\documentclass[12pt,a4paper,oneside,reqno]{book}

\usepackage{geometry}
\geometry{
	headsep=7ex,          % <-- and this
}

\usepackage{fancyhdr}
\fancypagestyle{plain}{\fancyhf{}} % To clear page numbers from footer, and header line at the start of every chapter

\pagestyle{fancy}
\fancyhf{}% Clear header/footer
\fancyhead[L]{\nouppercase\leftmark}
\fancyhead[R]{\thepage}

\usepackage{setspace}
\usepackage[T1]{fontenc}
\usepackage[dvips]{graphicx}
\usepackage{epsfig}
\usepackage{amsmath,amssymb,amscd,amsthm}
\usepackage{subfigure}
\usepackage[latin1]{inputenc}
\usepackage{latexsym}
\usepackage{graphics}
\usepackage{colortbl} 
\usepackage{color}
\usepackage{ae}
\usepackage{enumitem}
\usepackage[all]{xy}
\usepackage{scalerel}
\usepackage{tikz}
\usepackage{cancel}
\usepackage{bbm,amsbsy}
\usepackage{mathrsfs}  
\usepackage[normalem]{ulem}
\usepackage{nicefrac}
\usepackage{xfrac}
\usepackage{faktor}
\usepackage{tikz-cd} 
\usepackage{dsfont}
\usepackage{tikz}
\usetikzlibrary{shapes,calc,cd,decorations.pathmorphing,hobby}

% NON MI COMPILA CON QUESTO PACCHETTO. POI MAGARI SI RIMETTE. PER RITROVARE LE MODIFICHE, BASTA FARE Ctrl+F mathbbm
%\usepackage{bbm}

\usepackage{amsthm}

\linespread{1.10}

\makeatletter
\newtheorem*{rep@theorem}{\rep@title}
\newcommand{\newreptheorem}[2]{%
	\newenvironment{rep#1}[1]{%
		\def\rep@title{#2 \ref{##1}}%
		\begin{rep@theorem}}%
		{\end{rep@theorem}}}
\makeatother

\makeatletter
\newtheorem*{rep@cor}{\rep@title}
\newcommand{\newrepcor}[2]{%
	\newenvironment{rep#1}[1]{%
		\def\rep@title{#2 \ref{##1}}%
		\begin{rep@cor}}%
		{\end{rep@cor}}}
\makeatother

\makeatletter
\newtheorem*{rep@prop}{\rep@title}
\newcommand{\newrepprop}[2]{%
	\newenvironment{rep#1}[1]{%
		\def\rep@title{#2 \ref{##1}}%
		\begin{rep@prop}}%
		{\end{rep@prop}}}
\makeatother

\theoremstyle{definition}

\newtheorem{Definition}{Definition}[section]
\newtheorem{Def}[Definition]{Definition}

\newtheorem{Example}[Definition]{Example}
\newtheorem{example}[Definition]{Example}

\newtheorem{Remark}[Definition]{Remark}
\newtheorem{remark}[Definition]{Remark}
\theoremstyle{plain}
\newtheorem{Proposition}[Definition]{Proposition}
\newtheorem{Prop}[Definition]{Proposition}
\newtheorem{prop}[Definition]{Proposition}
\newreptheorem{prop}{Proposition}
\newtheorem{Lemma}[Definition]{Lemma}
\newtheorem{lemma}[Definition]{Lemma}
\newtheorem*{Theorem*}{Theorem}
\newtheorem{Theorem}[Definition]{Theorem}

\newreptheorem{theorem}{Theorem}
\newtheorem{Teo}[Definition]{Theorem}
\newtheorem{Cor}[Definition]{Corollary}
\newtheorem{cor}[Definition]{Corollary}
\newreptheorem{cor}{Corollary}

\newlist{steps}{enumerate}{1}
\setlist[steps, 1]{itemsep=8pt,leftmargin=0cm,itemindent=.5cm,labelwidth=\itemindent,labelsep=0cm,align=left,label = \textbf{\emph{Step \arabic*}:\,}}

\newcommand{\C}{{\mathbb C}}
\newcommand{\CC}{\mathds C}
\newcommand{\R}{{\mathbb R}}
\newcommand{\RR}{\mathds R}
\newcommand{\Z}{{\mathds Z}}

\newcommand{\hol}{\mathrm{hol}}
\newcommand{\Hyp}{\mathbb{H}}
\newcommand{\Sph}{\mathbb{S}}

\newcommand{\JJ}{\mathbbm{J}}
\newcommand{\JJJ}{\mathds J}

\newcommand{\ddt}{\left.\frac{d}{dt}\right|_{t=0}}
\newcommand{\Isom}{\mathrm{Isom}}

\newcommand{\V}{\mathcal{V}}
\newcommand{\VP}{\mathcal{V}^0}
\newcommand{\HH}{\mathcal{H}}
\newcommand{\HP}{\mathcal{H}^0}

\newcommand{\as}[1]{\textcolor{magenta}{#1}}

\newcommand{\inner}[1] {\langle #1 \rangle}
\newcommand{\inners}{\langle \cdot, \cdot \rangle}

\newcommand{\CP}{\mathbb C \mathbb P}
\newcommand{\PSL}{{\mathrm{PSL}(2,\mathds C)}}
\newcommand{\SL}{{\mathrm{SL}(2,\mathds C)}}
\newcommand{\asl}{\mathfrak{sl}(2,\CC)}
\newcommand{\PXX}{\mathbb P \mathbb X}

\newcommand{\Lieg}{\mathfrak g}
\newcommand{\Proj}{\mathbb P}
\newcommand{\Hy}{\mathbb H}
\newcommand{\XXX}{\mathbb X}
\newcommand{\blue}{\textcolor{blue}}

\newcommand{\phee}{\varphi}
\newcommand{\eps}{\epsilon}

\newcommand{\m}{\mathfrak m}

\newcommand{\SO}{\mathrm{SO}}
\newcommand{\GG}{{\mathbbm g}}
\newcommand{\GGG}{{\mathbb G}}
\newcommand{\G}[1]{\mathbb G_{#1}}
\newcommand{\gs}[1]{{\overline{Sas}_{#1}}}
\newcommand{\gstar}[1]{{\overline{Sas}_{#1} ^{\ *}}}
\newcommand{\xx}{{{x}}}

\newcommand{\vv}{{{v}}}

\newcommand{\ww}{{{w}}}

\theoremstyle{definition} 
\newtheorem{Claim}{Claim}

\newtheorem*{Notation*}{Notation}

\newcommand{\PML}{\XXX_{n+1}}
\newcommand{\ML}{\XXX_{n+1}}
\newcommand{\XX}{\mathtt{X}}
\newcommand{\YY}{\mathtt{Y}}
\newcommand{\ZZ}{\mathtt{Z}}

\newcommand{\CTM}{\C TM}
\newcommand{\CTS}{\C TM}
\newcommand{\io}{\sigma}
\newcommand{\NN}{\mathcal N}

\newcommand{\IIs}{\mathrm{I}\hspace{-0.04cm}\mathrm{I}}

\newcommand{\IIv}{\underline{\mathrm{I}\hspace{-0.04cm}\mathrm{I}}}

\newcommand{\Lieonn}{\mathfrak{o}(n+2,\C)}
\newcommand{\SOnn}{SO(n+2,\C)}
\newcommand{\upsi}{\underline \Psi}

\newcommand{\utheta}{\underline \theta}

\usepackage{marvosym}
\newcommand{\Tau}{\text{\Capricorn}}
\newcommand{\sslash}{\mathbin{/\mkern-6mu/}}

\newcommand{\I}{\mathrm{I}}
\newcommand{\II}{\mathrm{I}\hspace{-0.04cm}\mathrm{I}}
\newcommand{\III}{\mathrm{I}\hspace{-0.04cm}\mathrm{I}\hspace{-0.04cm}\mathrm{I}}
\newcommand{\arctanh}{\mathrm{arctanh}}

\newcommand{\Ham}{\mathrm{Ham}}
\newcommand{\Flux}{\mathrm{Flux}}

\usepackage[backend=biber,style=alphabetic
]{biblatex}
\usepackage[colorlinks=true]{hyperref}
\addbibresource{eesbiblio.bib}

\usepackage{caption}
\captionsetup[figure]{font=small, margin=30pt }

\renewcommand{\maketitle}{
	
	\begin{titlepage}

		\thispagestyle{empty}
		\enlargethispage{20cm}  % just to make sure LaTeX doesn't
		% generate a new page
		
		%\noindent
		\iffalse
		\begin{figure}
			\centering
			\includegraphics[width=0.2\textwidth]{logo}
		\end{figure}
	\fi

		%\begin{flushleft}
		%\vspace{1cm}
		%Sede Amministrativa: Universit\`a degli Studi di Padova
		%\end{flushleft}
		\begin{center}
			\vspace{1cm}
			Universit\`a degli Studi di Pavia \\
			\vspace{0.1cm}
			Dipartimento di Matematica Felice Casorati\\
			\vspace{0.1cm}
			Joint PhD Program in Mathematics Pavia - Milano Bicocca - INdAM%\\
			%CICLO XXXII
		\end{center}
		\vspace{2cm}
		\begin{center}
			\LARGE
			{\textbf{Immersions of surfaces into $\SL$ and into the space of geodesics of Hyperbolic space}}
		\end{center}
		
		\begin{flushright}
			\vspace{2cm}
			\textbf{PhD thesis of:} \\ Christian El Emam
		\end{flushright}
		
		\begin{flushleft}
			\vspace{0.5cm}
			\textbf{PhD Coordinator:}\\ Prof. Thomas Weigel\\
			\vspace{0.5cm}
			
			%\begin{tabular}{@{}ll@{}}
			%	\textbf{Supervisors:} & Prof. Giulio Ciraolo \\
			%						 & Prof. Luigi Vezzoni
			%\end{tabular}

			\textbf{Supervisor:} \\
			Prof. Francesco Bonsante

		\end{flushleft}

		\vspace{3cm}
		\begin{center}
			December 2020 - XXXIII Cycle
		\end{center}

\end{titlepage}}

\begin{document}
		\maketitle

	\tableofcontents
	
	\chapter*{Acknowledgements}
	
	There are several people I would like to thank for helping me get to this thesis.
	\vspace{6pt}
	
	Vorrei ringraziare con affetto Francesco Bonsante che, con molta disponibilit\`a, ha accettato tre anni fa di farmi da tutor per il dottorato. \`E stato una guida illuminante, paziente e generosa e gli sar\`o sempre grato per avermi insegnato a camminare nel mondo della ricerca. 
	
	Vorrei ringraziare Andrea Seppi, il mio "fratello maggiore" accademico, che ha investito tempo e pazienza per collaborare con un principiante. Insieme abbiamo trovato un po' di matematica interessante e io ho trovato un buon amico.
	
	I would like to thank Jean-Marc Schlenker for offering me a postdoc position in a tough moment for the academic job market. I thank him for his kindness and for the interesting conversations that we had and that we will have in Luxembourg.
	
	Un grazie profondo ai miei genitori. Mi hanno sempre lasciato libero di vivere il mio percorso e di fare le mie scelte (anche se forse a volte preferirebbero un figlio impiegato di banca che abiti a cinque minuti da casa). Vi voglio tanto bene e, anche quando non ci sono, sono sempre l\`i.
	
	Un grazie amorevole a Silvia, che accetta con pazienza di vivere una relazione quasi costantemente a distanza, ma che offre sempre ristoro al mio cuore vagabondo e che mi porta una casa ogni volta che mi viene a trovare.
	
	Ringrazio con tantissimo affetto tutti i miei compagni di dottorato. Grazie per le risate, per i pranzi, le pause t\`e e caff\` e per tutte le tante minipause fittamente distribuite durante il lavoro. \`E stato un bellissimo percorso di crescita condivisa, avete reso questi tre anni i pi\`u belli della mia formazione.
	
	Un grazie anche ai miei amici Miriam, Niyi, Oreste, Giorgio, Lorenzo, Martina, Rossana, il prof. Monti, Gabriele, Eugenia - e ne sto dimenticando qualcuno - per essere sempre una buona ragione in pi\`u per tornare a casa.
	\vspace{6pt}
	
	Being independent is beautiful, but always having someone to thank is probably even more so.

	\chapter{Introduction}
		It is a well known result in Riemannian and Pseudo-Riemannian Geometry that the theory of codimension-1 (admissible) immersions of a simply connected manifold $M$ into a space form is completely described by the study of two tensors on $M$, the \emph{first fundamental form} and the \emph{shape operator} of the immersion, corresponding respectively to the pull-back metric and to the derivative of a local normal vector field.
	
	On the one hand, for a given immersion the first fundamental form and the shape operator turn out to satisfy two equations known as Gauss and Codazzi equations. On the other hand, given a metric and a self-adjoint (1,1)-tensor on $M$ satisfying these equations, there exists one admissibile immersion into the space form with that pull-back metric and that self-adjoint tensor respectively as first fundamental form and shape operator; moreover, such immersion is unique up to post-composition with ambient isometries of the space form. This class of results are often denoted as \emph{Gauss-Codazzi Theorems}, \emph{Bonnet Theorems} or as \emph{Fundamental Theorem of hypersurfaces}, and can be extended for higher codimension. See for instance \cite{Kobayashi-Nomizu2, Gauss-Codazziinspaceforms}. We will recall in Chapter \ref{chapter: pseudoriem space forms} the Gauss-Codazzi theorems for pseudo-Riemannian space forms of constant sectional curvature $-1$, such as $\Hyp^n$ and $\widetilde {AdS^n}$.
	\vspace{3pt}

	Several remarkable results in the study of geometric structures on surfaces have been developed from the study of immersions of surfaces into $\Hyp^3$ and $AdS^3$. 
	A convex equivariant immersion of $\widetilde S$ into $\Hyp^3$ induces a projective structure on $S$ with developing map defined by the immersion $G^+_\sigma\colon \widetilde S \to \partial \Hyp^3$ with $G_\sigma^+(p)$ being the endpoint of the geodesic ray starting at $\sigma(p)$, orthogonal to $\sigma$ and directed toward the concave side: this has been a starting point for several results about projective structures \cite{Labourie, cyclic}. On the other hand, the Gauss map of an equivariant Riemannian immersion of $\widetilde S$ into $AdS^3$ points out a pair of points in the Teichm\"uller space of $S$ and minimal isometries are related to the problem of finding minimal Lagrangian maps between hyperbolic metrics, see \cite{KrasnovSchlenker, universal}.
	\vspace{5pt}
	
	In this thesis, we present two possible developments for this theory of immersions.
	\begin{itemize}
		\item Immersions of smooth manifolds into holomorphic Riemannian space forms of constant curvature $-1$, that we will denote as $\XXX_{n+1}$ (see Subsection \ref{subsec hRm intro}, complete definition in Chapter \ref {Chapter hRm}).
		
		A proper definition of holomorphic Riemannian metric (and of corresponding Levi-Civita connection and curvature) will be given in Chapter \ref {Chapter hRm} (see Definition \ref{def hRm}). Let us just mention now that holomorphic Riemannian metrics are a natural analogue of Riemannian metrics in the complex setting, and that holomorphic Riemannian space forms are simply-connected complex manifolds endowed with a holomorphic Riemannian metric with the property of being complete and with constant sectional curvature.

		The theory of immersions into $\XXX_{n+1}$ generalizes the one of immersions into hyperbolic pseudo-Riemannian space forms, since they all isometrically immerge into $\XXX_{n+1}$, leading to a more general version of the Gauss-Codazzi Theorem.
		
		This approach also allows to furnish a tool for constructing holomorphic variations of projective structures, with connections with the complex landslide. Moreover, this theory leads to the definition of a new geometric structure for surface, namely \emph{complex metrics}, for which we state a uniformization theorem that is deeply connected with Bers Simultaneous Uniformization Theorem. For $n=3$, we see that the space $\XXX_3$ is isometric to $\SL$ equipped with the global complex Killing form, up to a scalar.
		
		\item Immersions of smooth $n$-manifolds into the space $\G{n+1}$ of the geodesics of $\Hyp^{n+1}$. 
		
		In general an immersion $\sigma$ of a hypersurface $M=M^n$ in $\Hyp^{n+1}$ induces a Gauss map $G_\sigma=(G_\sigma^+, G_\sigma^-)$ into its space of geodesics, identified with $\partial \Hyp^{n+1}\times \partial \Hyp^{n+1}\setminus \Delta=:\G{n+1}$, defined by taking, at each point, the endpoints of the normal oriented unparametrized geodesic to $\sigma$. If $\sigma$ has principal curvatures in $(-1,1)$, both $G_\sigma^+$ and $G_\sigma^-$ are developing maps for projective structures.
		
		The space $\G n$ has an interesting para-K\"ahler structure (see Section \ref{sec:parakahler metric GG}) which encodes a lot of geometric aspects of the hyperbolic space.
		
		In this thesis we mainly question when an immersion $G\colon M^n\to \G{n+1}$ is the Gauss map of an immersion into $\Hyp^{n+1}$, distinguishing among local, global and equivariant integrability: in this sense we provide a characterization for integrability of Riemannian immersions in terms of their extrinsic geometry in relation with the para-K\"ahler structure of $\G{n+1}$. We will also focus on the symplectic geometry of $\G{n+1}$ and of (equivariant) Hamiltonian symplectomorphisms of this space, which turn out to send (equivariantly) integrable immersions into (equivariantly) integrable immersions.
			\end{itemize}

	 These two subjects have at least one point of contact: indeed, the space of geodesics of $\Hyp^3$ has a structure of holomorphic Riemannian space form of constant curvature $-1$.
	 
	 \vspace{10 pt}
	 This thesis draws its content mainly from the papers \cite{ioBonsante} and \cite{ioSeppi}, in which I have been working on together with Francesco Bonsante and Andrea Seppi respectively, jointly with some unedited contents and remarks.
	 
	 After the Introduction chapter, the thesis will proceed with the following structure.
	 \begin{itemize}
	 	\item Part \ref{part ambient spaces} is mainly devoted to the presentation of the spaces $\XXX_n$ and $\G {n}$. The part will start with a short chapter dedicated to pseudo-Riemannian space forms.
	 	\item Part \ref{part bonsante} is dedicated to the study of immersions of smooth $n$-manifolds into $\XXX_{n+1}$ and $\XXX_n$, and to the geometric consequences.
		\item Part \ref{parte Seppi} is devoted to the study of Gauss maps of hypersurfaces in $\Hyp^{n+1}$ and integrable immersions of $n$-manifolds into $\G{n+1}$.
	 \end{itemize}

	 \section{The spaces $\XXX_n$ and $\G n$}
	 
	 Let us give a first look at the spaces $\XXX_n$ and $\G n$.

	 	\subsection{Holomorphic Riemannian manifolds and the space $\XXX_n$}
	 	\label{subsec hRm intro}
	 	
	 	 The notion of \emph{holomorphic Riemannian metrics} on complex manifolds can be seen as a natural analogue of Riemannian metrics in the complex setting: a holomorphic Riemannian metric (also denoted as hRm in this thesis) on a complex manifold $\mathbb M$ is a holomorphic never degenerate section of $Sym_\C(T^*\mathbb M \otimes T^* \mathbb M)$, namely it consists pointwise of $\C$-bilinear inner products on the holomorphic tangent spaces whose variation on the manifold is holomorphic.
	 Such structures turn out to have several interesting geometric properties and have been largely studied (e.g. see the  works by LeBrun, Dumitrescu, Zeghib, Biswas as in \cite{holomorphicriemannian1}, \cite{holomorphicriemannian4} \cite{holomorphicriemannian2}, \cite{holomorphicriemannian3}). 
	 
	 In an attempt to provide a self-contained introduction to the aspects we will deal with, Chapter \ref{Chapter hRm} starts with some basic
	 results on holomorphic Riemannian manifolds. After a short overview in the general setting - where we recall the notions of Levi-Civita connections (with the corresponding curvature tensors) and sectional curvature in this setting - we will focus on \emph{holomorphic Riemannian space forms}, namely geodesically-complete simply-connected holomorphic Riemannian manifolds with constant sectional curvature.  
	 
	 Drawing inspitation from the proofs in the pseudo-Riemannian setting, we prove the following.
	 \begin{reptheorem}{Theorem space forms}
	 	For all integer $n\ge 2$ and $k\in \C$ there exists exactly one holomorphic Riemannian space form of dimension $n$ with constant sectional curvature $k$ up to isometry.
	 \end{reptheorem}
	 We will denote with $\mathbb X_n$ the holomorphic Riemannian space form of dimension $n$ and curvature $-1$.

	 The space $\mathbb X_n$ can be defined as 
	 \[\mathbb X_n= \{(z_1, \dots, z_{n+1}) \in \C^{n+1}\ |\ \sum_{i=1}^{n+1} z_1^2 + \dots + z_{n+1}^2 =-1 \}
	 \] with the metric inherited as a complex submanifold on $\C^{n+1}$ equipped with the standard $\C$-bilinear inner product.
	 
	 This quadric model of $\mathbb X_n$ may look familiar: it is definitely analogue to some models of $\Hy^n$, $AdS^n$, $dS^n$ and $S^n$. In fact, all the pseudo-Riemannian space forms of curvature $-1$ immerge isometrically in $\mathbb X_n$: as a result, $\Hy^n$, $AdS^n$ embed isometrically while $dS^n$ and $S^n$ embed anti-isometrically, i.e. $-dS^n$ and $-S^n$ (namely, $dS^n$ and $S^n$ equipped with the opposite of their standard metric) embed isometrically.
	 
	 For $n=1,2,3$, $\mathbb X_n$ turns out to be familiar also in another sense:
	 \begin{itemize}
	 	\item $\mathbb X_1$ is isometric to $\C^*$ equipped with the holomorphic Riemannian metric defined by the quadratic differential $\frac{dz^2}{z^2}$;
	 	\item $\mathbb X_2$ is isometric to the space $\GGG$ of oriented lines of $\Hy^3$, canonically identified with $\CP^1\times \CP^1\setminus \Delta$, equipped with the only $\PSL$-invariant holomorphic Riemannian metric of curvature $-1$, that we will denote as $\inners_\GGG$;
	 	\item $\mathbb X_3$ is isometric (up to a scale) to $\SL$ equipped with the Killing form. Chapter \ref{Chapter SL} will treat the particular case of $\SL$ and of its 2:1 quotient $\PSL$.
	 \end{itemize}

\subsection{The space $\G n$ of geodesics of $\Hyp^n$ and the bundle \\
	$\mathrm p\colon T^1\Hyp^n\to \G n$}
\label{sec intro 3}
	In the groundbreaking paper \cite{zbMATH03791301}, Hitchin observed the existence of a natural complex structure on the space of oriented geodesics in Euclidean three-space.  A large interest has then grown on the geometry of the space of oriented (maximal unparametrized) geodesics of Euclidean space of any dimension (see \cite{zbMATH02228497,zbMATH02232144,zbMATH05530808,zbMATH06300570}) and of several other Riemannian and pseudo-Riemannian manifolds (see \cite{zbMATH05988505,zbMATH06268759,zbMATH06828623,MR3888623,zbMATH07050784}). In this paper, we are interested in the case of hyperbolic $n$-space $\Hyp^n$, whose  space of oriented geodesics is denoted here by $\G{n}$. The  geometry of $\G{n}$ has been addressed in \cite{zbMATH05187891} and, for $n=3$, in \cite{zbMATH05821527,zbMATH05779824,zbMATH06173011,zbMATH06481975}.  For the purpose of this paper, the most relevant geometric structure on $\G{n}$ is a natural \emph{para-K\"ahler structure} $(\GG, \JJ, \Omega)$ (introduced in \cite{zbMATH05988505,zbMATH06268759}), 
	a notion that we will define in detail in Section \ref{sec:parakahler metric GG}.  A particularly relevant feature of such para-K\"ahler structure is the fact that the Gauss map of an oriented immersion $\sigma:M\to\Hyp^n$, which is defined as the map $G_\sigma\colon M\to \G n$ that associates to a point of $M$ the orthogonal geodesic of $\sigma$ endowed with the compatible orientation, is a Lagrangian immersion in $\G{n}$. 
	We will come back to this important point in Subsection \ref{sec intro 1}. Let us remark here that, as a consequence of the geometry of the hyperbolic space $\Hyp^n$, an oriented geodesic in $\Hyp^n$ is characterized, up to orientation preserving reparametrization, by the ordered couple of its "endpoints'' in the visual boundary $\partial \Hyp^n$: this gives an identification 
$
\G{n}\cong\partial \Hyp^n\times\partial \Hyp^n \setminus \Delta
$. 
\iffalse
Under this identification the Gauss map $G_\sigma$ of an immersion $\sigma:M\to\Hyp^n$ corresponds to a pair of  \emph{hyperbolic Gauss maps} $G_\sigma^\pm:M\to\partial \Hyp^n$.
\fi	 
	 
\vspace{5pt}

 In this paper we give an alternative model for the para-K\"ahler structure of $\G{n}$ with respect to the previous literature (\cite{zbMATH05187891,zbMATH05779824,zbMATH05988505,zbMATH06268759}), which is well-suited for the problem of (equivariant) integrability.
%
%the projection map \[\mathrm p\colon T^1\Hyp^{n+1}\to \G{n+1},\] where $T^1\Hyp^{n+1}$ denotes the unit tangent bundle of $\Hyp^{n+1}$, defined by $\mathrm{p}(x,v)$ being the oriented geodesic of $\Hyp^{n+1}$ tangent to $v$ at $x$. 
The geodesic flow induces a natural principal $\R$-bundle structure {whose total space is $T^1\Hyp^{n+1}$ and whose bundle map is $\mathrm{p}:T^1\Hyp^{n+1}\to \G{n+1}$ defined in Equation \eqref{Def p intro}}, and acts by isometries of the \emph{para-Sasaki metric} $g$, which is a pseudo-Riemannian version of the classical Sasaki metric on $T^1\Hyp^{n+1}$. Let us denote by $\chi$ the infinitesimal generator of the geodesic flow, which is a vector field on $T^1\Hyp^{n+1}$ tangent to the fibers of $\mathrm p$. The idea is to define each element that constitutes the para-K\"ahler structure of $\G{n+1}$ (see the items below) by push-forward of certain tensorial quantities defined on the $g$-orthogonal complement of $\chi$, showing that the push-forward is well defined by invariance under the action of the geodesic flow. More concretely:

%and, as we will see, the para-K\"ahler structure $(\GG, \JJ, \Omega)$ on $\G n$ can be obtained by pushing down via $\mathrm p$ some geometric structures on $T^1\Hyp^n$ that are invariant under the action of the geodesic flow. More precisely:
\begin{itemize}
	\item The \emph{pseudo-Riemannian metric} $\GG$ of $\G{n+1}$ (of signature $(n,n)$) is defined as push-forward of the restriction of $g$ to $\chi^\perp$;
	\item The \emph{para-complex structure} $\JJ$ (that is, a $(1,1)$ tensor whose square is the identity and whose $\pm 1$-eigenspaces are integrable distributions of the same dimension) is obtained from an endomorphism $J$ of $\chi^\perp$, invariant under the geodesic flow, which essentially switches {the} horizontal and vertical {distributions} in $T^1\Hyp^{n+1}$;
	\item The \emph{symplectic form} $\Omega$ arises from a similar construction on $\chi^\perp$, in such a way that $\Omega(X,Y)=\GG(X,\mathbb J Y)$.
\end{itemize}

The symplectic geometry of $\G{n+1}$ has a deep relation with the structure of $T^1\Hyp^{n+1}$. Indeed 
the total space of $T^1\Hyp^{n+1}$ is endowed with a connection form 
$\omega$, whose kernel consists precisely of $\chi^\perp$ (See Definition \ref{Def connection form}). In Proposition \ref{prop:identity curvature form} we prove that $\Omega$ is in fact the curvature form of the connection $\omega$ on $\mathrm p$, namely:
\begin{equation}\label{eq omega Omega intro}
	d\omega=\mathrm p^*\Omega~.
\end{equation}	 
\vspace{5pt}	 
	
	Finally, in Section \ref{subsection: G(3) modello Bonsante-Christian} we show that the holomorphic Riemannian metric $\inners_{\GGG}$  on $\G 3$ (mentioned above in the end of Subsection \ref{subsec hRm intro} and defined in more detail in Section \ref{section: G3 e hRm}) is related to the pseudo-Riemannian metric $\GG_3$ by
	\[
	Re \Big( \inners_{\GGG} \Big)= \GG_3\ .
	\]

	 \section{Immersions into $\XXX_{n+1}$ and geometric structures}

	In this thesis we approach the study of immersions of smooth manifolds into the spaces $\XXX_n$. 
	
	The study of this kind of immersions turns out to have some interesting consequences, including some remarks concerning geometric structures on surfaces. In order to give a general picture, here are a few facts.
	\begin{itemize}
		\item it provides, as we mentioned, a formalism for the study of immersions of surfaces into $\SL$ and into the space of geodesics of $\Hy^3$;
		\item it generalizes the classical theory of immersions into non-zero curvature space forms, leading to a model to study the transitioning of hypersurfaces among $\Hy^n$, $AdS^n$, $dS^n$ and $S^n$;
		\item it furnishes a tool to construct holomorphic maps from complex manifolds to the character variety of $SO(n, \C)$ (including $\PSL\cong SO(3,\C)$), providing also an alternative proof for the holomorphicity of the complex landslide map (see \cite{cyclic});
		\item it allows to introduce a notion of \emph{complex metrics} which extends Riemannian metrics and which turns out to be useful to describe the geometry of couples of projective structures with the same monodromy. We also show a uniformization theorem in this setting which is in a way equivalent to the classical Bers Theorem for quasi-Fuchsian representations.
	\end{itemize}

	\subsection{Immersions of manifolds into $\mathbb X_n$} In Chapter \ref{Chapter immersioni in Xn}, we approach the study of immersions of smooth manifolds into $\XXX_n$. The idea is to try to emulate the usual approach as in the pseudo-Riemannian case, but things turn out to be technically more complicated. 
	
	Given a smooth map $\sigma\colon M \to \XXX_n$, the pull-back of the holomorphic Riemannian metric is some $\C$-valued symmetric bilinear form, so one can extend it to a $\C$-bilinear inner product on $\CTM=TM \otimes \C$; it now makes sense to require it to be non-degenerate. 
	This is the genesis of what we define as \emph{complex (valued) metrics} on smooth manifolds, namely smoothly varying non-degenerate inner products on each $\C T_x M$, $x\in M$. We will say that an immersion $\sigma\colon M \to \XXX_n$ is \emph{admissible} if the pull-back metric is a complex valued metric. By elementary linear algebra, $\sigma$ is admissible only if $dim(M)\le n$: the real codimension is $n$ and, despite it seems high, it cannot be lower than that. It therefore makes sense to redifine the $\emph{codimension}$ of 
	$\sigma$ as $n-dim(M)$. In this paper we focus on immersions of codimension $1$ and $0$.
	
	It turns out that this condition of admissibility is the correct one in order to have extrinsic geometric objects that are analogue to the ones in the pseudo-Riemannian case. Complex metrics come with some complex analogue
	of the Levi-Civita connection, which, in turn, allows to define a curvature tensor and a notion of sectional curvature. In codimension 1, admissible immersions come with a notion of local normal vector field (unique up to a sign) that allows to define a second fundamental form and a shape operator. 
	
	Section \ref{sec Gauss-Codazzi eq first part} ends with Theorem \ref{Teo Gauss-Codazzi}, in which we deduce some analogue of Gauss and Codazzi equations. With a bit more work, we show in Theorem \ref{Teoremone} that immersion data satisfying Gauss-Codazzi equations can be integrated for simply-connected domains.
	Let us state here the theorem in the particular case of immersions of surfaces into $\mathbb X_3\cong \SL$.
	
	\begin{repcor}{Cor Gauss-Codazzi PSL}
		Let $S$ be a smooth simply connected surface. Consider a complex metric $g$ on $S$, with induced Levi-Civita connection $\nabla$, and a $g$-self-adjoint bundle-homomorphism 
		$\Psi\colon \CTS \to \CTS$.
		
		The pair $(g, \Psi)$
		\begin{align}
			1) &d^\nabla \Psi \equiv 0;\\
			2)	&K=-1+det(\Psi).
		\end{align}
		if and only if there exists an isometric immersion $\sigma \colon S \to \SL$ whose corresponding shape operator is $\Psi$. Moreover, such $\sigma$ is unique up to post-composition with elements in $\Isom_0(\SL)= \Proj(\SL \times \SL)$. 
	\end{repcor}

	The essential uniqueness of the immersion grants that if
	$\Gamma$ is a group acting on $S$ preserving $g$ and $\Psi$, then the immersion $\sigma$ is $(\Gamma,\Isom_0(\SL) )$-equivariant, namely there exists a representation \[mon_\sigma\colon \Gamma\to Isom_0(\SL)\] such that for all $\gamma\in \Gamma$
	\[
	mon_\sigma(\gamma)\circ \sigma = \sigma \circ\gamma. \]
	
	In other words, if $S$ is not simply connected, solutions of Gauss-Codazzi equations correspond to $(\pi_1(S), \Isom_0(\SL) )$-equivariant immersions of its universal cover into $\SL$.
	\vspace{5mm}

	\subsection{Immersions in codimension zero and into pseudo-Riemannian space forms}	 
	The study of immersions into $\mathbb X_n$ in codimension zero leads to an interesting result.

	\begin{reptheorem}{teoremone stessa dimensione}        
		Let $M$ be a smooth manifold of dimension $n$.
		
		Then, $g$ is a complex metric for $M$ with constant sectional curvature $-1$ if and only if there exists an isometric immersion
		\[(\widetilde M, \widetilde g)\to \mathbb{X}_n\] which is unique up to post-composition with elements in $Isom(\mathbb X _{n})$ and therefore is $(\pi_1(M), O(n+1,\C) )$-equivariant.
	\end{reptheorem}

	This result can be deduced by the Gauss-Codazzi Theorem: in fact, immersions in $\mathbb X_n$ of codimension $0$ correspond to  codimension $1$ totally geodesic immersions in $\mathbb X_{n+1}$, namely immersions with shape operator $\Psi=0$. A full proof is in Section \ref{section codimension zero}.
	
	As a result, every pseudo-Riemannian space form of constant sectional curvature $-1$ and dimension $n$ admits an essentially unique isometric immersion into $\mathbb X_n$.
	
	\vspace{5mm}
	
	In fact, the last remark and the similar description of Gauss-Codazzi equations for immersions into pseudo-Riemannian space forms lead to Theorem \ref{da X_n a space forms}, which, in the case of $\SL$, can be stated in the following way.
	
\begin{reptheorem}{teo GC passando da pseudoRiem SL}
	Let $\sigma\colon S\to \SL$ be an admissible immersion with pull-back metric $g$ and shape operator $\Psi$.
	\begin{itemize}
		\item $\sigma(S)$ is contained in the image of an isometric embedding of $\Hy^3$ if and only if $g$ is Riemannian and $\Psi$ is real.
		\item 
		$\sigma(S)$ is contained in the image of an isometric embedding of $AdS^3$ if and only if either $g$ is Riemannian and $i\Psi$ is real, or if $g$ has signature $(1,1)$ and $\Psi$ is real.
		\item $\sigma(S)$ is contained in the image of an isometric embedding of $-dS^3$ if and only if either $g$ has signature $(1,1)$ and $i\Psi$ is real, or if $g$ is negative definite and $\Psi$ is real.
		\item $\sigma(S)$ is contained in the image of an isometric embedding of $-\Sph^3$ if and only if $g$ is negative definite and $i\Psi$ is real. 
	\end{itemize}
\end{reptheorem}

	\subsection{Holomorphic dependence of the monodromy on the immersion data}	
	Given a smooth manifold $M$ of dimension $n$, we say that $(g, \Psi)$ is a couple of \emph{immersion data} for $M$ if there exists a $\pi_1(M)$-equivariant immersion $\widetilde M\to \mathbb X_{n+1}$ with pull-back metric $\widetilde g$ and shape operator $\widetilde \Psi$. As a result of the essential uniqueness of the immersion, each immersion data comes with a monodromy map $mon_\sigma\colon \pi_1(M)\to Isom_0(\mathbb X_{n+1})$.
	
	In Chapter \ref{Section dipendenza olo}, we consider families of immersion data $\{(g_\lambda, \Psi_\lambda)\}_{\lambda \in \Lambda}$ for $M$. 
	
	Let $\Lambda$ be a complex manifold. We say that the family $\{(g_\lambda, \Psi_\lambda)\}_{\lambda \in \Lambda}$ is  \emph{holomorphic} if for all $p\in M$, the functions
	\begin{align*}
		\Lambda &\to Sym^2(\C T^*_pM)\\ 
		\lambda &\mapsto g_\lambda(p)
	\end{align*}
	and 
	\begin{align*}
		\Lambda &\to End_\C(\C T_pM)\\ 
		\lambda &\mapsto \Psi_\lambda(p)
	\end{align*}
	are holomorphic. 
	
	For a fixed hyperbolic Riemannian metric $h$ on a surface $S$, an instructive example is given by the family $\{(g_z,\psi_z)\}_{z\in \C}$ defined by
	\[
	\begin{cases}
		g_z= \cosh^2(z) h; \\
		\psi_z= \tanh(z) id
	\end{cases} \ ,
	\] 
	whose monodromy is going to be the monodromy of an immersion into $\Hy^3$ for $z\in \R$ and the monodromy of an immersion into $AdS^3$ for $z\in i\R$. 
	
	The main result of Chapter \ref{Section dipendenza olo} is the following.
	
\begin{reptheorem}{Teo dipendenza olomorfa}
	Let $\Lambda$ be a complex manifold and $M$ be a smooth manifold of dimension $n$. 
	
	Let $\{(g_\lambda, \Psi_\lambda)\}_{\lambda\in \Lambda}$ be a holomorphic family of immersion data for $\pi_1(M)$-equivariant immersions $\widetilde M\to \mathbb X_{n+1}$. Then there exists a smooth map
	\[
	\sigma\colon \Lambda \times \widetilde M \to \mathbb X_{n+1}
	\]
	such that, for all $\lambda\in \Lambda$ and $p\in M$:
	\begin{itemize}
		\item $\sigma_\lambda:= \sigma(\lambda, \cdot)\colon \widetilde M \to \mathbb X_{n+1}$ is an admissible immersion with immersion data $(g_\lambda, \Psi_\lambda)$;
		\item $\sigma(\cdot,p)\colon \Lambda \to \mathbb X_{n+1}$ is holomorphic.
	\end{itemize}
	Moreover, for all $\alpha\in \pi_1(M)$, the monodromy map evaluated in $\alpha$
	\begin{align*}
		\Lambda &\to  SO(n+2, \C)\\
		\lambda &\mapsto mon(\sigma_\lambda) (\alpha)
	\end{align*}
	is holomorphic.
\end{reptheorem}

	As an application of Theorem \ref{Teo dipendenza olomorfa}, in Subsection \ref{Subsection complex landslide} we show an alternative proof of the holomorphicity of the complex landslide introduced in \cite{cyclic}, whose definition we recall in Section 
	\ref{sec complex landslide}. More precisely, denoting the complex landslide with initial data $(h, h(b\cdot, b\cdot))$ as the complex flow $z\mapsto P_{h,b}(z)$, we obtain the following.
	
	\begin{reptheorem}{Teo alternativo landslide}
	The holonomy of the projective structure $P_{h,h(b\cdot, b\cdot)}(z)$ is equal to the monodromy of the immersion $\widetilde S\to \GGG$ with induced complex metric 
	\[
	g_z = h \bigg( (\cos(z)id -  \sin(z) Jb)\ \cdot,  (\cos(z)id -  \sin(z) Jb)\cdot \bigg)
	\]
	with constant curvature $-1$.
	
	As a consequence of Theorem \ref{Teo dipendenza olomorfa} and Theorem \ref{Hol e diffeo}, $P_{h,b}$ is holomorphic.
	\end{reptheorem}

	\subsection{Uniformizing complex metrics and Bers Theorem}	
	
	In Chapter \ref{Section uniformization} we focus on complex metrics on surfaces. 
	
	Even in dimension 2, complex metrics can have a rather wild behaviour. Nevertheless, we point out a neighbourhood of the Riemannian locus whose elements have some nice features: we will call these elements \emph{positive complex metrics} (Definition \ref{Def positive metrics}).
	
	One of the main properties of positive complex metrics on orientable surfaces is that they admit a notion of \emph{Area form}, analogous to the one defined for Riemannian surfaces (see Theorem \ref{Teorema orientazione}).
	
	We prove that the standard Gauss-Bonnet Theorem also holds for positive complex metrics on closed surfaces:	
	\begin{reptheorem}{Gauss Bonnet}
		Let $S$ be a closed oriented surface and $g$ a complex metric of constant curvature $K_g$ and area form $dA_g$.
		Then \[
		\int_S K_g dA_g = 2 \pi \chi(S)
		\]
		where $\chi(S)$ is the Euler-Poincaré characteristic of $S$.
	\end{reptheorem}
	   
	After that, we prove the most relevant result in Chapter \ref{Section uniformization}, consisting in a version of the Uniformization Theorem for positive complex metrics (see Theorem \ref{Uniformization 1} for the complete version of the statement). 
	
	\begin{Theorem*}
		Let $S$ be a surface with $\chi(S)<0$. 
		
		For any positive complex metric $g$ on $S$ there exists a smooth function $f\colon S\to \C^*$ such that the positive complex metric $f\cdot g$ has constant curvature $-1$ and has quasi-Fuchsian monodromy.
	\end{Theorem*}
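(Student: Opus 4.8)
The plan is to follow the classical PDE route to uniformization, suitably complexified, and then to propagate the quasi-Fuchsian condition from the Riemannian (Fuchsian) locus by a continuity argument. First I would fix the conformal factor as $f=e^{2u}$ for an unknown smooth $u\colon S\to\C$: since the target is $\C$, such a $u$ is an honest global function and $f=e^{2u}$ automatically takes values in $\C^*$. As in the Riemannian case, a conformal change transforms the curvature of a complex metric by
\begin{equation*}
K_{e^{2u}g}=e^{-2u}\big(K_g-\Delta_g u\big),
\end{equation*}
where $\Delta_g$ is the complex Laplace--Beltrami operator attached to $g$ by its Levi--Civita connection. Imposing $K_{e^{2u}g}=-1$ thus reduces the theorem to solving the complex Liouville equation
\begin{equation*}
\Delta_g u=K_g+e^{2u}
\end{equation*}
for $u\colon S\to\C$. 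The role of the hypothesis that $g$ is \emph{positive} is exactly this: writing $\sigma=g^{-1}(\xi,\xi)$ for the principal symbol of $\Delta_g$, positivity guarantees $\sigma\neq 0$ for every real cotangent direction $\xi\neq 0$, so that after splitting $u=u_1+iu_2$ the scalar complex equation becomes a determined real $2\times 2$ system for $(u_1,u_2)$ whose leading part (with symbol determinant $|\sigma|^2>0$) is Douglis--Nirenberg elliptic.

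Because positive complex metrics form a connected neighbourhood of the Riemannian locus, I would join $g$ to a genuine Riemannian metric $g_0$ of curvature $-1$ by a smooth path $(g_t)_{t\in[0,1]}$ of positive complex metrics, for instance by continuously damping the imaginary part of $g$, with $g_1=g$, and solve the Liouville equation for every $g_t$. At $t=0$ the classical uniformization theorem provides a solution $u_0$. Let $A\subseteq[0,1]$ be the set of parameters for which a solution $u_t$ exists in a fixed Hölder space. Openness of $A$ follows from the implicit function theorem once one checks that the linearisation $L_t=\Delta_{g_t}-2e^{2u_t}$ is invertible: ellipticity makes $L_t$ Fredholm of index $0$, and the real part of the zeroth-order coefficient, controlled by positivity, rules out a kernel.

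Closedness of $A$ is the analytic heart of the argument and is the step I expect to be the main obstacle, since it requires uniform a priori bounds on $u_t$. The bound on the real part $u_1$ can be obtained from the maximum principle together with the Gauss--Bonnet constraint (Theorem~\ref{Gauss Bonnet}), exactly as in the classical scalar case; the genuinely new difficulty is to prevent the imaginary part $u_2$ from escaping to infinity, since it couples to $u_1$ through the lower-order terms of the elliptic system. The hard part will be to use positivity once more to keep this coupling subordinate to the elliptic leading term, so that standard elliptic estimates close and $u_t$ stays bounded; this delivers a solution for $t=1$, i.e. a function $f=e^{2u}$ with $K_{f\cdot g}=-1$.

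Finally, for the monodromy I would invoke the codimension-zero integration theorem (Theorem~\ref{teoremone stessa dimensione}): the constant-curvature metric $f\cdot g$ is realised by a $(\pi_1(S),O(3,\C))$-equivariant immersion $\widetilde S\to\XXX_2\cong\GGG$, and for an oriented surface the monodromy lands in the identity component, giving $\rho\colon\pi_1(S)\to SO(3,\C)\cong\PSL$; equivalently, the two hyperbolic Gauss maps of the immersion furnish a pair of $\rho$-equivariant developing maps of projective structures on $S$. Running the same path $(g_t)$, at $t=0$ the immersion is the totally geodesic inclusion of a hyperbolic surface into $\Hyp^2\subset\XXX_2$, so $\rho_0$ is Fuchsian, hence quasi-Fuchsian. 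By the holomorphic, in particular continuous, dependence of the monodromy on the immersion data (Theorem~\ref{Teo dipendenza olomorfa}), $t\mapsto\rho_t$ is continuous, and since the quasi-Fuchsian locus is open in the character variety the set of $t$ with $\rho_t$ quasi-Fuchsian is open. To see that it is also closed along the path, so that $\rho_1=\rho$ is quasi-Fuchsian, I would use that the two Gauss maps simultaneously uniformise a pair of Riemann surfaces, whence by the Bers correspondence their common holonomy cannot leave the quasi-Fuchsian component while $g_t$ remains positive; this is precisely the point where the statement becomes equivalent to Bers' Simultaneous Uniformization Theorem.
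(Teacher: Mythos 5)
Your proposal takes a genuinely different route from the paper, and as written it has gaps that I do not think can be closed along the lines you sketch. The paper never solves a PDE: it observes that a positive complex metric determines a bicomplex structure $\mathbf J\equiv(J_1,J_2)$, feeds the pair $(J_1,J_2)$ into Bers' Simultaneous Uniformization Theorem to obtain a quasi-Fuchsian $\rho$ together with a $J_1$-holomorphic embedding $f_1\colon\widetilde S\to\Omega_+$ and a $J_2$-antiholomorphic embedding $f_2\colon\widetilde S\to\Omega_-$, and then notes that $(f_1,f_2)^*\inners_\GGG$ is a positive complex metric of curvature $-1$ compatible with $\mathbf J$, hence conformal to $g$ by the homeomorphism between conformal classes of positive complex metrics and $BC(S)\cong C(S)\times C(S)$. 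The existence of the conformal factor $f$ and the quasi-Fuchsianity of the monodromy thus come for free from Bers, which is exactly why the paper describes the theorem as "in a sense equivalent" to Bers' theorem.

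The concrete problems with your PDE argument are the following. First, both your a priori bound on $u_1$ and your invertibility claim for the linearisation $L_t=\Delta_{g_t}-2e^{2u_t}$ rest on sign control of the zeroth-order term, but $\mathrm{Re}(e^{2u})=e^{2u_1}\cos(2u_2)$ has no sign unless $u_2$ is already bounded, and the equations for $u_1$ and $u_2$ are coupled through precisely this term (the "complex Laplacian" $\Delta_g$ mixes real and imaginary parts as well, since $g$ is $\C$-bilinear). So the maximum principle step fails as stated, the Fredholm-index-zero operator $L_t$ may well have kernel, and the uniform bound on $u_2$ — which you yourself flag as the main obstacle — is exactly the point where the argument is missing an idea rather than a routine estimate. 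Second, your closedness step for the monodromy asserts that the holonomy "cannot leave the quasi-Fuchsian component while $g_t$ remains positive"; since $QF(S)$ is open and not closed in $\mathcal X(S)$, this is the statement to be proved, not a consequence of openness plus continuity. The paper handles the corresponding point (part 2 of Theorem \ref{Uniformization 1}) by identifying the relevant developing maps with the component $\widetilde{\mathcal P}_0(S)$ via Goldman's theorem, not by a path-closedness argument. Your final reduction of the monodromy computation to Theorem \ref{teoremone stessa dimensione} and the Gauss maps is fine, but it only becomes available once existence of the uniformizing factor is secured, which is where the proposal is incomplete.
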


	The proof of this result uses Bers Simultaneous Uniformization Theorem (Theorem \ref{Bers} in this paper) and, in a sense, is equivalent to it.
	
	Indeed, by Theorem \ref{teoremone stessa dimensione}, complex metrics on $S$ with constant curvature $-1$ correspond to equivariant isometric immersions of $\widetilde S$ into $\GGG=\CP^1\times \CP^1\setminus\Delta$: hence, they can be identified with a couple of maps $\widetilde S\to \CP^1$ with the same monodromy. In this sense, Bers Theorem provides a whole group of immersions into $\CP^1\times \CP^1\setminus\Delta$: such immersions correspond to complex metrics of curvature $-1$ which prove to be positive. The proof of the uniformization theorem consists in showing that every complex positive metric is conformal to a metric constructed with this procedure.

	\section{Integrable immersions into $\G{n+1}$}
	
As we mentioned before, a hypersurface immersion $\sigma\colon M\to \Hyp^{n+1}$ defines a \emph{Gauss map} $G_\sigma=(G_\sigma^+, G_\sigma^-)\colon M\to \G {n+1}$ defined by $G_\sigma(p)$ being the orthogonal geodesic to $\sigma$ in $\sigma(p)$. The main interest lies in the study of immersions $\sigma$ which are equivariant with respect to some group representation $\rho:\pi_1(M)\to\Isom(\Hyp^{n+1})$, for an $n$-manifold $M$. 
	
 Uhlenbeck \cite{zbMATH03840752} and Epstein \cite{Epstein:1986aa,zbMATH03948676,zbMATH04015637} studied immersed hypersurfaces in $\Hyp^n$, mostly in dimension $n=3$, highlighting the relevance of hypersurfaces satisfying the geometric condition {for which principal curvatures are everywhere}  different from $\pm 1$, sometimes called \emph{horospherically convexity}: this is the condition that ensures that the components of the Gauss maps $G_\sigma^\pm$ are locally invertible. 
 
 As we will see, we will often focus on Gauss maps of immersions into $\Hyp^{n+1}$ \emph{with small principal curvatures}, i.e. with principal curvatures lying in $(-1,1)$.
	
The two main results of Part \ref{parte Seppi} are Theorem \ref{teorema hol H baby} and Theorem \ref{thm:second char ham}, which we will discuss in the following subsections.

	\subsection{Integrability of immersions in $\G n$}\label{sec intro 1}
	
	%Let $M=M^n$ be an oriented manifold and let $\sigma\colon M^n\to \Hyp^{n+1}$ be an immersion, with $\nu$ being the normal vector field compatible with the orientation. The map $\sigma$ induces a \emph{Gauss map} $G_\sigma \colon M \to \G {n+1}$ defined by $G_\sigma(p)$ being the geodesic tangent to $\nu(p)$.
	
	%The Gauss map $G_\sigma$ is an immersion and the first fundamental forms $\I$ and $\overline \I$, defined as the pull-back via $\sigma$ and $G_\sigma$ of the metrics in $\Hyp^{n+1}$ and $\G {n+1}$ respectively, satisfy
	%\[
	%\overline \I= \I - \I(B\cdot, B\cdot)
	%\]
	%with $B=-D\nu$ being the shape operator of $\sigma$. In particular $\overline \I$ is Riemannian if and only if $\sigma$ has \emph{small principal curvarures}\appunto{\orange{Si può togliere qui la def di s.p.c. pero' bisognerebbe modificare lo statement del Teo \ref{prop: riemannian global integrability}}}, namely if the principal curvatures of $\sigma$ lie in $(-1,1)$.
	
	One of the main goals of this paper is to discuss when an immersion $G\colon M^n\to \G{n+1}$ is \emph{integrable}, namely when it is the Gauss map of an immersion $M\to \Hyp^{n+1}$, in terms of the geometry of $\G{n+1}$. We will distinguish three types of integrability conditions, which we list from the weakest to the strongest: \begin{itemize}
		\item An immersion $G\colon M\to \G {n+1}$ is \emph{locally integrable} if for all $p\in M$ there exists a neighbourhood $U$ of $p$ such that $G|_{U}$ is the Gauss map of an immersion $U\to \Hyp^{n+1}$;
		\item An immersion $G\colon M \to \G{n+1}$ is \emph{globally integrable} if it is the Gauss map of an immersion $M\to \Hyp^{n+1}$;
		\item Given a representation $\rho\colon \pi_1(M)\to \Isom (\Hyp^{n+1})$, a $\rho$-equivariant immersion $G\colon \widetilde M\to \G{n+1}$ is \emph{$\rho$-integrable} 
		%(or \emph{equivariantly integrable}) 
		if it is the Gauss map of a $\rho$-equivariant immersion $\widetilde M \to \Hyp^{n+1}$.
	\end{itemize}
	
	Let us clarify here that, since the definition of Gauss map requires to fix an orientation on $M$ (see Definition \ref{Def:lift and gauss}),  the above three definitions of integrability have to be interpreted as: ``there exists an orientation on $U$ (in the first case) or $M$ (in the other two) such that $G$ is the Gauss map of an immersion in $\Hyp^{n+1}$ with respect to that orientation''.
	
	We will mostly restrict to immersions $\sigma$ with small principal curvatures, which is equivalent to the condition that the Gauss map $G_\sigma$ is Riemannian, meaning that the pull-back by $G_\sigma$ of the ambient pseudo-Riemannian metric of $\G{n+1}$ is positive definite, hence a Riemannian metric (Proposition \ref{prop: small curv sse riemannian}). 
	%Only in the last part of the paper, including Theorem \ref{thm:second char ham} of this introduction, we will assume that $M$ is closed.
	
	\subsubsection{Local integrability} As it was essentially observed by Anciaux in \cite{zbMATH06268759}[Theorem 2.10], local integrability admits a very simple characterization in terms of the symplectic geometry of $\G{n+1}$.
	\begin{reptheorem}{cor: local integrability}
		Let $M^n$ be a manifold and $G\colon M\to \G {n+1}$ be an immersion. Then $G$ is locally integrable if and only if it is Lagrangian.
	\end{reptheorem}
	The methods of this paper easily provide a proof of Theorem \ref{cor: local integrability}, which is independent from the content of \cite{zbMATH06268759}.
	{Let us denote by $T^1\Hyp^{n+1}$  the unit tangent bundle of $\Hyp^{n+1}$ and {by}
		\begin{equation}\label{Def p intro}
			\mathrm p\colon T^1\Hyp^{n+1}\to \G{n+1}~
		\end{equation} the map such that $\mathrm{p}(x,v)$ is the oriented geodesic of $\Hyp^{n+1}$ tangent to $v$ at $x$. Then, if $G$ is Lagrangian, we prove that one can locally construct maps $\zeta:U\to T^1\Hyp^{n+1}$ (for $U$ a simply connected open set) such that $\mathrm p\circ\zeta=G$. 
		%\as{Such maps $\zeta$ are constructed} as sections of certain flat principal bundles, see Section \ref{sec intro 3}. 
		Up to restricting the domain again, one can find such a map $\zeta$ so that it projects to an immersion $\sigma$ in $\Hyp^{n+1}$ (Lemma \ref{lemma:desingularize for small t}), and the Gauss map of $\sigma$ is $G$ by construction.} 
	
	Our next results are, to our knowledge, completely new and give characterizations of global integrability and $\rho$-integrability under the assumption of small principal curvatures.

	\subsubsection{Global integrability} The problem of global integrability is in general more subtle than local integrability. As a matter of fact, in Example \ref{ex: Lagrangian not globally integrable} we construct an example of a locally integrable immersion $G:(-T,T)\to\G{2}$, defined for some $T\in \R^+$, that is not globally integrable. By taking a cylinder on this curve, one easily sees that the same phenomenon occurs in any dimension. We stress that in our example $M=(-T,T)$ (or the product $(-T,T)\times\R^{n-1}$ for $n>2$) is simply connected: {the key point in our example is that one can find globally defined maps $\zeta:M\to T^1\Hyp^{n+1}$ such that $G=\mathrm p\circ\zeta$, but no such $\zeta$ projects to an immersion in $\Hyp^{n+1}$. }
	
	Nevertheless, we show that this issue does not occur for Riemannian immersions $G$. In this case any immersion $\sigma$ whose Gauss map is $G$ (if it exists) necessarily has small principal curvatures. We will always restrict  to this setting hereafter. In summary, we have the following characterization of global integrability for $M$ simply connected.
	\begin{reptheorem}{prop: riemannian global integrability}
		Let $M^n$ be a simply connected manifold and $G\colon M\to \G {n+1}$ be a Riemannian immersion. Then $G$ is globally integrable if and only if it is Lagrangian.
		%Given a simply connected manifold $M^n$ and an {immersion} $G:M^n\to \G{n+1}$, $G$ is Riemannian and Lagrangian if and only if there exists an immersion $\sigma\colon M\to \Hyp^{n+1}$ with small principal curvatures such that $G_\sigma=G$.
		%If in addition $\sigma$ is complete, then both $\sigma$ and $G_\sigma$ are embeddings and $M$ is contractible.
	\end{reptheorem}
	
	We give a characterization of global integrability for $\pi_1(M)\ne \{1\}$ in Corollary \ref{cor hol H baby}, which is a direct consequence of our first characterization of $\rho$-integrability (Theorem \ref{teorema hol H baby}). Anyway, we remark that if a Riemannian and Lagrangian immersion  $G\colon M\to \G {n+1}$ is also complete (i.e. has complete first fundamental form), then $M$ is necessarily simply connected:
	
	%$\sigma\colon M\to \G{n+1}$ is a complete immersion (i.e. with complete first fundamental form) with small principal curvatures, then $M$ is necessarily simply connected.
	
	\begin{reptheorem}{Cor G complete}
		Let $M^n$ be a manifold. If $G\colon M\to \G {n+1}$ is a complete Riemannian and Lagrangian immersion, then $M$ is diffeomorphic to $\R^n$ and $G$ is the Gauss map of a proper embedding $\sigma:M\to\Hyp^{n+1}$.
	\end{reptheorem}
	
	In Theorem \ref{Cor G complete} the conclusion that $G=G_\sigma$ for $\sigma$ a proper embedding follows from the fact that $\sigma$ is complete, which is an easy consequence of Equation \eqref{eq:fff gauss} relating the first fundamental forms of $\sigma$ and $G_\sigma$, and the non-trivial fact that complete immersions in $\Hyp^{n+1}$ with small principal curvatures are proper embeddings (Proposition \ref{prop injectivity}).
	
	%\begin{repprop}{prop injectivity}
	%Let $M^n$ be an oriented manifold and $\sigma:M\to\Hyp^{n+1}$ be a {complete}  immersion  with small principal curvatures. Then $\sigma$ is a proper embedding and $M$ is diffeomorphic to $\R^n$.
	%\end{repprop}

	\subsubsection{Equivariant integrability}  {Let us first observe that the problem of $\rho$-integrability presents some additional difficulties than global integrability}. Assume $G\colon \widetilde M \to \G{n+1}$ is a Lagrangian, Riemannian and $\rho$-equivariant immersion for some representation $\rho\colon \pi_1(M^n)\to \Isom(\Hyp^{n+1})$.
	%for some representation $\rho\colon \pi_1(M)\to \Isom(\Hyp^{n+1})$, namely, for all $\alpha\in \pi_1(M)$, $G\circ \alpha= \rho(\alpha)\circ G$. 
	Then, by Theorem \ref{prop: riemannian global integrability}, there exists $\sigma\colon \widetilde M\to\Hyp^{n+1}$ with Gauss map $G$, {but the main issue is that such a $\sigma$ will not be}  $\rho$-equivariant in general, as one can see in Examples \ref{ex: global integrable non equivariant} and \ref{ex: global integrable non equivariant2}.

	Nevertheless, $\rho$-integrability of Riemannian immersions into $\G{n+1}$ can still be characterized in terms of their extrinsic geometry. Let $\overline {\mathrm H}$ be the mean curvature vector of $G$, defined as the trace of the second fundamen{tal form, and let $\Omega$ be the symplectic form of $\G{n+1}$. Since $G$ is $\rho$-equivariant, the $1$-form $G^*(\Omega(\overline {\mathrm H},\cdot))$ on $\widetilde M$ is invariant under the action of $\pi_1(M)$, so it descends to a $1$-form on $M$. 
		One can prove that such $1$-form on $M$ is closed (Corollary \ref{cor:maslov closed}):
		we will denote with $\mu_G$ its cohomology class in $H^1_{dR}(M,\R)$ and we will call it the \emph{Maslov class} of $G$, in accordance with some related interpretations of the Maslov class in other geometric contexts (see, among others, \cite{zbMATH03730903,zbMATH00704931,zbMATH01523513,zbMATH01786838}). 
		The Maslov class encodes the existence of equivariantly integrating immersions, in the sense stated in the following theorem.

		\begin{reptheorem}{teorema hol H baby}
			Let $M^n$ be an {orientable} manifold, $\rho\colon \pi_1(M) \to \Isom(\Hyp^{n+1})$ be a representation and $G\colon \widetilde M \to \G{n+1}$ be a $\rho$-equivariant Riemannian and Lagrangian {immersion}. Then $G$   is $\rho$-integrable if and only if the Maslov class $\mu_G$ vanishes.
		\end{reptheorem}

		Applying Theorem \ref{teorema hol H baby} to a trivial representation, we immediately obtain a characterization of global integrability for Riemannian immersions, thus extending Theorem \ref{prop: riemannian global integrability} to the case $\pi_1(M)\neq \{1\}$.
		
		\begin{repcor}{cor hol H baby}
			Let $M^n$ be an {orientable} manifold and $G\colon M \to \G{n+1}$ be a Riemannian and Lagrangian immersion. Then $G$ is globally integrable if and only if its Maslov class $\mu_G$ vanishes.
			%Given an \as{oriented} $n$-manifold  $M$ and an {immersion} $G\colon M \to \G{n+1}$, $G$ is the Gauss map of an immersion $\sigma:M\to\Hyp^{n+1}$ of small principal curvatures if and only if $G$ is Riemannian and Lagrangian and its Maslov class $\mu_G$ is zero.
		\end{repcor}

		\subsection{Nearly-Fuchsian representations}\label{sec intro 2}
		
		Let us now focus on the case for which $M$ is a closed oriented manifold. Although our results apply to any dimension, we borrow the terminology from the three-dimensional case (see \cite{zbMATH06204974}) and say that a representation $\rho\colon \pi_1(M)\to \Isom(\Hyp^{n+1})$ is \emph{nearly-Fuchsian} if there exists a $\rho$-equivariant immersion $\sigma\colon\widetilde M\to \Hyp^{n+1}$ with small principal curvatures. We show (Proposition \ref{prop:action free prop disc0}) that the action of a nearly-Fuchsian representation on $\Hyp^{n+1}$ is free, properly discontinuously and convex cocompact; the quotient of $\Hyp^{n+1}$ by $\rho(\pi_1(M))$ is called \emph{nearly-Fuchsian manifold}. 
		
		Moreover, the action of $\rho(\pi_1(M))$ extends to a free and {properly} discontinuous action on the complement of a topological $(n-1)$-sphere $\Lambda_\rho$ (the \emph{limit set} of $\rho$) in the visual boundary $\partial\Hyp^{n+1}$. Such complement is the disjoint union of two topological $n$-discs which we denote by $\Omega_+$ and $\Omega_-$.  It follows that there exists a maximal open region of $\G {n+1}$ over which a nearly-Fuchsian representation $\rho$ acts freely and properly discontinuously. This region is defined as the subset of $\G{n+1}$ consisting of oriented geodesics having either final endpoint in $\Omega_+$ or initial endpoint in $\Omega_-$. The quotient of this open region via the action of $\rho$, that we denote with $\mathcal G_\rho$, inherits a para-K\"ahler structure.
		
		%As a consequence of any $\rho$-equivariant immersion $\sigma$ with small principal curvatures induces an embedding of $M$ in the quotient nearly-Fuchsian manifold.
		%one can prove that such an immersion is in fact a proper embedding, and it induces an embedding $M\to \faktor{\Hyp^{n+1}}{\rho(\pi_1(M))}$ with small principal curvatures. 
		
		Let us first state a uniqueness result concerning nearly-Fuchsian representations. A consequence of Theorem \ref{teorema hol H baby} and the definition of Maslov class is that if $G$ is a $\rho$-equivariant, Riemannian and Lagrangian immersion which is furthermore \emph{minimal}, i.e. with $\overline {\mathrm{H}}=0$, then it is $\rho$-integrable. Together with an application of a maximum principle in the corresponding nearly-Fuchsian manifold, we prove:
		
		\begin{repcor}{cor:uniqueness min lag}
			Given a closed orientable manifold $M^n$ and a representation $\rho:\pi_1(M)\to\Isom(\Hyp^{n+1})$, there exists at most one  $\rho$-equivariant {Riemannian} minimal Lagrangian immersion $G:\widetilde M\to\G{n+1}$ up to reparametrization. If such a $G$ exists, then $\rho$ is nearly-Fuchsian and $G$ induces a {minimal Lagrangian} embedding of $M$ in $\mathcal G_\rho$.
		\end{repcor}

		%As we see in Section \blue{MANCA REF}, the limit set $\Lambda\subset \partial \Hyp^{n+1}$ of $\rho$, defined as the boundary in $\partial \Hyp^{n+1}$ of the $\rho$-orbit of any point in $\Hyp^{n+1}$, is homeomorphic to $\Sph^n$ and it disconnects $\partial \Hyp^{n+1}$ into two open discs, $\Omega_+$ and $\Omega_-$. One can see that $\rho$ has a natural free and properly discontinuous action on the subset $T$ of $\G n$ defined by ${T=\Big( \Omega_+\times  \partial \Hyp^{n+1} \bigcup \partial \Hyp^{n+1}\times \Omega_- \Big)\setminus \Delta}$: we denote the quotient manifold with $\mathcal G_\rho$. Clearly, $\mathcal G_\rho$ inherits the para-K\"ahler structure.
		
		In fact, for any $\rho$-equivariant immersion $\sigma:\widetilde M\to\Hyp^{n+1}$ with small principal curvatures, the hyperbolic Gauss maps $G_\sigma^\pm$ are equivariant diffeomorphisms between $\widetilde M$ and $\Omega_\pm$. Hence up to changing the orientation of $M$, which corresponds to swapping the two factors {$\partial \Hyp^{n+1}$} in the identification $\G{n+1}\cong\partial\Hyp^{n+1}\times\partial\Hyp^{n+1}\setminus\Delta$, the Gauss map of $\sigma$ takes values in the maximal open region defined above, and induces an embedding of $M$ in $\mathcal G_\rho$. 
		
		This observations permits to deal (in the cocompact case) with embeddings in $\mathcal G_\rho$ instead of $\rho$-equivariant embeddings  in $\G{n+1}$. In analogy with the definition of $\rho$-integrability defined above, we will say that a $n$-dimensional
		submanifold $\mathcal L\subset \mathcal G_\rho$ is $\rho$-\emph{integrable} if it is the image in the quotient of a $\rho$-integrable embedding in $\G{n+1}$. Clearly such $\mathcal L$ is necessarily Lagrangian by Theorem \ref{cor: local integrability}. We are now ready to state our second characterization result for $\rho$-integrability.

		%As we see in Section \blue{MANCA REF}, there exists a maximal open subset of $\G {n+1}$ (unique up to applying the diagonal swap $(z,w)\mapsto (w,z)$}) \appunto{\orange{Ho detto una cosa vera?}}over which a nearly-Fuchsian representation $\rho$ acts freely and properly discontinuously: the quotient of this open subset via the action of $\rho$, that we denote with $\mathcal G_\rho$, inherits the para-K\"ahler structure.

		%Up to post-composing with the diagonal swap, one has that every Riemannian Lagrangian $\rho$-equivariant immersion $G\colon \widetilde M\to \G{n+1}$ has image in the open subset defined above, hence it descends to an embedding  $M\to \mathcal G_\rho$. 

		\begin{reptheorem}{thm:second char ham}
			Let $M$ be a closed orientable $n$-manifold, $\rho:\pi_1(M)\to\Isom(\Hyp^{n+1})$ be a nearly-Fuchsian representation and $\mathcal L\subset\mathcal G_\rho$ be a Riemannian $\rho$-integrable submanifold. Then a Riemannian submanifold $\mathcal L'$ is $\rho$-integrable if and only if there exists $\Phi\in \mathrm{Ham}_c(\mathcal G_\rho,\Omega)$ such that $\Phi(\mathcal L)=\mathcal L'$.
		\end{reptheorem}
		
		{In Theorem \ref{thm:second char ham} we denoted} by $\Ham_c(\mathcal G_\rho,\Omega)$ the group of compactly-supported \emph{Hamiltonian symplectomorphisms} of $\mathcal G_\rho$ with respect to its symplectic form $\Omega$. (See Definition \ref{Def Hamc}). The proof of Theorem \ref{thm:second char ham} in fact shows that if $\mathcal L$ is $\rho$-integrable and $\mathcal L'=\Phi(\mathcal L)$ for $\Phi\in \mathrm{Ham}_c(\mathcal G_\rho,\Omega)$, then $\mathcal L'$ is integrable as well, even without the hypothesis that $\mathcal L$ and $\mathcal L'$ are Riemannian submanifolds.
		
		If $\rho$ admits an equivariant Riemannian minimal Lagrangian immersion, then Theorem \ref{thm:second char ham} can be restated by saying that a Riemannian and Lagrangian submanifold $\mathcal L'$ is $\rho$-integrable if and only if it is in the $\Ham_c(\mathcal G_\rho,\Omega)$-orbit of \emph{the} minimal Lagrangian submanifold $\mathcal L\subset\mathcal G_\rho$, which is unique by Theorem \ref{cor:uniqueness min lag}.

		\subsection{Relation with geometric flows}
		
		Finally, in Section \ref{app:geometric flows} we apply these methods to study the relation between evolutions by geometric flows in $\Hyp^{n+1}$ and in $\G{n+1}$. More precisely, suppose that $\sigma_\bullet:M\times(-\epsilon,\epsilon)\to\Hyp^{n+1}$ is a smoothly varying family of Riemannian immersions that satisfy:
		$$\frac{d}{dt} \sigma_t  = f_t \nu_t$$
		where $\nu_t$ is the normal vector of $\sigma_t$ and $f_\bullet:M\times(-\epsilon,\epsilon)\to\R$ is a smooth function. Then the variation of the Gauss map $G_t$ of $\sigma_t$ is given, up to a tangential term, by the normal term  $-\JJ(dG_t({\overline\nabla}{}^t f_t))$, where ${\overline \nabla}{}^t f_t$ denotes the gradient with respect to the first fundamental form of $G_t$, that is, the Riemannian metric $G_t^*\GG$. 
		%, we produce a formula that allows to  deformations of hypersurfaces into $\Hyp^{n+1}$ 
		%an evolution equation describe
		
		Let {us} consider the special case of the flow defined by
		$f_t:=\kappa_{\sigma_t}$, where $\kappa_{\sigma_t}$ is defined as the sum of the hyperbolic arctangent of the principal curvatures of $\sigma_t$ (see Equation \eqref{eq:aux function}). The study of the associated flow has been suggested in dimension three in \cite{zbMATH01789966}, by analogy with a similar flow on surfaces in the three-sphere. 
		Combining the aforementioned result of Section \ref{app:geometric flows} with
	Proposition \ref{Prop: formula H in G}, we obtain that such flow in $\Hyp^{n+1}$ induces the Lagrangian mean curvature flow in $\G{n+1}$ up to tangential diffeomorphisms. A similar result has been obtained in Anti-de Sitter space (in dimension three) in \cite{zbMATH06182655}. 
		
		%As a result, a smooth deformation $G_t\colon \widetilde M \to \R$ defined for $|t|< \epsilon$ is integrable if anf only if 

		\iffalse
		\subsection*{Organization of the paper}
		
		The paper is organized as follows. In Section \ref{sec space geodesics} we introduce the space of geodesics $\G{n+1}$ and its natural para-K\"ahler structure. In Section \ref{sec:gauss map} we study the properties of the Gauss map and provide useful examples. Section \ref{sec:small princ curv} focuses on immersions with small principal curvatures and prove several properties. In Section \ref{sec:local int}  we study the relations with the geometry of flat principal bundles, in particular Equation \eqref{eq omega Omega intro} (Proposition \ref{prop:identity curvature form}), and we prove Theorem \ref{cor: local integrability}, Theorem \ref{prop: riemannian global integrability} and Theorem \ref{cor G complete}. In Section \ref{sec:equivariant integrability} we prove Theorem \ref{teorema hol H baby} (more precisely, the stronger version given in Theorem  \ref{Teorema hol H}) and deduce Corollaries \ref{cor hol H baby} and \ref{cor:uniqueness min lag}. Finally, in Section \ref{sec:hamiltonian}, focusing on nearly-Fuchsian representations, we prove Theorem \ref{thm:second char ham}.

		\fi

	\part{The ambient spaces}
	\label{part ambient spaces}

		\chapter{Pseudo-Riemannian space forms}
		\label{chapter: pseudoriem space forms}
	\section{Pseudo-Riemannian metrics}
		We recall some very basic concepts in differential geometry in order to fix the notation. The reader may refer to \cite{pseudoRiemanniangeometry, Kobayashi-Nomizu1} as good textbooks on these topics.
		
			Let $M=M^n$ be a smooth manifold of dimension $n$. Throughout all the paper, we will assume $M$ to be connected.
	
	A \emph{pseudo-Riemannian metric} $g$ on $M$ is a smooth section of the bundle 
	$Sym^2 (TM)$ such that the bilinear form $g_x \colon T_x M \times T_x M \to \R$ is non-degenerate. By connectedness of $M$, the bilinear form $g_x$ has constant signature for all $x\in M$.
	
	A \emph{Riemannian metric} $g$ on $M$ is a pseudo-Riemannian metric such that $g_x$ is positive-definite for all $x\in M$.  We say that $(M,g)$ is a \emph{(pseudo-)Riemannian} manifold if $g$ is (pseudo-)Riemannian. As a matter of notation, we will often denote pseudo-Riemannian manifolds with $M$ instead of $(M,g)$. 
	
	An \emph{affine connection} $\nabla$ on a manifold $M$ is a linear map
	\begin{align*}
	\nabla\colon \Gamma(TM) &\to End(TM)\\
	X &\mapsto \nabla X (\colon Y \mapsto \nabla_Y X).
	\end{align*}
	such that, for all $X,Y\in \Gamma(TM)$ and $f\colon M\to \R$, \[\nabla (f Y)=df \otimes Y + f \nabla Y.
	\]
A pseudo-Riemannian metric $g$ induces canonically an affine connection, called  \emph{Levi-Civita connection} of $g$, defined as the unique affine connection with the following properties:
	\begin{itemize}
		\item $\nabla$ is compatible with the metric, i.e. for all $X,Y\in \Gamma(TM)$, $d\big( g(X,Y)\big)= g(\nabla X, Y) + g(X, \nabla Y)$;
		\item $\nabla$ is torsion-free, i.e. for all $X,Y\in \Gamma(TM)$, $\nabla_X Y - \nabla_Y X = [X,Y]$.
	\end{itemize}
	
	A connection $\nabla$ comes with a notion of parallel transport, geodesics and exponential map. Since the exponential map at any point of $M$ is a local diffeomorphism, for every point $x$ of $M$ there exists a \emph{starred} neighborhood $U_x$, namely $U_x$ is such that any of its points can be connected to $x$ through a geodesic segment. 
	
	\begin{Lemma}
		\label{mappa differenziale determina isometria}
		Let $f_1,f_2\colon (M,g)\to (N,h)$ be two isometries between pseudo-Riemannian manifolds (with $M$ connected) such that, for some point $x\in \mathbb M$, $f_1(x)=f_2(x)$ and $d_x f_1=d_x f_2\colon T_x M \to T_{f_1(x)} N$. Then $f_1\equiv f_2$.
	\end{Lemma}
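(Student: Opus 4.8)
The plan is to prove that two isometries agreeing at a point together with their differentials must coincide everywhere, using the rigidity of the Levi-Civita connection under isometries together with a connectedness argument. The key structural fact I would exploit is that an isometry intertwines geodesics: if $f\colon (M,g)\to(N,h)$ is an isometry, then $f$ maps the geodesic of $M$ through $x$ with initial velocity $v$ to the geodesic of $N$ through $f(x)$ with initial velocity $d_xf(v)$. Equivalently, $f$ commutes with the exponential maps, i.e. $f\circ \exp_x^M = \exp_{f(x)}^N \circ\, d_xf$ on the domain where $\exp_x^M$ is defined. This is the heart of the argument and follows because an isometry preserves the Levi-Civita connection (being determined by the metric) and hence sends geodesics to geodesics.

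First I would set $A:=\{x\in M \mid f_1(x)=f_2(x)\text{ and }d_xf_1=d_xf_2\}$ and show that $A$ is nonempty, closed, and open; connectedness of $M$ then forces $A=M$, which gives $f_1\equiv f_2$. Nonemptiness is the hypothesis (the base point $x$ lies in $A$). Closedness is immediate since the conditions $f_1=f_2$ and $df_1=df_2$ are expressed by continuous maps agreeing, so $A$ is the preimage of a diagonal under a continuous map, hence closed.

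The main work is openness, and this is where I would use the starred neighborhoods recalled just before the statement. Let $x_0\in A$. Choose a starred neighborhood $U_{x_0}$, so every point $y\in U_{x_0}$ can be written as $y=\exp_{x_0}^M(v)$ for some tangent vector $v\in T_{x_0}M$. Using the commutation of isometries with the exponential map, for $i=1,2$ we have $f_i(y)=f_i(\exp_{x_0}^M(v))=\exp_{f_i(x_0)}^N(d_{x_0}f_i(v))$. Since $x_0\in A$, we have $f_1(x_0)=f_2(x_0)$ and $d_{x_0}f_1=d_{x_0}f_2$, so the two right-hand sides coincide, giving $f_1(y)=f_2(y)$ for all $y\in U_{x_0}$. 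Thus $f_1$ and $f_2$ agree on the open set $U_{x_0}$; since two maps that agree on an open set have equal differentials there, we also get $d_yf_1=d_yf_2$ for all $y\in U_{x_0}$, so $U_{x_0}\subseteq A$ and $A$ is open.

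The step I expect to be the genuine crux is establishing the commutation $f\circ\exp^M=\exp^N\circ\, df$, i.e. that an isometry carries geodesics to geodesics. The clean way to see this is that the Levi-Civita connection is uniquely determined by the metric (via the Koszul formula / the two defining properties recalled above), so an isometry $f$ satisfies $df(\nabla^M_X Y)=\nabla^N_{df(X)}df(Y)$; consequently $f$ sends a curve with $\nabla^M_{\dot\gamma}\dot\gamma=0$ to a curve with the analogous property in $N$, which is exactly a geodesic, with the correct initial velocity by the chain rule. Once this naturality of geodesics is in hand, the open-closed argument above closes the proof.
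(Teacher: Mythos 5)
Your proof is correct and rests on exactly the same key fact as the paper's: an isometry commutes with exponential maps, so $f_1$ and $f_2$ agree on a starred neighborhood of any point where they and their differentials coincide. The only difference is bookkeeping — you propagate via an open-closed (connectedness) argument on the set $A=\{x : f_1(x)=f_2(x),\ d_xf_1=d_xf_2\}$, whereas the paper covers a path from $x$ to $y$ by finitely many starred neighborhoods and iterates; these are interchangeable.
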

	\begin{proof}
		Being $f, g$ isometries, one has that 
		\[f \circ exp_x ^M =exp_{f(x)}^N \circ d_xf =exp_{g(x)}^N \circ d_xg= g\circ exp_x^M\ , \]
		so $f$ and $g$ coincide on any open neighbourhood of $x$ whose elements can be linked to $x$ by a geodesic segment. For all $y\in M$, one can take a connected compact subset of $M$ containg both $x$ and $y$ (e.g. a curve) and cover it by a finite amount of starred open subsets of $M$. By iteration of the argument above and by standard deductions, one gets that $f$ and $g$ coincide on $y$.
	\end{proof}

	We say that a pseudo-Riemannian manifold is \emph{complete} if its Levi-Civita connection is geodesically complete, namely if geodesics can be extended indefinitely. If the metric is not Riemannian, it is generally false that any two points on the manifold can be connected by a geodesic segment.
	
	An affine connection $\nabla$ defines a $(1,3)$-type \emph{curvature tensor} $R\in \Gamma(T^*M\otimes T^*M\otimes T^*M\otimes TM)$ as
	\[	
	R(X,Y)Z:= \nabla_X \nabla_Y Z - \nabla_Y \nabla_X Z - \nabla_{[X,Y]} Z.
	\] 
	If $(M,g)$ is a pseudo-Riemannian manifold, one also has a $(0,4)$-type curvature tensor, that we will still denote with $R$, defined as
	\[
	R(X,Y,Z, T):= -g(R(X,Y)Z, T).
	\]
	The $(0,4)$-type curvature tensor has several symmetries, generated by the equations \begin{equation}
	\label{eq: symmetries of R pseudo-Riem}
	\begin{split}
	&R(X,Y,Z,T)= -R(Y,X, Z,T)\\
	&R(X,Y, Z,T)=R(Z,T, X, Y)\\
	&R(X,Y,Z,T)+R(Y,Z,X,T)+R(Z, X, Y,T)=0.
	\end{split}
	\end{equation}
	
	For all $x\in M$, given a $2$-dimensional vector subspace $\mathcal V< T_x M$ such that ${g_x|}_{\mathcal V}$ is non degenerate, the \emph{sectional curvature of} $\mathcal V$ is defined as
	\[
	K(\mathcal V)= \frac{R(X,Y,X,Y)}{g(X,X)g(Y,Y)- (g(X,Y))^2}
	\]
	with $\mathcal V=Span_\R(X,Y)$. The multilinearity of $R$ and $g$ and the symmetries of $R$ ensure that the definition does not depend on the choice of the generators $X,Y$ for $\mathcal V$.

	\section{Pseudo-Riemannian space forms: uniqueness}
	\label{section: uniqueness pseudoRiem space forms}
	
	We say that the pseudo-Riemannian manifold $(M,g)$ has constant sectional curvature $k\in \R$ if, for all $x\in M$ and for all $g$-non-degenerate 2-dimensional vector subspace $\mathcal V<T_x M$, $K(\mathcal V)=k$.
	
	We will say that a pseudo-Riemannian manifold $(M,g)$ is a \emph{space form} if it is simply connected, complete and with constant sectional curvature. 
	\begin{Theorem}
		\label{Teo: unicita space form pseudo-Riem}
		For all $n,m\in \Z_{\ge 0}$, with $n\ge 2$ and $m\le n$, and $k\in \R$, there exists a unique pseudo-Riemannian space form of dimension $n$, signature $(m, n-m)$ and constant curvature $k$, up to isometries.
	\end{Theorem}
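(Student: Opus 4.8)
The plan is to separate existence (handled quickly by exhibiting models) from uniqueness, which is the substantive content of this section. For existence, I would take the standard quadric models: in $\R^{n+1}$ equipped with a nondegenerate symmetric bilinear form of suitable signature, the level set $\{q=1/k\}$ (for $k\neq 0$) inherits a pseudo-Riemannian metric of signature $(m,n-m)$, and a direct computation of its second fundamental form as a hypersurface shows it has constant sectional curvature $k$; for $k=0$ one takes flat $\R^{m,n-m}$. Since some of these quadrics fail to be simply connected (as for $\widetilde{AdS^n}$), one passes to the universal cover, which remains complete and has the same local geometry, thereby producing a model of each prescribed type.

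For uniqueness I would first isolate the algebraic fact that constant sectional curvature $k$ forces the full $(0,4)$-curvature tensor to be
\[
R(X,Y,Z,T) = k\big(g(X,Z)g(Y,T) - g(X,T)g(Y,Z)\big).
\]
Indeed, the right-hand side defines a tensor $R_0$ satisfying exactly the symmetries in \eqref{eq: symmetries of R pseudo-Riem} and having the same sectional curvatures as $R$ on every nondegenerate plane; a standard polarization argument then shows that a curvature-type tensor is determined by its sectional curvatures, so $R=R_0$. In particular the curvature is built only out of the parallel tensor $g$, hence $\nabla R=0$ and the space form is locally symmetric.

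Next I would run a Cartan--Ambrose--Hicks argument. Given two space forms $(M,g)$ and $(M',g')$ with the same $n$, signature, and $k$, fix $x\in M$, $x'\in M'$ and a linear isometry $F\colon T_xM\to T_{x'}M'$, which exists precisely because the signatures agree. On a starred neighborhood $U_x$ set $\phi:=\exp_{x'}^{M'}\circ F\circ(\exp_x^M)^{-1}$ and verify it is a local isometry: along each geodesic from $x$, the differential $d\phi$ is described by Jacobi fields, and since both curvature tensors equal $R_0$ and $F$ intertwines the associated curvature operators, $F$ carries Jacobi fields of $M$ to Jacobi fields of $M'$ with matching initial data, so $d\phi$ preserves the metric. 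This produces a local isometry near $x$, which one then globalizes by analytic continuation: the construction is canonical once $F$ is fixed, and because $R$ is parallel and intertwined by parallel transport, the continuation along a path depends only on its homotopy class. Simple-connectedness of $M$ makes the continuation single-valued and completeness makes it defined on all of $M$, yielding a global isometry $\phi\colon M\to M'$. Lemma \ref{mappa differenziale determina isometria} then guarantees it is the unique isometry with the prescribed $1$-jet $(x\mapsto x',\,F)$.

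The main obstacle I expect is the globalization step in the genuinely pseudo-Riemannian setting: unlike the Riemannian case, $\exp$ is only a diffeomorphism on starred neighborhoods and not every pair of points is joined by a geodesic, so one cannot simply write down a global normal-coordinate expression for the metric and compare. The technical heart is therefore the monodromy argument---showing that the analytic continuation of the local isometry along two homotopic paths agrees---which rests on $\nabla R=0$ together with patching of local isometries across overlapping starred neighborhoods, exactly the covering-by-starred-sets device already used in the proof of Lemma \ref{mappa differenziale determina isometria}.
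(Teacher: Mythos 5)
Your proposal is correct and follows essentially the same route as the paper: the polarization argument showing constant curvature forces $R(X,Y,Z,W)=k\bigl(g(X,Z)g(Y,W)-g(Y,Z)g(X,W)\bigr)$, followed by the Cartan--Ambrose--Hicks theorem applied to a linear isometry of tangent spaces, with simple-connectedness upgrading the resulting covering map to an isometry. The only differences are cosmetic: the paper defers existence (your quadric models) to a later section and cites Cartan--Ambrose--Hicks as a black box rather than sketching its proof via Jacobi fields and analytic continuation as you do.
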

	
	Despite Theorem \ref{Teo: unicita space form pseudo-Riem} is a classical result in Differential Geometry, it will be convenient to recall one way to prove the uniqueness part of the statement because we will use the same idea for a main proof later on.
	\vspace{20pt}
	
	\emph{Step $1$:} The Cartan-Ambrose-Hicks Theorem for affine manifolds.
	
	Let $M$ and $N$ be two smooth manifolds of the same dimension $n$, let $\nabla^M$ and $\nabla^N$ be two linear connections on $TM$ and on $TN$ respectively. 
	The pair $(M, \nabla^M)$ (and $(N, \nabla^N)$) is often called an \emph{affine manifold}. 
	Assume that $\nabla^M$ and $\nabla^N$ are complete and that $M$ is simply connected.
	
	For convenience, assume both $\nabla^M$ and $\nabla^N$ are torsion-free. 
	Fix $x_0\in M$ and $y_0\in N$ and assume $L\colon T_{x_0}M \to T_{y_0}N$ is a linear isomorphism. 
	For all $X\in T_{x_0} M$ and for any given $\tau> 0$, consider the geodesic $\gamma(t)= exp^M(tX)$ with $t\in [0, \tau]$ and the geodesic curve $\gamma_L(t)=exp^N(tL(X))$ with $t\in [0, \tau]$: the composition of $L$ with the parallel transports via the geodesics $\gamma$ and $\gamma_L$ induces a linear isomorphism
	\[
	L_\gamma\colon T_{\gamma(\tau)}M\to T_{\gamma_L(\tau)}N.\]
	
	By iterating this argument, one gets that any piecewise geodesic curve $\gamma	\colon [0, \tau]\to M$ with starting point $x_0$ induces a piecewise geodesic curve $\gamma_L\colon [0, \tau]\to N$ with starting point $y_0$ and the composition of $L$ with the parallel transports defines once again a linear isomorphism $L_\gamma\colon T_{\gamma(\tau)}M\to T_{\gamma_L(\tau)}N$.
	
	Recall that, being $M$ (and $N$) connected, any two of its points can be linked by a piecewise geodesic curve.

	\begin{Theorem}[Ambrose-Cartan-Hicks Theorem]
		\label{Teo: Ambrose-Cartan-Hicks}
		Under the notations and the assumptions above, if for all piecewise geodesic curve $\gamma\colon [0,\tau]\to M$ with $\gamma(0)=x_0$ the corresponding isomorphism $L_\gamma\colon T_{\gamma(\tau)}M\to T_{\gamma_\phi(\tau)}N$ preserves the curvature tenors, namely
		\begin{equation} 
		\label{eq: R in Cartan-Ambrose-Hicks}
		L_\gamma (R^M (X_1, X_2) X_3 )= R^N (L_\gamma (X_1), L_\gamma (X_2) ) (L_\gamma (X_3) )
		\end{equation}
		for all $X_1, X_2,X_3\in T_{\gamma(T)}M$, then there exists a unique covering map $f\colon M\to N$ such that:
		\begin{itemize}
			\item $f$ is affine, i.e. $f^* \nabla^N= \nabla^M$;
			\item $f(x_0)=y_0$ and $d_{x_0}f=L$;
			\item for all piecewise geodesic curve $\gamma\colon [0, \tau]\to M$ starting at $x_0$,  $f(\gamma(\tau))= \gamma_L(\tau)$ with $d_{\gamma(\tau)} f = L_\gamma$.
		\end{itemize}
	\end{Theorem}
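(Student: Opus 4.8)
The plan is to construct $f$ directly from the broken-geodesic development already set up before the statement, and then to verify in turn that it is well defined, smooth, affine, and a covering map. For $p\in M$, choose a piecewise geodesic $\gamma\colon[0,\tau]\to M$ with $\gamma(0)=x_0$ and $\gamma(\tau)=p$ (which exists by connectedness), and set $f(p):=\gamma_L(\tau)$, with the natural candidate for the differential being $d_pf:=L_\gamma$. First I would record the elementary compatibility of the construction $\gamma\mapsto(\gamma_L,L_\gamma)$ with concatenation of paths and with subdivision of a geodesic segment into shorter ones; granting well-definedness, this makes the third bullet ($f(\gamma(\tau))=\gamma_L(\tau)$ and $d_{\gamma(\tau)}f=L_\gamma$) automatic.

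The heart of the argument, and the only place where hypothesis \eqref{eq: R in Cartan-Ambrose-Hicks} is really used, is to prove that $\gamma_L(\tau)$ and $L_\gamma$ depend only on the endpoint $p$ and not on the chosen path. Since $M$ is simply connected, any two broken geodesics from $x_0$ to $p$ are homotopic rel endpoints, and I would connect them by a smooth family $\gamma_s$ of broken geodesics with fixed endpoints. Differentiating the development in $s$, the variation field of $\gamma_s$ satisfies a Jacobi equation governed by $R^M$, while the variation of the developed curve $(\gamma_s)_L$ satisfies the corresponding Jacobi equation governed by $R^N$. The curvature-matching condition \eqref{eq: R in Cartan-Ambrose-Hicks}, together with the fact that $L_{\gamma_s}$ intertwines parallel transport by construction, forces these two Jacobi fields to correspond under $L_{\gamma_s}$ along the whole family; evaluating at the fixed endpoint then gives $\frac{d}{ds}(\gamma_s)_L(\tau)=0$ and the $s$-independence of $L_{\gamma_s}$. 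Equivalently, one may phrase this as the statement that matched curvatures produce matched holonomies, so that the development of a contractible loop closes up. I expect this homotopy-through-Jacobi-fields step to be the main obstacle, both because the development must be set up carefully for families of broken geodesics and because it is exactly here that all hypotheses must be brought to bear simultaneously; a clean alternative that I might use instead is to exhibit the graph of $(f,df)$ as the integral leaf through $(x_0,y_0,L)$ of an $n$-plane distribution on the bundle of linear isomorphisms $T M\to T N$, whose Frobenius integrability condition is precisely \eqref{eq: R in Cartan-Ambrose-Hicks}, with simple connectivity of $M$ ensuring the leaf is a single-valued section over $M$.

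Granting well-definedness, smoothness of $f$ is a local matter: near any point $f$ is obtained by developing short radial geodesics, an operation depending smoothly on the data. To see that $f$ is affine, I would use that by construction $f$ carries each geodesic of $M$ to a geodesic of $N$ with affine parameter and that $d_pf=L_\gamma$ commutes with parallel transport along geodesics; since a torsion-free connection is determined by its geodesics together with parallel transport, this yields $f^*\nabla^N=\nabla^M$. Since $d_pf=L_\gamma$ is a linear isomorphism at every $p$, the map $f$ is in particular a local diffeomorphism.

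Finally, to upgrade $f$ to a covering map I would invoke completeness of $\nabla^M$: an affine local diffeomorphism out of a complete affine manifold is a covering map, the point being that a geodesic of $N$ issuing from $f(p)$ lifts through $p$ to the geodesic of $M$ with initial velocity $(d_pf)^{-1}$ of its initial velocity (defined for all time by completeness), which gives the path-lifting property for $N$ connected. Uniqueness of $f$ subject to $f(x_0)=y_0$ and $d_{x_0}f=L$ is then immediate from the argument of Lemma \ref{mappa differenziale determina isometria}, which applies verbatim to affine maps: two affine maps agreeing to first order at one point of the connected manifold $M$ coincide, since both intertwine the exponential maps $\exp^M$ and $\exp^N$.
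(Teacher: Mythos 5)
The paper does not actually prove Theorem \ref{Teo: Ambrose-Cartan-Hicks}: it states it and defers the proof to \cite{Piccione}, so there is no in-paper argument to compare yours against. Judged on its own terms, your proposal is the standard proof of the Cartan--Ambrose--Hicks theorem and its architecture is sound: define $f$ by developing broken geodesics, prove path-independence from simple connectedness plus the curvature hypothesis, check that $f$ is affine and a local diffeomorphism, and use completeness of $\nabla^M$ (both connections are assumed complete in the setup) to upgrade an affine local diffeomorphism onto the connected manifold $N$ to a covering map; the uniqueness via Lemma \ref{mappa differenziale determina isometria} is also correct. Your alternative formulation, realizing the graph of $(f,df)$ as an integral leaf of a distribution on the bundle of linear isomorphisms $TM\to TN$ whose Frobenius integrability condition is \eqref{eq: R in Cartan-Ambrose-Hicks}, is in fact closer in spirit to the treatment in the cited reference.

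Two points in the central well-definedness step are stated rather than proved. First, ``any two broken geodesics from $x_0$ to $p$ are homotopic rel endpoints'' must be upgraded to ``homotopic through a family of broken geodesics,'' which requires an approximation argument and forces you to track the junction points, where the Jacobi equation has to be restarted on each segment with matched data transported from the previous one. Second, the vanishing of $\frac{d}{ds}(\gamma_s)_L(\tau)$ only shows that the point $f(p)$ is path-independent; the $s$-independence of $L_{\gamma_s}$ is a separate assertion about the correspondence of parallel transports along $\gamma_s$ and $(\gamma_s)_L$, and needs its own computation rather than following ``equivalently'' from the endpoint Jacobi field. Neither issue is fatal, and both are handled in the standard references, but as written these are the two places where your outline falls short of a complete proof.
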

	For a proof of Theorem \ref{Teo: Ambrose-Cartan-Hicks} see \cite{Piccione}.
	\vspace{25pt}
	
	\emph{Step $2$:} The curvature tensor of manifolds with constant sectional curvature.
	
	\begin{Prop}
		\label{prop: R determined by sectional curvatures}
		Let $M$ be a manifold and let $R, R'\in \Gamma(T^*M ^{4\otimes} )$ be two $(0,4)$-type tensors both satisfying Equations \eqref{eq: symmetries of R pseudo-Riem}. If
		\[
		R(X,Y, X,Y)=R'(X,Y,X,Y)
		\]
		for all $X, Y\in \Gamma(TM)$, then $R\equiv R'$.
	\end{Prop}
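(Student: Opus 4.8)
The plan is to set $D := R - R'$ and reduce the statement to showing that a $(0,4)$-tensor satisfying \eqref{eq: symmetries of R pseudo-Riem} whose ``diagonal'' expression $D(X,Y,X,Y)$ vanishes identically must itself vanish. This reduction is immediate: the symmetries in \eqref{eq: symmetries of R pseudo-Riem} are linear in the tensor, so $D$ inherits all of them, and the hypothesis $R(X,Y,X,Y)=R'(X,Y,X,Y)$ is precisely the vanishing of $D(X,Y,X,Y)$ for all $X,Y$. Note that this is a purely pointwise, algebraic statement, so no completeness assumption or the signature of $g$ enters.

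The core of the argument is a double polarization. First I would polarize in the coupled first and third slots: expanding $D(X+Z,Y,X+Z,Y)=0$, the two ``pure'' terms vanish by hypothesis, and the pair symmetry $D(Z,Y,X,Y)=D(X,Y,Z,Y)$ collapses the remaining cross terms to $2\,D(X,Y,Z,Y)=0$, giving $D(X,Y,Z,Y)=0$ for all $X,Y,Z$. Polarizing this new identity in the coupled second and fourth slots, that is expanding $D(X,Y+W,Z,Y+W)=0$ and discarding the terms already known to vanish, yields
\[
D(X,Y,Z,W) = -\,D(X,W,Z,Y),
\]
an antisymmetry of $D$ under transposing its second and fourth arguments.

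Combining this new antisymmetry with the antisymmetries in the first/second and third/fourth pairs built into \eqref{eq: symmetries of R pseudo-Riem} (the latter following from the first two lines), I observe that $D$ changes sign under the transpositions $(1\,2)$, $(3\,4)$ and $(2\,4)$. Since the edges $\{1,2\}$, $\{3,4\}$, $\{2,4\}$ form a connected graph on the four arguments, these transpositions generate the full symmetric group $S_4$; hence $D$ is totally antisymmetric in its four entries. The final step invokes the first Bianchi identity, the third line of \eqref{eq: symmetries of R pseudo-Riem}: for a totally antisymmetric tensor the cyclic permutation of the first three slots is an even permutation, so all three summands coincide and the identity reads $3\,D(X,Y,Z,W)=0$, forcing $D\equiv 0$, which is the claim.

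The only genuinely delicate point is bookkeeping: keeping track of which cross terms pair up under which symmetry in each polarization step, and checking that the three transpositions really generate all of $S_4$ rather than a proper subgroup (so that total antisymmetry is legitimate). Everything else is routine multilinear algebra, and the argument is exactly the template that will be reused in the later space-form uniqueness proof, as anticipated before the statement.
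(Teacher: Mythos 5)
Your proof is correct and follows essentially the same route as the paper's: both set $D=R-R'$, polarize the vanishing diagonal identity twice to extract additional symmetries of $D$, and then feed these into the first Bianchi identity. The only cosmetic difference is the endgame — you upgrade the derived antisymmetry in the second and fourth slots to total antisymmetry of $D$, so that the Bianchi sum collapses to $3D=0$ directly, whereas the paper derives $2D(X,Y,W,Z)=D(X,W,Y,Z)$ and iterates to get $4D=D$; both are valid and land in the same place.
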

	\begin{proof}
		Observe that the tensor $R- R'$ satisfies Equations \eqref{eq: symmetries of R pseudo-Riem}, being the equations linear, and that it is such that $(R-R')(X,Y,X,Y)=0$ for all $X,Y\in\Gamma(TM)$. For all $W,X,Y,Z \in \Gamma(TM)$, we have
		\begin{align*}
		&(R-R')(X,Y+Z, X,Y+Z)= 0 \Longrightarrow (R-R')(X,Y,X,Z)= 0;\\
		&(R-R')(X+W,Y, X+W, Z)=0 \Longrightarrow (R-R')(X,Y,W,Z)=(R-q)(Y,W,X,Z);\\
		&(R-R')(X,Y,W,Z)+(R-R')(W,X,Y,Z)+(R-R')(Y,W,X,Z)=0 \Longrightarrow \\
		&\qquad	\qquad \Longrightarrow 2(R-R')(X,Y,W,Z)= (R-R')(X,W,Y,Z);
		\end{align*}
		then $4(R-R')(X,Y,W,Z)=2(R-R')(X,W,Y,Z)=(R-R')(X,Y,W,Z)$, hence $R-R'\equiv 0$.
	\end{proof}

	\begin{Cor}
		\label{cor: R di space form pseudo-Riem}
		If $(M, g)$ is a pseudo-Riemannian manifold of constant sectional curvature $k\in \R$, then, for any $X,Y,Z, W\in \Gamma(T\mathbb M)$,
		\[R(X,Y,Z,W)= k (g(X,Z) g(Y,W) - g(Y,Z)g(X,W)),\]
		or equivalently
		\[
		R(X,Y)Z= -k (g(X,Z)Y - g(Y,Z)X).
		\]
		In particular, $R(X,Y)Z\in Span(X,Y)$.
	\end{Cor}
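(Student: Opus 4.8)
The plan is to apply Proposition \ref{prop: R determined by sectional curvatures} to the curvature tensor $R$ and to the explicit candidate tensor, working pointwise at a fixed $x\in M$:
\[
R'(X,Y,Z,W) := k\big(g(X,Z)g(Y,W) - g(Y,Z)g(X,W)\big).
\]
First I would verify that $R'$ satisfies all the symmetries \eqref{eq: symmetries of R pseudo-Riem}: antisymmetry in the first two slots and the pair-exchange symmetry are immediate from the symmetry of $g$, and the first Bianchi identity follows from a short cancellation of the six resulting terms. Since $R$ satisfies these symmetries by hypothesis, Proposition \ref{prop: R determined by sectional curvatures} reduces the entire statement to checking the single identity $R(X,Y,X,Y)=R'(X,Y,X,Y)$ for all $X,Y\in T_xM$.

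Next I would compute $R'(X,Y,X,Y) = k\big(g(X,X)g(Y,Y)-g(X,Y)^2\big)$, which is exactly $k$ times the denominator appearing in the definition of sectional curvature. Hence, for every pair $X,Y$ spanning a $g$-non-degenerate $2$-plane $\mathcal V=\mathrm{Span}(X,Y)$, the hypothesis $K(\mathcal V)=k$ gives directly
\[
R(X,Y,X,Y)=k\big(g(X,X)g(Y,Y)-g(X,Y)^2\big)=R'(X,Y,X,Y).
\]

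The one genuinely pseudo-Riemannian subtlety — and the step I expect to be the main obstacle — is that, unlike in the Riemannian case, not every pair $(X,Y)$ spans a non-degenerate plane, so the previous identity is a priori established only on the set of non-degenerate pairs. To close this gap I would argue by continuity: the two functions $(X,Y)\mapsto R(X,Y,X,Y)$ and $(X,Y)\mapsto R'(X,Y,X,Y)$ are polynomial, hence continuous, in the components of $X,Y$ on $T_xM\times T_xM$, while the set where $g(X,X)g(Y,Y)-g(X,Y)^2\neq 0$ is open and, being the complement of the zero locus of a nonzero polynomial, dense. That polynomial is indeed not identically zero, because in any signature with $n\ge 2$ an orthogonal basis furnishes two vectors $e_i,e_j$ with $g(e_i,e_i)g(e_j,e_j)\neq 0$ and $g(e_i,e_j)=0$. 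Equality on this dense set therefore propagates to all of $T_xM\times T_xM$, and Proposition \ref{prop: R determined by sectional curvatures} yields $R\equiv R'$, which is the first displayed formula.

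Finally I would pass to the $(1,3)$-form via the defining relation $R(X,Y,Z,T)=-g(R(X,Y)Z,T)$: since $g$ is non-degenerate, the $(0,4)$-identity is equivalent to $R(X,Y)Z=-k\big(g(X,Z)Y-g(Y,Z)X\big)$, from which the membership $R(X,Y)Z\in\mathrm{Span}(X,Y)$ is evident.
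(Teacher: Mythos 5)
Your proof follows essentially the same route as the paper's: both reduce the statement to Proposition \ref{prop: R determined by sectional curvatures} by observing that the candidate tensor $R'$ satisfies the symmetries \eqref{eq: symmetries of R pseudo-Riem} and agrees with $R$ on the diagonal entries $R(X,Y,X,Y)$. The only difference is that you spell out the continuity/density argument needed to pass from $g$-non-degenerate planes (where the constant-curvature hypothesis applies directly) to arbitrary pairs $(X,Y)$ --- a genuinely pseudo-Riemannian point that the paper's one-line proof leaves implicit; your handling of it is correct.
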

	\begin{proof}
		The tensor $R'(X, Y, Z, W)= k (g(X,Z)g(Y,W) - g(Y,Z)g(X,W))$ satisfies Equations \eqref{eq: symmetries of R pseudo-Riem} and is such that $R'(X,Y,X,Y)=R(X,Y,X,Y)$ for all $X,Y\in \Gamma(TM)$.
	\end{proof}
	\vspace{15pt}
	\emph{Step $3$}: Conclusion.
	\begin{proof}[Proof of Theorem $\ref{Teo: unicita space form pseudo-Riem}$]
		Let $(M,g)$ and $(N, h)$ be two simply connected, complete pseudo-Riemannian manifolds of dimension $n$, signature $(m, n-m)$ and with constant sectional curvature $k$. Denote with $\nabla^M$ and $\nabla^N$ respectively their Levi-Civita connections. Fix $x\in M$ and $y\in N$. 
		
		By standard linear algebra, there exists a linear isometry $L\colon (T_x M, g_x)\to (T_y N, g_y)$ because $T_xM$ and $T_y N$ have the same dimension, and $g_x$ and $h_y$ have the same signature. With notations as in \emph{Step 1}, let $\gamma\colon [0, \tau]\to M$ be a geodesic with $\gamma(0)= x$ and let $\gamma_L\colon [0, \tau] \to N$ such that $\dot \gamma_L (0)= L(\dot \gamma(0))$. Since parallel transport along $\gamma$ defines an isometry between the tangent spaces $T_{\gamma(0)} M \to T_{\gamma(\tau)}N$, and likewise for $\gamma_L$, the induced map
		\[
		L_\gamma\colon (T_{\gamma(\tau)}M, g) \to (T_{\gamma_L(\tau)}N, h)
		\]
		is a linear isometry.
		By iteration, the same holds for $L_\gamma$ when $\gamma$ is a piecewise geodesic curve starting at $x$. 
		
		By Corollary \ref{cor: R di space form pseudo-Riem} applied to both $M$ and $N$ and by the fact that $L_\gamma$ is an isometry, for all $X,Y, Z\in T_{\gamma_L(\tau)}M$ one gets that
		\begin{align*}
		L_\gamma(R^M(X,Y)Z)&= - k\ g(X,Z)L_\gamma(Y) + k\ g(Y,Z)L_\gamma(X) =\\ 
		&= - k\ h(L_\gamma(X), L_\gamma(Z)) L_\gamma(Y) + k\ h(L_\gamma(Y), L_\gamma(Z))L_\gamma(X)=\\
		&=  R^N(L_\gamma(X), L_\gamma(Y)) (L_\gamma(Z)).
		\end{align*}
		By Theorem \ref{Teo: Ambrose-Cartan-Hicks}, there exists a covering map $f\colon M \to N$ such that for every piecewise geodesic curve $\gamma\colon [0, \tau] \to M$ starting at $x$, $d_{\gamma(\tau)}f=L_\gamma$, hence $f$ is a local isometry. Since $M$ and $N$ are both simply connected, $f$ is an isometry.
	\end{proof}
	
	\begin{Remark}
		The proof of \ref{Teo: unicita space form pseudo-Riem} leads to another observation: if $(M,g)$ is a pseudo-Riemannian space form, then, for any given $x\in M$ and any given linear isometry $L\colon (T_x M, g_x)\to (T_xM, g_x)$, there exists an isometry $f\colon (M,g)\to (M,g)$ such that $f(x)=x$ and $d_x f=L$. 
	\end{Remark}

	We will denote with $\mathbb F_k^{m_1, m_2}= (\mathbb F_k^{m_1, m_2}, \inners)$ the pseudo-Riemannian space form of dimension $m_1+m_2\ge 2$, signature $(m_1,m_2)$ and constant sectional curvature $k\in \R$.
	
	We will discuss existence of pseudo-Riemannian space forms in Section \ref{sec existence pseudoRiem space forms}, after recalling the Gauss-Codazzi Theorem. The reader might find quite unusual to work with space forms before proving they exist, but hopefully the reason behind this choice will be clear later on.

	\section{Immersions of hypersurfaces into pseudo-Riemannian space forms}
	\label{section: Gauss-Codazzi pseudoRiem}
	
	There exists a general theory of immersions of manifolds into pseudo-Riemannian space forms. In the following we stick to the theory of hypersurfaces, where the theory has a more simple description. The reader might refer for instance to \cite{Gauss-Codazziinspaceforms} which treats the topic in the general setting of immersions of any codimension into products of space forms. 
	
	The setting is very classical and in the following we will skip the main proofs. Nevertheless, in Chapter \ref{Chapter immersioni in Xn}, we will handle a similar formalism to study immersions of smooth manifolds of dimension $n$ into $(n+1)$-dimensional holomorphic Riemannian space forms and we will provide all the major proofs: \emph{mutatis mutandis}, proofs in the pseudo-Riemannian setting work in the same fashion with some more attention to signatures but with generally fewer technical precautions.

	Let $M=M^n$ be an orientable $n$-dimensional manifold, let $m_1, m_2$ be two integers with $m_1+m_2=n+1$, and denote $\mathbb F= (\mathbb F_k^{m_1, m_2}, \inners)$. Equip both $\mathbb F$ and $M$ with an orientation. 
	
	We say that an immersion  $\sigma\colon M\to \mathbb F$ is \emph{admissible} if the pull-back metric $\I=\sigma^*\inners$ is a (non-degenerate) pseudo-Riemannian metric on $M$. The pull-back metric $\I$ falls within one of the following two cases:
	\begin{itemize}
		\item[Case ($a$):] $\I$ has signature $(m_1-1,m_2)$;
		\item[Case ($b$):] $\I$ has signature $(m_1, m_2-1)$.
	\end{itemize}

	The map $d\sigma$ induces a canonical bundle inclusion of $TM <\sigma^* (T \mathbb F)$. In order to relax the notation, we denote with $\inners$ also the pull-back metric on $\sigma^*(T \mathbb F)$. By construction, $\inners_{|TM}= \I$.
	
	There exists a unique section $\nu\in \Gamma(\sigma^*(T\mathbb F))$ such that $\inner{\nu, \nu}=1$ if in case $(a)$ and such that $\inner{\nu,\nu}=-1$ if in case $(b)$ and so that $(d\sigma(e_1), \dots, d\sigma(e_n), d\sigma(\nu) )$ is an oriented frame for $\mathbb{F}$ whenever $(e_1, \dots, e_n)$ is an oriented frame for $M$. Such $\nu$ is a \emph{(global) normal vector field}.
	
	Denote with $\nabla$ the Levi-Civita connection on $TM$ with respect to $\I$ and with $\overline \nabla$ the Levi-Civita connection on $\sigma^*(T\mathbb F)$ with respect to $\inners$, namely the pull-back of the Levi-Civita connection of $\inners$ on $T\mathbb F$. One has that, for all $X,Y\in \Gamma(TM)$,
	\[
	\overline \nabla_X Y= \nabla_X Y + \II(X,Y) \nu\ ,
	\]
	where $\II$ is called \emph{second fundamental form} of $\sigma$ and is a symmetric $(0,2)$-tensor on $TM$.
	
	By raising an index and changing a sign, one can define the \emph{shape operator} $B$ for $\sigma$ as the $(1,1)$-tensor on $TM$ defined by
	\[
	\I(B(X),Y):= \II(X,Y).
	\]
	Since $\II$ is symmetric, $B$ is $\I$-self adjoint.
	
	Define the \emph{exterior covariant derivative} of a $(1,1)$-tensor $\beta$ with respect to the connection $\nabla$ as the skew-symmetric $(1,2)$-tensor defined by
	\[
	d^\nabla \beta (X,Y)= \nabla_X(\beta(Y) )- \nabla_Y(\beta(X))- \beta([X,Y]).
	\]

	The following result is known as \emph{Gauss-Codazzi Theorem} (also \emph{Bonnet Theorem} or also \emph{Fundamental theorem of hypersurfaces}).
	
	\begin{Theorem}[Gauss-Codazzi Theorem]
		\label{Gauss-Codazzi pseudo-Riem}
		Let $\I$ be a pseudo-Riemannian metric on $M$ with signature $(m_1- \frac{\delta-1}{2},m_2 + \frac{\delta-1}{2})$ with $\delta\in \{\pm 1\}$. Let $B\colon TM\to TM$ be a $\I$-self adjoint $(1,1)$-form. The pair $(\I,B)$ satisies
		\begin{align*}
		1) &d^{\nabla} B \equiv 0, \qquad \text{and};\\
		2)	&R(e_i, e_j, \cdot, \cdot) + \delta B^i \wedge B^j= k\ \theta^i\wedge \theta^j\\
		&\text{for some constant $k\in \R$ and for all $i,j =1, \dots n$, where $(e_i)_{i=1}^n$ is any}\\ &\text{ $\I$-orthonormal frame with corresponding coframe $(\theta^i)_{i=1}^n$ and with $B= B^i \otimes e_i$},
		\end{align*}
		if and only if there exists a $\pi_1(M)$-equivariant isometric immersion $\sigma\colon (\widetilde M,\widetilde \I)\to \mathbb F_k^{m_1,m_2}$ with shape operator $\widetilde B$, with $\widetilde \I$ and $\widetilde B$ denoting the natural lifts of $\I$ and $B$ on the universal cover.
		
		Such immersion is unique up to post-composition of $\sigma$ with ambient isometries of $\mathbb F_k^{m_1, m_2}$.
	\end{Theorem}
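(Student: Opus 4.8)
The plan is to prove the Gauss--Codazzi Theorem (Theorem \ref{Gauss-Codazzi pseudo-Riem}) by splitting it into the two standard implications and reducing both to results already available in the excerpt. The forward direction (``immersion $\Rightarrow$ equations'') is a direct computation. Given an admissible immersion $\sigma$ with normal field $\nu$, I would differentiate the Gauss formula $\overline\nabla_X Y = \nabla_X Y + \II(X,Y)\nu$ and use that $\overline\nabla$ is the Levi-Civita connection of a space form, so its curvature $R^{\mathbb F}$ is given by the explicit formula of Corollary \ref{cor: R di space form pseudo-Riem}. Decomposing $\overline\nabla_X\overline\nabla_Y Z - \overline\nabla_Y\overline\nabla_X Z - \overline\nabla_{[X,Y]}Z$ into tangential and normal parts yields, in the tangential part, the Gauss equation relating $R^M$ to $k$ and to wedge products of the shape operator (this is equation $2)$, where the sign $\delta$ encodes whether $\inner{\nu,\nu}=\pm 1$, i.e.\ Case $(a)$ versus Case $(b)$), and in the normal part the Codazzi equation $d^\nabla B \equiv 0$ (equation $1)$). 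These are routine once one keeps careful track of the sign $\delta = \inner{\nu,\nu}$.

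For the converse direction (``equations $\Rightarrow$ immersion''), I would emulate the uniqueness argument of Theorem \ref{Teo: unicita space form pseudo-Riem}, which the authors explicitly flag as the template (``we will use the same idea for a main proof later on''). The strategy is to build the immersion on the total space by an Ambrose--Cartan--Hicks type integration. Concretely, I would form the bundle $E = TM \oplus \underline{\R}$ over $\widetilde M$, equip it with the inner product $\I \oplus (\delta)$ and the connection $D$ defined by the Gauss--Weingarten formulas, namely $D_X(Y,f) = (\nabla_X Y - f\, B(X),\ \II(X,Y) + X(f))$. The Gauss equation forces the curvature of $D$ to agree with the model curvature of $\mathbb F_k^{m_1,m_2}$ (via Corollary \ref{cor: R di space form pseudo-Riem}), and the Codazzi equation $d^\nabla B \equiv 0$ is exactly the compatibility making $D$ flat relative to the space form; hence $(E,D)$ is locally isometric to $T\mathbb F$ along geodesics. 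Applying Theorem \ref{Teo: Ambrose-Cartan-Hicks} to the affine manifold structure produces a map into $\mathbb F$ whose differential realizes the prescribed $\I$ and $B$, and whose normal field integrates $\nu$. Simple-connectedness of $\widetilde M$ upgrades the resulting covering/local isometry to a genuine immersion, and the $\pi_1(M)$-equivariance follows because the construction is canonical and commutes with deck transformations preserving $\I$ and $B$.

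Uniqueness up to ambient isometry then follows from Lemma \ref{mappa differenziale determina isometria}: two immersions with the same first fundamental form and shape operator have, at a base point, coinciding value and differential after composing one with a suitable element of $\Isom(\mathbb F_k^{m_1,m_2})$ (such an isometry exists by the Remark following Theorem \ref{Teo: unicita space form pseudo-Riem}, which realizes any linear isometry of a tangent space), and then the two immersions agree everywhere.

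I expect \textbf{the main obstacle} to be the converse integration step, specifically verifying that the Gauss and Codazzi equations are precisely the integrability conditions that make the auxiliary connection $D$ on $E = TM \oplus \underline\R$ flat as a $\mathbb F$-valued Cartan connection, so that Theorem \ref{Teo: Ambrose-Cartan-Hicks} applies cleanly. The delicate bookkeeping lies in the sign $\delta$ and in checking that the curvature-preservation hypothesis \eqref{eq: R in Cartan-Ambrose-Hicks} of Ambrose--Cartan--Hicks is exactly equation $2)$; everything else (torsion-freeness, metric compatibility, and the passage from covering to isometry) is standard. Since the authors defer full proofs here and promise them \emph{mutatis mutandis} in Chapter \ref{Chapter immersioni in Xn} for the holomorphic Riemannian case, I would present this sketch and point to that chapter for the detailed computations.
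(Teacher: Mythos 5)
Your forward direction is fine and is essentially the computation the paper carries out for the holomorphic analogue (Theorem \ref{Teo Gauss-Codazzi}): split the ambient curvature into tangential and normal parts and read off Gauss and Codazzi, with $\delta=\inner{\nu,\nu}$ tracking the sign. Note that the paper gives no separate proof of the pseudo-Riemannian statement and, like you, defers to Chapter \ref{Chapter immersioni in Xn}; but there the converse is proved by the moving-frame method: one assembles $\omega\in\Omega^1(M,\mathfrak{so}(n+2,\C))$ from $(\Theta,\Psi^i,\theta^i)$, checks $d\omega+[\omega,\omega]=0$ using exactly the Gauss and Codazzi equations, integrates via Lemma \ref{Lemma chiave unicita} to a map $\Phi\colon M\to SO(n+2,\C)$, and sets $\sigma=\Phi\cdot e$. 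That is not the route you sketch, and your route has genuine gaps.

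First, Theorem \ref{Teo: Ambrose-Cartan-Hicks} as stated (and as used in the paper) requires the two affine manifolds to have the \emph{same} dimension and produces a covering map; it cannot output a codimension-one immersion $M^n\to\mathbb F_k^{m_1,m_2}$, so ``applying Theorem \ref{Teo: Ambrose-Cartan-Hicks}'' is not a legitimate step here. Second, your auxiliary bundle $E=TM\oplus\underline{\R}$ with the Gauss--Weingarten connection is the candidate for $\sigma^*T\mathbb F$, whose curvature is that of the space form --- proportional to $k$ --- and is \emph{not} made flat by the Gauss--Codazzi equations. To integrate one must enlarge the bundle by the radial/position direction (rank $n+2$, mirroring the quadric model $Q_k^{m_1,m_2}\subset\R^{n+2}$), or equivalently pass to the $\mathfrak{so}(n+2)$-valued form as the paper does; only then do Gauss and Codazzi become precisely the vanishing of curvature, and a parallel frame yields $\sigma$. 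Finally, your uniqueness step cites Lemma \ref{mappa differenziale determina isometria}, but that lemma concerns isometries between manifolds of equal dimension and does not apply to two hypersurface immersions sharing a $1$-jet: matching the first fundamental form and the $1$-jet is not enough without the shape operator, and the propagation of agreement is again done by the frame ODE (this is the content of Proposition \ref{Teo unicita} and Corollary \ref{Corollario Teo unicita} in the holomorphic setting), not by that lemma.
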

	
	We recall that $\sigma\colon \widetilde M \to \mathbb F$ is said to be $\pi_1(M)$-equivariant, with $\pi_1(M)$ denoting the deck group of the covering map $\widetilde M \to M$, if for all $\alpha \in \pi_1(M)$ there exists $\phi\in \Isom(\mathbb F)$ such that $\sigma \circ \alpha= \phi \circ \sigma$.
	
	Gauss-Codazzi Theorem allows to see immersions into pseudo-Riemannian space forms as a pair of metric and shape operator, translating extrinsic geometry into intrinsic geometry.
	
	Chapter \ref{Chapter immersioni in Xn} will be devoted to proving a similar result for immersions into holomorphic Riemannian space forms, and the pseudo-Riemannian version of the Gauss-Codazzi theorem will allow to see immersions into pseudo-Riemannian space forms as particular cases of immersions into holomorphic Riemannian space forms (see Section \ref{sec pseudoRiemannian hrm}).
	
	\section{Pseudo-Riemannian space forms: existence}
	\label{sec existence pseudoRiem space forms}
	Let $m_1$, $m_2$ be non negative integers with $n:=m_1+m_2\ge2$. Denote with $\inners_{m_1, m_2}$ both the bilinear form on $\mathbb R^n$ defined by 
	\[\inner{x,y}_{m_1, m_2}= x_1y_1 + \dots + x_{m_1} y_{m_1} - x_{m_1+1}y_{m_1+1}- \dots - x_n y_n\ ,\]
	and the corresponding pseudo-Riemannian metric induced by the canonical identification $T \mathbb R^n = \mathbb R^n \times \mathbb R^n$, namely
	\[
	\inners_{m_1, m_2}= dx_1^2 + \dots + dx_{m_1}^2 - dx_{m_1+1}^2 -\dots - d x_n^2. 
	\]
	The pseudo-Riemannian manifold $\mathbb R^{m_1, m_2}= (\mathbb R^n, \inners_{m_1, m_2})$ is often denoted as the \emph{Minkowski space} of signature $(m_1, m_2)$.
	
	For all $n=m_1+m_2\ge 2$, $0\le m_1,m_2$ and $k\in \mathbb R$, the space form $\mathbb F_k^{m_1, m_2}$ exists, and it admits a very simple description.

	\begin{Prop}
		\label{Prop: quadriche pseudo-Riem}
		\begin{itemize}
			\item[$1)$]  $\mathbb F_0^{m_1, m_2}= \mathbb R^{m_1, m_2}$.
			\item[$2)$] For $k\ne 0$, $\mathbb F_k^{m_1, m_2}$ is the isometric universal cover of any connected component of
			\[
			Q_k^{m_1, m_2}=\bigg\{x\in \mathbb R^{m_1+m_2 +1}\ |\ \inner{x,x}_{m_1+1, m_2}=\frac{1}{k} \bigg\} \subset \mathbb R^{m_1+1, m_2} \qquad \text{if $k>0$}
			\]
			and
			\[
			Q_k^{m_1, m_2}=\bigg\{x\in \mathbb R^{n+1}\ |\ \inner{x,x}_{m_1, m_2+1}=\frac{1}{k} \bigg\} \subset \mathbb R^{m_1, m_2+1} \qquad \text{if $k<0$,}
			\]
			both endowed with the submanifold metric.
		\end{itemize}	
	\end{Prop}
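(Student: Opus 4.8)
The plan is to produce, for each triple $(m_1,m_2,k)$, an explicit pseudo-Riemannian manifold of dimension $n=m_1+m_2$, signature $(m_1,m_2)$ and constant curvature $k$; since $\mathbb F_k^{m_1,m_2}$ was \emph{defined} as the space form with these invariants, exhibiting such a manifold proves existence, and the uniqueness already established in Theorem \ref{Teo: unicita space form pseudo-Riem} identifies it with $\mathbb F_k^{m_1,m_2}$. Part $(1)$ is immediate: the Levi-Civita connection of $\R^{m_1,m_2}$ is the flat coordinate connection $D$, its geodesics are the affine lines $t\mapsto x+tv$ (defined for all $t$, hence complete), its curvature tensor vanishes (so the sectional curvature is constantly $0$), and $\R^n$ is simply connected; thus $\R^{m_1,m_2}$ is a space form and uniqueness yields $\mathbb F_0^{m_1,m_2}=\R^{m_1,m_2}$.

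For part $(2)$ fix $k\neq 0$ and let $\E$ denote the ambient Minkowski space, namely $\R^{m_1+1,m_2}$ if $k>0$ and $\R^{m_1,m_2+1}$ if $k<0$, again with flat connection $D$. Write $N(x)=x$ for the position vector field under $T\E=\E\times\E$. Since the gradient of $x\mapsto\inner{x,x}$ is $2x\neq 0$ on $Q:=Q_k^{m_1,m_2}$, the level $1/k$ is regular and $Q$ is a smooth hypersurface; differentiating $\inner{x,x}=1/k$ shows $v\in T_xQ$ iff $\inner{x,v}=0$, so $N$ is a globally defined normal field and $Q$ is two-sided. As $\inner{N,N}=1/k$ has sign $\varepsilon:=\mathrm{sign}(k)$, the field $\nu=\sqrt{|k|}\,N$ is a unit normal with $\inner{\nu,\nu}=\varepsilon$; removing this single direction from the ambient signature ($(m_1+1,m_2)$ for $k>0$, $(m_1,m_2+1)$ for $k<0$) leaves the induced metric $\I$ non-degenerate of signature exactly $(m_1,m_2)$ in both cases.

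I then compute the extrinsic geometry. Because $D$ is flat and $N$ is the position vector, $D_XN=X$, hence $\overline\nabla_X\nu=\sqrt{|k|}\,X$; comparing with the Weingarten identity $\inner{\overline\nabla_X\nu,Y}=-\varepsilon\,\II(X,Y)$ gives the pure homothety $\II(X,Y)=-\varepsilon\sqrt{|k|}\,\I(X,Y)$, i.e. shape operator $B=-\varepsilon\sqrt{|k|}\,\mathrm{id}$. Taking the tangential component of the vanishing ambient curvature yields the Gauss equation $R(X,Y)Z=\varepsilon\big(\II(Y,Z)B(X)-\II(X,Z)B(Y)\big)$, and substituting the above together with $\varepsilon|k|=k$ gives
\[
R(X,Y)Z=k\big(\I(Y,Z)X-\I(X,Z)Y\big),
\]
which by Corollary \ref{cor: R di space form pseudo-Riem} is precisely the curvature of a manifold of constant sectional curvature $k$.

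Finally I handle completeness and the universal cover, which I expect to be the most delicate point. Completeness I would obtain by identifying the geodesics of $Q$ with the connected components of the sections $Q\cap\Pi$, where $\Pi$ is a linear $2$-plane through $0$ on which $\inner{\cdot,\cdot}$ is non-degenerate: the involution that is $+\mathrm{id}$ on $\Pi$ and $-\mathrm{id}$ on $\Pi^\perp$ is an ambient isometry preserving $Q$ with fixed locus $Q\cap\Pi$, so these conics are totally geodesic, and parametrizing them by trigonometric or hyperbolic functions (according to the causal type of the initial velocity) produces geodesics defined for all time. Hence each component of $Q$ is complete; its universal cover $\widetilde Q$ is simply connected, still complete (geodesics lift along the covering) and locally isometric to $Q$, so it has signature $(m_1,m_2)$ and constant curvature $k$. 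Thus $\widetilde Q$ is a space form, and Theorem \ref{Teo: unicita space form pseudo-Riem} identifies it with $\mathbb F_k^{m_1,m_2}$. The points demanding care are the uniform sign bookkeeping for $\varepsilon$ across $k\gtrless 0$, and the fact that $Q$ may be disconnected (for instance the two-sheeted hyperboloid underlying $\Hyp^n$), which is why the statement speaks of ``any connected component''.
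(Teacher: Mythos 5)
Your overall strategy coincides with the paper's: part $(1)$ is handled identically, and for part $(2)$ both arguments compute the shape operator of the quadric from $D_XN=X$, feed it into the Gauss equation to get constant curvature $k$, verify completeness, and then pass to the universal cover and invoke the uniqueness theorem. The sign bookkeeping with $\varepsilon$ is consistent and the curvature computation lands on the formula of Corollary \ref{cor: R di space form pseudo-Riem}, as required. The only real divergence is in how completeness is proved: the paper derives the ODE $\ddot\gamma(t)=-k\,\inner{\dot\gamma(0),\dot\gamma(0)}\,\gamma(t)$ and writes down explicit global solutions in all three causal cases, whereas you identify geodesics with components of the plane sections $Q\cap\Pi$ via the reflection $+\mathrm{id}_\Pi\oplus(-\mathrm{id}_{\Pi^\perp})$, whose fixed locus is totally geodesic.

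There is one small but genuine gap in that completeness step. You restrict to $2$-planes $\Pi$ through the origin on which $\inners$ is non-degenerate, but a geodesic of $Q$ with \emph{null} initial velocity $v$ at $x$ lies in the plane $\mathrm{Span}(x,v)$, whose Gram matrix $\begin{pmatrix}1/k&0\\0&0\end{pmatrix}$ is degenerate; such directions exist whenever the induced metric is indefinite, and they are not covered by your reflection argument, so geodesic completeness (i.e.\ the exponential map being defined on all of $TQ$) is not fully established as written. The fix is immediate: either check directly that $t\mapsto x+tv$ is a complete geodesic of $Q$ when $\inner{v,v}=0$ (it satisfies $\ddot\gamma=0$, which is normal to $Q$ only trivially, and stays on $Q$ since $\inner{x+tv,x+tv}=1/k$), or follow the paper and solve the geodesic ODE uniformly, whose third branch $\gamma(t)=\gamma(0)+t\dot\gamma(0)$ handles exactly this case.
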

	\begin{proof}
		\begin{itemize}
			\item[1)] In the canonical identification $\Gamma(T \mathbb R^n)=C^\infty(\mathbb R^n, \mathbb R^n)$, one can check explicitly on the canonical orthonormal frame generated by the canonical basis that the Levi-Civita connection for $\mathbb R^{m_1, m_2}$ is given by the standard affine connection
			\[
			\mathrm{d}_X Y:= Jac(Y) \cdot X.
			\]
			Schwarz Theorem on commutativity of partial derivatives in $\mathbb R^n$ involves $R\equiv 0$, hence the sectional curvature is constantly zero.
			
			Geodesics in this model are straight lines $t\mapsto tx + y$, hence $\mathbb R^{m_1,m_2}$ is complete.	
			
			\item[2)] Denote with $Q_k^{m_1, m_2}$ the quadrics in the statement endowed with the submanifold metric, namely \[Q_k^{m_1, m_2}=\big\{x\in \mathbb R^{m_1, m_2+ 1}\ |\ \inner{x,x}_{m_1, m_2+1}=\frac{1}{k} \big\} \qquad \text{if $k<0$,}\]  \[Q_k^{m_1, m_2}=\big\{x\in \mathbb R^{m_1+1, m_2}\ |\ \inner{x,x}_{m_1+1, m_2}=\frac{1}{k} \big\} \qquad \text{if $k>0$}. \] 
			In the following, in order to ease the notation, we denote with $\inners$ both the induced metric on $Q_k^{m_1, m_2}$ and the corresponding ambient metric for $\mathbb R^{n+1}$ in which $Q_k^{m_1, m_2}$ is embedded, bearing in mind that the latter depends on the sign of $k$.
			
			For any curve $\gamma$ on $Q_k^{m_1, m_2}$,  $\inner{\gamma(t),\gamma(t)}=\frac 1 k$ implies $\inner{\gamma(t), \dot \gamma(t)}=0$: as a result, we deduce that $T_x Q_k^{m_1, m_2}= x^{\bot_{(m_1,m_2)}}$, that the normal vector field along (the standard inclusion into $\R^{n+1}$ of) $Q_k^{m, n-m}$ is given by $\nu(x)= \sqrt{|k|}\ x$, and that the retriction of $\inners$ to $Q_k^{m, n-m}$ is a pseudo-Riemannian metric.
			
			Denote with $D$ the Levi-Civita connection of $Q_k^{m_1, m_2}$, then $D \nu$ is equal to the orthogonal projection onto $TQ_k^{m_1, m_2}$ of \[\mathrm d \nu= \sqrt{|k|}\ \mathrm d x= \sqrt{|k|}\ id, \]
			hence $D\nu= \sqrt{|k|}\ id$ as well. As a result, one gets that the second fundamental form of the inclusion $Q_k^{m_1, m_2}\hookrightarrow \R^{n+1}$ is $\II= \sqrt{|k|}\ \inners$ and that the shape operator is $B= \sqrt{|k|}\ id$. 
			
			By Gauss-Codazzi Theorem \ref{Gauss-Codazzi pseudo-Riem}, one concludes that, for local orthonormal frame $e_1,e_2$ (of any signature), the curvature tensor $R$ for $Q_k^{m, n-m}$ satisfies
			\[
			R(e_1, e_2, \cdot, \cdot ) = sign(k)|k|\  \inner{e_1, \cdot}\wedge \inner{e_2, \cdot}
			\]
			involving $K(Span(e_1, e_2))= \frac{R(e_1, e_2, e_1, e_2)}{\inner{e_1, e_1}\inner{e_2, e_2} }=k$. This proves that $Q_k^{m_1, m_2}$ has constant sectional curvature $k$.
			
			Geodesics of $Q_k^{m_1, m_2}$ have a simple description. A curve $\gamma\colon \mathbb R \to Q_k^{m_1, m_2}$ is a geodesic if $D \dot \gamma\equiv 0$, namely if $\ddot \gamma (t)= \lambda(t) \gamma(t)$ for some $\lambda\colon \mathbb R\to \mathbb R$. As a consequence, $\gamma$ has constant speed, i.e. $\inner{\dot\gamma, \dot \gamma}\equiv \inner{\dot\gamma(0), \dot \gamma(0)}$. By differentiating the equation $\inner{\gamma, \dot \gamma}\equiv 0$, one gets that $\gamma$ satisfies 
			\begin{equation}
			\label{eq: geodetiche}
			\ddot \gamma (t)= - k \inner{\dot \gamma(0), \dot \gamma(0)} \gamma(t)
			\end{equation}
			for all $t$.
			
			Equation \eqref{eq: geodetiche} has solution $\gamma\colon \mathbb R \to Q_k^{m_1, m_2}$ for any initial data $\gamma(0)$ and $\dot \gamma(0)$:
			\begin{align*}
			\text{if }& -k \inner{\dot \gamma(0), \dot \gamma(0)} >0, \\ 
			&\  \gamma(t)= \cosh\big( \sqrt{-k \inner{\dot \gamma(0), \dot \gamma(0)}} t \big) \gamma(0) + \frac{\sinh\big( \sqrt{-k \inner{\dot \gamma(0), \dot \gamma(0)}} t \big)} { \sqrt{-k \inner{\dot \gamma(0), \dot \gamma(0)}}} \dot \gamma(0);\\
			\text{if }& - k \inner{\dot \gamma(0), \dot \gamma(0)} <0, \\ 
			&\ \gamma(t)= \cos\big( \sqrt{k \inner{\dot \gamma(0), \dot \gamma(0)}} t \big) \gamma(0) + \frac{\sin\big( \sqrt{k \inner{\dot \gamma(0), \dot \gamma(0)}} t \big)} { \sqrt{k \inner{\dot \gamma(0), \dot \gamma(0)}}} \dot \gamma(0);\\
			\text{if }& \inner{\dot \gamma(0), \dot \gamma(0)} =0, \qquad \quad \gamma(t)= \gamma(0)+ t\dot \gamma(0).
			\end{align*}
			In conclusion, $Q_k^{m, n-m}$ has constant sectional curvature and is geodesically complete: its universal cover endowed with the pull-back metric is a space form.
		\end{itemize}

	\end{proof}
	\begin{Remark}
		\label{rmk: Q e quasi sempre s.c.}
		We remark that $Q_k^{m_1, m_2}$ is connected and simply connected for most $m,n$ and $k$.
		
		First, observe that, for all $k<0$, $Q_k^{m_1, m_2}$ is diffeomorphic to $Q_{-1}^{m_1, m_2}$ and, by swapping signs and rearranging the variables, to $Q_{+1}^{m_2,m_1}$. Focusing on $Q_{-1}^{m_1, m_2}$, one can construct the diffeomorphism
		\begin{equation*}
		\begin{split}
		\mathbb R^{m_1} \times S^{m_2} &\to Q_{-1}^{m_1, m_2}\\
		(u_1, \dots, u_m, v_1, \dots, v_{n-m+1}) & \mapsto \big(u_1, \dots, u_m, (\sqrt{1 + \|u \|^2})v_1, \dots, (\sqrt{1 + \|u \|^2})v_{n-m+1}  \big)  
		\end{split}
		\end{equation*}
		where $\|u\|^2= u_1^2 + \dots + u_m^2$.
		
		As a result: 
		\begin{itemize}
			\item[\_] $Q_{-1}^{n,0}$ is disconnected and each of its two connected components of its is contractible;
			\item[\_] $Q_{-1}^{n-1, 1}$ is connected and has fundamental group isomorphic to $\Z$;
			\item[\_] $Q_{-1}^{m_1,m_2}$ is connected and simply connected in all the other cases.
		\end{itemize}
		In fact, the former two cases correspond to two very important pseudo-Riemannian manifolds which will occur quite often in this paper:
		\begin{itemize}
			\item $\Hyp^n= \mathbb F_{-1}^{n,0}=  Q^{n,0}_{-1} \cap \{x_{n+1}>0\}$ is the \emph{hyperbolic space};
			\item $AdS^n= Q^{n-1,1}_{-1}$ is the \emph{Anti-de Sitter space}.
		\end{itemize}
	\end{Remark}

\section{Models of  $\Hyp^n$}	
\label{sec model Hn}
	We quickly recall some aspects of the geometry of the hyperbolic space $\Hyp^n$. The reader might look at \cite{martelli} for a more complete survey.
	
	In this thesis we will refer to the following three models for the hyperbolic space.
	\begin{itemize}
	\item The \emph{hyperboloid model} described above, canonically embedded in $\R^{n,1}$. The isometry group of $Q^{n,0}_{-1}$ coincides with the group of isomorphisms of $\R^{n,1}$ that preserve the bilinear form, namely $O(n,1)$. The subgroup of $O(n,1)$ that fixes $\Hyp^{n}$ (which is a connected component of $Q^{n,0}_{-1})$ is often denoted with $O^+(n,1)\cong \Isom(\Hyp^{n})$.
	
	The group $Isom_0(\Hyp^n)$ of orientation-preserving isometries of $\Hyp^n$ is connected and corresponds to 
	\[
	SO(n,1) \cap O^+(n,1) = SO_0 (n,1) 
	\]
	which is the connected component of $SO(n,1)$ and of $O(n,1)$ containing the identity.
	
	\item The \emph{half-space model} corresponds to the manifold $\R^{n-1}\times \R^+$ equipped with the Riemannian metric
	\[
	\frac 1 {x_n^2} (dx_1^2 + \dots, dx_n^2).
	\]
	For $n=2$, the group of orientation preserving isometries of the half-space model is $\Isom_0(\Hyp^2)\cong \mathrm{PSL}(2,\R)$ acting on the open subset $\R\times \R^+\subset \C$ by M\"obius transformations; the whole isometry group is generated by orientation preserving isometries and by the isometry $z \mapsto -\overline z$.
	
	\item The Poincaré \emph{disc} model is the manifold given by the open ball $\{x\in \R^n\ |\ \|x\|^2_{\R^n} <1  \}$ endowed with the Riemannian metric
	\[
	\frac{4}{(1- \|x\|^2_{\R^n})^2}  (dx_1 + \dots + dx_n). 
	\]
	For $n=2$, regarding the disc as an open ball in $\C$, the group of orientation preserving isometries of the disc model corresponds to the group $PSU(1,1)$ acting by M\"obius transformation, while an orientation-reversing isometry is given by the map $z\mapsto \overline z$ is 
	\end{itemize} 
	
	As a negatively-curved, complete and simply-connected space, $\Hyp^n$ admits a notion of visual boundary $\partial \Hyp^n$ at infinity. The visual boundary $\partial \Hyp^n$ can be defined as the quotient of $T^1 \Hyp^n$ under the equivalence relation defined by
	\[
	(x,v)\sim (x',v') \quad \text{if and only if} \sup_{t\in[0,+\infty)} dist_{\Hyp^n}(exp_x(tv), exp_{x'}(tv')) <\infty.
	\]
	
	By Cartan-Hadamard Theorem, one has that for all $x\in \Hyp^n$ the projection $T^1_x \Hyp^n\to \partial \Hyp^n$ is a bijection. With respect to the half-space model and to the Poincaré disc model, the boundary $\partial \Hyp^n$ can be identified as the boundary of the corresponding open subsets of $\R^{n}\cup \{\infty\}$, namely as $\R^{n-1}\cup\{\infty\}$ in the half-space model, and as the sphere $S^{n-1}$ in the disc model: in both cases the correspondence is obtained by simply taking
	\begin{align*}
		[(x,v)]\mapsto \lim_{t\to +\infty} exp_x(tv) \ .
	\end{align*}
	
	As a consequence of the regularity of $\Hyp^n$, one has that for any two distinct limit points $p_+, p_- \in \partial \Hyp^n$ there exists a unique maximal geodesic $\gamma$ up to positive reparametrization such that $\lim_{t\to \pm \infty} \gamma(t)= p_{\pm}$. This is a key aspect to define the space of geodesics of $\Hyp^n$ in Chapter \ref{chapter spazio geodetiche}.
	\vspace{3pt}
	
	Finally, let us recall that for $n=3$ the boundary has a conformal structure. Regarding the half-space in $\R^3$ as $\C\times \R^+$, its boundary can be seen as $\overline \C=\C\cup \{\infty\}$, which is naturally a complex manifold. On the other hand, also the boundary $S^2$ of the disc in $\R^3$ has a natural complex structure for which it is biholomorphic to $\overline \C$: this allows us to say that $\partial \Hyp^3$ has a complex structure, equivalently defined in the two models.
	
	Now, every isometry of $\Hyp^n$ clearly defines a homeomorphism of its boundary: for $n=3$, if the isometry preserves the orientation, this homeomorphism is in fact a biholomorphism of the boundary, and this correspondence provides the isomorphism
\[
Isom_0(\Hyp^3)\cong Bihol(\partial \Hyp^2)\ .
\]
For $n=3$, this furnishes another useful description for the isometry group of $\Hyp^3$. Indeed, the biholomorphisms of $\overline \C$ are given exactly by the M\"obius transformation, leading, in the half-space model, to the isomorphism 
\[
Isom_0(\Hyp^3)\cong \PSL\ .
\]

	\chapter{Geometry of holomorphic Riemannian space forms}
	\label{Chapter hRm}
	\section{Holomorphic Riemannian metrics}
	
	Let $\mathbb M$ be a complex analytic manifold, with complex structure $\JJJ$, let $n=dim_\C \mathbb M$ and $T\mathbb M\to \mathbb M$ be the tangent bundle. 
	
	We recall that local coordinates $(x_1, y_1, \dots, x_n, y_n)\colon U \to \R^{2n}\equiv \C^n$, $U\subset \mathbb M$, are \emph{holomorphic} if \[\JJJ\bigg(\frac{\partial}{\partial x_k}\bigg)=\frac{\partial}{\partial y_k}\qquad
	\JJJ\bigg(\frac{\partial}{\partial y_k}\bigg)= -\frac{\partial}{\partial x_k}.
	\]
	A function $f\colon U \to \C^N$ is \emph{holomorphic} if, in local holomorphic coordinates, it can be seen as a holomorphic function from an open subset of $\C^n$ to $\C^N$. Equivalently, $f$ is holomorphic if and only if $df\circ \JJJ= i\ df$.
	
	A local vector field $X$ on $\mathbb M$ is \emph{holomorphic} if $dz_k (X)$ is a local holomorphic function for all $k\in\{1, \dots, n\}$, where $dz_k=dx_k + i dy_k$.

	\begin{Definition}
		\label{def hRm}
		
		A \emph{holomorphic Riemannian metric} (also \emph{hRm}) on $\mathbb M$ is a symmetric $2$-form $\inner{\cdot, \cdot}$ on $T\mathbb M$, i.e. a section of $Sym^2 (T^*\mathbb M)$, such that: \begin{itemize}
			\item $\langle \cdot, \cdot \rangle$ is $\C$-bilinear, i.e. for all $X,Y \in T_x \mathbb M$ we have $\langle \JJJ X, Y \rangle= \langle X, \JJJ Y\rangle =i \langle X, Y\rangle$;
			\item $\inners$ is non-degenerate at each point as a complex bilinear form;
			\item for all $X_1, X_2$ local holomorphic vector fields, $\inner{X_1, X_2}$ is a holomorphic function; equivalently, for all local holomorphic coordinates $(x_1, y_1, \dots, x_n, y_n)$, the functions $
			\inner{\frac{\partial}{\partial x_k}, \frac{\partial}{\partial x_h}}$ (or equivalently the functions $ \inner{\frac{\partial}{\partial x_k}, \frac{\partial}{\partial y_h}}$ and $\inner{\frac{\partial}{\partial y_k}, \frac{\partial}{\partial y_h}}$)
			are all holomorphic. 
		\end{itemize}
		
		We also denote $\| X\|^2:= \inner{X,X}$.
	\end{Definition}

	Taking inspiration from basic (Pseudo-)Riemannian Geometry, one can define several constructions associated to a holomorphic Riemannian metric, such as a Levi-Civita connection - leading to notions of curvature tensor, (complex) geodesics and completeness - and sectional curvatures. We recall some basic notions, the reader might find a more detailed treatment in \cite{holomorphicriemannian4}.
	
	\begin{Remark}Observe that, for a given holomorphic Riemannian metric $\inner{\cdot, \cdot}$, both the real part $Re\inner{\cdot, \cdot}$ and the imaginary part $Im\inner{\cdot, \cdot}$ are pseudo-Riemannian metrics on $\mathbb M$ with signature $(n,n)$. Indeed, if $X_1, \dots, X_n\in T_x \mathbb M$ are $\inners_x$-orthonormal, then $(X_1, \dots, X_n, \JJJ X_1, \dots, \JJJ X_n)$ is a real orthonormal basis for $Re\inners_x$ with signature $(n,n)$; similarly for $Im\inners= Re(i\inners)$.
	\end{Remark}

	There exists an analogous result to the Levi-Civita Theorem.
	
	\begin{Proposition} [See \cite{holomorphicriemannian4}]
		\label{prop: Levi Civita hRm}
		Given a holomorphic Riemannian metric $\inner{\cdot, \cdot}$ on $\mathbb M$, there exists a unique affine connection $D$ over $T\mathbb M$, that we will call \emph{Levi-Civita connection}, such that for all $X,Y \in \Gamma(T \mathbb M)$ the following conditions hold:
		\begin{align}
		\label{compatibilita con metrica}
		d \inner{X, Y} &= \inner{D X, Y}+ \inner{X, D Y} \qquad && \text{($D$ is compatible with the metric)};\\
		\label{torsion free}
		[X,Y]&= D_X Y - D_Y X \qquad &&\text{($D$ is torsion free)}.
		\end{align}
		Such connection coincides with the Levi-Civita connections of $Re\inner{\cdot, \cdot}$ and $Im\inner{\cdot, \cdot}$ and $D\JJJ=0$.
	\end{Proposition}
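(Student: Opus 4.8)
The plan is to reproduce the classical derivation of the Levi-Civita connection through the Koszul formula, carried out for the $\C$-bilinear form $\inners$, and then to read off the extra conclusions from the decomposition into the two real metrics $Re\inners$ and $Im\inners$ recorded in the Remark above. Throughout I write $g:=Re\inners$ and $h:=Im\inners$, which by that Remark are pseudo-Riemannian metrics of signature $(n,n)$, in particular non-degenerate.

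\emph{Uniqueness.} First I would show that any $D$ satisfying \eqref{compatibilita con metrica} and \eqref{torsion free} is forced to obey the Koszul identity
\[
2\inner{D_X Y, Z} = X\inner{Y,Z} + Y\inner{X,Z} - Z\inner{X,Y} + \inner{[X,Y],Z} - \inner{[X,Z],Y} - \inner{[Y,Z],X}
\]
for all $X,Y,Z\in\Gamma(T\mathbb M)$. This is obtained exactly as in the pseudo-Riemannian case: write \eqref{compatibilita con metrica} for the three cyclic orderings of $(X,Y,Z)$, add two of them and subtract the third, and use \eqref{torsion free} to rewrite the surviving $D$-terms as Lie brackets. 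The manipulation uses only $\C$-bilinearity and symmetry of $\inners$ together with the Leibniz rule, so it transfers verbatim. Since $\inner{W,Z}=0$ for all $Z$ forces $g(W,Z)=0$ for all $Z$ and hence $W=0$ by non-degeneracy of $g$, the right-hand side above determines $D_X Y$, which proves uniqueness.

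\emph{Existence and coincidence with the real connections.} Let $\nabla^g$ and $\nabla^h$ be the Levi-Civita connections of $g$ and $h$ furnished by the classical pseudo-Riemannian Levi-Civita theorem. The key point is that, because $X$ is a real vector field and $\inner{Y,Z}=g(Y,Z)+i\,h(Y,Z)$, differentiation commutes with taking real and imaginary parts, so $Re\big(X\inner{Y,Z}\big)=X\,g(Y,Z)$, $Re\,\inner{[X,Y],Z}=g([X,Y],Z)$, and similarly for the imaginary parts. Hence the real part of the right-hand side of the Koszul identity for $\inners$ is the Koszul right-hand side for $g$, and its imaginary part is the one for $h$. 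Defining $D$ by the Koszul formula and taking real and imaginary parts therefore gives, for all $Z$,
\[
g(D_X Y, Z) = g(\nabla^g_X Y, Z), \qquad h(D_X Y, Z) = h(\nabla^h_X Y, Z),
\]
so non-degeneracy of $g$ and of $h$ forces $D=\nabla^g=\nabla^h$. In particular $D$ is an honest affine connection and is torsion-free (being $\nabla^g$), and recombining the $g$- and $h$-compatibility relations yields $d\inner{X,Y}=\inner{D X,Y}+\inner{X,D Y}$. This establishes existence and the asserted coincidence with the Levi-Civita connections of $Re\inners$ and $Im\inners$.

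\emph{Parallelism of $\JJJ$, and the main difficulty.} The $\C$-bilinearity of $\inners$ unpacks into $g(\JJJ X,Y)=-h(X,Y)$ and $h(\JJJ X,Y)=g(X,Y)$, so in particular $h(\cdot,\cdot)=-g(\JJJ\,\cdot,\cdot)$. Differentiating this identity along $X$ and invoking compatibility of $D$ with both $g$ and $h$, all terms involving $D_X Z$ cancel and one is left with $g\big(\JJJ D_X Y - D_X(\JJJ Y),\,Z\big)=0$ for every $Z$; non-degeneracy of $g$ then gives $D_X(\JJJ Y)=\JJJ(D_X Y)$, that is $D\JJJ=0$. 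The only genuinely new content beyond the classical theorem lies precisely in this interplay between the complex structure and the two real metrics — verifying that the real and imaginary parts of the $\inners$-Koszul formula reproduce the $g$- and $h$-formulas (which is what forces $\nabla^g=\nabla^h$) and then extracting $D\JJJ=0$ from the two compatibilities. Both steps reduce to bookkeeping with the relation $h=-g(\JJJ\,\cdot,\cdot)$ and non-degeneracy, so I expect no serious obstacle.
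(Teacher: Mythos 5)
Your uniqueness argument and your derivation of $D\JJJ=0$ from $Im\inners=-Re\inner{\JJJ\,\cdot,\cdot}$ and the two real compatibilities are both sound, and they match the spirit of the paper's proof. The gap is in the existence step. You write ``Defining $D$ by the Koszul formula\dots'', but the complex Koszul formula does not automatically define a vector $D_XY$: the right-hand side is an $\R$-linear, $\C$-valued function $\Phi(Z)$ of the real vector field $Z$, and it is of the form $\inner{W,\cdot}$ for some $W\in T_p\mathbb M$ only if it is $\C$-linear with respect to $\JJJ$, i.e.\ $\Phi(\JJJ Z)=i\,\Phi(Z)$. Checking this requires, among other things, that $(\JJJ Z)\inner{X,Y}=i\,Z\inner{X,Y}$, which is exactly the holomorphicity of the functions $\inner{X,Y}$ (the third condition in Definition \ref{def hRm}), plus a control on terms like $\inner{[X,\JJJ Z],Y]}$ versus $i\inner{[X,Z],Y}$. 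Your proposal never invokes holomorphicity, so it cannot be complete: for a merely smooth non-degenerate symmetric $\C$-bilinear form, $Re\inners$ and $Im\inners$ are two essentially unrelated neutral pseudo-Riemannian metrics whose Levi-Civita connections generically differ, so the conclusion $\nabla^{Re\inners}=\nabla^{Im\inners}$ — which is equivalent to your well-definedness claim — is simply false without the holomorphicity hypothesis.

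The paper's proof addresses precisely this point: it works with holomorphic coordinate vector fields $X,Y,Z\in\{\partial_{x_k},\partial_{y_k}\}$ (which pairwise commute, killing the bracket terms), writes the two real Koszul formulas, and uses the identity $\partial_{\JJJ Z}\inner{X,Y}=i\,\partial_Z\inner{X,Y}$ — valid because $\inner{X,Y}$ is holomorphic — to show $Im\inner{D^1_XY,Z}=Im\inner{D^2_XY,Z}$, whence $D^1=D^2$. To repair your argument, replace ``define $D$ by the Koszul formula'' with ``set $D:=\nabla^{Re\inners}$'' and then prove $\nabla^{Re\inners}=\nabla^{Im\inners}$ by comparing the two real Koszul formulas in a holomorphic frame, which is where the holomorphicity of $\inners$ must be used.
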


	\begin{proof}
		Let $D^1$ and $D^2$ be the Levi-Civita connections for $Re\inner{\cdot, \cdot}$ and $Im\inner{\cdot, \cdot}$ respectively. Let $(x_1, y_1, \dots, x_n, y_n)$ be local holomorphic coordinates for $\mathbb M$. 
		
		A straightforward calculation shows that $D^1$ and $D^2$ are characterised by the fact that for all $X,Y,Z\in \{\frac{\partial}{\partial x_1}, \frac{\partial}{\partial y_1}, \dots, \frac{\partial}{\partial x_n}, \frac{\partial}{\partial y_n} \}$ (therefore $X,Y,Z$ are pairwise commuting) we have
		\begin{align*}
		&Re\langle D^1_X Y, Z \rangle= \frac 1 2 \Big( \partial_X (Re\inner{Y,Z}) + \partial_Y (Re\inner{Z,X}) - \partial_Z (Re\inner{X, Y}) \Big) \\
		&Im\inner{D^2_X Y, Z}= \frac 1 2 \Big( \partial_X (Im\inner{Y,Z}) + \partial_Y (Im\inner{Z,X}) - \partial_Z (Im\inner{X, Y}) \Big).
		\end{align*}

		Recalling that, for such $X,Y, Z$, we have $\partial_{\JJJ Z} (\langle X, Y \rangle)= i \partial_Z (\langle X, Y \rangle)$, observe that
		\begin{align*}
		&Im \langle D^1_X Y, Z\rangle= - Re (i\langle D^1_X Y, Z\rangle)= - Re (\langle D^1_X Y, \JJJ Z\rangle)= \\
		&=-\frac 1 2 \Big( \partial_X (Re\langle Y, \JJJ Z \rangle) + \partial_Y (Re\langle \JJJ Z, X \rangle) - \partial_{\JJJ Z} (Re\langle X, Y \rangle) \Big)= \\
		&=-\frac 1 2 \Big( Re (\partial_X \langle Y, \JJJ Z \rangle) + Re(\partial_Y \langle \JJJ Z, X \rangle) - Re (\partial_{\JJJ Z} \langle X, Y \rangle )\Big)= \\
		&= \frac 1 2 \Big(  Im (\partial_X \langle Y, Z \rangle) + Im(\partial_Y \langle Z, X \rangle) - Im ( \partial_Z\langle X, Y \rangle) \Big)=\\
		&= Im \langle D^2_X Y, Z\rangle.
		\end{align*}
		We conclude that $D^1=D^2=:D$ and both equations (\ref{compatibilita con metrica}) and (\ref{torsion free}) of the statement hold since they hold for both the real and the imaginary part of the metric.
		
		Finally, for all $X, Y\in \Gamma(T\mathbb M)$, we have
		\[
		d\inner{ X, Y}= -i\ d\inner{\JJJ X, Y}= -i \inner{D( \JJJ X),Y}+\inner{X, DY},
		\]
		so, by (\ref{compatibilita con metrica}), $\langle D(\JJJ X), Y\rangle=i\langle DX, Y\rangle= \langle \JJJ DX, Y\rangle$.
		Hence $D (\JJJ X)=\JJJ DX$ since the bilinear form is non-degenerate.
	\end{proof}

	\begin{Remark} A direct computation shows that, exactly as in Pseudo-Riemannian Geometry, the Levi-Civita connection $D$ for a hRm $\langle \cdot, \cdot \rangle$ is explicitly described by
		\begin{equation}
		\label{Levi-Civita}
		\begin{split}
		\langle D_X Y, Z\rangle= \frac 1 2 \Big( X\inner{Y,Z} + Y\inner{Z,X} - Z\inner{X, Y} + \\
		+\inner{[X,Y],Z} -\inner{[Y,Z],X}+ \inner{[Z,X],Y} \Big)
		\end{split}
		\end{equation}
		for all $X,Y, Z\in \Gamma(T\mathbb M)$.
	\end{Remark}

	The notion of Levi-Civita connection $D$ for the metric $\inners$ leads to the standard definition of the $(1,3)$-type and $(0,4)$-type \emph{curvature tensors}, that we will denote with $R$, defined by
	\[
	R(X,Y,Z, T):= -\inner{R(X,Y)Z,T}=- \inner{ \nabla_X \nabla_Y Z - \nabla_Y \nabla_X Z - \nabla_{[X,Y]} Z, T }
	\]
	for all $X,Y,Z, T \in \Gamma(T\mathbb M)$.

	Since $D$ is the Levi-Civita connection for $Re\inner{\cdot, \cdot}$ and for $Im\inner{\cdot, \cdot}$, it is easy to check that all of the standard  symmetries of curvature tensors for (the Levi-Civita connections of) pseudo-Riemannian metrics hold for (the Levi-Civita connections of) holomorphic Riemannian metrics as well, namely all Equations \eqref{eq: symmetries of R pseudo-Riem} hold. So, for instance, 
	\[
	R(X,Y,Z,T)= -R(X, Y, T, Z)= R(Z, T, X, Y)= -R(Z, T, Y, X).
	\]
	Since the $(0,4)$-type $R$ is obviously $\C$-linear on the last component, we conclude that it is $\C$-multilinear.
	
	\begin{Definition}	
		\label{def sectional curvature hRm}
		A \emph{non-degenerate plane} of $T_p \mathbb M$ is a complex vector subspace $\mathcal V< T_p \mathbb M$ with $dim_\C\mathcal V=2$ and such that $\inner{\cdot, \cdot}_{|\mathcal V}$ is a non degenerate bilinear form.
		
		For holomorphic Riemannian metrics, we can define the {complex sectional curvature} of a nondegenerate complex plane $\mathcal V= Span_\C (V,W)<T_{p}M$ as
		\begin{equation}
		\label{def curvatura}
		K(Span_\C (V,W))=\frac{-\inner{R(V,W)V,W}}{\|V\|^2 \|W\|^2 - \inner{V,W}^2}\newline.
		\end{equation}
		This definition of $K(Span_\C (V,W))$ is well-posed since $R$ is $\C$-multilinear and satisfies Equations \eqref{eq: symmetries of R pseudo-Riem}.
	\end{Definition}

	\begin{Example} 
		\label{Esempio gruppi di Lie}
		The Killing form on complex semisimple Lie groups.
		
		Let $G$ be a complex Lie group with unit $e$. 
		Consider on $T_e G\cong Lie(G)$ the \emph{Killing form} \[Kill\colon T_e G \times T_e G \to \C,\] defined by $Kill(u,v):= tr(ad(u)\circ ad(v) )$. By standard Lie Theory, the Killing form is $\C$-bilinear and symmetric. $Kill$ is also \emph{$Ad$-invariant}, i.e. for all $g\in G$ \[Kill(Ad(g) \cdot, Ad(g) \cdot)=Kill,\] and \emph{$ad$-invariant}, i.e. for all $v\in Lie(G)$ \[Kill(ad(v)\cdot, \cdot)+ Kill(\cdot, ad(v)\cdot)=0.\]
		
		Assume that $G$ is \emph{semisimple}, namely that $Kill$ is a non-degenerate bilinear form.
		
		For all $g\in G$ one can push-forward $Kill$ via $L_g$ to define a non degenerate $\C$-bilinear form on $T_g G$, namely 
		\[Kill_g (X,Y):= \Big((L_g)_* Kill \Big)(X,Y)=  Kill\bigg( \big(d_g (L_g^{-1})\big)(X), \big(d_g (L_g^{-1})\big)(Y) \bigg)\]
		for all $X,Y\in T_g G$.
		By $Ad$-invariance, the analogous bilinear form $(R_g)_* Kill$ is such that $(R_g)_* Kill=(L_g)_* Kill$.
		
		This defines globally a nowhere-degenerate section $Kill_{\bullet} \in \Gamma\big(Sym^2(T^*G)\big)$ such that, for all $X,Y$ left-invariant vector fields, hence holomorphic vector fields, $Kill_{\bullet}(X,Y)$ is constant, hence a holomorphic function. Since any other holomorphic vector field can be seen as a combination of left-invariant vector fields with holomorphic coefficients, we conclude that $Kill_{\bullet}$ is a holomorphic Riemannian metric.
		
		Let $X, Y, Z$ be left-invariant vector fields for $G$, then $Kill_\bullet(Y,Z)$ and $Kill_\bullet(X,Z)$ are constant functions and, by $ad$-invariancy, $Kill_\bullet({[Z,Y],X})+Kill_\bullet({[Z,X],Y})=0$. In conclusion if $D$ is the Levi-Civita connection for $Kill_\bullet$, then, by the explicit expression (\ref{Levi-Civita}), we get that 
		\[
		D_X Y =\frac 1 2 [X,Y]
		\]
		for all $X,Y$ left-invariant vector fields.

		We can also explicitly compute the curvature tensor.
		
		Let $[\cdot,\cdot]$ denote the Lie bracket on $T_e G=\Lieg$. For all $X_0, Y_0, Z_0\in \Lieg$, let $X, Y, Z$ be the corresponding left-invariant vector fields. Then,
		\begin{equation}
			\label{eq: R in gruppi Lie complessi}
		\begin{split}
		R(X_{0},Y_{0}) Z_{0} :=&( D_X D_Y Z - D_Y D_X Z - D_{[X,Y]} Z)_{|_{e}}=\\
		=& \frac 1 4 \Big( [X, [Y,Z]]- [Y, [X,Z]]\Big)_{|_{e}}- \frac 1 2 [[X,Y],Z]_{|_{e}}\\
		=&\frac 1 4 \Big( [X, [Y,Z]]+[Y,[Z,X]] + [Z,[X,Y]] \Big)_{|_{e}} - \frac 1 4 [[X,Y],Z]_{|_{e}}=\\
		=& -\frac 1 4 [[X_{0},Y_{0}],Z_{0}].
		\end{split}
		\end{equation}

	\end{Example}

	\section{Holomorphic Riemannian space forms}
	
	We will say that a connected holomorphic Riemannian manifold $\mathbb M=(\mathbb M, \inner{\cdot, \cdot})$ is \emph{complete} if the Levi-Civita connection is complete, namely if geodesic curves can be extended indefinitely, equivalently if the exponential map is defined on the whole $T\mathbb M$.
	
	We will call $\emph{holomorphic Riemannian space form}$ a complete, simply connected holomorphic Riemannian manifold with constant sectional curvature.
	
	\begin{Theorem}
		\label{Theorem space forms}
		For all integer $n\ge 2$ and $k\in \C$ there exists exactly one holomorphic Riemannian space form of dimension $n$ with constant sectional curvature $k$ up to isometry.
	\end{Theorem}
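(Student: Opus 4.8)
The plan is to prove uniqueness and existence separately, in both cases closely following the template of the pseudo-Riemannian Theorem \ref{Teo: unicita space form pseudo-Riem}, while exploiting two features that the complex setting provides. For uniqueness, I would first note that the two algebraic inputs of the real proof carry over unchanged. On one hand, the curvature tensor of a holomorphic Riemannian metric is $\C$-multilinear and satisfies the symmetries \eqref{eq: symmetries of R pseudo-Riem}, so the purely algebraic Proposition \ref{prop: R determined by sectional curvatures} and Corollary \ref{cor: R di space form pseudo-Riem} apply: constant sectional curvature $k$ forces $R(X,Y)Z=-k(\inner{X,Z}Y-\inner{Y,Z}X)$. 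On the other hand, given two space forms $\M$, $\mathbb N$ of the same dimension and curvature $k$ and base points $x_0,y_0$, there exists a $\C$-linear isometry $L\colon T_{x_0}\M\to T_{y_0}\mathbb N$ of the complex inner products. Here lies the first simplification: over $\C$ every non-degenerate symmetric $\C$-bilinear form is equivalent to the standard one, so there is no signature obstruction and hence a single candidate space form for each $(n,k)$, in contrast with the pseudo-Riemannian count.

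Next I would run the Cartan--Ambrose--Hicks machine (Theorem \ref{Teo: Ambrose-Cartan-Hicks}). By Proposition \ref{prop: Levi Civita hRm} the Levi-Civita connection $D$ of $\inners$ coincides with that of $Re\inners$ and satisfies $D\JJJ=0$; completeness and simple connectivity are part of the space-form hypotheses, and parallel transport along any broken geodesic is a $\C$-linear isometry because $D$ preserves both $\inners$ and $\JJJ$. Exactly as in Step $3$ of the proof of Theorem \ref{Teo: unicita space form pseudo-Riem}, the explicit formula for $R$ above, together with the fact that each $L_\gamma$ is a $\C$-linear isometry, shows that $L_\gamma$ intertwines the curvature tensors of $\M$ and $\mathbb N$. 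Cartan--Ambrose--Hicks then yields a covering map $f\colon\M\to\mathbb N$ with $d_xf=L_\gamma$ at each point; since every $L_\gamma$ is a $\C$-linear isometry, $f$ is a local isometry which commutes with $\JJJ$, hence holomorphic, and simple connectivity promotes it to a global isometry of holomorphic Riemannian manifolds.

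For existence I would exhibit explicit models. For $k=0$ take $\C^n$ with the flat holomorphic metric $dz_1^2+\dots+dz_n^2$: its connection is the trivial one, so it is flat, its geodesics are the affine lines $t\mapsto tv+p$ (hence complete), and $\C^n$ is simply connected. For $k\neq 0$ take the complex quadric $Q_k=\{z\in\C^{n+1}\mid \inner{z,z}=1/k\}$, where $\inners$ is the standard $\C$-bilinear form on $\C^{n+1}$, with the induced holomorphic metric; this generalizes the quadric model of $\XXX_n$ announced for $k=-1$. The geometric properties are the complex analogues of Proposition \ref{Prop: quadriche pseudo-Riem}: since $\inner{z,z}=1/k\neq0$ the position vector $z$ is non-isotropic, so $T_zQ_k=z^\perp$ carries a non-degenerate induced form; fixing $\sqrt k\in\C$, the field $\nu=\sqrt k\,z$ is a unit normal with shape operator $\sqrt k\,\mathrm{id}$, and the same computation as in Proposition \ref{Prop: quadriche pseudo-Riem}, now carried out over $\C$, gives constant curvature $k$. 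Solving $\ddot\gamma=-k\inner{\dot\gamma(0),\dot\gamma(0)}\gamma$ in closed form, with entire (complex hyperbolic, trigonometric, or affine) solutions, shows $Q_k$ is complete.

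The main obstacle, and the genuine point of departure from the real case, is simple connectivity of $Q_k$. Whereas some real quadrics fail to be simply connected — e.g.\ $\AdS^n$, which forced passing to a universal cover in Proposition \ref{Prop: quadriche pseudo-Riem} — the complex quadric is already simply connected for $n\ge2$, so no universal cover is needed. Via the biholomorphic rescaling $z\mapsto\sqrt k\,z$ every $Q_k$ is diffeomorphic to $Q=\{z\in\C^{n+1}\mid\inner{z,z}=1\}$; writing $z=x+iy$ with $x,y\in\R^{n+1}$, the defining equation becomes $|x|^2-|y|^2=1$ together with $\langle x,y\rangle_{\R^{n+1}}=0$, and the map $(x,y)\mapsto(x/|x|,y)$ identifies $Q$ with the tangent bundle $T\Sph^n$. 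Since $\Sph^n$ is simply connected for $n\ge2$, the contractible-fiber bundle $T\Sph^n\to\Sph^n$ forces $\pi_1(T\Sph^n)\cong\pi_1(\Sph^n)=\{1\}$. This completes existence and, combined with the uniqueness argument, establishes exactly one holomorphic Riemannian space form for each $(n,k)$.
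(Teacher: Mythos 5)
Your proof is correct and, for uniqueness, coincides with the paper's argument step for step: the same algebraic lemmas force $R(X,Y)Z=-k(\inner{X,Z}Y-\inner{Y,Z}X)$, the absence of signature over $\C$ gives a single candidate linear isometry $L$, parallel transport along broken geodesics produces $\C$-linear isometries, and Cartan--Ambrose--Hicks together with simple connectivity yields the global isometry. For existence the models are also the same up to normalization: you vary the quadric $Q_k=\{\inner{z,z}=1/k\}$ with the induced metric, while the paper keeps the single quadric $\XXX_n=\{\inner{z,z}=-1\}$ and rescales its metric by $-1/k$; these are interchangeable. The one place where you genuinely diverge is the verification of constant curvature: you compute the shape operator $\sqrt{k}\,\mathrm{id}$ and appeal to ``the same computation as in Proposition \ref{Prop: quadriche pseudo-Riem} carried out over $\C$'', which implicitly requires a Gauss equation for hypersurfaces of the flat holomorphic manifold $(\C^{n+1},\inners_0)$ --- a statement the paper never formulates (its Theorem \ref{Teo Gauss-Codazzi} concerns only immersions into $\XXX_{n+1}$), although the proof does carry over verbatim with ambient curvature $0$, so this is a routine omission rather than an error. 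The paper instead exhibits a totally geodesic isometric copy of $\Hyp^2$ inside $\XXX_n$ as a real linear slice and uses transitivity of $SO(n+1,\C)$ on non-degenerate complex planes, so the curvature is read off from a real submanifold where the value $-1$ is classical. Your route is more uniform but obliges you to actually write down the complex flat-ambient Gauss equation; the paper's route borrows the answer from real hyperbolic geometry. The completeness argument (entire solutions of $\ddot\gamma=-k\inner{\dot\gamma(0),\dot\gamma(0)}\gamma$) and the simple connectivity argument ($Q\cong T\Sph^n$ for $n\ge 2$) match the paper's.
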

	
	We first prove uniqueness, then existence will follow from an explicit description of the space forms. 
	
	\subsection{Uniqueness}
	
	The proof of uniqueness follows exactly the same idea as in Section \ref{section: uniqueness pseudoRiem space forms}.
	
	\iffalse
	\begin{Lemma}
		\label{mappa differenziale determina isometria}
		Let $f,g\colon \mathbb M\to \mathbb M'$ be two isometries between holomorphic Riemannian manifolds with $\mathbb M$ connected such that, for some point $p\in \mathbb M$, $f(p)=g(p)$ and $f_{*p}=g_{*p}\colon T_p \mathbb M\to T_{f(p)} \mathbb M'$. Then $f\equiv g$.
	\end{Lemma}
	\begin{proof}
		It is sufficient to observe that $f$ and $g$ are isometries for the real part of the holomorphic Riemannian metric, then the thesis follows by standard Pseudo-Riemannian Geometry.
	\end{proof}
	
	\fi
	
	\begin{Lemma}
		\label{lemma space form}
		If $(\mathbb M,\inners)$ is a manifold of constant sectional curvature $k\in \C$, then, for any $W,X,Y,Z\in \Gamma(T\mathbb M)$,
		\[R(X,Y,Z,W)= k (\inner{X,Z} \inner{Y,W} - \inner{Y,Z}\inner{X,W}),\]
		or equivalently
		\[
		R(X,Y)Z= -k (\inner{X,Z}Y - \inner{Y,Z}X).
		\]
		In particular, $R(X,Y)Z\in Span(X,Y)$.
	\end{Lemma}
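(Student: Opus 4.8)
The plan is to mirror exactly the pseudo-Riemannian argument leading to Corollary \ref{cor: R di space form pseudo-Riem}, exploiting that the curvature tensor of a holomorphic Riemannian metric enjoys the same symmetries \eqref{eq: symmetries of R pseudo-Riem} and is $\C$-multilinear, as recalled just before Definition \ref{def sectional curvature hRm}. I would introduce the comparison tensor
\[
R'(X,Y,Z,W) := k\big(\inner{X,Z}\inner{Y,W} - \inner{Y,Z}\inner{X,W}\big).
\]
Since $\inners$ is $\C$-bilinear, $R'$ is $\C$-multilinear, and a direct check shows it satisfies all the relations in \eqref{eq: symmetries of R pseudo-Riem}. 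The whole goal is then to prove $R = R'$, after which the equivalent $(1,3)$-formula follows at once from $R(X,Y,Z,W) = -\inner{R(X,Y)Z,W}$ together with nondegeneracy of $\inners$, and the inclusion $R(X,Y)Z\in Span(X,Y)$ is immediate from that formula.

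First I would record the polarization principle: if two $\C$-multilinear $(0,4)$-tensors satisfy \eqref{eq: symmetries of R pseudo-Riem} and agree on the diagonal, i.e. $R(X,Y,X,Y) = R'(X,Y,X,Y)$ for all $X,Y$, then they coincide. This is precisely the content of Proposition \ref{prop: R determined by sectional curvatures}, whose proof uses only additivity and the symmetry relations and never the ground field being $\R$; it therefore transfers verbatim to the $\C$-multilinear setting, applied to the tensor $R - R'$.

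It then remains to verify the diagonal identity $R(X,Y,X,Y) = R'(X,Y,X,Y)$ for all $X,Y\in T_p\mathbb{M}$, at every point $p$. For pairs $(X,Y)$ spanning a \emph{nondegenerate} plane this is exactly the hypothesis of constant sectional curvature $k$: unwinding Definition \ref{def sectional curvature hRm} gives $-\inner{R(X,Y)X,Y} = k(\|X\|^2\|Y\|^2 - \inner{X,Y}^2)$, and since $R(X,Y,X,Y) = -\inner{R(X,Y)X,Y}$, the right-hand side is precisely $R'(X,Y,X,Y)$.

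The main obstacle is passing to arbitrary pairs, including linearly dependent ones and those spanning degenerate planes, since the polarization of the previous step feeds in sums such as $(X, Y+Z)$ whose spans need not be nondegenerate. I would resolve this by a density argument: at each point the quantity $\|V\|^2\|W\|^2 - \inner{V,W}^2$ is a holomorphic (polynomial) function on $T_p\mathbb{M}\times T_p\mathbb{M}$ which is not identically zero, as any pair of $\inners$-orthonormal vectors makes it equal to $1$ and such vectors exist because $\inners_p$ is nondegenerate. Hence its nonvanishing locus is the complement of a proper analytic subvariety and is therefore dense. As both $(X,Y)\mapsto R(X,Y,X,Y)$ and $(X,Y)\mapsto R'(X,Y,X,Y)$ are continuous, their agreement on this dense set forces agreement everywhere. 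With the diagonal identity established for all pairs, the polarization principle yields $R = R'$, completing the proof.
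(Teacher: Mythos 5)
Your proof is correct and follows essentially the same route as the paper's: introduce the comparison tensor $R'$, note that it is $\C$-multilinear and satisfies the symmetries \eqref{eq: symmetries of R pseudo-Riem}, check the diagonal identity, and conclude via Proposition \ref{prop: R determined by sectional curvatures}. The paper's proof is a one-line appeal to the pseudo-Riemannian argument and does not even mention the issue of degenerate planes; your density argument on the nonvanishing locus of $\|V\|^2\|W\|^2-\inner{V,W}^2$ is a legitimate and welcome filling-in of that implicit step.
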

	\begin{proof}
		Exactly as in the pseudo-Riemannian case, the tensor $R'(X, Y, Z, W)= k (\inner{X,Z}\inner{Y,W} - \inner{Y,Z}\inner{X,W})$ satisfies Equations \eqref{eq: symmetries of R pseudo-Riem} and is such that $R'(X,Y,X,Y)=R(X,Y,X,Y)$ for all $X,Y\in \Gamma(TM)$, so the thesis follows from \ref{prop: R determined by sectional curvatures}.
	\end{proof}

	We can now prove uniqueness in Theorem \ref{Theorem space forms}.
	\begin{proof}[Proof of Theorem $\ref{Theorem space forms}$ -Uniqueness]
		Let $({\mathbb M},\inner{\cdot, \cdot}_{\mathbb M}),(\mathbb M',\inner{\cdot,\cdot}_{\mathbb M'})$ be two holomorphic Riemannian space forms with the same dimension $n$ and constant sectional curvature $k\in \C$. Fix any $x\in \mathbb M$ and $y\in \mathbb M'$. Since all the non-degenerate complex bilinear forms on a complex vector space are isomorphic, there exists a linear isometry $L\colon (T_x \mathbb M, \inner{\cdot, \cdot}_{\mathbb M})\to (T_y \mathbb M', \inner{\cdot, \cdot}_{\mathbb M'})$. 
		
		With exactly the same notation as in Section \ref {section: uniqueness pseudoRiem space forms}, every piecewise geodesic curve $\gamma\colon[0,\tau]\to \mathbb M$ starting at $x$ induces a piecewise geodesic curve $\gamma_L$ on $\mathbb M$. 
		Since the Levi Civita connection of a hRm is the Levi-Civita connection for both the real and the imaginary part of the metric, the parallel transport along $\gamma$ defines an isomorphism $T_{\gamma(0)}\mathbb M \to T_{\gamma(\tau)}\mathbb M$ which is an isometry w.r.t. both $Re\inners_{\mathbb M}$ and $Im\inners_{\mathbb M}$, hence it is a linear isometry for $\inners_{\mathbb M}$. The same for $\inners_{\mathbb N}$. The composition of $L$ with the parallel transports alogn $\gamma$ and $\gamma_L$ defines an isometry
		\[
		L_\gamma\colon T_{\gamma(\tau)}{\mathbb M}\to T_{\gamma_L(\tau)} \mathbb M'.\]
		The explicit description of the curvature tensor in Lemma $\ref{lemma space form}$ and the fact that $L_\gamma$ is an isometry imply that 
		\[
		L_{\gamma(\tau)} ^* (R^{\mathbb M'})=  R^{\mathbb M}
		\] 
		both when $R$ is meant as a $(0,4)$-tensor and as a $(1,3)$ tensor.
		
		By Theorem \ref{Teo: Ambrose-Cartan-Hicks}, there exists a covering map $f\colon M \to N$ such that for every piecewise geodesic curve $\gamma\colon [0, \tau] \to M$ starting at $x$, $d_{\gamma(\tau)}f=L_\gamma$, hence $f$ is a local isometry. Since $M$ and $N$ are both simply connected, $f$ is an isometry.
	\end{proof}

	\subsection{Existence - the spaces $\mathbb{X}_n \cong \faktor {SO(n+1, \C)}{SO(n, \C)}$}
	
	The simplest example of holomorphic Riemannian manifold is $\C^n$ with the usual inner product \[\inner{ z, w}_0= \sum_{i=1}^n z_i w_i.\]

	In this thesis, we will focus on another important class of examples.
	
	Consider the complex manifold
	\[\mathbb{X}_n=\{ z\in \C^{n+1}\ |\  \inner{ z,  z}_0= \sum z_i^2=-1 \}.\]
	
	The restriction to $\mathbb X_n$ of the metric $\inners_0$ of $\C^{n+1}$ defines a holomorphic Riemannian metric. Indeed,
	\[
	T_{ z} \mathbb{X}_n =  z ^\bot= \{ v\in \C^{n+1}\ |\ \inner{ v, z}_0=0\}
	\]
	and the restriction of the inner product to $ z^\bot$ is non degenerate since $\inner{ z, z}_{0}\ne 0$; finally, since $\mathbb X_n\subset \C^{n+1}$ is a complex submanifold, local holomorphic vector fields on $\mathbb X_n$ extend to local holomorphic vector fields on $\C^{n+1}$, and this allows to prove that the inherited metric is in fact holomorphic.

	Since $SO(n+1,\C)$ acts by isometries on $\C^{n+1}$ with its inner product, it acts by isometries on $\mathbb{X}_n$ as well and the action on $\mathbb X_n$ is transitive by linear algebra. Moreover, for $ e=(0, \dots, 0, i)\in \mathbb{X}_n$, \[Stab(e)=\begin{pmatrix}
	SO(n, \C) & \underline 0\\
	^t\underline 0 & 1
	\end{pmatrix}\cong SO(n,\C);\] we conclude that $\mathbb X _n$ has a structure of homogeneous space \[\mathbb{X}_n\cong \faktor {SO(n+1, \C)}{SO(n, \C)}.\]
	
	\begin{Theorem}
		\label{Theorem explicit space forms}
		The $n-$dimensional space form with constant sectional curvature $k\in \C$ is:
		\begin{itemize}
			\item $\C^n$ with the usual inner product $\inners_0$ for $k=0$;
			\item $(\mathbb{X}_n, -\frac {1} k\inner{\cdot, \cdot})$ for $k \in \C^*$.
		\end{itemize}	
	\end{Theorem}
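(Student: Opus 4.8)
The plan is to verify directly that the two manifolds listed are holomorphic Riemannian space forms of the prescribed curvature; since uniqueness has just been proved, this identifies them as \emph{the} space forms. The case $k=0$ is immediate and parallels part $1)$ of Proposition \ref{Prop: quadriche pseudo-Riem}: $\C^n$ is simply connected, in standard holomorphic coordinates the formula \eqref{Levi-Civita} gives the flat connection $\mathrm{d}_X Y = Jac(Y)\cdot X$, so $R\equiv 0$ and all sectional curvatures vanish, and the geodesics $t\mapsto tx+y$ are defined for all $t$, giving completeness.

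For $k\in\C^*$ I would first show that the model $(\mathbb{X}_n,\inners_0)$ has constant sectional curvature $-1$ and then obtain general $k$ by a constant rescaling. The computation mirrors the quadric computation in Proposition \ref{Prop: quadriche pseudo-Riem}. At $z\in\mathbb{X}_n$ one has $T_z\mathbb{X}_n=z^\perp$, and the position vector is a unit normal, $\nu(z)=z$ with $\inner{\nu,\nu}_0=-1$. The Levi-Civita connection $D$ of the induced metric is the tangential projection of the flat ambient connection $\mathrm d$: it is torsion-free and metric-compatible on tangent fields, hence Levi-Civita by Proposition \ref{prop: Levi Civita hRm}. Writing $\mathrm d_X Y = D_X Y + \II(X,Y)\nu$ and differentiating the identity $\inner{Y,z}_0\equiv 0$ gives $\II(X,Y)=\inner{X,Y}_0$. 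Since $\mathrm d_X\nu=\mathrm d_X z = X$, expanding the flatness relation $\mathrm d_X\mathrm d_Y Z-\mathrm d_Y\mathrm d_X Z-\mathrm d_{[X,Y]}Z=0$ and extracting its tangential component yields
\[
R(X,Y)Z=\inner{X,Z}_0\,Y-\inner{Y,Z}_0\,X~,
\]
which by Lemma \ref{lemma space form} is exactly the curvature of a space of constant sectional curvature $-1$.

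Completeness follows from the geodesic equation, exactly as in the pseudo-Riemannian quadric case: a geodesic has normal ambient acceleration, so $\ddot\gamma=c\,\gamma$ with $c=\inner{\dot\gamma(0),\dot\gamma(0)}_0\in\C$ constant along $\gamma$, and this linear ODE has entire ($\cosh/\cos$-type, or affine when $c=0$) solutions defined for all $t$. To pass to general $k$, set $g:=-\tfrac1k\inners_0$, which has the same Levi-Civita connection and geodesics as $\inners_0$ (hence is again complete and simply connected) and the same $(1,3)$-curvature tensor; since $\inners_0=-k\,g$, the displayed formula reads $R(X,Y)Z=-k\big(g(X,Z)Y-g(Y,Z)X\big)$, which by Lemma \ref{lemma space form} means constant sectional curvature $k$. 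This identifies $(\mathbb{X}_n,-\tfrac1k\inners_0)$ as a space form of curvature $k$.

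The step I expect to require the most care is checking that $\mathbb{X}_n$ is \emph{simply connected}, which is needed both for it to qualify as a space form and for the uniqueness theorem to apply. The cleanest route is an explicit diffeomorphism $\mathbb{X}_n\cong TS^n$: writing $z=x+iy$ with $x,y\in\R^{n+1}$, the equation $\sum z_i^2=-1$ becomes $\|y\|^2=\|x\|^2+1$ together with $\langle x,y\rangle=0$, so $y\neq 0$ and the assignment $z\mapsto\big(y/\|y\|,\,x\big)$ is a diffeomorphism onto $\{(u,w):u\in S^n,\ w\perp u\}=TS^n$. Since $S^n$ is simply connected for $n\ge 2$ and $TS^n$ retracts onto its zero section, $\mathbb{X}_n$ is simply connected precisely in the range $n\ge 2$ of the statement (for $n=1$ one gets the annulus $TS^1$, which is not simply connected).
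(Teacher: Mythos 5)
Your proof is correct, and the overall architecture matches the paper's: verify the two models directly, reduce $k\in\C^*$ to the case $k=-1$ by a constant rescaling (which preserves the Levi-Civita connection), get completeness from the explicit geodesic ODE $\ddot\gamma=\inner{\dot\gamma,\dot\gamma}_0\,\gamma$, and get simple connectedness from the diffeomorphism $\mathbb{X}_n\cong TS^n$. The one step where you take a genuinely different route is the computation of the sectional curvature. The paper first observes that the curvature is \emph{constant} because $SO(n+1,\C)$ acts transitively on non-degenerate complex $2$-planes of the tangent spaces, and then pins down the value $-1$ by exhibiting a totally geodesic isometric copy of $\Hy^2$ inside $\mathbb{X}_n$ and evaluating on a real basis there. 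You instead run the Gauss-equation computation for the quadric in flat $\C^{n+1}$: using $\nu(z)=z$, $\II=\inners_0$, $\mathrm d_X\nu=X$, you obtain the full curvature tensor $R(X,Y)Z=\inner{X,Z}_0Y-\inner{Y,Z}_0X$ in one stroke, and read off constant curvature $-1$ from its form. (Your appeal to Lemma \ref{lemma space form} is in the converse direction to how it is stated, but that direction is immediate: plugging the displayed $R$ into the definition of sectional curvature gives $K\equiv -1$ for every non-degenerate plane.) Your version is more self-contained — it does not presuppose the curvature of $\Hy^2$ or the transitivity of the isometric action on non-degenerate planes — at the cost of a slightly longer computation; the paper's version is shorter but leans on the homogeneity of $\mathbb{X}_n$ and on Remark \ref{rmk Super hRm}'s totally geodesic embeddings, which it wants to record anyway for later use.
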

	
	It is clear that $\C^n$ is the flat space form: its real and imaginary parts are indeed the pseudo-Riemannian spaces $\R^{n,n}$ which are flat pseudo-Riemannian space forms and for which the curvature tensor is constantly zero.
	
	It is also clear that, if we prove that $\mathbb{X}_n=(\mathbb{X}_n, \inner{\cdot, \cdot})$ is the space form of constant sectional curvature $-1$, then, for all $\alpha \in \C^*$, $(\mathbb{X}_n, -\frac {1} \alpha\inner{\cdot, \cdot})$  has the same Levi-Civita connection and is the space form of constant sectional curvature $\alpha$.
	
	We give a proof of the fact that $\mathbb{X}_n=(\mathbb{X}_n, \inner{\cdot, \cdot})$ is the space form of constant sectional curvature $-1$ among the following remarks on the geometry of this space.
	
	\begin{Remark}		
		\label{rmk Super hRm}
		\begin{enumerate}

			\item \label{rmk Super hRm orientation}
			For future use, we will need to define a sort of extension of the concept of "oriented frame" for the manifold $\XXX_n$.
			
			Define a \emph{complex orientation} for $\mathbb{X}_n$ as a holomorphic non-zero $n$-form $\omega_0$ with the property of being $SO(n+1,\C)$-invariant. At least one complex orientation exists: any $\C$-multilinear $n$-form on $T_e \mathbb{X}_n$ is $SO(n, \C)$-invariant (indeed, it is $SL(n, \C)$-invariant), so the action of $SO(n+1,\C)$ defines a well-posed $SO(n+1,\C)$-invariant $n$-form on $\mathbb{X}_n$. 
			
			By Lemma \ref{mappa differenziale determina isometria}, we have that \[SO(n+1,\C)\cong \Isom_0 (\mathbb{X}_n)\cong \Isom(\mathbb{X}_n, \omega_0).\]
			
			Moreover, $\Isom_0 (\mathbb{X}_n)$ is a subgroup of index $2$ of $\Isom(\mathbb{X}_n)\cong O(n+1,\C)$.

			\item The Levi-Civita connection $D$ for $\mathbb{X}_n$ is the tangent component of the canonical connection $\mathrm d$ for $\C^{n+1}$: in other words, seeing $T \mathbb X_{n}$ as a subbundle of $T\C^{n+1}_{|\mathbb  X_{n} } \equiv \mathbb X_n \times\C^{n+1}$,  smooth vector fields on $T \mathbb X_{n}$ can be seen as smooth functions $\mathbb X_n\to \C^{n+1}$, then 
			
			\[D_{X( z)} Y = (\mathrm d_{X( z)} Y)^T \] 
			
			where $(\mathrm d_{X( z)} Y)^T $ is the tangent component of the vector $ (\mathrm d_{X( z)} Y)^T$ along $T_p \mathbb{X}_n$.
			
			Exactly as in pseudo-Riemannian geometry, this follows by observing that $\mathrm d^T$ is a linear connection for $\mathbb{X}_n$ which satisfies the same properties as the Levi-Civita connection.
			
			\item 
			Consider the problem of finding a geodesic $\gamma\colon \R\to \XXX_n$ with initial data $\gamma(0)$ and $\dot \gamma(0)$. The condition $D_{\dot\gamma(t)}\dot \gamma(t)= (\frac{d \dot\gamma(t)}{dt})^T=0$ leads to $\ddot \gamma(t)\in Span_\CC(\gamma(t))$,
			hence differentiating the condition $\inner{\dot \gamma, \dot\gamma}\equiv \inner{\dot \gamma(0), \dot\gamma(0)}$ one gets the ODE \[\ddot \gamma(t)=  \inner{\dot \gamma(t), \dot \gamma(t)} \gamma(t).\]
			The ODE admits a solution for all initial data $\gamma(0)$ and $\dot \gamma (0)$ and leads to the follwing explicit description of the exponential map:
			\begin{equation}
			\label{descrizione geodetiche}
			\begin{split}
			\exp_{ z} \colon T_{ z}\mathbb{X}_n &\to \mathbb{X}_n\\
			v & \mapsto
			\begin{cases}
			\cosh(\sqrt{\inner{ v,  v}} )  x + \frac{\sinh ({\sqrt{\langle  v,  v\rangle}})}{\sqrt{\langle  v,  v\rangle}}  v \quad &\text{if $\langle  v,  v \rangle\ne 0$}\\
			 x +  v \quad &\text{if $\langle  v,  v \rangle =0$ .}
			\end{cases}
			\end{split}
			\end{equation}
			Notice that the description of the exponential map is independent from the choice of the square root.
			
			Moreover, this explicit description allows to see that for all $ z\in \XXX_n$ and $ v\in T_{ x} \XXX_n$, the map
			\begin{align*}
			\C &\to \mathbb X_n\\
			\lambda& \mapsto exp_{ z}(\lambda  v)
			\end{align*}
			is holomorphic. We will refer to these maps $\lambda \mapsto exp_{ z}(\lambda  v)$ as \emph{complex geodesics}.

			\item The space $\mathbb{X}_n$ is diffeomorphic to $T S^n$. 
			
			Regard $T S^n$ as $\{( u,  v)\in \R^{n+1}\times \R^{n+1}\ |\ \| u\|_{\R^{n+1}}=1, \inner{ u,  v}_{\R^{n+1}} =  0 \}$. Then a diffeomorphism $\mathbb{X}_n \xrightarrow{\sim} T S^n$ is given by
			\begin{equation*}
			 z =  x + i  y \mapsto  (\frac{ 1}{\| y\|_{\R^{n+1}}} y,  x),
			\end{equation*}
			which is well-posed since \[\inner{ x + i  y,  x +i y}_0=-1 \Longleftrightarrow \begin{cases}
			\| y\|^2_{\R^{n+1}}= \| x \|^2_{\R^{n+1}}+1>0 &\\
			\inner{ x,  y}_{\R^{n+1}}=0
			\end{cases}.\]
			
			In particular, $\mathbb{X}_n$ is simply connected for $n \ge 2$.
			
			\item For $n\ge 2$, $\mathbb{X}_n$ has constant sectional curvature $-1$. 
			
			It is clear that it has constant sectional curvature since $SO(n,\C)$ acts transitively on nondegenerate complex planes of $(T_{ e} \mathbb{X}_n,\inners)$. In order to compute the value of the sectional curvature, observe that the embedding 
			\begin{align*}
			\R^{2,1} &\hookrightarrow \C^{n+1}\\
			(x_1, x_2, x_3) &\mapsto (x_1, x_2, 0, \dots, 0, ix_3)
			\end{align*}
			induces an isometric embedding 
			\[
			\Hy^2 \hookrightarrow \mathbb X_n
			\]
			which is admissible and totally geodesic (i.e. it sends geodesics into geodesics or equivalently the pull-back of the Levi-Civita connection is the Levi-Civita connection) by Formula \eqref{descrizione geodetiche}.
			
			Finally, one computes the sectional curvature of a complex $2$-plane tangent generated by the image of $\Hy^2$ by choosing a basis lying in the real tangent space to the image of $\Hy^2$, implying that the sectional curvature is $-1$.
			
			\item
			\label{rmk Super hRm immersion pseudo-Riemannian}
			More generally, one gets that $Q_{-1}^{m,n-m}$ defined as in Proposition \ref{Prop: quadriche pseudo-Riem} isometrically embeds in $\mathbb X_n$:
			\begin{equation}
			\label{eq: quadrica pseudo-Riem dentro X_n}
			\begin{split}
			Q_{-1}^{m,n-m} &\to \mathbb X_{n}\\
			 x=(x_1, \dots, x_{n+1}) &\mapsto (x_1,\dots, x_m, ix_{m+1}, \dots, ix_{n+1}),
			\end{split}
			\end{equation}
			hence $\mathbb F_{-1}^{m,n-m}$ admits an isometric immersion into $\mathbb X_n$.
			
			Clearly these embeddings are planar, i.e. they are the restrictions of a totally geodesic embedding of $\R^{n+1}$ into $\C^{n+1}$.
			In fact, the embeddings of $Q_{m,n,-1}$ are totally geodesic as well: in general if $\mathcal W<\C^{n+1}$ is a real vector subspace such that ${\inners_0}_{|\mathcal W}$ is a real bilinear form, then Formula \eqref{descrizione geodetiche} shows that $\mathcal W\cap \XXX_n$ is totally geodesic.
			
			\item In the above remark, the condition for which ${\inners_0}_{|W}$ is real is essential.
			
			For instance, let $ v_1, v_2\in \C^{n+1}$ be such that $\inner{ v_i,  v_j}_0 =\delta_{ij}$ and define $\mathcal W=Span_\R (i v_1, \sqrt i  v_2)$. Then, $\mathcal W\cap \XXX_n$ has real dimension $1$ and passes by $i v_1\in \mathbb X_n$ where it is tangent to the vector $\sqrt i  v_2$. 
			By Formula \eqref{descrizione geodetiche} it is clear that the geodesic of $\XXX_n$ by $i v_1$ and tangent to $\sqrt i  v_2$ is not contained in $\mathcal W$. 
			
			This example shows both that the intersection of $\XXX_n$ with a generic real vector subspace of $\C^{n+1}$ need not be totally geodesic and that smooth totally geodesic submanifolds of $\XXX_n$ need not be planar.

			\item \label{rmk Super hRm isometries PX}
		 Consider the projective quotient $\Proj \colon \C^{n+1}\setminus 0 \to \CP^n$.
			
			Then ${\Proj}_{|\mathbb{X}_n}$ is a two-sheeted covering on its image $\PXX_n$, which corresponds to the complementary in $\CP^n$ of the non-degenerate hyperquadric \[\partial \PXX_n=\{z_1^2+ \dots +z_{n+1}^2=0\}.\]
			
			Since $-id\in O(n+1,\C)$, the hRm on $\XXX_n$ descends to a hRm on $\PXX_n$. The $O(n+1,\C)$-invariancy of $\mathbb{X}_n$ and its hRm implies that the action of $SO(n+1,\C)$ on $\CP^n$ fixes $\PXX_n$ globally (hence the complementary hyperquadric) and acts by isometries on it.

			The group of isometries of  $\PXX_n$ is given by $PO(n+1,\C)$ acting on the whole $\CP^n$ as a subgroup of $PGL(n+1,\C)$. Conversely, it is simple to check that $PO(n+1,\C)$ coincides exactly with the subgroup of elements of $PGL(n+1,\C)$ that fix $\PXX_n$ globally (or, equivalently, that fix $\partial \PXX_n$ globally).

			\item From the expression of the exponential map given in Formula \eqref{descrizione geodetiche} it is immediate to see that, if $\mathcal W$ is a complex subspace of $\mathbb C^{n+1}$ then $\mathcal W\cap\XXX_n$ is a totally geodesic complex submanifold. Equivalently, the intersection of $\PXX_n$ with a projective subspace of $\CP^n$ is a totally geodesic complex submanifold. 
			
			It turns out that the intersection of $\PXX_n$ with a complex projective line $\ell$ concides with the exponential of a complex subspace of (complex) dimension $1$ of the tangent plane at any point of $\ell\cap \PXX_n$. As a result, any totally geodesic complex submanifold is in fact the intersection of $\PXX_n$ with a complex projective subspace.
			
			We remark that if ${\inners_0}_{|\mathcal W}$ is not degenerate then $\mathcal W\cap\XXX_n$ is isometric to $\XXX_{k}$, where $k=\dim_{\mathbb C}(\mathcal W)-1$.
			
			\item If $\ell=\mathbb P(W)$ is a projective line in $\CP^n$, then $\ell\cap \partial \PXX$ can contain either one point (if $\ell$ is tangent to $\partial \PXX$) or two points (if the intersection is transverse). In the latter case the restriction of the product
			$\inners$ to $\mathcal W$ admits two isotropic directions, and in particular is not degenerate. In the former case there is only one isotropic direction that in fact is contained in the orthogonal subspace of $\mathcal W$. The product in this case is degenerate and the restriction of the metric on $\ell\cap \partial\PXX$ is totally isotropic.

		\end{enumerate}

	\end{Remark}

We will show that the spaces $\mathbb X_2$ and $\mathbb X_3$ have an interesting geometric description in relation with $\Hyp^3$: we will devote Chapter \ref{Chapter SL} to showing that $\mathbb X_3$ is isometric to $\SL$ with its Killing form up to a scale, and Section \ref{section: G3 e hRm} to showing that $\mathbb X_2$ can be seen as the space of geodesics of $\Hyp^3$. 

Before that, let us give a quick look at $\mathbb X_1$.

	\section{$\mathbb X_1$ as $\C^*$ endowed with a holomorphic quadratic differential}
	
	By regarding $\mathbb X_1$ as $\{(z_1,z_2)\in \C^2\ |\ z_1^2 +z_2^2=-1\}=\{(i\cos(z),i\sin(z))\ |\ z\in \C\}$, one can define the map
	\begin{align*}
	F_1\colon \mathbb X_1 &\xrightarrow{\sim} \C^*\\
	(z_1,z_2)=(i\cos(z),i\sin(z))&\mapsto z_2-iz_1=e^{iz}
	\end{align*}
	which is in fact a biholomorphism.
	
	Denote with $\inners$ also the push-forward Riemannian holomorphic metric on $\C^*$.
	
	A straightforward calculation shows that the automorphisms of $(\C^*,\inners)$ correspond to multiplications by a constant: indeed, for all $\begin{pmatrix}
	\cos(\alpha) & - \sin(\alpha)\\
	\sin(\alpha) & \cos(\alpha)
	\end{pmatrix}\in SO(2,\C)$, $\alpha\in \C$, we have 
	\[
	F_1\ ^t\bigg( \begin{pmatrix}
	\cos(\alpha) & - \sin(\alpha)\\
	\sin(\alpha) & \cos(\alpha)
	\end{pmatrix} \cdot \begin{pmatrix}
	i\cos(z)\\
	i\sin(z)
	\end{pmatrix} \bigg)= F_1( (i\cos(z+\alpha), i\sin(z+\alpha))= e^{i\alpha}e^{iz}.
	\]
	
	Using this isotropy, we can compute explicitly the holomorphic Riemannian metric. At each point $z\in \C^*$, we have 
	\[\inners_z= \lambda(z) dz^2
	\]
	for some holomorphic function $\lambda\colon \C^*\to \C$. 
	
	Invariance by constant multiplication implies $\lambda(z)=\frac {\lambda(1)} {z^2}$. 
	In order to compute $\lambda (1)$, set the condition for which the vector $F_{*(0,i)} \bigg(\begin{pmatrix} 1 \\ 0 \end{pmatrix}\bigg)= -i \in T_i \C^*$ has norm $1$ to get $\lambda(1)=1$: we conclude that 
	\[
	\inners= \frac {dz^2}{z^2}.
	\]
	
	In general, we can see $\mathbb X_1$ as $\CP^1\setminus \{ p_1,p_2\}$ and the holomorphic Riemannian metric as some holomorphic quadratic differential on $\CP^1$ with exactly two poles of order $2$ in $p_1$ and $p_2$.
	
	Finally, a direct computation via $F$ shows that geodesics in $(\C^*, \inners)$ are all of the form $t\mapsto \mu_1 e^{t\mu_2}$ with $\mu_1\in \C^*$, $\mu_2\in \C$: the images correspond either to a circumference with center $0$, a straight ray connecting $0$ and $\infty$ or a spiraling ray connecting $0$ and $\infty$. As a result, the geodesics for $\inners$ coincide with the geodesics for the flat structure $\frac{|dz|^2}{|z|^2}$ induced by the metric seen as a holomorphic quadratic differential. 
	\vspace{6mm}

	\begin{tikzpicture}[scale=0.4]
	\draw[domain=-500:-200, hobby,densely dashed] plot ({-exp(\x/200)*sin(\x)},{exp(\x/200)*cos(\x)});
	\draw[domain=-200:400, hobby] plot ({-exp(\x/200)*sin(\x)},{exp(\x/200)*cos(\x)});
	\draw[domain=400:410, hobby, densely dashed] plot ({-exp(\x/200)*sin(\x)},{exp(\x/200)*cos(\x)});
	\draw (12,0) ellipse (1 and 1); 
	\draw (24,0.5)--(24,5);
	\draw[densely dashed] (24,5)--(24,6);
	\draw[densely dashed] (24,0)--(24,0.5);
	\coordinate [label=above:$i$] (A) at (0,1); \fill (A) circle (3pt);
	\coordinate [label=above:$i$] (B) at (12,1); \fill (B) circle (3pt);
	\coordinate [label=right:$i$] (C) at (24,1); \fill (C) circle (3pt);
	\coordinate [label=below:$0$] (D) at (0,0); \fill (D) circle (1.5pt);
	\coordinate [label=below:$0$] (E) at (12,0); \fill (E) circle (1.5pt);
	\coordinate [label=below:$0$] (F) at (24,0); \fill (F) circle (1.5pt);
	\end{tikzpicture}

	\chapter{$\SL$ as a holomorphic Riemannian space form}
	\label{Chapter SL}

		\section{$\mathbb X_3$ and $\SL$ with its Killing form} 
		We show that $\mathbb{X}_3$ is isometric (up to a scale) to the complex Lie group $\SL=\{A\in Mat(2,\CC)\ |\ det(A)=1 \}$ endowed with the holomorphic Riemannian metric given by the Killing form, globally pushed forward from $I_2$ equivalently by left or right translation.
	
	Consider on $Mat(2,\CC)$ the non-degenerate quadratic form given by $M\mapsto -det(M)$, which corresponds to the complex bilinear form 
	\[
	\inner{M,N}_{Mat_2}=\frac{1}{2}\bigg(tr(M\cdot N)- tr(M)\cdot tr(N) \bigg).
	\]
	In the identification $T Mat(2,\CC)=Mat(2,\CC)\times Mat(2,\CC)$, this complex bilinear form induces a holomorphic Riemannian metric on $Mat(2,\CC)$. 
	
	Also notice that the action of $\SL\times \SL$ on $Mat(2,\CC)$ given by $(A,B)\cdot M:= A 
	M B^{-1}$ is by isometries, because it preserves the quadratic form.

	Since all the non-degenerate complex bilinear forms on complex vector spaces of the same dimension are isomorphic, there exists a linear isomorphism \[F\colon (\C^4,\inners_0) \to (Mat(2,\CC),\inners_{Mat_2}),\]
	such as, for instance,
		\begin{equation}
	\label{iso C4 Mat(2,C)}
	F(z_1, z_2, z_3, z_4)=
	\begin{pmatrix}
	-z_1- iz_4 &  -z_2-iz_3\\
	-z_2 + iz_3 & z_1-iz_4
	\end{pmatrix}.
	\end{equation}
	
	  As a linear isometry, $F$ is an isometry between the natural induced holomorphic Riemannian metrics on $\C^4$ and $Mat(2,\CC)$: such isometry $F$ restricts to an isometry between $\mathbb X_3$ and $\SL$, where $\SL$ is equipped with the submanifold metric.

	For all $A\in \SL$,  $T_A \SL=   A\cdot T_{I_2} \SL$ and \[T_{I_2} \SL=\asl= \{M\in Mat(2,\C) \ |\ tr(M)=0 \}\] can be endowed with the structure of complex Lie algebra associated to the complex Lie group $\SL$.
	
	The induced metric on $\SL$ at a point $A$ is given by
	\[
	\inner{AV,AW}_A= \inner{V,W}_{I_2}= \inner{V,W}_{Mat_2}= \frac{1}{2} tr(V\cdot W),
	\]
	
	as a consequence we have $\inners_{I_2}=\frac{1}{8} Kill$ where $Kill$ is the Killing form of $\asl$. Since $\asl$ is semisimple, its Killing form is $Ad$-invariant, and $\inners_A$ is the symmetric form on $T_A \SL$ induced by pushing forward $\inners$ equivalently by right or left translation by $A$.
	
	We will often focus on \[\PSL=\faktor{\SL}{\{\pm I_2\}}\cong \Proj \mathbb X_3\] as well, which is a 2:1 isometric quotient of $\SL$.

\subsection{The group of automoporphisms}

	\begin{Proposition}
		\label{prop: Isom of SL}
		Define $Aut(\SL):=\Isom_0( \SL, inners)$. Then
		\[Aut(\SL)\cong \faktor{\SL \times \SL}{\{\pm(I_2,I_2)\}}\] where the action of $\SL\times \SL$ is given by
		\begin{equation}
		\label{automorfismi SL}
		\begin{split}
		{\SL \times \SL}\times \SL \to \SL\\
		(A,B)\cdot C := A\ C\ B^{-1}
		\end{split}.
		\end{equation}.
		In particular, the $Ad$ representation $Ad\colon \SL \to Isom_0(\asl, \inners)\cong SO(3,\C)$ is surjective.
	\end{Proposition}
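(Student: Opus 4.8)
The plan is to exhibit $\faktor{\SL\times\SL}{\{\pm(I_2,I_2)\}}$ inside $\Isom_0(\SL)$ through the stated action and then to show by a dimension-and-connectedness argument that it fills up the whole identity component. Concretely, I would introduce the homomorphism $\Phi\colon\SL\times\SL\to\Isom_0(\SL)$ given by $\Phi(A,B)(C)=ACB^{-1}$. Since $\det(ACB^{-1})=\det C$, each $\Phi(A,B)$ preserves $\SL$, and since the action of $\SL\times\SL$ on $Mat(2,\CC)$ preserves $M\mapsto-\det M$ — hence the bilinear form $\langle\cdot,\cdot\rangle_{Mat_2}$ and its restriction to $\SL$ — each $\Phi(A,B)$ is an isometry. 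It lies in the identity component because $\SL\times\SL$ is connected and $\Phi(I_2,I_2)=\mathrm{id}$. Computing the kernel: $\Phi(A,B)=\mathrm{id}$ forces $AB^{-1}=\Phi(A,B)(I_2)=I_2$, so $A=B$, and then $ACA^{-1}=C$ for all $C\in\SL$ means $A$ is central, i.e. $A=\pm I_2$; hence $\ker\Phi=\{\pm(I_2,I_2)\}$ and $\Phi$ descends to an injection of $\faktor{\SL\times\SL}{\{\pm(I_2,I_2)\}}$ into $\Isom_0(\SL)$.

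Next I would prove that $\Phi$ is onto. By the first part of Remark \ref{rmk Super hRm} together with the isometry $\mathbb X_3\cong\SL$ established above, $\Isom_0(\SL)\cong\Isom_0(\mathbb X_3)\cong SO(4,\C)$, which is connected and of complex dimension $\binom{4}{2}=6=\dim_\C(\SL\times\SL)$. As $\ker\Phi$ is discrete, its Lie algebra is trivial, so $d\Phi_{(I_2,I_2)}$ is injective; between spaces of equal dimension it is then an isomorphism, and $\Phi$ is a local biholomorphism at the identity, hence everywhere by translation. Its image is therefore an open subgroup of the connected group $\Isom_0(\SL)$, so it is the whole group. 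This yields the isomorphism $\faktor{\SL\times\SL}{\{\pm(I_2,I_2)\}}\cong\Isom_0(\SL)$.

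For the statement on $Ad$, I would specialize to the stabilizer of $I_2$. From $\Phi^{-1}(\Isom_0(\SL)_{I_2})=\{(A,A):A\in\SL\}$ and the surjectivity just proved, $\Phi$ maps the diagonal onto $\Isom_0(\SL)_{I_2}$, so this stabilizer consists of the conjugations $C\mapsto ACA^{-1}$ and is isomorphic to $\PSL$. Its differential at the fixed point $I_2$ sends $\Phi(A,A)$ to $Ad(A)\in SO(3,\C)$, and this differential map is injective by Lemma \ref{mappa differenziale determina isometria}. On the other hand the homogeneous-space description $\mathbb X_3\cong\faktor{SO(4,\C)}{SO(3,\C)}$ shows that $\Isom_0(\SL)_{I_2}\cong SO(3,\C)$; hence $Ad$ is an injective homomorphism between two connected groups each of complex dimension $3$, and the same open-subgroup argument forces it onto $SO(3,\C)$. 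Thus $Ad\colon\SL\to SO(3,\C)$ is surjective, with kernel $\{\pm I_2\}$.

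The only nonformal point is surjectivity, and I expect the whole difficulty to be funnelled into the dimension count: everything works because $\dim_\C(\SL\times\SL)=\dim_\C SO(4,\C)$ and $SO(4,\C)$ is connected, so once the identification $\Isom_0(\SL)\cong SO(4,\C)$ from the space-form theory is granted, no genuine computation remains — the kernel and the explicit differential $d_{I_2}\Phi(A,A)=Ad(A)$ being entirely formal.
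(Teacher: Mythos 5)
Your proposal is correct and follows essentially the same route as the paper: identify $\Isom_0(\SL)\cong\Isom_0(\XXX_3)\cong SO(4,\C)$, observe that the action $(A,B)\cdot C=ACB^{-1}$ preserves $-\det$ and hence is by isometries, compute the kernel $\{\pm(I_2,I_2)\}$, and conclude surjectivity (both for $\Phi$ and for $Ad$) by the equality of complex dimensions together with connectedness. You merely spell out more explicitly the kernel computation and the open-subgroup step that the paper leaves implicit.
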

	\begin{proof}
		Since $Isom_0(\mathbb X_3)\cong Isom_0(\C^{4}, \inners_0)\cong SO(4, \C)$, one has  $Isom_0(\SL)\cong Isom_0(Mat(2,\C))\cong SO(4,C)$. Since, for all $A, B\in \SL$, $det(A\ M\ B^{-1} )= det(M)$, the action above preserves the quadratic form, hence it is by isometries. We therefore have a homomorphism $\SL\times \SL \to Isom(\SL)$ whose kernel is $\{\pm(I_2,I_2)\}$. Finally, since \[dim_\C \bigg(\faktor{\SL\times \SL}{ \{\pm(I_2,I_2)\} }\bigg)=6= dim_\C Isom(\SL),\] we conclude that $Isom_0 (\SL)\cong \faktor{\SL\times \SL}{ \{\pm(I_2,I_2)\}}$
		
		The same dimensional argument shows that $Ad$ is surjective.
	\end{proof}

	\begin{Remark}
		As a consequence of Proposition \ref{prop: Isom of SL}, one gets that
		\[
		\Isom_0(\PSL. \inners) \cong \PSL\times \PSL
		\]
		where $\PSL\times \PSL$ acts on $\PSL$ by left and right multiplication. Moreover, $Ad\colon \PSL\to \Isom_0 (\asl)$ is an isomorphism.
	\end{Remark}
 
\subsection{Cross-product}

 \begin{Lemma}
 	For all $V,W\in \asl$,
 	\[
 	\| [V, W] \|^2 =-4 \|V\|^2 \|W\|^2 + 4 \inner{V,W}^2.
 	\]
 	As a result, if $V, W$ are orthonormal, then $ (V, W, \frac 1 {2i} [V,W] )$ is an orthonormal basis.
 \end{Lemma}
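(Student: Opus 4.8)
The plan is to reduce everything to trace identities for traceless $2\times 2$ matrices. Recall from the previous subsection that the restriction of the ambient form to $\asl=\{M\in Mat(2,\C)\mid tr(M)=0\}$ is $\inner{V,W}=\tfrac12 tr(VW)$, so in particular $\|V\|^2=\tfrac12 tr(V^2)$. First I would record two purely algebraic facts coming from the Cayley--Hamilton theorem. For traceless $V$ one has $V^2-tr(V)V+\det(V)I_2=0$, and since $tr(V)=0$ and $\det(V)=\tfrac12\big(tr(V)^2-tr(V^2)\big)=-\|V\|^2$, this gives
\[
V^2=\|V\|^2 I_2.
\]
Polarizing (applying the identity to $V+W$ and subtracting the contributions of $V$ and $W$) yields the companion relation
\[
VW+WV=2\inner{V,W}\,I_2.
\]

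With these in hand, the core is to expand the square of the commutator. Writing $[V,W]=VW-WV$,
\[
[V,W]^2=VWVW+WVWV-VWWV-WVVW.
\]
The two middle terms collapse immediately via $V^2=\|V\|^2I_2$ and $W^2=\|W\|^2I_2$, giving $VWWV+WVVW=2\|V\|^2\|W\|^2 I_2$. For the remaining two I would rewrite the inner factor using $WV=2\inner{V,W}I_2-VW$ (and symmetrically $VW=2\inner{V,W}I_2-WV$ in $WVWV$); together with $V^2,W^2\propto I_2$ and the anticommutator relation this reduces $VWVW+WVWV$ to $\big(4\inner{V,W}^2-2\|V\|^2\|W\|^2\big)I_2$. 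Collecting,
\[
[V,W]^2=\big(4\inner{V,W}^2-4\|V\|^2\|W\|^2\big)I_2.
\]
Taking the trace and halving, since $\|[V,W]\|^2=\tfrac12 tr([V,W]^2)$, gives exactly the claimed formula.

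For the final assertion I would specialize to an orthonormal pair, so $\|V\|^2=\|W\|^2=1$ and $\inner{V,W}=0$, whence $\|[V,W]\|^2=-4$. It remains to check orthogonality to $V$ and $W$, which is immediate from cyclicity of the trace: $\inner{[V,W],V}=\tfrac12 tr\big((VW-WV)V\big)=\tfrac12\big(tr(V^2W)-tr(V^2W)\big)=0$, and symmetrically for $W$. Since $\inners$ is $\C$-bilinear, the rescaling by $1/(2i)$ is designed precisely to fix the norm: $\big\|\tfrac1{2i}[V,W]\big\|^2=\tfrac{1}{(2i)^2}\|[V,W]\|^2=\tfrac{-1}{4}\cdot(-4)=1$. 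Three mutually orthogonal unit vectors in the $3$-dimensional nondegenerate space $\asl$ are automatically a basis, completing the proof.

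The main point to watch — rather than a genuine obstacle — is the bookkeeping of scalar factors and, crucially, the fact that $\inners$ is $\C$-bilinear rather than Hermitian, so that $\|\lambda X\|^2=\lambda^2\|X\|^2$; this is exactly why the imaginary scalar $1/(2i)$, and not $1/2$, is the correct normalization once $\|[V,W]\|^2$ turns out to be the negative number $-4$.
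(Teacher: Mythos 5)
Your proof is correct, and it takes a genuinely different route from the paper's. The paper computes $\|[V,W]\|^2=-\det(VW-WV)$ by factoring out $\det(V)\det(W)$ and reducing to the trace of the "commutator" $VWV^{-1}W^{-1}$ via the identities $\det(V-I)=1-tr(V)+\det(V)$ and $tr(V^2)=(tr V)^2-2\det(V)$; this forces it to assume $V,W$ non-isotropic (hence invertible) and to invoke density of non-isotropic vectors in $\asl$ to conclude in general. Your argument instead stays entirely inside the associative algebra: Cayley--Hamilton for traceless matrices gives $V^2=\|V\|^2I_2$, polarization gives $VW+WV=2\inner{V,W}I_2$, and a four-term expansion of $[V,W]^2$ collapses to a scalar multiple of the identity, after which one takes half the trace (legitimate since a commutator is automatically traceless). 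This buys you a proof valid for \emph{all} $V,W$ with no invertibility hypothesis and no limiting argument, at the modest cost of a slightly longer multiplication. Note in passing that your sign convention $V^2=-\det(V)I_2=\|V\|^2I_2$ is the correct one; the paper's Equation \eqref{formula3} as printed ($V^2=\det(V)I_2$) carries a sign slip that its own subsequent computation does not actually use in that form. The orthogonality step is also handled differently but equivalently: you use cyclicity of the trace where the paper invokes $ad$-invariance of $\inners$; these are the same fact in two guises. The normalization discussion for $\tfrac{1}{2i}[V,W]$, exploiting $\C$-bilinearity so that $\|\lambda X\|^2=\lambda^2\|X\|^2$, matches the paper's intent exactly.
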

 \begin{proof}
 	It is enough to prove the formula for $V$ and $W$ non-isotropic, since non-isotropic vectors are dense in $\asl$.
 	
 	Observe that for $V \in Mat(2,\C)$ the following equations stand:
 	\begin{gather}
 	\label{formula1} Tr(V^2)= (Tr(V))^2 -2 det(V); \\
 	\label{formula2}det(V- I)= 1 -Tr(V)+det(V); \\
 	\label{formula3}\text{if $V\in \asl$, then} \qquad V^2=det(V) I_2
 	\end{gather}
 	Hence, with a straightforward computation, if $V,W \in \asl$ are non isotropic, we get that
 	\begin{align*}
 	\|[V,W]\|^2 = &-det(V\ W\ -\ W\ V)= -det(V\ W\ V^{-1}W^{-1} -\ I)det(V)det(W)= \\
 	&= -det(V) det(W)(2 - Tr(V\ W\ V^{-1} W^{-1}))=\\
 	&= -det(V)det(W) \Bigg(2- \frac 1 {det(V)det(W)} Tr\Big( (V\ W)^2\Big) \Bigg) =\\
 	&=-2 det(V)det(W) +Tr ( (V\ W)^2 ) - 2 det(V)det(W)=\\
 	&= - 4 \|V\|^2 \|W\|^2 + 4 \inner{V,W}^2.
 	\end{align*}
 	
 	If $V, W$ are orthonormal, then $\|[V, W]\|^2=-4$. Moreover, since $\inners$ is $ad$-invariant on $\asl$, we have $\inner{[V,W],V}= \inner{[V,W],W}=0$, proving that $(V, W, \frac 1 {2i} [V,W])$ is an orthonormal basis.
 \end{proof} 
 
 The previous Lemma suggests the definition of a \emph{cross-product} 
 \[
 \times \colon \asl \times \asl \to \asl
 \] given by
 \[
 V \times W := \frac{1}{2i} [V,W].
 \]
 
 Observe that, for all $A\in \PSL$, \[Ad(A)(V) \times Ad(A) (W)= Ad(A) (V\times W)\]
 since $Ad(A)$ is a Lie-algebra endomorphism of $\asl$. As a result, the cross product can be extended to a $(1,2)$ tensor on $\PSL$, equivalently by left or right pull-back.
 
 We also say that a basis $(V_1, V_2, V_3)$ for $\asl$ is \emph{positive} if $V_3 = V_1 \times V_2$.

 We can fix a positive orthonormal basis given by $\mathbb {B}^0 =(V_1 ^0, V_2^0, V_3^0)$, where
 \begin{equation}
 \label{base positiva}
 \quad V_1 ^0 = \begin{pmatrix}
 1 & 0 \\
 0 & -1
 \end{pmatrix}, \quad V_2 ^0 =\begin{pmatrix}
 0 & 1 \\
 1 & 0
 \end{pmatrix}, \quad V_3 ^0 =\begin{pmatrix}
 0 & -i \\
 i & 0
 \end{pmatrix}. 
 \end{equation}

 Equivalently, one can consider the complex $3$-form $\omega_0$ on $\asl$ defined by $\omega_0 (V_1^0, V_2^0, V_3^0)=1$. Such $\omega$ is $Ad$-invariant and hence can be extended to a global $3$-form $\omega_0$ on $\PSL$: one can therefore check that, for all $U,V\in T_A \PSL$, the cross product $U\times V$ is the unique vector in $T_A \PSL$ such that, $\forall W\in T_A \PSL$, $\omega_0(U,V,W) = \inner{U\times V, W}$. So, a orthonormal frame ${V_1, V_2, V_3}$ is positive if and only if $\omega_0(V_1, V_2, V_3)=1$.

 Exactly as it happened for $\mathbb X_3$ in Remark \ref{rmk Super hRm}.\ref{rmk Super hRm orientation},
 \[
 Aut(\SL)=Isom_0( \SL, \inners)=Isom(\SL, \inners, \omega_0).
 \]

\subsection{$\asl$ and isometries of $\Hy^3$}

As we showed in Section \ref{sec model Hn}, the group $\PSL$ acts on $\overline \C$ via M\"obius maps: the action furnishes an isomorphism with the group of biholomorphisms of the 2-sphere $\PSL\cong Aut(\C \Proj^1)$, which induces a canonical isomorphism $\PSL\cong {Isom_0} (\Hy^3)$.

There are several interesting correspondences between the Killing form of $\asl$, equal to $\inners$ defined above up to a scale, and the geometry of $\Hyp^3$. Recall that $\inner{V,V}=-det(V)$ for all $V\in \asl$.

Since the isometry $F$ defined above is a linear map, by Equation \eqref{descrizione geodetiche}, we get that the exponential map at $I_2\in \PSL$ is given by 
\begin{equation}
\label{eq geodetiche SL}
\exp(V)= \cosh(\sqrt{\inner{V,V}}) I_2 + \frac {\sinh (\sqrt{\inner{V,V}})} {\sqrt{\inner{V,V}}}  V,
\end{equation}
for all $V\in \asl$, where we denote $\frac{\sinh (\sqrt{\inner{V,V}})} {\sqrt{\inner{V,V}}}=1$ if $\inner{V,V}=0$.

As a result, $Tr(exp(V))= 2\cosh(\|V\|)$ and, if we see $\PSL$ as the automorphisms group of $\Hy^3$, we can determine the nature of $exp(V)$ by the norm of $V$. Since the type of the isometry $A\in \PSL$ can be deduced by $tr(A)$ (See \cite{martelli}), we have the following.

\begin{Prop}
	\label{prop: tipo isometria H3}
	Define \[\Lambda:=\{W\in \asl \ |\ \|W\|\in 2i \pi \Z, \|W\|\ne 0\}\cup \{0\}.\] Then, for $V\in \asl$:
	\begin{itemize}
		\item $exp(V)=id$ if and only if $V\in \Lambda$;
		\item $exp(V)$ is elliptic if and only if $V\notin \Lambda$ and $\| V\| \in i \R \setminus \{0\}$;
		\item $exp(V)$ is parabolic if and only if $\|V\|=0$ and $V\ne 0$;
		\item $exp(V)$ is (purely) hyperbolic if and only if $\|V\| \in \R + 2i\pi \Z$ with $\nobreak{Re(\|V\|)\ne 0}$;
		\item $exp(V)$ is loxodromic elsewise.
	\end{itemize}
\end{Prop}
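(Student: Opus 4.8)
The plan is to read every case off the explicit exponential in Equation \eqref{eq geodetiche SL}. Set $w=\|V\|=\sqrt{\inner{V,V}}$; this is only defined up to sign, but that will never matter since only $\cosh(w)$, $w^2$ and the unordered pair $\{e^{w},e^{-w}\}$ ever enter. Formula \eqref{eq geodetiche SL} reads $\exp(V)=\cosh(w)I_2+\tfrac{\sinh(w)}{w}V$, so $\mathrm{Tr}(\exp(V))=2\cosh(w)$. Since $V\in\asl$ is traceless with $\det V=-\inner{V,V}=-w^2$, its eigenvalues are $\pm w$, hence the eigenvalues of $\exp(V)$ are $e^{\pm w}$, and $\exp(V)$ is diagonalizable exactly when $V$ is, i.e. whenever $w\neq 0$ or $V=0$. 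This reduces the whole proposition to translating the standard eigenvalue/trace classification of isometries of $\Hy^3$ (recalled from \cite{martelli}, and used in the sentence preceding the statement) into conditions on $w=a+bi$ with $a,b\in\R$.

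First I would isolate the central locus. Because $I_2$ and $V$ are linearly independent whenever $V\neq 0$, the equation $\exp(V)=I_2$ forces the two coefficients to match those of $I_2$: namely $\tfrac{\sinh(w)}{w}=0$, i.e. $\sinh(w)=0$, and $\cosh(w)=1$. The first condition gives $w\in i\pi\Z$, and since $\cosh(i\pi k)=\cos(\pi k)=(-1)^k$, the second condition selects the even multiples, so $w\in 2i\pi\Z$. Together with the trivial case $V=0$, this is precisely $V\in\Lambda$, which proves the first bullet. Note that the isotropic case $w=0$ with $V\neq 0$ is excluded here, since there $\exp(V)=I_2+V\neq I_2$.

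Next I would expand $\cosh(a+bi)=\cosh(a)\cos(b)+i\sinh(a)\sin(b)$ and $e^{w}=e^{a}(\cos b+i\sin b)$, and match the remaining cases by their eigenvalues. One gets $|e^{w}|=1\iff a=0$, i.e. $w\in i\R$, which after removing the central points $2i\pi\Z$ is the elliptic case; $e^{w}\in\R^{*}$ with $e^{w}>0\iff b\in 2\pi\Z$ and $a\neq 0$, i.e. $w\in\R^{*}+2i\pi\Z$, the purely hyperbolic case; the non-diagonalizable locus $w=0$, $V\neq 0$, where $\exp(V)=I_2+V$ is unipotent, is parabolic; and everything else, namely $a\neq 0$ together with $b\notin 2\pi\Z$, is loxodromic. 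A short check that $\Lambda$ and these four conditions partition $\asl$ (every $w$ lies in exactly one) then finishes the proof.

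The main obstacle is purely bookkeeping between $\SL$ and $\PSL$: the distinction between $I_2$ and the central element $-I_2$ is exactly what pins the identity lattice to $2i\pi\Z$ rather than $i\pi\Z$, and at the boundary $\mathrm{Tr}(\exp(V))=\pm 2$ one must separate a central element from a parabolic one. Passing through the honest eigenvalues $e^{\pm w}$ of the matrix $\exp(V)$, rather than through the trace alone (which is insensitive precisely on the locus $\mathrm{Tr}=\pm 2$ and only determined up to sign in $\PSL$), is what cleanly resolves these boundary cases and yields the stated conditions.
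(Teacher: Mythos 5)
Your overall strategy is exactly the one the paper itself gestures at (it gives no written proof beyond the formula $Tr(\exp(V))=2\cosh(\|V\|)$ and a reference to the trace classification of isometries), and the computation of the eigenvalues $e^{\pm w}$ of $\exp(V)$, the identification of the non-diagonalizable locus $w=0$, $V\ne 0$ with the parabolic case, and the first bullet read inside $\SL$ are all fine. The genuine gap is in your dictionary between eigenvalues of the matrix $\exp(V)\in\SL$ and the type of the induced isometry of $\Hy^3$: the type depends only on the class of $\exp(V)$ in $\PSL=\SL/\{\pm I_2\}$, so the unordered pair $\{e^{w},e^{-w}\}$ may only be used up to a simultaneous change of sign. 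Your criteria ``elliptic $\iff$ $|e^{w}|=1$ and $w\notin 2i\pi\Z$'' and ``purely hyperbolic $\iff$ $e^{w}\in\R_{>0}\setminus\{1\}$'' are sufficient but not necessary. Concretely, for $w=i\pi$ one gets $\exp(V)=-I_2$, which is the identity of $\PSL$ and not an elliptic isometry, yet your second bin contains it; and for $w=a+i\pi$ with $a\ne 0$ one gets $\exp(V)$ conjugate to $\mathrm{diag}(-e^{a},-e^{-a})$, which acts on $\overline\C$ as $z\mapsto e^{2a}z$ and is therefore purely hyperbolic, yet your requirement $e^{w}>0$ files it under ``loxodromic''. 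Your closing ``partition check'' cannot detect this, because the five sets do partition $\asl$ --- the labels on two of the cells are simply wrong.

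Carrying out the computation with the quantity that is actually well defined on $\PSL$, namely $tr^2(\exp(V))=4\cosh^2(\|V\|)$, shows that the lattice $2i\pi\Z$ must be replaced by $i\pi\Z$ both in $\Lambda$ and in the hyperbolic bullet: the ``only if'' directions of the first, second, fourth and fifth bullets fail precisely on the loci $\|V\|\in i\pi(2\Z+1)$ and $\|V\|\in\R+i\pi(2\Z+1)$ with $Re(\|V\|)\ne 0$. (The statement you are proving shares this slip; note that the paper immediately afterwards composes $\exp$ with $Fix$ on $\asl\setminus\Lambda$, which requires exactly that $\exp$ avoid the identity of $\PSL$ there, so the intended $\Lambda$ must be the larger lattice.) In short, you correctly identified the $\pm I_2$ ambiguity as ``the main obstacle'', but your resolution is internally inconsistent: you detect the identity in $\SL$ while the remaining types are unavoidably notions about the image in $\PSL$, and the two conventions cannot be mixed within a single partition of $\asl$.
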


Now, define the set
\begin{gather*}
\overline{\mathbb{G}}_\sim= \faktor{\partial\Hyp^3 \times \partial \Hyp^3}{\sim: (x,y)\sim (y,x)}
\end{gather*} 
and consider the map 
\begin{equation}
	\label{eq: mappa Fix su bar G}
Fix\colon \PSL \setminus\{I_2\} \to \overline{\mathbb G}_\sim
\end{equation}
that assigns to each isometry of $\Hy^3$ different from $I_2$ its fixed points, counted as a double point if the isometry is parabolic. If $A\in \PSL \setminus \{I_2\}$ is not parabolic, we can also see $Fix(A)$ as an unoriented unparametrized maximal geodesic of $\Hyp^3$.

The group $\PSL$ acts on $\asl \setminus \Lambda$ via $Ad$, acts on $\PSL \setminus \{I_2\}$ by conjugation and acts on $\overline {\mathbb G}_\sim$ as $\PSL$ is the isometry group of $\Hy^3$: the maps 
\[
\asl \setminus \Lambda \xrightarrow{exp} \PSL\setminus \{I_2\} \xrightarrow{Fix} \overline{\mathbb G}_\sim
\] 
are all equivariant with respect to these actions.

\begin{Proposition}
	\label{Prop: geodetiche complesse stesso asse}
	Let $V, W \in \asl \setminus \Lambda$.
	\begin{itemize}
		\item[1)] $Fix\circ \exp (V)= Fix\circ exp(W)$ if and only if $Span_\C(V)= Span_\C(W)$. As a result
		\[
		Fix \circ exp \colon \mathbb P \asl\to \overline{\mathbb G}_\sim 
		\]
		is a bijection.
		\item[2)] If $\inner{V,W}=0$ with $\inner{V,V}, \inner{W,W}\ne 0$, then the two geodesics $Fix(exp(V))$ and $Fix(exp(W))$ meet orthogonally.
		\item[3)] If $\inner{V,W}=0$ with $\inner{V,V}=0$, then the point fixed by $exp(V)$ is also fixed by $exp(W)$. 
	\end{itemize}
\end{Proposition}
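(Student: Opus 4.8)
The plan is to translate everything into the action of $\PSL$ on $\partial\Hy^3\cong\CP^1$ by Möbius maps, under which a non-identity isometry $A$ has fixed-point set equal to the (projectivised) eigenlines of any lift of $A$ to $\SL$. The key elementary observation is that, by \eqref{eq geodetiche SL},
\[
\exp(V)=\cosh(\|V\|)\,I_2+\frac{\sinh(\|V\|)}{\|V\|}\,V
\]
is a combination $aI_2+bV$ of $I_2$ and $V$; for $V\notin\Lambda$ Proposition \ref{prop: tipo isometria H3} guarantees $\exp(V)\neq\mathrm{id}$, so the coefficient $b$ does not vanish and $\exp(V)$ has exactly the same eigenlines as $V$. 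Hence $Fix(\exp(V))$ is precisely the unordered pair of eigenlines of the traceless endomorphism $V$, and this pair is unchanged if $V$ is rescaled, so it depends only on $Span_\C(V)$. This already yields the ``if'' direction of part 1) and shows that $Fix\circ\exp$ descends to a well-defined map $\Phi\colon\mathbb{P}\asl\to\overline{\mathbb{G}}_\sim$.

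For the converse and the bijectivity in part 1), I would prove directly that sending a class $[V]\in\mathbb{P}\asl$ to its unordered pair of eigenlines is a bijection onto $\overline{\mathbb{G}}_\sim=\mathrm{Sym}^2(\CP^1)$. Given distinct lines $[u]\neq[v]$ in $\CP^1$, a traceless $V$ with these eigenlines has eigenvalues $\pm\alpha$ (because $\mathrm{tr}(V)=0$), hence is determined up to the scalar $\alpha$, giving a unique class. Given a double point $[u]=[v]$, a traceless $V$ having $[u]$ as its only eigenline must be nilpotent (a traceless matrix with a repeated eigenvalue has eigenvalue $0$), so $\ker V=\mathrm{im}\,V=[u]$ determines $V$ up to scalar, again a unique class; conversely a nilpotent $V$ produces a parabolic $\exp(V)$ and thus a diagonal (double) point of $\overline{\mathbb{G}}_\sim$. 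This matches the parabolic case of the $Fix$ map and completes the bijection.

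Parts 2) and 3) I would settle by normalisation, exploiting that $\exp$ and $Fix$ are equivariant for $\mathrm{Ad}$ on $\asl$, conjugation on $\PSL$ and the isometric action on $\overline{\mathbb{G}}_\sim$, together with $\mathrm{Ad}(\PSL)\cong\mathrm{SO}(3,\C)$ (Proposition \ref{prop: Isom of SL}). Since rescaling $V$ does not move $Fix(\exp(V))$, in part 2) I may assume $\langle V,V\rangle=\langle W,W\rangle=1$ and $\langle V,W\rangle=0$, and by transitivity of $\mathrm{SO}(3,\C)$ on orthonormal pairs reduce to $V=V_1^0,\ W=V_2^0$ from \eqref{base positiva}. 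A direct computation gives $\exp(V_1^0)=\mathrm{diag}(e,e^{-1})$, fixing $0,\infty$, and $\exp(V_2^0)=\begin{pmatrix}\cosh 1&\sinh 1\\ \sinh 1&\cosh 1\end{pmatrix}$, fixing $\pm1$; in the upper half-space model the geodesic from $0$ to $\infty$ and the geodesic from $-1$ to $1$ cross orthogonally at height $1$ over the origin, and orthogonality is preserved by the ambient isometry that undoes the normalisation. For part 3) I normalise the isotropic vector to $V=\left(\begin{smallmatrix}0&1\\0&0\end{smallmatrix}\right)$ (all isotropic lines are $\mathrm{SO}(3,\C)$-equivalent); then $\langle V,W\rangle=\tfrac12\mathrm{tr}(VW)=0$ forces $W$ to be upper triangular, so both $\exp(V)$ and $\exp(W)$ fix $\infty$, which is the unique fixed point of the parabolic $\exp(V)$.

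The main obstacle I anticipate is part 1): one must check that the eigenline correspondence is genuinely a bijection onto the full symmetric product $\overline{\mathbb{G}}_\sim$, correctly matching the nilpotent/parabolic classes with the diagonal double points, and one must justify the eigenvector-sharing step on the locus where $Fix\circ\exp$ is defined, i.e. that $\exp(V)\neq\pm I_2$ (equivalently $b\neq0$) there. Parts 2) and 3) are then routine once the normalisation is set up, since all the relevant maps are equivariant and rescaling leaves the fixed-point data unchanged.
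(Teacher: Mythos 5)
Your proof is correct, and for parts 2) and 3) it is essentially the paper's own argument: reduce by the transitivity of $\mathrm{SO}(3,\C)\cong\mathrm{Ad}(\PSL)$ on (orthonormal, resp.\ isotropic--unit orthogonal) pairs to an explicit representative and read off the fixed points; whether one checks orthogonality in the half-space model at the point $(0,1)$ or in the sphere model as the paper does is immaterial. The one genuine difference is part 1). The paper also normalises: it reduces to $V_1^0$ and $V_2^0-iV_3^0$, verifies that the isometries fixing $\{0,\infty\}$ (resp.\ only $\infty$) are exactly $\exp(\mathrm{Span}_\C V_1^0)$ (resp.\ $\exp(\mathrm{Span}_\C(V_2^0-iV_3^0))$), and then disposes of surjectivity by noting that $Fix$ and $\exp$ are each surjective. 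You instead classify traceless $2\times 2$ matrices directly by their unordered pair of eigenlines and observe that $\exp(V)=\cosh(\|V\|)I_2+\tfrac{\sinh\|V\|}{\|V\|}V$ shares eigenlines with $V$ whenever the second coefficient is nonzero, which holds precisely on the locus where $\exp(V)\neq\pm I_2$, i.e.\ where $Fix\circ\exp$ is defined. This buys a self-contained bijection $\mathbb P\asl\to\overline{\mathbb G}_\sim$ that handles the diagonal (parabolic/nilpotent) stratum explicitly, and it also makes transparent a step the paper leaves implicit, namely why $\exp(W)$ being diagonal forces $W$ itself to be diagonal. Both routes ultimately rest on the same key identity $\exp(V)\in\mathrm{Span}_\C(I_2,V)$, so the difference is one of packaging rather than of substance.
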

\begin{proof}
	\begin{itemize}
		\item[1)] Since the $Ad$ representation is surjective by Proposition \ref{prop: Isom of SL}, the diagonal in $\SL\times \SL$ acts transitively on the vectors of $\asl$ with squared-norm $-1$, and on the isotropic ones. As a consequence, it is sufficient to check that the statement holds for $V=V^0_1$ and for $V= V^0_2- i V^0_3$, defined as in \eqref{base positiva}. 
		
		Pick the model $\partial\Hyp^3\cong\overline \C$. We use extensively Equation \eqref{eq geodetiche SL}.
		
		For all $\lambda\in \C$, $exp(\lambda V^0_1)=\begin{pmatrix}
		e^\lambda &0\\
		0 & e^{-\lambda}
		\end{pmatrix}$ has fixed points $0$ and $\infty$ for all $\lambda$ and one can immediately check that an element of $\PSL$ fixing $0$ and $\infty$ is of this form.
		
			In the same fashion, $exp(\lambda(V_2^0-iV_3^0))= \begin{pmatrix}
			1 & 2\lambda \\
			0 &1
		\end{pmatrix}$ has $\infty$ as unique fixed point, and this is the description of every other automorphism with this property.
		
		This proves that $Fix\circ exp$ is injective. It is also surjective because both $Fix$ and $exp$ are.
		
		\item[2)] The group $SO(3,\C)$ acts transitively on couples of orthonormal vectors, so the statement holds if and only if it works for $V=V_1^0$ and $V=V_2^0$. 
		
		An explicit computation (composed with a suitable stereographic projection) shows that, in the model $\partial \Hyp^3\cong \mathbb S^2$,
		 \begin{align*}
		&Fix \circ exp(Span_\C (V_1^0))= \{(0,1,0), (0,-1,0)\}\\
		&Fix \circ exp(Span_\C (V_2^0))= \{(0,0,1), (0,0,-1)\}
		\end{align*}
		and the thesis follows.
		\item[3)] Once again, $SO(3,\C)$ acts transitively on couples of orthogonal vectors $(V,W)$ with $V$ isotropic and $W$ with norm $1$, so the thesis holds as it holds for $(V^0_2- i V^0_3, V^0_1)$.
		\end{itemize}
\end{proof}

	\subsection{A boundary for $\PSL$}	
	The linear isomorphism $F$ defined in Equation \eqref{iso C4 Mat(2,C)} induces a map $F\colon \C\mathbb P^3 \xrightarrow{\sim} \mathbb P Mat(2,\C)$ which restricts to $F\colon \mathbb P \XXX_3 \xrightarrow{\sim} \PSL$. We can define the \emph{boundary} of $\PSL$ as 
	\[\partial \PSL= \mathbb P Mat(2,\C) \setminus \PSL= \{[M]\in \mathbb P Mat(2, \C)\ |\ det(M)=0\},\]
	 in particular $\PSL$ is a Zariski-open subset of $\mathbb P Mat(2,\C)$.

Observe that the map
	\begin{align*}
	\partial \PSL&\cong \C\mathbb P^1 \times \C \mathbb P^1=\partial \Hy^3 \times \partial \Hy^3\\
	[M] &\mapsto (Ker M, Im M)
	\end{align*}	
is a biholomorphism, with inverse
\[
((z_1 \colon z_2), (w_1 \colon w_2)) \mapsto \begin{pmatrix} w_1 z_2 & -w_1z_1\\
w_2 z_2 & -w_2 z_1
\end{pmatrix}
\]

As we noticed in Remark \ref{rmk Super hRm}.\ref{rmk Super hRm isometries PX}, every isometry of $\mathbb {P}\XXX_3$ extends to a linear isomorphism of $\C \mathbb P^3$, and clearly the same holds for	$\PSL$ inside $\mathbb P Mat(2,\C)$.

	\begin{Proposition}
		\label{Prop geodetiche sul bordo}
		Let $\gamma$ be the closure in $\mathbb P Mat(2,\C)$ of a nonconstant maximal (real or complex) geodesic in $\PSL$ with $\gamma(0)=I_2$. Then, via the identification above,
		\begin{itemize}
			\item if $\dot\gamma(0)$ is isotropic, then $\gamma \cap \partial \PSL =\{(z,z)\}$ where $Fix(\gamma(t))=(z,z)\in \overline{\mathbb G}_\sim$ for all $t\ne 0$;
			\item if $\dot\gamma(0)$ is not isotropic, then $\gamma \cap \partial \PSL =\{(z_1,z_2), (z_2, z_1)\}$, where $Fix(\gamma(t))=[(z_1,z_2)]$ for all $t$ for which $\gamma(t)\ne I_2$.
		\end{itemize}
		
	\end{Proposition}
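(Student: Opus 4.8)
The plan is to realize $\gamma$ as (the closure of) a piece of a complex projective line in $\mathbb{P}\,\mathrm{Mat}(2,\C)$ and then to intersect that line with the quadric $\partial\PSL$. Set $V=\dot\gamma(0)\in\asl$, so that $\gamma(t)=\exp(tV)$, and write $\mu=\sqrt{\inner{V,V}}$. By the exponential formula \eqref{eq geodetiche SL}, when $\inner{V,V}\neq0$ one has
\[
\gamma(t)=\cosh(t\mu)\,I_2+\frac{\sinh(t\mu)}{\mu}\,V
=\cosh(t\mu)\Bigl(I_2+\tfrac{\tanh(t\mu)}{\mu}V\Bigr),
\]
while for $\inner{V,V}=0$ it degenerates to $\gamma(t)=I_2+tV$. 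In either case $[\gamma(t)]$ lies on the projective line $\ell=\mathbb{P}\bigl(\mathrm{Span}_\C(I_2,V)\bigr)$, and by Remark \ref{rmk Super hRm}.(8) the complex geodesic through $I_2$ tangent to $V$ is exactly $\ell\cap\PSL$, whose closure is all of $\ell$ (the closure of a real geodesic through $I_2$ is contained in $\ell$). Hence $\gamma\cap\partial\PSL\subseteq\ell\cap\partial\PSL$, and the whole problem reduces to understanding the latter.

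Since $\partial\PSL=\{[M]:\det M=0\}=\{[M]:\inner{M,M}=0\}$, the intersection $\ell\cap\partial\PSL$ is the set of isotropic directions of the restriction of $\inners$ to $\mathrm{Span}_\C(I_2,V)$. Here $\inner{I_2,I_2}=-\det I_2=-1$, $\inner{I_2,V}=-\tfrac12\,\mathrm{tr}(V)=0$ because $\mathrm{tr}\,V=0$, and $\inner{V,V}=-\det V$, so the Gram matrix in the basis $(I_2,V)$ is $\mathrm{diag}(-1,\inner{V,V})$. When $V$ is isotropic the form is degenerate with a single isotropic line $[V]$; when $V$ is non-isotropic it is non-degenerate, and $aI_2+bV$ is isotropic iff $a=\pm\mu b$, giving the two points $[\mu I_2+V]$ and $[\mu I_2-V]$. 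These coincide with the genuine limit points of $\gamma$: as $\tanh(t\mu)\to\pm1$ one obtains $[\mu I_2\pm V]$, and $[I_2+tV]\to[V]$. This recovers the cardinality statement of Remark \ref{rmk Super hRm}.(9) and gives the first half of each bullet.

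Next I would match these boundary points with fixed points of $\gamma(t)$. Under $[M]\mapsto(\ker M,\operatorname{im}M)$, the rank-one matrix $\mu I_2+V$ has kernel the $(-\mu)$-eigenline of $V$ and image the $(+\mu)$-eigenline of $V$; calling these $z_-,z_+\in\partial\Hy^3$, we get $[\mu I_2+V]\mapsto(z_-,z_+)$ and, symmetrically, $[\mu I_2-V]\mapsto(z_+,z_-)$, i.e.\ the swapped pair $\{(z_1,z_2),(z_2,z_1)\}$. In the isotropic case $V$ is nilpotent by \eqref{formula3}, so $\ker V=\operatorname{im}V=:z$ and $[V]\mapsto(z,z)$. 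On the other hand $\exp(tV)=\cosh(t\mu)I_2+\tfrac{\sinh(t\mu)}{\mu}V$ is a degree-one polynomial in $V$, hence shares its eigenlines, so as a M\"obius transformation its fixed points are exactly $\{z_+,z_-\}$ (non-isotropic case) or the single parabolic fixed point $z$ (isotropic case, using Proposition \ref{prop: tipo isometria H3}); by Proposition \ref{Prop: geodetiche complesse stesso asse}.(1) this is independent of $t$ with $\gamma(t)\neq I_2$. Comparing, $\mathrm{Fix}(\gamma(t))=[(z_1,z_2)]$, resp.\ $(z,z)$, which is precisely the claim.

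The point that requires the most care is the control of the closure. For a complex geodesic this is clean: the image is $\ell$ minus its boundary points, because $\tanh\colon\C\to\CP^1$ omits the values $\pm1$ (these produce $[\mu I_2\pm V]$ only in the limit) while its poles land on the interior point $[V]$ with $\det V\neq0$, so the closure is all of $\ell$ and $\gamma\cap\partial\PSL=\ell\cap\partial\PSL$ with equality. For a real geodesic one must observe that its closure can be a proper subset of $\ell\cap\partial\PSL$ — indeed when $\inner{V,V}$ is a negative real number the flow is elliptic and the real geodesic is a closed loop lying in $\{\det=1\}$, disjoint from $\partial\PSL$ — so the stated equality is read via the complex geodesic carrying the same initial data (equivalently, for geodesics whose closure actually reaches the boundary). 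The only other place where a sign slip is easy is keeping straight which eigenline of $V$ is the kernel and which is the image of $\mu I_2\pm V$.
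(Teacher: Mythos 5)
Your proof is correct and follows essentially the same route as the paper's: the geodesic lies on the projective line $\mathbb{P}(\mathrm{Span}_\C(I_2,V))$, whose boundary points $[V]$ (resp. $[\mu I_2\pm V]$) have kernel and image equal to the eigenlines of $V$, which are in turn the fixed points of $\exp(tV)$. Your extra observation that an elliptic \emph{real} geodesic is a closed loop whose closure never meets $\partial\PSL$ is a legitimate caveat the paper glosses over (its proof normalizes to $\|V\|^2=1$, which is only harmless for complex geodesics), and your reading of the statement through the associated complex geodesic is the right fix.
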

	\begin{proof}
		See $\PSL= PGL(2,\CC)$.
		
		If $V\in \asl$, with $\|V\|=0$ and $V\ne 0$, we have
		\[
		exp(tV)= [I+tV]=[V+ \frac 1 t I] ,
		\]
		hence, $\partial \{exp(tV)\ |\ t\in \C \}= \{(Ker(V),Im(V))\}$. Since $det(V)=0$, $V^2=0$ by Formula \eqref{formula3}, hence $Ker(V)=Im(V)$. Moreover, for all $ z \in Ker(V)$, $exp(tV)( z)=  z$ for all $t$, proving that $z:=[ z]\in \C \mathbb P^1$ is the fixed point of $\gamma(t)$.
		
		If $V\in \asl$ with $\|V\|^2=1$ we have
		\[
		exp(tV)= [\cosh(t) I + \sinh(t) V]=[I+ \tanh(t) V],
		\]
		hence, $\partial \{exp(tV)\ |\ t\in \C\} =\{(Ker(I+V), Im(I+V)), (Ker(I-V), Im(I-V))\}$. Observe that $(I+V)(I-V)=(I-V)(I+V)={0}$, hence $Ker(I\pm V)=Im(I\mp V)$. Clearly, $Ker(I+V)$ and $Ker(I-V)$ correspond to eigenvectors of $V$, thus, by the explicit description, they correspond to eigenvectors of $exp(tV)$ for all $t$, proving that they are fixed points of the corresponding M\"obius maps.
	\end{proof}

\iffalse

We still consider $\PSL$ as the group of automorphisms of $\Hy^3$. Let $\emph{Kill} (\Hy^3)$ be the set of Killing vector fields for $\Hy^3$ and consider the canonical bijection
\begin{align*}
\asl &\xrightarrow{\sim} \text{\emph{Kill}} (\Hy^3) \\
V &\mapsto K_V 
\end{align*}
where $K_V (p)= \frac d {dt}_{|_{t=0}} exp(tV)(p)$. 
By the isotropy, we also observe that for all maximally compact subalgebra $\mathcal{W}$ there exists $p_\mathcal{W} \in \Hy^3$ such that $exp(\mathcal{W})= Stab(p_\mathcal{W})$, indeed $Stab( 0)= Span_{\R}(V_1^0, V_2^0, V_3^0)$. Equivalently, we have
\[\mathcal{W}= \{W\in \asl\ |\ K_W (p_\mathcal{W})=0\}
\]
and it is easy to check explicitly that 
\[
i\mathcal{W}= \{V\in \asl\ |\ \nabla K_V  (p_\mathcal{W})\equiv 0\}.
\]
In general, for $W\in \asl$ with $\|W\|^2 <0$ we have
\[
K_W (p)=0 \Longleftrightarrow \nabla K_{iW} (p)=0.
\]

\fi

\section{Some interesting immersions of $\mathbb F_{-1}^{m,3-m}$ in $\SL$}

As we observed in Remark \ref{rmk Super hRm}.\ref{rmk Super hRm immersion pseudo-Riemannian}, $\mathbb F_{-1}^{m,3-m}$ isometrically immerges into $\mathbb{X}_3$ in a totally geodesic way. Let us see that, by post-composing Equation \eqref{eq: quadrica pseudo-Riem dentro X_n} with the map $F$ in Equation \eqref{iso C4 Mat(2,C)}, $\Hyp^3$, $AdS^3$ and $-\mathbb S^3$ admit some interesting embeddings into $\SL$.

\begin{itemize}
	\item  Let us start from $AdS^3$. Recall that $AdS^3$ can be seen as the quadric $Q_{-1}^{2,1}\subset \R^{2,2}$ which embeds into $\mathbb X_3$ via
	\begin{align*}
		AdS^3 &\hookrightarrow \mathbb X_3\\
		(x_1,x_2,x_3,x_4)&\mapsto (x_1, x_2, i x_3, ix_4),
	\end{align*}
	and composing with $F$ we get the isometric embedding
	\begin{align*}
		F_1\colon AdS^3 &\to \SL \\
		(x_1,x_2,x_3,x_4)&\mapsto \begin{pmatrix} -x_1+x_4 &-x_2+x_3\\
			-x_2 - x_3 & x_1+x_4
		\end{pmatrix}.
	\end{align*}
	Observe that $F_2(AdS^3)$ is exactly the subset of the elements of $\SL$ whose entries are real, namely $F_2$ gives the isometry
	\[
	F_2\colon AdS^3 \to \mathrm{SL}(2,\RR).
	\]
	Since the Killing form of $\mathfrak{sl}(2,\RR)$ is the restriction of the 
	Killing form of $\asl$, $F_1$ provides an isometry between $AdS^3$ and the global Killing form of $\SL$ up to a scalar.

\item	
Replying the same constructions as above, the isometric immersion of $-\mathbb S^3\subset \R^{0,4}$ into $\mathbb X_3$ given by
\[
(x_1,x_2,x_3,x_4)\mapsto (ix_1, ix_2,ix_3,ix_4)
\]
provides an isometric immersion
\begin{align*}
	F_2\colon -\mathbb S^3 &\to \SL \\
	(x_1,x_2,x_3,x_4)&\mapsto \begin{pmatrix}
		x_4 -ix_1  & +x_3 -ix_2 \\
		-x_3 -ix_2  & x_4 -ix_1  
	\end{pmatrix}
\end{align*}
One gets that the image of $F_2$ is in fact the Lie group $SU(2)$, which corresponds to the stabilizer of $\underline 0\in \Hyp^3$ in the Poincar\'e disk model. One has once again that the Killing form of ${\mathfrak{su}(2)}$ is the restriction of the Killing form of $\asl$, therefore $-\mathbb S^3$ is isometric up to a scalar factor to $SU(2)$ with its global Killing form.

\item For $\Hyp^3$ in the hyperboloid model, let us slightly modify $F$ with $-F$ to get the explicit isometric immersion
\begin{equation}
	\begin{split}
		F_3\colon \Hyp^3 &\to \SL\\
		(x_1,x_2,x_3,x_4)&\mapsto \begin{pmatrix}
			x_1-x_4 & x_2 +ix_3\\
			x_2-ix_3 & x_1 +x_4
		\end{pmatrix}
	\end{split}.
\end{equation}
Since the immersion is totally geodesic,  $F_3 (exp^{\Hyp^3})= exp^{\SL}( dF_3(v))$, hence the image of $F_3$ is the image via $exp^{\mathrm{SL}}$ of $d F_3 (T_{(0, 0, 0,1)}\Hyp^n)$. Observe that
\begin{align*}
d F_3 (T_{(0, 0, 0,1 )}\Hyp^n) &=\{V\in \asl\ |\ ^tV= \overline V \}=\\
&= i \cdot \{V\in \asl\ |\ ^t V= - \overline V\} = i \mathfrak{su}(2).
\end{align*}
Now, $\Hyp^3$ has a natural structure of symmetric space for $\SL$ when you see it in the disk model: the stabilizer of $0\in \Hyp^3\subset \R^3$ is $SU(2)$ and the corresponding Cartan decomposition is given by $\asl=\mathfrak {su}(2)\oplus i \mathfrak{su}(2)$ (see also Section \ref{Section symm spaces}).
 By the classical theory of symmetric spaces, $exp(\mathfrak{su}(2))\subset \SL$ corresponds to purely hyperbolic maps of $\Hyp^3$ with axis passing by $0$: more precisely, the projection of $exp(\mathfrak{su}(2))$ in $\PSL$ contains all and only the isometries of this type. 
\end{itemize}

\chapter{The space of geodesics of the hyperbolic space}
\label{chapter spazio geodetiche}
As a consequence of the geometry of the hyperbolic space $\Hyp^n$, a maximal non-constant geodesic in $\Hyp^n$ is characterized, up to orientation-preserving reparameterization, by the ordered couple of its "endpoints" in its visual boundary at infinity $\partial \Hyp^n$: by means of this identification, from now on we will refer to the manifold
\[
\G n= \partial \Hyp^n\times\partial \Hyp^n \setminus \Delta
\]
as the \emph{space of (maximal, oriented, unparameterized) geodesics} of $\Hyp^n$, namely, the space of non-constant geodesics $\gamma:\R\to\Hyp^n$ up to monotone increasing reparameterizations. The space $\G n$ is a manifold of dimension $2(n-1)$.

 We recall that, in the disk model $\partial \Hyp^n$ is naturally identified with the unitary sphere $S^{n-1}$, in the hyperplane model with $\R^{n-1}\cap \{\infty\}$, while, in the hyperboloid model, $\partial\Hyp^n$ can be identified to the projectivization of the null-cone in Minkowski space:
\begin{equation}\label{eq:bdy hyperboloid model}
	\partial\Hyp^n=\faktor{\{x\in\R^{n,1}\,|\,\langle x,x\rangle =0,\,x_{n+1}>0\}}{\R_{>0}}~.
\end{equation}

The aim of this chapter is to present a model for a para-K\"ahler structure for $\GGG_n$, which is going to encode a lot of informations about the geometry of $\Hyp^n$ and of $T^1\Hyp^n$.

\section{The geodesic flow and the para-Sasaki metric on $T^1 \mathbb X_n$}
\label{sec geodesic flow on T1Xn}
As we have already observed, the tangent bundle to $\XXX_n$ can be seen as
\[
T \XXX_n = \{( z,{ v})\in \C^{n+1}\times \C^{n+1}\ |\  z\in \XXX_n \ |\ \inner{ z,  z}_0=-1,\, \inner{ z,  v}_0=0  \},
\]
where the obvious projection map is 
$$\bar\pi:T\XXX_n\to\XXX_n\qquad \pi( z,{ v})= z~.$$

We will be particularly interested in the \emph{unit tangent bundle} of $\XXX_n$, namely the bundle of unit tangent vectors
\begin{equation}\label{eq:modelT}
	T^1\XXX_n=\{( z,{ v})\in T\XXX_n \,|\, \inner{ v,  v}_0=1\}
\end{equation}
and let us denote $\pi= \bar\pi |_{T^1\XXX_n}$
Notice that both $T \XXX_n$ and $T^1 \XXX_n$ are complex submanifolds of $\C^{n+1}\times \C^{n+1}$ and that $\pi$ and $\bar\pi$ are both holomorphic.

In this model, we can describe the tangent spaces to $T\XXX_n$ and to $T^1\XXX_n$ in a point $(z,v)$ as
\begin{equation}
	\label{eq:modelTT}
	\begin{split}
		T_{( z,{ v})}T\XXX_n &=\{(\dot { z},\dot { v})\in\C^{n+1}\times \C^{n+1}\,|\,\langle  z,\dot { z}\rangle_0= \langle  z,\dot { v}\rangle_0+\langle { v},\dot { z}\rangle_0=0\}~\\ 
		T_{( z,{ v})}T^1\XXX_n &=\{(\dot { z},\dot { v})\in\C^{n+1}\times \C^{n+1}\,|\,\langle  z,\dot { z}\rangle_0=\langle  z,\dot { v}\rangle_0+\langle { v},\dot { z}\rangle_0=\langle { v},\dot { v}\rangle_0=0\}~
	\end{split}
\end{equation}

We observed that $\XXX_n$ has both a notion of real geodesic and a notion of complex geodesic. In the following, $\lambda$ can be seen both as a real or as a complex number. 

Define the real (resp. complex) \emph{geodesic flow} on $T\XXX_n$ as
\begin{equation}
	\label{eq: geodesic flow}
	\lambda\cdot ( z,{ v})=\varphi_\lambda( z,{ v})= (\gamma(\lambda),\gamma'(\lambda))~,
\end{equation}
where $\gamma$ is the unique parameterized real (resp. complex) geodesic such that $\gamma(0)= z$ and $\gamma'(0)={ v}$. 

Both the real and the complex geodesic flow restrict to flows on $T^1\XXX_n$. By Formula \eqref{descrizione geodetiche} the flow $\varphi_\lambda\colon T^1\Hyp^n\to T^1\Hyp^n$ can be written explicitly as
\begin{equation}\label{eq:geodflow}
	\varphi_\lambda( z,  v) = (\cosh(\lambda) z + \sinh(\lambda) v,\sinh(t) z+\cosh(t){ v})~.
\end{equation}

We also denote the vector field $\chi$ on $T \XXX_n$ defined by
\[
\chi_{( z_0,  v_0)} := \frac{d}{dt}_{|t=0} \phee_{t}( z_0,  v_0)= \frac{d}{dz}_{|z=0} \phee_z( z_0,  v_0)
\] 
as the \emph{infinitesimal generator} of the geodesic flow.

In the following, we will use the expression \emph{geodesic flow} to refer to the \emph{real} geodesic flow.
\vspace{5pt}

We shall now introduce a holomorphic Riemannian metric on $T\XXX_n$ and $T^1\XXX_n$ for which elements in $\Isom(\XXX_n)$ and the geodesic flow act by isometries. For this purpose, let us construct {horizontal} and {vertical} distributions on $T\XXX_n$ and $T^1\XXX_n$. This construction imitates the more standard construction in the Riemannian setting. 
\vspace{5pt}

Given $( z,{ v})\in T\XXX_n$, the \emph{vertical subspace} at $( z, v)$ is defined as:
$$\V_{( z,{ v})}=T_{( z,{ v})} \big(\bar\pi^{-1}( z)\big)\cong T_{ z}\XXX_n.$$

Hence given a vector $ w\in T_{ z}\XXX_n$ orthogonal to ${ v}$, we can define its \emph{vertical lift}
$ w^\V\in \V_{({ z},{ v})}$, and 
vertical lifting gives a map from ${ v}^\perp$ to $\V_{({ z},{ v})}< T_{({ z},{ v})}T\XXX_n$ which is simply the identity map under the above identification. More concretely, in the model for $T_{({ z},{ v})}T\XXX_n$ defined in \eqref{eq:modelTT},
$$ w^\V=(0, w)\in \C^{n+1}\times \C^{n+1}~.$$

For $T^1\XXX_n$, the induced vertical distribution is defined by
\begin{equation}
	\label{eq: vertical lift}
	\VP_{( z,{ v})}=T_{( z,{ v})} \big(\pi^{-1}( z)\big)\cong  v^\bot < T_{ z}\XXX_n.
\end{equation}
with the vertical lifting defined as for $T\XXX_n$.

\vspace{5pt}
Let us now define the horizontal lifting. Given $ u\in T_{ z}\XXX_n$, let us consider the parameterized geodesic $\gamma:\R\to\XXX_n$ with $\gamma(0)={ z}$ and $\gamma'(0)= u$, and let ${v}(t)$ be the parallel transport of ${ v}$ along $\gamma$. 
Then ${ u}^\HH$ is defined as the derivative of {$(\gamma(t),{ v}(t))$} at time $t=0$; notice that if $ v\in T^1_{z}\XXX_n$, then $v(t)$ has norm $1$ too, and the curve $(\gamma(t),v(t))$ lies in $T^1\XXX_n$.

Let us compute the horizontal lift on $T\XXX_n$ in the hyperboloid model by distinguishing two different cases.

First, let us consider the case of ${ u}\in { v}^\perp\subset T_{ z}\XXX_n$. In the model \eqref{eq:modelTT}, using that the image of the parameterized geodesic $\gamma$ is the intersection of $\XXX_n$ with a plane in $\C^{n+1}$ {orthogonal to ${ v}$}, the parallel transport of ${ v}$ along $\gamma$ is the  vector field constantly equal to ${ v}$, and therefore
\begin{equation}
	\label{eq: horizontal lift}
	{ u}^\HH=\ddt(\gamma(t),{ v})=({ u},0)\in \C^{n+1}\times \C^{n+1}~.
\end{equation}
We shall denote by $\HP_{({ z},{ v})}$ the subspace of horizontal lifts of this form, which is therefore a horizontal subspace in $T_{({ z},{ v})}T^1\XXX_n$ isomorphic to ${ v}^\perp$.

This gives an injective linear map from $T_{ z}\XXX_n$ to $T_{({ z},{ v})}T^1\XXX_n$ that we define as the \emph{horizontal lift} and whose image is the \emph{horizontal subspace} $\HH_{({ z},{ v})}< T_{({ z},{ v})} T\XXX_n$, with \[\HH_{({ z},{ v})}< T_{({ z},{ v})} T^1\XXX_n<T_{({ z},{ v})} T\XXX_n\]
if $ v$ has norm $1$.

There remains to understand the case of ${ u}={ v}$. 

\begin{Lemma} \label{lemma:generator geoflow hor lift}
	Given $({ z},{ v})\in T\XXX_n$, the horizontal lift ${ v}^\HH$ coincides with the infinitesimal generator $\chi_{({ z},{ v})}$ of the geodesic flow, and has the expression:
	$$\chi_{({ z},{ v})}=({ v},{ z})\in\C^{n+1}\times\C^{n+1}~.$$

\end{Lemma}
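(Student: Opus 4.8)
The plan is to treat the two assertions separately, taking first the geometric identification $v^\HH=\chi_{(z,v)}$ and then the explicit formula, since the identification makes the computation trivial. The conceptual core is the first claim, and it rests on the fact that geodesics are autoparallel. By definition, the horizontal lift $v^\HH$ is obtained by taking the parameterized geodesic $\gamma$ with $\gamma(0)=z$ and $\gamma'(0)=v$, letting $v(t)$ denote the parallel transport of $v$ along $\gamma$, and differentiating $t\mapsto(\gamma(t),v(t))$ at $t=0$. The decisive observation is that, precisely because we are lifting the vector $v$ along the geodesic whose initial velocity is $v$ itself, the parallel field $v(t)$ coincides with the velocity field $\gamma'(t)$: indeed $\gamma'$ is parallel along $\gamma$, this being the defining equation $D_{\gamma'}\gamma'=0$ of a geodesic, and $\gamma'(0)=v$, so uniqueness of parallel transport forces $v(t)=\gamma'(t)$. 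Consequently $(\gamma(t),v(t))=(\gamma(t),\gamma'(t))=\varphi_t(z,v)$, and differentiating at $t=0$ gives exactly $\chi_{(z,v)}$.

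For the explicit expression, I would differentiate the closed form of the geodesic. From Formula \eqref{descrizione geodetiche}, the geodesic issuing from $z$ with velocity $v$ is $\gamma(t)=\cosh(t\sqrt{\langle v,v\rangle_0})\,z+\tfrac{\sinh(t\sqrt{\langle v,v\rangle_0})}{\sqrt{\langle v,v\rangle_0}}\,v$, so that $\gamma'(t)=\sqrt{\langle v,v\rangle_0}\,\sinh(t\sqrt{\langle v,v\rangle_0})\,z+\cosh(t\sqrt{\langle v,v\rangle_0})\,v$. Evaluating the pair $(\gamma',\gamma'')$ at $t=0$ yields $\chi_{(z,v)}=(v,\langle v,v\rangle_0\,z)$; one reads off the same thing from the geodesic ODE $\ddot\gamma=\langle\dot\gamma,\dot\gamma\rangle_0\,\gamma$ recalled just before \eqref{descrizione geodetiche}, which gives $\gamma''(0)=\langle v,v\rangle_0\,z$ directly. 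On the unit tangent bundle $T^1\XXX_n$, where $\langle v,v\rangle_0=1$, this reduces to $\chi_{(z,v)}=(v,z)$, as claimed. The expression is independent of the choice of square root, since only even functions of $\sqrt{\langle v,v\rangle_0}$ enter.

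Finally, I would record the consistency check that $(v,z)$ genuinely lies in $T_{(z,v)}T^1\XXX_n$. Using the model \eqref{eq:modelTT} with $(\dot z,\dot v)=(v,z)$, the three defining conditions read $\langle z,v\rangle_0=0$, $\langle z,z\rangle_0+\langle v,v\rangle_0=0$, and $\langle v,z\rangle_0=0$, all of which hold because $\langle z,z\rangle_0=-1$, $\langle v,v\rangle_0=1$, and $\langle z,v\rangle_0=0$. This is also what explains the role of the unit-norm hypothesis: it is exactly the condition that turns the general tangent vector $(v,\langle v,v\rangle_0\,z)$ on $T\XXX_n$ into $(v,z)$ on $T^1\XXX_n$.

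I do not expect any serious obstacle here. The only point requiring genuine care is the identification $v(t)=\gamma'(t)$, where one must not confuse the horizontal lift of a generic vector $u$, for which the transported field is unrelated to $\gamma'$, with the horizontal lift of the specific vector $v$ along its own geodesic, which is the case at hand; everything else is a direct differentiation of an explicit formula.
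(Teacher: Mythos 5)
Your proof is correct and follows essentially the same route as the paper: both arguments hinge on the observation that the velocity field of a geodesic is its own parallel transport, so the curve $(\gamma(t),v(t))$ defining $v^\HH$ is precisely the geodesic flow orbit, and then obtain the explicit formula by differentiating the closed-form expression of the flow at $t=0$. Your extra care in distinguishing the general expression $(v,\langle v,v\rangle_0\,z)$ on $T\XXX_n$ from its restriction $(v,z)$ to the unit tangent bundle, together with the tangency check against \eqref{eq:modelTT}, is a worthwhile clarification of a point the paper glosses over, but it does not change the substance of the argument.
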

\begin{proof}
	\iffalse
	Let us denote the geodesic flow by $\varphi_t:T^1\XXX_n\to T^1\XXX_n$. In the $\R^{n,1}$-model, it writes as
	\begin{equation}
		\varphi_t({ z},{ v})=(\cosh(t){ z}+\sinh(t){ v},\sinh(t){ z}+\cosh(t){ v})~.
	\end{equation}
	\fi
	Since the tangent vector to a parameterized geodesic is parallel along the geodesic itself, $\varphi_t({ z},{ v})$ also equals {$(\gamma(t),{ v}(t))$, for ${ v}(t)$ the vector field} used to define the horizontal lift. 
	Hence clearly 
	$${ v}^\HH=\chi_{({ z},{ v})}=\ddt \varphi_t({ z},{ v})~.$$ 
	Differentiating Equation \eqref{eq:geodflow} at $t=0$ we obtain the desired expression.
\end{proof}

In summary,
\begin{equation}
	\label{eq:direct sum}
	\begin{split}
		T_{({ z},{ v})}T^1\XXX_n&=\HH_{({ z},{ v})}\oplus\VP_{({ z},{ v})}=\HP_{({ z},{ v})}\oplus\mathrm{Span}_{\C}(\chi_{({ z},{ v})})\oplus\VP_{({ z},{ v})}~\\
		T_{({ z},{ v})}T\XXX_n&=\HH_{({ z},{ v})}\oplus\V_{({ z},{ v})}=\HP_{({ z},{ v})}\oplus\mathrm{Span}_{\C}(\chi_{({ z},{ v})})\oplus\VP_{({ z},{ v})}\oplus \mathrm{Span}_\C ((0,  v))~.
	\end{split}
\end{equation}

{We are now able to} introduce the para-Sasaki metric on the unit tangent bundle.

\begin{Def}\label{Def:parasasaki}
	Define the \emph{para-Sasaki metric} on $T\XXX_n$ as the metric $\gs n$ defined by
	$$\gs{n}(V_1,V_2)=\begin{cases} 
		+\langle { u}_1,{ u}_2\rangle_0 & \text{if }V_1,V_2\in\HH_{({ z},{ v})}\text{ and }V_i={ u}_i^\HH \\
		-\langle { w}_1,{ w}_2\rangle_0 & \text{if }V_1,V_2\in\V_{({ z},{ v})}\text{ and }V_i={ w}_i^\V \\
		0 & \text{if }V_1\in\HH_{({ z},{ v})}\text{ and }V_2\in\V_{({ z},{ v})}~.
	\end{cases}$$
\end{Def}

Clearly the metric  $\gs n$ is non-degenerate. Moreover it is holomorphic  Riemannian: if $(v_1,\dots, v_n)$ is a holomorphic orthonormal frame for $U\subset\XXX_n$, then $(v_1^\mathcal H,\dots, v_n^\mathcal H, v_1^\mathcal V, \dots, v_n^\mathcal V)$ is a local orthonormal frame in $\bar \pi^{-1}(U)$, and, by the explicit description we have computed in the model \eqref{eq:modelTT}, the elements of this frame are local holomorphic vector fields on $T\XXX_n$. 
Moreover, the restriction of $\gs n$ to $T^1\XXX_n$, that we will denote with $\gs n$, is a holomorphic Riemannian metric since it is non degenerate.

It is also worth observing that, from Definition \ref{Def:parasasaki} and Lemma \ref{lemma:generator geoflow hor lift}, that
\begin{equation}\label{eq:generator geoflow is unit}
	\gs{n}(\chi_{({ z},{ v})},\chi_{({ z},{ v})})=1~,
\end{equation}
and that $\chi_{({ z},{ v})}$ is orthogonal to both $\VP_{({ z},{ v})}$ and $\HP_{({ z},{ v})}$.

Clearly the obvious action of the isometry group $\Isom(\XXX_n)$ on $T^1\XXX_n$ preserves the para-Sasaki metric, since all the ingredients involved in the definition are invariant by isometries. The same is also true for the action of the complex geodesic flow, and this fact is much more peculiar of the choice we made in Definition \ref{Def:parasasaki}.

\begin{Lemma}\label{lemma:geodflow isometric}
	The $\C$-action of the complex geodesic flow on $T^1\XXX_n$ is isometric for the para-Sasaki metric, and commutes with the action of $\Isom(\XXX_n)$.
\end{Lemma}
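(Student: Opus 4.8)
The plan is to exploit the linearity of the flow in the model \eqref{eq:modelT}. By Equation \eqref{eq:geodflow}, $\varphi_\lambda$ is the restriction to $T^1\XXX_n$ of the $\C$-linear map of $\C^{n+1}\times\C^{n+1}$ given by $\left(\begin{smallmatrix}\cosh\lambda & \sinh\lambda\\ \sinh\lambda & \cosh\lambda\end{smallmatrix}\right)$ acting on the two $\C^{n+1}$-factors. Hence its differential at $(z,v)$ carries the same linear expression,
\[
d\varphi_\lambda(\dot z,\dot v)=(\cosh\lambda\,\dot z+\sinh\lambda\,\dot v,\ \sinh\lambda\,\dot z+\cosh\lambda\,\dot v)~,
\]
which reduces the whole statement to elementary linear algebra in the block decomposition \eqref{eq:direct sum}.

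First I would record how $d\varphi_\lambda$ interacts with that decomposition. Writing $(z',v')=\varphi_\lambda(z,v)$, a direct substitution shows that any $u\in z^\perp\cap v^\perp$ stays in $(z')^\perp\cap (v')^\perp$, since $\langle z',u\rangle_0$ and $\langle v',u\rangle_0$ are linear combinations of the vanishing pairings $\langle z,u\rangle_0,\langle v,u\rangle_0$; thus the lifts at $(z',v')$ are well defined. Using $u^\HH=(u,0)$, $w^\V=(0,w)$ and $\chi_{(z,v)}=(v,z)$ from Lemma \ref{lemma:generator geoflow hor lift}, the three key identities are
\begin{align*}
	d\varphi_\lambda(u^\HH)&=(\cosh\lambda\,u)^\HH+(\sinh\lambda\,u)^\V~,\\
	d\varphi_\lambda(w^\V)&=(\sinh\lambda\,w)^\HH+(\cosh\lambda\,w)^\V~,\\
	d\varphi_\lambda(\chi_{(z,v)})&=\chi_{(z',v')}~,
\end{align*}
all lifts on the right being taken at $(z',v')$. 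The last identity is immediate because $\chi$ generates the flow and is therefore flow-invariant.

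The isometry claim then follows by checking $\gs n(d\varphi_\lambda V_1,d\varphi_\lambda V_2)=\gs n(V_1,V_2)$ on each block of \eqref{eq:direct sum}, invoking Definition \ref{Def:parasasaki} and the orthogonality of $\HP$, $\VP$ and $\mathrm{Span}_\C(\chi)$. On two horizontal inputs one gets the factor $\cosh^2\lambda-\sinh^2\lambda=1$; on two vertical inputs the same; and the mixed $\HP$--$\VP$ terms cancel because the horizontal and vertical components of $d\varphi_\lambda V$ enter $\gs n$ with opposite signs. The point I want to stress is that it is exactly the opposite sign convention of Definition \ref{Def:parasasaki} (positive on $\HH$, negative on $\V$) which turns the hyperbolic identity $\cosh^2-\sinh^2=1$ into invariance: with a uniform sign, as in the classical Sasaki metric, the flow would fail to be isometric. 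This is the ``peculiar'' feature anticipated in the text, and it is the genuine content of the lemma.

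Finally, commutation with $\Isom(\XXX_n)$ is the easy part: every isometry is the restriction of some $A\in\O(n+1,\C)$ acting diagonally as $(z,v)\mapsto(Az,Av)$, and since $A$ commutes with the scalar hyperbolic matrix acting on each factor, $\varphi_\lambda$ and the $\Isom(\XXX_n)$-action commute identically (equivalently, isometries send parameterized geodesics to parameterized geodesics and hence intertwine the geodesic flow). The only real obstacle is bookkeeping in the isometry computation—keeping track that all lifts are evaluated at the image point $(z',v')$ and that the chosen vectors remain in the relevant orthogonal complement—but no estimate or deeper idea is needed beyond the sign cancellation just described.
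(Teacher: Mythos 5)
Your proposal is correct and follows essentially the same route as the paper: compute $d\varphi_\lambda$ from the linearity of Equation \eqref{eq:geodflow}, record its action on $\HP$, $\VP$ and $\chi$, and conclude via $\cosh^2\lambda-\sinh^2\lambda=1$ together with the sign convention of Definition \ref{Def:parasasaki}, with commutation following from isometries preserving parameterized geodesics. The extra check that $u\in z^\perp\cap v^\perp$ is carried into $(z')^\perp\cap(v')^\perp$ is a nice explicit touch the paper leaves implicit, but the argument is the same.
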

\begin{proof}
	Let us first consider the differential of $\varphi_\lambda$, for a given $\lambda\in\C$. Since the expression for $\varphi_\lambda$ from Equation \eqref{eq:geodflow} is linear in ${ z}$ and ${ v}$, we have: 
	\begin{equation}\label{eq:geodflow linearized}
		d\varphi_\lambda(\dot { z},\dot { v})=(\cosh(\lambda)\dot { z}+\sinh(\lambda)\dot { v},\sinh(\lambda)\dot { z}+\cosh(\lambda)\dot { v})~,
	\end{equation}
	for $X=(\dot { z},\dot { v})$ as in \eqref{eq:modelTT}.
	Let us distinguish three cases. 
	
	If $X={ w}^\HH=({ w},0)$ for ${ w}\in { v}^\perp\subset T_{ z}\XXX_n$, then
	\begin{equation}\label{eq:diff geoflow1}
		d\varphi_\lambda({ w}^\HH)=(\cosh(\lambda){ w},\sinh(\lambda){ w})=\cosh(\lambda){ w}^\HH+\sinh(\lambda){ w}^\V~.
	\end{equation}
	For $X={ w}^\V=(0,{ w})$ a completely analogous computation gives
	\begin{equation}\label{eq:diff geoflow2}
		d\varphi_\lambda({ w}^\V)=(\sinh(\lambda){ w},\cosh(\lambda){ w})=\sinh(\lambda){ w}^\HH+\cosh(\lambda){ w}^\V~.
	\end{equation}
	Finally, for $X=\chi_{({ z},{ v})}$, by construction 
	\begin{equation}\label{eq:diff geoflow3}
		d\varphi_\lambda(\chi_{({ z},{ v})})=\chi_{\varphi_\lambda({ z},{ v})}~.
	\end{equation}
	Now using \eqref{eq:diff geoflow1} and \eqref{eq:diff geoflow2}, and Definition \ref{Def:parasasaki}, we can check that that 
	$$\gs{n}(d\varphi_\lambda({ w}_1^\HH),d\varphi_\lambda({ w}_2^\HH))=(\cosh^2(\lambda)-\sinh^2(\lambda))\langle { w}_1,{ w}_2\rangle=\langle { w}_1,{ w}_2\rangle=\gs{n}({ w}_1^\HH,{ w}_2^\HH)~.$$
	A completely analogous computation shows 
	$$\gs{n}(d\varphi_\lambda({ w}_1^\V),d\varphi_\lambda({ w}_2^\V))=-\langle { w}_1,{ w}_2\rangle=\gs{n}({ w}_1^\V,{ w}_2^\V)~.$$ Also, one checks that
	$\gs{n}(d\varphi_\lambda({ w}_1^\HH),d\varphi_\lambda({ w}_2^\V))=0=\gs{n}({ w}_1^\HH,{ w}_2^\V)$.
	By  \eqref{eq:generator geoflow is unit} and \eqref{eq:diff geoflow3}, the norm of vectors proportional to $\chi_{({ z},{ v})}$ is preserved. Together with \eqref{eq:diff geoflow1} and \eqref{eq:diff geoflow2}, vectors of the form $d\varphi_\lambda({ w}^\HH)$ and $d\varphi_\lambda({ w}^\V)$ are orthogonal to $d\varphi_\lambda(\chi_{({ z},{ v})})=\chi_{\varphi_\lambda({ z},{ v})}$.
	This concludes the first part of the statement.
	
	Finally, since isometries map parameterized geodesics to parameterized geodesics, it is straightworward to see that the $\C$-action commutes with $\Isom(\XXX_n)$.
\end{proof}

\subsection{The para-Sasaki metric on $T^1\Hyp^n$}
As we anticipated, the discussion above was only meant to present the geodesic flow and the para-Sasaki metric on $T^1\Hyp^n$ in the most general setting: indeed, all the constructions and the staments above for $\XXX_n$ can be repeated for the unitary tangent space of any space form of constant curvature $-1$ via the fact that $\mathbb F_{-1}^{m,n-m}$ isometrically immerges into $\XXX_n$.

The horizontal and vertical decomposition $T T^1\mathbb F_{-1}^{m,n-m}= \HH \oplus \VP$ depends uniquely on the Levi-Civita connection on $\mathbb F_{-1}^{m,n-m}$ and is defined as for $T^1\XXX_n$. Similarly we can define the \emph{para-Sasaki metric} on $T^1\mathbb F_{-1}^{m,n-m}$ as 	$$\gs{m,n-m}(V_1,V_2)=\begin{cases} 
	+\langle { u}_1,{ u}_2\rangle_{m,n-m} & \text{if }V_1,V_2\in\HH_{({ z},{ v})}\text{ and }V_i={ u}_i^\HH \\
	-\langle { w}_1,{ w}_2\rangle_{m,n-m} & \text{if }V_1,V_2\in\V_{({ z},{ v})}\text{ and }V_i={ w}_i^\V \\
	0 & \text{if }V_1\in\HH_{({ z},{ v})}\text{ and }V_2\in\V_{({ z},{ v})}~
\end{cases}.$$

One can immediately check that the totally geodesic isometric immersion $\mathbb F_{-1}^{m,n-m}\to \XXX_n$, defined as in Equation \eqref{eq: quadrica pseudo-Riem dentro X_n},  induces an isometric immersion $T^1 \mathbb F_{-1}^{m,n-m}\to T^1\XXX_n$ equipped with the respective para-Sasaki metrics. Also, this immersion is equivariant with respect to the geodesic flow on $T^1 \mathbb F_{-1}^{m,n-m}$ (defined wordly as in Equation \eqref{eq: geodesic flow}) and the one on $T^1\XXX_n$.

As a consequence of Lemma \ref{lemma:geodflow isometric} one also has the following.

\begin{Prop}
	The geodesic flow acts by isometries for $T^1 \mathbb F_{-1}^{m,n-m}$ with the para-Sasaki metric. 
	
	In particular, it is true for $T^1\Hyp^n$, $T^1 AdS^n$, and $T^{-1} dS^n$.
\end{Prop}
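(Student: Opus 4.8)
The plan is to transfer Lemma \ref{lemma:geodflow isometric} from $T^1\XXX_n$ to $T^1\mathbb F_{-1}^{m,n-m}$ along the isometric immersion induced by the totally geodesic embedding of Equation \eqref{eq: quadrica pseudo-Riem dentro X_n}. Denote by $\iota\colon T^1\mathbb F_{-1}^{m,n-m}\to T^1\XXX_n$ this immersion and by $\varphi_\lambda^{\mathbb F}$, $\varphi_\lambda^{\XXX}$ the (real) geodesic flows on the two spaces. The two properties of $\iota$ recorded just before the statement are exactly the input needed: $\iota$ is isometric for the respective para-Sasaki metrics $\gs{m,n-m}$ and $\gs{n}$, and it is equivariant for the geodesic flows, i.e. $\iota\circ\varphi_\lambda^{\mathbb F}=\varphi_\lambda^{\XXX}\circ\iota$ for all $\lambda\in\R$.

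First I would differentiate the equivariance identity to get $\iota_*\circ(\varphi_\lambda^{\mathbb F})_*=(\varphi_\lambda^{\XXX})_*\circ\iota_*$ at every point. Then, for $p\in T^1\mathbb F_{-1}^{m,n-m}$ and $X,Y\in T_p(T^1\mathbb F_{-1}^{m,n-m})$, I would chain the four facts:
\begin{align*}
\gs{m,n-m}\big((\varphi_\lambda^{\mathbb F})_*X,(\varphi_\lambda^{\mathbb F})_*Y\big)
&=\gs{n}\big(\iota_*(\varphi_\lambda^{\mathbb F})_*X,\iota_*(\varphi_\lambda^{\mathbb F})_*Y\big)\\
&=\gs{n}\big((\varphi_\lambda^{\XXX})_*\iota_*X,(\varphi_\lambda^{\XXX})_*\iota_*Y\big)\\
&=\gs{n}(\iota_*X,\iota_*Y)=\gs{m,n-m}(X,Y),
\end{align*}
using in order that $\iota$ is isometric, the differentiated equivariance identity, Lemma \ref{lemma:geodflow isometric}, and once more that $\iota$ is isometric. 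This proves that each $\varphi_\lambda^{\mathbb F}$ preserves $\gs{m,n-m}$.

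The ``in particular'' clause is then immediate by reading off the signatures: $\Hyp^n=\mathbb F_{-1}^{n,0}$, $AdS^n=\mathbb F_{-1}^{n-1,1}$, and $-dS^n=\mathbb F_{-1}^{1,n-1}$ (so that $T^{-1}dS^n=T^1(-dS^n)$ carries the para-Sasaki metric of $\mathbb F_{-1}^{1,n-1}$), all of which are instances of the general statement.

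I do not expect a genuine obstacle: both properties of $\iota$ are already established in the paragraph preceding the Proposition, and Lemma \ref{lemma:geodflow isometric} carries all the analytic weight. The only point deserving a line of care is that the Proposition concerns the \emph{real} geodesic flow (by the convention fixed after Equation \eqref{eq:geodflow}), while the Lemma is stated for the complex flow; this causes no trouble, since the real flow is simply the restriction of the complex one to $\lambda\in\R$, to which the Lemma applies verbatim.
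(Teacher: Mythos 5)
Your proof is correct and follows exactly the route the paper intends: the paper states the Proposition as an immediate consequence of Lemma \ref{lemma:geodflow isometric} together with the flow-equivariant isometric immersion $T^1\mathbb F_{-1}^{m,n-m}\to T^1\XXX_n$ established in the preceding paragraph, and your chain of equalities is just the explicit write-out of that deduction. No gaps.
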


Finally, let us summarize the description of horizontal and vertical lifts for $T^1\Hyp^n$ in the hyperboloid model. 

Consider the hyperboloid model \[\Hyp^n=\{ x=(x_1,\dots, x_{n+1})\in \R^{n,1} |\ \inner{ x,  x}_{n,1}=-1 ,\ x_{n+1}>0 \}\subset \R^{n,1}.\] Then \[T^1\Hyp^n =\{( x,  v)\in \R^{n,1}\times \R^{n,1}\ |\ \inner{ x, x}_{n,1}=-1,\ \inner{ x, v}_{n,1}=0,\ \inner{ v, v}_{n,1}=1 \},\]
and
\[
T_{( x,  v)} T^1\Hyp^n= \{(\dot{ x},\dot{  v})\ |\ \inner{ x,\dot{ x}}_{n,1}= \inner{ v,\dot{ v}}_{n,1}= \inner{ x,\dot{ v}}_{n,1}+ \inner{\dot{ x},  v}_{n,1}=0  \}.
\]
In the hyperboloid model, for all $ w\in  v^\bot <T_{ x}\Hyp^n$, the vertical and horizontal lifts are respectively
\begin{align*}
	 w^\V &=(0, w)\in \V_{( x, v)},\\
	 w^\HH &=( w, 0)\in \HH_{( x,  v)}.
\end{align*}
while 
\[
 v^\HH= \chi_{( x,  v)}= ( v,  x)\in \HH_{( x,  v)},
\]
and the para-Sasaki metric of $T^1\Hyp^n$, that for simplicity we will denote by $\gs n$, is
\begin{equation}
	\label{eq: Sas di T1Hn}
\gs n ( u_1^\HH+  w_1 ^\V,  u_2^\HH +  w_2^\V)= \inner{ u_1,  u_2}-\inner{ w_1, w_2}.
\end{equation}

\section{A para-K\"ahler structure on $\G n$}\label{sec:parakahler metric GG}

The aim of this section is to introduce a para-K\"ahler structure on the space $\G n= \partial \Hyp^n\times \partial \Hyp^n \setminus\Delta$ of the geodesics of $\Hyp^n$.

Before that, let us start by quickly recalling the basic definitions of para-complex and para-K\"ahler geometry. First introduced by Libermann in \cite{libermann}, the reader can refer to the survey \cite{zbMATH00894673} for more details on para-complex geometry. 
\vspace{5pt}
Given a manifold $\mathcal M$ of dimension $2n$,  an \emph{almost para-complex structure} on $\mathcal M$ is a tensor $\JJ$ of type $(1,1)$ (that is, a smooth section of the bundle of endomorphisms of $T\mathcal M$) such that $\JJ^2=\mathbbm 1$ and that at every point $p\in \mathcal M$ the eigenspaces {$T_p^\pm \mathcal M=\ker(\JJ\mp \mathbbm 1)$} have dimension $n$. The almost para-complex structure $\JJ$ is a \emph{para-complex structure} if the distributions $T_p^\pm \mathcal M$ are integrable. 

A \emph{para-K\"ahler structure} on $\mathcal M$ is the datum of a para-complex structure $\JJ$ and a pseudo-Riemannian metric $\GG$ such that $\JJ$ is $\GG$-skewsymmetric, namely
\begin{equation}\label{eq:JGcompatible}
	\mathbbm g(\JJ X,Y)=-\mathbbm g(X,\JJ Y)
\end{equation}
for every $X$ and $Y$, and such that the corresponding \emph{fundamental form}, namely
\begin{equation}
	\label{eq: Definizione Omega}
	\Omega(X,Y):=\mathbbm g(X,\JJ Y),
\end{equation} 
is closed. 

Observe that Equation \eqref{eq:JGcompatible} is equivalent to the condition that $\JJ$ is anti-isometric for $\mathbbm g$, namely:
\begin{equation}
	\label{eq:JGantiisometry}
	\mathbbm{g}(\JJ X,\JJ Y)=-\mathbbm{g}(X,Y)
\end{equation}
which implies immediately that the metric $\mathbbm g$ is necessarily neutral (that is, its signature is $(n,n)$) and that $\Omega$ is a 2-form, hence a symplectic form on $\mathcal M$.

\vspace{10pt}

In order to introduce the para-K\"ahler structure on the space of geodesics $\G{n}$, consider the map
\[
\mathrm p\colon T^1 \Hyp^{n} \to \G {n}
\]
that sends $( x, v)$ to the geodesic passing by $ x$ and tangent to $ v$. Observe that $\mathrm p$ is a $\R$-principal bundle via the action of the geodesic flow.

The para-K\"ahler structure on $\G n$ will be defined by introducing some structures on $T^1\Hyp^{n}$ and showing that they can be pushed forward to $\G{n}$.
Clearly the group $\Isom(\Hyp^{n})$ acts both on $T^1\Hyp^{n}$ and on $\G {n}$, in the obvious way, and moreover the two projection maps $\pi:T^1\Hyp^{n}\to\Hyp^{n}$ and $\mathrm p:T^1\Hyp^{n}\to\G {n}$ are equivariant with respect to these actions.

The kernel of the differential map $d\mathrm p$ is generated by the infinitesimal generator of the geodesic flow. As a consequence, given a point $(\xx,\vv)\in T^1\Hyp^{n}$, the decomposition \eqref{eq:direct sum} 
shows that the tangent space $T_\ell \G{n}$ identifies to $\chi_{(\xx,\vv)}^\perp=\HP_{(\xx,\vv)}\oplus\VP_{(\xx,\vv)}$, where $\ell\in \G{n}$ is the oriented unparameterized geodesic going through $\xx$ with speed $\vv$, and the orthogonal subspace is taken with respect to the para-Sasaki metric $\gs n$: in other words, the differential of $\mathrm p$ induces a vector space isomorphism 
\[d\mathrm p|_{\chi^\perp_{(x,v)} }\colon \chi_{(\xx,\vv)}^\perp \to T_\ell \G{n}.\]

Now, let us define ${\mathrm J}\in\mathrm{End}(\chi_{(\xx,\vv)}^\perp)$ by the following expression:
$${\mathrm J}(\dot \xx,\dot \vv)=(\dot \vv,\dot \xx)~.$$
In other words, recalling that $\HP_{(\xx,\vv)}$ consists of the vectors of the form $(\ww, 0)$, and $\VP_{(\xx,\vv)}$ of those of the form $( 0,\ww)$, for $\ww\in \vv^\perp$, ${\mathrm J}$ is defined by
\begin{equation}\label{eq:defiJ2}
	{\mathrm J}(\ww^\HH)=\ww^\V\qquad \text{and}\qquad {\mathrm J}(\ww^\V)=\ww^\HH~.
\end{equation}

\begin{Lemma}\label{lemma:paracomplex structure}
	The endomorphism ${\mathrm J}$ induces an almost para-complex structure $\JJ$ on $T_\ell \G{n}$, which does not depend on the choice of $(x,v)\in \mathrm p^{-1}(\ell)$.
\end{Lemma}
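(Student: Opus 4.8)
The plan is to check first that $\mathrm J$ is an almost para-complex structure on the vector space $\chi^\perp_{(\xx,\vv)}$, then to push it forward to $T_\ell\G n$ through the isomorphism $d\mathrm p|_{\chi^\perp_{(\xx,\vv)}}$ of the statement, the only genuine content being that the outcome does not change if $(\xx,\vv)$ is replaced by another point of the fibre $\mathrm p^{-1}(\ell)$.

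For the pointwise linear algebra I would argue as follows. By \eqref{eq:defiJ2} the endomorphism $\mathrm J$ interchanges the two summands of $\chi^\perp_{(\xx,\vv)}=\HP_{(\xx,\vv)}\oplus\VP_{(\xx,\vv)}$; hence it preserves $\chi^\perp_{(\xx,\vv)}$ and satisfies $\mathrm J^2=\mathbbm 1$. Its eigenspaces
\[
\ker(\mathrm J\mp\mathbbm 1)=\{\,\ww^\HH\pm\ww^\V\ :\ \ww\in\vv^\perp\,\}
\]
have equal real dimension $n-1=\tfrac12\dim_\R\chi^\perp_{(\xx,\vv)}$, so $\mathrm J$ is an almost para-complex structure on $\chi^\perp_{(\xx,\vv)}$. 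Carrying it over by the linear isomorphism $d\mathrm p|_{\chi^\perp_{(\xx,\vv)}}\colon\chi^\perp_{(\xx,\vv)}\to T_\ell\G n$ produces an endomorphism $\JJ$ of $T_\ell\G n$ that still squares to $\mathbbm 1$ and has balanced eigenspaces.

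The key step, and the only one carrying real content, is independence of the chosen lift. Since $\mathrm p$ is the principal $\R$-bundle of the geodesic flow, any two points of $\mathrm p^{-1}(\ell)$ differ by some $\varphi_\lambda$; from $\mathrm p\circ\varphi_\lambda=\mathrm p$ one gets $d\mathrm p\circ d\varphi_\lambda=d\mathrm p$, and $d\varphi_\lambda$ maps $\chi^\perp_{(\xx,\vv)}$ onto $\chi^\perp_{\varphi_\lambda(\xx,\vv)}$ because it is $\gs n$-isometric and fixes $\chi$ (Lemma \ref{lemma:geodflow isometric} and \eqref{eq:diff geoflow3}). It therefore suffices to verify that $d\varphi_\lambda$ commutes with $\mathrm J$, for then the two push-forwards of $\mathrm J$ agree. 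Using \eqref{eq:diff geoflow1}--\eqref{eq:diff geoflow2}, which hold verbatim on $T^1\Hyp^n$, this reduces to the short cancellation
\[
d\varphi_\lambda\bigl(\mathrm J(\ww^\HH)\bigr)=d\varphi_\lambda(\ww^\V)=\sinh(\lambda)\,\ww^\HH+\cosh(\lambda)\,\ww^\V=\mathrm J\bigl(d\varphi_\lambda(\ww^\HH)\bigr),
\]
together with the analogous identity for $\ww^\V$. I expect this computation to be the crux of the lemma, although it is a one-line verification rather than a serious obstacle. Finally, I would observe that $\JJ$ depends smoothly on $\ell$, since $\mathrm p$ is a submersion admitting local sections along which the construction is manifestly smooth, so the pointwise definition assembles into a bona fide almost para-complex structure on $\G n$.
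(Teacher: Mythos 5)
Your proposal is correct and follows essentially the same route as the paper: the crux in both is the commutation identity $d\varphi_\lambda\circ\mathrm J=\mathrm J\circ d\varphi_\lambda$ on $\chi^\perp$, verified via the linearized geodesic flow \eqref{eq:geodflow linearized}, combined with the facts that $\varphi_\lambda$ preserves $\chi$ and acts isometrically so as to preserve $\chi^\perp$. The paper identifies the $\pm1$-eigenspaces as $\{(\ww,\pm\ww):\ww\in\vv^\perp\}$ exactly as you do, so there is nothing to add.
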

\begin{proof}
	By definition of the $\R$-principal bundle structure $\mathrm p:T^1\Hyp^{n}\to \G{n}$ and of the geodesic flow $\varphi_t$, $\mathrm p\circ \varphi_t=\mathrm p$ for every $t\in\R$. Moreover $\varphi_t$ preserves the infinitesimal generator $\chi$ (Equation \eqref{eq:diff geoflow3}) and acts isometrically on $T^1\Hyp^{n}$ by Lemma \ref{lemma:geodflow isometric}, hence it preserves the orthogonal complement of $\chi$.
	Therefore, given vectors $X,Y\in T_\ell \G{n}$, any two lifts of $X$ and $Y$ on $T^1\Hyp^{n}$ orthogonal to $\mathrm p^{-1}(\ell)$ differ by push-forward by $\varphi_t$.
	
	However, it is important to stress that the differential of $\varphi_t$ does \emph{not} preserve the distributions $\HP$ and $\VP$ {individually}  (see Equations \eqref{eq:diff geoflow2} and \eqref{eq:diff geoflow3}).
	Nevertheless, by Equation \eqref{eq:geodflow linearized}:
	\begin{align*}(\varphi_t)_*({\mathrm J}(\dot \xx,\dot \vv))=(\varphi_t)_*(\dot \vv,\dot \xx)&=(\cosh(t)\dot \vv+\sinh(t)\dot \xx,\sinh(t)\dot \vv+\cosh(t)\dot \xx) \\
		&={\mathrm J}(\sinh(t)\dot \vv+\cosh(t)\dot \xx,\cosh(t)\dot \vv+\sinh(t)\dot \xx)={\mathrm J}(\varphi_t)_*(\dot \xx,\dot \vv)
		~,
	\end{align*}
	which shows that $\JJ$ is well-defined on $T_\ell \G{n}$. It is clear from the expression of $J$ that $\JJ^2=\mathbbm 1$, and moreover that the $\pm 1$-eigenspaces of $\JJ$ both have dimension $n$, since the eigenspaces of ${\mathrm J}$ consist precisely of the vectors of the form $(\ww,\ww)$ (resp. $(\ww,-\ww)$) for $\ww\in v^\perp\subset T_\xx\Hyp^{n+1}$.
\end{proof}

Let us now turn our attention to the construction of the neutral metric $\GG$, which will be defined by a similar construction. In fact, given $(\xx,\vv)\in \mathrm p^{-1}(\ell)$, we simply define $\m$ on $T_\ell \G{n}$ as the push-forward of the restriction $\gs{n}|_{\chi^\perp_{(\xx,\vv)}}$ by the isomorphism $$d\mathrm p|_{\chi^\perp_{(\xx,\vv)}}:\chi^\perp_{(\xx,\vv)}\to T_\ell \G{n}~.$$
Well-posedness of this definition follows immediately from Equation \eqref{eq:diff geoflow3} and Lemma \ref{lemma:geodflow isometric}.

\begin{Lemma}
	The restriction of $\gs{n}$ to $\chi^\perp_{(\xx,\vv)}$ induces a neutral metric $\GG$ on $T_\ell \G{n}$, which does not depend on the choice of $(\xx,\vv)\in \mathrm p^{-1}(\ell)$, such that $\JJ$ is $\GG$-skewsymmetric.
\end{Lemma}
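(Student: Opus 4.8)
The plan is to establish the three assertions — well-posedness of the push-forward, neutrality of $\GG$, and $\GG$-skewsymmetry of $\JJ$ — by working on $\chi^\perp_{(\xx,\vv)}$ and transporting everything through the linear isomorphism $d\mathrm p|_{\chi^\perp_{(\xx,\vv)}}$, exactly in the spirit of Lemma \ref{lemma:paracomplex structure}.

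For well-posedness I would repeat the argument already used for $\JJ$. Two points of the fiber $\mathrm p^{-1}(\ell)$ differ by some $\varphi_t$; since $\mathrm p\circ\varphi_t=\mathrm p$, the corresponding lifts of vectors $X,Y\in T_\ell\G n$ differ by $(\varphi_t)_*$. By Equation \eqref{eq:diff geoflow3} the flow preserves $\chi$, hence its differential preserves $\chi^\perp$, and by Lemma \ref{lemma:geodflow isometric} it is a $\gs n$-isometry; therefore the restriction of $\gs n$ to $\chi^\perp$ is invariant under $(\varphi_t)_*$, and the push-forward metric is independent of the chosen lift.

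Neutrality I would read off directly from Equation \eqref{eq: Sas di T1Hn}. On the splitting $\chi^\perp_{(\xx,\vv)}=\HP_{(\xx,\vv)}\oplus\VP_{(\xx,\vv)}$, the para-Sasaki metric equals $+\inner{\cdot,\cdot}$ on horizontal lifts, $-\inner{\cdot,\cdot}$ on vertical lifts, and the two summands are orthogonal. Since the hyperbolic metric is positive definite on $\vv^\perp$, this shows that $\HP$ and $\VP$ are respectively positive and negative definite of equal dimension $n-1$, so $\gs n|_{\chi^\perp}$ is non-degenerate of signature $(n-1,n-1)$; a linear isomorphism preserves signature, so $\GG$ is neutral.

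The skewsymmetry reduces, after the same push-forward (recall $\GG(d\mathrm p\,\widetilde X,d\mathrm p\,\widetilde Y)=\gs n(\widetilde X,\widetilde Y)$ and $\JJ\circ d\mathrm p=d\mathrm p\circ\mathrm J$ on $\chi^\perp$), to checking the identity on lifts $\widetilde X=\uu_1^\HH+\ww_1^\V$, $\widetilde Y=\uu_2^\HH+\ww_2^\V$. Using the definition \eqref{eq:defiJ2} of $\mathrm J$, which swaps horizontal and vertical lifts and hence preserves $\chi^\perp$, together with Equation \eqref{eq: Sas di T1Hn}, a direct expansion gives $\gs n(\mathrm J\,\widetilde X,\widetilde Y)=\inner{\ww_1,\uu_2}-\inner{\uu_1,\ww_2}=-\gs n(\widetilde X,\mathrm J\,\widetilde Y)$, which is precisely Equation \eqref{eq:JGcompatible}. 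I do not anticipate a genuine obstacle here: the only subtlety is the one already handled for $\JJ$, namely that the geodesic flow acts isometrically and preserves $\chi^\perp$, and this is exactly what makes the three push-forward constructions mutually compatible.
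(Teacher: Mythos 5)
Your proposal is correct and follows essentially the same route as the paper: well-posedness via the invariance of $\chi^\perp$ and of $\gs n$ under the geodesic flow (Equation \eqref{eq:diff geoflow3} and Lemma \ref{lemma:geodflow isometric}), and skewsymmetry read off from Definition \ref{Def:parasasaki} together with the fact that $\mathrm J$ swaps $\HP$ and $\VP$. The only cosmetic difference is that the paper verifies the equivalent anti-isometry identity \eqref{eq:JGantiisometry} (from which neutrality also follows) rather than expanding $\gs n(\mathrm J\widetilde X,\widetilde Y)$ directly, but the computation is the same.
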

\begin{proof}
	It only remains to show $\GG$-skewsymmetry, namely Equation \eqref{eq:JGcompatible}. The latter is indeed equivalent to Equation \eqref{eq:JGantiisometry}, which is immediate from Definition \ref{Def:parasasaki} and the definition of ${\mathrm J}$, see in particular \eqref{eq:defiJ2}. 
\end{proof}

There is something left to prove in order to conclude that the constructions of $\JJ$, $\GG$, and $\Omega:=\GG(\cdot, \JJ\cdot)$ induce a para-K\"ahler structure on $\G{n}$ but we defer the remaining checks to the following sections: in particular, we are left to prove that the almost para-complex structure $\JJ$ is integrable (it will be a consequence of Example \ref{ex:horospheres}) and that the  2-form $\Omega$ is closed (which is the content of  Corollary \ref{cor:omega closed}).

{
	\begin{Remark}
		\label{rmk: Isom preserva Omega e J}
		The group $\Isom(\Hyp^{n})$ acts naturally on $\G{n}$ and the map $\mathrm p\colon T^1 \Hyp^n\to \G{n}$ is equivariant, namely $\mathrm p (\psi\cdot (\xx,\vv))= \psi \cdot \mathrm p(\xx,\vv)$ for all $\psi\in \Isom(\Hyp^n)$. As a result, by construction of $\GG$ and $\JJ$, the action of $\Isom(\Hyp^n)$ on $\G{n}$ preserves both $\GG$ and $\JJ$.
\end{Remark}}

\begin{Remark}\label{rmk other metric1}
	Of course some choices have been made in the above construction, in particular in the expression of the para-Sasaki metric of Definition \ref{Def:parasasaki}, which has a fundamental role when introducing the metric $\GG$. The essential properties we used are the naturality with respect to the isometry group of $\Hyp^{n}$ and to the action of the geodesic flow (Lemma \ref{lemma:geodflow isometric}).
	
	Some alternative definitions for $\gs{n}$ would produce the same expression for $\GG$.
	For instance one can define for all $c\in \R^+$ a metric ${\overline{Sas}_{n} ^{\ *c}}$ on $T^1 \Hyp^{n}$ so that, with respect to the direct sum decomposition \eqref{eq:direct sum}:
	\begin{itemize}
		\item ${\overline{Sas}_{n} ^{\ *c}}(\ww_1^\HH,\ww_2^\HH)=-\gs n (\ww_1^\V,\ww_2^\V)=\langle \ww_1,\ww_2\rangle$ for any $\ww_1,\ww_2\in v^\perp\subset T_\xx\Hyp^{n}$,
		\item ${\overline{Sas}_{n} ^{\ *c}} (\chi_{(\xx,\vv)},\chi_{(\xx,\vv)})=c$,
		\item $\mathrm{Span}(\chi_{(\xx,\vv)})$, $\HP_{(\xx,\vv)}$ and $\VP_{(\xx,\vv)}$ are mutually ${\overline{Sas}_{n} ^{\ *c}}$-orthogonal.
	\end{itemize}
	
	Replacing $\gs{n}$ with such a ${\overline{Sas}_{n} ^{\ *c}}$, one would clearly obtain the same metric $\GG$ since it only depends on the restriction of ${\overline{Sas}_{n} ^{\ *c}}$ to the orthogonal complement of $\chi$.
	{Moreover, ${\overline{Sas}_{n} ^{\ *c}}$ is invariant under the action of $\Isom(\Hyp^n)$ and under the geodesic flow.}
	
\end{Remark}
\begin{Remark}\label{rmk other metric2}
	It will  be convenient to use Remark \ref{rmk other metric1} in the following, by considering $T^1\Hyp^{n}$ as a submanifold of $\R^{n,1}\times \R^{n,1}$, and taking the metric given by the Minkowski product on the first factor, and its opposite on the second factor, restricted to $T^1\Hyp^{n}$, i.e.
	\begin{equation}\label{eq:metric minkxmink}
		\gstar n ((\dot \xx_1,\dot \vv_1),(\dot \xx_2,\dot \vv_2))=\langle \dot \xx_1,\dot \xx_2\rangle-\langle \dot \vv_1,\dot \vv_2\rangle~.
	\end{equation}
	In fact, it is immediate to check that $\gstar n(\ww_1^\HH,\ww_2^\HH)=\gstar n ((\ww_1,0),(\ww_2,0))=\langle \ww_1,\ww_2\rangle$ for $\ww_i\in \vv^\perp$, that similarly $\gstar n(\ww_1^\V,\ww_2^\V)=-\langle \ww_1,\ww_2\rangle$, and that 
	$$\gstar n (\chi_{(\xx,\vv)},\chi_{(\xx,\vv)})=\gstar n ((\vv,\xx),(\vv,\xx))=\langle \vv,\vv\rangle-\langle \xx,\xx\rangle =2~.$$
	Finally elements of the three types are mutually orthogonal, and therefore
	$\gstar n ={\overline{Sas}_{n} ^{\ *2}}$ with ${\overline{Sas}_{n} ^{\ *2}}$ as in Remark \ref{rmk other metric1}.
\end{Remark}

\section{The space $\G 3$ as $\mathbb X_2$}
\label{section: G3 e hRm}
For $n=3$, the space $\G 3$ of the geodesics of $\Hyp^3$ also has a structure of holomorphic Riemannian manifold: in fact, it is isometric to $\XXX_2$. 

From now on we will denote $\GGG= \G 3$

Let us first observe that $\GGG$ has a natural complex structure. Indeed, $\partial \Hyp^3$ has a natural complex structure for which it is biholomorphic to $\CP^1$: this structure coincides with the standard complex structure on $S^2$ when you see $\Hyp^3$ in the disk model, or equivalently to the one of $\overline \C$ seen as the boundary of $\Hyp^3$ in the half-space model $\C\times \R^+$. The action of the group $\PSL\cong \Isom_0(\Hyp^3)$ on $\Hyp^3$ extends to an action of $\partial \Hyp^3$ which coincides with the action via M\"obius maps in the identification $\partial \Hyp^3\cong \overline \C$, hence it is by biholomorphisms.

As a result, the space $\GGG = \partial \Hyp^3 \times \partial \Hyp^3\setminus \Delta \cong \overline \C\times \overline \C \setminus \Delta$ inherits a natural complex structure. The natural action of $\PSL\cong \Isom_0(\Hyp^3)$ on $\GGG$ is diagonal and corresponds to the M\"obius maps action on each component, namely
\begin{align*}
	\PSL\times \GGG &\to \GGG\\
	A \cdot (z_1, z_2)&= (A\cdot z_1, A\cdot z_2).
\end{align*}

\begin{Theorem}
	\label{teo: metrica su G}
	The space $\GGG$ admits a complete holomorphic Riemannian metric $\inners_\GGG$ of constant curvature $-1$ such that $\Isom_0(\inners_\GG)\cong\PSL$ under the natural action. As a consequence, one has $(\GGG, \inners_\GGG) \cong (\XXX_2, \inners)$.
	
	Explicitly, if $(U,z)$ is an affine chart for $\CP^1$, then in the chart $(U\times U\setminus \Delta, z\times z)$ for $\GGG$ the metric is described by  \begin{equation}
		\label{eq: metrica su G}
		\inners_\GGG= -\frac{4}{(z_1-z_2)^2} dz_1 \cdot dz_2.
	\end{equation}
	Finally, $\inners_\GGG$ is the unique hRm on $\GGG$ with the property of being $\PSL$-invariant and having constant curvature $-1$.
\end{Theorem}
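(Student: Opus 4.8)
The plan is to take the explicit expression \eqref{eq: metrica su G} as the \emph{definition} of $\inners_\GGG$, verify all the asserted properties directly, and deduce the comparison with $\XXX_2$ from the uniqueness part of Theorem \ref{Theorem space forms}. First I would check that the local formula $-\frac{4}{(z_1-z_2)^2}\,dz_1\cdot dz_2$ glues to a globally defined holomorphic Riemannian metric on $\GGG=\CP^1\times\CP^1\setminus\Delta$: it is manifestly $\C$-bilinear, symmetric and holomorphic in $(z_1,z_2)$, and off the diagonal the coefficient is nonzero while $dz_1\cdot dz_2$ is a nondegenerate symmetric form, so it is a hRm in the sense of Definition \ref{def hRm} on each bi-affine chart. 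Gluing across charts (including charts containing $\infty$) is subsumed by the stronger claim of $\PSL$-invariance, which I prove by a direct substitution: for $w=\frac{az+b}{cz+d}$ with $ad-bc=1$ one has $dw_i=\frac{dz_i}{(cz_i+d)^2}$ and $w_1-w_2=\frac{z_1-z_2}{(cz_1+d)(cz_2+d)}$, whence $\frac{dw_1\cdot dw_2}{(w_1-w_2)^2}=\frac{dz_1\cdot dz_2}{(z_1-z_2)^2}$. Since the transition maps of $\CP^1$ are Möbius maps, this identity shows the expression is chart-independent; and since $\PSL$ acts diagonally by Möbius maps, it acts by isometries, giving an injective homomorphism $\PSL\hookrightarrow\Isom_0(\GGG,\inners_\GGG)$.

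Next I would compute the curvature. Writing $E:=-2/(z_1-z_2)^2$ so that $\inners_\GGG=2E\,dz_1\cdot dz_2$ is in null-coordinate form ($g_{11}=g_{22}=0$, $g_{12}=E$), the only nonvanishing Christoffel symbols of the Levi-Civita connection of Proposition \ref{prop: Levi Civita hRm} are $\Gamma^{1}_{11}=\partial_{z_1}\ln E$ and $\Gamma^{2}_{22}=\partial_{z_2}\ln E$, and a short computation gives $R(\partial_{z_1},\partial_{z_2})\partial_{z_1}=-(\partial_{z_1}\partial_{z_2}\ln E)\,\partial_{z_1}$. Hence, by Definition \ref{def sectional curvature hRm}, the complex sectional curvature of the (unique, since $\dim_\C\GGG=2$) tangent plane is $K=-\frac{\partial_{z_1}\partial_{z_2}\ln E}{E}$. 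Since $\partial_{z_2}\ln E=\frac{2}{z_1-z_2}$ and $\partial_{z_1}\partial_{z_2}\ln E=-\frac{2}{(z_1-z_2)^2}=E$, this yields $K\equiv-1$. (Alternatively, transitivity of the $\PSL$-action already forces $K$ to be constant, reducing the verification to a single point.)

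To invoke Theorem \ref{Theorem space forms} I still need completeness and simple connectivity. For completeness I would use that $\PSL$ acts transitively on $\GGG$ — it is sharply $3$-transitive on $\CP^1$, hence transitive on ordered pairs of distinct points — so $(\GGG,Re\,\inners_\GGG)$ is a homogeneous pseudo-Riemannian manifold, and homogeneous pseudo-Riemannian manifolds are geodesically complete; since by Proposition \ref{prop: Levi Civita hRm} the Levi-Civita connection of $\inners_\GGG$ agrees with that of its real part, $\inners_\GGG$ is complete. Simple connectivity follows from the fibration $\CP^1\times\CP^1\setminus\Delta\to\CP^1$, $(\xi_-,\xi_+)\mapsto\xi_+$, which is locally trivial (it is homogeneous under $\PSL$) with contractible fiber $\CP^1\setminus\{\xi_+\}\cong\C$, so $\pi_1(\GGG)\cong\pi_1(\CP^1)=0$. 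Thus $(\GGG,\inners_\GGG)$ is a holomorphic Riemannian space form of dimension $2$ and curvature $-1$, and Theorem \ref{Theorem space forms} gives the isometry $(\GGG,\inners_\GGG)\cong(\XXX_2,\inners)$. In particular $\Isom_0(\GGG,\inners_\GGG)\cong\Isom_0(\XXX_2)\cong SO(3,\C)$ by Remark \ref{rmk Super hRm}, a connected group of complex dimension $3$; as the faithful image of $\PSL$ inside it is connected of the same dimension $3$, it is everything, proving $\Isom_0(\inners_\GGG)\cong\PSL$.

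Finally, for the uniqueness statement I would argue at the level of invariant tensors. Any $\PSL$-invariant section of $Sym^2(T^*\GGG)$ is determined by its value at a base point $\ell_0=(0,\infty)$, which must be invariant under $\mathrm{Stab}_{\PSL}(\ell_0)=\{z\mapsto\lambda z:\lambda\in\C^*\}$. This one-parameter group acts on $T_{\ell_0}\GGG=T_0\CP^1\oplus T_\infty\CP^1$ as $\mathrm{diag}(\lambda,\lambda^{-1})$, so $dz_1^{2}$ and $dz_2^{2}$ transform with the nontrivial weights $\lambda^{\mp2}$ and only $dz_1\cdot dz_2$ (weight $0$) survives; hence the $\PSL$-invariant symmetric $2$-tensors form the complex line $\C\cdot\inners_\GGG$. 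For $h=c\,\inners_\GGG$ the curvature scales as $K_h=K/c=-1/c$, so the requirement of constant curvature $-1$ forces $c=1$, i.e. $h=\inners_\GGG$. The main obstacle, and the only genuinely computational point, is the curvature calculation of the second paragraph; the completeness input (homogeneity implies geodesic completeness in the pseudo-Riemannian setting) is the other place where some care is needed to legitimately apply the space-form uniqueness theorem.
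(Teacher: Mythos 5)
Your proposal is correct and follows the same overall skeleton as the paper's proof: define $\inners_\GGG$ by the local formula, check chart-independence and $\PSL$-invariance by the Möbius substitution, get completeness from homogeneity, invoke Theorem \ref{Theorem space forms}, identify $\Isom_0$ by a dimension count, and prove uniqueness via the stabilizer of a point. Two steps are handled by genuinely different (but equally valid) local arguments. For the curvature, the paper avoids Christoffel symbols entirely: it exhibits the totally geodesic isometric copy of $\Hy^2$ given by $z\mapsto(z,\overline z)$ (totally geodesic because it is the fixed locus of the involution $(z,w)\mapsto(\overline w,\overline z)$, an isometry of $Re\,\inners_\GGG$), and reads off $K=-1$ from the embedded hyperbolic plane, exactly as in the verification that $\XXX_n$ has curvature $-1$; your null-coordinate computation $K=-\frac{1}{E}\partial_{z_1}\partial_{z_2}\ln E$ with $E=-2/(z_1-z_2)^2$ is a clean self-contained alternative, and your remark that transitivity already forces constancy matches the paper. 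For uniqueness, the paper argues that the stabilizer $SO(2,\C)$ must preserve the two isotropic directions of any invariant metric, forcing proportionality; your weight decomposition of $Sym^2_\C(T^*_{\ell_0}\GGG)$ under $\mathrm{diag}(\lambda,\lambda^{-1})$ is the same idea phrased representation-theoretically, and is arguably more transparent. You also supply the simple connectivity of $\GGG$ (via the fibration over $\CP^1$ with fiber $\C$), which the paper uses without comment. One caveat inherited from the paper rather than introduced by you: the assertion that a homogeneous pseudo-Riemannian manifold is geodesically complete is not true in full generality, so strictly speaking both arguments lean on the fact that $\GGG$ is a (pseudo-Riemannian) symmetric space of $\PSL$, for which completeness does hold; since you reproduce the paper's own reasoning here, this is not a gap relative to it.
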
	

\begin{proof}
	Let us start with showing that the metric locally defined in Equation \eqref{eq: metrica su G} does not depend on the choice of the affine chart. Clearly, since \eqref{eq: metrica su G} gives a local hRm, independence on the chart would prove that this local description defines a global hRm on $\GGG$.
	
	Assume $(U,z)$ and $(U',z')$ be two overlapping affine charts for $\CP^1$, then $\psi=z' \circ z^{-1}$ (defined on $z(U\cap U')$) is a M\"obius map.
	As a result, in the local charts $(U\times U\setminus \Delta,z\times z)$ and $(U'\times U'\setminus \Delta, z'\times z')$ for $\GGG$, denoting $\psi(w)=\frac{aw+b}{cw+d}$ with $ad-bc=1$, we compute that
	\begin{align*}
		-\frac{4}{(z'_1-z'_2)^2} dz'_1 \cdot dz'_2&= -\frac{4}{(\psi(z_1)-\psi(z_2))^2} d(\psi\circ z_1) \cdot d(\psi \circ z_2)=\\
		&= -\frac{4}{\big(\frac{az_1+b}{cz_1+d}-\frac{az_2+b}{cz_2+d} \big)^2} \frac 1 {(cz_1+d)^2} \frac 1 {(cz_2 +d)^2} dz_1 \cdot dz_2=\\
		&= -\frac 4 {\big((az_1+b)(cz_2+d)-(az_2+b)(cz_1+d)\big)^2} dz_1 \cdot dz_2=\\
		&=-\frac{4}{(z_1-z_2)^2} dz_1 \cdot dz_2
	\end{align*}
	proving that Equation \eqref{eq: metrica su G} does not depend on the affine chart: as a result it defines a well posed hRm on $\GGG$.
	
	Let us prove that $\inners_\GGG$ is $\PSL$-invariant. Let $\ell\in \GGG$ and $\psi\in \PSL$. Let $(U,z)$ be an affine chart on $\CP^1$ such that the endpoints of $\ell$ and of $\psi(\ell)$ lie on $U$. By definition of the action of $\PSL$ on $\GGG$, locally around $\ell$ one has that $(\psi\times \psi)\circ(z\times z)= (z\times z)\circ \psi$. Using this and recalling that $(\psi\times \psi)\circ(z\times z)$ is a local affine coordinate around $\ell$, we get (here $\inners=\inners_\GGG$):
	\begin{align*}
		\psi^* (\inners|_{\psi(\ell)})&= \psi^* \bigg( (- \frac 4 {(z_1-z_2)^2} dz_1 \cdot dz_2)\bigg)|_{\psi(\ell)}=\\
		&= -\frac{4}{(\psi(z_1) - \psi(z_2))^2} d(\psi\circ z_1)\cdot d(\psi\circ z_2) |_{\ell}= \\
		&=\inners|_\ell,
	\end{align*}
	and the $\PSL$ invariancy follows.
	
	Since $\PSL$ acts transitively on $\GGG$, the Levi-Civita connection of $\inners_\GGG$ endows $\GGG$ with a structure of homogeneous space, hence the connection is geodesically complete (See \cite{Kobayashi-Nomizu2}). We conclude that $(\GGG, \inners_\GGG)$ is a holomorphic Riemannian space form.

	We finally show that $\inners_\GGG$ has constant curvature $-1$. Clearly, it has constant sectional curvature since $\PSL$ acts transitively.
	Emulating our proof of the fact that $\mathbb{X}_2\subset \C^3$ has constant curvature $-1$, it is sufficient to show that there exists an isometric embedding $\Hy^2\to (\GGG,\inners_\GGG)$ which is totally geodesic.

	Consider the immersion
	\begin{align*}
		\sigma \colon H&\to \GGG \\ 
		z &\mapsto (z,\overline z)
	\end{align*}
	where $H=\{z\in \C\ |\ Im(z)>0\}$ is the upper half plane.
	
	By Equation \eqref{eq: metrica su G}, the pull-back metric on $H$ is
	\[
	- \frac 4 {- 4 (Im(z))^2} dz\cdot d\overline z=\frac 1 {(Im(z))^2} dz d\overline z,
	\]
	so $\sigma$ is an isometric immersion of the hyperbolic plane $\Hy^2$ in its upper half plane model.
	
	Moreover, observe that $\sigma(H)$ is the fixed locus of the involution $(z, w)\mapsto (\overline w, \overline z)$. A direct computation shows that this involution is an isometry for the pseudo-Riemannian metric given by $Re\big( \frac{1}{(z_1-z_2)^2} dz_1 dz_2\big)$, hence it is an isomorphism for the induced Levi-Civita connection which coincides with the one induced by $\inners_\GGG$ by Proposition \ref{prop: Levi Civita hRm}: we conclude that $\sigma$ is totally geodesic.
	
	Being $\GGG$ simply connected and being $\inners_{\GGG}$ complete of constant curvature $-1$, by Theorem \ref{Theorem space forms}, $(\GGG,\inners_{\GGG})$ is isometric to $\XXX_2$.

	Since $\Isom_0(\GGG,\inners_\GGG)\cong \Isom_0(\XXX_2)\cong \SO(3,\C)$ and since $\PSL$ acts faithfully on $\GGG$, the induced map $\PSL\to \Isom_0(\GGG, \inners_\GGG)$ is an injective homomorphism of Lie groups of the same dimension, hence an isomorphism.
	
	In conclusion, let us prove that $\inners_\GGG$ is the unique hRm on $\GGG$ being $\PSL$-invariant and with constant curvature $-1$. Assume $\tau$ is another such hRm on $\GGG$, then $\tau$ coincides with $\inners_\GGG$ if and only if they coincide in one point since $\PSL$ acts transitively. 
	
	Now, observe that if a bilinear form on $\C^2$ is $\SO(2,\CC)$-invariant, then, since every element of $\SO(2,\CC)$ has eigenspaces $\{(z,z)\ |\ z\in \C\}$ and $\{(z,-z)\ |\ z\in \C\}$ with eigenvalues generally different from $\pm 1$, one deduces that its eigenspaces coincide with the isotropic directions for the metric. With the same argument applied to $(T_\ell \GGG, \inners_\GGG|_\ell)$ (for any $\ell$) and to its stabilizer $Stab(\ell)\cong SO(2,\CC)$, one gets that $\inners_\GGG$ and $\tau$ have the same isotropic direction in $T_\ell \GGG$, therefore $\tau=\lambda \cdot \inners_\GGG$ for some $\lambda\in \C^*$, but then $\lambda=1$ because by ipothesis $\inners_\GGG$ and $\tau$ have the same curvature.
\end{proof}

\begin{Notation*}
	With a little abuse, in the following we will use $\GGG$ to denote the space $\G 3$ equipped with the hRm $\inners_\GGG$. 
\end{Notation*}

We anticipate that we will prove later on in this thesis (Proposition {prop: pull-back Re hRm}) that $\GG_3= Re(\inners_\GGG)$. 
\vspace{10pt}

It might turn out interesting to produce an explicit isometry between $\GGG$ and $\XXX_2$. Hereafter, we present two explicit isometries which carry some geometric meaning.
\vspace{8pt}

\emph{Model} 1. 

Recall that we defined an isometry $F\colon(\C^4, \inners_0)\to(Mat(2,\C), \inners_{Mat_2})$ by 
\[F(z_1, z_2, z_3, z_4)=
\begin{pmatrix}
	-z_1- iz_4 &  -z_2-iz_3\\
	-z_2 + iz_3 & z_1-iz_4
\end{pmatrix}.\]

As we previously observed, it restricts to an isometry $F\colon \mathbb X_3 \to \SL$. 

On the other hand, by regarding $\mathbb X_2$ as $\{(z_1, z_2, z_3,0)\in \mathbb X_3\}$, we have that $F$ restricts to an isometry \[F\colon \mathbb X_2 \to\asl \cap \SL=\{M\in Mat(2,\C)\ |\ det(M)=1,\ tr(M)=0 \}.\] 
As we already observed in Equation \eqref{formula3}, for all $M\in \asl \cap \SL$ one has that $M^2=I_2$, clearly with $M\ne -I_2$: as a result, with respect to the standard action of $\SL$ on $\Hy^3$, we get that $\asl \cap \SL$ corresponds to orientation-preserving isometries of order $2$, i.e. to rotations of angle $\pi$ around some axis in $\Hy^3$. We therefore have a 2-sheeted covering map 
\[
Axis\colon \SL\cap \asl\to \GGG_\sim =\nicefrac{(\CP^1\times \CP^1\setminus \Delta)}{(x,y)\sim(y,x)}
\]
analogous to the one seen in Equation \eqref{eq: mappa Fix su bar G},
that sends each matrix to the fixed unoriented geodesic of the corresponding isometry in $\Hy^3$.

Since $\XXX_2$ is simply connected, by uniqueness of the universal cover, $Axis$ lifts to some diffeomorphism \[\widetilde{Axis}\colon \SL\cap \asl \to \GGG.\] By precomposing with $F$, we get a diffeomorphism $\widetilde {Axis} \circ F \colon \mathbb X_2 \to \GGG$. Observe that explicitly $\widetilde {Axis}$ is given by:
\begin{equation}
	\label{eq: mappa Axis esplicita}
	\begin{pmatrix}
		a &b \\
		c &-a
	\end{pmatrix} \mapsto \Big(\frac{a+i} c= - \frac b {a-i}, \frac{a-i} c= - \frac b {a+i}  \Big),\footnote{This notation means that, on each component, at least one between RHS and LHS makes sense and they coincide if they both make sense.}
\end{equation}
in particular it is a biholomorphism.

Let us show that $\widetilde{Axis}$ is also an isometry of holomorphic Riemannian manifolds, i.e. that the push-forward of $\inners_{\SL}$ via $\widetilde{Axis}$ coincides with $\inners_\GGG$. Indeed, the map $\widetilde Axis$ is equivariant with respect to the action of $\SL$ via conjugation on $\SL\cap \asl$ and by M\"obius maps on $\GGG$, hence the push-forward metric is $\SL$-invariant; since the pull-back metric also has curvature $-1$, it coincides with $\inners_\GGG$ by the uniqueness part of Theorem \ref{teo: metrica su G}.

\iffalse
Consider the curve $\gamma(t)= \begin{pmatrix}
	\sinh(t) & i\cosh(t)\\
	i\cosh(t) & -\sinh(t) 
\end{pmatrix} \in \SL\cap \asl$, having $\gamma(0)= \begin{pmatrix}
	0 & i \\
	i & 0
\end{pmatrix}$, and $\dot \gamma(0)= \begin{pmatrix}
	1 &0\\
	0 &-1
\end{pmatrix}$. By Equation \eqref{eq: mappa Axis esplicita}, we have 
\begin{align*}
	\widetilde {Axis} (\gamma(t))&= \big( \frac{\sinh(t) +i}{i\cosh(t)}, \frac{\sinh(t) -i}{i\cosh(t)}  \big)\\
	d\widetilde{Axis}(\dot \gamma(0)) =(-i, -i),
\end{align*}
and one gets by Equation \eqref{eq: metrica su G} that $\inner{(-i, -i),(-i, -i)}_\GGG= +1= -det\begin{pmatrix}
	1 &0\\
	0 &-1
\end{pmatrix}$, hence the metrics coincide.
\fi

\vspace{15pt}
\emph{Model 2.}

Let us construct a second isometry $\mathbb X_2\to \GGG$.

Recall that, as we observed in \ref{rmk Super hRm}.\ref{rmk Super hRm isometries PX}, we defined $\mathbb{P X}_2\subset \CP^2$ as the complementary of the projective conic $\partial \mathbb{P X}_2= Q:= \{z_1^2 +z_2^2 +z_3^2=0\}$. 

Notice that $Q$ can be seen as the image of some adapted version of the Veronese embedding
\begin{align*}
	v\colon \CP^1 &\to  Q\subset \CP^2\\
	(t_1 \colon t_2 ) &\mapsto (i(t_1^2 +t_2^2)\colon 2t_1 t_2\colon t_1^2-t_2^2)
\end{align*} 

We can therefore describe a holomorphic 2-sheeted covering, hence a universal covering, of $\Proj \mathbb X_2$ via the map
\begin{equation}
	\label{Mappa 2:1}
	\begin{split}
		u \colon \GGG:=(\CP^1 \times \CP^1) \setminus \Delta &\to  \mathbb{P X}_2 \\
		(p, q) &\mapsto \ell_{v(p)}  \cap \ell_{v(q)}
	\end{split}
\end{equation}
where $\ell_{v(x)}$ denotes the tangent line to the quadric $Q$ in the point $v(x)$. The fact that $u$ is a $2$-sheeted covering follows from the fact that $Q$ is a conic, hence, for every fixed external point, there are exactly two lines passing by that point and tangent to $Q$. 

By uniqueness of the universal cover, we conclude that $u$ lifts to a biholomorphism between $\XXX_{2}$ and $\GGG=\CP^1\times \CP^1\setminus \Delta$.
\vspace{3pt}

One can check explicitly that for all $A\in \PSL$ there exists $\delta(A)\in SO(3,\C)$ such that $u(A\cdot p, A\cdot q)= \delta(A) \cdot u(p, q)$, with $\delta\colon \PSL \to SO(3,\C)$ an isomorphism. As a result, the pull-back metric via $u$ is $\PSL$-invariant, in addition to having clearly curvature $-1$, hence it coincides with $\inners_\GGG$ by Theorem \ref{teo: metrica su G}.

\section{$\Hy^3$ and $\GGG$ as symmetric spaces of $\PSL$}
\label{Section symm spaces}
As we remarked previously, $
Isom_0(\Hy^3)\cong Isom_0(\GGG)\cong \PSL.$

In fact, both $\Hy^3$ and $\GGG$ can be seen as symmetric spaces (in the sense of affine spaces) associated to $\PSL$:
\begin{itemize}
	\item[a)]  $\Hy^3 \cong \faktor {\PSL}{SU(2)}$, where the symmetry at $0\in \Hy^3\subset \R^3$ in the disk model is given by the map $ x\mapsto - x$;\\
	\item [b)]  $\GGG=\faktor {\PSL}{SO(2,\C)}$, where the symmetry at a geodesic $\gamma\in \GGG$ is given by the rotation of angle $\pi$ around $\gamma$.
\end{itemize}
For a complete survey on symmetric spaces see \cite{Kobayashi-Nomizu2}

Our aim in this section is to show that the metrics on $\Hy^3$ and $\GGG$ are in fact related to the hRm on $\PSL$ through their structures of symmetric spaces.

\vspace{5mm}

Fix $x_1 \in \Hy^3$ and $x_2 \in \GGG$ and, as a matter of convenience, denote in this section $X_1:= \Hy^3$ and $X_2:=\GGG$.

For $k=1,2$, define the evaluation map related to $(X_k, x_k)$ as
\begin{equation}
	\begin{split}
		\beta_k\colon \PSL &\to X_k\\
		A &\mapsto A\cdot x_k.
	\end{split}
\end{equation}

The two marked symmetric spaces $(X_1,x_1)$ and $(X_2,x_2)$ induce two Cartan decompositions of the Lie algebra $\asl$, namely
\begin{equation}\begin{split}
		\text{for $(X_1, x_1)$: } &\quad \asl= \mathfrak u (2) \oplus i \mathfrak u(2)=: \mathfrak h_1 \oplus \mathfrak m_1; \\
		\text{for $(X_2, x_2)$: } &\quad \asl= \mathfrak o(2,\C) \oplus (Sym(2,\C)\cap \asl)=: \mathfrak h_2 \oplus \mathfrak m_2.
	\end{split}
\end{equation}
We recall the following facts (see \cite{Kobayashi-Nomizu2} \S X-XI):
\begin{itemize}
	\item $\mathfrak h_i =Lie(Stab(x_i))$, and the $Ad$ action of $Stab(x_i)$ on $\asl$ globally fixes $\mathfrak m_i$;
	\item $[\mathfrak h_i, \mathfrak m_i]\subset \mathfrak m_i$ and $[\mathfrak m_i, \mathfrak m_i]\subset \mathfrak h_i$;
	\item the map 
	\[
	d_{I_2}\beta_i\colon \mathfrak m_i \to T_{x_i}  X_i
	\]
	is a linear isomorphism. Let us denote $d_{I_2}\beta_i (V)=: V_{x_i}$ for all $V\in \mathfrak m_i$;
	
	\item For all $A\in \PSL$, one has \begin{equation}
		\label{eq 1}
		\beta_i \circ L_A= A\circ \beta_i.\end{equation}
	As a corollary, for all $M\in Stab(x_i)$,\begin{equation}
		\label{eq 2}
		d_{I_2}\beta_i\circ Ad(A)= d_{x_i} A \circ d_{I_2} \beta_i.
	\end{equation}
	
	\item 	\label{Prop curvatura spazio simm}
	For all $U, V, W\in \mathfrak m_i$, 
	\begin{equation}
		\label{eq curvatura spazio simmetrico}
		R^{X_i}(U_{x_i}, V_{x_i})W_{x_i}= - [[U,V], W]
	\end{equation}
	where $R^{X_i}$ is the curvature tensor of $X_i$.
\end{itemize}

Define on $T\PSL$ the distribution \[\mathcal D_i|_{A} := (L_A)_* \mathfrak m_i < T_A \PSL.\]
\begin{Prop}
	For both $i=1,2$, the restriction to $\mathcal D_i$ of the differential of $\beta_i$ is a linear isometry at each $A\in \PSL$ up to a constant. Namely, for all $A\in \PSL$, 
	\[
	d_A \beta_i \colon (\mathcal D_i|_A, 4 \inners) \xrightarrow{\sim} T_{A \cdot x_i} X_i.
	\]
\end{Prop}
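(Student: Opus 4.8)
The plan is to reduce the statement to a single linear-algebra identity at the identity $I_2\in\PSL$ by equivariance, and then to pin down the constant $4$ using the constant-curvature normalization of $X_i$.

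First I would differentiate the equivariance relation \eqref{eq 1}, $\beta_i\circ L_A=A\circ\beta_i$, at $I_2$, obtaining $d_A\beta_i\circ (L_A)_*=d_{x_i}A\circ d_{I_2}\beta_i$. Thus for $W=(L_A)_*V\in\mathcal D_i|_A$ with $V\in\m_i$ one has $d_A\beta_i(W)=d_{x_i}A(V_{x_i})$. Since $A$ acts by isometries on $X_i$, the map $d_{x_i}A$ is a linear isometry $T_{x_i}X_i\to T_{A\cdot x_i}X_i$; and since $\inners$ is by construction invariant under left translations, $(L_A)_*\colon(\m_i,\inners)\to(\mathcal D_i|_A,\inners)$ is a linear isometry. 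Hence the desired identity $g_{X_i}(d_A\beta_i W_1,d_A\beta_i W_2)=4\,\inners(W_1,W_2)$ for every $A$ follows at once from the single base-point identity $g_{X_i}((V_1)_{x_i},(V_2)_{x_i})=4\,\inners(V_1,V_2)$ for all $V_1,V_2\in\m_i$, where $g_{X_i}$ denotes the metric of $X_i$ (hyperbolic of curvature $-1$ for $i=1$, the hRm $\inners_\GGG$ of curvature $-1$ for $i=2$, by Theorem \ref{teo: metrica su G}).

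To establish the base-point identity I would argue in two steps. Step one is proportionality: both $\beta_i^*g_{X_i}$ and $\inners|_{\m_i}$ are invariant under the isotropy action $Ad(Stab(x_i))$ on $\m_i$ (the former because $g_{X_i}$ is $\Isom$-invariant and $\beta_i$ intertwines the actions, the latter because $\inners$ is $Ad$-invariant on all of $\asl$ and $\m_i$ is $Ad(\h_i)$-stable). For $i=1$, $Stab(x_1)=SU(2)$ acts on $\m_1=i\,\mathfrak{su}(2)\cong\R^3$ as the irreducible real rotation representation, so invariant symmetric forms are unique up to scale; for $i=2$, $Stab(x_2)=SO(2,\C)$ has as its only invariant lines in $\m_2\cong\C^2$ the two isotropic eigenlines, and the uniqueness argument at the end of the proof of Theorem \ref{teo: metrica su G} shows the invariant $\C$-bilinear symmetric form is again unique up to scale. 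In both cases $\beta_i^*g_{X_i}=c_i\,\inners|_{\m_i}$ for some constant $c_i$.

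Step two, which I regard as the crux, is to show $c_i=4$. Using the symmetric-space curvature formula \eqref{eq curvatura spazio simmetrico}, for $U,V\in\m_i$ spanning a non-degenerate plane one computes $R^{X_i}((U)_{x_i},(V)_{x_i},(U)_{x_i},(V)_{x_i})=c_i\,\inner{[[U,V],U],V}$, and $ad$-invariance of the Killing form rewrites the right-hand side as $c_i\,\|[U,V]\|^2$. The cross-product identity $\|[U,V]\|^2=-4\bigl(\|U\|^2\|V\|^2-\inner{U,V}^2\bigr)$ proved earlier then gives sectional curvature $-4/c_i$; since $X_i$ has constant curvature $-1$ in both cases, $c_i=4$. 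The one genuine obstacle is carrying out this bookkeeping uniformly in the Riemannian case ($i=1$, where $\inners|_{\m_1}$ is real and positive-definite) and in the holomorphic case ($i=2$, where every quantity is $\C$-valued); once proportionality is secured, however, the curvature computation is purely algebraic and identical in form for the two spaces.
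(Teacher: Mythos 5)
Your proof is correct and follows essentially the same strategy as the paper's: reduce to the base point by equivariance, identify the two bilinear forms on $\mathfrak m_i$ up to a constant, and pin down the constant $4$ via the symmetric-space curvature formula \eqref{eq curvatura spazio simmetrico}. The only cosmetic differences are that the paper establishes proportionality by pushing $\inners|_{\mathfrak m_i}$ forward to a well-posed $\PSL$-invariant metric $g_i$ on $X_i$ and invoking the global uniqueness of the invariant metric of constant curvature $-1$ (rather than your Schur-type argument on the isotropy representation of $Stab(x_i)$ on $\mathfrak m_i$), and that it extracts the factor $4$ by comparing with the Lie-group curvature formula $R^{\PSL}(U,V)W=-\tfrac14[[U,V],W]$ from \eqref{eq: R in gruppi Lie complessi} rather than from the cross-product identity $\|[U,V]\|^2=-4\left(\|U\|^2\|V\|^2-\inner{U,V}^2\right)$ --- these two computations are equivalent.
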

\begin{proof}
	Both the Riemannian metric of $\Hy^3$ and the hRm of $\GGG$ are uniquely determined by being $\PSL$-invariant metrics (Riemannian and holomophic Riemannian resp.) with constant sectional curvature $-1$. It is therefore enough to show that push-forward bilinear forms $(\beta_{i |\mathcal D_i})_* (\inners)$ define two well-posed, $\PSL$-invariant metrics of constant sectional curvature $-4$.
	
	By standard Lie theory, ${Kill^{\asl}}_{|\mathfrak u(2)}= Kill^{\mathfrak u(2)}$ which is a real negative-definite bilinear form being $U(2)$ compact and semisimple. By $\C$-bilinearity of $\inners=\frac 1 8 Kill^{\asl}$,  $\inners_{|i \mathfrak u(2)}$ is real and positive-definite. 
	
	On the other hand, $\inners_{|\mathfrak m_i}$ is a non-degenerate $\C$-bilinear form with orthonormal basis $\bigg( \begin{pmatrix} 0 & 1\\
		1 &0
	\end{pmatrix},  \begin{pmatrix} 0 & -i\\
		i & 0
	\end{pmatrix} \bigg)$.
	
	As we mentioned, consider on each $X_i$ the metric $g_i$ defined so that for all $A\in \PSL$ and for all $V,W\in \mathfrak m_i$
	\[
	(g_i)_{A\cdot x_i} \bigg((d_{x_i}A) (V_{x_i}), (d_{x_i} A) (W_{x_i})\bigg):= \inner{V, W}_{\asl}.
	\]
	\begin{itemize}
		\item The definition of $(g_i)$ is well-posed. Indeed, if $A\cdot x_i=B\cdot x_i$, $(d_{x_i}A) V_{x_i}= (d_{x_i}B) V'_{x_i}$ and $(d_{x_i}A) W_{x_i}=(d_{x_i}B) W'_{x_i}$, then using that $\inners$ is $Ad$-invariant and equation \eqref{eq 2} one has
		\begin{align*}
			(g_i)_{A\cdot x_i} ( (d_{x_i}A) V_{x_i}, (d_{x_i}A) W_{x_i}):=& g_{x_i}( V_{x_i}, W_{x_i})=\\
			=& g_{x_i}\bigg(\big(d_{x_i}(A^{-1}B) \big) V_{x_i}', \big(d_{x_i}(A^{-1}B) \big) W_{x_i}'\bigg)=\\
			=& \inner{Ad(A^{-1}B) V',Ad(A^{-1}B) W'}_\SL=\\
			=& \inner{V', W'}_\SL=\\
			=& (g_i)_{B\cdot x_i} ((d_{x_i}B) V'_{x_i}, (d_{x_i}B) W'_{x_i}).
		\end{align*}
		Since $\PSL$ acts on $X_i$ transitively and by diffeomorphisms, $g_i$ is uniquely defined.
		
		By construction $\PSL$ acts by isometries on both $g_1$ and $g_2$.  
		
		\item The metric $(g_i)_{A\cdot x_i}$ is the push-forward via $\beta_i$ of $\inners_{|\mathcal D_i}$, i.e. \[d_A\beta_i\colon (D_i (A), \inners_\SL) \to (T_{A\cdot x_i} X_i, g_i)\] is a linear isometry for all $A$. Indeed, by equation $\eqref{eq 2}$, for all $V, W\in \mathfrak m_i$,
		\begin{equation}
			\begin{split}
				&(g_i)_{A\cdot x_i} \big( (d_A\beta_i)(d_{I_2} L_A) (V),(d_A\beta_i)(d_{I_2} L_A) (W)\big)=\\
				&=(g_i)_{A\cdot x_i} \big( (d_{x_i}A) (d_{I_2}\beta_i)(V),(d_{x_i}A) (d_{I_2}\beta_i) (W)\big) =\\
				&= \inner{V, W}_\SL = \inner{(d_{I_2} L_A)(V), (d_{I_2} L_A)(W) }_\SL.
			\end{split}
		\end{equation}
		
		\item Since $\beta_1$ is smooth and $\beta_2$ is holomorphic, one can easily see that $g_1$ is Riemannian and $g_2$ is holomorphic Riemannian.
		
		\item We compute the sectional curvature of $g_i$. 
		
		First, let $V, W, Z\in \mathfrak m_i$ be orthonormal with respect to $\inners$,then by Equation \ref{eq: R in gruppi Lie complessi} and Equation \eqref{eq curvatura spazio simmetrico} we have
		\begin{align*}
			&-1= K(Span_\C (V, W))= \inner{R^{\PSL}(V, W) W, V}= -\frac 1 4 \inner{  [[V,W], W], V}= \\
			& =- \frac 1 4 \inner{R^{X_i} (V_{x_i}, W_{x_i})W_{x_i}, V_{x_i} }= \frac 1 4 K^{X_i} (Span_\C (V_{x_i},W_{x_i}) ).
		\end{align*}
		We conclude that $(X_i, g_i)$ has constant sectional curvature $-4$ and the proof follows.
	\end{itemize}

\end{proof}

\part{Immersions into $\XXX_{n+1}$ and geometric consequences}	
\label{part bonsante}

\chapter{Immersed hypersurfaces in $\PML$}    
\label{Chapter immersioni in Xn}

In this Chapter we study the geometry of smooth immersions of the form 
\[M\to \XXX_{n+1}\]where $M=M^n$ is a smooth manifold of (real) dimension $n$ and $\XXX_{n+1}$ is the Riemannian holomorphic space form of constant sectional curvature $-1$ and complex dimension $n+1$. 

One can immediately notice that, as an immersion between smooth manifolds, it has very high codimension. Nevertheless, we can define a suitable class of immersions for which we can translate in this setting some aspects of the classical theory of immersions of hypersurfaces.
In order to do it, we will introduce a new structure on manifolds that extends the notion of Riemannian metric: \emph{complex valued metrics}. 

In this Chapter, we will use $\XX, \YY, \ZZ$ to denote elements (and sections) of $TM$ and $X, Y, Z$ to denote elements (and sections) of the complexified tangent bundle $\CTM:= TM \oplus i TM= \C \otimes_\R TM$ whose elements can be seen as complex derivations of germs of complex-valued functions.

Let $M$ be a smooth manifold of (real) dimension $m$ and $\sigma \colon M\to \mathbb X_{n+1}$, with $n+1\ge m$, be a smooth immersion. Since $\mathbb X_{n+1}$ is a complex manifold, the differential map $d\sigma$ extends by $\C$-linearity to a map 
\begin{align*}
	d\sigma \colon \CTM &\to T\mathbb X_n\\
	X= \XX + i \YY &\mapsto d\sigma(\XX)+ \JJJ(d\sigma)(\YY)=: d\sigma(X).
\end{align*}

Now, consider the $\C-$bilinear pull-back form $\sigma^*\inner{\cdot,\cdot}$ on $\CTM$ defined by 
\begin{align*}
	\sigma^*\inner{\cdot, \cdot}_p\colon \C T_pM \times \C T_pM &\to \C\\
	(X, Y) &\mapsto \inner{d\sigma(X), d\sigma(Y)}.
\end{align*}
$\sigma^*\inner{\cdot,\cdot}_p$ is $\C$-bilinear and symmetric since $\inner{\cdot, \cdot}_{\sigma(p)}$ is $\C$-bilinear and symmetric. 

\begin{Definition}
	\begin{itemize}
		\item A \emph{complex (valued)} metric $g$ on $M$ is a non-degenerate smooth section of the bundle $Sym(\C T^*M\otimes \C T^*M)$, i.e. it is a smooth choice at each point $p\in M$ of a non-degenerate symmetric complex bilinear form
		\[
		g_p \colon \C T_pM\times \C T_pM \to \C.
		\]
		
		\item A smooth immersion $\sigma \colon M \to \PML$ is \emph{admissible} if $g=\sigma^*\inner{\cdot, \cdot}$ is a complex valued metric for $M$, i.e. if $g_p$ is non-degenerate. 
		
		\item If $g$ is a complex metric on $M$, an immersion $\sigma\colon (M, g)\to \PML$ is \emph{isometric} if $\sigma^*\inner{\cdot, \cdot}=g$. 
	\end{itemize}
\end{Definition}

\begin{Remark}
	If $\sigma$ is an admissible immersion, then $d_p\sigma \colon \C T_p M\to T_{\io(p)} \PML$ is injective. Indeed, if $d_{p}\sigma(X)=0$ then clearly $\sigma^* \inner{X, \cdot }\equiv 0$, hence $X=0$.
\end{Remark}

\section{Levi-Civita connection and curvature for complex metrics}

Let $M$ be a manifold of dimension $m$ and $g$ be a complex valued metric on $\CTM$. 
Despite looking quite weak as geometric structures, complex metrics admit a notion of Levi-Civita connection analogous to the one in pseudo-Riemannian geometry.

Recall that for sections $X, Y$ of $\CTM$ we have a well-posed Lie bracket $[X,Y]$ which coincides with the $\C$-bilinear extension of the usual Lie bracket for vector fields.

Define a \emph{connection on $\C TM$} as the $\C$-linear application
\begin{align*}
	\nabla \colon \Gamma (\C TM) &\to \Gamma(Hom_{\C} (\C TM, \C TM))\\
	\alpha &\mapsto \nabla \alpha (\colon X \mapsto \nabla_X \alpha)
\end{align*}
such that, for all $f\in C^{\infty}(M, \C)$, $\nabla_X (f \alpha)= f \nabla_X \alpha + X(f) \alpha$.

\begin{Proposition}
	\label{prop: LC di complex metrics}
	For every complex valued metric $g$ on $M$, there exists a unique connection $\nabla$ on $\CTM$, that we will call \emph{Levi-Civita connection}, such that for all $X,Y \in \Gamma(\CTM)$ the following conditions hold:
	\begin{align*}
		d ( g(X, Y) )&= g(\nabla X, Y)+ g(X, \nabla Y) \qquad & & \text{($\nabla$ is compatible with the metric)};\\
		[X,Y]&= \nabla_X Y - \nabla_Y X \qquad & & \text{($\nabla$ is torsion free)}.
	\end{align*}
\end{Proposition}

Observe that if $g$ is obtained as a $\C$-bilinear extension of some pseudo-Riemannian metric, then the induced Levi-Civita connection is the complex extension of the Levi-Civita connection for the (pseudo-)Riemannian metric on $M$.

We can also define the $(0,4)$-type and the $(1,3)$-type \emph{curvature tensors} for $g$ defined as 
\[
R(X,Y,Z, T):= -g(R(X,Y)Z,T):=- g\Big( \nabla_X \nabla_Y Z - \nabla_Y \nabla_X Z - \nabla_{[X,Y]} Z, T \Big)
\]
with $X,Y,Z, T \in \Gamma(\CTS)$.
The curvature tensor is $\C$-multilinear and has all of the standard symmetries of the curvature tensors induced by Riemannian metrics.

Finally, for every complex plane $Span_\C (X,Y)\in \C T_p M$ such that $g|_{Span_\C (X,Y)}$ is non-degenerate, we can define the {sectional curvature} $K(X,Y):=K(Span_\C (X,Y) )$ as
\begin{equation}
	\label{def curvatura}
	K(X,Y)=\frac{-g(R(X,Y)X,Y)}{g(X,X)g(Y,Y) -g(X,Y)^2}
\end{equation}
where the definition of $K(X,Y)$ is independent from the choice of the basis $\{X,Y\}$ for $Span_\C (X,Y)$.\newline

It is simple to check, via the Gram-Schmidt algorithm, that in a neighbourhood of any point $p\in M$ it is possible to construct a local $g-$orthonormal frame $(X_j)_{j=1}^m$ on $M$. We show it explicitly.

Fix a orthonormal basis $(W_j (p))_{j=1}^m$ for $\C T_p M$ and locally extend, in a neighbourhood of $p$, each $W_j(p)$ to a complex vector field $W_j$. Up to shrinking the neighbourhood in order to make the definition well-posed, define by iteration the local vector fields $Y_j$ by
\[
Y_j:= W_j - \sum_{k=1}^{j-1} \frac{g(W_j, Y_k)}{g(Y_k, Y_k)}Y_k
\]
which are such that $Y_j(p)= W_j(p)$ and by construction the $Y_j$'s are pairwise orthogonal.
Finally, up to shrinking the neighbourhood again, the vectors $X_j= \frac{Y_j}{\sqrt{g(Y_j,Y_j)}}$ (defined for any local choice of the square root) determine a local $g$-orthonormal frame around $p$. Similarly, every set of orthonormal vector fields can be extended to an orthonormal frame.\newline

Let $(X_j)_{j=1}^m$ be a local orthonormal frame for $g$, with $X_j\in \CTM$. Let $(\theta^j)_{j=1}^m$ be the correspondent coframe, $\theta^i\in \C T^*M$, defined by $\theta^i= g(X_i, \cdot)$.

We can define the \emph{Levi-Civita connection forms} $\theta^i_j$ for the frame $(X_i)_i$ by
\[
\nabla X_i = \sum_h \theta_i ^h \otimes X_h,
\] 
or, equivalently, by the equations
\[\begin{cases}
	d \theta^i =- \sum_j \theta_j ^i \wedge \theta^j\\
	\theta_j^i=-\theta_i ^j
\end{cases}.
\]

\section{Extrinsic geometry of hypersurfaces in $\PML$}

From now on, assume $dim (M)=n$.

Let $\sigma\colon M \to \PML$ be an admissible immersion and $g=\sigma^*\inner{\cdot, \cdot}$ be the induced complex metric.
Denote with $D$ be the Levi-Civita connection on $\PML$ and with $\nabla$ be the Levi-Civita connection for $g$. We want to adapt the usual theory of extrinsic geometry for immersed hypersurfaces to our setting.

Define the pull-back vector bundle $\Lambda=\io^* (T\PML)\to M$, which is a complex vector bundle, defined by $i\sigma^*(v):=\sigma^*(\mathbb J v)$ for all $v\in T\PML$, and is endowed on each fiber with the pull-back complex bilinear form that we still denote with $\inners$.

If $U$ is an open subset of $M$ over which $\io$ restricts to an embedding, then $\Lambda|_{U} \cong T\PML|_{ \io (U)}$.

Since the map $d \io \colon \C TM \to \PML$ is injective, $\C TM$ can be seen canonically as a complex sub-bundle of $\Lambda$ via the correspondence
\begin{align*}
	\C TM &\hookrightarrow \Lambda\\
	X &\mapsto \io^* ({d\io (X)}).
\end{align*}
Moreover, the complex bilinear form on $\Lambda$ corresponds to the one on $\C TM$ when restricted to it, since $\io$ is an isometric immersion.

We pull back the Levi-Civita connection $D$ of $T\PML$ in order to get a $\R$-linear connection $\overline \nabla$ on $\Lambda$,
\[
\overline \nabla :=\sigma^*D  \colon \Gamma(\Lambda) \to \Gamma \Big( Hom_{\R}(TM, \Lambda) \Big).
\] Observe that $\overline \nabla$ is completely defined by the Leibniz rule and by the condition
\[
\overline \nabla_\XX \ \io^*\xi := \io^*\Big( D_{ \io_* \XX}\ \xi   \Big) \qquad \forall \xi \in \Gamma(T \PML), \XX\in \Gamma(TM).
\]
By $\C$-linearity, we can see $\overline \nabla$ as a map 
\[ \overline \nabla \colon \Gamma(\Lambda) \to \Gamma \Big( Hom_{\C}(\C TM, \Lambda) \Big).  \]
by defining \[
\overline \nabla_{\XX_1 + i \XX_2} \hat \xi := \overline \nabla_{\XX_1} \hat \xi + i \overline \nabla_{\XX_2} \hat \xi.
\]

Via the canonical immersion of bundles $\CTM\hookrightarrow \Lambda$, it makes sense to consider the vector field $\overline \nabla _X Y$ with $X,Y\in \Gamma(\CTM)$.

Since $D\JJJ=0$ on $\PML$, for all $\XX \in \Gamma(TM)$ and $Y\in \Gamma(\C TM)$ we have that
\begin{align*}
	\overline{\nabla}_\XX (i  \sigma^*(\xi)) &= \sigma^* \Big(D_{d\sigma (\XX)} { (\mathbb J \xi)} \Big) =  \sigma^* \Big(D_{d\io(\XX)} \JJJ\ \xi \Big)=\\
	&= \sigma^* \Big(\JJJ D_{d\io(\XX)} \ \xi \Big)= i\   \overline \nabla_\XX \sigma^*\xi,
\end{align*}
and we conclude that $\overline \nabla$ is $\C$-bilinear.

We observed that $\C TM$ is a sub-bundle of $\Lambda$ over which the restriction of the complex bilinear form of $\Lambda$ is non-degenerate. Hence, we can consider the \emph{normal bundle} $\NN= \C TM^{\bot}$ over $M$ defined as the orthogonal complement of $\C TM$ in $\Lambda$. $\NN$ is a rank-1 complex bundle on $M$. 

For all local fields $X,Y \in \Gamma( \C TM)$ we can define $\IIv (X,Y)$ as the component in $\NN$ of $\overline \nabla_X Y$. 

\begin{Proposition}
	For all $X,Y \in \Gamma(\C TM)$, the component in $\C TM$ of $\overline \nabla_X Y$ is $\nabla_X Y$, where $\nabla$ is the Levi-Civita connection on $M$. In other words:
	\[
	\overline \nabla_X Y = \nabla_X Y + \IIv (X,Y).
	\]
	Moreover, $\IIv$ is a symmetric, $\C$-bilinear tensor.
\end{Proposition}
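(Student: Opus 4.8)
The plan is to run the classical Gauss-formula argument: identify the tangential part of the ambient connection $\overline\nabla$ with the intrinsic Levi-Civita connection $\nabla$ by invoking the uniqueness already established in Proposition \ref{prop: LC di complex metrics}. Since $g$ is non-degenerate on $\CTM\subset\Lambda$, there is an $\inners$-orthogonal splitting $\Lambda=\CTM\oplus\NN$; let $(\cdot)^\top$ and $(\cdot)^\perp$ be the two projections, which are pointwise $\C$-linear bundle maps. First I would set $\nabla^\top_X Y:=(\overline\nabla_X Y)^\top$ for $X,Y\in\Gamma(\CTM)$ and check that $\nabla^\top$ is a connection on $\CTM$: $\C$-linearity in the direction and additivity in $Y$ are inherited from $\overline\nabla$, while the Leibniz rule follows because $\overline\nabla_X(fY)=f\,\overline\nabla_X Y+X(f)Y$ with the term $X(f)Y$ already tangential, so that $\nabla^\top_X(fY)=f\,\nabla^\top_X Y+X(f)Y$ for $f\in C^\infty(M,\C)$. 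Dually, $\IIv(X,Y)=(\overline\nabla_X Y)^\perp$ by definition.

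The two remaining verifications are metric compatibility and torsion-freeness of $\nabla^\top$. For compatibility I would use that $\overline\nabla$ is compatible with $\inners$ on $\Lambda$, this being the pull-back of the compatibility of $D$ with the holomorphic Riemannian metric of $\PML$ granted by Proposition \ref{prop: Levi Civita hRm}, extended $\C$-bilinearly: for $X,Y,Z\in\Gamma(\CTM)$,
\[
Z\,g(X,Y)=\inner{\overline\nabla_Z X,Y}+\inner{X,\overline\nabla_Z Y}=g(\nabla^\top_Z X,Y)+g(X,\nabla^\top_Z Y),
\]
where the last equality holds because $X,Y\in\CTM$ are $\inners$-orthogonal to $\NN$, so only the tangential parts contribute. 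For torsion-freeness the key input is the identity
\[
\overline\nabla_X Y-\overline\nabla_Y X=[X,Y]\qquad(X,Y\in\Gamma(\CTM)),
\]
whose tangential part gives $\nabla^\top_X Y-\nabla^\top_Y X=[X,Y]$ (as $[X,Y]\in\CTM$) and whose normal part gives $\IIv(X,Y)=\IIv(Y,X)$. By uniqueness in Proposition \ref{prop: LC di complex metrics} we conclude $\nabla^\top=\nabla$, which is exactly the asserted decomposition $\overline\nabla_X Y=\nabla_X Y+\IIv(X,Y)$, while the normal part yields the symmetry of $\IIv$. Tensoriality is then formal: $\IIv(fX,Y)=f\,\IIv(X,Y)$ since $\overline\nabla$ is $C^\infty(M,\C)$-linear in the direction, and $\IIv(X,fY)=f\,\IIv(X,Y)$ since the extra term $X(f)Y$ is tangential; together with symmetry, $\IIv$ is a $\C$-bilinear $\NN$-valued tensor.

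The step deserving genuine care, and the main obstacle, is the bracket identity $\overline\nabla_X Y-\overline\nabla_Y X=[X,Y]$, because $\sigma$ is only an immersion, so $\overline\nabla_X Y$ is a priori defined merely through the pull-back formula $\overline\nabla_\XX\,\sigma^*\xi=\sigma^*(D_{d\sigma(\XX)}\xi)$ rather than as an honest ambient covariant derivative. I would argue locally on an open set $U$ where $\sigma$ restricts to an embedding: there $\Lambda|_U\cong T\PML|_{\sigma(U)}$ identifies $X,Y$ with complexified vector fields tangent to $\sigma(U)$, which I extend to local $\C$-fields $\widehat X,\widehat Y$ on a neighbourhood in $\PML$ tangent to $\sigma(U)$. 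Then $\overline\nabla_X Y=(D_{\widehat X}\widehat Y)|_{\sigma(U)}$, and torsion-freeness of $D$ (Proposition \ref{prop: Levi Civita hRm}, extended $\C$-bilinearly from its real statement) gives $D_{\widehat X}\widehat Y-D_{\widehat Y}\widehat X=[\widehat X,\widehat Y]$; since $\widehat X,\widehat Y$ are tangent to $\sigma(U)$, their bracket restricts to $[X,Y]$ along $\sigma(U)$, proving the identity pointwise and hence everywhere. An entirely parallel local argument secures the metric-compatibility of $\overline\nabla$ invoked above.
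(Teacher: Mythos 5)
Your proof is correct and follows the same route as the paper: project $\overline\nabla$ onto $\CTM$, verify that the tangential part is a connection that is torsion-free and compatible with $g$, and invoke the uniqueness of the Levi-Civita connection of a complex metric (Proposition \ref{prop: LC di complex metrics}) — the paper's own proof is exactly this, compressed into one line as ``the standard proof in the Riemannian case''. The only step to phrase carefully is your local extension argument for the torsion identity: since $d\sigma(\C T_pM)$ is strictly larger than the tangent space of the embedded image $\sigma(U)$, a complexified field is not literally ``tangent to $\sigma(U)$'', so one should first reduce to real fields $X,Y\in\Gamma(TM)$ — legitimate because $\overline\nabla_XY-\overline\nabla_YX-[X,Y]$ is $C^{\infty}(M,\C)$-bilinear and all three terms are $\C$-bilinear extensions of their real counterparts — and only then extend tangentially and apply the related-vector-fields argument; this is precisely the reduction with which the paper opens its proof.
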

\begin{proof}
	By $\C$-linearity of $\nabla$ and $\overline \nabla$, it is enough to prove it for $X,Y\in \Gamma(TM)$. The proof just follows the standard proof in the Riemannian case: defining $A_X Y:= \overline{\nabla}_X Y - \IIv(X,Y)$, one can show that $A$ is a connection on $\CTM$, that it is torsion free and compatible with the metric, hence $A=\nabla$ by Proposition \ref{prop: LC di complex metrics}.
\end{proof}

For all $p\in M$, consider on a suitable neighbourhood $U_p \subset M$ a norm-1 section $\nu$ of $\NN$: we call such $\nu$ a local \emph{normal vector field} for $\sigma$. 

A local normal vector field fixed, we can locally define the \emph{second fundamental form} of the immersion $\sigma$ as the tensor  
\[
\IIs:=\inner{\IIv, \nu}= \inner{\overline \nabla, \nu}.
\]

Since there are two opposite choices for the section $\nu$, $\IIs$ is defined up to a sign.

We define the \emph{shape operator} $\Psi$ associated to the immersion $\io\colon M \to \PML$ as the tensor
\begin{align*}
	\Psi \in \Gamma \Big( Sym(\C T^*M \otimes_{\C} \C TM)\Big)
\end{align*}
such that, $\forall p\in M$ and $\forall\ \XX,\YY \in T_p M$,  \[g(\Psi(\XX),\YY)= \inner{\IIv(\XX,\YY), \nu} =\IIs(\XX,\YY).\]
As $\IIs$ is defined up to a sign, $\Psi$ is defined up to a sign as well.

We will say that $\io$ is \emph{totally geodesic} if and only if $\IIv\equiv 0$, i.e. $\Psi \equiv 0$.

\begin{Remark}
	One can see that the curvature tensor $\overline R$ of the connection $\overline\nabla$ is the pull-back of the curvature tensor $D$. That is:
	\begin{align*}
		\overline R(\XX,\YY)Z = \io^*\Big( R^D (d\io(\XX), d\io (\YY) )d\io(Z) \Big)
	\end{align*}
	
	Hence, since $\XXX_{n+1}$ has sectional curvature $-1$,
	\begin{equation}
		\label{eq: bar R curvature -1}
		\begin{split}
			g(\overline R (\XX,\YY)\YY, \XX)= \inner{R(d\sigma( \XX), d\io( \YY)) d\io(\YY), d\io(\XX)}\\
			=-(g(\XX,\XX)^2 g(\YY,\YY)^2-g(\XX, \YY)^2).
			%	= K(Span_\C (\io_*\XX, \io_*\YY) )=-1.
		\end{split}
	\end{equation}
\end{Remark}
\noindent

\begin{Proposition}
	Let $\nu$ be an normal vector field and let $\Psi$ be the corresponding shape operator. Then,
	\[
	\Psi =-\overline\nabla \nu
	\]
\end{Proposition}
\begin{proof}
	For all $\XX \in \Gamma(TM)$ and $Y\in \Gamma(\C TM)$,
	\begin{align*}
		g(\Psi(\XX), Y)&=\IIs(\XX,Y) =\inner{\overline \nabla_\XX Y, \nu} =\\
		&= -{\inner{ Y, \overline \nabla_\XX \nu}} + d(\inner{Y,\nu})(\XX)=\\
		&= -\inner{ Y, \overline \nabla_\XX \nu}.
	\end{align*}
	Moreover, 
	\begin{equation*}
		\inner{\overline\nabla_\XX \nu, \nu}=  \frac 1 2 \partial_{\XX}(\inner{\nu, \nu})=0.
	\end{equation*}
	Hence, $\overline \nabla_\XX \nu \in \C TM$ and $\Psi(\XX)=- \overline\nabla_\XX \nu$. The proof follows by $\C$-linearity.
\end{proof}

\section{Gauss and Codazzi equations}	
\label{sec Gauss-Codazzi eq first part}
Consider the exterior covariant derivative $d^\nabla$ associated to the Levi-Civita connection $\nabla$ on $\C TM$, namely
\[
(d^\nabla \Psi)(X,Y)= \nabla_X (\Psi (Y)) - \nabla_Y (\Psi(X)) - \Psi([X,Y])
\]
for all $X,Y\in \Gamma(\C TM)$.

Fix a local orthonormal frame $(X_i)_{i=i}^n$ with coframe $(\theta^i)_{i=1}^n$.  We can see the shape operator $\Psi$ in coordinates:
\[
\Psi=: \Psi^i_j \cdot \theta^j \otimes X_i =: \Psi^i \otimes X_i,
\]
where $\Psi^i_j \in \C^{\infty}(U,\C)$, with $\Psi^i_j=\Psi^j_i$, and $\Psi^i\in \Omega^1(\CTM)$. Notice that
\begin{equation}
	\label{equazione Psi_i}
	\IIs(X_i, \cdot)= \inner{\Psi(X_i), \cdot}=  \inner{\Psi^j_i X_j, \cdot}= \Psi^j_i \theta^j=\Psi_j^i \theta^j=\Psi^i
\end{equation}

We are ready to state the first part of an adapted version of Gauss-Codazzi Theorem.

\begin{Theorem}[Gauss-Codazzi, first part]
	\label{Teo Gauss-Codazzi}
	Let $\io \colon M^n \to \PML$ be an admissible immersion and $g=\sigma^* \inner{\cdot, \cdot}$ be the induced complex metric.
	
	Let $\nabla$ be the Levi-Civita connection on $M$, $R^M$ the curvature tensor of $g$ and $\Psi$ the shape operator associated to $\sigma$. 
	
	For any $g$-orthonormal frame $(X_i)_i$ with corresponding coframe $(\theta^i)_i$, the following equations hold: 
	\begin{align}
		\label{GC1}
		&1)d^\nabla \Psi \equiv 0 \qquad &&\text{(Codazzi equation)};\\
		\label{GC2}
		&2)    R^M(X_i, X_j, \cdot, \cdot) - \Psi^i \wedge \Psi^j= -\theta^i\wedge \theta^j \qquad &&\text{(Gauss equation)}. 
	\end{align}
\end{Theorem}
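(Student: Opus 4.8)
The plan is to mimic the classical pseudo-Riemannian derivation, exploiting that everything has been set up to be $\C$-bilinear and that the pulled-back connection $\overline\nabla$ on $\Lambda$ decomposes as $\overline\nabla_X Y = \nabla_X Y + \IIv(X,Y)$ with $\IIv(X,Y)=\IIs(X,Y)\,\nu$, together with $\Psi = -\overline\nabla\nu$ already established. The whole identity will come from computing the curvature tensor $\overline R$ of $\overline\nabla$ in two ways: once via the ambient curvature formula \eqref{eq: bar R curvature -1}, and once by expanding $\overline\nabla\,\overline\nabla$ using the splitting into tangential ($\C TM$) and normal ($\NN$) parts. Since all objects are $\C$-multilinear, it suffices to verify the equations on a local $g$-orthonormal frame $(X_i)$, and by $\C$-bilinearity I may even reduce to real vector fields $\XX,\YY\in\Gamma(TM)$ where Lie brackets behave classically.

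First I would compute $\overline\nabla_X\overline\nabla_Y Z$ for $X,Y,Z\in\Gamma(\C TM)$, writing $\overline\nabla_Y Z = \nabla_Y Z + \IIs(Y,Z)\nu$, then applying $\overline\nabla_X$ and using $\overline\nabla_X\nu = -\Psi(X)$ together with $\overline\nabla_X(\nabla_Y Z)=\nabla_X\nabla_Y Z + \IIs(X,\nabla_Y Z)\nu$. Collecting terms and antisymmetrizing in $X,Y$ and subtracting the $\overline\nabla_{[X,Y]}$ term, the curvature $\overline R(X,Y)Z$ splits into a tangential part and a normal part. The tangential part reads $R^M(X,Y)Z + \IIs(Y,Z)\Psi(X) - \IIs(X,Z)\Psi(Y)$, while the normal part is $\bigl(\nabla_X(\IIs(Y,Z)) - \nabla_Y(\IIs(X,Z)) - \IIs([X,Y],Z)\bigr)\nu$, i.e. exactly $(d^\nabla\Psi)(X,Y)$ paired against $Z$ via $g$ and $\nu$.

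Now I would invoke the two-sided comparison. Pairing the tangential part against a fourth field and using \eqref{eq: bar R curvature -1}, which says the ambient sectional curvature is $-1$, yields $R^M(X,Y,Z,T) = -\bigl(g(X,Z)g(Y,T)-g(Y,Z)g(X,T)\bigr) + \bigl(\IIs(X,Z)\IIs(Y,T)-\IIs(Y,Z)\IIs(X,T)\bigr)$. Specializing $X=X_i$, $Y=X_j$ and rewriting $\IIs(X_i,\cdot)=\Psi^i$ from \eqref{equazione Psi_i} and $g(X_i,\cdot)=\theta^i$, this is precisely the Gauss equation \eqref{GC2}. For the Codazzi equation \eqref{GC1}, I would use that the ambient space form $\XXX_{n+1}$ has curvature proportional to the metric (Lemma \ref{lemma space form}), so that $R^D(d\sigma(X),d\sigma(Y))d\sigma(Z)$ is tangential; hence the normal component of $\overline R(X,Y)Z$ vanishes, forcing $(d^\nabla\Psi)(X,Y)=0$.

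The main obstacle I anticipate is purely bookkeeping rather than conceptual: one must be careful that $\overline\nabla$ is only $\R$-linear a priori and becomes $\C$-linear through the identity $\overline\nabla_\XX(i\,\sigma^*\xi)=i\,\overline\nabla_\XX\sigma^*\xi$ proved earlier, so that the decomposition $\overline\nabla_X Y=\nabla_X Y+\IIv(X,Y)$ and the symmetry and $\C$-bilinearity of $\IIs$ are legitimately used on complexified fields. I would therefore first reduce everything to real tangent fields (where $[\XX,\YY]\in\Gamma(TM)$ and the classical Koszul computation applies verbatim) and then extend by $\C$-bilinearity, noting that both sides of \eqref{GC1} and \eqref{GC2} are tensorial and $\C$-multilinear so the extension is automatic. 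No genuinely new difficulty beyond the pseudo-Riemannian case should arise, exactly as the text promised with the phrase \emph{mutatis mutandis}.
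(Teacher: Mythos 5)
Your proposal is correct and follows essentially the same route as the paper's proof: expand $\overline\nabla_X\overline\nabla_Y Z$ using $\overline\nabla_Y Z=\nabla_Y Z+\IIs(Y,Z)\nu$ and $\overline\nabla\nu=-\Psi$, compare the tangential part of $\overline R$ with the constant-curvature formula of Lemma \ref{lemma space form} to get Gauss, and use that $R^D(V_1,V_2)V_3\in\mathrm{Span}_\C(V_1,V_2)$ to kill the normal part and get Codazzi, reducing throughout to real (commuting) fields and extending by $\C$-multilinearity. Only a bookkeeping slip: differentiating $\IIs(Y,Z)\nu$ produces $-\IIs(Y,Z)\Psi(X)$, so the tangential part of $\overline R(X,Y)Z$ is $R^M(X,Y)Z-\IIs(Y,Z)\Psi(X)+\IIs(X,Z)\Psi(Y)$ (opposite signs to the ones you wrote), though the final $(0,4)$ Gauss identity you state is the correct one.
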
\vspace{-3ex}
\begin{proof}
	In a neighbourhood $U$ of a point $p\in M$, fix a local normal vector field $\nu$. Let $Z\in \Gamma(\C TU)$, let $\XX(p),\YY(p) \in T_p M$ and let $\XX,\YY\in \Gamma(TU)$ be local extensions of $\XX(p)$ and $\YY(p)$ such that $[\XX, \YY]=0$ (such extensions can be constructed through a local chart). 
	
	By definition,
	\[
	\overline \nabla_ { \YY} Z= \nabla_{\YY} Z + \IIv (Z, \YY), 
	\]
	hence, by second derivation, 
	\begin{align*}
		\overline \nabla_\XX \overline{\nabla}_{\YY} Z&= \overline \nabla_\XX \nabla_{\YY} Z + \overline\nabla_\XX (\IIs(Z,\YY)\nu)=\\
		&= \nabla_\XX \nabla_{ \YY} Z + \IIs(\XX, \nabla_\YY Z)\nu + \overline \nabla_\XX (\IIs(Z, \YY) \nu)=\\
		&=\nabla_\XX \nabla_{\YY} Z + \IIs(\XX, \nabla_{\YY} Z)\nu + \partial_\XX (\IIs(Z,{\YY})) \nu + \IIs(Z,\YY) \overline\nabla_\XX \nu=\\
		&= \nabla_\XX \nabla_{\YY} Z -  \IIs(Z,\YY)\Psi(\XX) + \big( \IIs(\XX, \nabla_\YY Z) + \partial_\XX(\IIs(Z,\YY)) \big)\nu.
	\end{align*}
	
	We compute $\overline R (\XX,\YY)Z$.  Since $[ \XX,  \YY]=0$, we have
	\begin{align*}
		\overline R (\XX,\YY)Z =& R^M (\XX,\YY) Z - g(\Psi(\YY),Z)\Psi(\XX)+g(\Psi(\XX),Z)\Psi(\YY) + \\
		&+ \Big( \IIs(\XX, \nabla_\YY Z)+ \partial_\XX(\IIs(Z,  \YY))- \IIs(\YY, \nabla_\XX Z) - \partial_\YY(\IIs(Z,  \XX))\Big)\nu.
	\end{align*}
	Recall that $\overline R (\XX,\YY)Z= \io^* \Big( R^D(d\io(\XX), d\io(\YY)) d\io(Z) \Big)$. 
	
	Also recall that by Lemma \ref{lemma space form}, for all $V_1,V_2,V_3\in T_{ z} \PML$
	\[R^D(V_1, V_2)V_3 \in Span_\C(V_1, V_2)\]
	thus $\inner{\overline R(\XX,\YY)Z , \nu}=0$.
	
	As a result, we have the equalities 
	\begin{align*}
		\text{(a)}:&\   \IIs(\XX, \nabla_\YY Z)+ \partial_\XX(\IIs(Z,  \YY))- \IIs(\YY, \nabla_\XX Z) - \partial_\YY(\IIs(Z,  \XX)) =0;\\
		\text{(b)}:&\  \overline R (\XX,\YY)Z =  R^M (\XX,\YY) Z - g(\Psi(\YY),Z)\Psi(\XX)+g(\Psi(\XX),Z)\Psi(\YY).
	\end{align*}
	We deduce (\ref{GC1}) from (a) and (\ref{GC2}) from (b).
	
	By manipulation of (a), we get:
	\begin{align*}
		\IIs(\XX, \nabla_\YY Z)+& \partial_\XX(\IIs(Z,  \YY))- \IIs(\YY, \nabla_\XX Z) - \partial_\YY(\IIs(Z,  \XX))=\\
		&=\partial_\XX (g(Z, \Psi(\YY) ))- g(\nabla_\XX Z, \Psi(\YY))- \partial_\YY (g(Z, \Psi(\XX))) - g(\nabla_\YY Z, \Psi(\XX))=\\
		&= g(Z, \nabla_\XX(\Psi(\YY)))- g(Z, \nabla_\YY(\Psi(\XX)) )=\\
		&	= g(Z, d^{\nabla} \Psi (\XX,\YY));
	\end{align*}
	
	since this holds for all $Z$ in $\Gamma(\C TM)$, $d^{\nabla} \Psi (\XX,\YY)=0$ for all $\XX,\YY \in T_p M$, hence $d^{\nabla} \Psi=0$.

	In order to prove $(\ref{GC2})$, observe that by $\C$-linearity (b) is equivalent to
	\[
	b': \overline R (X,Y)Z =  R^M (X,Y) Z - g(\Psi(Y),Z)\Psi(X)+g(\Psi(X),Z)\Psi(Y)
	\]
	for all $X,Y,Z\in \Gamma(\CTM)$. 
	
	Recalling Lemma \ref{lemma space form} and equation (\ref{equazione Psi_i}), in the orthonormal frame $(X_i)_i$ we have
	\begin{align*}
		\overline R (X_i, X_j, X_k, X_h)=& R^M(X_i, X_j, X_k, X_h)+\\
		&+g(\Psi(X_i), X_h)g(\Psi(X_j), X_k) - g(\Psi(X_j), X_h)g(\Psi(X_i),X_k),
	\end{align*}
	so, by Equation \eqref{eq: bar R curvature -1}
	\[-g(X_i, X_k)g(X_j, X_h)+ g(X_i, X_h)g(X_j, X_k)=
	R^M(X_i, X_j, X_k, X_h) + (\Psi^i\wedge \Psi^j) (X_h, X_k)\]
	and
	\[
	-\theta^i\wedge \theta^j= R^M(X_i, X_j, \cdot, \cdot)- \Psi^i\wedge \Psi^j.
	\]
	The proof follows.

\end{proof}

For $n=2$, the Gauss equation $(\ref{GC2})$ can be written more simply. Fixed an orthonormal frame $\{X_1,X_2\}$ on the surface $M$, the curvature tensor $R$ is completely determined by its value $R(X_1,X_2,X_1,X_2)$ which is the curvature of $M$. Similarly, $\Psi^1\wedge \Psi^2= (\Psi_1^1 \Psi_2^2- \Psi^2_1\Psi^1_2)\theta^1\wedge \theta^2= det(\Psi)\theta^1\wedge \theta^2$. Therefore, equation $(\ref{GC2})$ is equivalent to 
\begin{equation}
	\label{GC2superfici}
	\text{Gauss equation for surfaces in $\PSL$:}\qquad \qquad K- det(\Psi)=-1
\end{equation}

\section{Integration of Gauss-Codazzi equations}
\label{Section integrazione GC}
The main aim of this section is to show that the converse of Theorem \ref{Teo Gauss-Codazzi} is also true for simply connected manifolds, in the way expressed in Theorem \ref{Teoremone}. 

Let us first give a definition inspired from Theorem \ref{Teo Gauss-Codazzi}.
\begin{Definition}
	An \emph{immersion data} for a smooth manifold $M=M^n$ is a pair $(g, \Psi)$ with $g$ a complex metric and $\Psi\in \Gamma(\CTM \otimes CT^*M)$ such that:
	\begin{itemize}
		\item $d^\nabla \Psi=0$, with $\nabla$ the Levi-Civita connection of $g$ ;
		\item for all $g$-orthonormal frame $(X_i)_{i=1}^n$ with correspondent coframe $(\theta^i)_{i=1}^n$, 
		\[R(X_i, X_j, \cdot, \cdot) - \Psi^i \wedge \Psi^j= -\theta^i\wedge \theta^j
		\]	
		with $R$ the curvature tensor of $g$ and the $\Psi^j$'s defined by $\Psi=\Psi^i\otimes X_i$. 
	\end{itemize}	
	Equivalently, we will say that $(g, \Psi)$ \emph{satisfy the Gauss-Codazzi equations}, (i.e. Equations \eqref{GC1}, \eqref{GC2}).
\end{Definition}

\begin{Teo} [Gauss-Codazzi, second part]
	\label{Teoremone} 
	Let $M$ be a smooth simply connected manifold of dimension $n$. Then, for all immersion data $(g,\Psi)$, there exists an isometric immersion $\sigma \colon (M,g) \to \ML$ with shape operator $\Psi$.
	
	Such $\sigma$ is unique up to post-composition with an element in $\Isom_0(\XXX_{n+1})$: i.e., if $\sigma'$ is another isometric immersion with the same shape operator, there exists a unique $\phi\in \Isom_0(\XXX_{n+1})$ such that $\sigma'(x)= \phi \cdot \sigma(x)$ for all $x\in M$.	
\end{Teo}

We state the case of surfaces in $\SL$ as a corollary.
\begin{Cor}
	\label{Cor Gauss-Codazzi PSL}
	Let $S$ be a smooth simply connected surface. Consider a complex metric $g$ on $S$, with induced Levi-Civita connection $\nabla$, and a $g$-self-adjoint bundle-homorphism 
$\Psi\colon \CTS \to \CTS$.

The pair $(g, \Psi)$ satisfies
\begin{align}
	1) &d^\nabla \Psi \equiv 0;\\
	2)	&K=-1+det(\Psi)\quad, 
\end{align}
if and only if there exists an isometric immersion $\sigma \colon S \to \SL$ whose corresponding shape operator is $\Psi$. Moreover, such $\sigma$ is unique up to post-composition with elements in $Isom_0(\SL)= \Proj(\SL \times \SL)$. 
\end{Cor}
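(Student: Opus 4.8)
The statement to prove is Theorem~\ref{Teoremone}, the integration (Bonnet-type) result for immersion data into $\ML=\XXX_{n+1}$. The natural strategy is to transfer the problem to a flat-bundle / integrability question exactly as in the classical pseudo-Riemannian proof, using that $\XXX_{n+1}$ is a homogeneous space $\faktor{SO(n+2,\C)}{SO(n+1,\C)}$ with $\Isom_0(\XXX_{n+1})\cong SO(n+2,\C)$ acting transitively (Remark~\ref{rmk Super hRm}). The plan is to encode a candidate immersion together with an adapted orthonormal frame and a choice of normal vector as a map into the frame bundle, i.e. essentially into $SO(n+2,\C)$, and to reconstruct this map by integrating a flat connection built out of the data $(g,\Psi)$.

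First I would set up the ambient structure. Viewing $\XXX_{n+1}\subset\C^{n+2}$ as the quadric, a point $\sigma(x)$ together with a $g$-orthonormal frame $(X_1,\dots,X_n)$ of $\C T_xM$, pushed into $\C^{n+2}$ via $d\sigma$, plus the position vector $\sigma(x)$ itself and a unit normal $\nu(x)$, assembles into a frame of $\C^{n+2}$; normalizing, this is a map $F\colon M\to SO(n+2,\C)$. The Maurer--Cartan form then splits into blocks governed precisely by: the Levi-Civita connection forms $\theta^i_j$ of $g$ (from Proposition~\ref{prop: LC di complex metrics}), the coframe $\theta^i$ (describing how $\sigma$ moves, since $D$ is the tangential part of the flat connection $\mathrm d$ on $\C^{n+2}$, Remark~\ref{rmk Super hRm}), and the shape operator components $\Psi^i$ (describing how $\nu$ moves, via $\Psi=-\overline\nabla\nu$). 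I would write down explicitly the $\Lieg$-valued $1$-form $\alpha$ on $M$, with $\Lieg=\so(n+2,\C)$, whose blocks are $(\theta^i_j)$, $(\theta^i)$ and $(\Psi^i)$, together with the rows/columns coming from the quadric relations $\langle\sigma,\sigma\rangle=-1$, $\langle\sigma,\nu\rangle=0$, $\langle\nu,\nu\rangle=1$.

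The core of the argument is then: $\alpha$ satisfies the Maurer--Cartan equation $d\alpha+\tfrac12[\alpha,\alpha]=0$ \emph{if and only if} $(g,\Psi)$ satisfy the Gauss--Codazzi equations \eqref{GC1}--\eqref{GC2}. This is the step I expect to be the main obstacle, because one must check block-by-block that the flatness of $\alpha$ decomposes exactly into the structural identity $d\theta^i=-\theta^i_j\wedge\theta^j$ (which holds automatically for the Levi-Civita forms of $g$), the Codazzi equation $d^\nabla\Psi\equiv 0$, and the Gauss equation $R(X_i,X_j,\cdot,\cdot)-\Psi^i\wedge\Psi^j=-\theta^i\wedge\theta^j$ — the curvature term $-\theta^i\wedge\theta^j$ arising precisely from the constant curvature $-1$ of $\XXX_{n+1}$ (Lemma~\ref{lemma space form}). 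One must be careful that all forms are $\C$-valued and the computation is genuinely holomorphic-linear-algebraic, but \emph{mutatis mutandis} it parallels the real case flagged in Section~\ref{section: Gauss-Codazzi pseudoRiem}. Since $M$ is simply connected, the flatness of $\alpha$ guarantees, by the standard integration of Maurer--Cartan forms into a Lie group, the existence of an integral $F\colon M\to SO(n+2,\C)$ with $F^{-1}dF=\alpha$, unique once an initial value $F(x_0)$ is fixed.

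Finally I would extract the geometric conclusion. Defining $\sigma(x):=F(x)\cdot e$ for a basepoint $e\in\XXX_{n+1}$, the first block of the equation $F^{-1}dF=\alpha$ forces $d\sigma$ to carry $X_i$ to the $i$-th frame vector, so $\sigma^*\langle\cdot,\cdot\rangle=g$ and $\sigma$ is an admissible isometric immersion; the normal block forces $-\overline\nabla\nu=\Psi$, so the shape operator of $\sigma$ is exactly $\Psi$. For uniqueness, if $\sigma'$ is another isometric immersion with the same shape operator, then $\sigma'$ produces another integral frame $F'$ for the same $\alpha$ (after choosing the frames compatibly at $x_0$), and the two differ by a constant left translation $\phi\in SO(n+2,\C)=\Isom_0(\XXX_{n+1})$; applying Lemma~\ref{mappa differenziale determina isometria} (an isometry is determined by its $1$-jet at a point) upgrades this to $\sigma'=\phi\circ\sigma$ globally, with $\phi$ unique. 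The corollary for surfaces in $\SL\cong\XXX_3$ (Corollary~\ref{Cor Gauss-Codazzi PSL}) then follows immediately by specializing $n=2$, rewriting the Gauss equation in the scalar form \eqref{GC2superfici} $K=-1+\det(\Psi)$, and using the identification $\Isom_0(\SL)\cong\Proj(\SL\times\SL)$ from Proposition~\ref{prop: Isom of SL}.
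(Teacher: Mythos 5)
Your proposal is correct and follows essentially the same route as the paper: both encode the immersion and an adapted orthonormal frame as a map into $SO(n+2,\C)$, show that the Maurer--Cartan equation for the explicit block-form $1$-form is equivalent to the Gauss and Codazzi equations, integrate it using simple connectedness, and obtain uniqueness from the fact that two integrals of the same form differ by a constant left translation. The only detail the paper adds is a patching step (good covers and analytic continuation) for the case where $M$ does not admit a global $g$-orthonormal frame, since the form $\alpha$ is a priori only defined on frame-admitting open sets.
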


The proof of Theorem \ref{Teoremone} will require some Lie theory.

Let $G$ be a Lie group. Recall that the \emph{Maurer-Cartan form} of $G$ is the $1-$form $\omega_G\in \Omega^1(G, Lie(G))$ defined by
\[
(\omega_G)_g (\dot g)= d(L_{g}^{-1}) (\dot g), 
\]	hence it is invariant by left translations. Moreover, the Maurer-Cartan form $\omega_G$ is completely characterized as the unique solution to the differential equation
\[
d\omega + [\omega, \omega]=0, \qquad \omega\in \Omega^1(G, Lie(G)).
\]

The proofs of both existence and uniqueness of Theorem \ref{Teoremone} are based on the following result on Lie groups.

\begin{Lemma}
	\label{Lemma chiave unicita}
	Let $M$ be a simply connected manifold and $G$ be a Lie group.
	
	Let $\omega \in \Omega^1 (M, Lie(G))$.
	
	Then 
	\begin{equation}
		\label{differenziale MC}
		d \omega+[\omega,\omega]=0
	\end{equation}
	if and only if there exists a smooth $\Phi\colon M \to G$ such that $\omega=\Phi^* \omega_G$, where $\omega_G$ denotes the Maurer-Cartan form of $G$.
	
	Moreover, such $\Phi$ is unique up to post-composition with some $L_g$, $g\in G$.
\end{Lemma}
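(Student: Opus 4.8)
The plan is to dispatch the ``only if'' direction by a one-line pullback computation, to establish existence through the Frobenius integrability theorem applied to a suitable distribution on $M\times G$, and to deduce the uniqueness clause from the left-invariance of $\omega_G$. For the easy direction, suppose $\omega=\Phi^*\omega_G$. Since exterior differentiation and the bracket on $Lie(G)$-valued forms both commute with pullback, I would simply pull back the structure equation $d\omega_G+[\omega_G,\omega_G]=0$ (recalled in the excerpt as the defining property of the Maurer-Cartan form) to obtain $d\omega+[\omega,\omega]=\Phi^*\big(d\omega_G+[\omega_G,\omega_G]\big)=0$.

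For existence, consider the two projections $\mathrm{pr}_1\colon M\times G\to M$ and $\mathrm{pr}_2\colon M\times G\to G$, and define the $Lie(G)$-valued $1$-form
\[
\alpha=\mathrm{pr}_1^*\omega-\mathrm{pr}_2^*\omega_G\in\Omega^1(M\times G,Lie(G)).
\]
Since $(\omega_G)_g\colon T_gG\to Lie(G)$ is a linear isomorphism for every $g$, at each point the kernel $\mathcal D=\ker\alpha$ is the graph of a linear map $T_pM\to T_gG$, hence a distribution of rank $\dim M$ that projects isomorphically onto $TM$ under $d\mathrm{pr}_1$. The crucial step is integrability: computing $d\alpha=-\mathrm{pr}_1^*[\omega,\omega]+\mathrm{pr}_2^*[\omega_G,\omega_G]$ and noting that on vectors tangent to $\mathcal D$ the two forms $\mathrm{pr}_1^*\omega$ and $\mathrm{pr}_2^*\omega_G$ coincide, I would check, using the hypothesis $d\omega+[\omega,\omega]=0$ together with the structure equation, that $d\alpha$ vanishes on pairs of vectors tangent to $\mathcal D$; by the Frobenius theorem $\mathcal D$ is then integrable. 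Fixing a basepoint $p_0\in M$, let $L$ be the maximal integral leaf through $(p_0,e)$; because $\mathcal D$ is a graph over $TM$, the restriction $\mathrm{pr}_1|_L\colon L\to M$ is a local diffeomorphism. I would then argue that $\mathrm{pr}_1|_L$ is in fact a covering map and invoke simple connectedness of $M$ to conclude it is a diffeomorphism. Thus $L$ is the graph of a smooth map $\Phi\colon M\to G$ with $\Phi(p_0)=e$, and the fact that $\alpha$ annihilates $TL$ translates precisely into $\Phi^*\omega_G=\omega$.

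For uniqueness, the key observation is that the left translations $L_h\colon(p,g)\mapsto(p,hg)$ of the $G$-factor preserve $\mathrm{pr}_2^*\omega_G$, hence preserve $\alpha$ and therefore permute the leaves of $\mathcal D$. Given two solutions $\Phi,\Phi'$ with graphs $L,L'$, setting $h=\Phi'(p_0)\Phi(p_0)^{-1}$ makes $L_h(L)$ a leaf sharing the point $(p_0,\Phi'(p_0))$ with $L'$; by uniqueness of leaves $L_h(L)=L'$, which says exactly that $\Phi'=L_h\circ\Phi$. Conversely, $(L_h)^*\omega_G=\omega_G$ shows that every such left translate is again a solution, so the solutions form a single left-$G$-coset, as claimed.

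I expect the main obstacle to be the promotion of $\mathrm{pr}_1|_L$ from a local diffeomorphism to a genuine covering map: this is the only place where the global hypothesis on $M$ is genuinely used. It requires a path-lifting argument showing that every path in $M$ admits a horizontal lift (a curve in $M\times G$ tangent to $\mathcal D$) defined for all parameter values, so that $\mathrm{pr}_1|_L$ satisfies the homotopy lifting property; simple connectedness of $M$ then forces $\mathrm{pr}_1|_L$ to be a homeomorphism, yielding the globally defined $\Phi$.
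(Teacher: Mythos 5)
Your proposal is correct and follows essentially the same route as the paper, which proves this lemma as the special case $\Lambda=\{pt\}$ of Proposition \ref{Teorema esistenza foliazione}: the same form $\pi_G^*\omega_G-\pi_M^*\omega$ on $M\times G$, the same Frobenius computation on its kernel, and the same use of the left $G$-action for uniqueness. The one step you flag as the main obstacle --- promoting $\mathrm{pr}_1|_L$ from a local diffeomorphism to a covering map --- is closed in the paper by showing that the restricted projection is proper, using precisely the $G$-invariance of the distribution that you already invoke for uniqueness: a sequence on a leaf whose projection converges can be compared with the leaf through a point over the limit, which is a $G$-translate of the original one, forcing convergence on the original leaf.
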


A proof of Lemma \ref{Lemma chiave unicita} follows by constructing a suitable Cartan connection - depending on $\omega$ - on the trivial principal bundle $\pi_M \colon M\times G \to M$ so that its curvature being zero is equivalent to condition \eqref{differenziale MC}. 
In Chapter \ref{Section dipendenza olo} we will prove Proposition \ref{Teorema esistenza foliazione} which can be seen as a more general version of Lemma \ref{Lemma chiave unicita}.

Lemma \ref{Lemma chiave unicita} provides a remarkable one-to-one correspondence between immersions $M\to SO(n+2,\C)$ up to translations and elements in $\nobreak{\Omega^1(M, \Lieonn)}$ satisfying $(\ref{differenziale MC})$.

\vspace{10pt}

We start the proof of Theorem \ref{Teoremone} providing a way, which will turn out to be very useful for our pourposes, to construct (local) immersions into $SO(n+2,\C)$ from immersions into $\mathbb{X}_{n+1}$.

Let $dim(M)=n$ and let $\sigma \colon (M,g) \to \PML$ be an isometric immersion with shape operator $\Psi$.

Let $(X_i)_{i=1}^n$ be a local orthonormal frame for $\CTM$ on some open subset $U\subset M$ and $\nu$ be a normal vector field for the immersion $\sigma$. 
Recalling that $\mathbb X_{n+1}\subset \C^{n+2}$, notice that  $( d\sigma(X_1), \dots, d\sigma (X_n), d\sigma(\nu(x)), -i\sigma(x) )$ is an orthonormal basis for $\C^{n+2}$. Up to switching $\nu$ with $-\nu$, we can assume that this basis of $\C^{n+2}$ lies in the same $\SOnn$-orbit as the canonical basis $( v_1^0, \dots,  v_{n+2}^0)$ of $\C^{n+2}$. Recall that we defined \[ e=\begin{pmatrix}
	0\\
	\dots\\
	0\\
	i
\end{pmatrix}=i v^0_{n+2}\in \mathbb X_{n+1}.\]

Given an immersion $\sigma\colon U\to \mathbb X_{n+1}$, we construct the smooth map \[\Phi\colon U \to \SOnn\] defined, for all $x\in U$, by
\begin{equation}
	\label{definizione Phi}
	\begin{split}
		\Phi(x) (  v_i) &= d_x\sigma (X_i) \qquad \qquad i=1,\dots n\\
		\Phi(x) (  v_{n+1}) &= d_x\sigma (\nu),\\	\Phi(x)( v_{n+2})&=-i\Phi(x) ( e) =   -i\sigma(x) 	
	\end{split}
\end{equation}
In the following, we denote by $\alpha, \beta, \gamma$ indices in $\{1, \dots, n+2\}$ and by $i, j, k, h$ indices in $\{1, \dots, n\}$

Let $\omega_G$ be the Maurer-Cartan form for $\SOnn$ and define \[\omega_\Phi:= \Phi^* \omega_G\in \Omega^1(\C TU,\Lieonn),\] where we consider the canonical identification $\Lieonn=Skew({n+2}, \C)$.

By definition of the Maurer-Cartan form, $\omega_\Phi$ satisfies
\[
\omega_\Phi (x) (\XX)= d(L_{\Phi(x)} ^{-1}) \circ d_x\Phi (\XX)= \Phi(x)^{-1} \cdot \big(d_x\Phi (\XX)\big) 
\]

We are now able to prove the following.
\begin{Prop} [Uniqueness in Theorem \ref{Teoremone}]
	\label{Teo unicita}
	Let  $M$ be a connected smooth manifold of dimension $n$. Let $\sigma, \sigma'\colon M \to \ML$ be two admissible immersions of a hypersurface with the same induced complex metric $g=\sigma^*\inners=(\sigma')^*\inners$ and the same shape operator $\Psi$. Then, there exists a unique $\phi\in \Isom_0(\ML)$ such that $\sigma' (x)=\phi \cdot \sigma(x)$ for all $x\in M$.
\end{Prop}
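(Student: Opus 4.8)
The plan is to prove uniqueness by reducing it to Lemma \ref{Lemma chiave unicita}, following the strategy already set up in the construction of the map $\Phi$. The key observation is that the data of the immersion $\sigma$, together with its shape operator $\Psi$, determines the pullback of the Maurer-Cartan form $\omega_\Phi = \Phi^*\omega_G$ entirely in terms of intrinsic quantities on $M$. So first I would write out $\omega_\Phi$ explicitly in the local orthonormal frame $(X_i)$ adapted to $\sigma$, computing the components $d_x\Phi(\XX)$ applied to the basis vectors $v_\alpha$ using Equation \eqref{definizione Phi}. Differentiating the relations $\Phi(x)(v_i) = d_x\sigma(X_i)$, $\Phi(x)(v_{n+1}) = d_x\sigma(\nu)$, and $\Phi(x)(v_{n+2}) = -i\sigma(x)$, and decomposing the resulting tangent vectors into the splitting $\C TM \oplus \NN \oplus \mathrm{Span}_\C(\sigma)$, one finds that the entries of the $\mathfrak{so}(n+2,\C)$-valued form $\omega_\Phi$ are exactly the Levi-Civita connection forms $\theta^i_j$ (from the block indexed by $1,\dots,n$), the components $\Psi^i$ of the shape operator (from the rows/columns involving the index $n+1$, using $\Psi = -\overline\nabla\nu$), and the coframe $\theta^i$ (from the rows/columns involving $n+2$, reflecting $\overline\nabla_X \sigma = d\sigma(X)$ for the position vector).

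The crucial point is then that $\omega_\Phi$ depends \emph{only} on $(g,\Psi)$ and on the choice of orthonormal frame, not on the immersion $\sigma$ itself beyond these data. Hence if $\sigma$ and $\sigma'$ have the same induced complex metric $g$ and the same shape operator $\Psi$, and I build $\Phi$ and $\Phi'$ from the same local frame $(X_i)$, then $\omega_\Phi = \omega_{\Phi'}$ as elements of $\Omega^1(\C TU, \mathfrak{o}(n+2,\C))$. The uniqueness part of Lemma \ref{Lemma chiave unicita} then gives, locally, some $g_0 \in \SOnn$ with $\Phi' = L_{g_0}\circ\Phi$, i.e.\ $\Phi'(x) = g_0\cdot\Phi(x)$ for all $x$ in a connected neighbourhood. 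Reading off the last column $\Phi(x)(v_{n+2}) = -i\sigma(x)$ shows $\sigma'(x) = g_0\cdot\sigma(x)$, so $g_0$ (viewed in $\Isom_0(\XXX_{n+1})\cong \SOnn$) realizes the desired post-composition locally.

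To globalize over a connected (not necessarily simply connected) $M$, I would cover $M$ by connected open sets on which adapted frames exist, obtain an element $g_0$ of $\Isom_0(\XXX_{n+1})$ on each, and check these agree on overlaps. The gluing follows from Lemma \ref{mappa differenziale determina isometria}: on any overlap the two candidate isometries $\phi$ agree together with their differentials at a point (both send $\sigma(x)\mapsto\sigma'(x)$ and $d_x\sigma \mapsto d_x\sigma'$ compatibly, since these are forced by the common data $(g,\Psi)$), hence they coincide. By connectedness of $M$ this produces a single $\phi\in\Isom_0(\XXX_{n+1})$ with $\sigma' = \phi\circ\sigma$ everywhere, and the same lemma gives uniqueness of $\phi$.

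The main obstacle I anticipate is the bookkeeping in the first step: one must verify carefully that every entry of $\omega_\Phi$ is an intrinsic invariant of $(g,\Psi)$, which requires using that $\overline\nabla$ is the ambient Levi-Civita connection restricted to the immersion, that $\XXX_{n+1}$ sits in $\C^{n+2}$ as a quadric with position vector orthogonal to the tangent space, and the identity $\Psi = -\overline\nabla\nu$ together with the Gauss formula $\overline\nabla_X Y = \nabla_X Y + \IIv(X,Y)$. A secondary subtlety is the sign ambiguity in the choice of normal $\nu$ (equivalently of $\Psi$): one fixes it, as in the construction preceding \eqref{definizione Phi}, so that the adapted basis lies in the $\SOnn$-orbit of the canonical basis, ensuring $\Phi$ genuinely takes values in $\SOnn$ rather than $\O(n+2,\C)$, and hence that the resulting isometry lies in the identity component $\Isom_0(\XXX_{n+1})$.
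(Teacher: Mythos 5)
Your proposal is correct and follows essentially the same route as the paper: compute $\omega_\Phi=\Phi^*\omega_G$ explicitly, observe that every entry is expressed through $\theta^i$, $\theta^i_j$ and $\Psi^i$ (hence through $(g,\Psi)$ alone), invoke the uniqueness part of Lemma \ref{Lemma chiave unicita} to get $\Phi'=L_{g_0}\circ\Phi$, and then glue over a connected $M$ using the pointwise uniqueness of the ambient isometry. The only cosmetic difference is that the paper establishes uniqueness of $\phi$ first by a direct linear-algebra argument (an element of $SO(n+2,\C)$ fixing the $(n+1)$-dimensional subspace $\mathrm{Span}_\C(\sigma(x),d\sigma(T_xM))$ is the identity), whereas you fold it into the gluing step; both are fine.
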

\begin{proof}We first prove that $\phi$ is unique. Assume $\phi_1 \circ \sigma=\phi_2 \circ \sigma$ for some $\phi_1,\phi_2\in SO(n+2, \C)$, then, for any $x\in U$, $\phi_2^{-1} \circ \phi_1$ coincides with the identity on $Span_\C(\sigma(x), d\sigma(T_x M)  )\subset \C^{n+2}$ which is a complex vector subspace of dimension $(n+1)$, therefore, since both $\phi_1$ and $\phi_2$ are orientation-preserving, we conclude that $\phi_1=\phi_2$.
	
	We now prove that $\phi$ exists. Observe that it is enough to show that the statement holds for any open subset of $M$ that admits a global $g$-orthonormal frame, then the thesis follows by the uniqueness of $\phi$ and by the fact that $M$ is connected. We therefore assume in the proof that $M$ admits a global ortonormal frame without loss of generality.
	
	Consider an isometric immersion $\sigma\colon M \to \ML$ with immersion data $(g,\Psi)$, fix a $g$-frame $(X_i)_i$, and construct the lifting $\Phi\colon M \to Isom_0(\ML)$ as in Equation \eqref{definizione Phi}.
	
	In order to prove the statement, it is enough to show that the form $\omega_\Phi$ depends on $\sigma$ only through $g$ and $\Psi$. Indeed, assume that $\sigma$ and $\sigma'$ are two isometric immersions with the same immersion data $(g,\Psi)$: the induced maps $\Phi, \Phi'\colon M \mapsto G= SO(n+2,\C)$ would be such that $\Phi^* \omega_G = (\Phi')^* \omega_G$, hence, by Lemma \ref{Lemma chiave unicita}, there exists $\phi\in \SOnn$ such that $\Phi'(x)=\phi\cdot \Phi(x)$ for all $x\in M$, hence $\sigma'(x)=\phi\cdot \sigma(x)$. \newline

	For $\alpha=1, \dots, n+2$, define \[\Phi_\alpha (x):= \Phi(x) v^0_\alpha.\]
	Notice that  $i\Phi_\alpha\in \mathbb{X}_{n+1}$ for all $\alpha$, and that $\Phi_\beta(x)\in T_{i\Phi_\alpha(x)} \mathbb{X}_{n+1}$ for all $\alpha\ne \beta$ (see Equation \eqref{definizione Phi}). In particular, for $\beta \ne n+2$,  $\Phi_\beta$ can be seen as a vector field on $\mathbb X_{n+1}$ along $\sigma(U)$.
	
	As usual, denote with $\theta^i_j$ the Levi-Civita connection forms for $(X_i)_{i=1}^n$, so $
	\nabla X_i = \theta_i ^j \otimes X_j$. Also recall the notation $\Psi= \Psi^i \otimes X_i$. 
	
	We compute the form $\omega_\Phi$ explicitly using the extrinsic geometry of $\sigma$. In the following computations, elements such as $\omega_\Phi(\YY)$, $\Phi(x)$, and $(d_x \Phi)(\YY)$ are to be seen as matrices, while elements such as $\sigma(x)$ and $(d_x \sigma)(\YY)$ are to be seen as vectors.

	For all $x\in U$ and $\YY\in T_x U$,
	\begin{align*}
		\Phi(x)\ \omega_\Phi (\YY)\  v_\alpha^0 &= (d_x\Phi) (\YY)\   v_\alpha^0 = d_x\big(\Phi(\cdot) \  v_\alpha^0 \big) (\YY)= (d_x\Phi_{\alpha}) (\YY).
	\end{align*}
	In particular, \begin{align*}
		\omega_\Phi(\YY) \cdot  v_{n+2}^0 &= (\Phi(x))^{-1}\ d_x(\Phi(\cdot)  v^0_{n+2}) (\YY) =-i(\Phi(x))^{-1}\ d_x\sigma(\YY)=\\
		&= \sum_{k=1}^n -i\inner{\YY, X_k} \cdot (\Phi(x))^{-1}\ (d_x\sigma(X_k))=\\
		&= \sum_{k=1}^n -i\inner{\YY, X_k}  v_k^0= \sum_{k=1}^n -i\theta^k(\YY)  v_k^0
	\end{align*}
	
	We now compute $\omega_\Phi \cdot  v_j^0$ with $j=1, \dots, n$. Recall that the Levi-Civita connection on $\mathbb X_{n+1}\subset \C^{n+2}$ is the tangent component of the standard differentiation on $\C^{n+2}$ to deduce that
	\begin{align*}
		\Phi(x)\ \omega_\Phi (\YY)\   v_j^0&= (d_x\Phi_j)(\YY)= \partial_\YY \Phi_j =\\
		&= (d_x\sigma) (\overline \nabla_ \YY X_j) + \inner{ (d_x \Phi_j)( \YY), \Phi_{n+2}} \Phi_{n+2}=\\
		&=(d_x\sigma) (\nabla_ \YY X_j) + \IIs (\YY, X_j) d\sigma(\nu) + \inner{ (d_x \Phi_j)( \YY), \Phi_{n+2}} \Phi_{n+2}=\\
		&=(d_x\sigma) (\nabla_ \YY X_j) + \IIs (\YY, X_j) d\sigma(\nu) -\inner{ \Phi_{j}, (d_x \Phi_{n+2})( \YY)} \Phi_{n+2}=\\
		&=\sum_k \theta_j ^k (\YY) \Phi_k +\IIs (\YY, X_j) \Phi_{n+1}+ i\inner{ \Phi_j, d\sigma(\YY)} \Phi_{n+2}.
	\end{align*}
	
	As a result, 
	\begin{align*}
		\omega_\Phi(\YY)  v_j^0&= \theta_j^k  v_k^0 + \IIs (\YY, X_j) v_{n+1}^0 +i \inner{X_j, \YY}  v_{n+2}^0=\\
		&= \theta_j^k  v_k^0 + \Psi^j(\YY) v_{n+1}^0 +i \theta^j(\YY) \underline v_{n+2}^0.
	\end{align*}
	
	We therefore have a complete description of $\omega_\Phi \in \Gamma(\CTM, \Lieonn)= Skew(n+2, \Gamma(\CTM))$ in terms of $g$ (and of the induced connection $\nabla$) and $\Psi$ only:
	
	\begin{equation}
		\label{eq: forma omega phi}
		\omega_\Phi=
		\begin{pmatrix}
			{\scalebox{2} {$\Theta$} }& \begin{matrix}
				\Psi^1 & -i\theta^1 \\
				\dots &\dots \\
				\Psi^n & -i\theta^n
			\end{matrix} \\
			\begin{matrix}
				-\Psi^1 & \dots & -\Psi^n\\
				i\theta^1 & \dots & i\theta^n
			\end{matrix} & 
			\begin{matrix}0\text{    } & 0\text{    } \\ 
				0\text{    } & 0\text{    }
			\end{matrix}
		\end{pmatrix}  
	\end{equation}
	where $\Theta=(\theta^i_j)_{i,j}$. The proof follows.
\end{proof}

\begin{Cor}
	\label{Corollario Teo unicita}
	Let $\sigma, \sigma'\colon (M^n,g) \to \ML$ be two isometric immersions with the same shape operator $\Psi$. Assume $\sigma(x)=\sigma'(x)$ and $d_x\sigma=d_x\sigma'$, then $\sigma\equiv\sigma'$.
\end{Cor}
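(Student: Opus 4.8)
The statement is an immediate consequence of the uniqueness Proposition~\ref{Teo unicita}, which already produces an isometry $\phi\in\Isom_0(\XXX_{n+1})$ with $\sigma'=\phi\circ\sigma$; the only work left is to exploit the pointwise hypotheses $\sigma(x)=\sigma'(x)$ and $d_x\sigma=d_x\sigma'$ in order to force $\phi$ to be the identity. So the plan is: first invoke Proposition~\ref{Teo unicita} to get such a $\phi$, and then show $\phi=\mathrm{id}$ by a purely linear-algebraic argument at the single point $p:=\sigma(x)$.

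Writing $\sigma'=\phi\circ\sigma$ and evaluating at $x$ gives $\phi(p)=p$; differentiating the same relation gives $d_x\sigma'=d_p\phi\circ d_x\sigma$, and combining this with $d_x\sigma'=d_x\sigma$ together with the injectivity of $d_x\sigma$ (guaranteed by admissibility) shows that $d_p\phi$ restricts to the identity on the $n$-dimensional complex subspace $W:=d_x\sigma(\C T_xM)\subset T_p\XXX_{n+1}$. Since $g=\sigma^*\inners$ is non-degenerate, $W$ is a non-degenerate subspace, so $T_p\XXX_{n+1}=W\oplus W^\perp$ with $\dim_\C W^\perp=1$. Because $d_p\phi$ is a $\C$-linear isometry fixing $W$ pointwise, it preserves $W^\perp$ and acts on it as multiplication by a scalar $c\in\C$; as $\inners$ is non-degenerate on the line $W^\perp$, the isometry condition forces $c^2=1$, hence $c=\pm1$.

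The key step is to rule out $c=-1$. For this I would use that $\phi\in\Isom_0(\XXX_{n+1})$ preserves the complex volume form $\omega_0$ of Remark~\ref{rmk Super hRm}.\ref{rmk Super hRm orientation}, so that $(d_p\phi)^*(\omega_0)_p=(\omega_0)_p$ and therefore $\det d_p\phi=1$; since $\det d_p\phi=\det(\mathrm{id}_W)\cdot c=c$, this yields $c=1$ and hence $d_p\phi=\mathrm{id}$. Finally, applying Lemma~\ref{mappa differenziale determina isometria} to the two isometries $\phi$ and $\mathrm{id}$ of $\XXX_{n+1}$, which agree at $p$ and have the same differential there, gives $\phi=\mathrm{id}$, i.e. $\sigma\equiv\sigma'$. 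The only mildly delicate point is the determinant computation, which is precisely where one genuinely uses that $\phi$ is orientation-preserving rather than merely an ambient isometry; every other step is a formal unwinding of the relation $\sigma'=\phi\circ\sigma$.
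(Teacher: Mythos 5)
Your proof is correct and follows essentially the same route as the paper: both reduce to the observation that the ambient isometry $\phi$ produced by Proposition~\ref{Teo unicita} fixes everything except possibly a sign on a one-dimensional orthogonal complement, and that orientation-preservation rules out the sign $-1$. The only cosmetic difference is that the paper runs this argument directly on the matrix $\phi\in SO(n+2,\C)$ acting on the $(n+1)$-dimensional subspace $Span_\C(\sigma(x),d_x\sigma(\C T_xM))\subset\C^{n+2}$, so no rigidity lemma is needed, whereas you work in $T_p\XXX_{n+1}$ and then close with Lemma~\ref{mappa differenziale determina isometria}.
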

\begin{proof}
	By Proposition \ref{Teo unicita}, there exists $\phi \in \SOnn$ such that $\phi \circ \sigma=\sigma'$. Since there exists at most one matrix in $\SOnn$ sending $n+1$ given vectors in $\C^{n+2}$ into other $n+1$ given vectors, we have $d_{\sigma(x)} \phi=id$, hence $\phi=id$.
\end{proof}

We are finally able to prove existence.

\begin{proof} [Proof of Existence in Theorem $\ref{Teoremone}$] We prove the statement in two steps.
	
	\emph{Step 1.}     
	Assume $M=U$ is such that there exists a globally defined $g$-orthonormal frame $(X_i)_{i=1}^n$ in $\C T U$ with dual frame $(\theta^i)_i$.
	
	Our aim is to construct a suitable form $\omega\in \Omega^1(U, \Lieonn)$ satisfying \eqref{differenziale MC} in order to be able to apply Lemma \ref{Lemma chiave unicita} to obtain an immersion $\Phi\colon U\to\SOnn$. Then, drawing inspiration from Equations \eqref{definizione Phi}, we will prove that the immersion $\sigma(x)=i \Phi(x)\cdot \underline v_{n+2}$ into $\XXX_{n+1}$ satisfied the expected conditions. 
	
	Let $\Theta=(\theta^i_j)$ be the skew-symmetric matrix of the Levi-Civita connection forms for $(X_i)_i$ and recall the notation
	$\Psi= \Psi^i \otimes X_i=\Psi^i_j \cdot \theta^j \otimes X_i$.
	Also define \[\upsi:= \begin{pmatrix}
		\Psi^1 \\
		\Psi^2\\
		\dots\\
		\Psi^n
	\end{pmatrix} \quad \text{and}\quad \utheta:= \begin{pmatrix}
		\theta^1 \\
		\theta^2\\
		\dots\\
		\theta^n
	\end{pmatrix} .\]

	Equation \eqref{eq: forma omega phi} in the proof of Proposition \ref{Teo unicita} suggests to define a form $\omega\in \Omega^1(U, \Lieonn)$ as
	\[		
	\omega=
	\begin{pmatrix}
		{\scalebox{2} {$\Theta$} }& \begin{matrix}
			\Psi^1 & -i\theta^1 \\
			\dots &\dots \\
			\Psi^n & -i\theta^n
		\end{matrix} \\
		\begin{matrix}
			-\Psi^1 & \dots & -\Psi^n\\
			i\theta^1 & \dots & i\theta^n
		\end{matrix} & 
		\begin{matrix}0\text{    } & \text{    } 0 \\ 
			0\text{    } & \text{    } 0
		\end{matrix}
	\end{pmatrix}  
	\]
	
	We want to prove that $d\omega + [\omega, \omega]=0$. \newline
	
	By the explicit computation, we get that 
	\[
	d\omega + [\omega, \omega] =
	\begin{pmatrix}
		d\Theta + \Theta\wedge \Theta - \upsi \wedge ^t\upsi+ \utheta \wedge ^t\utheta & d\upsi + \Theta \wedge \upsi & -i d\utheta -i \Theta \wedge \utheta \\
		-d\upsi - \Theta \wedge \upsi	 &  0 & i^t\upsi\wedge \underline \theta \\
		i ^td\utheta+ i ^t \utheta \wedge \Theta & i ^t\utheta \wedge \upsi & 0
	\end{pmatrix}
	\]
	We now observe the following.
	\begin{itemize}
		\item By definition of the Levi-Civita connection forms,
		\[
		d\theta^j +\theta^i_j\wedge \theta^j=0,\]
		hence $d\utheta+ \Theta\wedge \utheta=0$.
		\item Since $\Psi$ is symmetric, we have 
		\[^t\upsi\wedge \utheta= \sum_{j,k} \Psi^j_k \theta^j\wedge \theta^k=0.
		\]
		\item Expanding $d^{\nabla} \Psi$, one gets
		\begin{align*}
			g\big((d^\nabla \Psi) (X_i, X_j), X_h\big) =& \big(\nabla_{X_i} (\Psi^k(X_j) X_k), X_h\big)-\\
			&-\big(\nabla_{X_j} \Psi^k(X_i)X_k, X_h\big) - \Psi^h([X_i, X_j])=\\
			=& X_i (\Psi^h (X_j)) + \Psi^k (X_j)\theta_k^h (X_i)-\\
			&- X_j(\Psi^h(X_i)) - \Psi^k (X_i) \theta_k ^h (X_j) - \Psi^h([X_i, X_j])=\\
			=& (d\Psi^h)(X_i, X_j) +  (\Theta\wedge \upsi)^{(h)} (X_i, X_j).
		\end{align*}
		which leads to the standard formula
		\[
		\underline {d^\nabla \Psi}= d\upsi +  \Theta \wedge \upsi,
		\]
		(see also \cite{Kobayashi-Nomizu1}). Thus, by Codazzi equation,
		\[
		d\upsi +  \Theta \wedge \upsi=0
		\]

		\item  By a straightforward computation \[ d\theta_j^i+ \sum_t \theta_k^i\wedge \theta^k_j =R(X_i, X_j, \cdot, \cdot).\]
		Now use Gauss equation to get
		\[d\theta_j^i + \sum_k \theta^i_k\wedge \theta_j^k - \psi^i \wedge \psi^j+ \theta^i \wedge \theta^j=0 \]
		for all $i, j=1,\dots, n$, i.e. \[d\Theta +\Theta \wedge \Theta-\upsi\wedge^t\upsi + \utheta\wedge^t\utheta=0.\]

	\end{itemize}
	We can finally conclude that $d\omega + [\omega, \omega]=0$.
	
	By Lemma \ref{Lemma chiave unicita} there exists $\Phi\colon U \to \SOnn$ such that $\omega= \omega_\Phi= \Phi^* \omega_{G}$ where $\omega_{G}$ is the Maurer-Cartan form of $\SOnn$.
	
	Define $\sigma\colon M \to \ML$ as $\sigma(x):= \Phi(x)\cdot  e=i\Phi(x) v_{n+2}^0$.
	
	Now, we note that the way $\omega$ is defined allows exactly to compute the following:
	\begin{align*}
		d_x\sigma(X_j) &= i \big(d_x ( \Phi(\cdot)  v_{n+2}^0 )\big)(X_j)=i (d_x\Phi)(X_j)\  v_{n+2}^0=\\
		&=i \Phi(x)\ \omega(X_j)\  v^0_{(n+2)}= \theta^k(X_j)\ \Phi(x)\  v_k^0= \Phi(x) v_j^0.
	\end{align*}
	We conclude that $\sigma$ is an immersion at every point and that it is an isometric immersion since its differential sends an orthonormal basis into orthonormal vectors. 
	
	Observe that this construction is in fact inverse to construction $(\ref{definizione Phi})$, i.e. $\Phi$ is the map one gets if they started from the immersion $\sigma$ in the first place. By the proof of Lemma \ref{Lemma chiave unicita}, one gets that the penultimate column of $\omega=\Phi^*\omega_G$ has as non-zero entries the components of the shape operator w.r.t. the frame $(X_1, \dots, X_n)$, hence $\Psi$ is the shape operator of $\sigma$.
	\vspace{10pt}

	\emph{Step 2.} We extend the result for simply connected manifold $M$ with immersion data $(g,\Psi)$. We will say that an open subset of $M$ is \emph{immersible} if it admits a $g$-isometric immersion into $\ML$ with shape operator $\Psi$.
	
	Fix a point $x\in M$ and an immersible open neighbourhood $U_0$ of $x$ and an isometric immersion $\sigma_0 \colon U_0 \to \ML$. In fact, one can just fix the germ of an immersion.
	
	For any $y\in M$, let $\alpha\colon [0,1] \to M$ be a simple path connecting $x$ to $y$. Consider a collection of open subsets $\{U_i\}_{i=0}^m$ of $\ML$ with the property of being a \emph{good cover} for $\alpha([0,1])$, i.e. such that: \begin{itemize}
		\item[\_] $\alpha([0,1]) \subset \bigcup_{i=0}^m U_i$;
		\item[\_] $U_i$ is immersible for all $i$;
		\item[\_] $\alpha^{-1}(U_i)$ is a connected interval;
		\item[\_] $U_{i}\cap U_j\cap \alpha([0,1]) \ne \emptyset$ iff $|i-j|\le 1$ and $U_i\cap U_{j}$ is either empty or connected for all $i,j$.
	\end{itemize}
	
	By Corollary \ref{Corollario Teo unicita}, we can construct a unique family $\{\sigma_i \}_{i=0}^m$ such that $\sigma_i \colon U_i \to \ML$ is an isometric immersion and ${\sigma_i}_{|U_i \cap U_{i-1}}= {\sigma_{i-1}}_{|U_i \cap U_{i-1}}$ for all $i=1, \dots m$.
	
	We prove that the germ of $\sigma_m$ around $y$ does not depend on the choice of the good cover for $\alpha$.
	Let $\{U'_j\}_{j=0}^m$, with $U'_0=U_0$, be another good cover for $\alpha$ with associated set of immersions $\{\sigma'_j\}_{j=0}^p$, with $\sigma'_0=\sigma_0$. For all $i\in \{0, \dots, m\}$ and $j\in \{0, \dots, p \}$, $U_i\cap U'_j\cap \alpha([0,1])$ is either empty or connected, since it is the image of a connected interval; we define $C_{i,j}$ as the connected component of $U_i \cap U'_j$ which intersects $\alpha([0,1])$.
	
	By contradiction, assume that $(i_0, j_0)\in \{0, \dots, m\}\times\{0,\dots, p\}$ is such that $U_{i_0}\cap U'_{j_0}\cap \alpha(I)\ne \emptyset$ and such that $\sigma_{i_0}$ and $\sigma'_{j_0}$ do not coincide over $C_{i_0, j_0}$; also pick $(i_0, j_0)$ so that this does not hold for any other couple $(i, j)$ with $i\le i_0$ and $j\le j_0$. One can see (because of the topology of $[0,1]$) that, for such $(i_0, j_0)$, either $U_{i_0 -1}\cap U_{i_0}\cap U'_{j_0}\ne \emptyset$ or $U'_{j_0-1}\cap U_{i_0}\cap U_{j_0}\ne \emptyset$; assume the former without loss of generality. Then, by minimality $\sigma_{i_0-1}$ and $\sigma'_{j_0}$ coincide over $C_{i_0-1, j_0}$, while by construction $\sigma_{i_0-1}$ and $\sigma_{i_0}$ coincide over $U_{i_0}\cap U_{i_0-1}$: as a result $\sigma_{i_0}$ and $\sigma'_{j_0}$ coincide on $C_{i_0,j_0} \cap U_{i_0}\cap U_{i_0-1}$ which is nonempty, hence they coincide on $C_{i_0, j_0}$ by Corollary \ref{Corollario Teo unicita}, which is a contradiction.
	
	Finally, we use the simply-connectedness of $M$ to prove that the germ of $\sigma_m$ over $y$ does not depend on the path $\alpha$ either. Indeed, any two paths from $x$ to $y$ are linked by some homotopy $H\colon [0,1]\times [0,1]\to M$; by compactness, it is clear that if $\{U_i\}_{i=0}^m$ is a good cover for the path $\alpha_t:=H(t,\cdot)$, then there exists $\eps_t$ such that $\{U_i\}_i$ is a good cover for $\alpha_s$ for all $|s-t|<\eps$; as a result, the function that assigns to each time $t\in [0,1]$ the germ of the corresponding $\sigma_n^t$ in $q$ constructed via $\alpha_t$ is locally constant, hence constant.
	
	We can therefore extend $\sigma_0$ to an isometric immersion $\sigma\colon M \to \ML$.
\end{proof}

\begin{Remark}
	Let $M$ be a manifold with immersion data $(g,\Psi)$ satisfying the Gauss-Codazzi equation. 
	
	Consider its universal cover $(\widetilde M, \widetilde g)$, over which $\pi_1(M)$ acts by isometries, and the lifting $\widetilde \Psi$ of $\Psi$, which is $\pi_1(M)$-invariant; by the previous result, there exists an isometric immersion \[\sigma\colon (\widetilde M,\widetilde g) \to \PML\] with shape operator $\widetilde \Psi$, unique up to an ambient isometry. It is now trivial to check that $\sigma$ is $(\pi_1 (M), \SOnn)$-equivariant. Indeed, for all $\alpha \in \pi_1 (M)$, $\sigma \circ \alpha$ is a new isometric embedding with shape operator $\Psi$, hence, by uniqueness of Theorem \ref{Teoremone}. there exists a unique element $mon(\alpha)\in \SOnn$ such that \[\sigma \circ \alpha= mon(\alpha) \circ \sigma.\]
\end{Remark}

\section{Totally geodesic hypersurfaces in $\PML$}
\label{section codimension zero}
A particular case of immersions $M\to \mathbb X_{n+1}$ is given by totally geodesic immersions, namely immersions with $\Psi=0$. The study of this case leads to several interesting results.

\begin{Lemma}
	If $g$ is a complex metric on a smooth manifold $M$ with constant sectional curvature $k\in \C$, then, for any $X,Y,Z,W\in \Gamma(\C TM)$,
	\[
	R(X,Y,Z,W)= k (\inner{X,Z}\inner{Y,W} - \inner{Y,Z}\inner{X,W}).
	\]
	In particular, $R(X,Y)Z\in Span_{\C}(X,Y)$.
\end{Lemma}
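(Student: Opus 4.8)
The plan is to follow exactly the strategy used in Proposition \ref{prop: R determined by sectional curvatures} and its Corollary \ref{cor: R di space form pseudo-Riem} in the pseudo-Riemannian setting, since the entire argument is purely algebraic and only relies on the symmetries \eqref{eq: symmetries of R pseudo-Riem} (which, as remarked earlier in the chapter, hold verbatim for the curvature tensor of a holomorphic Riemannian metric) together with $\C$-multilinearity of $R$ (also established earlier). In fact, this lemma is identical in content to Lemma \ref{lemma space form}, which has already been stated and proved in the uniqueness part of Theorem \ref{Theorem space forms}, so the proof will be a one-line reduction to that result.

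Concretely, the approach is to introduce the auxiliary $(0,4)$-tensor
\[
R'(X,Y,Z,W):= k\big(\inner{X,Z}\inner{Y,W} - \inner{Y,Z}\inner{X,W}\big).
\]
First I would check that $R'$ satisfies all the symmetries \eqref{eq: symmetries of R pseudo-Riem}: antisymmetry in the first two arguments and the pair-swap symmetry are immediate from the symmetry of $g=\inners$, and the first Bianchi identity follows by direct expansion and cancellation. Next I would verify the diagonal agreement
\[
R'(X,Y,X,Y)= k\big(\inner{X,X}\inner{Y,Y}-\inner{X,Y}^2\big),
\]
which by the definition of sectional curvature \eqref{def curvatura} and the hypothesis $K\equiv k$ equals precisely $R(X,Y,X,Y)$ for all $X,Y\in\Gamma(\C TM)$.

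Having established that $R$ and $R'$ agree on the diagonal and both satisfy the curvature symmetries, I would invoke Proposition \ref{prop: R determined by sectional curvatures} — whose proof is genuinely algebraic and valid over $\C$ — to conclude $R\equiv R'$. This gives the $(0,4)$-form of the statement; raising the last index via the non-degenerate $\C$-bilinear form then yields the $(1,3)$ version $R(X,Y)Z= -k(\inner{X,Z}Y - \inner{Y,Z}X)$, and the membership $R(X,Y)Z\in\mathrm{Span}_\C(X,Y)$ is read off directly from this expression.

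There is essentially no obstacle here: the only point requiring a moment of care is making sure that Proposition \ref{prop: R determined by sectional curvatures}, originally phrased for real $(0,4)$-tensors, applies in the complex-bilinear setting. But its proof uses only polarization identities and linear manipulations that are insensitive to the scalar field, so it carries over without change — indeed the text has already noted that the symmetries and $\C$-multilinearity are exactly what is needed. Thus the proof reduces to the single sentence that $R'$ satisfies \eqref{eq: symmetries of R pseudo-Riem} and agrees with $R$ on $(X,Y,X,Y)$, whence $R\equiv R'$ by Proposition \ref{prop: R determined by sectional curvatures}.
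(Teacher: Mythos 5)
Your proposal is correct and follows exactly the paper's own argument: the paper proves this by introducing the same auxiliary tensor $R'$, noting it satisfies the symmetries \eqref{eq: symmetries of R pseudo-Riem} and agrees with $R$ on the diagonal, and concluding via Proposition \ref{prop: R determined by sectional curvatures} (whose polarization argument is field-independent, as you observe). The paper's proof of the complex-metric version is literally the single line ``the proof is exactly as in Lemma \ref{lemma space form},'' so your slightly more detailed write-up matches it in every essential respect.
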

\begin{proof}The proof is exactly as in Lemma \ref{lemma space form}.
\end{proof}

\begin{Theorem}
	\label{teoremone stessa dimensione}        
	Let $M$ be a smooth manifold of dimension $n$.
	
	Then, $g$ is a complex metric for $M$ with constant sectional curvature $-1$ if and only if there exists an isometric immersion
	\[(\widetilde M, \widetilde g)\to \mathbb{X}_n\] which is unique up to post-composition with elements in $Isom(\mathbb X _{n})$ and therefore is $(\pi_1(M), O(n+1,\C) )$-equivariant.
\end{Theorem}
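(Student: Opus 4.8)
The plan is to deduce Theorem \ref{teoremone stessa dimensione} from the codimension-one Gauss--Codazzi machinery already developed, by exploiting the simple observation that an immersion of an $n$-manifold into $\XXX_n$ is the same thing as a \emph{totally geodesic} immersion of the same $n$-manifold into $\XXX_{n+1}$, i.e.\ one with vanishing shape operator $\Psi\equiv 0$. Indeed, fixing once and for all a totally geodesic isometric embedding $\iota\colon \XXX_n\hookrightarrow \XXX_{n+1}$ (which exists by Remark \ref{rmk Super hRm}, taking the intersection of $\XXX_{n+1}$ with a nondegenerate complex hyperplane through a chosen point, so that $\iota(\XXX_n)=\mathcal W\cap \XXX_{n+1}$ for a complex subspace $\mathcal W$), the composition $\iota\circ\tau$ of any isometric immersion $\tau\colon \widetilde M\to \XXX_n$ has vanishing second fundamental form, since a totally geodesic submanifold of a totally geodesic submanifold is totally geodesic in the ambient. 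Conversely, a totally geodesic isometric immersion $\sigma\colon\widetilde M\to\XXX_{n+1}$ has image contained in a totally geodesic copy of $\XXX_n$: this is because totally geodesic means geodesics of $\widetilde M$ map to geodesics of $\XXX_{n+1}$, and by the explicit description of the exponential map in Equation \eqref{descrizione geodetiche} the union of complex geodesics through a point tangent to the $n$-dimensional subspace $d\sigma(\C T_x\widetilde M)$ sweeps out exactly the intersection of $\XXX_{n+1}$ with an $(n+1)$-dimensional complex subspace, i.e.\ a copy of $\XXX_n$.

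First I would make the forward direction precise: suppose $g$ is a complex metric on $M$ of constant sectional curvature $-1$. I claim the pair $(g,\Psi)$ with $\Psi\equiv 0$ is an immersion datum in the sense of the Gauss--Codazzi formalism. Codazzi \eqref{GC1} reads $d^\nabla\Psi\equiv 0$, which is automatic for $\Psi=0$. For the Gauss equation \eqref{GC2}, with $\Psi=0$ the equation becomes $R^M(X_i,X_j,\cdot,\cdot)=-\theta^i\wedge\theta^j$ for an orthonormal frame, which is exactly the statement that $g$ has constant sectional curvature $-1$, by the preceding Lemma computing $R$ in terms of $g$ and $k$ (here $k=-1$). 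Hence Theorem \ref{Teoremone} applies to $(\widetilde g,0)$ on the simply connected $\widetilde M$ and yields an isometric immersion $\sigma\colon(\widetilde M,\widetilde g)\to \XXX_{n+1}$ with shape operator $0$, unique up to $\Isom_0(\XXX_{n+1})$. By the totally-geodesic argument above, $\sigma$ factors as $\iota\circ\tau$ through a totally geodesic $\XXX_n$, giving the desired isometric immersion into $\XXX_n$.

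For the converse direction, an isometric immersion $\tau\colon(\widetilde M,\widetilde g)\to\XXX_n$ realizes $\widetilde g$ as a pull-back of the holomorphic Riemannian metric of $\XXX_n$, which is a complex metric; pushing the constant-curvature property across is the content of the Gauss equation for the \emph{codimension-zero} immersion, equivalently the fact that $\iota\circ\tau$ is totally geodesic in $\XXX_{n+1}$ and the Gauss equation \eqref{GC2} with $\Psi=0$ then forces $K_g\equiv -1$. For uniqueness and equivariance I would argue that the $\Isom_0(\XXX_{n+1})$-ambiguity of $\sigma$ reduces to an $\Isom(\XXX_n)$-ambiguity: two totally geodesic immersions into $\XXX_{n+1}$ with the same data differ by an element of $\Isom_0(\XXX_{n+1})=SO(n+2,\C)$ preserving the hyperplane section, and the subgroup of $SO(n+2,\C)$ fixing a nondegenerate complex hyperplane is $O(n+1,\C)\cong\Isom(\XXX_n)$ (note this is the full isometry group, not just the identity component, which matches the statement's $\Isom(\XXX_n)$). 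Equivariance then follows exactly as in the Remark after Theorem \ref{Teoremone}: each deck transformation $\alpha\in\pi_1(M)$ produces a new immersion with the same data, hence differs from $\tau$ by a unique ambient isometry $mon(\alpha)\in O(n+1,\C)$, and $mon$ is a homomorphism.

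The main obstacle I anticipate is the bookkeeping around \emph{which} isometry group appears and why the normal direction is essentially canonical. Concretely, the delicate point is verifying that a totally geodesic isometric immersion of an $n$-manifold into $\XXX_{n+1}$ has image in a genuine totally geodesic $\XXX_n$ (and not merely in the intersection with a real subspace, which by the final items of Remark \ref{rmk Super hRm} need not be totally geodesic): this requires using that $d\sigma(\C T_x\widetilde M)$ is a \emph{complex} $n$-dimensional nondegenerate subspace of $T_{\sigma(x)}\XXX_{n+1}$, together with the complex-geodesic description in \eqref{descrizione geodetiche}, to conclude the image lies in $\XXX_{n+1}\cap\mathcal W$ for a complex subspace $\mathcal W$ of complex dimension $n+1$. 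Once this is set up carefully the rest is a direct transcription of the arguments already proved for Theorem \ref{Teoremone} and Proposition \ref{Teo unicita}, so no new analytic input is needed beyond the identification of the relevant stabilizer inside $SO(n+2,\C)$ as $O(n+1,\C)$.
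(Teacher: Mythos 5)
Your proposal follows essentially the same route as the paper: the pair $(g,0)$ satisfies Gauss--Codazzi precisely when $K_g\equiv -1$, Theorem \ref{Teoremone} produces a totally geodesic immersion into $\XXX_{n+1}$, and the stabilizer of a nondegenerate hyperplane section in $SO(n+2,\C)$ is identified with $O(n+1,\C)$. The only cosmetic difference is at the step showing the image lies in a copy of $\XXX_n$: the paper differentiates the unit normal field $\nu_0$ along curves and shows it is a constant vector of $\C^{n+2}$ (so the image sits in $\nu_0^\perp$), which is a slightly more direct way of handling the point you flag as delicate than sweeping by complex geodesics through a single basepoint.
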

\begin{proof}
	Let $\iota \colon \mathbb \C^{n+1}\hookrightarrow \mathbb \C^{n+2}$ be the immersion
	$\iota(z_1,\dots z_{n+1})=(z_1,\dots, z_{n+1}, 0)$.
	
	Assume there exists an isometric immersion $\sigma\colon (M,g)\to \mathbb X_n$, then the map $\overline \sigma=\iota \circ \sigma\colon (M,g) \to \mathbb X_{n+1}$ is an isometric immersion as well, and it has $\overline \sigma^*  v^{n+2}_0$ as a global normal vector field. The induced shape operator is therefore \[\Psi=\overline \nabla (\overline \sigma^* v^{n+2}_0)=0.\] By Gauss equation, this means that, for every local orthonormal frame $(X_i)_i$, $R(X_i, X_j, X_i, X_j)=-1$, i.e. $(M,g)$ has constant sectional curvature $-1$.
	\vspace{5pt}
	
	Conversely, assume $(M,g)$ has constant sectional curvature $-1$. Then, by taking $\Psi\equiv 0$, the couple $(g,\Psi)$ trivially satisfies the Codazzi Equation \eqref{GC1}, and, by the previous lemma, we have $R(X_i, X_j,\cdot, \cdot)=-\theta^1\wedge \theta^j$, so the Gauss equation holds as well.
	By Theorem \ref{Teoremone}, there exists an isometric $\pi_1(M)$-equivariant immersion $\overline \sigma\colon (M,g)\to \mathbb{X}_{n+1}$. 
	
	Let $\nu=\overline \sigma^* \nu_0$ be a normal local vector field w.r.t. $\overline\sigma$, with $\nu_0(x)\in T_{\sigma(x)}\mathbb X_{n+1}$. Let $\gamma(t)$ be a curve on $M$ with $\gamma(0)=x$ over which $\nu((\gamma(t)))=:\nu(t)$ is well-defined. Also denote $\nu_0(t)= d\overline{\sigma}(\nu(t))$.
	
	Since $\nu_0$ is unitary it turns out that $\dot\nu_0$ is orthogonal to $\nu_0$. On the other hand, differentiating $0=\inner{\nu_0(t),\overline \sigma(\gamma(t))}$ and using that $\nu_0$ is orthogonal to $d\overline{\sigma}(\dot\gamma)$, we deduce that $\dot\nu_0$ is orthogonal to the vector $\overline{\sigma}(\gamma(t))$. Thus $\dot\nu_0$ is contained in $d\overline{\sigma}(\CTM)$, and 
	\[
	\dot\nu_0= d\overline\sigma(D_{\dot \gamma} \nu) =-d\overline\sigma(\Psi(\dot\gamma))=0.
	\]
	
	We conclude that $\dot \nu_0(t)\equiv0$, hence $\nu_0$ is a constant vector and $Im(\overline\sigma)\subset \nu_0^\bot$. Up to composition with elements in $\SOnn$, we can assume $\nu_0= (0, \dots, 0, 1)$, so $Im (\overline \sigma) \subset \mathbb X_n$.
	
	Finally, an isometric immersion of $(\widetilde M, \widetilde g)$ in $\mathbb{X}_n$ is unique up to composition with elements in $\SOnn$ that stabilize $\mathbb X_n$, namely up to elements in $O(n+1,\C)$.
\end{proof}

\begin{Remark}
	\label{rmk: pseudo riemannian immerge in Xn}
	By Theorem \ref{teoremone stessa dimensione}, every pseudo-Riemannian space form of constant curvature $-1$ of dimension $n$ admits an essentially unique isometric immersion into $\mathbb{X}_{n}$.
\end{Remark}

An interesting case we are going to treat in Chapter \ref{Section uniformization} is the case $n=2$. In this setting, the previous theorem can be stated in the following way.
\begin{Proposition}
	\label{surfaces in G first part}	
	Let $(S,g)$ be a surface equipped with a complex metric, denote with $(\widetilde S, \widetilde g)$ the universal covering. Then:
	\begin{itemize}
		\item $(S,g)$ has constant curvature $-1$ if and only if there exists an isometric immersion 
		\[
		\sigma=(f_1,f_2) \colon (\widetilde S,\widetilde g)\to {\mathbb G}= (\CP^1 \times \CP^1 \setminus \Delta, -\frac{4}{(z_1 - z_2)^2}dz_1 dz_2) 
		\]
		which is $(\pi_1 (S), Isom(\GGG))$-equivariant. In particular, $g$ induces a monodromy map 
		\[
		mon_g \colon \pi_1 (S) \to \PSL \times \Z_2
		\] defined up to conjugation. Being $\sigma$ admissible, the maps $f_j\colon \widetilde S \to \CP^1$ are such that $rk( d f_j) \geq 1$.
		\item
		By composing with some affine chart $(U\times U\setminus \Delta, z\times z)$ of $\GGG$, a complex metric $g$ with constant curvature $-1$ can be locally expressed as
		\[
		g= -\frac{4}{(f_1 - f_2)^2}df_1 df_2.
		\]
		\item The maps $f_1$, $f_2$ are local diffeomorphisms if and only if no real vector $\XX \in TS\setminus \{0\}$ is isotropic for $g= \sigma^*\inners= (f_1,f_2)^*\inners$.
	\end{itemize}
\end{Proposition}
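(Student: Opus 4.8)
The plan is to deduce the three items from the codimension-zero integration result already in hand. The first item is essentially Theorem~\ref{teoremone stessa dimensione} specialized to $n=2$, transported through the isometry $\mathbb X_2\cong\GGG$ of Theorem~\ref{teo: metrica su G}. First I would note that, by Theorem~\ref{teoremone stessa dimensione}, $(S,g)$ has constant curvature $-1$ if and only if $(\widetilde S,\widetilde g)$ admits an isometric immersion into $\mathbb X_2$, unique up to $\Isom(\mathbb X_2)\cong O(3,\C)$ and hence $(\pi_1(S),O(3,\C))$-equivariant; composing with the isometry $\mathbb X_2\xrightarrow{\sim}\GGG$ yields the desired $\sigma=(f_1,f_2)\colon(\widetilde S,\widetilde g)\to\GGG$. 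To pin down the monodromy target $\PSL\times\Z_2$ I would identify $\Isom(\GGG)$: in odd dimension $O(3,\C)=SO(3,\C)\times\{\pm I_3\}$, where $SO(3,\C)\cong\PSL$ acts diagonally by M\"obius maps (Theorem~\ref{teo: metrica su G}) and $-I_3$ acts as the endpoint-swap $(z_1,z_2)\mapsto(z_2,z_1)$, so $\Isom(\GGG)\cong\PSL\times\Z_2$. The equivariance of $\sigma$ then produces $mon_g\colon\pi_1(S)\to\PSL\times\Z_2$, well defined up to conjugation because $\sigma$ is unique only up to post-composition with an ambient isometry.

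For the rank assertion and the local formula, the key input is that $\sigma$ is \emph{admissible}, so $d_p\sigma\colon\C T_pS\to T_{\sigma(p)}\GGG$ is injective and $g=\sigma^*\inners$ is nondegenerate. Choosing an affine chart $(U\times U\setminus\Delta,z\times z)$ of $\GGG$ with $\sigma(p)\in U\times U\setminus\Delta$, Equation~\eqref{eq: metrica su G} gives at once
\[
g=\sigma^*\inners=-\frac{4}{(f_1-f_2)^2}\,df_1\,df_2,
\]
which is the second item; here $f_j$ is $z$ composed with the $j$-th component of $\sigma$, and $f_1(p)\neq f_2(p)$ since $\sigma$ avoids $\Delta$. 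From this expression, were $df_j$ to vanish at a point, then $g(X,X)=-\tfrac{4}{(f_1-f_2)^2}df_1(X)df_2(X)$ would vanish for every $X\in\C T_pS$, contradicting nondegeneracy of $g$; hence $\mathrm{rk}(df_j)\geq 1$ everywhere.

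The substantive point is the third item, which I would phrase as pure linear algebra from the local formula. For a real vector $\XX\in T_pS$ one has $g(\XX,\XX)=-\tfrac{4}{(f_1-f_2)^2}\,df_1(\XX)\,df_2(\XX)$, a product of two complex numbers divided by the nonvanishing factor $(f_1-f_2)^2$; thus $\XX$ is isotropic if and only if $df_1(\XX)=0$ or $df_2(\XX)=0$. On the other hand $f_j\colon\widetilde S\to\CP^1$ is a local diffeomorphism at $p$ exactly when its real differential $d_pf_j$ is injective, i.e. when $df_j(\XX)\neq 0$ for every $\XX\neq 0$. Combining the two, both $f_1$ and $f_2$ are local diffeomorphisms if and only if $df_1(\XX)$ and $df_2(\XX)$ are simultaneously nonzero for all $p$ and all $\XX\neq 0$, which is precisely the condition that no nonzero real vector be isotropic for $g$. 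The main thing to get right is that all of these conditions are chart-independent (chart transitions are biholomorphisms and $g$ is intrinsic) and that one carefully distinguishes the $\C$-bilinear pullback from the real differentials $d_pf_j$; once these bookkeeping points are handled the equivalence is immediate, so there is no serious obstacle beyond this care.
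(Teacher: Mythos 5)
Your proposal is correct and takes essentially the same route as the paper: the first two items are read off from Theorem \ref{teoremone stessa dimensione} together with the isometry $\mathbb X_2\cong\GGG$ of Theorem \ref{teo: metrica su G} (which is all the paper says about them), and the third item is proved by the same pointwise computation that $g(\XX,\XX)=-\tfrac{4}{(f_1-f_2)^2}df_1(\XX)df_2(\XX)$ vanishes for some real $\XX\neq 0$ exactly when one of the real differentials $d_pf_j$ fails to be injective. Your extra care in spelling out $\Isom(\GGG)\cong\PSL\times\Z_2$ and the rank bound is consistent with the paper's earlier discussion and adds nothing that conflicts with its argument.
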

\begin{proof}
	We only need to prove the last part of the proposition.
	There exists $\XX\in T_x S$ such that $g(\XX,\XX)=0$ if and only if $df_1(\XX)\cdot df_2(\XX)=0$ for some $v$, which holds if and only if one between $f_1$ and $f_2$ is not a local diffeomorphism.
\end{proof}
\begin{Example}
	The hyperbolic plane $\Hy^2$ in the upper half-plane model admits the isometric immersion $z \mapsto (z,\overline z)$ into $\GGG$ as described in the proof of Theorem \ref{teo: metrica su G}.
	
	Consider the immersion of $S^2$ given by \begin{align*}
		(f_1,f_2)\colon S^2 \approx \overline \C &\to \GGG\\
		z &\mapsto (z,-\frac{1}{\overline z})
	\end{align*}
	which in fact embeds $S^2$ into the graph of the antipodal map. The pull-back metric is given by
	\[(f_1,f_2)^* \inners= - \frac{4}{(1+ |z|^2)^2} dzd\overline{z}
	\]
	which coincides with the negative definite space form of curvature $-1$, namely $-S^2$, the sphere equipped with the opposite of the standard elliptic metric.
	
	Another example is given by the 2-dimensional Anti-de Sitter space ${AdS^2}$ which is isometric to $S^1\times S^1\setminus \Delta\subset \GGG$. 
\end{Example}

\section{Connections with immersions into pseudo-Riemannian space forms}
\label{sec pseudoRiemannian hrm}

Recall that in Section \ref{section: uniqueness pseudoRiem space forms} we defined $\mathbb F_{-1}^{m_1,m_2}$ as the pseudo-Riemannian space form of dimension $n:=m_1+m_2$, constant sectional curvature $-1$ and signature $(m_1,m_2)$. From now on, let us denote $\mathbb F^{m_1,m_2}:= \mathbb F_{-1}^{m_1,m_2}$.

As we already observed in Remark \ref{rmk: pseudo riemannian immerge in Xn}, there exists a unique isometric immersion of $\mathbb{F}^{m_1,m_2}$ into $\mathbb X_{n}$ up to composition with ambient isometries; moreover, we constructed in \ref{rmk Super hRm orientation}.\ref{rmk Super hRm immersion pseudo-Riemannian} an explicit isometric immersion $\iota\colon\mathbb{F}^{m_1,m_2} \to \mathbb X_{n}$.

As we mentioned in Section \ref{section: Gauss-Codazzi pseudoRiem}, there exists a general theory of immersions of hypersurfaces into $\mathbb F^{m_1,m_2}$, leading to the Gauss-Codazzi Theorem \ref{Gauss-Codazzi pseudo-Riem}. By composition with the isometric immersion $\iota\colon \mathbb F^{m_1,m_2}\to \mathbb X_n$, one can see an immersion $\sigma$ into $\mathbb F^{m_1,m_2}$ as an immersion $\overline \sigma=\iota\circ \sigma$ into $\mathbb X_n$ and, in a sense, the extrinsic geometry of the latter extends the one of the former. Let us look at this with more attention.

\begin{itemize}
	\item The map $d\iota$ induces a canonical bundle inclusion of $\sigma^* T \mathbb F^{m_1,m_2}$ into $\overline\sigma^*T \mathbb X_{n}$: in fact, since $Span_\C (d\iota(T_{ x} \mathbb F^{m_1,m_2}) )= T_{\iota( x)} \mathbb X_n$, one has
	\[
	\sigma^* T \mathbb F^{m_1,m_2} \otimes_\R \C \cong \bar \sigma^* T \mathbb X_n.
	\] Since $\iota$ is isometric, the pull-back Levi-Civita connections coincide on $\sigma^* T \mathbb F^{m_1,m_2}$.
	
	\item Let $g=\overline\sigma^*\inners_{\XXX_n}=\sigma^*\inners_{m_1,m_2}$. Recall that in Section \ref{section: Gauss-Codazzi pseudoRiem} we distinguished cases $(a)$ and $(b)$ according to the signature of $g$, respectively of type $(m_1-1, m_2)$ or $(m_1, m_2-1)$. A local normal vector field $\nu$ has squared norm $+1$ in case $(a)$ and squared norm $-1$ in case $(b)$. 
	
	Observe that in case $(a)$, $d\iota(\nu)=:\nu'$ is a normal vector field for $\overline \sigma$. Similarly, in case $(b)$, $i(d\iota)(\nu)=:\nu'$ is norm-$1$ and is a normal vector field for $\overline \sigma$.
	
	\item The exterior derivative of the local normal vector field defines a shape operator $B$, which coincides to the shape operator $\Psi$ induced by $\overline \sigma$ in case $(a)$, and which satisfies $\Psi=i B$ in case $(b)$.

	\item Recall Theorem \ref{Gauss-Codazzi pseudo-Riem} and the notation $\delta=-1$ in case $(a)$ and $\delta=+1$ in case $(b)$.	
	
	If $\sigma\colon M\to \mathbb F^{m_1, m_2}$ is an admissible immersion with pull-back metric $\I$ and shape operator $B$, then $\overline\sigma\colon M\to \mathbb X_n$ is an admissible immersion with immersion data $(g,\Psi)=(g,\delta B)$. On the other hand, the data $(g, \Psi)$ uniquely determines the immersion of $\widetilde M$ into $\mathbb X_n$ up to ambient isometry. Since also $\iota$ is the unique isometric immersion of $\mathbb F^{m_1,m_2}$ into $\mathbb X_n$ up to ambient isometry, one can conclude the following.
\end{itemize}
\begin{Theorem}
	\label{da X_n a space forms}
	Let $M=M^{n}$, $(g,\Psi)$ be immersion data for a $\pi_1(M)$-equivariant immersion of $\widetilde M$ into $\mathbb X_{n+1}$.
	Assume that $g$ is real, namely that $g|_{TM}$ is pseudo-Riemannian, and has signature $(m_1,m_2)$, with $m_1+m_2=n$. 
	
	Then, if $\Psi$ is real, i.e. if $\Psi$ restricts to a bundle homomorphism $\Psi\colon TM\to TM$, there exists an isometric $\pi_1(M)$-equivariant immersion $\sigma\colon \widetilde M\to \mathbb X_n$ such that $\sigma( \widetilde M)\subset \iota(\mathbb F^{m_1+1,m_2})$.
	
	Similarly, if $i\Psi$ is real, then there exists an isometric $\pi_1(M)$-equivariant immersion $\sigma\colon \widetilde M\to \mathbb X_n$ such that $\sigma( \widetilde M)\subset \iota(\mathbb F^{m_1,m_2+1})$.
\end{Theorem}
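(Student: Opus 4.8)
The plan is to recognize the holomorphic immersion data $(g,\Psi)$ as the data of a genuine hypersurface immersion of $\widetilde M$ into the pseudo-Riemannian space form $\mathbb{F}^{m_1+1,m_2}$ (in the case $\Psi$ real) or $\mathbb{F}^{m_1,m_2+1}$ (in the case $i\Psi$ real), and then to push that immersion into $\XXX_{n+1}$ through the totally geodesic map $\iota$ of Equation \eqref{eq: quadrica pseudo-Riem dentro X_n}. First I would treat the two cases by setting $B:=\Psi$ when $\Psi$ is real, and $B:=-i\Psi$ when $i\Psi$ is real, so that in both cases $B$ is an honest real $g|_{TM}$-self-adjoint $(1,1)$-tensor with $\Psi=B$ (resp. $\Psi=iB$). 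The target space form is dictated by the type of unit normal needed to cut out a hypersurface of signature $(m_1,m_2)$: a spacelike normal (norm $+1$) realizes $(m_1,m_2)$ inside $\mathbb{F}^{m_1+1,m_2}$, while a timelike normal (norm $-1$) realizes it inside $\mathbb{F}^{m_1,m_2+1}$.

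The central step is to verify that the holomorphic Gauss-Codazzi equations \eqref{GC1}-\eqref{GC2} satisfied by $(g,\Psi)$ restrict exactly to the pseudo-Riemannian Gauss-Codazzi equations of Theorem \ref{Gauss-Codazzi pseudo-Riem} for the pair $(g|_{TM},B)$. Since $g$ is real, a real $g|_{TM}$-orthonormal frame $(X_i)$ of $TM$ is simultaneously a $\C$-orthonormal frame of $\CTM$; and because the Levi-Civita connection of the complex metric $g$ is the $\C$-linear extension of the Levi-Civita connection of $g|_{TM}$ (the remark following Proposition \ref{prop: LC di complex metrics}), its curvature tensor $R$ restricts on real vectors to the curvature of $g|_{TM}$. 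Thus Codazzi $d^\nabla\Psi\equiv 0$ becomes $d^\nabla B\equiv 0$. For the Gauss equation one matches signs: \eqref{GC2} reads $R(X_i,X_j,\cdot,\cdot)-\Psi^i\wedge\Psi^j=-\theta^i\wedge\theta^j$, while Theorem \ref{Gauss-Codazzi pseudo-Riem} with $k=-1$ requires $R+\delta\,B^i\wedge B^j=-\theta^i\wedge\theta^j$. When $\Psi=B$ is real these coincide precisely for $\delta=-1$, i.e. the spacelike case, giving $\mathbb{F}^{m_1+1,m_2}$; when $\Psi=iB$ one has $\Psi^i\wedge\Psi^j=i^2\,B^i\wedge B^j=-B^i\wedge B^j$, so \eqref{GC2} becomes $R+B^i\wedge B^j=-\theta^i\wedge\theta^j$, matching $\delta=+1$, the timelike case, giving $\mathbb{F}^{m_1,m_2+1}$. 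This sign bookkeeping — in particular the way the factor $i$ in $\Psi=iB$ toggles the sign of $\Psi^i\wedge\Psi^j$ and hence switches between the two admissible signatures — is the delicate point, and it is exactly what makes the two assertions of the statement genuinely distinct.

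With the equations checked, Theorem \ref{Gauss-Codazzi pseudo-Riem} supplies a $\pi_1(M)$-equivariant isometric immersion $\sigma_0\colon(\widetilde M,\widetilde g)\to\mathbb{F}^{m_1+1,m_2}$ (resp. $\mathbb{F}^{m_1,m_2+1}$) with shape operator $B$. I then set $\sigma:=\iota\circ\sigma_0$, so that by construction $\sigma(\widetilde M)\subset\iota(\mathbb{F}^{m_1+1,m_2})$ (resp. $\iota(\mathbb{F}^{m_1,m_2+1})$). It remains to read off, exactly as in the itemized discussion preceding the statement, that $\sigma$ is an admissible isometric immersion into $\XXX_{n+1}$ with induced complex metric $g$ and shape operator $\Psi$: pushing the spacelike unit normal $\nu$ of $\sigma_0$ forward gives a unit normal $d\iota(\nu)$ of $\sigma$, whence $\Psi=B$; pushing the timelike unit normal forward and rescaling by $i$ gives a unit normal $i\,d\iota(\nu)$ of $\sigma$, whence $\Psi=iB$; in both cases the second fundamental forms match. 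Equivariance transports from $\sigma_0$ to $\sigma$ because every isometry of $\mathbb{F}^{m_1+1,m_2}$ extends $\C$-linearly to an isometry of $\XXX_{n+1}$ preserving $\iota(\mathbb{F}^{m_1+1,m_2})$, yielding a monodromy $\pi_1(M)\to\Isom_0(\XXX_{n+1})\cong SO(n+2,\C)$. Finally, since $(g,\Psi)$ determine the immersion of $\widetilde M$ into $\XXX_{n+1}$ uniquely up to post-composition with an element of $\Isom_0(\XXX_{n+1})$ (Theorem \ref{Teoremone}), the immersion $\sigma$ just constructed is, up to ambient isometry, the immersion defining the data, and it has image contained in $\iota(\mathbb{F}^{m_1+1,m_2})$ (resp. $\iota(\mathbb{F}^{m_1,m_2+1})$), which is the desired conclusion.
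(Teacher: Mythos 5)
Your proposal is correct and follows essentially the same route as the paper: the paper's own argument (the itemized discussion in Section \ref{sec pseudoRiemannian hrm}) likewise identifies $(g|_{TM},B)$ with $B=\Psi$ or $B=-i\Psi$ as pseudo-Riemannian immersion data for $\mathbb F^{m_1+1,m_2}$ or $\mathbb F^{m_1,m_2+1}$ according to whether the normal $d\iota(\nu)$ or $i\,d\iota(\nu)$ is used, and then concludes by the uniqueness statements of Theorems \ref{Gauss-Codazzi pseudo-Riem} and \ref{Teoremone}. Your explicit sign verification (the factor $i$ flipping $\Psi^i\wedge\Psi^j$ and hence toggling $\delta$) is exactly the bookkeeping the paper leaves implicit, and it is consistent with the paper's conventions ($\delta=-1$ in the spacelike case $(a)$, $\delta=+1$ in the timelike case $(b)$).
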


Let us state, as a corollary, how the last theorem can be restated for immersions into $\SL$.

\begin{Theorem}
	\label{teo GC passando da pseudoRiem SL}
	Let $\sigma\colon S\to \SL$ be an admissible immersion with pull-back metric $g$ and shape operator $\Psi$.
	\begin{itemize}
		\item $\sigma(S)$ is contained in the image of an isometric embedding of $\Hy^3$ if and only if $g$ is Riemannian and $\Psi$ is real.
		\item 
		$\sigma(S)$ is contained in the image of an isometric embedding of $AdS^3$ if and only if either $g$ is Riemannian and $i\Psi$ is real, or if $g$ has signature $(1,1)$ and $\Psi$ is real.
		\item $\sigma(S)$ is contained in the image of an isometric embedding of $-dS^3$ if and only if either $g$ has signature $(1,1)$ and $i\Psi$ is real, or if $g$ is negative definite and $\Psi$ is real.
		\item $\sigma(S)$ is contained in the image of an isometric embedding of $-S^3$ if and only if $g$ is negative definite and $i\Psi$ is real. 
	\end{itemize}
\end{Theorem}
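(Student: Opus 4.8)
The plan is to deduce this statement directly from Theorem \ref{da X_n a space forms} specialized to $n=2$, using the isometry $F\colon\mathbb X_3\to\SL$ (Equation \eqref{iso C4 Mat(2,C)}), which identifies an admissible immersion $\sigma\colon S\to\SL$ with an admissible immersion into $\mathbb X_{n+1}=\mathbb X_3$ carrying immersion data $(g,\Psi)$. First I would record the dictionary between the three-dimensional pseudo-Riemannian space forms of curvature $-1$ and the notation $\mathbb F^{m_1,m_2}$: namely $\Hy^3=\mathbb F^{3,0}$ and $AdS^3=\mathbb F^{2,1}$ (Remark \ref{rmk: Q e quasi sempre s.c.}), together with $-dS^3=\mathbb F^{1,2}$ and $-S^3=\mathbb F^{0,3}$, the latter two following from the fact that reversing the sign of a metric reverses both the sign of the sectional curvature and the signature.

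For the forward (``if'') direction I would observe that a real complex metric $g$ on the surface $S$ has signature $(2,0)$, $(1,1)$ or $(0,2)$, that is, $(m_1,m_2)$ with $m_1+m_2=2$, and then feed each of the six combinations ``signature of $g$'' against ``$\Psi$ real or $i\Psi$ real'' into Theorem \ref{da X_n a space forms}. Concretely, when $\Psi$ is real the image lands in $\iota(\mathbb F^{m_1+1,m_2})$, while when $i\Psi$ is real it lands in $\iota(\mathbb F^{m_1,m_2+1})$; translating through the dictionary reproduces exactly the four bullet points. For instance $g$ of signature $(1,1)$ with $\Psi$ real gives $\iota(\mathbb F^{2,1})=\iota(AdS^3)$, whereas $g$ of signature $(1,1)$ with $i\Psi$ real gives $\iota(\mathbb F^{1,2})=\iota(-dS^3)$, which is why $AdS^3$ and $-dS^3$ each arise from two distinct signatures of $g$.

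For the converse (``only if'') direction I would argue by factorization. If $\sigma(S)$ lies in the image of an isometric embedding of one of the four space forms $\mathbb F^{m_1',m_2'}$, then $\sigma=\iota\circ\sigma_0$ for an admissible hypersurface immersion $\sigma_0\colon S\to\mathbb F^{m_1',m_2'}$; since $\iota(\mathbb F^{m_1',m_2'})$ is a totally geodesic real form of $\mathbb X_3$ (Remark \ref{rmk Super hRm}.\ref{rmk Super hRm immersion pseudo-Riemannian}), the pull-back $g=\sigma^*\inners$ restricts to a real (pseudo-Riemannian) metric on $TS$. Then I would invoke the case analysis recalled just before Theorem \ref{da X_n a space forms}: the induced metric $\I$ on the hypersurface has signature $(m_1'-1,m_2')$ in case $(a)$, with $\Psi=B$ real, or $(m_1',m_2'-1)$ in case $(b)$, with $\Psi=iB$, so that $i\Psi$ is real. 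Reading off the admissible signatures for each $\mathbb F^{m_1',m_2'}$, and discarding the impossible subcases (case $(b)$ for $\Hy^3=\mathbb F^{3,0}$ and case $(a)$ for $-S^3=\mathbb F^{0,3}$), yields precisely the stated necessary conditions and hence the equivalence.

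Since the content is entirely a bookkeeping reduction to results already proved, the only genuine obstacle is organizing the cases without sign or signature errors: in particular keeping track of the two subcases $(a)$ and $(b)$ that make $AdS^3$ and $-dS^3$ each reachable from two different signatures of $g$, and correctly handling the sign reversal that turns the positively-curved $dS^3$ and $S^3$ into the isometrically embeddable $-dS^3$ and $-S^3$.
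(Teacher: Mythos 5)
Your proposal is correct and follows exactly the route the paper intends: the paper states this theorem as an immediate corollary of Theorem \ref{da X_n a space forms} (via the dictionary $\Hy^3=\mathbb F^{3,0}$, $AdS^3=\mathbb F^{2,1}$, $-dS^3=\mathbb F^{1,2}$, $-\Sph^3=\mathbb F^{0,3}$ and the case $(a)$/$(b)$ analysis of Section \ref{sec pseudoRiemannian hrm}), which is precisely the bookkeeping you carry out. Your signature and sign checks in both directions are accurate.
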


\section{From immersions into $\Hy^3$ to immersions into $\GGG$}
\label{sec metrica complessa indotta in G}
Given $\sigma\colon \widetilde S\to \Hy^3$ a
$\pi_1(S)$-equivariant immersion with corresponding immersion data $(\I, B)$ on $S$ and normal field $\nu$, one can define the immersion 
\[
G_\sigma\colon \widetilde S\to \GGG
\]
where $G_\sigma(x)$ is the oriented maximal geodesic of $\Hyp^3$ tangent to $\nu(x)$. In the identification $\GGG=\CP^1 \times \CP^1\setminus \Delta$, one has $G_\sigma= (G_\sigma^+, G_\sigma^-)$ corresponding to the endpoints of the geodesic rays starting at $\sigma(x)$ with tangent directions respectively $\nu(x)$ and $-\nu(x)$. We will call $G_\sigma$ the \emph{Gauss map} of $\sigma$. 
We notice that the map $G_\sigma$ is $\pi_1(S)$-equivariant with the same monodromy as $\sigma$. 

We will extensively discuss Gauss maps of hypersurfaces in Part \ref{parte Seppi}. We anticipate the discussion here in order to get several examples of immersions into $\GGG\cong \XXX_2$, and, as a consequence, of complex metrics of constant curvature $-1$ on surfaces.

The aim of this section is to prove the following formula for the pull-back metric for $G_\sigma$.
\begin{Prop}
	\label{Prop metrica in G}
	Let $J$ be the complex structure induced by $h$ and inducing the same orientation as $\nu$. Under the notations above, the pull-back $h=G_\sigma^* \inners_\GGG$ descends to the complex bilinear form on $\C  TS$ defined by
	\begin{align*}
		h &= \I((id - i  JB)\cdot, (id - i  JB)\cdot )=\\
		&= \I- \I(B\cdot, B\cdot)+ i \I( (   JB- B   J)\cdot, \cdot)
	\end{align*}
	which is non degenerate (i.e. a complex metric) in $x$ if and only if $K_{\I} (x)\ne 0$ 
\end{Prop}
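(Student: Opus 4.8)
The plan is to factor the Gauss map through the unit tangent bundle and to transport the computation of $h$ onto $\widetilde S$ by means of the para-Sasaki metric. Concretely, I would introduce the lift $\zeta\colon\widetilde S\to T^1\Hy^3$, $\zeta(x)=(\sigma(x),\nu(x))$, so that $G_\sigma=\mathrm p\circ\zeta$. Working in the $\R^{3,1}\times\R^{3,1}$ model of $T^1\Hy^3$ and using that the ambient derivative of $\nu$ along $\sigma$ is automatically tangent to $\Hy^3$ and equals $\overline\nabla_X\nu=-d\sigma(BX)$ (the defining property $\Psi=-\overline\nabla\nu$ of the shape operator), the first step is to show, with respect to the splitting \eqref{eq:direct sum},
\[
d\zeta(X)=\bigl(d\sigma(X)\bigr)^{\HH}-\bigl(d\sigma(BX)\bigr)^{\V}\in\chi^{\perp}.
\]
The crucial feature is that $d\zeta(X)$ has no component along $\chi$, so $d\mathrm p$ identifies it with $dG_\sigma(X)$ compatibly with the push-forward construction of the metric $\GG_3$ and of the structures $\JJ$, $\JJJ$ on $\GGG$.

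For the real part this is then immediate: invoking the relation $\GG_3=Re(\inners_\GGG)$ (established elsewhere in the thesis) together with the explicit para-Sasaki formula \eqref{eq: Sas di T1Hn}, I would compute
\[
Re(h)(X,Y)=\gs{3}\bigl(d\zeta(X),d\zeta(Y)\bigr)=\inner{d\sigma X,d\sigma Y}-\inner{d\sigma BX,d\sigma BY}=\I(X,Y)-\I(BX,BY),
\]
which is exactly the real part $\I-\I(B\cdot,B\cdot)$ of the stated expression.

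The hard part is the imaginary part, and it is really the identification of the \emph{genuine} complex structure $\JJJ$ of $\GGG\cong\XXX_2$ in terms of the horizontal/vertical decomposition. Here I would prove that, read through $d\mathrm p|_{\chi^\perp}$, the structure $\JJJ$ corresponds to the endomorphism $w^{\HH}\mapsto (jw)^{\V}$, $w^{\V}\mapsto (jw)^{\HH}$, where $j$ is the rotation by a right angle on $\nu^{\perp}\subset T_{\sigma(x)}\Hy^3$ determined by the metric and the orientation; equivalently $\JJJ=\JJ\circ\hat j$, with $\hat j$ the fibrewise $j$ and $\JJ$ the para-complex structure swapping $\HH$ and $\VP$. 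Granting this, and writing $Im(\inners_\GGG)$ in terms of $\GG_3$ and $\JJJ$ via $\C$-bilinearity, the same horizontal/vertical bookkeeping together with $d\sigma\circ J=j\circ d\sigma$ yields $Im(h)(X,Y)=\I\bigl((JB-BJ)X,Y\bigr)$. I expect this structural identification of $\JJJ$ to be the main obstacle, since it is the only point that genuinely uses how the complex structure of the space of geodesics interacts with the geodesic flow rather than just the real metric $\GG_3$.

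It then remains to assemble the two parts and to treat non-degeneracy. A direct expansion, using that $J$ is an $\I$-isometry and $\I$-skew-adjoint and that $B$ is $\I$-self-adjoint, gives
\[
\I\bigl((\mathrm{id}-iJB)X,(\mathrm{id}-iJB)Y\bigr)=\I(X,Y)-\I(BX,BY)+i\,\I\bigl((JB-BJ)X,Y\bigr),
\]
the sign of the cross term being fixed by the orientation convention defining $J$; this matches both displayed expressions for $h$. Finally, since $h(X,Y)=\I\bigl((\mathrm{id}-iJB)X,(\mathrm{id}-iJB)Y\bigr)$ with $\I$ non-degenerate, $h$ is non-degenerate at $x$ if and only if $\mathrm{id}-iJB$ is invertible there. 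Using the two-dimensional identity $\det(\mathrm{id}-iJB)=1-i\,\mathrm{tr}(JB)-\det(JB)$ together with $\mathrm{tr}(JB)=0$ and $\det J=1$, and then the Gauss equation for a surface in $\Hy^3$, namely $K_{\I}=-1+\det B$, I obtain
\[
\det(\mathrm{id}-iJB)=1-\det B=-K_{\I},
\]
so that $h$ is a complex metric at $x$ precisely when $K_{\I}(x)\neq 0$, completing the proof.
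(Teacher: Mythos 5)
Your strategy coincides with the paper's up to the computation of the lift, $d\zeta_\sigma(X)=(d\sigma(X))^{\HH}-(d\sigma(BX))^{\V}\in\chi^\perp$ (this is exactly Lemma \ref{Lemma 1 metrica pull back G}), and your closing non-degeneracy argument via $\det(\mathrm{id}-iJB)=1-\operatorname{tr}(JB)\,i-\det B=1-\det B=-K_{\I}$ is an equivalent, slightly cleaner version of the paper's computation of the Gram determinant $(1-\lambda_1\lambda_2)^2$ in an eigenbasis of $B$. (The sign of the cross term $\I((JB-BJ)\cdot,\cdot)$ depends on orientation conventions and does not affect the determinant, so it is immaterial there.) The divergence, and the problem, is in how you evaluate $\inners_\GGG$ on $d\mathrm p(\chi^\perp)$.

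Two issues. First, a circularity: you obtain the real part of $h$ by "invoking the relation $\GG_3=Re(\inners_\GGG)$ (established elsewhere in the thesis)". In the thesis that relation is Proposition \ref{prop: pull-back Re hRm}, whose proof starts precisely from Equation \eqref{eq: Re gauss map}, which is itself deduced from Proposition \ref{Prop metrica in G} together with \eqref{eq:fff gauss}; so you cannot use it here without first proving it independently (which is possible, e.g.\ by comparing $\gs{3}|_{\chi^\perp}$ with the real part of the chart formula \eqref{eq: metrica su G} at one point and using $\PSL$-invariance, but that is not what you wrote). Second, the identification of the genuine complex structure $\JJJ$ of $\GGG$ with $\pm\JJ\circ\hat\jmath$ on $\chi^\perp$, which you rightly single out as the crux and which carries the entire imaginary part, is asserted rather than proved; it is essentially as hard as the statement being established, since it encodes how the complex structure of $\CP^1\times\CP^1\setminus\Delta$ interacts with the horizontal/vertical splitting. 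The paper sidesteps both points with one bare-hands computation (Lemma \ref{Lemma 2 metrica pull back G}): it places $(x,v)$ at a standard point of the half-space model, differentiates the four one-parameter isometry groups $\exp(tV),\exp(itV),\exp(tW),\exp(itW)$ to compute $d_{(x,v)}\mathrm p$ explicitly on a basis of $v^\perp\oplus v^\perp$, and plugs the result into $\inners_\GGG=-\frac{4}{(z_1-z_2)^2}\,dz_1\cdot dz_2$, obtaining real and imaginary parts simultaneously with no appeal to $\GG_3=Re(\inners_\GGG)$ or to an abstract description of $\JJJ$. To make your route rigorous you would need to actually prove the claim about $\JJJ$ (for instance by showing that the $\pm i$-eigenspaces of $\JJ\circ\hat\jmath$ project to the kernels of the two factor projections $\G{3}\to\CP^1$); otherwise the explicit computation is the honest way to close the gap.
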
 	

In order to prove the proposition, we regard $\overline\sigma$ as the 
composition of the map $\zeta_\sigma=(\sigma,\nu):\widetilde S\to T^1\Hy^3$ with the natural projection $\mathrm p:T^1\Hy^3\to\GGG$ that sends each vector to the unique tangent geodesic.

First of all, recall that in Section \ref{sec geodesic flow on T1Xn} we observed that the tangent space to $T_{(x,v)}(T^1\Hy^3)$ is naturally identified with $T_x\Hy^3\oplus v^\perp= \mathcal H\oplus \mathcal V^0$. Under this decomposition, one has that for a given  path $\alpha:(-\epsilon, \epsilon)\to T^1\Hy^3$, which can be written as $\alpha(t)=(x(t),v(t))$ with $v(t)$ being a unit vector field along the path $x(t)$, the identification is given by 
\[\dot\alpha(0) = (\dot x(0),\frac{Dv}{dt}(0)).\] Note that, since $v$ is unitary, by differentiating $\inner{v(t), 
	v(t)}_{\Hy^3}=1$ one gets that the vector $\frac{Dv}{dt}(0)$ is orthogonal to 
$v$, so the correspondence is well-posed.

\begin{Lemma}
	\label{Lemma 1 metrica pull back G}
	Under the above identification, for any $p\in\widetilde S$ and $X\in T_p\widetilde S$ we have 
	\[
	(d_p\zeta_\sigma)(X)=(X,-B(X))\,.
	\]
\end{Lemma}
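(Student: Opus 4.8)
The plan is to compute the differential $d_p\zeta_\sigma$ directly by evaluating it on an arbitrary tangent vector $X \in T_p\widetilde S$ and reading off its horizontal and vertical components under the identification $T_{(x,v)}(T^1\Hy^3)\cong \mathcal H \oplus \mathcal V^0 \cong T_x\Hy^3 \oplus v^\perp$ recalled just above the statement. Since $\zeta_\sigma = (\sigma, \nu)$, I would pick a smooth path $\alpha(t)$ in $\widetilde S$ with $\alpha(0)=p$ and $\dot\alpha(0)=X$, and consider the path $\zeta_\sigma(\alpha(t)) = (\sigma(\alpha(t)), \nu(\alpha(t)))$ in $T^1\Hy^3$. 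By the explicit description of the tangent space to $T^1\Hy^3$ given before the lemma, its velocity at $t=0$ is
\[
(d_p\zeta_\sigma)(X) = \Big(\tfrac{d}{dt}\big|_{t=0}\sigma(\alpha(t)),\ \tfrac{D\nu}{dt}\big|_{t=0}\Big) = \big(d_p\sigma(X),\ \tfrac{D\nu}{dt}(0)\big)~.
\]

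First I would identify the horizontal component: $\tfrac{d}{dt}|_{t=0}\sigma(\alpha(t)) = d_p\sigma(X)$, which under the identification $\mathcal H \cong T_{\sigma(p)}\Hy^3$ is simply $X$ itself (viewing $\widetilde S$ as immersed via $\sigma$, and suppressing $d\sigma$ as is customary when $T\widetilde S$ is regarded as a subbundle). The genuine content is the vertical component: the covariant derivative $\tfrac{D\nu}{dt}(0)$ of the unit normal field along the path $\sigma\circ\alpha$. Here I would invoke the definition of the shape operator from the pseudo-Riemannian Gauss-Codazzi setup of Section \ref{section: Gauss-Codazzi pseudoRiem}: the shape operator $B$ of $\sigma$ satisfies $B(X) = -\overline\nabla_X \nu$, where $\overline\nabla$ is the ambient (pulled-back) Levi-Civita connection. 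Since $\nu$ has constant norm, $\overline\nabla_X\nu$ is orthogonal to $\nu$, hence lies in $\nu^\perp = \mathcal V^0$, so it legitimately represents the vertical component with no further projection needed. This gives $\tfrac{D\nu}{dt}(0) = \overline\nabla_X\nu = -B(X)$.

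Combining the two components yields $(d_p\zeta_\sigma)(X) = (X, -B(X))$, which is exactly the claim. The only point requiring care—and the step I expect to be the main (mild) obstacle—is the bookkeeping in the identification of the vertical space: one must check that the covariant derivative used to define the shape operator $B$ is the same as the one implicit in the identification $\dot\alpha(0) = (\dot x(0), \tfrac{Dv}{dt}(0))$ of $T(T^1\Hy^3)$, and that the sign conventions (the $-$ in $B(X) = -\overline\nabla_X\nu$, established in the earlier Proposition on $\Psi = -\overline\nabla\nu$) match. Once these conventions are aligned, the computation is immediate and essentially tautological; no integration or lengthy estimate is involved. I would therefore present it as a short direct verification rather than developing any auxiliary machinery.
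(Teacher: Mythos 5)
Your proof is correct and follows essentially the same route as the paper, which simply observes that the identity is immediate from the definition of the shape operator together with the identification $\dot\alpha(0)=(\dot x(0),\tfrac{Dv}{dt}(0))$ of $T(T^1\Hy^3)$ stated just before the lemma. The sign convention you check is indeed the one used in the paper (Equation \eqref{eq:shape}, $d\sigma\circ B(W)=-D_W\nu$), so the verification is as short as you present it.
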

\begin{proof}
	The proof is trivial by definition of the shape operator. Also see the proof of Proposition \ref{sec:gauss map}.
\end{proof}
\begin{Lemma}
	\label{Lemma 2 metrica pull back G}
	Let us fix $(x,v)\in T^1\Hy^3$ and $w_1,w_2\in v^\perp$. Then
	\begin{align*}
		\big\langle(d_{(x,v)}\mathrm p)(w_1,w_2), &(d_{(x,v)}\mathrm p)(w_1,w_2)\big \rangle _{\GGG}=\\
		&\inner{w_1, w_1}_{\Hyp^3}-\inner{w_2,w_2}_{\Hyp^3}+i (\inner{w_1, v\times w_2}_{\Hyp^3}-\inner{w_2, v\times w_1}_{\Hy^3}),
	\end{align*}
	where $\times$ is the vector product on $T\Hy^3$.  
\end{Lemma}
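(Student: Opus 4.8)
The plan is to compute the pull-back metric $\inners_\GGG$ under the differential of $\mathrm{p}$ by exploiting the explicit description of the para-Sasaki metric and the identification $\GGG\cong\XXX_2$. First I would recall that, by Theorem \ref{teo: metrica su G} together with the anticipated Proposition on $\GG_3=Re(\inners_\GGG)$ (and its imaginary counterpart), the holomorphic Riemannian metric $\inners_\GGG$ decomposes into a real part, which is the neutral metric $\GG_3$ defined in Section \ref{sec:parakahler metric GG}, and an imaginary part, which should be recognized as $\GG_3(\cdot,\JJ\cdot)=\Omega$. The key point is that the differential $d_{(x,v)}\mathrm{p}$ restricted to $\chi^\perp=\HP_{(x,v)}\oplus\VP_{(x,v)}$ is, by construction, an isometry onto $(T_\ell\GGG,\GG_3)$, and it intertwines the endomorphism $J$ on $\chi^\perp$ with the para-complex structure $\JJ$ on $T_\ell\GGG$.

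Concretely, I would write a tangent vector at $(x,v)$ as $w_1^\HH+w_2^\V=(w_1,w_2)$ with $w_1,w_2\in v^\perp$, and use Remark \ref{rmk other metric2} to present the real metric via $\gstar{n}$, namely $\GG_3$ is the push-forward of $\gs 3|_{\chi^\perp}$, so that
\[
Re\big\langle d\mathrm{p}(w_1,w_2),d\mathrm{p}(w_1,w_2)\big\rangle_\GGG=\gs{3}(w_1^\HH+w_2^\V,w_1^\HH+w_2^\V)=\inner{w_1,w_1}_{\Hyp^3}-\inner{w_2,w_2}_{\Hyp^3},
\]
using Definition \ref{Def:parasasaki} and the orthogonality of $\HP$ and $\VP$. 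This immediately yields the real part of the claimed formula. For the imaginary part, I would use $Im\langle\cdot,\cdot\rangle_\GGG=Re(-i\langle\cdot,\cdot\rangle_\GGG)$ together with the compatibility $-i\langle X,X\rangle_\GGG=\langle X,\JJ X\rangle_{\GG}=\Omega(X,\JJ X)$-type identities; pushing back to $\chi^\perp$, the imaginary part becomes $\gs 3(J(w_1^\HH+w_2^\V),w_1^\HH+w_2^\V)$ up to sign, where $J(\dot x,\dot v)=(\dot v,\dot x)$ swaps horizontal and vertical lifts by \eqref{eq:defiJ2}. Computing $\gs 3(J(w_1,w_2),(w_1,w_2))$ via Definition \ref{Def:parasasaki} and the fact that $J$ exchanges $\HP$ and $\VP$ produces cross terms between $w_1$ and $w_2$ of the form $\inner{w_1,w_2}$, which must then be rewritten using the vector product so as to match $\inner{w_1,v\times w_2}-\inner{w_2,v\times w_1}$.

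The main obstacle I expect is matching the imaginary part precisely, including the correct identification of the para-complex structure $\JJ$ with multiplication by $i$ under the isometry $\GGG\cong\XXX_2$ and the appearance of the vector product $v\times\cdot$ on $T\Hyp^3$. The subtlety is that $J$ as defined on $\chi^\perp$ simply swaps the two factors $w_1\leftrightarrow w_2$, whereas the target expression involves $v\times w_i$; these agree because, on the eigenspaces of $\JJ$ inside $T_\ell\GGG\cong T_\ell\XXX_2$, multiplication by $i$ acts as the complex structure of $\partial\Hyp^3\cong\CP^1$, which on $v^\perp\subset T_x\Hyp^3$ is exactly the rotation $w\mapsto v\times w$. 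I would therefore verify, by choosing an adapted orthonormal basis of $v^\perp$ with $w,\,v\times w$ and tracking how $d\mathrm{p}$ sends $w^\HH,\,w^\V$ to the $\pm1$-eigenspaces of $\JJ$, that the swap $J$ corresponds to the rotation $v\times\cdot$; this geometric dictionary is what converts the naive cross terms $\inner{w_1,w_2}$ into the antisymmetric expression with the vector product. Once this correspondence is pinned down, the remaining computation is a routine bilinear expansion, and combining it with the previous lemma on $d\zeta_\sigma(X)=(X,-B(X))$ will give Proposition \ref{Prop metrica in G} by substituting $w_1=X$, $w_2=-B(X)$.
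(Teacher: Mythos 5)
There are two genuine problems with this plan. First, it is circular as stated: the identity $\GG_3=Re\big(\inners_\GGG\big)$ is Proposition \ref{prop: pull-back Re hRm}, which in the paper is \emph{deduced from} Proposition \ref{Prop metrica in G} (via Equation \eqref{eq: Re gauss map}), and Proposition \ref{Prop metrica in G} is exactly what this lemma is feeding into. So you cannot invoke that "anticipated Proposition" here without supplying an independent proof of it; the logical order runs the other way.

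Second, and more seriously, the treatment of the imaginary part would fail. You propose to recognize $Im\inners_\GGG$ as $\GG(\cdot,\JJ\cdot)=\Omega$ and to identify the para-complex structure $\JJ$ with multiplication by $i$ under $\GGG\cong\XXX_2$. These cannot be identified: $\JJ^2=+\mathbbm 1$ while multiplication by $i$ squares to $-\mathbbm 1$. For a holomorphic Riemannian metric one has $Im\inner{X,Y}=-Re\inner{\JJJ X,Y}$ where $\JJJ$ is the \emph{complex} structure of $\GGG$; this is a symmetric bilinear form, whereas $\Omega$ is antisymmetric. If the imaginary part were $\Omega$, then $Im\,\|d\mathrm p(w_1,w_2)\|^2_\GGG=\Omega\big(d\mathrm p(w_1,w_2),d\mathrm p(w_1,w_2)\big)=0$, contradicting the statement, whose imaginary part equals $2\inner{w_1,v\times w_2}$ and is generically nonzero. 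Your remark that $i$ acts on $v^\perp$ as $w\mapsto v\times w$ is the right ingredient for where the cross product comes from, but it has to be combined with $\JJJ$, not with the horizontal/vertical swap $J$. For comparison, the paper avoids all of this by a direct computation: in the half-space model with $x=(0,1)$, $v=(1,0)$, it uses four explicit one-parameter groups of isometries (hyperbolic translations and rotations about two orthogonal axes through $x$) to compute $d_{(x,v)}\mathrm p$ on a basis of $\chi^\perp$, and then evaluates the explicit local expression $\inners_\GGG=-\tfrac{4}{(z_1-z_2)^2}dz_1\cdot dz_2$ on the resulting vectors. If you want to salvage your structural approach, you would need to prove $Re\inners_\GGG=\GG$ independently (e.g.\ by $\PSL$-invariance and a normalization at one point) and then compute $Im\inners_\GGG=-\GG(\JJJ\cdot,\cdot)$ by identifying $\JJJ$ on $\chi^\perp$ explicitly; as written, the argument does not go through.
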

\begin{proof}
	We consider the half-space model of $\Hy^3=\C\times\R^{+}$. Set 
	$x=(0,1)$ and, in the identification $T_x\Hy^3=\C\times\R$, set $v=(1,0)$. As a result, $v^\perp= Span_\R((i,0),(0,1)) <T_x \Hy^3$, we can canonically see $\partial \Hy^3=\overline \C$ and $\mathrm p(x,v) =(1,-1)$. The tangent space $T_{(1,-1)} \GGG$ can be trivially identified with $T_1 \C\times T_{-1}\C\cong \C \times \C$.

	Let us consider the following $1$-parameter groups of isometries of $\Hy^3$
	\[
	\begin{matrix}
		a(t)= \exp(tV), & b(t)=\exp(itV),\\
		c(t)=\exp(tW), & d(t)=\exp(itW)\,.
	\end{matrix}
	\]
	where $V,W\in\asl$ are defined as  $V=\begin{pmatrix}1/2 & 0\\0&-1/2\end{pmatrix}$, and $W=\begin{pmatrix}0 & -i/2\\i/2 & 0\end{pmatrix}$.
	
	Notice that $a(t)$ and $c(t)$ are groups of hyperbolic transformations with axis respectively $(0,\infty)$ and $(i,-i)$. On the other hand, $b(t)$ and $d(t)$ are pure rotations around the corresponding axes. We have chosen the normalization so that the translation lengths of $a(t)$ and $c(t)$ equal $t$, and so that the rotation angle of $b(t)$ and $d(t)$ is $t$.
	Observe that $x$ lies on all the axes of  $a(t), b(t), c(t), d(t)$.
	
	It follows that $t\to a(t)\cdot v$ is a parallel vector field along the axis of $a(t)$, so  the derivative of $a(t)\cdot(x, v)$ corresponds under the natural identification to the vector $((0,1), (0,0))$. On the other hand, the variation of the endpoints of the family of geodesics $a(t)\cdot (-1,1)\in \GGG$ is given by $(-1,1)\in T_{(-1,1)}\GGG$. Using that the map ${\mathrm p}$ is equivariant under the action of $\PSL$, we conclude that
	\[
	d_{(x,v)}{\mathrm p}((0,1),(0,0))=(-1,1)\,.
	\]
	
	In the same fashion, using $c(t)$ we deduce that
	\[
	d_{(x,v)}{\mathrm p}((-i,0), (0,0))=(-i,-i)\,.
	\]
	
	On the other side, $b(t) \cdot x=x$ for all $t$, therefore one can explcitly compute that
	\[b(t)\cdot v=(d_x b(t)) (v)=\cos t \ v+\sin t\ (0,1)\times v\in T_{x} \Hy^3.\] It follows that the derivative at $t=0$ of $b(t)\cdot(x,v)$ corresponds to $((0,0),(i,0))$.
	We conclude as above that
	\[
	d_{(x,v)}{\mathrm p}((0,0), (i,0))=(-i,i)
	\]
	and analogously for $d(t)$ we get
	\[
	d_{(x,v)}{\mathrm p}((0,0), (0,1))=(1,1).
	\]
	Finally, we can explcitly compute \[d_{(x,v)}{\mathrm p}_{|v^\perp \oplus v^\perp}\colon v^\perp \oplus v^\perp \cong (i\R \times \R^+)\times(i \R \times \R^+)  \to T_{(1,-1)}\GGG\cong \C \times \C \] as
	\begin{equation}
		\label{eq mappa differenziale di d}
	d{\mathrm p}((i\alpha,\beta), (i\gamma, \delta))=((\delta-\beta)+i(\alpha-\gamma),(\delta+\beta)+i(\alpha+\gamma)).
	\end{equation}
	Using the description of the metric as in Equation \ref{eq: metrica su G}, we get that
	\begin{align*}
		||d{\mathrm p}((i\alpha,\beta), (i\gamma, \delta))||_\GG^2&=
		-4\frac{[(\delta-\beta)+i(\alpha-\gamma)][(\delta+\beta)+i(\alpha+\gamma))]}{(1-(-1))^2}=\\
		&= \alpha^2+\beta^2-\gamma^2-\delta^2 - 2i(\alpha\delta-\beta\gamma)=\\
		&= ||(i\alpha,\beta)||^2-||(i\gamma, \delta))||^2 +2i \inner{(i\alpha,\beta), (1,0)\times (i\gamma, \delta))}_{\Hyp^3}
	\end{align*}
	and the thesis follows.
\end{proof}

\begin{proof} [Proof of Proposition $\ref{Prop metrica in G}$]
	The proof follows directly by Lemmas \ref{Lemma 1 metrica pull back G} and \ref{Lemma 2 metrica pull back G}.

	If $(X_1,X_2)$, with $X_2=  J X_1$ is a $\I$-orthonormal frame of eigenvectors for $B$, with corresponding eigenvalues $\lambda_1$ and $\lambda_2$ respectively, the pull-back bilinear form via $G_\sigma$ is described by \[
	h\leftrightarrow \begin{pmatrix}1- \lambda^2_1 & i(\lambda_1 -\lambda_2)\\
		i(\lambda_1-\lambda_2) &1-\lambda^2_2
	\end{pmatrix}
	\]	
	whose determinant is $(1-\lambda_1\lambda_2)^2$: hence, by Gauss equation, $\I$ is a complex metric at $x$ if and only if $K_{\I}(x)\ne0$.
\end{proof}

\chapter{Holomorphic dependence on the immersion data}
\label{Section dipendenza olo}

\section{The main result}
In this chapter we discuss  holomorphic dependence on the immersion data for immersions into $\mathbb X_{n+1}$ and their monodromy. 

Given a smooth manifold $M$  of dimension $n$ and any point $p\in M$, $\C T_p M$ is a complex vector space and provides a natural complex structure to the manifolds $\C T_p ^*M$, $Sym^2 (\C T_p^*M)$ and $End(\C T_p M)=\C T_p M \otimes \C T_p^* M$. 

\begin{Definition}
	\label{def: holomorphic immersion data}
	Given a complex manifold $\Lambda$, we will say that a family of immersion data $\{(g_\lambda, \Psi_\lambda)\}_{\lambda\in \Lambda}$ for $\pi_1(M)$-equivariant immersions of $\widetilde M$ into $\mathbb X_{n+1}$ is \emph{holomorphic} if, for all $p\in M$, the maps
	\begin{equation}
		\label{eq: def g lambda olo}
		\begin{split}
			\Lambda &\to Sym^2 (\C T_p^*M) \\
			\lambda &\mapsto g_\lambda(p) 
		\end{split}
	\end{equation}
	and
	\begin{equation}
		\begin{split}
			\Lambda &\to End(\C T_p M)\\
			\lambda &\mapsto \Psi_\lambda (p)
		\end{split}
	\end{equation}
	are both holomorphic in $\lambda$. 
\end{Definition}

We remark that this definition does not require any complex structure on $M$.

This section is devoted to the proof of the following theorem.
\begin{Theorem}
	\label{Teo dipendenza olomorfa}
	Let $\Lambda$ be a complex manifold and $M$ be a smooth manifold of dimension $n$. 
	
	Let $\{(g_\lambda, \Psi_\lambda)\}_{\lambda\in \Lambda}$ be a holomorphic family of immersion data for $\pi_1(M)$-equivariant immersions $\widetilde M\to \mathbb X_{n+1}$. Then there exists a smooth map
	\[
	\sigma\colon \Lambda \times \widetilde M \to \mathbb X_{n+1}
	\]
	such that, for all $\lambda\in \Lambda$ and $p\in M$:
	\begin{itemize}
		\item $\sigma_\lambda:= \sigma(\lambda, \cdot)\colon \widetilde M \to \mathbb X_{n+1}$ is an admissible immersion with immersion data $(g_\lambda, \Psi_\lambda)$;
		\item $\sigma(\cdot,p)\colon \Lambda \to \mathbb X_{n+1}$ is holomorphic.
	\end{itemize}
	Moreover, for all $\alpha\in \pi_1(M)$, the monodromy map evaluated in $\alpha$
	\begin{align*}
		\Lambda &\to  SO(n+2, \C)\\
		\lambda &\mapsto mon(\sigma_\lambda) (\alpha)
	\end{align*}
	is holomorphic.
\end{Theorem}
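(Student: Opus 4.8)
The plan is to reduce the statement to the construction performed in the proof of Theorem \ref{Teoremone}, but carried out with holomorphic dependence on the parameter $\lambda$ built in from the start. Recall that in \emph{Step 1} of the existence proof one associates to a single immersion datum $(g,\Psi)$ a one-form $\omega \in \Omega^1(U,\Lieonn)$ whose explicit expression is given by Equation \eqref{eq: forma omega phi}, and that $\omega$ satisfies the Maurer--Cartan equation $d\omega + [\omega,\omega]=0$ precisely because $(g,\Psi)$ satisfies Gauss--Codazzi. First I would fix a local $g_\lambda$-orthonormal coframe depending holomorphically on $\lambda$: starting from a frame $(W_j)$ that is $g_{\lambda_0}$-orthonormal at one point and applying the Gram--Schmidt procedure recalled after Proposition \ref{prop: LC di complex metrics}, the resulting frame $(X_j^\lambda)$ and coframe $(\theta^{j}_\lambda)$ vary holomorphically in $\lambda$ because $g_\lambda$ does and the operations involved (inner products, square roots with a fixed branch) are holomorphic. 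The connection forms $\theta^i_j$ and the entries $\Psi^i_\lambda$ are then built algebraically and by differentiation in the $M$-directions out of $g_\lambda$ and $\Psi_\lambda$, so the family $\omega_\lambda \in \Omega^1(U,\Lieonn)$ defined by \eqref{eq: forma omega phi} is smooth on $U$ and holomorphic in $\lambda$.

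The key technical point is then a parametrized version of Lemma \ref{Lemma chiave unicita}: given a holomorphic-in-$\lambda$ family $\omega_\lambda$ of $\Lieonn$-valued one-forms on a simply connected $M$, each satisfying the Maurer--Cartan equation, one obtains a family of maps $\Phi_\lambda\colon \widetilde M \to \SOnn$ with $\omega_\lambda = \Phi_\lambda^*\omega_G$, depending smoothly on $(\lambda,p)$ and holomorphically in $\lambda$ once the initial condition $\Phi_\lambda(p_0)$ is chosen to be holomorphic (e.g. constant). This is exactly the content of the more general Proposition \ref{Teorema esistenza foliazione} announced in the excerpt as a generalization of Lemma \ref{Lemma chiave unicita}; I would invoke it for the trivial principal bundle $(\Lambda\times \widetilde M)\times \SOnn$, integrating the flat Cartan connection determined by the $\omega_\lambda$. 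Concretely one solves the relevant linear ODE system along paths in $\widetilde M$ with $\lambda$ as a holomorphic parameter; since the coefficients are holomorphic in $\lambda$, holomorphic dependence of the solution on $\lambda$ follows from the standard theorem on holomorphic dependence of solutions of ODEs on parameters.

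Having produced $\Phi\colon \Lambda\times\widetilde M\to\SOnn$, I would define
\[
\sigma(\lambda,p) := i\,\Phi_\lambda(p)\cdot v^0_{n+2} \in \mathbb X_{n+1}~,
\]
mimicking \emph{Step 1} of the proof of Theorem \ref{Teoremone}. For each fixed $\lambda$ the computation in that step shows $\sigma_\lambda$ is an isometric immersion with immersion data $(g_\lambda,\Psi_\lambda)$, which gives the first bullet. For the second bullet, holomorphicity of $\sigma(\cdot,p)$ in $\lambda$ is immediate because $\Phi_\lambda(p)$ is holomorphic in $\lambda$ and the evaluation $A\mapsto i A v^0_{n+2}$ is complex linear, hence holomorphic. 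The passage from the local immersion on frame-admitting charts to a global immersion on $\widetilde M$ uses the uniqueness part (Corollary \ref{Corollario Teo unicita}) exactly as in \emph{Step 2}, and since the gluing isometries are determined algebraically and holomorphically by the local data they preserve holomorphic dependence on $\lambda$.

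Finally, for the monodromy: for fixed $\lambda$, $\sigma_\lambda\circ\alpha$ is an isometric immersion with the same immersion data $(g_\lambda,\Psi_\lambda)$ as $\sigma_\lambda$, since $\alpha\in\pi_1(M)$ acts isometrically preserving $\widetilde\Psi_\lambda$; by Proposition \ref{Teo unicita} there is a unique $mon(\sigma_\lambda)(\alpha)\in\SOnn$ with $\sigma_\lambda\circ\alpha = mon(\sigma_\lambda)(\alpha)\circ\sigma_\lambda$. In terms of the integrated frame map this monodromy is $mon(\sigma_\lambda)(\alpha) = \Phi_\lambda(\alpha\cdot p)\,\Phi_\lambda(p)^{-1}$ (independent of $p$ by equivariance of $\omega_\lambda$), and since $\Phi$ is holomorphic in $\lambda$ and matrix multiplication and inversion in $\SOnn$ are holomorphic, the map $\lambda\mapsto mon(\sigma_\lambda)(\alpha)$ is holomorphic. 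The main obstacle, and the step deserving the most care, is the parametrized integration: ensuring that the global solution $\Phi$ on the non-simply-connected-in-$\lambda$ but simply-connected-in-$M$ space $\Lambda\times\widetilde M$ is genuinely holomorphic in $\lambda$ and not merely smooth, which is why I would lean on the holomorphic-dependence-on-parameters theory for the flat connection rather than re-deriving it.
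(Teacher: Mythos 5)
Your proposal is correct and follows essentially the same route as the paper: holomorphic Gram--Schmidt to get a $\lambda$-holomorphic coframe, the form $\omega_\lambda$ from Equation \eqref{eq: forma omega phi}, integration via Proposition \ref{Teorema esistenza foliazione}, the definition $\sigma(\lambda,p)=\Phi(\lambda,p)\cdot e$, and the same uniqueness-based argument for the monodromy. The one step you delegate to ``holomorphic dependence of ODE solutions on parameters'' is exactly what the paper makes precise in Lemma \ref{Lemma CP}, where $\partial_{\dot\lambda}\bigl(\Phi(\cdot,p)\Phi(\lambda_0,p)^{-1}\bigr)$ is characterized as the unique solution of a Cauchy problem, so that $\partial_{i\dot\lambda}\xi=i\,\partial_{\dot\lambda}\xi$ follows by uniqueness --- you correctly identified this as the point requiring the most care.
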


We state as a Corollary the analogous result for immersions $\widetilde M^n\to \XXX_n$.

\begin{Cor}
	\label{cor: dipendenza olomorfa stessa dim}
	Let $\Lambda$ be a simply complex manifold, and $M$ be a smooth manifold of dimension $n$. 
	If $\{g_\lambda\}_\lambda$ is a family of complex metrics of constant sectional curvature $-1$ which is holomorphic in $\lambda$ (namely if the function in Equation \ref{eq: def g lambda olo} is holomorphic), 
	then there exists a smooth map
	\[
	\sigma\colon \Lambda \times \widetilde M \to \mathbb X_{n}
	\]
	such that $\sigma(\lambda, \cdot)$ is $\pi_1(M)$-equivariant with pull-back metric $\widetilde g_\lambda$  for all $\lambda\in \Lambda$, and such that $\sigma(\cdot, p)$ is holomorphic for all $p\in \widetilde M$. 
	
	Moreover, the monodromy of $\sigma(\lambda, \cdot)$ is holomorphic in $\lambda$.
\end{Cor}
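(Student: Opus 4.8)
The plan is to deduce Corollary \ref{cor: dipendenza olomorfa stessa dim} from Theorem \ref{Teo dipendenza olomorfa} in exactly the same way that Theorem \ref{teoremone stessa dimensione} was deduced from the Gauss-Codazzi integration Theorem \ref{Teoremone}. The key observation is that a complex metric $g$ of constant sectional curvature $-1$ on $M^n$ corresponds to a \emph{totally geodesic} immersion datum for $\XXX_{n+1}$, namely to the pair $(g, \Psi)$ with $\Psi \equiv 0$. Indeed, by the Lemma preceding Theorem \ref{teoremone stessa dimensione}, constant curvature $-1$ gives $R(X_i,X_j,\cdot,\cdot) = -\theta^i \wedge \theta^j$, so the Gauss equation \eqref{GC2} holds with $\Psi=0$, and the Codazzi equation \eqref{GC1} is trivially satisfied since $d^\nabla 0 = 0$. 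Thus $(g_\lambda, 0)$ is a genuine family of immersion data for $\pi_1(M)$-equivariant immersions $\widetilde M \to \XXX_{n+1}$.

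First I would check that the family $\{(g_\lambda, \Psi_\lambda)\}_{\lambda\in\Lambda} := \{(g_\lambda, 0)\}_{\lambda\in\Lambda}$ is holomorphic in the sense of Definition \ref{def: holomorphic immersion data}. The map $\lambda \mapsto g_\lambda(p)$ is holomorphic by hypothesis, and the map $\lambda \mapsto \Psi_\lambda(p) = 0 \in End_\C(\C T_p M)$ is constant, hence trivially holomorphic. Therefore Theorem \ref{Teo dipendenza olomorfa} applies and produces a smooth map $\overline\sigma \colon \Lambda \times \widetilde M \to \XXX_{n+1}$ such that each $\overline\sigma_\lambda := \overline\sigma(\lambda,\cdot)$ is an admissible immersion with immersion data $(g_\lambda, 0)$, such that $\overline\sigma(\cdot,p)$ is holomorphic for every $p$, and such that each monodromy evaluation $\lambda \mapsto mon(\overline\sigma_\lambda)(\alpha) \in SO(n+2,\C)$ is holomorphic.

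Next I would invoke the argument in the proof of Theorem \ref{teoremone stessa dimensione} to cut each $\overline\sigma_\lambda$ down to a map into $\XXX_n$. Since $\overline\sigma_\lambda$ has vanishing shape operator, its normal field $\nu_\lambda = \overline\sigma_\lambda^*(\nu_0)$ is parallel, hence a constant vector in $\C^{n+2}$, and the image of $\overline\sigma_\lambda$ lies in the hyperplane $\nu_\lambda^\perp$, i.e. in a totally geodesic copy of $\XXX_n$ inside $\XXX_{n+1}$. The subtle point specific to the holomorphic-family setting, and the step I expect to be the main obstacle, is that a priori this identification of the image with $\XXX_n$ requires post-composing each $\overline\sigma_\lambda$ with an ambient isometry $\phi_\lambda \in SO(n+2,\C)$ sending $\nu_\lambda$ to $v^0_{n+2}$, and one must ensure this can be done \emph{holomorphically in $\lambda$} so as not to destroy the holomorphicity of $\sigma(\cdot,p)$ and of the monodromy. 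To handle this I would argue that $\nu_\lambda$, being obtained from $\overline\sigma_\lambda$ by a construction (the last basis vector of the orthonormal frame lift $\Phi_\lambda$) that depends holomorphically on $\lambda$, is itself a holomorphic map $\Lambda \to \C^{n+2}$; then one can locally choose $\phi_\lambda \in SO(n+2,\C)$ depending holomorphically on $\lambda$ straightening the constant unit normal, possibly after passing to the simply connected $\Lambda$ to make the choice global. Composing, I set $\sigma(\lambda,p) := \phi_\lambda \cdot \overline\sigma(\lambda,p)$, which lands in $\XXX_n$, is smooth, is holomorphic in $\lambda$ for each fixed $p$ (as a product of holomorphic maps), and whose monodromy $\lambda \mapsto \phi_\lambda\, mon(\overline\sigma_\lambda)(\alpha)\, \phi_\lambda^{-1}$ remains holomorphic and takes values in the stabilizer $O(n+1,\C)$ of $\XXX_n$. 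This yields the desired $\sigma \colon \Lambda \times \widetilde M \to \XXX_n$ with all the required properties, completing the proof.
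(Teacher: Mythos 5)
Your proposal is correct and follows essentially the same route as the paper: apply Theorem \ref{Teo dipendenza olomorfa} to the holomorphic family of immersion data $(g_\lambda,0)$, observe that the constant normal field $\nu_\lambda$ depends holomorphically on $\lambda$, and post-compose with a holomorphically varying isometry $\phi_\lambda\in SO(n+2,\C)$ straightening the normal (the paper builds $\phi_\lambda$ locally via Gram--Schmidt and globalizes by the same analytic-continuation device used in Theorems \ref{Teoremone} and \ref{Teo dipendenza olomorfa}). Your additional remark that the resulting monodromy is the conjugate $\phi_\lambda\, mon(\overline\sigma_\lambda)(\alpha)\,\phi_\lambda^{-1}$ and lands in the stabilizer $O(n+1,\C)$ matches the paper's conclusion.
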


\begin{Example}
	Let $h$ be a hyperbolic metric on a closed surface $S$ and let $b\colon TS\to TS$ be a $h$-self-adjoint (1,1)-form such that $d^{\nabla_h}b=0$ and $det(b)=1$; one may choose for instance $b=id$. Then, the family $\{(g_z,\Psi_z)\}_{z\in \C}$ defined by
	\[
	\begin{cases}
		g_z= \cosh^2(z) \widetilde h; \\
		\Psi_z= -\tanh(z) \widetilde b
	\end{cases}
	\] 
	is a holomorphic family of $\pi_1(S)$-equivariant immersion data for constant curvature immersions $\widetilde S\to \SL$, with $K_{g_z} =- \frac{1}{\cosh(z)^2}$. Observe that $z\in \R$ corresponds to an immersion data into $\Hy^3$, while $z\in i\R$ corresponds to an immersion data into $AdS^3$ by Theorem \ref{da X_n a space forms}. 
	
	By Theorem \ref{Teo dipendenza olomorfa}, there exists a family of immersions $\sigma_z\colon \widetilde S\to \SL$ with data $(g_z, \Psi_z)$ whose monodromy is a holomorphic function in $z$. 
\end{Example}

\section{Some Lie Theory lemmas}

We start the proof of Theorem \ref{Teo dipendenza olomorfa}.

A great part of the proof of this result is a matter of integrating a distribution
on a manifold. We start from a technical result which extends Lemma \ref{Lemma chiave unicita}.

\begin{Proposition}
	\label{Teorema esistenza foliazione}
	Let $M$ and $\Lambda$ be two simply connected manifolds, $G$ a Lie group with Lie algebra $\Lieg$. 
	
	Consider a smooth family of forms $\{\omega_\lambda\}_{\lambda\in \Lambda}\subset \Omega^1(M, \Lieg)$, namely a smooth map $\Lambda\to \Omega^1 (M, \Lieg)$.
	
	The following are equivalent:
	\begin{itemize}
		\item for all $\lambda\in \Lambda$	\begin{equation}
			\label{eq MC parametrica}
			d\omega_\lambda + [\omega_\lambda, \omega_\lambda]=0;
		\end{equation}
		\item there exists a smooth map $\Phi \colon \Lambda \times M\to G$ such that, for all $\lambda\in \Lambda$, \begin{equation} 
			\label{eq pull-back MC}
			(\Phi(\lambda, \cdot))^* \omega_G= \omega_\lambda,
		\end{equation}
		where $\omega_G$ is the Maurer-Cartan form of $G$.
	\end{itemize}
	Moreover, both $\Phi$ and $\Phi'$ satisfy equation $\eqref{eq pull-back MC}$ if and only if
	\[
	\Phi' (\lambda, p)= \psi(\lambda) \cdot \Phi(\lambda,p)
	\]
	for some smooth $\psi\colon \Lambda\to G$.
	
	In other words, for every fixed $p_0\in M$, smooth $\psi_0\colon \Lambda \to G$ and for every smooth collection of $\Lieg$-valued 1-forms $\{\omega_\lambda\}_{\lambda\in \Lambda}$, there exists a unique $\Phi\colon \Lambda\times M\to G$ such that 
	\[
	\begin{cases}
		&\omega_\lambda= \Phi(\lambda, \cdot)^* \omega_G \\
		&\Phi(\lambda, p_0)=\psi_0(\lambda)
	\end{cases}
	\]
	
	For $\Lambda=\{pt\}$ one has Lemma \ref{Lemma chiave unicita}.
\end{Proposition}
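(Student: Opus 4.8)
The plan is to prove Proposition~\ref{Teorema esistenza foliazione} by reducing the parametrized statement to a single unparametrized Maurer--Cartan problem on the product manifold, exploiting the fact that the family $\{\omega_\lambda\}$ depends smoothly on $\lambda$. The key observation is that the hypothesis \eqref{eq MC parametrica} is a condition on $M$ for each fixed $\lambda$, but there is no a priori constraint on how the integrating maps glue as $\lambda$ varies. The natural idea is to build a single $\Lieg$-valued $1$-form on $\Lambda\times M$ that restricts to $\omega_\lambda$ on each slice $\{\lambda\}\times M$ and then apply the ordinary Lemma~\ref{Lemma chiave unicita} to it; the subtlety is that the extended form must itself satisfy the Maurer--Cartan equation on the whole product, and this is \emph{not} automatic from the slicewise equations alone.

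First I would set up the extended form. Write $\mathrm{pr}_M\colon \Lambda\times M\to M$ and $\mathrm{pr}_\Lambda\colon \Lambda\times M\to \Lambda$ for the projections, and look for a form $\Omega\in\Omega^1(\Lambda\times M,\Lieg)$ of the shape $\Omega_{(\lambda,p)}=\omega_\lambda(p)\circ d\mathrm{pr}_M + \eta_{(\lambda,p)}$, where $\eta$ is a $\Lieg$-valued $1$-form that vanishes on $TM$ and is to be determined so that $d\Omega+[\Omega,\Omega]=0$. The component of the Maurer--Cartan equation purely along $M$ holds by hypothesis \eqref{eq MC parametrica}; the mixed component (one $\Lambda$-direction, one $M$-direction) gives a first-order linear PDE for $\eta$ in the $M$-variables, which determines the $\partial_\lambda$-evolution of the integrating map; and the component purely along $\Lambda$ must be arranged to hold as well. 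The clean way to avoid solving for $\eta$ by hand is instead to fix the base point $p_0\in M$ and the prescribed $\psi_0\colon\Lambda\to G$, and to construct $\Phi(\lambda,\cdot)$ directly as the unique solution of $\Phi(\lambda,\cdot)^*\omega_G=\omega_\lambda$ with $\Phi(\lambda,p_0)=\psi_0(\lambda)$ provided by Lemma~\ref{Lemma chiave unicita} applied slicewise. This produces a well-defined map $\Phi\colon\Lambda\times M\to G$; the content then is to show $\Phi$ is \emph{jointly smooth} in $(\lambda,p)$, not merely smooth in $p$ for each $\lambda$.

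The main obstacle, and the step I would spend the most care on, is exactly this joint smoothness together with the evolution equation in $\lambda$. I would argue as follows: smooth dependence on parameters for the integration of Cartan connections follows from smooth dependence of solutions of ODEs (equivalently, of the associated flat-bundle parallel transport) on parameters and initial conditions. Concretely, fixing $p_0$ and a path from $p_0$ to $p$, the value $\Phi(\lambda,p)$ is obtained by parallel transport along that path for the flat connection on the trivial bundle $(\Lambda\times M)\times G$ determined by $\omega_\lambda$, with flatness guaranteed by \eqref{eq MC parametrica}; since the connection coefficients $\omega_\lambda$ depend smoothly on $\lambda$ by assumption, the standard theorem on smooth dependence of solutions of linear ODEs on parameters yields joint smoothness of $\Phi$, and simple-connectedness of $M$ ensures path-independence exactly as in the proof of Lemma~\ref{Lemma chiave unicita}.

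Finally I would establish the uniqueness/gauge statement. If $\Phi$ and $\Phi'$ both satisfy \eqref{eq pull-back MC}, then for each fixed $\lambda$ the unparametrized uniqueness in Lemma~\ref{Lemma chiave unicita} gives an element $\psi(\lambda)\in G$ with $\Phi'(\lambda,p)=\psi(\lambda)\cdot\Phi(\lambda,p)$ for all $p$; evaluating at $p_0$ gives $\psi(\lambda)=\Phi'(\lambda,p_0)\,\Phi(\lambda,p_0)^{-1}$, which exhibits $\psi$ as a composition of smooth maps and the smooth group operations, hence $\psi\colon\Lambda\to G$ is smooth. The prescribed-data reformulation then follows immediately by taking $\psi_0$ as the chosen value along $\{p_0\}\times\Lambda$, and specializing $\Lambda=\{pt\}$ recovers Lemma~\ref{Lemma chiave unicita} verbatim. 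The only genuinely new ingredient beyond the single-parameter lemma is the smooth-dependence-on-parameters input, so I would state that as the crux and keep the algebraic verifications brief.
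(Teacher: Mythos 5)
Your proof is correct, but it takes a genuinely different route from the paper's. The paper works on the total space $\Lambda\times M\times G$: it introduces the distribution $D_{(\lambda,p,g)}=\{(\dot\lambda,\dot p,\dot g)\ :\ \omega_\lambda(\dot p)=\omega_G(\dot g)\}$, integrates it via the Frobenius theorem, shows that each leaf projects onto $\Lambda\times M$ by a proper local diffeomorphism (hence a covering, hence a diffeomorphism by simple connectedness), and reads off $\Phi$ as the $G$-component of the inverse of that projection, so that joint smoothness of $\Phi$ comes from the smoothness of the leaf. You instead apply the unparametrized Lemma \ref{Lemma chiave unicita} slice by slice with the normalization $\Phi(\lambda,p_0)=\psi_0(\lambda)$, and then obtain joint smoothness from smooth dependence on parameters of the parallel-transport ODE along a family of paths issuing from $p_0$, with path-independence coming from simple connectedness of $M$ and the slicewise flatness \eqref{eq MC parametrica}; your uniqueness argument (evaluate at $p_0$ to get $\psi(\lambda)=\Phi'(\lambda,p_0)\,\Phi(\lambda,p_0)^{-1}$, manifestly smooth) matches the paper's, and you rightly discard the tempting shortcut of extending the $\omega_\lambda$ to a single Maurer--Cartan form on $\Lambda\times M$, since the mixed and purely-$\Lambda$ components of the structure equation are not controlled by the hypotheses. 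As for what each approach buys: the foliation argument packages everything into one geometric object, but it must justify the integrability of the full distribution $D$ (which mixes the $\Lambda$-directions with the $\lambda$-dependent distribution $D^0$, a point requiring care) and the properness of the leaf projection, and it implicitly uses simple connectedness of $\Lambda\times M$; your parametric-ODE argument sidesteps both issues, needs only $M$ simply connected, and isolates the single genuinely new ingredient beyond Lemma \ref{Lemma chiave unicita}, namely smooth dependence on parameters. For completeness you should add the (trivial) converse implication, that \eqref{eq pull-back MC} forces \eqref{eq MC parametrica} by the structure equation of $\omega_G$, and one sentence arranging the paths from $p_0$ to nearby endpoints to vary smoothly, so that the ODE parameter is genuinely the pair $(\lambda,p)$.
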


\begin{proof}
	It is clear by the differential equation of the Maurer-Cartan form that Equation \eqref{eq pull-back MC} in the latter statement implies Equation \eqref{eq MC parametrica} in the former one. We prove the opposite implication. 
	
	Endow the trivial bundles $\pi_{\Lambda\times M}\colon \Lambda\times M\times G\to \Lambda \times M$ and $\pi_M\colon  M\times G\to M$ with the natural left action of $G$ given by left translations on the last component. Also let $\pi_G\colon M\times G\to G$ denote the standard projection.
	
	On the tangent bundle of $\Lambda\times M\times G$, let $D=\{D_{(\lambda,p,g)}\}_{(\lambda,p,g)}$ be the distribution
	\begin{equation}
		\label{eq: definizione D}
		D_{(\lambda, p,g)}=\{(\dot \lambda,\dot p, \dot g)\in T_{(\lambda, p,g)} (\Lambda \times M\times G) \ |\ \omega_\lambda(\dot p)= \omega_G(\dot g)  \}= T_{(\lambda,p,g)} \Lambda \oplus D^0_{(\lambda, p, g)} .
	\end{equation}
	where
	\[
	D^0_{(\lambda, p, g)} =\{(0,\dot p, \dot g)\in T_{(\lambda, p,g)} \Lambda \times M\times G \ |\ \omega_\lambda(\dot p)= \omega_G(\dot g)  \}.
	\]
	Both $D$ and $D^0$ are invariant under the left action of $G$ because the Maurer-Cartan form on $G$ is left invariant.
	
	\begin{itemize}
		\item [Step 1.]
		We prove that the distribution $D^0$ is integrable. 
		
		For all $\lambda\in \Lambda$, consider on $M\times G$ the $\Lieg$-valued form
		\[
		\widetilde \omega_\lambda = \pi_G^* \omega_G - \pi_M^* \omega_\lambda \in  \Omega^1(M\times G, \Lieg).
		\]

		Clearly, defining for all $\lambda\in \Lambda$ the map $\iota_\lambda\colon M\times G\to \Lambda \times M\times G$ as the inclusion $(p,g)\mapsto (\lambda, p, g)$, then \[
		D^0= \bigcup_{\lambda\in \Lambda} d(\iota_\lambda)\big( Ker (\widetilde\omega_\lambda) \big).
		\]
		
		Now, for all $X, Y\in \Gamma(Ker({\widetilde\omega_\lambda}))$, one has
		\[
		d \widetilde \omega_\lambda(X, Y)= X(\widetilde \omega_\lambda (Y))- Y(\widetilde \omega_\lambda (X))- \widetilde \omega_\lambda ([X,Y])=-\widetilde \omega_\lambda ([X,Y]),
		\]therefore
		\begin{align*}
			d\widetilde \omega_\lambda (X,Y)&= d \omega_G ({d\pi_G}(X),{d\pi_G}(Y) )- d\omega_\lambda ({d\pi_M}(X), {d\pi_M}(Y))=\\
			&= -[\omega_G({d\pi_G}(X) ),\omega_G({d\pi_G}(Y))]+[\omega_\lambda({d\pi_M}(X) ),\omega_\lambda({d\pi_M}(Y))]=\\
			&= -[\omega_G({d\pi_G}(X)),\omega_G({d\pi_G}(Y))]+[\omega_G({d\pi_G}(X) ),\omega_G({d\pi_G}(Y))]=0
		\end{align*}
		so $[X,Y]\in Ker (\widetilde \omega_\lambda)$: by Frobenius Theorem, the distribution $Ker(\widetilde\omega_\lambda)$ is integrable, call  $\mathcal F_\lambda$ the integral foliation.
		As a result, $D^0$ is integrable with integral foliation
		\[
		\bigcup_{\lambda\in \Lambda} \iota_\lambda (\mathcal F_\lambda).
		\]

		\item[Step 2.] The distribution $D$ is integrable. 
		
		Recall that $D=T\Lambda \oplus D^0$ and that $D^0$ is integrable. Then, clearly, for all $X_1, Y_1\in \Gamma(T\Lambda)$ and $X_2,Y_2\in \Gamma(D^0)$, 
		\[
		[(X_1,X_2), (Y_1,Y_2)]=([X_1,Y_1], [X_2,Y_2]) \in \Gamma(T\Lambda) \oplus \Gamma(D^0)=\Gamma(D)
		\]
		and the integrability of $D$ follows by Frobenius Theorem. Denote with $\mathcal F$ the integral foliation. 
		
		\item[Step 3.] Each leaf of $\mathcal F$ projects diffeomorphically onto $\Lambda\times M$.
		
		Since the distribution $D$ on $\Lambda\times M\times G$ is invariant under the left action of $G$, $G$ has a well-posed left action on the foliation, namely, given a maximal leaf $\Sigma\in \mathcal F$ passing by the point $(\lambda_0, p_0,g_0)$, $L_g(\Sigma)=g \cdot \Sigma$ is the maximal leaf passing by the point $(\lambda_0, p_0, g g_0)$.

		Since $\omega_G(\dot g)=0$ if and only if $\dot g=0$, by Equation \eqref{eq: definizione D} the fibers of $\pi_{\Lambda\times M}$ are transverse to the distribution $D$, hence transverse to any maximal leaf of $\mathcal F$. 
		As a result, for each leaf $\Sigma\in \mathcal F$, the projection map $(\pi_{\Lambda\times M})_{|\Sigma}\colon \Sigma \to \Lambda \times M$ is a local diffeomorphism. 
		
		We show that it is also a proper map. Assume $\{(\lambda_n, p_n, g_n)\}_n\subset \Sigma$ is a sequence such that $(\lambda_n, p_n)\in \Lambda\times M$ converges to some $(p,\lambda)$ in $\Lambda \times M$, then, for any choice of $\bar g\in G$, the maximal leaf $\hat\Sigma$ passing by $(\lambda, p, \bar g)$ projects via a local diffeomorphism to $\Lambda\times M$: so there exists $\hat g$ such that $(\lambda_n, p_n, \hat g g_n)\in \Sigma'$ definitely for $n$, therefore $\Sigma= \hat g^{-1} \hat \Sigma$ and the sequence $(\lambda_n, p_n, g_n)\in \Sigma$ converges in $\Sigma$ to $(\lambda, p, \hat g ^{-1} \bar g)$.
		
		Since $(\pi_{\Lambda \times M})_{|\Sigma}\colon \Sigma \to \Lambda \times M$ is a local diffeomorphism and a proper map, it is a covering map, hence a diffeomorphism being $M$ simply connected. 
	\end{itemize}
	
	For any fixed leaf $\Sigma\in \mathcal F$, one can finally conclude that the map
	\[
	\Phi= \pi_G \circ (\pi_{\Lambda\times M})_{|\Sigma}^{-1}\colon \Lambda\times M \to G
	\]
	is smooth and such that $\Sigma=Graph(\Phi)$, therefore  \[\mathcal F=
	\{g\cdot \Sigma\}_{g\in G}=\{g\cdot Graph(\Phi) \}_{g\in G}= \{Graph(L_g \circ \Phi) \}_{g\in G}. \]
	As a result, for all $(\dot \lambda, \dot p)\in T_{(\lambda, p)}\Lambda\times M$, one has that 
	\[
	\omega_\lambda (\dot p)= \omega_G( d\Phi (\dot \lambda,\dot p) ),
	\]
	in particular, \[\omega_\lambda (\dot p)= \omega_G (d\Phi (0, \dot p) )= \omega_G(d(\Phi(\lambda, \cdot)) (\dot p) ).\]
	\vspace{5mm}
	
	We prove uniqueness. 
	
	Assume $\Phi_1, \Phi_2\colon \Lambda\times M\to G$ both satisfy 
	\[
	(\Phi_1(\lambda, \cdot))^*\omega_G= (\Phi_2(\lambda, \cdot))^*\omega_\lambda
	\]
	for all $\lambda\in \Lambda$, i.e.
	\[
	\omega_\lambda(\dot p)= \omega_G( d(\Phi_1 (\lambda, \cdot))(\dot p))= \omega_G( d(\Phi_2 (\lambda, \cdot))(\dot p)).
	\]
	Then, the graphs of $\Phi_1(\lambda, \cdot)$ and $\Phi_2 (\lambda, \cdot)$ are both integral manifolds for the distribution $D^0$. As a result, since $D^0$ is left invariant, for each $\lambda\in \Lambda$ there exists $\psi(\lambda)\in G$ such that
	\[
	\Phi_2(\lambda,p)=\psi(\lambda)\cdot \Phi_1(\lambda,p)
	\]
	for all $p\in M$. A posteriori, $\psi$ is smooth.
\end{proof}

With the same notations as in the previous theorem, let $\{\omega_\lambda \}_{\lambda\in \Lambda}\subset \Omega^1(M, \Lieg)$ be a smooth family of $\Lieg$-valued 1-forms, fix $p_0\in M$ and let $e\in G$ be the unity. Let $\Phi\colon \Lambda\times M\to G$ be such that
\begin{equation}
	\label{Phi integra omega base}
	\begin{cases}
		&\omega_\lambda= \Phi(\lambda, \cdot )^* \omega_G \\
		&\Phi(\lambda, p_0)=e
	\end{cases}.
\end{equation}
for all $\lambda$.

Denote $\Phi_\lambda=\Phi(\lambda, \cdot)\colon M\to G$.

The following technical Lemma will be useful to compute the derivatives of $\Phi$ with respect to the parameter $\lambda$.

\begin{Lemma}
	\label {Lemma CP}
	With the above notation, assume (\ref{Phi integra omega base}) holds. Let $\lambda_0\in \Lambda$ and $\dot \lambda\in T_{\lambda_0} \Lambda$.
	
	The Cauchy problem 
	\begin{equation}
		\label{Problema di Cauchy chiave}
		\begin{cases}
			&df = Ad(\Phi_{ \lambda_0} ) \circ \big(  \partial_{\dot \lambda} \omega_\bullet(\cdot) \big) \colon TM\to \Lieg \\
			&f(p_0)= 0
		\end{cases}
	\end{equation}
	with unknown quantity $f \colon M \to \Lieg$ has  
	\[f(p)= \partial_ {\dot \lambda} \big(\Phi(\cdot,p) \cdot (\Phi(\lambda_0,p))^{-1} \big)\]
	as unique solution.
\end{Lemma}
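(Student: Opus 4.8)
The plan is to verify the claimed solution by checking it satisfies both the initial condition and the differential equation, then invoke uniqueness for the Cauchy problem. First I would observe that the proposed function is well-defined: since $\Phi(\lambda_0, p)$ takes values in the Lie group $G$, the expression $\Phi(\cdot, p)\cdot (\Phi(\lambda_0, p))^{-1}$ makes sense as a $G$-valued map of $\lambda$ in a neighbourhood of $\lambda_0$, and at $\lambda = \lambda_0$ it equals the identity $e$. Differentiating in the direction $\dot\lambda$ at $\lambda_0$ therefore lands in $T_e G \cong \Lieg$, confirming $f\colon M \to \Lieg$. The initial condition is immediate: at $p = p_0$ we have $\Phi(\lambda, p_0) = e$ for all $\lambda$ by \eqref{Phi integra omega base}, so $\Phi(\cdot, p_0)\cdot(\Phi(\lambda_0, p_0))^{-1} \equiv e$ is constant in $\lambda$, hence $f(p_0) = 0$.

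The substance lies in verifying the differential equation, namely that $d f = Ad(\Phi_{\lambda_0}) \circ (\partial_{\dot\lambda}\omega_\bullet(\cdot))$. Here I would work at a fixed point $p$ and compute $df_p(\YY)$ for $\YY \in T_p M$ by exchanging the order of differentiation in $\lambda$ and in $p$, which is legitimate by smoothness of $\Phi$ on the product $\Lambda \times M$. The key input is the defining relation $\omega_\lambda = \Phi_\lambda^* \omega_G$, which in Maurer--Cartan terms reads
\[
\omega_\lambda(\YY) = (\Phi_\lambda(p))^{-1}\cdot d_p\Phi_\lambda(\YY).
\]
The strategy is to write both the $\lambda$-derivative and the $p$-derivative of the $G$-valued quantity $H(\lambda, p) := \Phi(\lambda, p)\cdot(\Phi(\lambda_0, p))^{-1}$ in terms of $\omega$ and its $\lambda$-derivative, then match. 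Differentiating $H$ in $p$ and using the product rule on the group (carefully tracking left versus right translations and the adjoint action that arises when one moves the inversion factor $(\Phi(\lambda_0, p))^{-1}$ past a left-invariant derivative) should produce exactly $Ad(\Phi_{\lambda_0}(p))$ applied to $\partial_{\dot\lambda}\omega_\bullet(\YY)$.

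The main obstacle I anticipate is the bookkeeping in this group-theoretic computation: one must correctly commute the mixed partial derivatives $\partial_{\dot\lambda}\partial_\YY$ and $\partial_\YY\partial_{\dot\lambda}$ of $\Phi$, and carefully translate between left-trivialized derivatives (which give $\omega_G$) and the two-sided expression defining $H$. The appearance of the adjoint $Ad(\Phi_{\lambda_0})$ is precisely the signature of having conjugated a left-invariant derivative by the right-multiplication factor $(\Phi(\lambda_0, p))^{-1}$, so the identity $\partial_{\dot\lambda}(g(\lambda)h(\lambda)^{-1})$ and the relation $d(L_a R_{b^{-1}}) = Ad(\cdot)$ must be applied with correct order. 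Once the differential equation is confirmed, uniqueness of solutions to the first-order linear Cauchy problem \eqref{Problema di Cauchy chiave} on the connected manifold $M$ (following, for instance, the integrability argument already used in Proposition \ref{Teorema esistenza foliazione}) closes the proof, since any two solutions agree at $p_0$ and satisfy the same determining equation.
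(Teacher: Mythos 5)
Your proposal follows essentially the same route as the paper: the paper introduces the same auxiliary map $\xi(\lambda,p)=\Phi(\lambda,p)\cdot(\Phi(\lambda_0,p))^{-1}$, checks the initial condition exactly as you do, derives $\omega_\lambda-\omega_{\lambda_0}=Ad\big(\Phi_{\lambda_0}(p)^{-1}\big)\big((\xi_\lambda)^*\omega_G\big)$ from the group product rule (which is precisely where the adjoint appears, as you predicted), then differentiates in $\lambda$, and proves uniqueness by the same one-line argument from $d(f_1-f_2)=0$ on the connected manifold $M$. The single step you leave implicit — differentiating the pullback $(\xi_\lambda)^*\omega_G$ with respect to $\lambda$ — is the one the paper handles with some care, by extending $\xi$ to a $\lambda$-family of diffeomorphisms of $M\times G$ and applying the Lie-derivative/Cartan formula; this is exactly the bookkeeping you flagged as the main obstacle, and it does go through.
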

\begin{proof}Assume $f_1$ and $f_2$ are both solutions to $(\ref{Problema di Cauchy chiave})$, then $d(f_1-f_2)=0$, hence $f_1-f_2$ is a constant, and $f_1(p_0)=f_2(p_0)$, hence $f_1\equiv f_2$. This proves uniqueness.

	Define $ \xi:\Lambda\times M\to G $ as
	\begin{equation}
		\label{def Sigma}\xi(\lambda, p):=\xi_\lambda(p)=\Phi(\lambda, p)(\Phi(\lambda_0, p))^{-1}.
	\end{equation}
	We need to prove that $\partial_{\dot \lambda} \xi\colon M\to \Lieg$ satisfies (\ref{Problema di Cauchy chiave}).
	
	Clearly,  $\xi(\lambda_0,\cdot)= \xi(\cdot, p_0)\equiv e$, therefore $\partial_{\dot \lambda} \xi (p_0)=0$.

	Differentiating
	\[
	\Phi_\lambda(p)= \xi_\lambda(p) \cdot \Phi_{\lambda_0}(p).
	\]
	one gets
	\[
	d_p(\Phi_\lambda)= d(L_{\xi(\lambda, p)}) \circ d_p(\Phi_{\lambda_0}) + d(R_{\Phi_{\lambda_0}(p)})\circ d_p(\xi_\lambda)
	\]
	Recalling that $\omega_G$ is left-invariant, we deduce that the $\omega_\lambda$'s satisfy
	\begin{equation}
		\label{eq 5.2}
		\begin{split}
			\omega_\lambda-\omega_{\lambda_0}= \Phi_\lambda^* (\omega_G) -\omega_{\lambda_0}&=  \Phi_{\lambda_0}^*(\omega_G) + (\xi_\lambda)^* \Big( Ad(\Phi_{\lambda_0}(p) ^{-1}) (\omega_G) \Big)-\omega_{\lambda_0}=\\
			&= \Phi_{\lambda_0}^*(\omega_G) +  Ad(\Phi_{\lambda_0}(p) ^{-1})\Big( (\xi_\lambda)^*  (\omega_G) \Big)-\omega_{\lambda_0}=\\
			&=Ad(\Phi_{\lambda_0}(p) ^{-1})\Big( (\xi_\lambda)^*  (\omega_G) \Big).
		\end{split}
	\end{equation}
	We will deduce the first equation of $(\ref{Problema di Cauchy chiave})$ with $f=\partial_{\dot \lambda} \xi$ by differentiating Equation $(\ref{eq 5.2})$ in the direction $\dot \lambda$. 
	
	In order to properly differentiate the RHS of Equation $(\ref{eq 5.2})$, we need to extend $\xi$ to
	\begin{align*}\Xi\colon \Lambda\times M\times G&\to M\times G\\
		(\lambda, p, g)&\mapsto (p,\  g\cdot \xi(\lambda, p) ),
	\end{align*}
	so that $\Xi(\lambda, \cdot, \cdot )$ can be seen as a family of diffeomorphisms of $M\times G$ depending on $\lambda$. 
	
	Clearly $(\lambda, p, e)= (p, \xi_\lambda (p))$ and
	\[
	\partial_{\dot \lambda} \Xi\in  \Gamma (T(M\times G))
	\]
	satifies
	\[
	(\partial_{\dot \lambda} \Xi)_{(p,g)}=  (0, (dL_g)(\partial_{\dot \lambda} \xi)_p  ), 
	\]
	hence, defining $\widetilde \omega:= \pi_G^* (\omega_G)$, 
	\begin{equation}
		\label{eq olo 1}
		\widetilde \omega( \partial_{\dot \lambda} \Xi)=  \partial_{ \dot \lambda} \xi\,.
	\end{equation}
	
	We are finally able to differentiate Equation (\ref{eq 5.2}). 
	
	Let $j_e\colon M\to M\times G$ be the section $j_e(p)=(p,e)$, hence $\pi_G\circ \Xi\circ j_e= \xi$.
	Then, by Equation \eqref{eq olo 1}, the derivative of Equation \eqref{eq 5.2} in the direction ${\dot \lambda}$ gives
	\begin{equation}
		\label{eq 5.3}
		\partial_{\dot\lambda} \omega = Ad\Big((\Phi_{\lambda_0} (p) )^{-1}\Big) \big(j_e ^*(\mathcal L _{\partial_{\dot  \lambda} \Xi} \ \widetilde \omega) \big)
	\end{equation}
	where $\mathcal L$ is the Lie derivative. 
	
	Recalling the properties of the Lie derivative with respect to differentiation and contraction, for all $\dot p\in TM$,
	\begin{align*}
		j_e^* \big(\mathcal L_{\partial_{\dot  \lambda}\Xi}\ \widetilde \omega \big) (\dot p) &= \Big(\mathcal L_{\partial_{\dot  \lambda}\Xi}\ \widetilde \omega\Big) (\dot p, 0)= \\
		&=d \widetilde \omega (\partial_{\dot  \lambda}\Xi,  (\dot p, 0)) + d\big( \widetilde \omega (\partial_{\dot  \lambda}\Xi) \big)({( \dot p,0)})=\\
		&=d \omega_G (\partial_{\dot \lambda} \xi, 0)+ d(\ \partial_{\dot  \lambda} \xi ) (\dot p)= \\
		&=d( \partial_{\dot  \lambda} \xi ) (\dot p).
	\end{align*}
	From the last equation and $(\ref{eq 5.3})$, the thesis follows.
\end{proof}

\section{Proof of Theorem \ref{Teo dipendenza olomorfa}}

We are now able to discuss the holomorphic dependence on the immersion data for immersions into $\mathbb X_{n+1}$.

\begin{proof}[Proof of Theorem $\ref{Teo dipendenza olomorfa}$]
	
	We prove that for all $(\lambda_0, p_0)\in \Lambda \times M$ there exists an open neighbourhood $U\times V$ and a $\sigma\colon U\times V\to \mathbb X_{n+1}$ as in the statement of Theorem \ref{Teo dipendenza olomorfa} and that, for any other $\sigma'$ satisfying the thesis like $\sigma$, $\sigma'(\lambda, p)= \phi(\lambda)\circ \sigma(\lambda, p)$ with $\phi\colon U \mapsto Isom(\mathbb X_n)$ holomorphic. Then, since  $M\times \Lambda$ is simply connected, the proof of the Theorem follows by the standard analytic continuation method as in "Step 2" of the proof of Theorem \ref{Teoremone}.

	Choose the neighbourhood $U\times V$ so that there exists on $V$ a $g_{\lambda_0}$-orthonormal frame $(X_1,\dots X_n)$, then the Gram-Schmidt algorithm applies on $U\times V$ to this frame in order to have for all $\lambda\in U$ a $g_\lambda$-orthonormal frame $(e_{1;\lambda}, \dots, e_{n;\lambda})$ on $V$ in the form 
	\[
	e_{i;\lambda}(p)=\sum_j T_{i}^j(\lambda, p) X_j(p)
	\]
	
	where \[T=(T^j_i)_{i,j}\colon U\times V\to Mat(n, \C)
	\] is such that $T(\cdot, p)$ is holomorphic for all $p\in V$.
	Define $\{\theta_\lambda^i\}_i$ as the dual coframe. Both the $e_{i; \lambda}$'s and the $\theta^i_\lambda$'s are (pointwise) holomorphic in $\lambda$.
	
	We recall the steps in the construction of the immersions from the immersion data, as we showed in Section \ref{Section integrazione GC}, and show the holomorphic dependence is preserved at each step.
	
	\begin{itemize}
		\item From $(g_\lambda, \Psi_\lambda)$, locally construct the form $\omega_\lambda \in \Omega^1(V, \Lieonn)$  
		\[
		\omega_\lambda=
		\begin{pmatrix}
			{\scalebox{2} {$\Theta_\lambda$} }& \begin{matrix}
				-\Psi_\lambda^1 & -i\theta_\lambda^1 \\
				\dots &\dots \\
				-\Psi_\lambda^n & -i\theta_\lambda^n
			\end{matrix} \\
			\begin{matrix}
				\Psi_\lambda^1 & \dots & \Psi_\lambda^n\\
				i\theta_\lambda^1 & \dots & i\theta_\lambda^n
			\end{matrix} & 
			\begin{matrix}0\text{    } & 0\text{    } \\ 
				0\text{    } & 0\text{    }
			\end{matrix}
		\end{pmatrix}  
		\]
		where $(\theta^i_\lambda)_{i=1}^n$ is the dual of $(e_{i; \lambda})_{i=1}^n$, $\Theta_\lambda=(\theta^i_{j; \lambda})_{i,j=1}^n$ is the matrix of the Levi-Civita connection forms w.r.t. the $\theta_\lambda^i$'s and $\Psi_\lambda= \Psi_\lambda^i \otimes e_{i; \lambda}$.
		
		Let us show that the map $\lambda\mapsto\omega_\lambda$ is holomorphic in $\lambda$. 
		
		One can easily check in coordinate charts that the $d\theta^i_\lambda$'s are holomorphic in $\lambda$. Defining 
		\[
		d\theta^i=: \alpha^i_{j, k; \lambda} \theta^j \wedge \theta^k,
		\]
		the coefficients $\alpha^i_{j, k; \lambda}$'s are holomorphic in $\lambda$, hence, since one can straightforwardly show that
		\[
		\theta^i_{k;\lambda} := -\alpha^i_{j,k; \lambda} \theta^j,
		\]
		we conclude that $\Theta_\lambda$ is holomorphic in $\lambda$; finally, the $\Psi_\lambda^i$'s are holomorphic in $\lambda$ since $\Psi_\lambda= \Psi_\lambda^i\otimes e_{i;\lambda}$.
		
		\item Up to shrinking $U$ and $V$, assume they are simply connected. By Proposition \ref{Teorema esistenza foliazione}, there exists a smooth map $\Phi\colon U\times V\to \SOnn$ such that
		$\Phi(\cdot, p_0)\equiv e$ and $\Phi(\lambda,\cdot)^*\omega_G= \omega_\lambda$ where $\omega_G$ is the Maurer-Cartan form of $G$. 
		We prove that $\Phi$ is holomorphic w.r.t. $\lambda$. 
		
		Define $\xi$ as in Equation $(\ref{def Sigma})$. For all $\dot \lambda \in T_{\lambda_0}\Lambda$, using Lemma $\ref{Lemma CP}$ and the fact that the $\omega_\lambda$'s vary in a holomorphic way in $\lambda$, one has
		\[
		d(\partial_{i \dot \lambda} \xi) = Ad(\Phi_{\lambda_0})\circ \partial_{i\dot \lambda} {\omega} = Ad(\Phi_{\lambda_0})\circ \big(i\partial_{\dot \lambda} {\omega} \big)= i\  d(\partial_{\dot \lambda} \xi).
		\]
		
		As a result, both $\partial_{i \dot \lambda}\xi$ and $i\partial_{ \dot \lambda}\xi$ solve the Cauchy problem (\ref{Problema di Cauchy chiave}) for $i\dot \lambda$, hence they coincide. 
		
		By $(\ref{def Sigma})$, one has that for all $p\in V$ and $\lambda\in U$
		\begin{align*}
			(\partial_{i\dot \lambda}\Phi) (\lambda_0, p) &= d(L_{\Phi(\lambda_0,p)})( (\partial_{i\dot \lambda}\xi)(\lambda_0,p)) = d(L_{\Phi(\lambda_0,p)})( i(\partial_{\dot \lambda} \xi)(\lambda_0,p))  = \\
			&=i (\partial_{\dot \lambda}\Phi) (\lambda_0, p).
		\end{align*} 
		
		\item By the proof of Theorem \ref{Teoremone}, defining the map
		\[
		\sigma\colon 
		U\times V\to \mathbb X_{n+1}
		\]
		by \[ \sigma (\lambda, p)= \Phi(\lambda,p) \cdot e, \]
		one has that $\sigma(\lambda, \cdot):V \to \mathbb X_{n+1}$ is an immersion with immersion data $(g_\lambda, \Psi_\lambda)$. Finally, $\sigma$ is holomorphic with respect to $\lambda$ since $\Phi$ is.
		\item By Theorem $\ref{Teoremone}$, if $\sigma'$ satisfies the statement of Theorem \ref{Teo dipendenza olomorfa} like $\sigma$, then $\sigma'(\lambda,p)= \phi(\lambda)\cdot \sigma(\lambda,p)$ for a unique function $\phi\colon U\to Isom_0(\mathbb X_n)\cong SO(n+2,\C)$. 
		
		We prove that $\phi$ is holomorphic.
		
		Fix $p\in V$ and a basis $X_1,\dots, X_n\in T_p V$. Then, the functions $A, A'\colon U\to GL(n+2, \C)$ defined by
		\begin{equation*}
			\begin{split}
				A(\lambda)&= \begin{pmatrix} d_p(\sigma(\lambda, \cdot)) (X_1)\ \ | & \dots \ \ |& d_p(\sigma(\lambda, \cdot)) (X_n)\ \  |& d_p(\sigma(\lambda, \cdot)) (\nu_{\sigma(\lambda, \cdot)})\ \ |& \sigma(p) \end{pmatrix}\\ &\text{and}\\ A'(\lambda)&= \begin{pmatrix} d_p(\sigma'(\lambda, \cdot))(X_1)\ \ |& \dots\ \  |& d_p(\sigma'(\lambda, \cdot)) (X_n)\ \ |& d_p(\sigma'(\lambda, \cdot))(\nu_{\sigma'(\lambda, \cdot)})\ \ |& \sigma'(p) \end{pmatrix} 
			\end{split}
		\end{equation*}
		are holomorphic with respect to $\lambda$: this is an elementary consequence of the assumptions on $\sigma$ and $\sigma'$ and of the fact that vector fields on $U$ and on $V$ commute with each other when seen as vector fields on $U\times V$. By definition of $\phi$, $\phi(\lambda)= A'(\lambda) (A(\lambda))^{-1}$, hence, by the chain rule, $\phi$ is holomorphic.

	\end{itemize}
	
	By the analytic continuation argument mentioned above, one gets that for all $\lambda$ there exists a neighbourhood $U\subset \Lambda$ such that there exists $\sigma\colon U\times \widetilde M\to \mathbb X_{n+1}$ as in the statement. 
	
	Moreover, by fixing a point $p_0\in \widetilde M$ and setting $\Phi(\lambda, p_0)\equiv e_G$ for all $\lambda$, one gets that the 1-jet of the immersion $\sigma(\lambda, \cdot)$ at $p_0$ is the same for all $\lambda$. As a result, different $\sigma$'s defined on domains of the form $U\times\widetilde M$ with the same 1-jet on $p_0$ glue together, proving that $\sigma$ extends globally on $\Lambda \times \widetilde M$.
	
	It is now simple to check that the monodromy of $\sigma(\lambda, \cdot)$ is holomorphic with respect to $\lambda$. Indeed,
	for all $\alpha\in \pi_1(M)$, the map 
	\[mon_\alpha \colon \Lambda\to SO(n+2,\C) \]
	is defined by 
	\[\sigma( \lambda, \alpha(p))= mon_\alpha (\lambda)\circ \sigma (\lambda,p):\]
	since both $\sigma$ and $\sigma \circ \alpha$ satisfy the conditions in the statement of the Theorem for the data $(g,\Psi)$, by the previous step $mon_\alpha$ is holomorphic in $\lambda$.

	\iffalse
	Similarly as in the previous step, the map $mon_\gamma (\lambda)$ is uniquely defined by the image of the point $\sigma(\lambda, p)$ and by the image of vectors of the form $(d_p\sigma(\lambda, \cdot)) (X_i)$ where $(X_1, \dots, X_n)$ is a basis for $T_p M$: this way one gets a description of $mon_\gamma(\lambda)\in SO(n+2,\C)$ 
	\fi
\end{proof}

\begin{proof}[Proof of Corollary $\ref{cor: dipendenza olomorfa stessa dim}$]
	Since $g_\lambda$ is holomorphic in $\lambda$, the set $\{(g_\lambda, 0)\}_{\lambda\in \Lambda}$ is a holomorphic family of immersion data for $M$. Then, Theorem \ref{Teo dipendenza olomorfa} ensures that there exists $\overline\sigma\colon \Lambda\times \widetilde M\to \XXX_{n+1}$ with $\overline \sigma(\cdot, p)$ holomorphic and with $\overline \sigma(\lambda, \cdot)$ an immersion with immersion data $(g_\lambda, 0)$. 
	
	Denoting $\nu_\lambda$ as the normal vector field of $\sigma_\lambda$, one has that $\nu_\lambda$ is a constant independent from $p\in \widetilde M$, as we showed in the proof of Theorem \ref{teoremone stessa dimensione}.
	
	Now, one can construct a map $\phi\colon \Lambda \to \SO(n+2,\C)$ such that $\phi(\lambda) (\nu_\lambda)\equiv \begin{pmatrix}
	 0 & \dots & 0 & 1
	\end{pmatrix}$: indeed, clearly one can construct $\phi$ locally on $\Lambda$ by extending $\nu_\lambda$ to a basis of $\C^{n+2}$, then applying Gram-Schmidt algorith to get a orthonormal basis, and then choosing $\phi(\lambda)$ as the unique isometry of $(\C^{n+2}, \inners_0)$ that sends this basis to the canonical basis (one has $\phi(\lambda)\in \SO(n+2)$ up to rearranging the basis); to see that $\phi$ can be chosen globally, apply the standard analytic continuation methods we already used in the proofs of Theorem \ref{Teo dipendenza olomorfa} and of Theorem \ref{Teoremone}.
	
	Then, under the canonical inclusion $\XXX_n\to \XXX_{n+1}$ given by $p\mapsto (p,0)$, the map $\sigma\colon \Lambda\times \widetilde M \to \XXX_n$ defined by $\sigma(\lambda, p)= \psi(\lambda)\circ \sigma (\lambda, p)$ is the researched function. The monodromy of $\sigma(\lambda, \cdot)$ is holomorphic in $\lambda$ since $SO(n+1, \C)$ is a complex submanifold of $SO(n+2, \C)$.
\end{proof}

\section{An application: the complex landslide is holomorphic}
\label{Subsection complex landslide}

In this section, we would like to provide a direct application of Theorem \ref{Teo dipendenza olomorfa}: we show an alternative proof for the fact that the holonomy of the complex landslide, defined in \cite{cyclic}, is holomorphic. 

We briefly recall some basic notions on projective structures, the notions of landslide, smooth grafting and complex landslide. One can use as main references \cite{Dumas:complexprojectivestructures} and \cite{cyclic}.
\vspace{5mm}

\subsection{Projective structures and holonomy}
\label{subsec proj structures and holonomy}
Let us quickly recall the definition of $\PSL$-character variety, projective structures and of their holonomy. For a more complete survey on these topics, see \cite{Dumas:complexprojectivestructures}.

Let $S$ be an oriented closed surface of genus $\mathrm g\ge 2$.

Let us define the character variety of $\PSL$. The general definition of character variety is more complex but for immersions of $\pi_1(S)$ into $\PSL$ it has a simple description. The reader might refer to \cite{charctervariety1, charactervariety2, charactervariety3} for more detailed treatments.

Fixing a set of generators $\{\alpha_1, \dots, \alpha_{2\mathrm g}\}$, the set \[\mathcal R(\pi_1(S), \PSL):=\mathrm{Hom}(\pi_1(S), \PSL)\] can be seen naturally as a subset of $\PSL^{\times(2\mathrm g)}$, and inherits an affine complex structure.

Define on $\mathcal R$ the equivalence relation 
\[
\rho_1\sim \rho_2 \qquad \text{if and only if} \qquad tr^2(\rho_1(\alpha))=tr^2(\rho_2(\alpha)) \quad \text{for all $\alpha\in \pi_1(S)$}
\]
with $tr^2$ denoting the square of the trace of a matrix. This equivalence relation induces a quotient 
\begin{equation}
	\label{eq: def character variety}
	\mathcal X (S) := {\mathcal R(\pi_1(S), \PSL)} \sslash {\PSL}
\end{equation}
that we denote as \emph{the character variety of} $\PSL$. For a suitable choice of a set of generators of $\pi_1(S)$, the evaluation of $tr^2$ provides an injective map into $\C^N$: the image is an affine variety, and this allows us to say that $\mathcal X(S)$ is an algebraic variety.

\vspace{5pt}
Let us now focus on complex projective structures.
In terms of $(G, X)$-structures, a \emph{(complex) projective structure} on $S$ is a $(\PSL, \CP^1)$-structure on $S$. A projective structure induces a complex structure and an orientation, we will stick to projective structures compatible with the orientation on $S$. As a $(\PSL, \CP^1)$-structure, a projective structure is determined by a $(\pi_1(S), \PSL)$-equivariant developing map $\widetilde S \to \CP^1$, which is unique up to post-composition with elements in $\PSL$. We therefore define
\begin{equation*}
	\begin{split}
		\widetilde{\mathcal P}(S)&:= \{\text{projective structures on $S$}\}=\\
		& =\faktor{\left \{(f, \rho)\ \bigg|\ \begin{split}
				&\rho\colon \pi_1(S)\to \PSL,\\
				&f\colon \widetilde S \to \CP^1 \text{ $\rho$-equiv. orientation-preserving local diffeo} 
			\end{split}
			\right \}}{\PSL} \\
		\mathcal P(S) &:=\faktor{\widetilde{\mathcal P} (S)}{\text{Diff}_0(S)}
	\end{split}
\end{equation*}

Equip the set of pairs $\{(f,\rho)\}$ with the compact-open topology, and $\widetilde {\mathcal P}(S)$ and $\mathcal P(S)$ with the quotient topology. The space $\mathcal P(S)$ is a topological manifold, over which one can define  a smooth and complex structure - compatible with the topology - via the Schwarzian parametrization (e.g. see \cite{Dumas:complexprojectivestructures}).

Now, one can define the holonomy map
\begin{align*}
	\widetilde {Hol} \colon \widetilde{\mathcal P}(S)&\to \mathcal X(S)\\
	[(f,\rho)] &\mapsto [\rho]\ ,
\end{align*}
which passes to the quotient to a map
\[Hol\colon 	{\mathcal P}(S) \to 	\mathcal{X}(S).\]
In fact, one can prove that the image of $Hol$ lies into a Zariski-open subset of $\mathcal X (S)$ corresponding to representations $\pi_1(S)\to \PSL$ whose action on $\Hyp^3$ and $\partial\Hyp^3$ does not fix neither an internal point, nor a point in the boundary, nor an unoriented geodesic (\cite{charactervariety2}). As a result, the image of $Hol$ is contained in a smooth complex region of $\mathcal X(S)$. In this context, the following Theorem holds.

\begin{Theorem} (Hejhal \cite{Hejhal}, Hubbard \cite{Hubbard},  Earle \cite{Earle})
	\label{Hol e diffeo}
	The holonomy map $Hol$ is a local biholomorphism. 
\end{Theorem}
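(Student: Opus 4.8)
The plan is to prove the two facts that together yield the statement: that $Hol$ is holomorphic, and that its differential is everywhere a $\C$-linear isomorphism. Since, by the Schwarzian parametrization recalled above, $\mathcal P(S)$ is a complex manifold of dimension $6\mathrm g-6$, and the image of $Hol$ lies in the smooth locus of $\mathcal X(S)$ (where the dimension is also $6\mathrm g-6$, as noted before the statement), the holomorphic inverse function theorem will then give the conclusion.

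First I would establish holomorphicity. A point of $\mathcal P(S)$ is encoded by a pair $(X,\phi)$, with $X$ a Riemann surface structure on $S$ and $\phi$ a holomorphic quadratic differential on $X$, and the developing map $f\colon\widetilde S\to\CP^1$ is the projectivized ratio of two linearly independent solutions of the linear second–order ODE $u''+\tfrac12 q\,u=0$, where $q$ is the Schwarzian data determined by $\phi$ together with a fixed reference projective structure. The holonomy representation $\rho\colon\pi_1(S)\to\PSL$ is exactly the monodromy of this ODE. Because the solutions of a linear ODE with holomorphically varying coefficients depend holomorphically on those coefficients and on the initial conditions, the monodromy varies holomorphically with $(X,\phi)$; hence $\widetilde{Hol}$, and therefore $Hol$, is holomorphic. (This is the same mechanism of holomorphic dependence of monodromy on data exploited in Theorem \ref{Teo dipendenza olomorfa}.)

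Next I would identify the differential cohomologically. At a non-elementary class $[\rho]$ the tangent space to $\mathcal X(S)$ is the group cohomology $H^1(\pi_1(S),\asl_{\mathrm{Ad}\,\rho})$, which by the Eichler--Shimura isomorphism has complex dimension $(2\mathrm g-2)\cdot 3=6\mathrm g-6$. On the other hand, an infinitesimal deformation of a projective structure produces, through the variation of its $\rho$-equivariant developing map, an $\asl$-valued $1$-cocycle on $\pi_1(S)$; the assignment sending a deformation to the class of this cocycle is precisely $d\,Hol$. Since domain and target have equal dimension and $Hol$ is holomorphic, it then suffices to prove that $d\,Hol$ is injective.

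The hard part — and the genuine content of the theorem — is this injectivity. A class in $\ker(d\,Hol)$ is an infinitesimal deformation of $(f,\rho)$ whose holonomy cocycle is a coboundary; after correcting by the corresponding infinitesimal conjugation one is left with a variation of the developing map that is genuinely $\rho$-equivariant, hence descends to a section along $f$ of the pulled-back tangent sheaf of $\CP^1$. Because $\rho$ is non-elementary (as guaranteed on the image of $Hol$, recalled before the statement), such an equivariant field cannot acquire poles and must arise as the variation of an isotopy in $\mathrm{Diff}_0(S)$, so the deformation is trivial in $\mathcal P(S)$. I expect the delicate step to be exactly this passage from ``the holonomy cocycle is a coboundary'' to ``the infinitesimal deformation is tangent to the $\mathrm{Diff}_0(S)$-orbit'': this is where the non-elementarity of the holonomy and the rigidity of the developing map are genuinely used, and it is what makes $d\,Hol$ injective despite $Hol$ failing to be globally injective (e.g. under grafting).
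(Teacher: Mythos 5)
The paper does not prove this statement: Theorem \ref{Hol e diffeo} is quoted verbatim from Hejhal, Hubbard and Earle, so there is no internal proof to compare yours against. Judged on its own, your outline is the standard Hubbard-style argument and its skeleton is sound: holomorphicity via the Schwarzian parametrization and holomorphic dependence of the monodromy of $u''+\tfrac12 qu=0$ on $q$; identification of $T_{[\rho]}\mathcal X(S)$ with $H^1(\pi_1(S),\asl_{\mathrm{Ad}\rho})$ of complex dimension $6\mathrm g-6$ on the smooth locus singled out before the statement; and then injectivity of $d\,Hol$ plus equality of dimensions.

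Two remarks on the step you flag as delicate. First, the mechanism is in fact more elementary than your phrasing suggests: once the holonomy cocycle $\dot\rho\rho^{-1}$ is written as a coboundary $\xi-\mathrm{Ad}\rho(\cdot)\xi$ and the deformation is corrected by post-composition with $\exp(t\xi)$, the resulting variation $\dot f'$ is a genuinely $\rho$-equivariant section of $f^*T\CP^1$; since $f$ is an immersion (a local diffeomorphism), $\dot f'=df(w)$ for a unique $\pi_1(S)$-invariant vector field $w$ on $\widetilde S$, which descends to $S$ and is tangent to the $\mathrm{Diff}_0(S)$-orbit. No issue of ``poles'' arises — everything here is smooth, not meromorphic — so that phrase should be dropped. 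Second, the non-elementarity of $\rho$ is not what drives this cancellation; it is used earlier, to guarantee that $[\rho]$ is a smooth point of $\mathcal X(S)$ with $H^0(\pi_1(S),\asl_{\mathrm{Ad}\rho})=0$ (so that $\xi$ is unique and $H^1$ really is the tangent space of the correct dimension). The genuinely nontrivial analytic content, which your sketch passes over, is that the quotient $\mathcal P(S)=\widetilde{\mathcal P}(S)/\mathrm{Diff}_0(S)$ is a finite-dimensional complex manifold whose tangent space is computed by exactly this cocycle calculus — this is what the Schwarzian parametrization provides and where the real work in the cited references lies.
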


\subsection{The complex landslide}
\label{sec complex landslide}
Let $\Tau (S)$ be the Teichm\"uller space of $S$. 

In \cite{cyclic}, the authors define two smooth functions satisfying several interesting properties: the \emph{Landslide map}
\[
L\colon \Tau(S)\times \Tau(S)\times \R \to \Tau(S) \times \Tau(S)
\]
and the \emph{Smooth Grafting map}
\[
SGr\colon \Tau(S) \times \Tau(S)\times \R^+ \to \mathcal P(S)
\]
which turn out to have several interesting geometric properties and are related respectively to the earthquake map and to the grafting map. (For brevity we are not going to discuss earthquakes and grafting in this thesis.)

There are several equivalent definitions of $L$ and $SGr$, we mention the ones most fitted to our pourposes.

Let $h$ be a hyperbolic metric. As a consequence of some works by Schoen \cite{Schoen} and Labourie \cite{Labourie}, it has been proved that the map
\begin{equation}
	\label{eq: tensori b h-regular}
\begin{split}
	\left\{
	\begin{array}{l}
		b\colon TS\to TS\\
		\text{bundle isomorphism}\\
		\text{such that:}
	\end{array}\Bigg|
	\begin{array}{l}
		\text{ $h$-self adjoint,} \\
		d^{\nabla_h} b=0, \\
		det(b)=1
	\end{array} \right\}
	&\xrightarrow{\sim}  \Tau(S) \\
	b &\mapsto [h(b\cdot, b\cdot)]
\end{split} 
\end{equation}
is well-posed and bijective. We will denote the tensors in the domain as \emph{$h$-regular}.

\begin{itemize}
	\item Given $([h], [h(b\cdot, b\cdot)])\in \Tau(S)\times \Tau(S)$ with $b$ $h$-regular, define for all $t\in \R$ the operator \[\beta_t:= \cos(\frac t 2)id + \sin(\frac t 2)J b\] where $J$ is the complex structure induced by $h$. The \emph{landslide map} is defined as
	\[
	L([h], [h(b\cdot, b\cdot)], t)=
	\bigg( \Big[h\Big(\beta_t\ \cdot, \beta_t \cdot \Big)\Big], \Big[h\Big(\beta_{t+\pi}\ \cdot, \beta_{t+\pi}\ \cdot \Big)\Big]  \bigg)\]
	\item Given $([h], [h(b\cdot, b\cdot)])\in \Tau(S)\times \Tau(S)$, observe that for all $s\in\R$ the pair
	\begin{equation}
		\label{eq dati immersione SGr}
		(\I_s, B_s):=\bigg(\cosh^2\Big(\frac s 2\Big) h, - \tanh\Big(\frac s 2\Big) b \bigg)
	\end{equation}
	satisfies Gauss-Codazzi equations for $\Hyp^3$. As a result, for all $s$ there exists a unique $\pi_1 (S)$-equivariant isometric immersion
	\[
	\sigma_s\colon (\widetilde S, \widetilde g_s) \to \Hy^3 
	\]
	with shape operator $-\tanh\big( \frac s 2\big) \widetilde{B_s}$.

	We will discuss convex hypersurfaces in $\Hyp^{n+1}$ later on in Definition \ref{Def:convex immersion}. For now, let us just mention that $\sigma_s$ is convex, i.e.  the tensor $b_s$ is negative definite, if and only if $s> 0$, and, as a result of convexity (see Lemma \ref{lemma:convex gauss map diffeo}), for $s>0$ the map $\sigma_s$ induces a map
	\[
	G^+_{s}\colon \widetilde S \to \partial \Hy^3 \cong \CP^1
	\]
	which is a diffeomorphism onto its image and which is defined by $G^+_{s} (p)$ being the endpoint of the geodesic ray starting at $\sigma(p)$ with tangent direction $\nu(p)$. The map $G^+_{s}$ is equivariant inducing the same monodromy into $\PSL$ as $\sigma_s$, thus it defines a projective structure on $S$. 
	
	In conclusion, the \emph{smooth grafting} is defined as the map
	\[
	Sgr([h], [h(b\cdot, b\cdot)], s)= [G_s^+].
	\]
\end{itemize}
Finally, in the same article, the authors define a \emph{complex landslide} as the composition of landslides and smooth grafting: more precisely, for every hyperbolic metric $h$ and $h$-regular tensor $b$, they define a map
\begin{align}
	P_{h,b}\colon H&\to \mathcal P(S)\\
	z=t+is &\mapsto Sgr\bigg( L\Big([h], [h(b\cdot, b\cdot)], t\Big), s \bigg)
\end{align}
where $H\subset \C$ is the upper half-plane. 

\begin{Theorem}[Bonsante - Mondello - Schlenker]
	$P_{h,b}$ is holomorphic.
\end{Theorem}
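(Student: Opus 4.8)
The plan is to realize the complex landslide $P_{h,b}$ as the holonomy of a holomorphic family of immersions into $\XXX_3 \cong \SL$, and then invoke Theorem \ref{Teo dipendenza olomorfa} together with Theorem \ref{Hol e diffeo}. The key observation is that both ingredients of the complex landslide --- the real landslide $L$ (driven by the parameter $t$) and the smooth grafting $SGr$ (driven by $s$) --- should fit together into a single holomorphic family parametrized by $z = t + is$ in the upper half-plane $H \subset \C$. Following the statement of Theorem \ref{Teo alternativo landslide} as spelled out in the introduction, I would take the complex metrics
\[
g_z = h\bigg( (\cos(z)\,\mathrm{id} - \sin(z)\,Jb)\,\cdot,\ (\cos(z)\,\mathrm{id} - \sin(z)\,Jb)\,\cdot \bigg)
\]
and the corresponding shape operators $\Psi_z$, and check that $\{(g_z, \Psi_z)\}_{z \in H}$ is a holomorphic family of immersion data for $\pi_1(S)$-equivariant immersions $\widetilde S \to \SL$ with constant curvature $-1$.

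First I would verify that these data genuinely satisfy the Gauss-Codazzi equations of Corollary \ref{Cor Gauss-Codazzi PSL} for every $z$, and that the dependence on $z$ is holomorphic in the sense of Definition \ref{def: holomorphic immersion data}: since $\cos(z)$, $\sin(z)$ are entire and $g_z$, $\Psi_z$ depend polynomially on these, holomorphicity of $z \mapsto g_z(p)$ and $z \mapsto \Psi_z(p)$ is immediate once the construction is set up. The real structure should be arranged so that the restriction to the real axis $z = t \in \R$ reproduces the landslide family (producing immersions into $\Hyp^3$ or, after the half-turn, a pair of points in $\Tau(S)\times\Tau(S)$ via Theorem \ref{da X_n a space forms}), while the restriction to a vertical line matches the smooth grafting construction of Equation \eqref{eq dati immersione SGr}. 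The crucial compatibility to pin down is that for $z = t + is$ the Gauss map $G^+$ of the associated immersion into $\GGG \cong \XXX_2$ has developing map and monodromy exactly equal to those of the projective structure $SGr(L([h],[h(b\cdot,b\cdot)],t),s)$; here I would use Proposition \ref{Prop metrica in G} relating the induced complex metric on $\GGG$ to the immersion data $(\I, B)$ in $\Hyp^3$, identifying the landslide operator $\beta_t = \cos(t/2)\mathrm{id} + \sin(t/2)Jb$ with the appropriate reparametrization of $(\cos(z)\mathrm{id} - \sin(z)Jb)$.

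Once the family $\{(g_z, \Psi_z)\}_{z \in H}$ is identified as holomorphic immersion data, Theorem \ref{Teo dipendenza olomorfa} directly gives a smooth map $\sigma\colon H \times \widetilde S \to \SL$ that is holomorphic in $z$ and whose monodromy representation $z \mapsto mon(\sigma_z)(\alpha) \in SO(4,\C)$ is holomorphic for every $\alpha \in \pi_1(S)$. Translating through the isomorphism $\Isom_0(\SL) \cong \PSL \times \PSL$ of Proposition \ref{prop: Isom of SL}, the two factors of the monodromy yield the holonomies of the two boundary developing maps $G^+_z, G^-_z$ into $\partial\Hyp^3 \cong \CP^1$, each of which is a $\PSL$-representation depending holomorphically on $z$. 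Since the holonomy of $P_{h,b}(z)$ is precisely one of these monodromy factors (the one coming from $G^+_z$), it follows that $Hol \circ P_{h,b}$ is holomorphic into the character variety $\mathcal X(S)$. Finally, applying Theorem \ref{Hol e diffeo} --- that $Hol$ is a local biholomorphism --- and the fact that $P_{h,b}$ is continuous, I would conclude that $P_{h,b}$ itself is holomorphic by locally inverting $Hol$.

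The main obstacle I anticipate is the second step: carefully matching the intrinsic data $(g_z, \Psi_z)$ to the extrinsic landslide-and-grafting construction, i.e.\ proving that the Gauss map of the constant-curvature immersion associated to $g_z$ coincides with $SGr(L(\cdot,t),s)$ rather than merely producing \emph{some} holomorphically-varying projective structure. This requires an honest computation of the induced complex metric via Proposition \ref{Prop metrica in G}, keeping track of orientations and of the factor-of-two reparametrizations hidden in $\beta_t$ and in the definitions \eqref{eq dati immersione SGr}, and checking that the boundary value as $s \to 0^+$ recovers the correct projective structure. The holomorphicity conclusion, by contrast, is essentially automatic once Theorem \ref{Teo dipendenza olomorfa} is available, which is exactly the point of phrasing the result this way.
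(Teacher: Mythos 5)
Your proposal follows essentially the same route as the paper: identify the holonomy of $P_{h,b}(z)$ with the monodromy of the equivariant constant-curvature-$-1$ immersion of $\widetilde S$ into $\GGG$ determined by the complex metric $g_z$ (via the computation with Proposition \ref{Prop metrica in G} and the landslide operators $\beta_t$, which is exactly the "main obstacle" you flag), then invoke the holomorphic-dependence theorem (the paper uses its codimension-zero form, Corollary \ref{cor: dipendenza olomorfa stessa dim}, rather than a family $(g_z,\Psi_z)$ into $\SL$) and Theorem \ref{Hol e diffeo}. This is the paper's proof in all essentials.
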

\vspace{5mm}
The argument for the proof in \cite{cyclic} uses analytic methods. Here we provide an alternative proof using Theorem \ref{Teo dipendenza olomorfa} and Proposition \ref{Prop metrica in G}.

\begin{Theorem} 
	\label{Teo alternativo landslide}
	The holonomy of the projective structure $P_{h,b}(z)$ is equal to the monodromy of the immersion $\widetilde S\to \GGG$ with induced complex metric 
	\[
	g_z = h \bigg( (\cos(z)id -  \sin(z) Jb)\ \cdot,  (\cos(z)id -  \sin(z) Jb)\cdot \bigg)
	\]
	with constant curvature $-1$.
	
	As a consequence of Theorem \ref{Teo dipendenza olomorfa} and Theorem \ref{Hol e diffeo}, $P_{h,b}$ is holomorphic.
\end{Theorem}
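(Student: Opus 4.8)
The statement has two parts: an identification of the holonomy of $P_{h,b}(z)$ with the monodromy of a family of immersions into $\GGG$, and the holomorphicity conclusion. For the first part, the strategy is to interpret the complex landslide concretely through the immersion-into-$\GGG$ formalism developed in Section \ref{sec metrica complessa indotta in G}. Recall that the complex landslide at $z=t+is$ is obtained by first applying the landslide at angle $t$ (which replaces the tensor $b$ by $\beta_t = \cos(t/2)\,\mathrm{id} + \sin(t/2)\,Jb$, or rather feeds $h(\beta_t\cdot,\beta_t\cdot)$ into the construction) and then applying smooth grafting at parameter $s$. So first I would unwind these definitions: after the landslide at time $t$, the relevant data for smooth grafting at $s>0$ is the immersion $\sigma_{t,s}\colon \widetilde S\to\Hyp^3$ with immersion data $(\I_s,B_s)$ as in Equation \eqref{eq dati immersione SGr}, but computed with respect to the landslided hyperbolic metric and tensor. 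The key point is that smooth grafting outputs the projective structure $G^+_{s}$, whose holonomy equals the monodromy of $\sigma_{t,s}$, which in turn equals the monodromy of its Gauss map $G_{\sigma_{t,s}}\colon\widetilde S\to\GGG$.

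\textbf{The core computation.} The heart of the matter is to show that this Gauss map has exactly the pull-back complex metric $g_z$ stated in the theorem. Here I would invoke Proposition \ref{Prop metrica in G}: the pull-back metric of the Gauss map of an immersion into $\Hyp^3$ with immersion data $(\I,B)$ is $\I\big((\mathrm{id}-iJB)\cdot,(\mathrm{id}-iJB)\cdot\big)$. The plan is to substitute the landslided data into this formula and verify, through a trigonometric-hyperbolic identity, that the resulting complex metric coincides with
\[
g_z = h\big((\cos(z)\,\mathrm{id}-\sin(z)\,Jb)\cdot,\ (\cos(z)\,\mathrm{id}-\sin(z)\,Jb)\cdot\big).
\]
The mechanism should be the addition formulas $\cos(t+is)=\cos t\cosh s - i\sin t\sinh s$ and $\sin(t+is)=\sin t\cosh s + i\cos t\sinh s$, combined with the fact that $J$ and $b$ interact as they do in $\beta_t$ and in the grafting data: the factor $\mathrm{id}-iJB$ applied after the landslide should reassemble, via $\cosh(s/2)$ and $\tanh(s/2)$ scalings, into the single operator $\cos(z)\,\mathrm{id}-\sin(z)\,Jb$. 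I expect this to be where the real work lies, keeping careful track of the distinction between the complex structure $J$ (for the landslided metric, which may differ from the $J$ of $h$) and the original data; since $\beta_t$ is $h$-self-adjoint and commutes appropriately with $J$, the computation should close up. That $g_z$ has constant curvature $-1$ is automatic from the Gauss equation \eqref{GC2superfici}, since $\sigma_{t,s}$ immerses into $\Hyp^3$.

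\textbf{Holomorphicity.} Once the holonomy of $P_{h,b}(z)$ is identified with the monodromy of the immersion $\widetilde S\to\GGG\cong\XXX_2$ carrying the complex metric $g_z$ of constant curvature $-1$, I would finish as follows. The family $z\mapsto g_z$ is manifestly holomorphic in $z$ in the sense of Definition \ref{def: holomorphic immersion data}, because the operator $\cos(z)\,\mathrm{id}-\sin(z)\,Jb$ depends holomorphically on $z$ and $g_z$ is a fixed algebraic (quadratic) expression in it. By Theorem \ref{teoremone stessa dimensione}, each $g_z$ corresponds to an essentially unique equivariant isometric immersion $\widetilde S\to\XXX_2\cong\GGG$, and by Corollary \ref{cor: dipendenza olomorfa stessa dim} (the codimension-zero version of Theorem \ref{Teo dipendenza olomorfa}) the monodromy $z\mapsto mon(\sigma_z)$ is holomorphic into $SO(3,\C)\cong\PSL$. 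Thus $z\mapsto Hol(P_{h,b}(z))$ is holomorphic as a map into the character variety. Since by Theorem \ref{Hol e diffeo} the holonomy map $Hol\colon\mathcal P(S)\to\mathcal X(S)$ is a local biholomorphism, holomorphicity of $Hol\circ P_{h,b}$ together with continuity of $P_{h,b}$ upgrades to holomorphicity of $P_{h,b}$ itself. The main obstacle, and the step I would verify most carefully, is the trigonometric identification of the Gauss-map pull-back metric with $g_z$; once that is in hand, the holomorphicity is a clean application of the machinery already established.
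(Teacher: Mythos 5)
Your proposal is correct and follows essentially the same route as the paper: unwind the landslide/grafting definitions, apply Proposition \ref{Prop metrica in G} to the Gauss map of the immersion with data $(\cosh^2(s/2)\bar h_t,\tanh(s/2)\bar b_t)$, and reassemble the operator via the semigroup property $\beta_t\circ\beta_s=\beta_{t+s}$ (extended to complex arguments) while tracking the conjugated complex structure $\bar J_t=\beta_{-t}J\beta_t$, exactly as the paper does. The concluding holomorphicity argument via Corollary \ref{cor: dipendenza olomorfa stessa dim} and Theorem \ref{Hol e diffeo} is also the paper's.
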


\begin{proof}
	In order to make the notation more simple, observe that there exists a natural right action of the group $\Gamma( GL( TS))$, with the operation of composition pointwise, over the space $\Gamma(Sym^2(T^*S))$ given by
	\begin{equation}
		\label{eq: notazione twist tensori}
		\begin{split}
		\Gamma(Sym^2(T^*S)) \times\Gamma( GL( TS)) &\to	\Gamma(Sym^2(T^*S))\\
		\tau \ast \alpha &= \tau(\alpha \cdot, \alpha \cdot)\ ,
		\end{split}
	\end{equation}
	and the same holds replacing $TS$ with $\C TS$ and $T^*S$ with $\C T^* S$, and we will adopt the same notation.

	Let $z=t+is$ and let us try to compute the holonomy of $P_{h,b}(z)= Sgr\bigg( L\Big([h], [h(b\cdot, b\cdot)], t\Big), s \bigg)$. In order to do it, we need to compute some elements.
	
	\begin{itemize}
		\item 
		For all $x\in \R$, define $\bar h_x:= h \ast \beta_x$. Then
		\[L\Big([h], [h(b\cdot, b\cdot)], t\Big)= \bigg([\bar h_{t}], [\bar h_{t+\pi}] \bigg). \] 
		
		\item 
		By a straightforward computation on a $h$-orthonormal frame diagonalizing $b$, one gets that $JbJb=-id$, and as a consequence that
		\begin{align*}
			\beta_t \circ \beta_s&= ( \cos\Big(\frac t 2\Big) id +sin\Big(\frac t 2\Big) Jb )\circ ( \cos\Big(\frac s 2\Big) id +sin\Big(\frac s 2\Big) Jb )=\\
			&=\cos\Big(\frac{t+s} 2 \Big)id + \sin\Big(\frac{t+s} 2\Big)Jb=\\
			&= \beta_{t+s}
		\end{align*}
		for all $t,s\in \R$. Hence, one has that
		\begin{align*}
			\bar h_{t+\pi}&= h \ast \beta_{t+\pi}= (h \ast \beta_\pi)\ast \beta_t=\\
			&= (h\ast b) \ast \beta_t= (h\ast \beta_t)\ast (\beta_{-t}b \beta_t)=\\
			&=\bar h_t \ast  (\beta_{-t}b \beta_t).
		\end{align*}
		Let us define $\bar b_t:= (\beta_{-t}b \beta_t)$. One can directly prove that $\bar b_t$ is self-adjoint and Codazzi for $\bar h_t$ and has determinant $1$.
		\item By a direct computation, one gets that the complex structure of $\bar h_t$ is \[
		\bar J_{t}= \beta_{-t}J\beta_{t}.
		\]

	\end{itemize}
	In order to compute the monodromy of $Sgr([\bar h_{-t}], [\bar h_{-t+\pi}], s)$, consider, as in Equation \eqref{eq dati immersione SGr}, the immersion into $\Hy^3$ with immersion data $(\I_s, B_s):=(cosh^2(s)\bar h_{t}, \tanh(s)\bar b_{t})$ and apply Proposition \ref{Prop metrica in G} to conclude that it has the same monodromy as the induced immersion into $\GGG$ with pull-back metric
	\begin{align*}
		h_z:=\I_s \ast( id- i J_{\I_s} \psi_s)&=\cosh^2\Big(\frac s 2\Big)\  \overline h_t \ast \Big(id + i \tanh\Big(\frac s 2 \Big) \overline J_t \overline b_t\Big)=\\
		&= \bar h_t \ast\Big(\cosh\Big(\frac s 2\Big) id + i \sinh\Big(\frac s 2 \Big) \overline J_t \overline b_t \Big)=\\
		&= \bar h_t \ast\Big(\cosh\Big(\frac s 2\Big) id + i \sinh\Big(\frac s 2 \Big) \beta_{-t}Jb \beta_t\Big)=\\
		&=\bar h_t \ast \bigg(\beta_{-t} \circ \Big(\cosh\Big(\frac s 2\Big) id + i \sinh\Big(\frac s 2\Big) Jb \Big)\circ \beta_t  \bigg)=\\
		&=h\ast \bigg( (\cosh\Big(\frac s 2\Big) id + i \sinh\Big(\frac s 2\Big) Jb )\circ \beta_t \bigg)=\\
		&= h \ast \bigg( \Big( \cos\Big(i\frac s 2 \Big) id + \sin\Big(i\frac s 2 \Big) Jb \Big)\circ \Big( \cos\Big(\frac t 2 \Big) id + \sin\Big(\frac t 2 \Big) Jb \Big) \bigg) =\\
		&= h\ast \Big( \cos\Big(\frac z 2 \Big)id + \sin\Big(\frac z 2 \Big) Jb \Big).
	\end{align*}
	
	By construction, each $g_z$ is a complex metric of curvature $-1$, corresponding to an equivariant immersion of $\widetilde S$ into $\mathbb \GGG$ for all $z$.
	
	Clearly, for any $X, Y\in \Gamma(TS)$, \[g_z(X,Y)= \cos^2(z)h(X,X)+ \sin^2(z) h(bX,bY)+ \sin z \cos z ( h(X, JbY)+ h(JbX, Y ))\] is holomorphic in $z$, hence the family of complex metrics $\{g_z\}_{z\in \C}$ is holomorphic. 
	By Corollary \ref{cor: dipendenza olomorfa stessa dim}, there exists a map $\sigma\colon \C\times \widetilde S\to \GGG$ with $\sigma=\sigma(z,p)$ holomorphic in $z$ and with $\sigma_z$ providing pull-back metric $g_z$. 
	The monodromy of $g_z$, hence the holonomy of $P_{h,b}$, is holomorphic in $z$; by Theorem \ref{Hol e diffeo}, the projective structure $P_{h,b}(z)$ depends on $z$ holomorphicly.
\end{proof}

\chapter{A Gauss-Bonnet Theorem and a Uniformization Theorem for complex metrics}
\label{Section uniformization}

\section{Positive complex metrics}

In this section we suggest an intrinsic study of complex metrics on surfaces. To this aim, we will introduce a natural generalization of the concept of complex structure for surfaces and we will present a uniformization theorem in this setting.

In the following, $S$ denotes an oriented surface.

The natural inclusion $TS\hookrightarrow \C TS$ factors to a bundle inclusion
\[
\begin{tikzcd}
	\Proj_\R (TS) \arrow [rr, hook]\arrow[rd] & & \Proj_{\C} (\C TS) \arrow[ld]\\
	& S & 
\end{tikzcd}.\]

Fiberwise, for every $p\in S$, $\Proj_\R (T_p S)$ is mapped homeomorphicly into a circle in $\Proj_\C(\C T_p S)\cong S^2$ whose complementary is the disjoint union of two open discs. The conjugation map on $\C TS$ descends to a bundle isomorphism on $\Proj_\C(\C T S)$ that fixes $\Proj_\R(TS)$ and that swaps the two discs fiberwise. 
\begin{Definition}	
	\label{Def bicomplex structure}
	A \emph{bicomplex structure} on a surface $S$ is a tensor \textbf{J} $\in \Gamma(\C T^* S \otimes \C TS)$ such that
	\begin{itemize}
		\item $\textbf{J}^2 = \textbf{J} \circ \textbf{J}=-id_{\C TS}$; as a result, $\textbf{J}$ is diagonalizable with eigenvalues $\pm i$ and eigenspaces $V_i (\textbf{J}), V_{-i}(\textbf{J})\subset \C TS$ with complex dimension $1$.
		\item for all $p\in S$, the eigenspaces $V_i (\textbf J_p)$ and $V_{-i}(\textbf J_p)$ of $\textbf J_p$ have trivial intersection with $T_p S$ and are such that the points $\Proj_\C(V_{i}(\textbf J_p))$ and $\Proj_\C(V_{-i}(\textbf J_p))$ lie in different connected components of $\Proj_\C(\C T_p S)\setminus \Proj_\R (T_pS)$.
	\end{itemize}
\end{Definition}

\begin{Remark}
	Clearly complex structures extend to bicomplex structures. Observe that a bicomplex structure $\textbf{J}$ is not a complex structure for $S$ in general, since it may not restrict to a section of $TS \otimes T^*S$. In other words, $\textbf J(\overline X)\ne \overline {\textbf J(X)}$ in general.
	
	Nevertheless, $\textbf{J}$ induces two complex structures $J_1, J_2$ defined by the conditions
	\[
	V_i (\textbf{J})= V_i (J_2) =  \overline{V_{-i} (J_2)} \qquad \text{and} \qquad V_{-i} (\textbf{J})=V_{-i}(J_1)= \overline{V_{i} (J_1)}
	\] 
	which totally characterize $J_1$ and $J_2$.
	
	Also observe that $J_1$ and $J_2$ induce the same orientation. Indeed, representing the set of complex structures on $S$ as $C(S)\sqcup C(\overline S)$, the map 
	\begin{align*}
		C(S)\sqcup C(\overline S)  &\to \Proj_\C (\C T_p S)\setminus \Proj_\R (T_pS) \\
		J_0 &\mapsto \Proj_\C (V_i(J_0)_p)
	\end{align*}
	is continuous, hence it sends $C(S)$ and $C(\overline S)$ onto the two distinct connected component of the image.
	
	As a result, every bicomplex structure induces an orientation. An orientation of $S$ fixed, denote with $BC(S)$ the set of orientation-consistent bicomplex strucures. With the notations above, we therefore have a bijection 
	\begin{equation}
		\label{BC(S) to C(S)xC(S)}
		\begin{split}
			BC(S) &\xrightarrow\sim C(S)\times C(S)\\
			\mathbf {J} &\mapsto (J_1, J_2). 
		\end{split}
	\end{equation}
	Endow $BC(S)$ with the pull-back topology for which, in particular, it is connected.
\end{Remark}

\begin{Definition}
	\label{Def positive metrics}
	Let $g$ be a complex metric on $S$.
	\begin{itemize}
		\item	Denote the set of isotropic vectors of $g$ as
		\[
		ll(g):=\{v\in \C T S \ |\ g(v,v)=0 \}\setminus \{0_S\}
		\]
		and define $ll(g_p):=ll(g) \cap \C T_p S$. Notice that $ll(g_p)$ is the union of two complex lines with trivial intersection.
		\item
		A \emph{positive complex metric} on a surface $S$ is a complex metric $g$ such that the two points $\Proj_\C(ll(g_p))$ lie in different connected components of $\Proj_\C(\C T_p S)\setminus \Proj_\C (T_pS)$.
		\item Denote with $CM^+ (S)$ the space of positive complex metrics on $S$ with the usual $C^{\infty}$-topology for spaces of sections.
	\end{itemize}
\end{Definition}

For the rest of the chapter, we will mostly focus on positive complex metrics. The main interest lies in the fact that positive complex metrics locally determine a bicomplex structure.

\begin{Proposition}
	Let $g$ be a positive complex metric on a surface $S$.
	\begin{itemize}
		\item[1)] For all $p\in S$ there exist exactly two opposite endomorphisms $\textbf{J}_p, -\textbf{J}_p\in End(\C T_pS)$ such that 
		\begin{equation}
			\label{compatibility}
			\begin{cases}
				g_p(\textbf{J}_p\cdot,\textbf{J}_p \cdot)= g_p(\cdot, \cdot),\\
				\textbf{J}_p^2=-id_p
			\end{cases}.
		\end{equation}
		The two solutions $\textbf{J}_p, -\textbf{J}_p$ depend smoothly on $p$ and on $g_p$.
		\item[2)] For all $p_0\in S$, there exists a local neighbourhood $U=U(p_0)$ over which $\textbf{J}=\{\textbf{J}_p\}_{p\in U}$ and $-\textbf{J}=\{-\textbf{J}_p\}_{p\in U}$ define two bicomplex structures on $U$ and
		\[	
		ll(g_p)=V_i(\textbf{J}_p)\cup V_{-i} (\textbf{J}_p).
		\] 
		We will say that $\textbf{J}$ and $-\textbf{J}$ are (locally) \emph{compatible} with $g$.
		\item [3)] If $\textbf{J}$ is compatible with $g$ and $\textbf{J}\equiv (J_1,J_2)$ in correspondence $(\ref{BC(S) to C(S)xC(S)})$, then locally 
		\[
		g= f\ dw_1 \cdot d\overline w_2
		\]
		where $f$ is a $\C$-valued smooth function and $w_1$, $w_2$ are local holomorphic coordinates for the complex structures $J_1$ and $J_2$ respectively. 
		\item[4)]
		If $\textbf{J}$ is compatible with $g$ on $U$, for any norm-1 local vector field $X$ on $U$, a local $g$-orthonormal frame on $U$ is given by $\{X,\textbf JX\}$. 
	\end{itemize}
\end{Proposition}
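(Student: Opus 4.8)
The statement to prove is a four-part proposition characterizing positive complex metrics on surfaces via compatible bicomplex structures. Let me think about how I'd prove each part.

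The setup: $g$ is a positive complex metric on an oriented surface $S$. At each point $p$, the space $\C T_p S$ is a 2-dimensional complex vector space with a non-degenerate symmetric $\C$-bilinear form $g_p$. The isotropic set $ll(g_p)$ consists of two complex lines (the null directions), and positivity means their projectivizations lie on opposite sides of the real projective circle $\Proj_\R(T_p S)$ in $\Proj_\C(\C T_p S) \cong S^2$.

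**Part 1** (existence of $\pm\textbf{J}_p$): I need to find endomorphisms $\textbf{J}_p$ with $\textbf{J}_p^2 = -\mathrm{id}$ and preserving $g_p$. Let me think about the structure. Since $g_p$ is a non-degenerate symmetric bilinear form on $\C^2$, I can pick a null basis $(e_1, e_2)$ spanning the two isotropic lines, normalized so $g_p(e_1,e_2) = 1$ (possible up to scaling since $g_p(e_1,e_2)\neq 0$ by non-degeneracy). In this basis $g_p = \begin{pmatrix} 0 & 1 \\ 1 & 0\end{pmatrix}$ (up to the factor of 2 in the definition). An endomorphism $\textbf{J}_p$ preserving $g_p$ and squaring to $-\mathrm{id}$ must be $g_p$-orthogonal with $\det = 1$, hence in $SO(2,\C)$; the condition $\textbf{J}_p^2 = -\mathrm{id}$ forces $\textbf{J}_p$ to be the rotation by $\pm \pi/2$. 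In the null basis this is diagonal: $\textbf{J}_p(e_1) = \pm i\, e_1$, $\textbf{J}_p(e_2) = \mp i\, e_2$. So exactly two solutions exist, and the eigenlines of $\textbf{J}_p$ are precisely the two isotropic lines. Smoothness in $p$ and $g_p$ follows because the null directions vary smoothly (they are the roots of a smooth quadratic polynomial whose discriminant is nonzero by non-degeneracy/positivity), and the normalization can be made smoothly locally.

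**Parts 2–4**: For part 2, I need to verify that $\textbf{J}$ (with a consistent sign choice on a neighborhood $U$) is genuinely a \emph{bicomplex structure} in the sense of Definition \ref{Def bicomplex structure}: its eigenspaces $V_{\pm i}(\textbf{J}_p)$ must meet $T_p S$ trivially and must projectivize to opposite connected components of $\Proj_\C(\C T_p S) \setminus \Proj_\R(T_p S)$. But by part 1 these eigenspaces are exactly the isotropic lines $ll(g_p)$, so this is precisely the positivity hypothesis on $g$. The consistent local sign choice exists because the two solutions $\pm\textbf{J}_p$ vary smoothly and $U$ is chosen simply connected (or small). For part 3, writing $\textbf{J} \equiv (J_1, J_2)$ via the bijection \eqref{BC(S) to C(S)xC(S)}, and taking local holomorphic coordinates $w_1$ for $J_1$ and $w_2$ for $J_2$, the null directions $V_{\pm i}(\textbf{J})$ are exactly $\ker dw_1$ type directions; so $g$ must be a multiple of $dw_1 \cdot d\overline{w_2}$ (the only symmetric bilinear form, up to scale, with those two prescribed null lines). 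The scalar $f$ is recovered smoothly. Part 4 is immediate linear algebra: given a unit vector $X$ (i.e. $g(X,X)=1$), the vector $\textbf{J}X$ satisfies $g(\textbf{J}X,\textbf{J}X) = g(X,X) = 1$ by compatibility, and $g(X,\textbf{J}X) = g(\textbf{J}X, \textbf{J}^2 X) = -g(\textbf{J}X, X)$, forcing $g(X,\textbf{J}X)=0$; so $\{X, \textbf{J}X\}$ is $g$-orthonormal, and it is a frame since $\textbf{J}$ is invertible.

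The main obstacle I anticipate is \textbf{not} the pointwise linear algebra (which is clean in a null basis) but rather keeping the \emph{smoothness and the identification with honest complex structures} under control in part 3: one must argue that the eigenline fields $V_{\pm i}(\textbf{J})$ are smooth integrable distributions giving rise to genuine complex structures $J_1, J_2$ with local holomorphic coordinates, and that $g$ then takes the stated product form $f\, dw_1 \cdot d\overline{w_2}$. I would handle this by first establishing the pointwise normal form, then observing that the coordinate vector fields $\partial_{w_1}, \partial_{\overline{w_2}}$ span the two isotropic lines, so that in the coframe $(dw_1, d\overline{w_2})$ the bilinear form has vanishing $dw_1^2$ and $d\overline{w_2}^2$ components, leaving only the cross term, whose coefficient $f$ is smooth and nonvanishing by non-degeneracy. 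The positivity hypothesis is exactly what guarantees, via the bijection \eqref{BC(S) to C(S)xC(S)}, that $J_1$ and $J_2$ are both orientation-consistent complex structures rather than one of each orientation, so that genuine holomorphic coordinates exist.
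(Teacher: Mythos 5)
Your proof is correct and follows essentially the same route as the paper's: a pointwise normal-form argument for part 1 (the paper solves $A^2=-I_2$, $^tA A=I_2$ in a $g$-orthonormal frame, getting the constant solutions $\pm\left(\begin{smallmatrix}0&1\\-1&0\end{smallmatrix}\right)$, while you diagonalize in a null basis — a cosmetic difference that makes the identification of the eigenlines with $ll(g_p)$ slightly more immediate but requires a short extra word on smoothness), the positivity hypothesis feeding directly into Definition \ref{Def bicomplex structure} for part 2, the fact that a $\C$-bilinear form on a two-dimensional complex space is determined up to scale by its isotropic directions for part 3, and the same two-line computation for part 4. There are no gaps.
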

\begin{proof}
	\begin{itemize}
		\item[1)]  With respect to two local $g$-orthonormal vector fields $X_1$ and $X_2$, $\textbf{J}_p$ is represented by a matrix $A_p\in Mat(2,\C)$ satisfying:
		\[
		\begin{cases}
			A_p^2=-I_2\\
			^tA_p A_p= I_2
		\end{cases},
		\]
		which admits exactly two opposite constant solutions for $A_p$, namely $A_p=\pm \begin{pmatrix} 0 &1\\-1 &0 \end{pmatrix}$, so the two solutions $\textbf J_p$ and $-\textbf J_p$ are smooth.
		\item[2)] The equation follows directly from $(\ref{compatibility})$. Since $g$ is positive, $\Proj_\C(V_{i}(\textbf{J}_p))$ and $\Proj_\C(V_{-i}(\textbf{J}_p))$ lie in different connected components of $\Proj_\C(\C T_p S)\setminus \Proj_\R (T_pS)$, so $\textbf{J}$ and $-\textbf{J}$ are bicomplex structures.
		
		\item[3)] A $\C$-bilinear form on a 2-dimensional $\C$-vector space is determined up to a constant by its isotropic directions. For local holomorphic coordinates $w_1,w_2$ for $J_1$ and $J_2$, we have
		\[
		V_i (\textbf{J})= V_i (J_2)= \C \frac{\partial}{\partial w_2} \qquad V_{-i} (\textbf{J})= V_{-i} (J_1)= \C \frac{\partial}{\partial \overline w_1}, 	
		\]
		hence the locally defined complex metrics $g$ and $ dw_1 \cdot d\overline  w_2= \frac 1 2 (dw_1 \otimes d\overline  w_2+ d\overline  w_2\otimes dw_1)$ are pointwise equal up to a constant since they have the same isotropic directions. Explicitly, we have 
		\[
		g= g\bigg(\frac {\partial}{\partial w_1}, \frac {\partial}{\partial \overline w_2} \bigg) dw_1 d\overline w_2
		\]
		\item[4)]  The thesis follows directly by the compatibility of $\textbf{J}$ with $g$.
	\end{itemize}
\end{proof}

We just showed that positive complex metrics locally define a bicomplex structure. In fact, the obstruction to the existence of a global compatible bicomplex structure is only a matter of orientability.

\begin{Theorem}
	\label{Teorema orientazione}
	Let $g$ be a positive complex metric on a surface $S$. 
	
	The following are equivalent:
	\begin{itemize}
		\item $S$ is orientable;
		\item there exists a global positive bicomplex structure $\textbf{J}$ on $S$ compatible with $g$;
		\item there exists a complex $1$-form $dA\in \Omega^2 (\C TS, \C)$, unique up to a sign, such that $\|dA\|^2_g=1$, i.e. for all local $g-$orthonormal frame $(X_1,X_2)$ one has $\nobreak{dA(X_1,X_2)\in\{-1,1\} }$.
		
		We will call $dA$ the \emph{area form} of $g$.
	\end{itemize}
\end{Theorem}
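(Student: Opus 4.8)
The statement asserts the equivalence of three conditions for a positive complex metric $g$ on a surface $S$: orientability of $S$, existence of a global compatible bicomplex structure, and existence of an area form. I would prove this as a cycle of implications, leveraging the local results already established in the preceding Proposition, which guarantees that near each point there exist exactly two opposite compatible bicomplex structures $\pm\mathbf{J}$, each depending smoothly on $p$ and on $g_p$.

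\emph{Orientability implies a global compatible $\mathbf{J}$.} First I would fix an orientation on $S$. The preceding Proposition produces, on a neighbourhood $U$ of each point, two local compatible bicomplex structures $\mathbf{J}$ and $-\mathbf{J}$. By the discussion following Definition \ref{Def bicomplex structure}, every bicomplex structure induces an orientation via the bijection $BC(S)\cong C(S)\times C(S)$; hence exactly one of the two local solutions $\pm\mathbf{J}$ is consistent with the fixed orientation on $U$. This singles out a canonical local choice, and on overlaps the two canonical choices must agree since they induce the same orientation and solve the same pointwise equations \eqref{compatibility}, whose only solutions are $\pm\mathbf{J}_p$. The smooth dependence on $g_p$ guaranteed by part (1) of the Proposition ensures the glued object is a global smooth section, hence a global compatible positive bicomplex structure.

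\emph{A global $\mathbf{J}$ yields an area form, and an area form yields orientability.} Given a global compatible $\mathbf{J}$, for any local $g$-orthonormal frame I would use part (4) of the Proposition: $\{X,\mathbf{J}X\}$ is $g$-orthonormal whenever $X$ has norm $1$. I then define $dA$ by declaring $dA(X,\mathbf{J}X)=1$; multilinearity and the fact that any two oriented $g$-orthonormal frames are related by a "rotation" fixing $\mathbf{J}$ (i.e.\ by the matrices $\pm I_2$ and the $SO(2,\C)$-type transformations preserving $\mathbf{J}$) shows $dA$ is well-defined up to a sign globally and satisfies $\|dA\|_g^2=1$. Conversely, a nowhere-vanishing $2$-form $dA$ on a surface is exactly the data of a volume form, which immediately orients $S$; uniqueness up to a sign reflects the two orientations. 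The routine content is checking that the local definitions of $dA$ patch into a genuine global $2$-form and that the normalization $dA(X_1,X_2)\in\{-1,1\}$ is frame-independent.

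\textbf{Main obstacle.} The only genuine subtlety is the gluing step: ensuring that the \emph{canonical} local choices of $\mathbf{J}$ (or of $dA$) agree on overlaps, i.e.\ that the sign ambiguity $\pm\mathbf{J}$ can be resolved \emph{globally} precisely when $S$ is orientable. The cleanest way to handle this is to observe that the sign choice is governed by a locally constant $\mathbb{Z}/2$-valued cocycle measuring compatibility with the orientation, which is trivial exactly when a global orientation exists; positivity of $g$ is what guarantees that the two isotropic directions lie in different components of $\Proj_\C(\C T_pS)\setminus\Proj_\R(T_pS)$, so that the two candidate bicomplex structures are genuinely distinguished and the orientation they induce is well-defined. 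I expect everything else to be a direct unwinding of the definitions and of the Proposition already proved.
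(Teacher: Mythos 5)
Your first two implications (orientability $\Rightarrow$ global $\mathbf J$, and global $\mathbf J$ $\Rightarrow$ area form) match the paper's argument: locally there is a unique compatible bicomplex structure inducing the fixed orientation, so the local choices glue, and then $dA=g(\mathbf J\cdot,\cdot)$ (equivalently your normalization $dA(X,\mathbf JX)=1$) is a global norm-one $2$-form. The gap is in the closing implication. You write that ``a nowhere-vanishing $2$-form on a surface is exactly the data of a volume form, which immediately orients $S$,'' but $dA$ is a \emph{complex-valued} form in $\Omega^2(\C TS,\C)$: on a real basis of $T_pS$ it takes values in $\C^*$, which is connected, so there is no preferred ``positive'' half-line and a nowhere-vanishing complex $2$-form does not by itself single out an orientation. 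The real-volume-form argument simply does not apply here.

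The correct route back, which is what the paper does, is to recover $\mathbf J$ from $dA$ rather than to read off an orientation directly: since $\dim_\C\bigwedge^2\C T_p^*S=1$, the only elements of unit $g$-norm at $p$ are $\pm g(\mathbf J_p\cdot,\cdot)$, so a global $dA$ with $\|dA\|_g^2=1$ forces a global coherent choice of sign, i.e.\ $dA=g(\mathbf J\cdot,\cdot)$ for a unique globally defined compatible bicomplex structure $\mathbf J$; this $\mathbf J$ then induces the orientation via the identification $BC(S)\cong C(S)\times C(S)$. This also gives the ``unique up to sign'' clause for free. With that replacement your cycle of implications closes; the rest of your proposal, including the $\Z/2$-cocycle remark about resolving the sign ambiguity of $\pm\mathbf J$, is consistent with the paper's proof.
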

\begin{proof}
	As we showed, a bicomplex structure induces two complex structures which induce an orientation on $S$. Conversely, given an orientation on $S$, locally there exists a unique bicomplex structure compatible with $g$ and inducing the same orientation, hence there exists a unique global one.
	
	If $\textbf J$ is a bicomplex structure compatible with $g$, then $dA=g(\textbf J\cdot, \cdot)$ is a  2-form of norm 1. Conversely, if $\textbf J$ is a local bicomplex structure compatible with $g$, the local $2$-forms $g(\textbf J\cdot, \cdot)$ and $g(-\textbf J\cdot, \cdot)$ are opposite, have module 1 and locally they are the only $2$-forms of $g$-module 1, since $dim_\C\bigwedge^2 \C T_pS=1$: as a result, for a given $dA$, one has $dA=g(\textbf J \cdot, \cdot)$ for a unique global $\textbf J$.
	
\end{proof}
We will say that $\{X, Y \}$ is a \emph{positive orthonormal frame} for $\C TS$ with respect to the orientation induced by $\textbf{J}$ if $\textbf J (X)=Y$. 

With the language of fiber bundles, a positive complex metric on a surface $S$ provides a reduction of the frame bundle of $\C TS \to S$ to the structure group $O(2, \C)$, while the pair of a positive complex metric and an orientation on $S$ provides a reduction to $SO(2,\C)$.

\begin{Cor}
	Given an oriented surface $S$, the map that sends a positive complex metric $g$ to the orientation-consistent bicomplex structure $\textbf{J}$ compatible with $g$ induces a homeomorphism
	\begin{equation}
		\label{conformal structures and BC}
		\begin{split}
			\faktor{CM^+(S)}{C^{\infty} (S, \C^*)}&\xrightarrow{\sim}   BC(S) \cong C(S)\times C(S)\\
			C^{\infty} (S, \C^*)\cdot g &\mapsto \textbf{J}_g
		\end{split}
	\end{equation}
	Conformal structures of Riemannian metrics correspond to elements of the diagonal of $C(S)\times C(S)$.
\end{Cor}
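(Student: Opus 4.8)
The plan is to exhibit the displayed map as a well-defined bijection and then promote it to a homeomorphism using the topologies already introduced. First I would check that the map is well-defined: two positive complex metrics $g$ and $g'$ that differ by multiplication by a nowhere-vanishing smooth function $f\in C^\infty(S,\C^*)$ have exactly the same isotropic directions $ll(g_p)=ll(g'_p)$ at every point, since scaling a $\C$-bilinear form does not change its null cone. By Theorem \ref{Teorema orientazione} and the preceding Proposition, the orientation-consistent bicomplex structure $\textbf{J}_g$ is determined pointwise by $g$ only through the datum of the two isotropic lines $\Proj_\C(ll(g_p))$ together with the fixed orientation (which selects $\textbf{J}$ over $-\textbf{J}$). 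Hence $\textbf{J}_{fg}=\textbf{J}_g$, and the assignment descends to the quotient $\faktor{CM^+(S)}{C^\infty(S,\C^*)}$.

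Next I would establish bijectivity. Surjectivity: given any orientation-consistent bicomplex structure $\textbf{J}\equiv(J_1,J_2)$ under the identification \eqref{BC(S) to C(S)xC(S)}, I would produce a positive complex metric compatible with it. Locally one sets $g=dw_1\cdot d\overline w_2$ in holomorphic coordinates $w_1,w_2$ for $J_1,J_2$, as in part 3) of the Proposition; these local models agree up to a nonvanishing smooth factor on overlaps (since on overlaps the isotropic directions, hence the conformal class of the metric, coincide), so a partition of unity glues them into a global positive complex metric $g$ with $\textbf{J}_g=\textbf{J}$. Injectivity: if $\textbf{J}_{g}=\textbf{J}_{g'}$, then $g$ and $g'$ have the same isotropic directions at every point, so by the same elementary linear algebra used in part 3) of the Proposition (a $\C$-bilinear form on a $2$-dimensional space is determined up to scalar by its null cone), we have $g'_p=f(p)g_p$ for a function $f$, and $f$ is smooth and nowhere zero by smoothness and nondegeneracy of $g,g'$; thus $[g]=[g']$ in the quotient. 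The final identification $BC(S)\cong C(S)\times C(S)$ is exactly the bijection \eqref{BC(S) to C(S)xC(S)}.

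It remains to verify that the bijection and its inverse are continuous, which is where I expect the only real work to lie. The quotient $\faktor{CM^+(S)}{C^\infty(S,\C^*)}$ carries the quotient of the $C^\infty$-topology on $CM^+(S)$, while $BC(S)$ carries the topology pulled back from $C(S)\times C(S)$; I would show continuity by checking it against these. Continuity of $g\mapsto\textbf{J}_g$ follows from part 1) of the Proposition, which asserts that the solutions $\pm\textbf{J}_p$ of the system \eqref{compatibility} depend smoothly on $g_p$: smooth dependence of the structure tensor on the metric gives continuity in the $C^\infty$-topologies, and this passes to the quotient by the universal property. For continuity of the inverse I would use the local construction above: the gluing via partition of unity produces, from a continuously varying family of bicomplex structures, a continuously varying representative complex metric, so the induced map $C(S)\times C(S)\to \faktor{CM^+(S)}{C^\infty(S,\C^*)}$ is continuous.

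The main obstacle is the continuity of the inverse map rather than its mere well-definedness: one must argue that the partition-of-unity gluing can be performed so that the resulting representative metric depends continuously (in the $C^\infty$-topology modulo $C^\infty(S,\C^*)$) on the input $(J_1,J_2)$, uniformly enough that no discontinuity is introduced at the overlaps. Since the ambiguity in the local models is precisely a $C^\infty(S,\C^*)$-factor, this ambiguity is invisible in the quotient, which is what makes the argument go through; the careful point is to phrase the gluing so that this invisibility is manifest. The final sentence about Riemannian conformal structures corresponding to the diagonal of $C(S)\times C(S)$ is then immediate: a positive complex metric is (the complexification of) a Riemannian metric exactly when its two isotropic directions are complex conjugate, i.e. when $V_i(J_2)=\overline{V_{i}(J_1)}$, which under \eqref{BC(S) to C(S)xC(S)} says $J_1=J_2$.
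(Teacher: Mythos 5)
Your well-definedness and injectivity arguments are fine, but the surjectivity step has a genuine gap. You propose to glue the local models $dw_1\cdot d\overline w_2$ by a partition of unity, observing that on overlaps two such models differ by a nowhere-vanishing smooth factor. The problem is that these transition factors take values in $\C^*$, which is not convex: if $g_i=f_{ij}\,g_j$ on $U_i\cap U_j$ with $f_{ij}$ a nowhere-zero \emph{complex} function (concretely $f_{ij}=\phi'\,\overline{\psi'}$ for the holomorphic transition maps of the two atlases, whose arguments are unconstrained), then $\sum_i\rho_i g_i=\bigl(\sum_i\rho_i f_{ij}\bigr)g_j$ and the coefficient $\sum_i\rho_i f_{ij}$ can vanish. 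So the partition-of-unity gluing can produce a \emph{degenerate} tensor, and the resulting $g$ need not be a complex metric at all. This also undermines your continuity-of-the-inverse discussion, which leans on the same gluing.

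The paper sidesteps this entirely with a single global formula: fix a real volume form $\alpha\in\Omega^2(TS)$ (available since $S$ is oriented), extend it $\C$-bilinearly, and set $g:=\alpha(\textbf{J}\cdot,\cdot)+\alpha(\cdot,\textbf{J}\cdot)$. One checks on a basis of eigenvectors of $\textbf{J}$ that this is nondegenerate and has exactly $V_{\pm i}(\textbf{J})$ as isotropic directions, hence is a positive complex metric inducing $\textbf{J}$; continuity of the inverse is then immediate from the formula, and taking $\alpha$ real also gives the identification of the diagonal with Riemannian conformal classes directly. To repair your argument you would either need to adopt such a global construction, or replace the naive convex combination by a genuine argument that the relevant $\C^*$-bundle of compatible metrics admits a global section (it does, but that requires an argument --- e.g. the one the paper gives).
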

\begin{proof}
	We showed that the map is continuous. Let us construct an inverse. Fix a real volume form $\alpha\in \Omega^2(TS)$, which extends by $\C$-bilinearity to a form in $\Omega^2(\C TS,\C)$. For any bicomplex structure $\textbf{J}$, define the symmetric tensor
	\[g:= \alpha(\textbf{J}\cdot, \cdot)+ \alpha(\cdot, \textbf{J} \cdot):\]
	$g$ is a complex metric as one can check on any basis $Y_i, Y_{-i}$ of eigenvectors for $\textbf{J}$ with different wigenvalues; moreover $g$ induces $\textbf{J}$ in turn, hence it is a positive complex metric.
	
	By choosing $\alpha$ as a real volume form for $S$, one gets an explicit correspondence between the diagonal of $C(S)\times C(S)$ and the set of conformal structures of Riemannian metrics.
\end{proof}

Finally, we remark that Proposition \ref{surfaces in G first part} can be restated in an interesting way for positive complex metrics in terms of developing maps for projective structures. 

\begin{Theorem}
	\label{Totally geodesic immersions}
	Let $S$ be an oriented surface and $g$ a positive complex metric on $S$. Denote with $(\widetilde S, \widetilde g)$ the universal cover. 
	
	$(S,g)$ has constant curvature $-1$ if and only if there exists a smooth map \[(f_1, f_2)\colon \widetilde S \to \GGG=\CP^1\times \CP^1 \setminus\Delta\] such that: 
	\begin{itemize}
		
		\item 
		$\widetilde g=(f_1, f_2)^* \inners=-\frac{4}{(f_1 - f_2)^2} df_1 \cdot df_2$
		\item $f_1$ and $f_2$ are local diffeomorphisms, respectively preserving and reversing the orientation.
		\item $f_1$ and $f_2$ are $(\pi_1 (S), \PSL)$-equivariant local diffeomorphisms inducing two projective structures with the same representation \[hol_{f_1}=hol_{f_2}=mon_{(f_1, f_2)}\colon \pi_1 (S)\to\PSL
		\]
		
	\end{itemize}
	Moreover, if $\textbf{J}$ is the bicomplex structure on $S$ induced by $g$, then, in identification (\ref{BC(S) to C(S)xC(S)}),
	\[
	\textbf{J} \equiv (f_1 ^* (\mathbb{J}^{\CP^1}), f_2 ^* (-\mathbb{J}^{\CP^1}) )
	\] 
	where $\mathbb{J}^{\CP^1}$ is the standard complex structure on $\CP^1$. 
	\begin{proof}
		As we observed in Proposition \ref{surfaces in G first part}, the fact that the complex metric $g$ is positive implies that $f_1$ and $f_2$ are local diffeomorphisms. 
		
		Let $w_k$ be a local holomorphic chart on $p\in S$ for the pull-back complex structure $f_k^* \mathbb{J}^{\CP^1}$. Then, by Cauchy-Riemann equations,
		\[
		Ker(df_k \colon \C T_pS \to \C)= Span_\C \Big(\frac{\partial}{\partial \overline {w_k} }_{|p} \Big)
		\]
		so by the explicit description of the metric one has
		\[ll(g_p)=\bigg(
		Span_\C \Big(\frac{\partial}{\partial \overline {w_1} }_{|p} \Big)\oplus Span_\C \Big(\frac{\partial}{\partial \overline {w_2} }_{|p} \Big) 
		\bigg) \setminus \{0_p\}
		\]
		and the two points of $\Proj_\C(ll(g_p))$ lie in two different components of $\Proj_\C (\C T_p S)\setminus \Proj_\R (T_pS)$ if and only if the two pull-back complex structures via $f_1$ and $f_2$ induce opposite orientations. 
		
		The choice of $(f_1, f_2)$ inducing the same $\widetilde g$ is unique up to post-composition with elements of $\PSL$, since the composition with the diagonal swap  $s:(x,y)\mapsto(y,x)$ on $\GGG$ would give an immersion with orientation-reversing first component.
		
		All the elements of $\pi_1(S)$ are orientation-preserving, hence $\sigma$ is $(\pi_1(S),\PSL)$-equivariant and, since $\PSL$ acts diagonally on $\GGG$, both $f_1$ and $f_2$ are equivariant with the same holonomy.
		
		From the description of $ll(g)$ above, one can conclude the last part of the theorem.
	\end{proof}
\end{Theorem}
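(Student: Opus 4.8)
The final statement to prove is Theorem \ref{Totally geodesic immersions}, which characterizes constant curvature $-1$ positive complex metrics on an oriented surface in terms of a pair of equivariant developing maps into $\CP^1$ with a common holonomy representation.

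My plan is to bootstrap from Proposition \ref{surfaces in G first part}, which already established the dictionary between complex metrics of constant curvature $-1$ on $\widetilde S$ and admissible isometric immersions $(f_1,f_2)\colon \widetilde S \to \GGG = \CP^1 \times \CP^1 \setminus \Delta$ that are $(\pi_1(S), \Isom(\GGG))$-equivariant, with pull-back metric $-\frac{4}{(f_1-f_2)^2}df_1\,df_2$. The new content here is twofold: first, that \emph{positivity} of $g$ is exactly what upgrades ``$f_1,f_2$ are local diffeomorphisms'' to the refined statement that $f_1$ preserves orientation and $f_2$ reverses it; second, that the monodromy actually lands in $\PSL$ (rather than $\PSL \times \Z_2$) so that both $f_1$ and $f_2$ are genuine developing maps for projective structures with the same holonomy.

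The core of the argument is a pointwise computation of the isotropic cone $ll(g_p)$ in terms of the kernels of $df_1$ and $df_2$. First I would pick, around a point $p$, local holomorphic charts $w_k$ for the pulled-back complex structures $f_k^*\JJJ^{\CP^1}$, and invoke the Cauchy--Riemann equations to identify $\ker(df_k\colon \C T_p S \to \C) = \mathrm{Span}_\C(\partial/\partial\overline{w_k})$. Since the metric reads $g = -\tfrac{4}{(f_1-f_2)^2}\,df_1\cdot df_2$, a vector is isotropic precisely when $df_1$ or $df_2$ kills it, giving $ll(g_p) = \big(\mathrm{Span}_\C(\partial/\partial\overline{w_1}) \oplus \mathrm{Span}_\C(\partial/\partial\overline{w_2})\big)\setminus\{0_p\}$. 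Then the \emph{positivity} condition in Definition \ref{Def positive metrics} — that the two points $\Proj_\C(ll(g_p))$ lie in different components of $\Proj_\C(\C T_pS)\setminus \Proj_\R(T_pS)$ — translates exactly into the statement that the two pulled-back complex structures induce opposite orientations, i.e. $f_1$ preserves and $f_2$ reverses orientation. This same identification of $ll(g)$ with the eigenline decomposition of the compatible bicomplex structure yields the final formula $\textbf{J} \equiv (f_1^*(\JJJ^{\CP^1}), f_2^*(-\JJJ^{\CP^1}))$ via the correspondence \eqref{BC(S) to C(S)xC(S)}.

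For the monodromy refinement, I would argue as follows: the isometry group $\Isom(\GGG)$ contains $\PSL$ acting diagonally together with the orientation-swapping involution $s\colon(x,y)\mapsto(y,x)$. Because every element of $\pi_1(S)$ acts on $\widetilde S$ preserving orientation, no deck transformation can be sent to an isometry involving $s$ (which would force the first component to reverse orientation, contradicting what we just proved about $f_1$); hence the monodromy $mon_{(f_1,f_2)}$ takes values in $\PSL$, and since $\PSL$ acts diagonally, $f_1$ and $f_2$ are both equivariant with the single common holonomy $hol_{f_1} = hol_{f_2}$. The uniqueness of $(f_1,f_2)$ up to post-composition with $\PSL$ follows from the essential uniqueness in Proposition \ref{surfaces in G first part}, discarding the branch obtained by applying $s$ (which would reverse the orientation of the first factor and hence violate positivity). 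The main obstacle I anticipate is bookkeeping the orientation conventions carefully and connecting, with no sign ambiguity, the abstract positivity condition on $\Proj_\C(\C T_pS)\setminus\Proj_\R(T_pS)$ to the concrete ``orientation-preserving versus orientation-reversing'' dichotomy for $f_1$ and $f_2$; everything else is a transcription of already-established facts.
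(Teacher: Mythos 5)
Your proposal is correct and follows essentially the same route as the paper's own proof: the same pointwise identification of the isotropic cone $ll(g_p)$ with the kernels of $df_1$ and $df_2$ via local holomorphic charts and the Cauchy--Riemann equations, the same translation of positivity into the opposite-orientation dichotomy, and the same orientation argument ruling out the swap $s$ so that the monodromy lands in $\PSL$ acting diagonally. No gaps to report.
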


\section{A Gauss-Bonnet Theorem for  positive complex metrics}

In this section, our aim is to prove a generalization of Gauss-Bonnet Theorem in the setting of positive complex metrics.

Let $g$ be a positive complex metric on an oriented surface $S$. We defined the area form $dA_g$ of $g$ as the unique 2-form with constant $g$-norm $1$ and compatible with the orientation on $S$, in the sense explained in Theorem \ref{Teorema orientazione}. Locally, 
\[
dA_g= \theta^1 \wedge \theta^2
\]
for a local positive $g$-coframe $\{\theta^1,\theta^2\}$.

\begin{Theorem}
	\label{Gauss Bonnet}
	Let $S$ be a closed oriented surface and $g$ a complex metric of constant curvature $K_g$.
	Then \[
	\int_S K_g dA_g = 2 \pi \chi(S)
	\]
	where $\chi(S)$ is the Euler-Poincaré characteristic of $S$.
\end{Theorem}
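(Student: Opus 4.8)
The plan is to mimic the classical proof of the Gauss--Bonnet theorem, adapting it to the complex-valued setting via the local structure equations established for complex metrics. The key observation is that everything in the classical proof is expressed through the Levi-Civita connection forms, and since Proposition \ref{prop: LC di complex metrics} gives us a genuine Levi-Civita connection with all the usual structural identities holding $\C$-linearly, the same machinery should go through verbatim, with the caveat that all quantities are now $\C$-valued.

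First I would set up the local curvature $2$-form. Fix a local positive $g$-orthonormal coframe $\{\theta^1,\theta^2\}$ with associated connection form $\theta^1_2 = -\theta^2_1$, determined by $d\theta^i = -\theta^i_j\wedge\theta^j$. As in the proof of Theorem \ref{Teo Gauss-Codazzi}, one has the structure equation $d\theta^1_2 = R(X_1,X_2,\cdot,\cdot)$ evaluated appropriately, so that $d\theta^1_2 = K_g\,\theta^1\wedge\theta^2 = K_g\,dA_g$, where $K_g$ is the (here constant, but the argument needs only that it is a well-defined function) sectional curvature. Crucially, although $\theta^1_2$ depends on the choice of positive orthonormal coframe, the $2$-form $K_g\,dA_g$ does not: a change of positive orthonormal frame corresponds to an $SO(2,\C)$-valued gauge transformation, under which $\theta^1_2$ changes by an exact form $d\phi$ (where $\phi$ is the $\C$-valued ``rotation angle''), exactly as in the Riemannian case, so $d\theta^1_2$ is globally well-defined. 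Thus $K_g\,dA_g$ is a global $\C$-valued $2$-form on $S$.

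Next I would reduce the integral to a topological count. Since $K_g\,dA_g = d\theta^1_2$ is locally exact but $\theta^1_2$ is only defined where the orthonormal coframe exists, I would invoke the correspondence between positive complex metrics and pairs of complex structures (Equation \eqref{BC(S) to C(S)xC(S)} and the corollary identifying $\faktor{CM^+(S)}{C^\infty(S,\C^*)}$ with $C(S)\times C(S)$). The cleanest route is to observe that the bicomplex structure $\textbf{J}$ compatible with $g$ determines, via its two underlying complex structures $J_1, J_2$, honest topological data on $S$ whose Euler characteristic is $\chi(S)$; alternatively, and more self-containedly, I would triangulate $S$ and run the classical argument summing the connection form around the boundaries of the faces, using Stokes' theorem face by face. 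The sum of the boundary integrals telescopes, and the total rotation of the frame around the triangulation is governed by the combinatorics of the triangulation, yielding $2\pi\chi(S)$ by the usual Hopf--Poincar\'e/angle-excess bookkeeping. The key point making the constant $2\pi\chi(S)$ (rather than something $\C$-valued) appear is that the holonomy ambiguity of $\theta^1_2$ around each vertex is valued in $2\pi i \Z$ after exponentiation into $SO(2,\C)\cong\C^*$, so the global monodromy contributions are forced to be integer multiples of $2\pi$.

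The main obstacle I anticipate is justifying that the ``total angle'' contributions are genuinely real integer multiples of $2\pi$ and not spurious complex quantities. In the Riemannian case this is guaranteed because $SO(2,\R)$ is a circle and winding numbers are integers; here $SO(2,\C)\cong\C^*$, whose fundamental group is again $\Z$, so the winding of the frame field around each vertex is still integer-valued. The careful step is to argue that although $\theta^1_2$ is $\C$-valued, its periods around small loops encircling the vertices of the triangulation lie in $2\pi i\Z$ (equivalently, the transition functions of the reduced $SO(2,\C)$-frame bundle have integer winding). This follows because the bicomplex structure $\textbf{J}$ orients $S$ and the associated $SO(2,\C)$-bundle is classified by its first Chern class, which is $\chi(S)$; I would make this precise either by the triangulation bookkeeping above or by identifying $d\theta^1_2$ as a representative of $2\pi$ times the Euler class of $(TS, \textbf{J})$. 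Once this integrality is in place, integrating $K_g\,dA_g = d\theta^1_2$ over $S$ and collecting the vertex contributions yields $\int_S K_g\,dA_g = 2\pi\chi(S)$.
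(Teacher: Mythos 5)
Your proposal is correct in its main (Chern--Weil) route, but it is genuinely different from the paper's proof, and one of your two suggested ways of finishing would not work. The paper never identifies the boundary contributions with a topological index directly. Instead it (i) takes a real vector field $\XX$ with isolated zeros, uses the frame $(\XX/\|\XX\|, \mathbf J(\XX/\|\XX\|))$ off the zero set, and reduces $\int_S K_g\,dA_g$ by Stokes to a sum of periods $\int_{\partial B_k}(\omega-\omega_k)=\int_{\partial B_k}d\alpha_k$, which lie in $\pi\Z$ because the transition functions take values in $SO(2,\C)\cong\C^*$ and $\pi_1(\C^*)=\Z$; (ii) shows by an explicit computation that $\int_S K_g\,dA_g$ is a conformal invariant, hence descends to a continuous $\pi\Z$-valued function on the connected space $BC(S)\cong C(S)\times C(S)$; (iii) concludes that this function is constant and evaluates it on a Riemannian metric, where classical Gauss--Bonnet applies. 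Your Euler-class route replaces (ii)--(iii) by a direct identification: since $\nabla g=0$ forces $\nabla\mathbf J=0$, the Levi-Civita connection preserves the isotropic eigenline $V_i(\mathbf J)=V_i(J_2)$, which is isomorphic as a complex line bundle to $(TS,J_2)$ and hence has first Chern class $\chi(S)$; on this line the connection form is $\pm i\,\theta^1_2$, and Chern--Weil gives $\int_S d\theta^1_2=\mp 2\pi\chi(S)$, whence the statement after fixing signs. This is sound and arguably cleaner --- it avoids both the conformal-invariance computation and the deformation to the Riemannian case --- but you currently only assert the key facts ($\nabla\mathbf J=0$, the identification of the eigenline with $(TS,J_2)$, and the diagonalization of the connection form on the isotropic splitting); these must be spelled out, and they are exactly where the positivity of $g$ and the bicomplex-structure formalism enter.

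Two concrete warnings. First, your fallback via ``triangulation and angle-excess bookkeeping'' would not go through: the Umlaufsatz and the identity ``exterior angle $=\pi-$ interior angle'' are facts about real angles in $SO(2,\R)$, and the complex-valued rotation parameters arising here do not satisfy them; integrality of winding numbers in $\pi_1(\C^*)=\Z$ gives you \emph{an} integer, but not, by itself, the identification of that integer with $\chi(S)$ --- which is precisely the gap the paper closes with its connectedness-and-deformation argument. Second, a small slip: the periods of the difference of connection forms around the singular points are \emph{real}, lying in $2\pi\Z$ (or in $\pi\Z$ when, as for $\XX/\|\XX\|$, the frame is only defined up to sign), not in $2\pi i\Z$; around a closed loop the real part of $\log$ of the $\C^*$-valued transition function integrates to zero and only the integer winding survives.
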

\begin{proof}
	Consider a real vector field $\XX\in \Gamma(TS)$ with finite set of zeros $\Lambda=\{x_1, \dots, x_n\}$. Since the complex metric is positive, $\| \XX\|^2\ne 0$ over $S\setminus \Lambda$. Let the index $k$ vary in $\{1, \dots, n\}$. 
	
	Consider for all $k$ two open disks $B_k$ and $\widetilde B_k$ in $S$, such that \[x_k\in B_k \subset \overline B_k \subset \widetilde B_k
	\]
	and such that $B_k \cap B_j\ne \emptyset$ iff $k=j$.
	
	Let $\textbf{J}$ be the bicomplex structure induced by $g$ and by the orientation on $S$. The $g$-orthonormal frame $\bigg(\frac \XX {\|\XX\|}, \textbf{J} \big(\frac \XX {\|\XX\|}\big) \bigg)$ can be defined locally on $S\setminus \Lambda$ but is defined globally on it only up to a sign, and the same holds for the induced dual coframe $\pm(\theta^1, \theta^2)$. Nevertheless, the area form $dA_g=\theta^1\wedge\theta^2$ is globally well-defined on $S\setminus \Lambda$ and so is the corresponding Levi-Civita connection form $\omega$ defined by 
	\begin{align*}
		d\theta^1 =\omega \wedge \theta^2 \\
		d\theta^2= -\omega \wedge \theta^1.
	\end{align*}
	On the whole $S\setminus \Lambda$ we have $d\omega=-KdA$ as in standard Riemannian Geometry (see \cite{Kobayashi-Nomizu1}).
	
	On each $\widetilde B_i$ we fix a norm-$1$ complex vector field $e_{i}$ and consider the coframe $(\theta^1_i, \theta^2_i)$ defined as the dual of $(e_i, \textbf{J}(e_i))$; let $\omega_i$ be the Levi-Civita connection form with respect to $(\theta^1_i, \theta^2_i)$. 
	Orient each $\partial {\widetilde B_i}$ with the orientation induced by $\widetilde B_i$. By Stokes formula,
	\begin{align*}                  
		\int_S K dA &= \int_{S\setminus \bigcup_i  B_i} K dA + \sum_{i=1}^n \int_{\overline B_i} KdA= \\
		=& -\int_{S\setminus \bigcup_i \widetilde B_i} d\omega - \sum_{i=1}^n \int_{\overline B_i} d\omega_i=\\
		=& \sum_{i=1}^n \int_{\partial B_i} \omega - \omega_i.
	\end{align*}

	We prove that $\int_S KdA$ is a conformal invariant. 
	
	For all smooth $f\colon S\to \C$, $\overline g= f g$ is a positive complex metric conformal to $g$. For some local choice of a square root of $f$ and of $g(\XX,\XX)$, one can define a local $\overline g$-orthonormal frame $\Big(\frac \XX { \sqrt{f \|\XX,\XX\|_g^2}  }, \frac {\textbf{J}\XX} { \sqrt{f \|\XX,\XX\|_g^2}  }\Big)$ with $\overline{g}$-dual given by the coframe $(\overline \theta^1, \overline \theta^2)$, which in turn defines a Levi-Civita connection form $\overline \omega$ for $\overline g$ independent of the choice of the root. On each 
	$\widetilde B_i$, fix a square root $\sqrt f$ of 
	$f$, so a $\overline g$-orthonormal frame is given by 
	$(\frac{e_1} {\sqrt f }, \frac{\textbf{J} (e_1)} {\sqrt f })$ with 
	associated $\overline g$-coframe $(\overline \theta_i^1, \overline \theta_i ^2)$ and Levi-Civita connection form $\overline \omega_i$. By an explicit computation, one can check that on each $\widetilde B_i \setminus \{x_i\} $ 
	\[
	\overline \omega - \omega= \overline \omega_i - \omega_i = -\frac1{2f}df\circ \textbf{J}.
	\]
	As a result, 
	\[
	\int_S K_{\overline g} dA_{\overline g} = \sum_{i=1}^n \int_{\partial B_i} \overline \omega -\overline \omega_i= \sum_{i=1}^n \int_{\partial B_i} \omega -\omega_i= \int_S K_g dA_g.
	\]

	Now, on each $\widetilde B_k \setminus \{x_k\}$, the coframes $(\theta^1_i, \theta^2_i)$ and $(\theta^1, \theta^2)$ (the last one is well-defined only locally) are both $g$-orthonormal and $\textbf{J}$-oriented, so we locally have 
	\[
	\begin{pmatrix}
		\theta^1\\
		\theta^2
	\end{pmatrix}=
	\begin{pmatrix}
		\cos(\alpha_k) & \sin(\alpha_k) \\
		-\sin(\alpha_k) & \cos(\alpha_k)
	\end{pmatrix} \begin{pmatrix}
		\theta_k ^1 \\
		\theta_k^2
	\end{pmatrix}
	\]
	where $e^{i\alpha_k} \in \C^*$ is a locally defined smooth function, defined only up to a sign on $\widetilde B_k \setminus \{x_k\}$. Nevertheless, $e^{2i\alpha_k}\colon \partial B_k \to S^1$ is a well defined smooth function and $d\alpha_k$ is a well-defined one form on $\partial B_k$.
	
	A standard calculation shows that $\omega - \omega_k =d \alpha_k$, hence
	
	\begin{align*}
		\int_S KdA &= \sum_{k=1}^n \int_{\partial B_k} \omega - \omega_k= \sum_{k=1}^n \int_{\partial B_k} d \alpha_k =\\
		&= \sum_{k=1}^n \int_{\partial B_k} \frac 1 {2i} \frac{ d (e^{2i\alpha_k} )}{e^{2i \alpha_k}}= \sum_{k=1}^n \pi deg(e^{2i\alpha_k}) \in \pi \Z.
	\end{align*}

	Recalling that $BC(S)$ is connected and that the quantity $\int_S K_g dA_g$ is a conformal invariant, the map 
	\[
	\int_S K \colon BC(S) \to \pi \Z
	\]
	is well-posed and continuous, hence it is constant. Since $\int_S K dA= 2 \pi \chi(S)$ for Riemannian metrics, the thesis follows.
\end{proof}

\section{A Uniformization Theorem through Bers Theorem}

The aim of this section is to prove a generalization of the classic Riemann's Uniformization Theorem for bicomplex structures and positive complex metrics.

Before stating the main theorem, we recall some notions about quasi-Fuchsian representations.

A representation $\rho\colon \pi_1(S)\to \PSL$ is \emph{quasi-Fuchsian} if its limit set $\Lambda_\rho\subset \CP^1$, i.e. the set of accumulation points of any $\rho(\pi_1(S))$-orbit in $\CP^1$, is a Jordan curve in $\CP^1$. The set of quasi-Fuchsian representations defines $QF(S)$ an open subset of the character variety $\mathcal X (S)$, defined in Equation \eqref{eq: def character variety}.

A major result in the setting of quasi-Fuchsian representations is the Bers Simultaneous Uniformization Theorem.

\begin{Theorem} [Bers Simultaneous Uniformization Theorem, \cite{Bers}]
	\label{Bers}
	Let $S$ be an oriented surface with $\chi (S)<0$. For all $J_1, J_2$ complex structures over $S$, there exists a unique quasi-Fuchsian representation $\rho=\rho(J_1,J_2) \colon \pi_1(S) \to \PSL$ such that, defined $\Lambda_\rho\subset \CP^1$ as the limit set of $\rho$ and $\Omega_+, \Omega_-$ as the connected components of $\CP^1 \setminus \Lambda_\rho$:
	\begin{itemize}
		\item there exists a unique diffeomorphism $f_1 \colon \widetilde S \to \Omega_+$, which is $J_1$-holomorphic and $\rho$-equivariant;
		\item there exists a unique diffeomorphism $f_2 \colon \widetilde S \to \Omega_-$, which is $J_2$-antiholomorphic and $\rho$-equivariant.
	\end{itemize} 
	Moreover, $(\rho, f_1, f_2)$ are continuous functions of $(J_1, J_2)\in BC(S)$.
	
	This correspondence determines a homeomorphism in the quotient
	\begin{equation}
		\label{Bers homeo}
		\mathfrak B\colon\Tau(S)\times \Tau(S)\xrightarrow{\sim}QF(S)
	\end{equation}
	where $\Tau(S)$ is the Teichm\"uller space of $S$.
\end{Theorem}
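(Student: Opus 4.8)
The plan is to prove the theorem through the classical quasiconformal machinery, the key analytic input being the measurable Riemann mapping theorem of Ahlfors--Bers. Given $(J_1,J_2)\in BC(S)\cong C(S)\times C(S)$, I would first invoke the ordinary Uniformization Theorem applied to the complex structure $J_2$: there is a Fuchsian representation $\rho_0\colon \pi_1(S)\to \mathrm{PSL}(2,\R)\subset \PSL$ and a $\rho_0$-equivariant biholomorphism from $(\widetilde S, J_2)$ onto the lower half-plane $H^-$, so that $H^-/\rho_0(\pi_1(S))\cong (S,J_2)$ and, symmetrically, $H/\rho_0(\pi_1(S))$ carries the conjugate structure. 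Identifying $\pi_1(S)$ with the Fuchsian group $\Gamma:=\rho_0(\pi_1(S))$, the goal becomes to deform the conformal structure on the upper half-plane to $J_1$ while leaving $H^-$ untouched.

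Then I would encode the target structure $J_1$ by a Beltrami differential. By Teichm\"uller theory there is a $\Gamma$-invariant measurable function $\mu$ on $H$ with $\|\mu\|_\infty<1$ such that $(H,\mu)/\Gamma$ is biholomorphic to $(S,J_1)$; here $\Gamma$-invariance means $(\mu\circ\gamma)\,\overline{\gamma'}/\gamma'=\mu$ for all $\gamma\in\Gamma$. I extend $\mu$ to all of $\CP^1$ by declaring $\mu\equiv 0$ on $H^-\cup\R$; the extension is still $\Gamma$-invariant with $\|\mu\|_\infty<1$. The measurable Riemann mapping theorem produces a quasiconformal homeomorphism $w^\mu\colon \CP^1\to\CP^1$, normalized to fix $0,1,\infty$, solving $\partial_{\bar z} w^\mu=\mu\,\partial_z w^\mu$. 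By uniqueness of the normalized solution and $\Gamma$-invariance of $\mu$, for every $\gamma\in\Gamma$ the map $w^\mu\circ\gamma\circ(w^\mu)^{-1}$ has Beltrami coefficient $0$, hence is a M\"obius transformation; this defines $\rho(\gamma):=w^\mu\circ\gamma\circ(w^\mu)^{-1}\in\PSL$ and thus a representation $\rho\colon\pi_1(S)\to\PSL$. I then set $\Omega_+:=w^\mu(H)$, $\Omega_-:=w^\mu(H^-)$, $\Lambda_\rho:=w^\mu(\R)$, and define $f_1,f_2$ as the descents of $w^\mu$ precomposed with the uniformizing maps. Since $\mu\equiv 0$ on $H^-$, the map $w^\mu$ is conformal there, yielding the $J_2$-antiholomorphic equivariant diffeomorphism $f_2\colon \widetilde S\to\Omega_-$; since $w^\mu$ absorbs $\mu$ on $H$, it yields the $J_1$-holomorphic equivariant diffeomorphism $f_1\colon\widetilde S\to\Omega_+$.

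Next I would check that $\rho$ is quasi-Fuchsian and establish uniqueness. Quasi-Fuchsianness follows because a quasiconformal image of a circle is a quasicircle, so $\Lambda_\rho=w^\mu(\R)$ is a Jordan curve, which one verifies to be exactly the limit set of $\rho(\pi_1(S))$: the action is properly discontinuous on the Jordan domains $\Omega_\pm$ and every orbit accumulates on $\Lambda_\rho$. For uniqueness, suppose $(\rho',f_1',f_2')$ is another such triple; then $f_1'\circ f_1^{-1}\colon \Omega_+\to\Omega_+'$ is conformal and equivariant, and likewise on $\Omega_-$, so gluing these maps across the limit set produces a conjugating homeomorphism between the two groups that is conformal off a measure-zero set, forcing $\rho'$ to be conjugate to $\rho$. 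This is precisely the statement that the marked pair of quotient structures determines the group, i.e. the map in \eqref{Bers homeo} is well-defined and injective.

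Finally, to upgrade the bijection $\mathfrak B\colon \Tau(S)\times\Tau(S)\to QF(S)$ to a homeomorphism, I would appeal to the continuous (indeed holomorphic) dependence of the normalized solution $w^\mu$ on $\mu$ in the $L^\infty$ topology, which is part of the Ahlfors--Bers theorem; since the Beltrami coefficient representing $J_1$ and the Fuchsian group representing $J_2$ vary continuously with $(J_1,J_2)$, the triple $(\rho,f_1,f_2)$ depends continuously on $(J_1,J_2)\in BC(S)$, while continuity of the inverse is immediate from reading off the two quotient Riemann surfaces $\Omega_\pm/\rho(\pi_1(S))$. I expect the main obstacle to be the analytic core, namely the measurable Riemann mapping theorem together with the fine properties of its solution (equivariance, boundary behaviour along the quasicircle, and continuous dependence on $\mu$); granting these, the group-theoretic and Teichm\"uller-theoretic bookkeeping is routine. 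The most delicate point in the uniqueness step is controlling the comparison map across $\Lambda_\rho$, for which one uses that quasicircles have measure zero together with the standard removability and rigidity lemmas for quasiconformal maps.
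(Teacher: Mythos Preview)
The paper does not prove this statement: Theorem~\ref{Bers} is quoted as a classical result with a citation to \cite{Bers}, and is used as a black box in the proof of the Uniformization Theorem~\ref{Uniformization 1}. So there is no ``paper's own proof'' to compare against.

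That said, your sketch is the standard proof of Bers' theorem via the measurable Riemann mapping theorem, and the outline is correct: uniformize $(S,J_2)$ by a Fuchsian group $\Gamma$ acting on $H^-$, encode $J_1$ as a $\Gamma$-invariant Beltrami differential $\mu$ on $H$, extend by zero across $\R$, solve the Beltrami equation to obtain the normalized quasiconformal $w^\mu$, and conjugate. The points you flag as delicate are exactly the genuine analytic content --- existence, uniqueness, equivariance, and parameter dependence of $w^\mu$ (Ahlfors--Bers), and the removability of the quasicircle $\Lambda_\rho$ in the uniqueness step. One small remark: in your uniqueness argument you should also invoke that quasiconformal maps of $\CP^1$ which are conformal off a set of measure zero are M\"obius (Weyl's lemma / removability), so that the glued comparison map is actually in $\PSL$; you gesture at this but it deserves explicit mention since it is where the argument could fail for more general Jordan curves.
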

\vspace{5mm}

Recall that in Subsection \ref{subsec proj structures and holonomy} we considered the holonomy maps $\widetilde {Hol}$ and $Hol$ from the spaces $\widetilde P(S)$ and $P(S)$ respectively. Define the loci
\begin{align*}
	\widetilde{\mathcal P}_{QF}(S):=& \widetilde{Hol}^{-1}(QF(S)) \subset \widetilde {\mathcal P}(S), \\
	\mathcal P_{QF} (S):=& Hol^{-1} (QF(S) )\subset  \mathcal P(S)
\end{align*}
which, by continuity of the holonomy maps, are open subsets of $\widetilde{\mathcal P}(S)$ and of ${\mathcal P}(S)$ respectively. 

\begin{Theorem}[W. Goldman, \cite{Goldman:projectivestructures}]
	\label{Teo Goldman} 
	Let $S$ be a compact oriented surface of genus $g(S)\ge 2$.
	
	An open connected component of the space $\widetilde {\mathcal P}_{QF} (S)$ is
	\begin{align*}
		\widetilde {\mathcal P}_0  (S):=\{ &f\colon \widetilde S \to \CP^1,  \text{$\rho$-equivariant for some quasi-Fuchsian $\rho\colon \pi_1(S)\to \CP^1$,} \\
		&\text{ with } f(\widetilde S)\cap \Lambda_\rho=\emptyset   \}.
	\end{align*}
	Moreover, the restriction 
	\[
	Hol \colon \faktor{\widetilde {\mathcal P}_0 (S)}{\text{Diff}_0 (S)}=: {\mathcal P}_0 (S) \to QF(S)
	\]
	is a homeomorphism. 
	
	With the notations of Theorem \ref{Bers} and equation \eqref{Bers homeo}, the inverse is given by \[[\rho]\mapsto [f_1 (\rho)]=\Big[f_1 \Big(\mathfrak B^{-1} ([\rho])\Big)\Big].
	\]	
	
\end{Theorem}

\vspace{15pt}

We are now able to state a generalization of the Uniformization Theorem.

For an oriented surface $S$, define 
\[CM^+_{-1} (S):=\{\text{Positive complex metrics with constant curvature $-1$} \}\]
endowed with the subspace topology from $CM^+ (S)$.

\begin{Theorem} 
	\label{Uniformization 1}
	
	Let $S$ be an oriented surface with $\chi(S)<0$. 
	\begin{itemize}
		\item[1)] 
		For all $g\in CM^+(S)$, there exists a smooth $f\colon S\to \C^*$ such that $f\cdot g$ has constant curvature $-1$ and quasi-Fuchsian monodromy.
		
		More precisely, the map
		\begin{align*}
			\mathfrak{{J}} \colon C M^+(S) &\to BC (S) \\
			g &\mapsto [g]=\textbf J_g
		\end{align*}
		admits a continuous right inverse 
		\[
		\mathfrak{U}\colon  C(S)\times C(S)\cong BC(S) \to C M^+_{-1} (S)
		\]
		such that the diagram 
		\[
		\begin{tikzcd}
			C(S)\times C(S)\cong BC(S) \arrow[r, "\mathfrak U"] \arrow[d] &CM_{-1}^+(S) \arrow[d, "mon"]  \\
			\Tau(S)\times \Tau(S) \arrow[r, "\mathfrak B"] & \mathcal X(S)
		\end{tikzcd}
		\]
		commutes, where $\mathfrak B$ is defined as in (\ref{Bers homeo}).

		\item[2)]	If $S$ is closed, the image of $\mathfrak U$ is the connected component of $C M^+_{-1} (S)$ containing Riemannian metrics.
	\end{itemize}
\end{Theorem}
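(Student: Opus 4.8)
The heart of Theorem \ref{Uniformization 1} is to reduce the uniformization of positive complex metrics to Bers Simultaneous Uniformization (Theorem \ref{Bers}), using the dictionary established in Theorem \ref{Totally geodesic immersions} between complex metrics of constant curvature $-1$ and pairs of equivariant maps into $\CP^1$. First I would construct the candidate right inverse $\mathfrak U$. Given a bicomplex structure $\textbf J\equiv (J_1,J_2)\in BC(S)\cong C(S)\times C(S)$, Bers' theorem produces a unique quasi-Fuchsian representation $\rho=\rho(J_1,J_2)$ together with equivariant maps $f_1\colon\widetilde S\to\Omega_+$ ($J_1$-holomorphic) and $f_2\colon\widetilde S\to\Omega_-$ ($J_2$-antiholomorphic). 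The pair $(f_1,f_2)$ lands in $\GGG=\CP^1\times\CP^1\setminus\Delta$ because $\Omega_+$ and $\Omega_-$ are disjoint, so I can define
\[
\mathfrak U(\textbf J):= (f_1,f_2)^*\inners_\GGG = -\frac{4}{(f_1-f_2)^2}\,df_1\cdot df_2~.
\]
By Theorem \ref{Totally geodesic immersions} this is a positive complex metric of constant curvature $-1$: indeed $f_1$ is an orientation-preserving local diffeomorphism and $f_2$ orientation-reversing (holomorphic versus antiholomorphic), which is exactly the positivity condition, and the induced bicomplex structure is $(f_1^*\JJJ^{\CP^1}, f_2^*(-\JJJ^{\CP^1}))=(J_1,J_2)=\textbf J$. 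This already shows $\mathfrak{J}\circ\mathfrak U=\mathrm{id}$, that the monodromy of $\mathfrak U(\textbf J)$ is $\rho$ (hence quasi-Fuchsian), and that the diagram commutes.

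\textbf{Continuity and the core existence claim.} Continuity of $\mathfrak U$ follows from the continuity clause in Bers' theorem, which asserts that $(\rho, f_1, f_2)$ depend continuously on $(J_1,J_2)$; pulling back $\inners_\GGG$ is continuous in the maps, so $\mathfrak U$ is continuous into $CM^+_{-1}(S)$. The substantive part of item (1) is then the assertion that \emph{every} $g\in CM^+(S)$ is conformal to a metric of the form $\mathfrak U(\textbf J_g)$, i.e. that there exists $f\colon S\to\C^*$ with $f\cdot g=\mathfrak U(\textbf J_g)$. Since $\mathfrak U(\textbf J_g)$ and $g$ induce the \emph{same} bicomplex structure $\textbf J_g$ (because $\mathfrak J\circ\mathfrak U=\mathrm{id}$), they have the same isotropic directions pointwise, so by the corollary following Theorem \ref{Teorema orientazione} (the homeomorphism $CM^+(S)/C^\infty(S,\C^*)\cong BC(S)$) they differ exactly by multiplication by a smooth $\C^*$-valued function. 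This is the conceptual crux: the quotient identification from Equation \eqref{conformal structures and BC} turns the existence statement into a tautology once $\mathfrak U$ is built, so the real content is hidden in having already proved that identification. I would state this carefully and invoke it.

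\textbf{Item (2): identifying the image.} For closed $S$ I must show that $\mathrm{Im}(\mathfrak U)$ is precisely the connected component of $CM^+_{-1}(S)$ containing the Riemannian metrics. One inclusion is easy: hyperbolic (Riemannian, curvature $-1$) metrics correspond under Bers to the Fuchsian diagonal in $C(S)\times C(S)$, so they lie in $\mathrm{Im}(\mathfrak U)$; since $BC(S)\cong C(S)\times C(S)$ is connected and $\mathfrak U$ is continuous, $\mathrm{Im}(\mathfrak U)$ is connected and contains the Riemannian locus, hence is contained in that component. For the reverse inclusion I would argue that $\mathrm{Im}(\mathfrak U)$ is relatively open and closed in $CM^+_{-1}(S)$, or equivalently characterize the component via monodromy: a constant-curvature $-1$ positive complex metric in the Riemannian component must, by Theorem \ref{Totally geodesic immersions}, arise from an equivariant pair $(f_1,f_2)\to\GGG$ whose representation lies in the quasi-Fuchsian component $QF(S)$ (using Goldman's Theorem \ref{Teo Goldman} that $\mathcal P_0(S)$ is an open connected component of $\widetilde{\mathcal P}_{QF}(S)$ mapping homeomorphically to $QF(S)$), and such a pair is then forced to be the Bers pair, i.e. in the image of $\mathfrak U$. \textbf{The main obstacle} I anticipate is this reverse inclusion in item (2): controlling how the developing pair $(f_1,f_2)$ of an arbitrary element of the Riemannian component relates to the Bers configuration requires matching the image $f_1(\widetilde S)$ to a component of $\CP^1\setminus\Lambda_\rho$ and ruling out that $f_1(\widetilde S)$ meets the limit set. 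This is exactly where Goldman's description of the component $\mathcal P_0(S)$ (equivariant maps with $f(\widetilde S)\cap\Lambda_\rho=\emptyset$) is indispensable, and threading the orientation bookkeeping (which factor is holomorphic, which antiholomorphic, and the swap $s\colon(x,y)\mapsto(y,x)$) through both factors simultaneously is the delicate step.
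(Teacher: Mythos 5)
Your proposal is correct and follows essentially the same route as the paper: construct $\mathfrak U$ from the Bers data $(\rho,f_1,f_2)$ via Theorem \ref{Totally geodesic immersions}, reduce the existence claim in (1) to the conformal-class identification of Equation \eqref{conformal structures and BC}, and prove (2) by showing $\mathrm{Im}(\mathfrak U)$ is closed and connected and then open, the openness coming from identifying it with the preimage under the developing-pair map of $\widetilde{\mathcal P}_0(S)\times\widetilde{\mathcal P}_0(\overline S)$ and invoking Goldman's Theorem \ref{Teo Goldman}. You also correctly single out the reverse inclusion in (2) as the delicate step, which is exactly where the paper leans on Goldman's description of $\mathcal P_0(S)$.
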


\begin{proof} [Proof of Theorem $\ref{Uniformization 1}$]
	\begin{itemize}
		\item[1)]
		Consider $\textbf{J}\in BC(S)$, which corresponds to two complex structures $(J_1, J_2)\in C(S)\times C(S)$.
		
		Applying Theorem \ref{Bers} to the couple $(J_1, J_2)$, there exist:
		\begin{itemize}
			\item $\rho=\rho(J_1,J_2)\colon \pi_1 (S)\to \PSL$ quasi-Fuchsian;
			\item $f_1 \colon \widetilde S \to \CP^1$ $J_1$-holomorphic and $\rho$-equivariant embedding;
			\item $f_2 \colon \widetilde S \to \CP^1$ $J_2$-antiholomorphic and $\rho$-equivariant embedding
		\end{itemize}
		with $Im(f_1)\cap Im(f_2)=\emptyset$. Hence, we have an admissible, $\rho$-equivariant embedding
		\[
		\sigma=(f_1, f_2) \colon \widetilde S \to {\GGG}= \CP^1 \times \CP^1 \setminus \Delta.
		\]
		By Theorem \ref{Totally geodesic immersions}, the pull-back complex metric on $\widetilde S$ is positive and projects to a positive complex metric $g$ on $S$. 
		
		After we show that the constructed $g$ is compatible with $\textbf{J}$, the thesis follows by defining $\mathfrak{U}(\textbf{J}):=g$. Let $w_1$ and $w_2$ be the complex coordinates on $\widetilde S$ induced by $J_1$ and $J_2$ respectively. Then,
		\begin{align*}
			g &= \frac{4}{(f_1  - f_2 )^2} f_1^* dz_1 \cdot f_2^* dz_2= \frac{4}{(f_1 - f_2 )^2}  d f_1 \cdot d f_2 = \\
			&= \frac{4}{(f_1  - f_2 )^2}  \frac{\partial f_1}{\partial  {w_1}} \frac{\partial f_2}{\partial\overline{ w_2}}  d {w_1} \cdot d \overline{w_2};
		\end{align*}
		hence $V_i(J_2)=V_i(\textbf{J})$ and $V_{-i} (J_1)=V_{-i}(\textbf{J})$ are the isotropic directions of $g$.
		\item[2)] 
		By regarding
		\[
		Im(\mathfrak U)=\{g\ |\ (\mathfrak U \circ \mathfrak J) (g)=g \}\subset C M_{-1}^+(S),
		\]
		we have that $Im(\mathfrak U)$ is a closed connected subset of $\C M_{-1}^+ (S)$. We need to prove that it is also open.
		
		As we stated in Theorem \ref{Totally geodesic immersions}, for any $g\in CM_{-1}^+$ there exist two developing maps for projective structures $f_1$ and $f_2$ on $S$ and $\overline S$ respectively, uniquely defined up to post-composition with elements in $\PSL$, such that $\sigma=(f_1,f_2)\colon (\widetilde S, \widetilde g)\to \GGG$ is an isometric immersion. We therefore have a map 
		\begin{align*}
			proj\colon CM_{-1} ^+ (S)&\to \mathcal {\widetilde P}(S)\times \mathcal{\widetilde  P}(\overline S)\\
			g=(f_1,f_2)^* \inners_\GGG &\mapsto ([f_1], [f_2]).
		\end{align*}
		
		The thesis follows after we show that \[Im (\mathfrak U )= proj^{-1}  (\mathcal{\widetilde P}_0(S)\times \mathcal{\widetilde  P}_0(\overline S) )\] since $\mathcal{\widetilde P}_0(S)$ is an open subset of $\mathcal{\widetilde P}_{QF}(S)$, hence of $\mathcal{\widetilde P}(S)$.
		
		By construction of $\mathfrak U$, clearly \[Im (\mathfrak U )\subseteq proj^{-1}  (\mathcal{\widetilde P}_0(S)\times \mathcal{\widetilde  P}_0(\overline S) ).\]
		
		Conversely, assume $g=(f_1,f_2)^* \inners_\GGG \in proj^{-1}  (\mathcal{\widetilde P}_0(S)\times \mathcal{\widetilde  P}_0(\overline S) )$. Since $hol_{f_1}=hol_{f_2}\colon \pi_1(S)\to \PSL$, $f_1$ and $f_2$, by Theorem \ref{Teo Goldman}, the maps $f_1$ and $f_2$ correspond exactly to the maps one gets through Theorem \ref{Bers} from the pair of complex structures $(f_1^* (\mathbb J^{\CP^1}), f_2^*(-\mathbb J^{\CP^1}))$, hence 
		\[
		(f_1,f_2)^*\inners_\GGG = \mathfrak U (f_1^* (\mathbb J^{\CP^1}), f_2^*(-\mathbb J^{\CP^1}) ). 
		\]

	\end{itemize}
\end{proof}

\part{Integrable immersions into $\G{n+1}$}
\label{parte Seppi}

\chapter{The Gauss map of hypersurfaces in $\Hyp^{n+1}$}\label{sec:gauss map}
In this Chapter we will focus on the construction of the Gauss map of an immersed hypersurface, its relation with the normal evolution and the geodesic flow action on the unit tangent bundle, and provide several examples of great importance for the rest of Part \ref{parte Seppi}.

After a discussion in the general setting, we will then focus on the case of immersions with small principal curvatures.

\section{Lift to the unit tangent bundle}
We defined the Gauss map associated to an immersion into $\Hyp^3$ in Section \ref{sec metrica complessa indotta in G}.  
Let us formally recall it now for hypersurfaces in general dimension, together with the notions of lift to the unit tangent bundle. 

Recall that in Chapter \ref{chapter spazio geodetiche} we defined a para-Sasaki metric $\gs{n+1}$ on $T^1\Hyp^{n+1}$ and a para-K\"ahler structure $(\GG, \JJ, \Omega)$ on the space $\G{n+1}$.

\begin{Def}\label{Def:lift and gauss}
	Let $M$ be an oriented $n$-dimensional manifold, let $\sigma:M\to\Hyp^{n+1}$ be an immersion, and let $\nu$ be the unit normal vector field of $\sigma$ compatible with the orientations of $M$ and $\Hyp^{n+1}$. We define the \emph{lift} of $\sigma$ as 
	$$\zeta_\sigma:M\to T^1\Hyp^{n+1}\qquad \zeta_\sigma(p)=(\sigma(p),\nu(p))~.$$
	The \emph{Gauss map} of $\sigma$ is then the map
	$$G_\sigma:M\to\G{n+1}\qquad G_\sigma=\mathrm{p}\circ \zeta_\sigma~.$$
\end{Def}

In other words, the Gauss map of $\sigma$ is the map which associates to $p\in M$ the geodesic $\ell$ of $\Hyp^{n+1}$ orthogonal to the image of $d_p\sigma$ at $\sigma(p)$, oriented compatibly with the orientations of $M$ and $\Hyp^{n+1}$.

Recall that the \emph{first fundamental form} $\I$ of $\sigma$ is defined as the pull-back Riemannian metric $\I=\sigma^*{\inners_{\Hyp^{n+1}}}$ on $M$.

Also {recall that} the \emph{shape operator} $B$ of $\sigma$ {can be defined} as the $(1,1)$-tensor on $M$ defined by 
\begin{equation}\label{eq:shape}
	d\sigma\circ B(W)=-D_{W} \nu~,
\end{equation} for $D$ the Levi-Civita connection of $\Hyp^{n+1}$.

\begin{prop}\label{prop:gauss immersion}
	Given an oriented manifold $M^n$ and an immersion $\sigma:M\to\Hyp^{n+1}$, the lift of $\sigma$ is an immersion orthogonal to the fibers of $\mathrm{p}:T^1\Hyp^{n+1}\to \G{n+1}$. As a consequence $G_\sigma$ is an immersion.
\end{prop}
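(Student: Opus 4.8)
The statement has two parts: first, that the lift $\zeta_\sigma$ is an immersion whose image is orthogonal to the fibers of $\mathrm p$; second, that consequently $G_\sigma = \mathrm p \circ \zeta_\sigma$ is an immersion. The key computational input is the explicit formula for $d\zeta_\sigma$ in terms of the shape operator, which is essentially Lemma \ref{Lemma 1 metrica pull back G} transported to the $\Hyp^{n+1}$ setting. The overall strategy is to compute $d_p\zeta_\sigma$ in the horizontal/vertical decomposition \eqref{eq:direct sum} of $T(T^1\Hyp^{n+1})$, read off injectivity, check orthogonality to $\chi$, and then use that $d\mathrm p$ restricts to an isomorphism on $\chi^\perp$ to conclude $G_\sigma$ is an immersion.

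\textbf{Step 1: compute $d\zeta_\sigma$.} First I would fix $p \in M$ and $W \in T_p M$, and consider a path $p(t)$ in $M$ with $p(0)=p$, $\dot p(0)=W$. Then $\zeta_\sigma(p(t)) = (\sigma(p(t)), \nu(p(t)))$, and under the identification of $T_{(\sigma(p),\nu(p))}(T^1\Hyp^{n+1})$ with $T_{\sigma(p)}\Hyp^{n+1} \oplus \nu^\perp = \HH \oplus \VP$ (as explained just before Lemma \ref{Lemma 1 metrica pull back G}), the derivative $\dot\zeta_\sigma(0)$ corresponds to the pair $\left(\tfrac{d}{dt}\sigma(p(t)), \tfrac{D}{dt}\nu(p(t))\right)$ at $t=0$. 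The horizontal component is $d_p\sigma(W)$, and the vertical component is $D_W\nu = -d\sigma(B(W))$ by the definition \eqref{eq:shape} of the shape operator. Thus, identifying $d\sigma(W)$ with $W$ via the embedding $d\sigma$, I get
\[
d_p\zeta_\sigma(W) = \big(d_p\sigma(W)\big)^{\HH} - \big(d_p\sigma(B(W))\big)^{\VP}~.
\]
I should check that $D_W\nu$ indeed lands in $\nu^\perp$: differentiating $\langle \nu,\nu\rangle = 1$ gives $\langle D_W\nu,\nu\rangle = 0$, so this is automatic.

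\textbf{Step 2: injectivity and orthogonality.} From the formula in Step 1, if $d_p\zeta_\sigma(W) = 0$ then its horizontal component $d_p\sigma(W)$ vanishes; since $\sigma$ is an immersion, $d_p\sigma$ is injective and hence $W=0$. This shows $\zeta_\sigma$ is an immersion. For orthogonality to the fibers, recall that the fibers of $\mathrm p$ are the orbits of the geodesic flow, whose infinitesimal generator at $(\sigma(p),\nu(p))$ is $\chi = \nu^{\HH}$ (Lemma \ref{lemma:generator geoflow hor lift}). Using the para-Sasaki metric (Definition \ref{Def:parasasaki}) — or more conveniently the equivalent metric $\gstar{n+1}$ of Remark \ref{rmk other metric2}, noting $\chi$ is $\gs{n+1}$-orthogonal to both $\HP$ and $\VP$ by \eqref{eq:generator geoflow is unit} — I compute $\gs{n+1}(d_p\zeta_\sigma(W), \chi)$. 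The vertical part of $d_p\zeta_\sigma(W)$ is orthogonal to $\chi$ since $\chi$ is horizontal, and the horizontal part $d_p\sigma(W)^{\HH}$ pairs with $\chi = \nu^{\HH}$ as $\langle d_p\sigma(W), \nu\rangle$, which is zero because $d_p\sigma(W)$ is tangent to the hypersurface while $\nu$ is normal. Hence $d_p\zeta_\sigma(W) \in \chi^\perp$ for all $W$.

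\textbf{Step 3: conclude $G_\sigma$ is an immersion.} Since $d_p\zeta_\sigma(W)$ lies in $\chi^\perp_{(\sigma(p),\nu(p))}$ and, as established in Section \ref{sec:parakahler metric GG}, the differential $d\mathrm p$ restricts to a linear isomorphism $d\mathrm p|_{\chi^\perp} : \chi^\perp \to T_{G_\sigma(p)}\G{n+1}$, the composition $d_p G_\sigma = d\mathrm p \circ d_p\zeta_\sigma$ is injective (being the composition of the injective $d_p\zeta_\sigma$ landing in $\chi^\perp$ with the isomorphism $d\mathrm p|_{\chi^\perp}$). Therefore $G_\sigma$ is an immersion. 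I do not anticipate a genuine obstacle here; the only points requiring care are the correct identification of the horizontal/vertical components in Step 1 (getting the sign and the role of $B$ right) and making sure the orthogonality argument in Step 2 uses the fact that $\chi$ is horizontal so that only the horizontal part of $d\zeta_\sigma(W)$ contributes to the pairing.
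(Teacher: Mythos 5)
Your proposal is correct and follows essentially the same route as the paper: the same computation of $d_p\zeta_\sigma(W)=d_p\sigma(W)^{\HH}-d_p\sigma(B(W))^{\V}$ (the paper's Equation \eqref{eq:differential lift}), the same injectivity argument via the horizontal component, the same orthogonality check against $\chi=\nu^{\HH}$, and the same conclusion via the isomorphism $d\mathrm p|_{\chi^\perp}$. No gaps.
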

\begin{proof}
	By a direct computation in the hyperboloid model, the differential of $\zeta_\sigma$ has the expression 
	\begin{equation}
		\label{eq: differential of sigma tilde}
		d_p\zeta_\sigma (W)=(d_p\sigma(W),d_p\nu(W))=(d_p\sigma(W),-d_p\sigma(B(W)))~,
	\end{equation} indeed, the ambient derivative  in $\R^{n+1,1}$ of the vector field $\nu$ equals the covariant derivative with respect to $D$, since {$d_p\nu(W)$} is orthogonal to $\sigma(p)$ as a consequence of the condition $\langle \sigma(p),\nu(p)\rangle=0$. 
	
	As both $d_p\sigma(W)$ and $d_p\sigma(B(W))$ are tangential to the image of $\sigma$ at $p$, hence orthogonal to $\nu(p)$, $d_p\zeta_\sigma (W)$ can be written as:
	\begin{equation}\label{eq:differential lift}
		d_p\zeta_\sigma(W)=d_p\sigma(W)^\HH-d_p\sigma(B(W))^\V~.
	\end{equation}
	Therefore, for every $W\neq 0$, $d_p \zeta_\sigma(W)$ is a non-zero vector orthogonal to $\chi_{\zeta(p)}$ by Definition \ref{Def:parasasaki}. Since the differential of $\mathrm p$ is a vector space isomorphism between $\chi_{\zeta(p)}^\perp$ and $T_{G_\sigma(p)}\G{n+1}$, the Gauss map $G_\sigma$ is also an immersion. 
\end{proof}

As a consequence of Proposition \ref{prop:gauss immersion}, we can compute the first fundamental form of the Gauss map $G_\sigma$, that is, the pull-back metric $G_\sigma^*\GG$, which we denote by $\overline\I$. Since the lift $\zeta_\sigma$ is orthogonal to $\chi$, it suffices to compute the pull-back metric of $\gs{n+1}$ by $\zeta_\sigma$. By Equation \eqref{eq:differential lift}, we obtain:
\begin{equation}\label{eq:fff gauss}
	\overline\I=\I - \III~,
\end{equation}
where $\III=\I(B\cdot,B\cdot)$ is the third fundamental form of $\sigma$.

Let us now see that the orthogonality to the generator of the geodesic flow essentially characterizes the lifts of immersed hypersurfaces in $\Hyp^{n+1}$, in the following sense.

\begin{prop}\label{prop:gauss immersion converse}
	Let $M^n$ be an orientable manifold and $\zeta:M\to T^1\Hyp^{n+1}$ be an immersion orthogonal to the fibers of $\mathrm{p}:T^1\Hyp^{n+1}\to \G{n+1}$. If $\sigma:=\pi\circ\zeta$ is an immersion, then $\zeta$ is the lift of $\sigma$ with respect to an orientation of $M$.
\end{prop}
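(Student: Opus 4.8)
The plan is to write $\zeta(p)=(\sigma(p),v(p))$, where by construction $\pi\circ\zeta=\sigma$ and $v$ is a smooth section of $T^1\Hyp^{n+1}$ along $\sigma$, i.e.\ $v(p)\in T^1_{\sigma(p)}\Hyp^{n+1}$. Recall that $\mathrm p\colon T^1\Hyp^{n+1}\to\G{n+1}$ is the $\R$-principal bundle given by the geodesic flow, so its fibers are exactly the flow orbits and their tangent spaces are spanned by the infinitesimal generator $\chi$, which in the hyperboloid model reads $\chi_{(\sigma,v)}=(v,\sigma)$ by Lemma \ref{lemma:generator geoflow hor lift}. Thus the hypothesis that $\zeta$ is orthogonal to the fibers of $\mathrm p$ (for the para-Sasaki metric $\gs{n+1}$) is precisely the condition that $\gs{n+1}(d_p\zeta(W),\chi_{\zeta(p)})=0$ for every $W\in T_pM$. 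Since $\gs{n+1}$ and the metric $\gstar{n+1}$ of Remark \ref{rmk other metric2} agree on $\chi^\perp=\HP\oplus\VP$ and both make $\chi$ orthogonal to $\HP\oplus\VP$, a vector is $\gs{n+1}$-orthogonal to $\chi$ if and only if it is $\gstar{n+1}$-orthogonal to $\chi$; I will carry out the computation with $\gstar{n+1}$, which is simply the restriction of the ambient form $\langle\cdot,\cdot\rangle\ominus\langle\cdot,\cdot\rangle$ on $\R^{n+1,1}\times\R^{n+1,1}$.

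The key (and only substantial) step is the following computation. Writing $d_p\zeta(W)=(d_p\sigma(W),d_pv(W))$ and using Equation \eqref{eq:metric minkxmink},
\begin{equation*}
	\gstar{n+1}\big(d_p\zeta(W),\chi_{\zeta(p)}\big)=\langle d_p\sigma(W),v(p)\rangle-\langle d_pv(W),\sigma(p)\rangle.
\end{equation*}
Differentiating the identity $\langle v,\sigma\rangle\equiv 0$ gives $\langle d_pv(W),\sigma(p)\rangle=-\langle d_p\sigma(W),v(p)\rangle$, so the right-hand side equals $2\langle d_p\sigma(W),v(p)\rangle$. Hence the orthogonality hypothesis forces $\langle d_p\sigma(W),v(p)\rangle=0$ for all $W\in T_pM$; that is, the unit vector $v(p)$ is orthogonal to $d_p\sigma(T_pM)$. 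Because $\sigma$ is an immersion of an $n$-manifold into the $(n+1)$-dimensional $\Hyp^{n+1}$, the normal line of $\sigma$ at $p$ is one-dimensional, so $v(p)=\pm\nu(p)$ for the unit normal $\nu$. As $v$ is a smooth global unit section, it is a globally defined smooth unit normal field of $\sigma$.

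It remains to fix the orientation. Since $M$ is orientable and $\Hyp^{n+1}$ carries its canonical orientation, I would orient $M$ by declaring a frame $(e_1,\dots,e_n)$ of $T_pM$ positive if and only if $(d_p\sigma(e_1),\dots,d_p\sigma(e_n),v(p))$ is a positive frame of $T_{\sigma(p)}\Hyp^{n+1}$; this is well defined and locally constant (hence globally consistent, using that $M$ is connected) because $v$ is a nowhere-vanishing smooth normal and $\sigma$ is an immersion. With respect to this orientation, the unit normal compatible with the orientations of $M$ and $\Hyp^{n+1}$ in the sense of Definition \ref{Def:lift and gauss} is exactly $v$, i.e.\ $\nu=v$, and therefore $\zeta=(\sigma,v)=(\sigma,\nu)=\zeta_\sigma$. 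The main (indeed only) obstacle is the short linear-algebra computation above in the hyperboloid model; everything else is a matter of bookkeeping with the orthogonality constraints defining $T^1\Hyp^{n+1}$ and a standard orientation argument.
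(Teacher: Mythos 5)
Your proof is correct and follows essentially the same route as the paper: write $\zeta=(\sigma,v)$ in the hyperboloid model, combine the differentiated constraint $\langle v,\sigma\rangle\equiv 0$ with the orthogonality to $\chi_{(\sigma,v)}=(v,\sigma)$ to conclude $\langle v,d\sigma(W)\rangle=0$, and then fix the orientation so that $v$ is the compatible unit normal. Your extra remarks (that $\gs{n+1}$- and $\gstar{n+1}$-orthogonality to $\chi$ agree, and the explicit construction of the orientation) are just a more detailed bookkeeping of steps the paper leaves implicit.
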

\begin{proof}
	Let us write $\zeta=(\sigma,\nu)$. Choosing the orientation of $M$ suitably, we only need to show that the unit vector field $\nu$ is normal  to the immersion $\sigma$. By differentiating, $d\zeta=(d\sigma,\mathrm d\nu)$ and by \eqref{eq:modelTT} we obtain 
	$$\langle \nu(p),d\sigma(W)\rangle+\langle \sigma(p),\mathrm d\nu(W)\rangle=0$$ 
	for all  $W\in T_p M$. By the orthogonality hypothesis and the expression $\chi_{\zeta(p)}=(\nu(p),\sigma(p))$ (Lemma \ref{lemma:generator geoflow hor lift}) we obtain
	$$\langle \nu(p),d\sigma(W)\rangle-\langle \sigma(p),\mathrm d\nu(W)\rangle=0~,$$
	hence $\langle \nu(p),\mathrm d\sigma(W)\rangle=0$ for all $W$. Since by hypothesis the differential of $\sigma$ is injective, $\nu(p)$ is uniquely determined up to the sign, and is a unit normal vector to the immersion $\sigma$. 
\end{proof}

In relation with Proposition \ref{prop:gauss immersion converse}, it is important to remark that there are (plenty of) immersions in $T^1\Hyp^{n+1}$ which are orthogonal to $\chi$ but are \emph{not} the lifts of immersions in $\Hyp^{n+1}$, meaning that they become singular when post-composed with the projection $\pi:T^1\Hyp^{n+1}\to \Hyp^{n+1}$. 
{Some examples of this phenomenon (Example \ref{ex:spheres}), and more in general of the Gauss map construction, are presented in Subsection \ref{subsec:examples} below.}

\subsection{Geodesic flow and normal evolution} We develop here the construction of the normal evolution of hypersurfaces into $\Hyp^{n+1}$.

\begin{Def}\label{Def normal evolution}
	Given an oriented manifold $M^n$ and an immersion $\sigma:M\to\Hyp^{n+1}$, the \emph{normal evolution} of $\sigma$ is the map 
	$$\sigma_t:M\to\Hyp^{n+1}\qquad \sigma_t(p)=\exp_{\sigma(p)}(t\nu(p))~,$$
	where $\nu$ is the unit normal vector field of $\sigma$ compatible with the orientations of $M$ and $\Hyp^{n+1}$.
\end{Def}

The relation between the normal evolution and the geodesic flow is encoded in the following proposition. 

\begin{prop}
	\label{prop: flusso geod e flusso normale}
	Let $M^n$ be an orientable manifold  and $\sigma:M\to \Hyp^{n+1}$ be an immersion. Suppose $\sigma_t$ is an immersion for some $t\in\R$. Then $\zeta_{\sigma_t}=\varphi_t\circ \zeta_\sigma$. 
\end{prop}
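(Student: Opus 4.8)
The plan is to prove the identity $\zeta_{\sigma_t}=\varphi_t\circ\zeta_\sigma$ by a pointwise argument, reducing everything to the defining properties of the geodesic flow and the lift. The two sides are maps $M\to T^1\Hyp^{n+1}$, so it suffices to check that they agree at each point $p\in M$. The key observation is that both sides trace out the same parametrized geodesic.

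First I would fix $p\in M$ and consider the geodesic $\gamma(s)=\exp_{\sigma(p)}(s\,\nu(p))$, which by Definition \ref{Def normal evolution} satisfies $\sigma_s(p)=\gamma(s)$ for all $s$. By the definition of the geodesic flow in Equation \eqref{eq: geodesic flow}, we have
\[
\varphi_t(\zeta_\sigma(p))=\varphi_t(\sigma(p),\nu(p))=(\gamma(t),\gamma'(t))~,
\]
since $\gamma$ is precisely the geodesic with initial data $\gamma(0)=\sigma(p)$ and $\gamma'(0)=\nu(p)$. Thus the first component of $\varphi_t\circ\zeta_\sigma$ at $p$ is $\gamma(t)=\sigma_t(p)$, which already matches the first component of $\zeta_{\sigma_t}(p)=(\sigma_t(p),\nu_t(p))$.

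The remaining task is to identify the second components, i.e.\ to show that $\gamma'(t)$ equals the unit normal $\nu_t(p)$ of the immersion $\sigma_t$ compatible with the chosen orientations. The plan here is twofold. One shows that $\gamma'(t)$ is a unit vector orthogonal to the image of $d_p\sigma_t$: orthogonality follows from a standard Riccati/Jacobi field computation, observing that for any $W\in T_pM$ the vector $d_p\sigma_t(W)=\frac{d}{ds}\big|_{s=0}\exp_{\sigma(p')}(t\nu(p'))$ (where $p'$ varies along a curve with derivative $W$) is the value at time $t$ of a Jacobi field $\mathcal{J}_W$ along $\gamma$ satisfying $\mathcal{J}_W(0)=d_p\sigma(W)$ and $\mathcal{J}_W'(0)=-d_p\sigma(B(W))$ by Equation \eqref{eq:shape}; since $\langle \mathcal{J}_W(s),\gamma'(s)\rangle$ has vanishing value and derivative at $s=0$ (both $d_p\sigma(W)$ and $d_p\sigma(B(W))$ are orthogonal to $\nu(p)=\gamma'(0)$), it vanishes identically, giving $\langle d_p\sigma_t(W),\gamma'(t)\rangle=0$. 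Since $\gamma$ is a unit-speed geodesic, $\gamma'(t)$ has norm $1$, so it is a unit normal to $\sigma_t$ at $p$.

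Finally I would address the sign, i.e.\ compatibility with the orientations, which is where a little care is needed: one must verify that $\gamma'(t)$ is the normal determined by the orientation of $M$ and of $\Hyp^{n+1}$, rather than its opposite. This follows by continuity in $t$: at $t=0$ we have $\gamma'(0)=\nu(p)=\nu_0(p)$ by construction, and since $\sigma_t$ is assumed to be an immersion, the compatible unit normal $\nu_t(p)$ varies continuously in $t$; as $\gamma'(t)$ also varies continuously and agrees with $\nu_t(p)$ up to sign for all $t$ where $\sigma_t$ is immersed, the sign cannot jump, so $\gamma'(t)=\nu_t(p)$. Hence $\varphi_t(\zeta_\sigma(p))=(\sigma_t(p),\nu_t(p))=\zeta_{\sigma_t}(p)$, as desired. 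The main obstacle I anticipate is the clean justification of the Jacobi-field orthogonality computation and the continuity-in-$t$ sign argument; the algebra itself is routine once the Jacobi field is set up via the expression for $d\zeta_\sigma$ in Equation \eqref{eq:differential lift}.
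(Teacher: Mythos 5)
Your proof has the same skeleton as the paper's: the first components agree by the very definition of the geodesic flow, and the content is to show that $\gamma'(t)$ is the unit normal of $\sigma_t$ at $p$. For the orthogonality step the paper computes explicitly in the hyperboloid model, where $d\sigma_t(V)=d\sigma\bigl((\cosh t\,\mathrm{id}-\sinh t\,B)V\bigr)$ and $\gamma'(t)=\sinh(t)\sigma(p)+\cosh(t)\nu(p)$ are visibly orthogonal; your Jacobi-field argument is a valid, model-independent alternative. You should, however, spell out why $\langle\mathcal J_W(s),\gamma'(s)\rangle$ vanishes identically once its value and first derivative vanish at $s=0$: the point is that this function is affine in $s$, since its second derivative equals $-\langle R(\mathcal J_W,\gamma')\gamma',\gamma'\rangle=0$.

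The genuine gap is in the sign step. Your continuity argument requires $\nu_s(p)$ to be defined, i.e.\ $d_p\sigma_s$ to be injective, for every $s$ in an interval joining $0$ to $t$; the hypothesis only provides this at $s=t$. Since $d_p\sigma_s=d\sigma\circ(\cosh s\,\mathrm{id}-\sinh s\,B)$, the orientation of the frame $(d\sigma_s(e_1),\dots,d\sigma_s(e_n),\gamma'(s))$ differs from that of $(d\sigma(e_1),\dots,d\sigma(e_n),\nu(p))$ by $\mathrm{sign}\det(\cosh s\,\mathrm{id}-\sinh s\,B)$, and this sign genuinely flips as $s$ crosses a value where $\coth s$ equals a principal curvature (e.g.\ a metric sphere with inward normal flowed past its center), so "the sign cannot jump" is not justified from the stated hypotheses alone. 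To be fair, the paper's own proof is no better here: it simply asserts orientation-compatibility, which is only correct when $\det(\cosh t\,\mathrm{id}-\sinh t\,B)>0$. In the situation where the proposition is actually applied (small principal curvatures, as in Corollary \ref{cor:equidistant is immersed}) every factor $\cosh s-\sinh s\,\lambda_i$ is positive for all $s$, so your continuity argument closes; either add that hypothesis explicitly, or replace the continuity argument by the direct determinant computation above and record when its sign is positive.
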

\begin{proof}
	The claim is equivalent to showing that, if the differential of $\sigma_t$ is injective at $p$, then $(\sigma_t(p),\nu_t(p))=\varphi_t(\sigma(p),\nu(p))$, where $\nu_t$ is the normal vector of $\sigma_t$. The equality on the first coordinate holds by definition of the geodesic flow, since $t\mapsto \gamma(t)=\sigma_t(p)$ is precisely the parameterized geodesic such that $\gamma(0)=\sigma(p)$ with speed $\gamma'(0)=\nu(p)$. It thus remains to check that the normal vector in $\sigma_t(p)$ equals $\gamma'(t)$.
	
	By the usual expression of the exponential map in the hyperboloid model of $\Hyp^{n+1}$, the normal evolution is:
	\begin{equation}\label{eq:normal evolution hyperboloid}
		\sigma_t(p)=\cosh(t)\sigma(p)+\sinh(t)\nu(p)~,
	\end{equation}
	and therefore 
	\begin{equation}\label{eq:normal evolution hyperboloid2}
		d\sigma_t(V)=d\sigma\circ(\cosh(t)\mathrm{id}-\sinh(t)B)(V)
	\end{equation}
	for $V\in T_p M$.
	It is then immediate to check that
	$$\dot	\gamma(t)=\sinh(t)\sigma(p)+\cosh(t)\nu(p)$$
	is orthogonal to $d\sigma_t(V)$
	for all $V$.
	If $d\sigma_t$ is injective, this implies that $\gamma'(t)$ is the unique unit vector normal to the image of $\sigma$ and compatible with the orientations, hence it equals $\nu_t(p)$. This concludes the proof.
\end{proof}

A straightforward consequence, recalling that the Gauss map is defined as $G_\sigma=\mathrm{p}\circ\zeta_\sigma$ and that $\mathrm p\circ\varphi_t=\mathrm p$, is the following.

\begin{cor}\label{prop:gauss map invariant normal evo}
	The Gauss map of an immersion $\sigma:M^n\to \Hyp^{n+1}$ is invariant under the normal evolution, namely $G_{\sigma_t}=G_\sigma$, as long as $\sigma_t$ is an immersion.
\end{cor}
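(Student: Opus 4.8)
The plan is to deduce Corollary \ref{prop:gauss map invariant normal evo} directly from Proposition \ref{prop: flusso geod e flusso normale} together with the defining property of the bundle map $\mathrm p\colon T^1\Hyp^{n+1}\to \G{n+1}$. Recall that by Definition \ref{Def:lift and gauss} the Gauss map is the composition $G_\sigma=\mathrm p\circ \zeta_\sigma$, where $\zeta_\sigma$ is the lift of $\sigma$ to the unit tangent bundle. The key structural fact I would exploit is that $\mathrm p$ is invariant under the geodesic flow: since the fibers of $\mathrm p$ are exactly the orbits of the geodesic flow (the $\R$-principal bundle structure), we have $\mathrm p\circ \varphi_t=\mathrm p$ for every $t\in\R$. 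This is the relation that was already recorded in the excerpt just before the statement.

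The argument is then a one-line chain of equalities, valid for every $t$ for which $\sigma_t$ is an immersion (so that $\zeta_{\sigma_t}$ and $G_{\sigma_t}$ are defined). First I would invoke Proposition \ref{prop: flusso geod e flusso normale}, which tells us that the lift of the normally-evolved hypersurface is obtained from the lift of $\sigma$ by applying the geodesic flow, namely $\zeta_{\sigma_t}=\varphi_t\circ \zeta_\sigma$. Composing with $\mathrm p$ on the left gives
\[
G_{\sigma_t}=\mathrm p\circ \zeta_{\sigma_t}=\mathrm p\circ \varphi_t\circ \zeta_\sigma=(\mathrm p\circ\varphi_t)\circ \zeta_\sigma=\mathrm p\circ \zeta_\sigma=G_\sigma~,
\]
where the fourth equality is precisely the flow-invariance $\mathrm p\circ\varphi_t=\mathrm p$. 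This establishes $G_{\sigma_t}=G_\sigma$ as claimed.

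There is essentially no obstacle here: the corollary is a formal consequence, and all the substance has been front-loaded into Proposition \ref{prop: flusso geod e flusso normale} (the geometric identification of the normal evolution with the geodesic flow on the lift) and into the identification of the fibers of $\mathrm p$ with the geodesic-flow orbits. The only point requiring a word of care is the hypothesis "as long as $\sigma_t$ is an immersion", which I would state explicitly: the identity holds pointwise on the (open) set of $p\in M$ where $d_p\sigma_t$ is injective, since outside that locus $\zeta_{\sigma_t}$ and hence $G_{\sigma_t}$ need not even be defined. On that locus, Proposition \ref{prop: flusso geod e flusso normale} applies verbatim, so the computation above goes through without change.
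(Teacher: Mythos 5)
Your proof is correct and is exactly the argument the paper intends: the corollary is stated as "a straightforward consequence" of Proposition \ref{prop: flusso geod e flusso normale} together with the identities $G_\sigma=\mathrm p\circ\zeta_\sigma$ and $\mathrm p\circ\varphi_t=\mathrm p$, which is precisely your chain of equalities. Nothing further is needed.
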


\begin{remark}
	It follows from Equation \eqref{eq:normal evolution hyperboloid2} that, for any immersion $\sigma:M^n\to \Hyp^{n+1}$, the differential of $d\sigma_t$ at a point $p$ is injective for small $t$. However, in general  $\sigma_t$ might fail to be a global immersion for all $t\neq 0$.
	%, even if $\sigma$ is an immersion, if one does not make any further assumption (e.g. cocompactness \blue{[in realtÃ  la condizione necessaria e sufficiente per l'esistenza per ogni $t$ Ã¨ che le curvature principali siano in $[-1,1]$. La cocompattezza dovrebbe permetterti di dire che $\sigma_t$ non farÃ  casino per $|t|$ definitivamente grande]}). 
	In Section \ref{sec:small princ curv} we will discuss the condition of small principal curvatures for $\sigma$, which is a sufficient condition to ensure that $\sigma_t$ remains an immersion for all $t$.
	%We will discuss more deeply in Section \ref{} several sufficient condition which ensure that $\sigma_t$ remains an immersion.
	
	As a related phenomenon, it is possible to construct examples of immersions $\zeta:M^n\to T^1\Hyp^{n+1}$ {which are orthogonal to the fibers of $\mathrm p$ but} such that $\pi\circ\varphi_t\circ\zeta$ fails to be an immersion for all $t\in\R$. We will discuss this problem later on, see Example \ref{ex: Lagrangian not globally integrable}.
\end{remark}

\subsection{Fundamental examples}\label{subsec:examples}

It is now useful to describe several explicit examples. All of {them} will actually play a role in some of the proofs in the next sections.

\begin{example}[Totally geodesic hyperplanes] \label{ex:totally geodesic}
	Let us consider a totally geodesic hyperplane $\mathcal P$ in $\Hyp^{n+1}$, and let $\sigma:\mathcal P\to \Hyp^{n+1}$ be the inclusion map. Since in this case the shape operator vanishes everywhere, from Equation \eqref{eq:fff gauss} the Gauss map is an isometric immersion (actually, an embedding) into $\G{n+1}$ with respect to the first fundamental form of $\sigma$. Totally geodesic immersions are in fact the only cases for which this occurs.
	
	A remark that will be important in the following is that the lift $\zeta_\sigma$ is horizontal: that is, by Equation \eqref{eq:differential lift}, $d\zeta_\sigma(w)$ equals the horizontal lift of $w$ for every vector $w$ tangent to $\mathcal P$ at $x$. Therefore for every $x\in \mathcal P$, the image of $d\zeta_\sigma$ at $x$ is exactly the horizontal subspace $\HP_{(x,\nu(x))}$, for $\nu(x)$ the unit normal vector of $\mathcal P$. 
\end{example}

\begin{example}[Spheres in tangent space] \label{ex:spheres}
	A qualitatively opposite example is the following. Given a point $x\in\Hyp^{n+1}$, let us choose an isometric identification of $T_x \Hyp^{n+1}$ with the $(n+1)$-dimensional Euclidean space, and consider the $n$-sphere $\Sph^n$ as a hypersurface in $T_x \Hyp^{n+1}$ by means of this identification. Then we can define the map 
	$$\zeta:\Sph^n\to T^1 \Hyp^{n+1}\qquad \zeta(v)=(x,v)~.$$
	The differential of $\zeta$ reads $d\zeta_v(w)=(0,w)=w^\V$ for every $w\in T_v\Sph^n\cong v^\perp$, hence $\zeta$ is an immersion, which is orthogonal to the fibers of $\mathrm{p}$. Actually,  $\zeta$ is vertical: this means that $d\zeta_v(w)$ is the vertical lift of $w$ for every $w\in v^\perp$, and therefore  $d_v\zeta(T_v \Sph^n)$ is exactly the vertical subspace $\VP_{(x,v)}$. 
	
	Clearly we are not in the situation of Propositions \ref{prop:gauss immersion} and \ref{prop:gauss immersion converse}, as $\pi\circ \zeta$ is a constant map. On the other hand, $\mathrm{p}\circ \zeta$ has image in $\G{n+1}$ consisting of all the (oriented) geodesics $\ell$ going through $x$. {However, when post-composing $\zeta$ with the geodesic flow, $\varphi_t\circ \zeta$ projects to an immersion in $\Hyp^{n+1}$} {for all $t\ne 0$} {and is in fact an embedding with image a geodesic sphere of $\Hyp^{n+1}$ of radius $|t|$ centered at $x$.}
	As a final remark, the first fundamental form of $\mathrm{p}\circ \zeta$,  is \emph{minus} the spherical metric of $\Sph^n$, since by Equation \eqref{eq: Sas di T1Hn} $\gs{n+1}(w^\V,w^\V)=-\langle w,w\rangle$. 
	%\as{tagliare?: Its image coincides also with the Gauss map image of all the geodesic spheres centered at $x$.} \orange{SÃ¬, mi sembra una ripetizione.}
\end{example}

The previous two examples can actually be seen as special cases of a more general construction, which will be very useful in the next sections. 
\begin{example}[A mixed hypersurface in the unit tangent bundle] \label{ex:mixed}
	Let us consider a totally geodesic $k$-dimensional submanifold $Q$ of $\Hyp^{n+1}$, for $0\leq k\leq n$. Consider the unit normal bundle 
	$$\mathrm N^1 Q=\{(x,v)\in T^1\Hyp^{n+1}\,|\,x\in Q,v\text{ orthogonal to }Q  \}~,$$
	which is an $n$-dimensional submanifold of $T^1\Hyp^{n+1}$, and 
	let $\iota$ be the obvious inclusion map of $\mathrm N^1 Q$ in $T^1\Hyp^{n+1}$. Observe that $\pi\circ \iota$ is nothing but the bundle map $\mathrm N^1 Q\to Q$, hence not an immersion unless $k=n$ which is the case we discussed in Example \ref{ex:totally geodesic}. The map $\mathrm{p}\circ\iota$ has instead image in $\G{n+1}$ which consists of all the oriented geodesics $\ell$ orthogonal to $Q$.  See Figure \ref{fig:cylinder}.
	Let us {focus on} its geometry in $\G{n+1}$.  
	
	Given  $(x,v)\in N^1 Q$, take an orthonormal basis $\{w_1,\ldots,w_k\}$ of $T_x Q$. Clearly the $w_i${'s} are orthogonal to $v$, and let us complete them to an orthonormal basis $\{w_1,\ldots,w_n\}$ of $v^\perp\subset T_x \Hyp^{n+1}$. Then $\{w_1,\ldots,w_n\}$ identifies to a basis of $T_{(x,v)}\mathrm N^1 Q$. By repeating the arguments of the previous two examples, $d\iota_{(x,v)}(w_i)$ is the \emph{horizontal} lift of $w_i$ at $(x,v)$ if $1\leq i\leq k$, and is the \emph{vertical} lift if $i>k$. In particular they are all orthogonal to $\chi_{(x,v)}$, and therefore the induced metric on $\mathrm N^1 Q$ by the metric $\gs{n+1}$ coincides with the first fundamental form of $\mathrm{p}\circ\iota$. This metric has signature $(k,n-k)$, and $\{w_1,\ldots,w_n\}$ is an orthonormal basis, for which $w_1,\ldots,w_k$ are positive directions and $w_{k+1},\ldots,w_n$ negative directions. 
	
	{Similarly to the previous example, for all $t\ne 0$ the map $\varphi_t\circ\iota$ has the property that, when post-composed with the projection $\pi$, it gives an embedding with image the equidistant ``cylinder'' around $Q$.}
\end{example}

\begin{figure}[htbp]
	\centering
	\includegraphics[height=4.5cm]{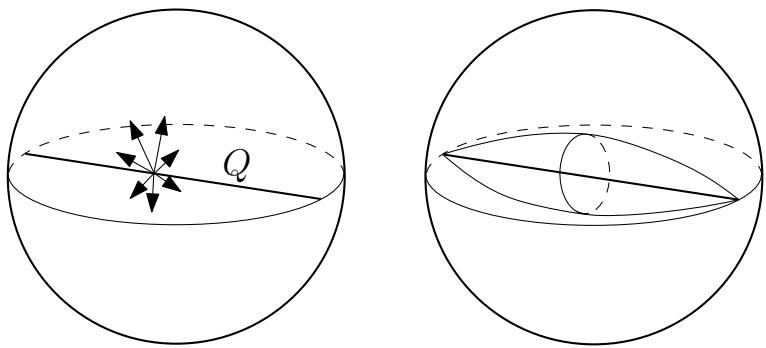} 
	%\captionsetup{labelformat=empty}
	
	\caption{The normal bundle $N^1Q$ of a $k$-dimensional totally geodesic submanifold $Q$ in $\Hyp^{n+1}$ (here $k=1$ and $n=2$). On the right: after composing with the geodesic flow $\varphi_t$ for $t\neq 0$, one obtains an equidistant cylinder.}\label{fig:cylinder}
\end{figure}

Let us now consider a final example, which allows also to {prove} the integrability of the almost para-complex structure $\JJ$ of $\G{n+1}$ we introduced in Lemma \ref{lemma:paracomplex structure}.

\begin{example}[Horospheres] \label{ex:horospheres}
	Let us consider a horosphere $H$ in $\Hyp^{n+1}$, and apply the Gauss map construction of Definition \ref{Def:lift and gauss} to the inclusion $\sigma:H\to \Hyp^{n+1}$.
	
	It is known that {the shape operator of $H$ is $\pm\mathrm{id}$ (the sign varies according to the choice of the normal vector field, or equivalently by the choice of orientation on $H$)}, a result we will also deduce later on from our arguments in Remark \ref{rmk princ curv horosphere}.  Define $\zeta_\pm$ as the lift of $\sigma$ induced by the choice of the normal vector field for which the shape operator is $\pm \mathrm{id}$.
	
	%\as{forse meglio tagliare questo paragrafo} To see this, recall that the upper-half plane model  of $\Hyp^{n+1}$ is $\{x_{n+1}>0\}\subset\R^{n+1}$ endowed with the metric $(1/x_{n+1}^2)g_{\R^{n+1}}$. Up to global isometries, $H$ is the slice $x_{n+1}=1$ in this model, and it is intrinsically isometric to $\R^{n+1}$. As Euclidean isometries on $H$ extend to global isometries of $\Hyp^{n+1}$, $H$ is an umbilical surface (namely, its shape operator is a multiple of the identity), and moreover the multiplicity factor is constant over $H$. By the Gauss equation, since the first fundamental form is flat, necessarily $B\equiv\pm \mathrm{id}$. Clearly changing the sign of the normal vector negates $B$.
	
	Now, by Proposition \ref{prop:gauss immersion}, the lift $\zeta_{\pm}$ is orthogonal to the fibers of $\mathrm{p}$, and moreover, by Equation \eqref{eq:differential lift}, $d\zeta_{\pm}(w)=w^\HH\pm w^\V$. As a result, by Equation \eqref{eq:defiJ2}, one has in fact that the image of $d_x\zeta_{\pm}$ is the whole $(\pm 1)$-eigenspace {of $\mathrm J$} in $T_{\zeta_\pm(x)}T^1 \Hyp^n$.
\end{example}

%Projecting down to $\G{n+1}$, Example \ref{ex:horospheres} shows that the Gauss maps of the horosphere $H$ (depending on the choice of orientation of $H$) have image integral submanifolds for the distributions $T^\pm\G{n+1}$ associated to the almost para-complex structure $\JJ$, which is therefore integrable. 

A direct application of Example \ref{ex:horospheres} shows that the almost para-complex structure $\JJ$ is integrable:

\begin{cor}
	The $(1,1)$-tensor $\JJ$ is a para-complex structure on $\G{n+1}$.
\end{cor}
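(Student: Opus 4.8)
The plan is to reduce the claim to the existence of integral submanifolds for the two eigenspace distributions of $\JJ$ and then to produce these integral submanifolds explicitly via the Gauss maps of horospheres. Recall that, by the definition of para-complex structure, the almost para-complex structure $\JJ$ is a para-complex structure precisely when the $\pm 1$-eigenspace distributions $\ker(\JJ \mp \mathbbm 1)$ are integrable. By the Frobenius theorem it is enough to exhibit, through every point $\ell \in \G{n+1}$ and for each sign, an $n$-dimensional submanifold whose tangent space at each of its points coincides with the corresponding eigenspace.

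First I would fix $\ell \in \G{n+1}$ with endpoints $p_-, p_+ \in \partial\Hyp^{n+1}$ and, for a choice of sign, consider the horosphere $H$ centered at the appropriate endpoint $\xi \in \{p_-, p_+\}$. The geodesic $\ell$ meets $H$ orthogonally at a single point $x$, so that $\ell = G_H(x)$ for the inclusion $\sigma \colon H \to \Hyp^{n+1}$ endowed with the normal vector field whose shape operator is $\pm\mathrm{id}$, as recalled in Example \ref{ex:horospheres}. By Proposition \ref{prop:gauss immersion}, $G_H$ is an immersion, and since its lift $\zeta_\pm$ is orthogonal to the fibers of $\mathrm p$, the isomorphism $d\mathrm p|_{\chi^\perp}$ carries the image of $d_x\zeta_\pm$ onto the image of $d_x G_H$. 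By the computation in Example \ref{ex:horospheres}, the image of $d_x\zeta_\pm$ is exactly the $\pm 1$-eigenspace of $\mathrm J$ in $\chi^\perp_{\zeta_\pm(x)}$; pushing forward by $d\mathrm p$ and using the very definition of $\JJ$, the image of $d_x G_H$ is the $\pm 1$-eigenspace of $\JJ$ at $\ell$.

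Therefore $G_H(H)$ is an integral submanifold of the corresponding eigenspace distribution passing through $\ell$. Since this construction works for every $\ell$ and for both signs (by centering the horosphere at $p_+$ or $p_-$ and choosing the orientation accordingly), both distributions admit an integral submanifold through each point, hence are integrable. Concretely, the leaves one obtains are the slices $\{\xi\} \times (\partial\Hyp^{n+1}\setminus\{\xi\})$ and $(\partial\Hyp^{n+1}\setminus\{\xi\}) \times \{\xi\}$, as $\xi$ ranges over $\partial\Hyp^{n+1}$, which are exactly the two product foliations of $\G{n+1} = \partial\Hyp^{n+1}\times\partial\Hyp^{n+1}\setminus\Delta$; this makes the integrability geometrically transparent.

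The only point requiring a little care — and the step I expect to be the main obstacle in writing it cleanly — is the bookkeeping of signs and orientations: matching which endpoint and which choice of normal vector field yields the $+1$ versus the $-1$ eigenspace, and checking that the shape operator of a horosphere is indeed $\pm\mathrm{id}$ (the fact promised in Example \ref{ex:horospheres} and to be justified, for instance, via the normal evolution formula \eqref{eq:normal evolution hyperboloid2} as announced in Remark \ref{rmk princ curv horosphere}). Once the eigenspace identification is pinned down for one sign, the other follows symmetrically by swapping the two boundary factors.
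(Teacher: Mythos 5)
Your proof is correct and follows essentially the same route as the paper: both use Example \ref{ex:horospheres} to identify the images of the Gauss maps of the two horospheres through a point of the geodesic (one for each choice of center/orientation) as integral submanifolds of the $\pm 1$-eigenspace distributions of $\JJ$, and conclude integrability. Your added identification of the leaves with the product slices $\{\xi\}\times(\partial\Hyp^{n+1}\setminus\{\xi\})$ is a pleasant extra but not a different argument.
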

\begin{proof}
	Given $(x,v)\in T^1\Hyp^{n+1}$, consider the two horospheres $H^\pm$ containing $x$ with normal vector $v$ at $x$. The vector $v$ points to the convex side of one of them, and to the concave side of the other. Let us orient them in such a way that $v$ is compatible with the ambient orientation. Then Example \ref{ex:horospheres} shows that the Gauss maps of the horospheres $H^\pm$ have image integral submanifolds for the distributions $T^\pm\G{n+1}$ associated to the almost para-complex structure $\JJ$, which is therefore integrable. 
\end{proof}

\section{A consequence: $(\G 3, \mathbbm g)\cong Re\big((\GGG, \inners_\GGG)\big)$}
\label{subsection: G(3) modello Bonsante-Christian}

We defined two different metrics on the space of geodesics of $\Hyp^3$, the pseudo-Riemannian metric $\GG$ and the holomorphic Riemannian metric $\inners=\inners_\GGG$. 

By Proposition \ref{Prop metrica in G} and Equation \ref{eq:fff gauss}, we can immediately observe that if $\sigma\colon S\to \Hyp^3$ is an immersion, then 
\begin{equation}
	\label{eq: Re gauss map}
G_\sigma^*(\GG)= Re\Big(G_\sigma^*(\inners_\GGG)\Big).
\end{equation}

We show that Equation \ref{eq: Re gauss map}, together with the examples considered in Subsection \ref{subsec:examples}, allows to prove the following

\begin{prop}
	\label{prop: pull-back Re hRm}
	On $\G{3}$,
	\[
	\GG= Re \Big( \inners_\GGG \Big)
	\]
\end{prop}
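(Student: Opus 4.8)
The plan is to exploit the fact that both $\GG$ and $\mathrm{Re}(\inners_\GGG)$ are pseudo-Riemannian metrics of signature $(2,2)$ on $\G 3$ which are invariant under the transitive action of $\PSL\cong\Isom_0(\Hyp^3)$, so that the identity between them reduces to a pointwise linear-algebra verification at a single geodesic $\ell\in\G 3$. Concretely, I would fix a base geodesic $\ell_0$ and show that the two symmetric bilinear forms $\GG_{\ell_0}$ and $\mathrm{Re}(\inners_\GGG)_{\ell_0}$ on $T_{\ell_0}\G 3$ agree; transitivity of $\PSL$ together with Remark \ref{rmk: Isom preserva Omega e J} (which gives $\Isom(\Hyp^3)$-invariance of $\GG$) and Theorem \ref{teo: metrica su G} (which gives $\PSL$-invariance of $\inners_\GGG$, hence of its real part) then propagates the equality to every tangent space.

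The key step is to feed the right immersions into Equation \eqref{eq: Re gauss map}, which already states $G_\sigma^*(\GG)=\mathrm{Re}\big(G_\sigma^*(\inners_\GGG)\big)$ for \emph{every} immersion $\sigma\colon S\to\Hyp^3$. The subtlety is that this identity holds on pulled-back tensors, so I must produce enough immersions whose Gauss maps sweep out a full basis of directions in a single tangent space $T_{\ell_0}\G 3$. First I would take $\sigma$ to be (a piece of) a totally geodesic plane $\mathcal P$ as in Example \ref{ex:totally geodesic}, with $\ell_0$ the normal geodesic at some point: by that example $d\zeta_\sigma$ spans exactly the horizontal subspace $\HP$, so $dG_\sigma$ surjects onto the image of $\HP$ in $T_{\ell_0}\G 3$, and Equation \eqref{eq: Re gauss map} gives agreement of the two forms when restricted to these ``horizontal'' directions. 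To capture the complementary ``vertical'' directions and the mixed terms, I would use a totally geodesic $k$-dimensional submanifold $Q$ together with its normal-bundle immersion as in Example \ref{ex:mixed}, where $d\iota$ realizes both horizontal lifts (along $T_xQ$) and vertical lifts (along the remaining normal directions); choosing $Q$ of dimension $1$ in $\Hyp^3$ and $\ell_0$ one of the geodesics orthogonal to $Q$, the image of $d(\mathrm p\circ\iota)$ at $\ell_0$ spans directions of mixed horizontal/vertical type, so Equation \eqref{eq: Re gauss map} again forces the forms to coincide there.

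The main obstacle will be the bookkeeping needed to verify that, as $\sigma$ (resp.\ $Q$) ranges over these model families, the images of the various $dG_\sigma$ at the fixed geodesic $\ell_0$ together span \emph{all} of $T_{\ell_0}\G 3$ and moreover pin down every entry of the symmetric bilinear form — including the off-diagonal (horizontal-vertical) pairings, which a single immersion may not see. A clean way around this is to argue directly at the level of the para-Sasaki metric: under the identification $T_{\ell_0}\G 3\cong\chi^\perp=\HP_{(x,v)}\oplus\VP_{(x,v)}$, both $\GG$ and $\mathrm{Re}(\inners_\GGG)$ are, by construction and by Proposition \ref{Prop metrica in G} respectively, determined by the Minkowski products $\langle\cdot,\cdot\rangle$ on $v^\perp$ via the explicit formulas $\gs 3(w_1^\HH,w_2^\HH)=\langle w_1,w_2\rangle$, $\gs 3(w_1^\V,w_2^\V)=-\langle w_1,w_2\rangle$, and orthogonality of $\HP$ to $\VP$; comparing these against the real part of the formula in Lemma \ref{Lemma 2 metrica pull back G} shows the two agree on every pair drawn from $\HP$ and $\VP$, which is exactly what the spanning argument above certifies. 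Once the pointwise identity at $\ell_0$ is established, invariance closes the proof.
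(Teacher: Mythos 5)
Your proposal is correct, but the route you end up taking is not quite the one the paper uses, and your first strategy is incomplete as sketched. The paper's proof works at an arbitrary $\ell=\mathrm p(x,v)$ and pins down every entry of $Re(\inners_\GGG)$ on the basis $\{d\mathrm p(w_i^\HH),d\mathrm p(w_i^\V)\}$ by feeding \emph{four} model families into Equation \eqref{eq: Re gauss map}: the totally geodesic plane (Example \ref{ex:totally geodesic}) for the $\HP$-$\HP$ block, the sphere (Example \ref{ex:spheres}) for the $\VP$-$\VP$ block, the cylinder (Example \ref{ex:mixed}) for the mixed orthogonality $\langle w_1^\HH,w_2^\V\rangle$, and — crucially — the horosphere (Example \ref{ex:horospheres}), whose image $\mathrm{Span}(w_k^\HH+w_k^\V)$ is isotropic for $\gs 3$ and hence forces $Re\langle d\mathrm p(w_k^\HH),d\mathrm p(w_k^\V)\rangle_\GGG=0$. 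Your spanning argument names only the first and third of these, so on its own it misses both the purely vertical pairings and the diagonal horizontal--vertical term $Re\langle w_k^\HH,w_k^\V\rangle$; you correctly sense this obstacle. Your fallback, however, is sound and in fact shorter than the paper's proof: Lemma \ref{Lemma 2 metrica pull back G} already gives the full quadratic form $\|d\mathrm p(w_1,w_2)\|^2_\GGG$ for an \emph{arbitrary} element $(w_1,w_2)\in\chi^\perp$, its real part is $\langle w_1,w_1\rangle-\langle w_2,w_2\rangle$, which is exactly $\gs 3$ restricted to $\chi^\perp$, i.e.\ the defining expression for $\GG$; polarizing the quadratic forms then yields equality of the bilinear forms at that point, and $\PSL$-invariance of both sides (Remark \ref{rmk: Isom preserva Omega e J} and Theorem \ref{teo: metrica su G}) plus transitivity finishes. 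What the paper's approach buys is a geometric dissection of the metric by explicit families of hypersurfaces (reused elsewhere, e.g.\ in Proposition \ref{prop:identity curvature form}); what yours buys is economy, at the price of leaning entirely on the coordinate computation in Lemma \ref{Lemma 2 metrica pull back G}. One small slip: the invariance of $Re(\inners_\GGG)$ should be attributed to Theorem \ref{teo: metrica su G} and the determination of its values to Lemma \ref{Lemma 2 metrica pull back G}, not to Proposition \ref{Prop metrica in G}, which concerns the pull-back along a Gauss map rather than the ambient metric.
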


\begin{proof}
Let $\ell=\mathrm p (x,v)\in \G{3}$, then in the hyperboloid model $\chi_{(x,v)}=(v,0)$, and
	\[
	\chi^\bot_{(x,v)} =Span_\R ((w_1,0), (w_2,0), (0,w_1), (0,w_2)) \subset T_{(x,v)} T^1 \Hyp^3
	\]
	with $(w_1,w_2,v)$ being a orthonormal basis for $T_x M$. Then, \[d\mathrm p (w_1,0), d\mathrm p(w_2,0), d\mathrm p(0,w_1), d\mathrm p(0,w_2)\in T_{\ell}\G{3} \] generate the tangent space and are mutually $\GG$-orthonormal, with the former two ones having $\GG$-norm $1$ and the latter two ones having $\GG$-norm $-1$. We prove that $Re\inners$ coincides with $\GG$ on this basis.
	\begin{itemize}
		\item As in Example \ref{ex:totally geodesic}, let $\sigma_1\colon \Hyp^2\hookrightarrow\Hyp^3$ be the totally geodesic inclusion passing by $x$ and with normal vector $v$. Then $\zeta_{\sigma_1} (x)=\ell$ and \[d\zeta_{\sigma_1} (T_x\Hyp^2)= Span_{\R} ((w_1,0), (w_2,0)).\] 
		As a result, by Proposition \ref{prop: pull-back Re hRm}, the metrics $\GG$ and $Re \inners$ coincide on $Span_{\R} (d\mathrm p (w_1,0), d\mathrm p(w_2,0))$, hence $(w_1,0)$ and $(w_2,0)$ have norm $1$ and are orthogonal for both metrics.
		\item As in Example \ref{ex:spheres}, there exists an embedded sphere $\sigma_2\colon S^2 \hookrightarrow \Hyp^3$ passing by $x$ and with $\ell$ as normal geodesic at $x$ such that \[d\zeta_{\sigma_2} (T_x S^2)= Span_{\R} ((0,w_1), (0,w_2)),\]
		hence, similarly as above, $d\mathrm p (0,w_1)$ and $d\mathrm p(0,w_2)$ have norm $-1$ and are orthogonal for both metrics.
		\item Using Example \ref{ex:mixed}, one can take an embedded cylinder $\sigma_3\colon S^1 \times \R\hookrightarrow \Hyp^3$ passing by $x$ with normal geodesic $\ell$ such that
		\[
		d\zeta_{\sigma_3} (T_x (S^1 \times \R))= Span_{\R} ((w_1,0), (0,w_2)).
		\]
		Hence $d\mathrm p(w_1,0)$ and $d \mathrm p(0,w_2)$ are orthogonal for $Re \inners$ too, and similarly for $(w_2,0)$ and $(0, w_1)$.
		\item We are left to prove that $Re\inner{d\mathrm p(w_k,0),d \mathrm p(0,w_k)}_\GGG=0$, with $k=1,2$. 
		By Example \ref{ex:horospheres}, the horosphere $\sigma_4\colon H\hookrightarrow \Hyp^3$ passing by $x$ and orthogonal to $v$ can be oriented so that
		\[
		d\zeta_{\sigma_4}(T_x H)= Span_\R( (w_1,w_1), (w_2, w_2) ).
		\]
		Hence, clearly $d\zeta_{\sigma_4}(T_x H)$ is isotropic for $\gs 3$, so $dG_{\sigma_4}(T_x H)\subset T_{\ell}\G{3}$ is isotropic for both $\GG$ and $Re{\inners}$. As a result, for $k=1,2$,
		\begin{align*}
			0&= Re\inner{d\mathrm p(w_k, w_k), d\mathrm p(w_k, w_k)}=\\
			&=Re\inner{d\mathrm p(w_k, 0), d\mathrm p(w_k, 0)} +  Re\inner{d\mathrm p(0, w_k), d\mathrm p(0, w_k)}+ 2 Re\inner{d\mathrm p(w_k, 0), d\mathrm p(0,w_k)}=\\
			&= 2 Re\inner{d\mathrm p(w_k, 0), d\mathrm p(0,w_k)},
		\end{align*}
		and the proof follows.
	\end{itemize}
	
\end{proof}

\section{Immersions with small principal curvatures}\label{sec:small princ curv}

In this section we define and study the properties of immersed hypersurfaces in $\Hyp^{n+1}$ with small principal curvatures and their Gauss map.

\subsection{Extrinsic geometry of hypersurfaces}\label{sec:extr geom small princ curv}
Let us start by defining our condition of small principal curvatures. Recall that the principal curvatures of an immersion of a hypersurface in a Riemannian manifold (in our case the ambient manifold is $\Hyp^{n+1}$) are the eigenvalues of the shape operator, which was defined in \eqref{eq:shape}.

\begin{Def}
	An immersion $\sigma\colon M^n \to \Hyp^{n+1}$ has \emph{small principal curvatures} if its principal curvatures at every point {lie in $(-1,1)\subset \R$}. 
	%It is \emph{complete} if the first fundamental form is a complete Riemannian metric.
\end{Def}

As a consequence of Equation \eqref{eq:fff gauss}, we have a direct characterization of immersions with small principal curvatures in terms of their Gauss map:

\begin{prop}\label{prop: small curv sse riemannian}
	Given an oriented manifold $M^n$ and an immersion $\sigma:M\to\Hyp^{n+1}$, $\sigma$ has small principal curvatures if and only if its Gauss map $G_\sigma$ is a Riemannian immersion.
\end{prop}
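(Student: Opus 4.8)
The plan is to use the explicit formula for the first fundamental form of the Gauss map already established in Equation \eqref{eq:fff gauss}, namely $\overline\I = \I - \III$, where $\III=\I(B\cdot,B\cdot)$ is the third fundamental form of $\sigma$. Since $\sigma$ is an immersion of a hypersurface in the Riemannian manifold $\Hyp^{n+1}$, its first fundamental form $\I$ is a genuine Riemannian metric, and the shape operator $B$ is $\I$-self-adjoint. The whole proof reduces to a pointwise linear-algebra computation comparing the bilinear form $\overline\I$ with $\I$.

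First I would fix a point $p\in M$ and diagonalize $B$ at $p$ with respect to $\I$: by self-adjointness there is an $\I$-orthonormal basis $\{e_1,\dots,e_n\}$ of $T_pM$ consisting of eigenvectors of $B$, with real eigenvalues $\lambda_1,\dots,\lambda_n$ which are exactly the principal curvatures of $\sigma$ at $p$. In this basis both $\I$ and $\III$ are diagonal: $\I(e_i,e_j)=\delta_{ij}$ and $\III(e_i,e_j)=\I(Be_i,Be_j)=\lambda_i\lambda_j\delta_{ij}=\lambda_i^2\delta_{ij}$. Hence by Equation \eqref{eq:fff gauss} the matrix of $\overline\I$ in this basis is diagonal with entries $1-\lambda_i^2$, that is
\[
\overline\I(e_i,e_j)=(1-\lambda_i^2)\,\delta_{ij}~.
\]

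From this the equivalence is immediate in both directions. If $\sigma$ has small principal curvatures, then $\lambda_i\in(-1,1)$ for all $i$, so $1-\lambda_i^2>0$, hence $\overline\I$ is positive definite at $p$; as $p$ was arbitrary, $\overline\I=G_\sigma^*\GG$ is positive definite everywhere, i.e. $G_\sigma$ is a Riemannian immersion. Conversely, if $G_\sigma$ is Riemannian, then $\overline\I$ is positive definite at every $p$, so each diagonal entry satisfies $1-\lambda_i^2>0$, which forces $|\lambda_i|<1$; thus every principal curvature lies in $(-1,1)$ and $\sigma$ has small principal curvatures. I would note for completeness that $G_\sigma$ is already known to be an immersion by Proposition \ref{prop:gauss immersion}, so the only content is the signature of its pull-back metric.

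There is no serious obstacle here: the entire argument is the diagonalization step plus reading off the signs of $1-\lambda_i^2$. The only point requiring a word of care is to confirm that $\overline\I$ being positive definite at a point is genuinely equivalent to all diagonal entries being positive, which holds precisely because we have simultaneously diagonalized $\I$ and $\III$, making $\overline\I$ diagonal in the same frame. Everything else is a direct application of the already-proven Equation \eqref{eq:fff gauss}, so I would expect the written proof to be only a few lines long.
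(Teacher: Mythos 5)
Your proof is correct and follows essentially the same route as the paper: both arguments reduce the statement, via Equation \eqref{eq:fff gauss}, to pointwise linear algebra for the $\I$-self-adjoint shape operator $B$ (the paper phrases it as $\overline\I(W,W)>0 \iff \|B(W)\|^2<\|W\|^2$ rather than diagonalizing explicitly, but this is the same spectral fact).
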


We recall that an immersion into a pseudo-Riemannian manifold is \emph{Riemannian} if the pull-back of the ambient pseudo-Riemannian metric, which in our case is {the first fundamental form  $\overline\I=G_\sigma^*\GG$}, is a Riemannian metric.

\begin{proof}[Proof of Proposition $\ref{prop: small curv sse riemannian}$]
	The condition that the Gauss map is a Riemannian immersion is equivalent to ${\overline\I}(W,W)>0$ for every $W\neq 0$. By Equation \eqref{eq:fff gauss}, this is equivalent to $\| B(W)\|^2<\|W\|^2$ for the norm on $M$ induced by $\I$, and this is equivalent to the eigenvalues of $B$ (that is, the principal curvatures) being strictly smaller than 1 in absolute value. 
\end{proof}

\begin{remark} \label{rmk: negative curv}
	Let us observe that a consequence of the hypothesis of small principal curvatures is that the first fundamental form of $\sigma$ has negative sectional curvature. Indeed, if $V,W$ is a pair of orthonormal vectors on $T_p M$, then by the Gauss' equation the sectional curvature of the plane spanned by $V$ and $W$ is:
	$$K_{\mathrm{Span}(V,W)}=-1+\II(V,V)\II(W,W)-\II(V,W)^2~.$$
	Since the principal curvatures of $\sigma$ are less than one in absolute value, we have $\| B(V)\|<\|V\|$ and the same for $W$. Moreover $V$ and $W$ are unit vectors, hence both $|\II(V,V)|$ and $|\II(W,W)|$ are less than one and the sectional curvature is negative.
\end{remark}

Recall that we introduced in Definition \ref{Def normal evolution} the normal evolution $\sigma_t$ of an immersion $\sigma:M\to\Hyp^{n+1}$, for $M$ an oriented $n$-manifold. An immediate consequence of Proposition \ref{prop: small curv sse riemannian} is the following:

\begin{cor}\label{cor:equidistant is immersed}
	Given an oriented manifold $M^n$ and an immersion $\sigma:M\to\Hyp^{n+1}$ with small principal curvatures, for all $t\in\R$ the normal evolution $\sigma_t$ is an immersion with small principal curvatures.
\end{cor}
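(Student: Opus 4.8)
The plan is to reduce the statement to the already-established Proposition \ref{prop: small curv sse riemannian} and Proposition \ref{prop: flusso geod e flusso normale}, together with the key invariance property of the Gauss map under normal evolution (Corollary \ref{prop:gauss map invariant normal evo}). The crucial observation is that the Gauss map is a \emph{geometric invariant} of the normal evolution: all the hypersurfaces $\sigma_t$ (whenever they are immersions) share the same Gauss map $G_{\sigma_t}=G_\sigma$. Since Proposition \ref{prop: small curv sse riemannian} characterizes the small principal curvatures condition purely in terms of the Gauss map being a Riemannian immersion, and this condition does not depend on $t$, the conclusion should follow almost immediately once we know that each $\sigma_t$ is genuinely an immersion.

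First I would address the immersion issue. The subtle point is that Proposition \ref{prop: small curv sse riemannian} and Corollary \ref{prop:gauss map invariant normal evo} presuppose that $\sigma_t$ is an immersion — indeed the invariance $G_{\sigma_t}=G_\sigma$ is stated only ``as long as $\sigma_t$ is an immersion.'' So the heart of the argument is to verify that small principal curvatures of $\sigma$ guarantee that $\sigma_t$ remains an immersion for \emph{all} $t\in\R$, not just for small $t$. This follows from the explicit expression of the differential of the normal evolution in Equation \eqref{eq:normal evolution hyperboloid2}, namely
\[
d\sigma_t(V)=d\sigma\circ(\cosh(t)\,\mathrm{id}-\sinh(t)\,B)(V)~.
\]
Since $d\sigma$ is injective, $\sigma_t$ is an immersion at $p$ precisely when the endomorphism $\cosh(t)\,\mathrm{id}-\sinh(t)\,B$ is invertible on $T_p M$. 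Its eigenvalues are $\cosh(t)-\sinh(t)\lambda_i$, where $\lambda_i$ are the principal curvatures of $\sigma$ at $p$. Under the hypothesis $\lambda_i\in(-1,1)$ one computes $\cosh(t)-\sinh(t)\lambda_i>\cosh(t)-|\sinh(t)|=e^{-|t|}>0$ for every $t$, so the endomorphism is invertible and $\sigma_t$ is an immersion everywhere, for all $t$.

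Once this is established, the remaining step is a clean chain of implications. By Proposition \ref{prop: flusso geod e flusso normale} we have $\zeta_{\sigma_t}=\varphi_t\circ\zeta_\sigma$, hence $G_{\sigma_t}=\mathrm p\circ\zeta_{\sigma_t}=\mathrm p\circ\varphi_t\circ\zeta_\sigma=\mathrm p\circ\zeta_\sigma=G_\sigma$, recovering Corollary \ref{prop:gauss map invariant normal evo}. Now $\sigma$ has small principal curvatures, so by Proposition \ref{prop: small curv sse riemannian} its Gauss map $G_\sigma$ is a Riemannian immersion. Since $G_{\sigma_t}=G_\sigma$ is the same Riemannian immersion, applying Proposition \ref{prop: small curv sse riemannian} in the reverse direction to the immersion $\sigma_t$ forces $\sigma_t$ to have small principal curvatures as well.

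The main obstacle, and the only place requiring genuine verification, is precisely the global immersion claim: one must rule out that the normal evolution develops singularities at large $|t|$. I expect this to be entirely elementary given the eigenvalue estimate above, so there is no serious analytic difficulty; the content of the corollary is really the conceptual packaging of the invariance of the Gauss map plus the spectral computation for $\cosh(t)\,\mathrm{id}-\sinh(t)\,B$. One should simply take care to phrase the argument so as not to invoke circularly the invariance $G_{\sigma_t}=G_\sigma$ before establishing that $\sigma_t$ is an immersion — hence the logical order must be: (i) eigenvalue estimate $\Rightarrow$ $\sigma_t$ is an immersion for all $t$; (ii) Proposition \ref{prop: flusso geod e flusso normale} $\Rightarrow$ $G_{\sigma_t}=G_\sigma$; (iii) Proposition \ref{prop: small curv sse riemannian} applied to $\sigma_t$ $\Rightarrow$ small principal curvatures.
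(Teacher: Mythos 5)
Your proposal is correct and follows essentially the same route as the paper: the paper's proof likewise uses Equation \eqref{eq:normal evolution hyperboloid2} to see that small principal curvatures force $\cosh(t)\,\mathrm{id}-\sinh(t)B$ to be invertible (hence $\sigma_t$ is an immersion for all $t$), and then combines the invariance of the Gauss map under normal evolution with Proposition \ref{prop: small curv sse riemannian}. Your explicit eigenvalue bound $\cosh(t)-\sinh(t)\lambda_i>e^{-|t|}$ and the care about logical ordering are slightly more detailed than the paper's terse phrasing, but the argument is the same.
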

\begin{proof}
	It follows from Equation \eqref{eq:normal evolution hyperboloid2} that $\sigma_t$ is an immersion if the shape operator $B$ of $\sigma$ satisfies $\| B(W)\|^2<\|W\|^2$ for every $W\neq 0$, that is, if $\sigma$ has small principal curvatures. Since the Gauss map is invariant under the normal evolution by Corollary \ref{prop:gauss map invariant normal evo}, $\sigma_t$ has small principal curvatures for all $t$ as a consequence of Proposition \ref{prop: small curv sse riemannian}.
\end{proof}

It will be useful to describe more precisely, under the hypothesis of small principal curvatures, the behaviour of the principal curvatures under the normal evolution. 
%Given an oriented manifold $M^n$ and an immersion $\sigma:M\to\Hyp^{n+1}$, let us introduce the following auxiliary function:

\begin{lemma} \label{lemma:evolution fsigma}
	Let $M^n$ be an oriented manifold and let  $\sigma:M\to\Hyp^{n+1}$ be an immersion. If $\sigma_t$ is an immersion, the principal directions (i.e. the eigenvectors of the shape operator) of $\sigma_t$ are the same as those of $\sigma$ with principal curvatures
	\begin{equation}\label{eq della discordia}
		\lambda_{i;t}=	\frac{\lambda_i-\tanh(t)}{1-\tanh(t)\lambda_i}~.
	\end{equation}
	where $\lambda_1(p),\ldots,\lambda_n(p)$ denote the principal curvatures of $\sigma$ at $p$.
	
	As a consequence, if $\sigma$ has small principal curvatures, the function
	\begin{equation} \label{eq:aux function}
		\kappa_\sigma(p):=\frac{1}{n}\sum_{i=1}^n\arctanh(\lambda_i(p))~,
	\end{equation}
	satisfies 
	\[\kappa_{\sigma_t}=\kappa_\sigma-t\] for every $t\in \R$. 
\end{lemma}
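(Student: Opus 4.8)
The plan is to prove the formula for the principal curvatures of $\sigma_t$ directly from the explicit expression of the shape operator under normal evolution, and then deduce the consequence for $\kappa_\sigma$ by a one-variable computation with $\arctanh$.

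First I would compute the shape operator $B_t$ of $\sigma_t$. From Equation \eqref{eq:normal evolution hyperboloid2} we already have the differential
\[
d\sigma_t = d\sigma\circ(\cosh(t)\,\mathrm{id}-\sinh(t)B)~.
\]
The normal vector of $\sigma_t$ is $\nu_t(p)=\dot\gamma(t)=\sinh(t)\sigma(p)+\cosh(t)\nu(p)$, as computed in the proof of Proposition \ref{prop: flusso geod e flusso normale}. To get $B_t$ I would differentiate $\nu_t$ along a tangent direction $W\in T_pM$, using that the ambient derivative in $\R^{n+1,1}$ agrees with $D$ up to the component along $\sigma(p)$ (exactly as in the proof of Proposition \ref{prop:gauss immersion}), obtaining
\[
D_{W}\nu_t = d\sigma\circ(\sinh(t)\,\mathrm{id}-\cosh(t)B)(W)~.
\]
Comparing with $d\sigma_t\circ B_t(W)=-D_W\nu_t$ and using the expression for $d\sigma_t$ above, one reads off
\[
B_t=(\cosh(t)\,\mathrm{id}-\sinh(t)B)^{-1}(\cosh(t)B-\sinh(t)\,\mathrm{id})~.
\]
Since all these operators are polynomials in $B$ (and its inverse-type combinations), they commute and are simultaneously diagonalizable; hence the eigenvectors of $B_t$ coincide with those of $B$, and on the eigenspace for $\lambda_i$ the operator $B_t$ acts by the scalar
\[
\frac{\cosh(t)\lambda_i-\sinh(t)}{\cosh(t)-\sinh(t)\lambda_i}=\frac{\lambda_i-\tanh(t)}{1-\tanh(t)\lambda_i}~,
\]
which is exactly \eqref{eq della discordia}. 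The only point requiring care is the invertibility of $\cosh(t)\,\mathrm{id}-\sinh(t)B$, which is precisely the condition that $\sigma_t$ be an immersion (it is guaranteed for all $t$ when the principal curvatures are small, by Corollary \ref{cor:equidistant is immersed}).

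For the statement on $\kappa_\sigma$, I would invoke the hyperbolic tangent addition formula: setting $\lambda_i=\tanh(a_i)$ with $a_i=\arctanh(\lambda_i)$ (legitimate since small principal curvatures means $\lambda_i\in(-1,1)$), Equation \eqref{eq della discordia} reads
\[
\lambda_{i;t}=\frac{\tanh(a_i)-\tanh(t)}{1-\tanh(t)\tanh(a_i)}=\tanh(a_i-t)~,
\]
so $\arctanh(\lambda_{i;t})=a_i-t=\arctanh(\lambda_i)-t$. Summing over $i$ and dividing by $n$ gives $\kappa_{\sigma_t}=\kappa_\sigma-t$, as claimed.

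I do not expect a genuine obstacle here; this is a direct computation. The one delicate point worth stating carefully is the invertibility of $\cosh(t)\,\mathrm{id}-\sinh(t)B$ needed to write $B_t$, together with the remark that $\arctanh$ is well-defined throughout thanks to the small principal curvatures hypothesis (and that, by Corollary \ref{cor:equidistant is immersed}, $\sigma_t$ remains an immersion with small principal curvatures, so the formula persists for all $t\in\R$ rather than only for small $t$).
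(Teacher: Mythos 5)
Your proposal is correct and follows essentially the same route as the paper: both compute $\nu_t=\sinh(t)\sigma+\cosh(t)\nu$, read off $B_t=(\mathrm{id}-\tanh(t)B)^{-1}\circ(B-\tanh(t)\,\mathrm{id})$ from the defining relation $d\sigma_t\circ B_t=-D\nu_t$ together with Equation \eqref{eq:normal evolution hyperboloid2}, and conclude via the hyperbolic tangent addition formula. Your explicit remarks on the invertibility of $\cosh(t)\,\mathrm{id}-\sinh(t)B$ and on the simultaneous diagonalizability are welcome clarifications of points the paper leaves implicit.
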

%\appunto{\as{questo Ã¨ il massimo che posso concedere su questo punto, perchÃ© aggiungere la dipendenza da $t$ nei $\lambda_i$ mi fa schifo}}
\begin{proof}
	We showed in the proof of Proposition \ref{prop: flusso geod e flusso normale} that in the hyperboloid model of $\Hyp^{n+1}$ the normal vector of $\sigma_t$, compatible with the orientations of $M$ and $\Hyp^{n+1}$, has the expression 
	$$\nu_t(p)=\sinh(t)\sigma(p)+\cosh(t)\nu(p)~,$$
	where $\nu=\nu_0$ is the normal vector for $\sigma=\sigma_0$. Using also Equation \eqref{eq:normal evolution hyperboloid2}, the shape operator $B_t$ of $\sigma_t$, whose defining condition is 
	$d\sigma_t\circ B_t(W)=-D_{W} \nu_t$ as in Equation \eqref{eq:shape}, is:
	\begin{equation}\label{eq:shape normal evo}
		B_t=(\mathrm{id}-\tanh(t)B)^{-1}\circ(B-\tanh(t)\mathrm{id})~.
	\end{equation}
	First, Equation \eqref{eq:shape normal evo} shows that if $V$ is a principal direction (i.e. an eigenvalue of $B$) for $\sigma$, then it is also for $\sigma_t$. Second, if $\lambda_i$ is a principal curvature of $\sigma$, then the corresponding principal curvature for $\sigma_t$ is 
	{\begin{equation}\label{eq della discordia}
			\lambda_{i;t}=	\frac{\lambda_i-\tanh(t)}{1-\tanh(t)\lambda_i}=\tanh(\mu_i-t)~,
	\end{equation}} where $\mu_i=\arctanh(\lambda_i)$. 
	
	Observe that if the $\lambda_i's$ are lower than 1 in absolute value, then $\arctanh{\lambda_i}$ is well-posed and one has
	\[
	\lambda_{i;t}= \tanh(\arctanh(\lambda_i)-t).
	\]
	The formula $\kappa_{\sigma_t}=\kappa_\sigma-t$  follows. 
\end{proof}

\begin{remark}\label{rmk fsigma smooth}
	Assume $\sigma$ has small principal curvatures. Although the principal curvatures of $\sigma$ are not smooth functions, the function $\kappa_\sigma$ defined in \eqref{eq:aux function} is smooth as long as $\sigma$ has small principal curvatures. Indeed, using the expression of the inverse hyperbolic tangent in terms of the elementary functions, we may express:
	\begin{align*}
		\kappa_\sigma(p)&=\frac{1}{2n}\left(\sum_{i=1}^n\log\left(\frac{1+\lambda_i(p)}{1-\lambda_i(p)}\right)\right)=\frac{1}{2n}\log\left(\frac{\prod_{i=1}^n(1+\lambda_i(p))}{\prod_{i=1}^n(1-\lambda_i(p))}\right)=\\
		&=\frac{1}{2n}\log\left(\frac{\det(\mathrm {id}+B)}{\det(\mathrm{id}-B)}\right)~,
	\end{align*}
	where $B$ is the shape operator of $\sigma$ as usual.
	This proves the smoothness of $\kappa_\sigma$, which will be implicitly used in Proposition \ref{Prop: formula H in G}.
\end{remark}

\subsection{Comparison horospheres}

%\orange{We will say that an immersion $\sigma\colon M^n\to \Hyp^{n+1}$ is \emph{complete} if its first fundamental form is a complete Riemannian metric.}

Our next goal is to discuss global injectivity of immersions with small principal curvatures (Proposition \ref{prop injectivity}) and {of} their Gauss maps (Proposition \ref{prop:gauss maps diffeo onto image}), {under the following completeness assumption.}
\begin{Def}
	An immersion $\sigma\colon M^n \to \Hyp^{n+1}$  is \emph{complete} if the first fundamental form $\I$ is a complete Riemannian metric.
\end{Def}

Here we provide some preliminary steps.

\begin{Def}\label{Def:convex immersion}
	{Given an oriented manifold $M^n$ and  an immersion $\sigma:M\to\Hyp^{n+1}$, let $B=-D\nu$ be its shape operator with respect to the unit normal vector field $\nu$ compatible with the orientations of $M$ and $\Hyp^{n+1}$. We say that $\sigma$} is (\emph{strictly}) \emph{convex}  if {$B$} is negative semi-definite (resp. definite), and, conversely, that it is (\emph{strictly}) \emph{concave} if {$B$} is positive semi-definite (resp. definite).
\end{Def}
{When $\sigma$ is an embedding, we refer to its image as a (strictly) convex/concave hypersurface.} Clearly reversing the orientation (and therefore the normal vector field) of a (strictly) convex hypersurface it becomes (strictly) concave, and viceversa. An example of convex hypersurface

A classical fact is 
that a properly embedded strictly convex hyperurface in $\Hyp^{n+1}$ disconnects it into two connected components and that exactly one of them is geodesically convex (the one towards which $-\nu$ is pointing): we denote the closure of this connected component as the \emph{convex side} of the hypersurface, and denote as the \emph{concave side} the closure of the other one.

{We need another definition before stating the next Lemma.} We say that a smooth curve $\gamma:[a,b]\to\Hyp^n$ parameterized by arclength has \emph{small acceleration} if $\| D_{\gamma'(t)}\gamma'(t)\|<1$ for all $t$, where $D$ denotes the Levi-Civita connection of $\Hyp^n$ as usual.

\begin{lemma}\label{lemma curve small acc} \label{rmk:strictly concave side} 
	Let $\gamma:[a,b]\to\Hyp^n$ be a smooth curve of small acceleration. Then the image of $\gamma$ lies on the concave side of any horosphere tangent to $\gamma$.
	{More precisely, $\gamma$ lies in the interior of the concave side except for the tangency point. }
\end{lemma}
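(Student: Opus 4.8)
The statement is a comparison result between a curve of small acceleration and horospheres tangent to it. The claim is essentially that small acceleration is the infinitesimal condition guaranteeing that $\gamma$ stays strictly on the concave side of each tangent horosphere. My plan is to set up the problem via the Busemann function associated to the horosphere and to show that its restriction to $\gamma$ is strictly convex (in the elementary one-variable sense), with a critical point precisely at the tangency.

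\textbf{First steps.} Fix a tangency time $t_0\in[a,b]$, and let $H$ be the horosphere tangent to $\gamma$ at $\gamma(t_0)$, with unit normal $\nu$ pointing toward the concave side (i.e.\ toward the point at infinity that is the center of $H$). Let $b_H:\Hyp^n\to\R$ be the Busemann function normalized so that $H=b_H^{-1}(0)$ and the concave side is $\{b_H>0\}$. The key analytic input is that the Hessian of a Busemann function equals the second fundamental form of its level horospheres, so $\mathrm{Hess}\,b_H = \langle \cdot,\cdot\rangle$ on $T\Hyp^n$ restricted to the directions tangent to the level sets, and more precisely $\nabla^2 b_H(X,X)=\langle X,X\rangle - (X b_H)^2$ holds (since the horospheres have shape operator $\pm\mathrm{id}$, as recalled in Example \ref{ex:horospheres}). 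I would consider the function $f(t)=b_H(\gamma(t))$ and compute its first two derivatives: $f'(t)=\langle \nabla b_H,\gamma'(t)\rangle$, and by the chain rule for the Hessian,
\[
f''(t)=\mathrm{Hess}\,b_H(\gamma'(t),\gamma'(t))+\langle \nabla b_H, D_{\gamma'(t)}\gamma'(t)\rangle~.
\]
Using the normalization $\|\gamma'\|=1$ and the Hessian formula, the first term becomes $1-(f'(t))^2$, while the second term is bounded by $\|\nabla b_H\|\,\|D_{\gamma'}\gamma'\| = \|D_{\gamma'}\gamma'\|<1$ in absolute value (since $\|\nabla b_H\|=1$ and $\gamma$ has small acceleration).

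\textbf{The main estimate and conclusion.} At $t=t_0$ we have $f(t_0)=0$ (tangency point lies on $H$) and $f'(t_0)=0$ (since $\gamma'(t_0)$ is tangent to $H$, hence orthogonal to $\nabla b_H$). The combined estimate gives $f''(t)\geq 1-(f'(t))^2-\|D_{\gamma'}\gamma'\|(t)$. The hard part will be turning this pointwise differential inequality into the desired global conclusion $f(t)>0$ for $t\neq t_0$, because $f''$ is only guaranteed positive where $|f'|$ is small, and a priori $f'$ could grow. My approach is to argue that $f'$ cannot escape the interval where $f''>0$: I would show that as long as $|f'(t)|$ stays bounded away from where $1-(f')^2$ is overtaken by the acceleration term, $f$ strictly increases away from $t_0$ and the curve moves into $\{b_H>0\}$. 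Concretely, starting from $f'(t_0)=0$, the strict inequality $f''(t_0)=1-\|D_{\gamma'}\gamma'\|(t_0)>0$ shows $f$ has a strict local minimum at $t_0$; a continuity/bootstrap argument (or a comparison ODE with the model $g''=1-(g')^2$, whose solutions are $g(t)=\log\cosh(t)$, i.e.\ exactly the Busemann function along a geodesic) then propagates strict positivity for all $t\in[a,b]\setminus\{t_0\}$, giving that $\gamma$ lies in the interior of the concave side except at the tangency point. I expect the comparison-ODE formulation to be the cleanest route and the step where care is genuinely needed.
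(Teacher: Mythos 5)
Your argument is correct, and it reaches the conclusion by a genuinely different route from the paper. The paper works in the upper half-space model: it normalizes so that the tangent horosphere is $\{x_n=1\}$, shows $\gamma_n''(0)<0$ directly from the Christoffel symbol $\Gamma_{11}^n=1$, and then globalizes by contradiction, sliding through the family of parallel horospheres $\{x_n=m\}$ and applying the local computation at a minimum point of $\gamma_n$. Your intrinsic formulation via the Busemann function packages the same information model-independently: the identity $\mathrm{Hess}\,b_H=\langle\cdot,\cdot\rangle-db_H\otimes db_H$, with the normalization that the concave side is $\{b_H>0\}$, gives
\[
f''(t)=1-(f'(t))^2+\langle\nabla b_H,D_{\gamma'}\gamma'\rangle
\]
for $f=b_H\circ\gamma$, and the paper's sliding family of horospheres is replaced by the fact that this single inequality sees all level sets of $b_H$ at once. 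One small slip: the unit normal pointing to the concave side points \emph{away} from the center of $H$ at infinity, not toward it; this does not affect anything, since the normalization of $b_H$ you actually use (concave side $=\{b_H>0\}$ together with a positive semi-definite Hessian) is the mutually consistent one.

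The step you flag as "where care is genuinely needed" is in fact the easy part, and no comparison ODE is required. The differential inequality only needs to be invoked at critical points of $f$: if $f'(t_1)=0$, then $f''(t_1)\geq 1-\|D_{\gamma'}\gamma'\|(t_1)>0$, so \emph{every} critical point of $f$ is a strict local minimum. A $C^2$ function on an interval all of whose critical points are strict local minima has at most one critical point (two local minima would force an interior maximum between them, which would be a critical point that is not a local minimum). Hence $t_0$ is the only critical point, $f'$ has a fixed sign on each side of $t_0$, $f$ is strictly monotone there, and $f>f(t_0)=0$ on $[a,b]\setminus\{t_0\}$. This one-line finish is the exact intrinsic counterpart of the paper's minimum-point contradiction.
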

\begin{proof}
	Up to reparametrization we can assume that the tangency point is $\gamma(0)$, and we shall prove that $\gamma(t)$ lies on the concave side of any horosphere tangent to $\gamma'(0)$ for every $t>0$. Recall that we are also assuming that $\gamma$ is parameterized by arclength. We will use the upper half-space model of $\Hyp^n$, namely, $\Hyp^n$ is the region $\{x_n>0\}$ in $\R^n$ endowed with the metric $(\frac 1 {x_n^2})(dx_1^2+\ldots+dx_n^2)$. Up to isometry, we can assume that $\gamma(0)=(0,\ldots,0,1)$, $\gamma'(0)=(1,0,\ldots,0)$ and that the tangent horosphere is $\{x_n=1\}$. 
	
	{Let us first} show that {$\gamma$ lies on the concave side of the horosphere for small $t$, namely, denoting $\gamma(t)=(\gamma_1(t),\ldots,\gamma_n(t))$, that} $\gamma_n(t)<1$ for small $t$. Since $\gamma_n(0)=1$ and $\gamma_n'(0)=0$, it will be sufficient to check that $\gamma_n''(0)<0$. Using the assumption on $\gamma'(0)$ and a direct computation of the Christoffel symbols $\Gamma_{11}^n=1$, we get
	$$(D_{\gamma'}\gamma')_n(0)=\gamma_n''(0)+1~.$$
	Since by hypothesis $\gamma$ has small acceleration, and at $\gamma(0)$ the metric of the upper half-space model coincides with the standard metric $dx_1^2+\ldots+dx_n^2$, $|(D_{\gamma'}\gamma')_n(0)|<1$ and therefore $\gamma_n''(0)<0$. {We conclude that, for suitable $\epsilon>0$, $\gamma_n(t)< 1$ for all $t\in (-\epsilon, \epsilon)\setminus \{0\}$.}
	
	Let us now show that $\gamma(t)$  {lies in the interior of} the concave side of the tangent horosphere $\{x_n=1\}$ for all {$t\ne 0$}, that is, that {$\gamma_n(t)< 1$ for all $t\ne 0$}. Suppose by contradiction that $\gamma_n(t_0)=1$ for some {$t_0\geq \epsilon$}. Then $\gamma_n$ has a minimum point {$t_{\min}$} in $(0,t_0)$, with minimum value $m<1$. The horosphere $\{x_n=m\}$ is then tangent to $\gamma$ at {$\gamma(t_{\min})$} and $\gamma_n(t)\geq m$ for $t$ in a neighbourhood of $t_{\min}$. By re-applying the argument of the previous part of the proof, this gives a contradiction. See Figure \ref{fig:tangenthorospheres}.
\end{proof}

\begin{figure}[htbp]
	\centering
	\includegraphics[height=5.5cm]{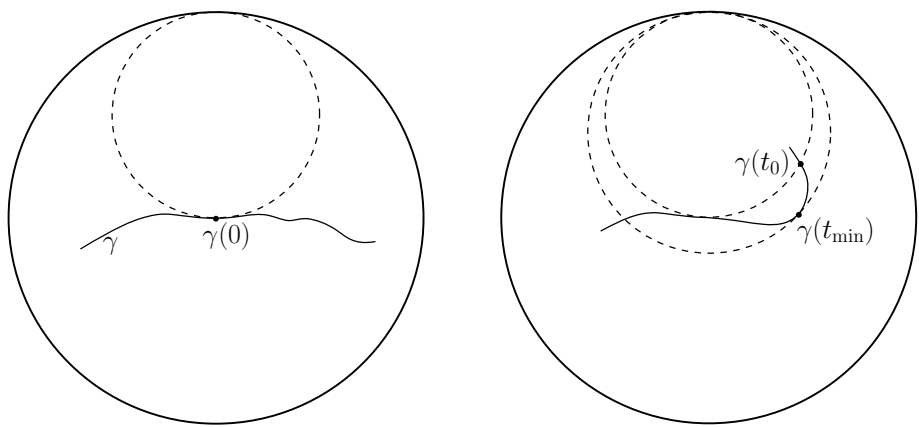} 
	%\captionsetup{labelformat=empty}
	
	\caption{A schematic picture of the argument in the proof of Lemma \ref{lemma curve small acc}. On the left, for $t\in(-\epsilon,\epsilon)$ the image of the curve $\gamma(t)$ lies in the concave side of the horosphere tangent to $\gamma$ at $t=0$. On the right, the same holds in fact for every $t$, for otherwise one would obtain a contradiction with the first part of the proof at the minimum point $t_{\min}$.}\label{fig:tangenthorospheres}
\end{figure}

%\begin{remark} 
%\appunto{\orange{Ora toglierei il Remark. Se mi dai l'ok cambio le reference nel testo}}The proof of Lemma \ref{lemma curve small acc} actually shows that the  $x_n$-coordinate of $\gamma(t)$ is strictly less than $1$ for all $t\neq 0$. In other words, we have proved that a curve of small acceleration lies \emph{strictly} on the concave side of any tangent horosphere, except at the tangency point.  
%\end{remark}

\begin{remark}\label{rmk princ curv horosphere}
	Given an immersion $\sigma:M^n\to \Hyp^{n+1}$ (or in general into any Riemannian manifold), a curve $\gamma:[a,b]\to M$ is a geodesic for the first fundamental form {of $\sigma$} (in short, {it} is an \emph{intrinsic} geodesic) if and only if $D_{(\sigma\circ\gamma)'}(\sigma\circ\gamma)'$ is orthogonal to the image of $\sigma$. In this case we have indeed
	\begin{equation}\label{eq:acceleration intrinsic geodesic}
		D_{(\sigma\circ\gamma)'}(\sigma\circ\gamma)'=\II(\gamma'(t),\gamma'(t))\nu(\gamma(t))
	\end{equation}
	where $\nu$ is the unit normal vector of the immersion with respect to the chosen orientations.
	
	By applying this remark to an intrinsic geodesic for the horosphere $\{x_n=1\}$, which has the form $\gamma(t)=(a_1t,\ldots,a_{n-1}t,1)$ (here $\sigma$ is simply the inclusion), and repeating  the same computation of the proof of Lemma \ref{lemma curve small acc}, we see that the second fundamental form of a horosphere equals the first fundamental form. Hence the principal curvatures of a horosphere are all identically equal to $1$ for the choice of inward normal vector, and therefore the shape operator is the identity at every point, a fact we have already used in Example \ref{ex:horospheres}. 
\end{remark}

An immediate  consequence of Lemma \ref{lemma curve small acc} is the following:
\begin{lemma} \label{lemma concave side horospheres}
	Given a {complete}  immersion $\sigma:M^n\to\Hyp^{n+1}$ with small principal curvatures, the image of $\sigma$ lies strictly on the concave side of any tangent horosphere. That is, for every $p\in M$, $\sigma(M\setminus \{p\})$ lies  in the interior of the concave side of each of the two horospheres tangent to $\sigma$ at $\sigma(p)$.
\end{lemma}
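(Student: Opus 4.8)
The plan is to reduce the global statement to the infinitesimal one already established in Lemma~\ref{lemma curve small acc}, by feeding intrinsic geodesics of the immersion into that lemma. The key observation is Remark~\ref{rmk princ curv horosphere}: if $\gamma:[a,b]\to M$ is an intrinsic geodesic for the first fundamental form $\I$ of $\sigma$, then the ambient acceleration of the curve $\sigma\circ\gamma$ is purely normal, with
\[
D_{(\sigma\circ\gamma)'}(\sigma\circ\gamma)'=\II(\gamma'(t),\gamma'(t))\,\nu(\gamma(t))~.
\]
Thus, parameterizing $\gamma$ by $\I$-arclength (so that $\sigma\circ\gamma$ is parameterized by ambient arclength), the acceleration has norm $|\II(\gamma'(t),\gamma'(t))|$. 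Since $\gamma'(t)$ is a unit vector for $\I$ and $\sigma$ has small principal curvatures, we have $\|B(\gamma'(t))\|<\|\gamma'(t)\|$, and hence $|\II(\gamma'(t),\gamma'(t))|=|\I(B(\gamma'(t)),\gamma'(t))|<1$ by Cauchy--Schwarz. This is precisely the small acceleration condition required to apply Lemma~\ref{lemma curve small acc} to the curve $\sigma\circ\gamma$.

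First I would fix a point $p\in M$ and one of the two horospheres $H$ tangent to $\sigma$ at $\sigma(p)$, and let $q\in M\setminus\{p\}$ be arbitrary. By completeness of $\I$, the immersion is geodesically complete, so there exists an $\I$-geodesic $\gamma$, parameterized by arclength, joining $p$ to $q$ (here I would invoke the Hopf--Rinow theorem, valid since $(M,\I)$ is complete). Applying the small acceleration computation above, the curve $\sigma\circ\gamma$ has small acceleration in $\Hyp^{n+1}$, and it is tangent to $H$ at $\sigma(p)=\sigma(\gamma(0))$ since $H$ is tangent to the image of $\sigma$ there and $\gamma'(0)$ is tangent to the image. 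Lemma~\ref{lemma curve small acc} then tells me that $\sigma\circ\gamma$ lies in the interior of the concave side of $H$ except at the tangency point, and in particular $\sigma(q)=\sigma(\gamma(1))$ lies in the interior of the concave side. As $q$ was arbitrary, $\sigma(M\setminus\{p\})$ lies in the interior of the concave side of $H$, and the same argument applies to the other tangent horosphere.

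The main subtlety to address carefully is the tangency claim: one must check that $H$, being a horosphere tangent to the immersed hypersurface $\sigma$ at $\sigma(p)$, is indeed tangent to the curve $\sigma\circ\gamma$ in the sense required by Lemma~\ref{lemma curve small acc}, i.e. that $\gamma(0)$ is the tangency point and $(\sigma\circ\gamma)'(0)$ is tangent to $H$. This holds because $(\sigma\circ\gamma)'(0)=d_p\sigma(\gamma'(0))$ lies in the tangent space to the image of $\sigma$ at $\sigma(p)$, which coincides with the tangent space to $H$ at that point by the definition of a tangent horosphere. A second minor point is that the lemma as stated concerns embedded horospheres and a curve into $\Hyp^n$; I would simply apply it in $\Hyp^{n+1}$ to the arclength-reparameterized curve $\sigma\circ\gamma$, noting that the statement and proof of Lemma~\ref{lemma curve small acc} are dimension-agnostic. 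No genuine obstacle arises beyond bookkeeping; the completeness hypothesis is used exactly once, to guarantee the existence of a connecting intrinsic geodesic.
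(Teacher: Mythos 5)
Your proposal is correct and follows essentially the same route as the paper: feed an $\I$-arclength intrinsic geodesic from $p$ to $q$ (existing by completeness and Hopf--Rinow) into the acceleration formula of Remark \ref{rmk princ curv horosphere}, bound $|\II(\gamma',\gamma')|<1$ via the small principal curvatures hypothesis, and conclude by Lemma \ref{lemma curve small acc}. The extra care you take with the tangency claim and the dimension shift is sound but not a departure from the paper's argument.
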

\begin{proof}
	
	Let us fix $p\in M$ and let $q\in M$, with $p\ne q$. By completeness
	there exists an intrinsic geodesic $\gamma$ on $M$ joining $p$ and $q$, which we assume to be parameterized by arclength. Applying Equation \eqref{eq:acceleration intrinsic geodesic} as in Remark \ref{rmk princ curv horosphere}, we have 
	$$\| D_{(\sigma\circ\gamma)'}(\sigma\circ\gamma)'\|=|\II(\gamma'(t),\gamma'(t))|<\I(\gamma'(t),\gamma'(t))=\|(\sigma\circ\gamma)'(t)\|^2=1~,$$
	hence $\sigma\circ\gamma$ has small acceleration. The conclusion follows from Lemma \ref{lemma curve small acc}.
\end{proof}

\begin{remark}\label{rmk: tangent metric spheres}
	Observe that any metric sphere in $\Hyp^{n+1}$ is contained in the convex side of any tangent horosphere. As a result, a hypersurface with small principal curvatures lies in the complementary of any metric ball of {$\Hyp^{n+1}$} whose boundary is tangent to the hypersurface. See Figure \ref{fig:tangentsurfaces}.
\end{remark}

\begin{remark}\label{rmk:convex side caps}
	%\appunto{\orange{Non mi convince appieno la notazione, direi di definire $d$ come numero reale e indica la distanza con segno, quindi il d-cap ha curvature -tanh(d)}} 
	A $r$-\emph{cap} in the hyperbolic space is the hypersurface at (signed) distance $r$ from a totally geodesic plane. By a simple computation (for instance using Equation \eqref{eq:shape normal evo}), $r$-caps are umbilical hypersurfaces with principal curvatures  {identically equal to $-\tanh(r)$, computed with respect to the unit normal vector pointing to the side where $r$ is increasing}. Now, if $\sigma:M\to\Hyp^{n+1}$ is  an immersion with principal curvatures smaller that $\epsilon=\tanh(r){\in (0,1)}$ in absolute value, then one can repeat wordly the proofs of Lemma \ref{lemma curve small acc} and Lemma \ref{lemma concave side horospheres}, by replacing horospheres with $r$-caps, and conclude that the image of $\sigma$ lies strictly on the {concave} side of every tangent $r$-cap {for $r=\arctanh(\epsilon)$}.  See Figure \ref{fig:tangentsurfaces}.
	A similar conclusion (which is however not interesting for the purpose of this paper) could of course be obtained  under the assumption that $\sigma$ has principal curvatures bounded by some constant $\epsilon>1$, in terms of tangent {metric} spheres with curvature greater than $\epsilon$ in {absolute value}.
\end{remark}

\begin{figure}[htbp]
	\centering
	\includegraphics[height=5.5cm]{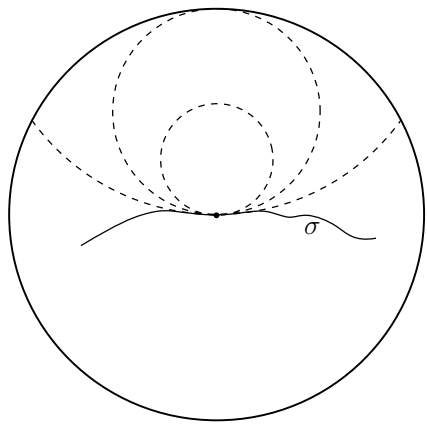} 
	%\captionsetup{labelformat=empty}
	
	\caption{Schematically, an immersion $\sigma$ tangent at one point to a metric sphere (whose principal curvatures are larger than $1$), a horosphere (equal to $1$) and a $r$-cap (smaller than $1$). The image of $\sigma$ is contained in the concave side of the three of them.}\label{fig:tangentsurfaces}
\end{figure}

\subsection{Injectivity results}
Having established these preliminary results, let us finally discuss the global injectivity of $\sigma$ and $G_\sigma$ under the hypothesis of completeness.
{Before that, we relate the completeness assumption for $\sigma$ to some topological conditions}.

%\orange{\sout{ Recall that in Remark \ref{rmk:proper implies complete} we observed that properly immersed hypersurfaces are complete. Our first result is that, for immersions with small principal curvatures, being properly immersed, properly embedded and complete are all equivalent conditions.}}

\begin{remark} \label{rmk:proper implies complete}
	Let us observe that proper immersions $\sigma\colon M\to \Hyp^{n+1}$ are complete.  
	{Indeed, i}f $p,q\in M$ have distance {at most} $r$ for the first fundamental form $\I$, then, by definition of distance on a Riemannian manifold,  $dist_{\Hyp^{n+1}}(\sigma(p),\sigma(q))\le r$: as a result, 
	\[
	\sigma(B_{\I}(x,r)) \subset B_{\Hyp^{n+1}}(x,r).
	\]
	Assuming $\sigma$ is proper, $\sigma^{-1}(\overline{B_{\Hyp^{n+1}}(x,r)})$ is a compact subspace of $M$ containing $B_{\I}(x,r)$, therefore $\overline {B_{\I}(x,r)}$ is compact. We conclude that $\I$ is complete by Hopf-Rinow Theorem.
\end{remark}

{A less trivial result is that Remark \ref{rmk:proper implies complete} can be reversed for immersions with small principal curvatures: in fact,} for immersions with small principal curvatures, being properly immersed, properly embedded and complete are all equivalent conditions

\begin{prop} \label{prop injectivity}
	Let $M^n$ be a manifold and $\sigma:M\to\Hyp^{n+1}$ be a {complete}  immersion  with small principal curvatures. Then $\sigma$ is a proper embedding and $M$ is diffeomorphic to $\R^n$.
\end{prop}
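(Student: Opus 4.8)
The plan is to prove that a complete immersion $\sigma:M^n\to\Hyp^{n+1}$ with small principal curvatures is a proper embedding with $M\cong\R^n$, by exploiting the comparison results with tangent horospheres (Lemma \ref{lemma concave side horospheres}) together with the normal evolution machinery. The key geometric input is that, since $\sigma$ has small principal curvatures, the first fundamental form $\I$ has negative sectional curvature (Remark \ref{rmk: negative curv}), so $(M,\I)$ is a Cartan--Hadamard manifold once we know it is complete (which is our hypothesis). By the Cartan--Hadamard theorem, $M$ is then diffeomorphic to $\R^n$ and, in particular, simply connected; this disposes of the topological conclusion immediately, so the real work is to prove injectivity and properness.

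First I would prove \emph{injectivity}. Suppose $\sigma(p)=\sigma(q)$ with $p\ne q$. Consider the two horospheres tangent to $\sigma$ at the common image point $\sigma(p)=\sigma(q)$; by Lemma \ref{lemma concave side horospheres} applied at $p$, the point $\sigma(q)$ must lie strictly in the interior of the concave side of each of these two horospheres. But $\sigma(q)=\sigma(p)$ lies \emph{on} both horospheres, not in their interiors, a contradiction. Hence $\sigma$ is injective. Because $\sigma$ is an injective immersion, to upgrade it to an embedding it suffices to show it is proper, since a proper injective immersion is a (closed) topological embedding, and an injective immersion that is moreover proper is a smooth embedding onto its image.

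Next I would establish \emph{properness}, which I expect to be the main obstacle. The natural strategy is to use completeness of $\I$ together with the horosphere comparison to control how $\sigma$ escapes to infinity. Concretely, fix a basepoint $x_0=\sigma(p_0)$ and an intrinsic geodesic ray $\gamma:[0,\infty)\to M$ emanating from $p_0$, parameterized by arclength; by completeness such rays exist for all time and, by Hopf--Rinow, exhaust $M$ in the sense that every point is joined to $p_0$ by a minimizing intrinsic geodesic. As shown in the proof of Lemma \ref{lemma concave side horospheres}, the curve $\sigma\circ\gamma$ has small acceleration, so by Lemma \ref{lemma curve small acc} it stays on the concave side of the horosphere tangent at $x_0$. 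The plan is to show that such a curve of small acceleration has image escaping every compact set of $\Hyp^{n+1}$ as the intrinsic arclength tends to infinity: intuitively a unit-speed curve whose geodesic curvature is bounded strictly below $1$ cannot remain in a bounded region forever, because it is ``almost a geodesic'' and in negative curvature geodesics diverge. I would make this quantitative by comparing with the $r$-caps of Remark \ref{rmk:convex side caps}: if the principal curvatures of $\sigma$ are bounded in absolute value by some $\epsilon=\tanh(r)<1$, then $\sigma\circ\gamma$ stays on the concave side of every tangent $r$-cap, and one deduces a lower bound of the form $dist_{\Hyp^{n+1}}(x_0,\sigma(\gamma(s)))\geq c(s)$ with $c(s)\to\infty$; equivalently, the hyperbolic distance travelled grows at least linearly in the intrinsic arclength $s$. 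This yields that preimages of compact sets are bounded, and being also closed (by continuity) they are compact, so $\sigma$ is proper.

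Finally I would assemble the pieces: properness plus injectivity plus the immersion property give that $\sigma$ is a proper embedding, and the Cartan--Hadamard argument gives $M\cong\R^n$. The delicate point to handle carefully is the quantitative divergence estimate for curves of small acceleration; if a fully rigorous comparison estimate proves awkward, an alternative is to argue by contradiction, assuming a sequence $\sigma(p_k)$ remains in a compact set while $p_k$ leaves every compact set of $M$, extracting convergent tangent data, and producing a tangent horosphere at the limit that is crossed by the image of $\sigma$, contradicting Lemma \ref{lemma concave side horospheres}. Either route reduces properness to the horosphere (or $r$-cap) comparison already established, which is the conceptual heart of the argument.
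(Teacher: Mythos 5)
Your injectivity step is fine --- in fact it is a more direct use of Lemma \ref{lemma concave side horospheres} than the paper's own argument, which goes through tangency with a metric sphere of maximal radius. But the way you dispose of the topological conclusion is a genuine error: the Cartan--Hadamard theorem requires \emph{simple connectedness} as a hypothesis, and a complete negatively curved manifold need not be diffeomorphic to $\R^n$ (a closed hyperbolic surface is a counterexample). Writing ``$M$ is then diffeomorphic to $\R^n$ and, in particular, simply connected'' from completeness and Remark \ref{rmk: negative curv} alone is circular. The missing ingredient is supplied by injectivity itself: if $u:\widetilde M\to M$ is the universal covering, then $\sigma\circ u$ is again a complete immersion with small principal curvatures, hence injective by your own argument, so $u$ is injective and $M$ is simply connected; only then does Cartan--Hadamard apply. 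This is exactly the order the paper follows, and you must reorder your proof accordingly.

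For properness you take a genuinely different route. The paper works locally: it shows any accumulation point $y_0$ of the image has a well-defined limiting normal direction, that near $y_0$ the image is a graph with bounded gradient over a Euclidean ball, and that completeness forces the graph to be defined on the \emph{whole} ball, whence $y_0$ is attained and properness follows. You instead want \emph{metric} properness, via the claim that a unit-speed intrinsic geodesic maps to a curve of acceleration $\le k<1$ whose distance from the starting point grows linearly. That claim is true, but it does not follow from the tangent-horosphere or tangent-$r$-cap comparisons as stated: the intersection of the concave sides of the two tangent horoballs (or $r$-caps) at a single point is an unbounded region, so no escape is forced without an additional monotonicity argument. Your fallback is also too vague to close the gap, since if $\sigma(p_k)\to y_0$ with $p_k$ leaving every compact set, $y_0$ need not a priori lie in the image, so there is no ``tangent horosphere at the limit'' to be crossed without doing essentially the paper's graph argument. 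If you wish to keep your route, prove the divergence estimate directly: in the hyperboloid model one has $\gamma''=D_{\gamma'}\gamma'+\gamma$ for a unit-speed curve, so $v(t)=|\langle\gamma(t),\xi\rangle|$ for a suitable null vector $\xi$ satisfies $v''\ge(1-k)v$ with $v'(0)\ge 0$, giving exponential growth of $v$ and hence linear growth of the hyperbolic distance because Busemann functions are $1$-Lipschitz. With that lemma in hand your argument closes; without it, properness is not established.
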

\begin{proof}
	To show that $\sigma$ is injective, let us suppose {by contradiction} that $\sigma(p)=\sigma(q)=y_0$ for $p\neq q$. Let $\gamma:[a,b]\to M$ be an intrinsic {$\I$-}geodesic joining $p$ and $q$ {parametrized by arclength}, which exists because $\I$ is complete. As in Lemma \ref{lemma concave side horospheres}, $\sigma\circ\gamma$ has small acceleration. Let 
	$$r_0:=\max_{t\in [a,b]}d_{\Hyp^{n+1}}(y_0,\sigma\circ\gamma(t))~.$$
	Then $\sigma\circ\gamma$ is tangent at some point $\sigma\circ\gamma(t_0)$ to the metric sphere in {$\Hyp^{n+1}$} centered at $y_0$ of radius $r_0$, and contained in its convex side. {By Remark \ref{rmk: tangent metric spheres}, $\sigma\circ\gamma$ lies in the convex side of the horosphere tangent to the hypersurface at $\sigma\circ\gamma(t_0)$.} This contradicts Lemma \ref{lemma curve small acc} and shows that $\sigma$ is an injective immersion. See Figure \ref{fig:tangenthorospheres2}.

	\begin{figure}[htbp]
		\centering
		\includegraphics[height=5.5cm]{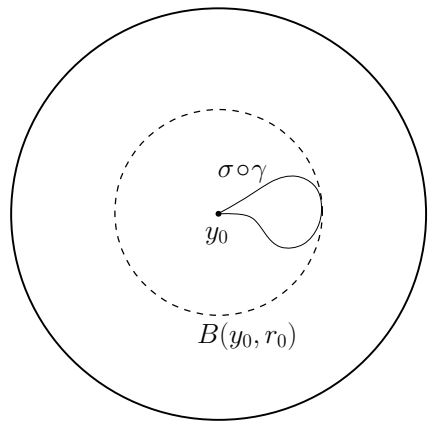} 
		%\captionsetup{labelformat=empty}
		
		\caption{A sketch of the proof of the first part of Proposition \ref{prop injectivity}, namely the injectivity of $\sigma$. If $\sigma(p)=\sigma(q)=y_0$ for $p\neq q$, then the image $\sigma\circ\gamma$ of a $\I$-geodesic connecting $p$ and $q$ would be tangent to a metric ball centered at $y_0$, which contradicts the assumption that $\sigma$ has small principal curvatures.}\label{fig:tangenthorospheres2}
	\end{figure}

	It follows that $M$ is simply connected. Indeed, let $u:\widetilde M\to M$ be a universal covering. If $M$ were not simply connected, then $u$ would not be injective, hence $\sigma\circ u$ would give a non-injective immersion in $\Hyp^{n+1}$ with small principal curvatures, contradicting the above part of the proof. Since the first fundamental form is a {complete} negatively curved Riemannian metric on $M$ (Remark \ref{rmk: negative curv}), $M$ is {diffeomorphic to $\R^n$} {by the Cartan-Hadamard Theorem}. 
	%\orange{(Io in merito conosco solo il teo di Cartan-Hadamard perÃ² serve $\I$ completa. Sicuro che ogni varietÃ  a curvatura negativa semplicemente connessa sia contraibile? In dimensione alta non ci sono esempi di aperti di $\Hyp^n$ semplicemente connessi ma non contraibili? 
	%Alla peggio si puÃ² dire questa cosa sotto l'ipotesi di cocompattezza e lÃ¬ Ã¨ vero perchÃ¨ $\I$ Ã¨ completa
	%)}
	
	Let us now show that $\sigma$ is proper, which also implies that it is a homeomorphism onto its image and thus an embedding. As a first step, suppose $y_0\in\Hyp^{n+1}$ is in the closure of the image of $\sigma$. We claim that the normal direction of $\sigma$ extends to $y_0$, meaning that there exists a vector $\nu_0\in T^1_{x_0}\Hyp^{n+1}$ such that  $[\nu(p_n)]\to [\nu_0]$ for every sequence $p_n\in M$ satisfying $\sigma(p_n)\to y_0$, where $\nu(p)$ denotes the unit normal vector of $\sigma$ at $p$ and $[\cdot]$ denotes the equivalence class up to multiplication by $\pm 1$. By compactness of unit tangent spheres, if $\sigma(p_n)\to y_0$ then one can extract a  subsequence $\nu(p_n)$ converging to (say) $\nu_0$. Observe that by Lemma \ref{lemma concave side horospheres}, the image of $\sigma$ lies in the concave side of any horosphere orthogonal to $\nu(p_n)$ at $\sigma(p_n)$. By a continuity argument, it lies also on the concave side of each of the two horospheres orthogonal to $\nu_0$ at $y_0$. The claim follows by a standard subsequence argument once we show that there can be no limit other than $\pm\nu_0$ along any subsequence. 
	
	{We will assume hereafter, in the upper half-space model 
		$$(\{x_{n+1}>0\},\frac 1 {x_{n+1}^2}(dx_1^2+\ldots+dx_{n+1}^2))~,$$ 
		that $y_0=(0,\ldots,0,1)$ and $\nu_0=(0,\ldots,0,1)$. {See Figure \ref{fig:graphs} on the left.} In this model, horospheres are either horizontal hyperplanes $\{x_{n+1}=c\}$ or spheres with south pole on $\{x_{n+1}=0\}$. By Lemma \ref{lemma concave side horospheres}, the image of $\sigma$ is contained in the concave side of both horospheres orthogonal to $\nu_0$, hence it lies in the region defined by $0<x_{n+1}\leq 1$ and $x_1^2+\ldots+x_{n}^2+(x_{n+1}-\frac{1}{2})^2\geq \frac{1}{4}$. Now, if $\nu_1\neq \pm\nu_0$ were a subsequential limit of $\nu(q_n)$ for some sequence $q_n$ with $\sigma(q_n)\to y_0$, then the image of $\sigma$ would lie on the concave side of some sphere with south pole on $\{x_{n+1}=0,(x_1,\ldots,x_{n})\neq(0,\ldots,0)\}$. But then $\sigma$ would either enter the region $x_{n+1}>1$ or the region $x_1^2+\ldots+x_{n}^2+(x_{n+1}-\frac{1}{2})^2< \frac{1}{4}$ in a neighbourhood of $y_0$, which gives a contradiction.}
	
	Having established the convergence of the normal direction to $[\nu_0]$, we can now find a neighbourhood $U$ of $y_0$ of the form $B(0,\epsilon)\times(\frac{1}{2},\frac{3}{2})$, where $B(0,\epsilon)$ is the ball of Euclidean radius $\epsilon$ centered at the origin in $\{x_{n+1}=0\}$, such that if $\sigma(p)\in U$, then the vertical projection from the tangent space of $\sigma$ at $\sigma(p)$ to $\{x_{n+1}=0\}$ is a linear isomorphism. By the implicit function theorem, $\sigma(M)\cap U$ is locally a graph over $\R^n$. Up to taking a smaller $\epsilon$, we can {arrange $U$ so that $\sigma(M)\cap U$ is a global graph over some open set of  $B(0,\epsilon)\subset\R^n$. Indeed as long as the normal vector $\nu$ is in a small neighbourhood of $\pm\nu_0$, the vertical lines over points in $B(0,\epsilon)$ may intersect the image of $\sigma$ in at most one point as a consequence of Lemma \ref{lemma concave side horospheres}. Let us denote $V\subseteq B(0,\epsilon)$ the image of the vertical projection from $\sigma(M)\cap U$ to $\R^n$, so that $\sigma(M)\cap U$ is 
		the graph of some function 
		$h:V\to(\frac{1}{2},\frac{3}{2})$ satisfying $h(0)=1$}.  Since the gradient of $h$ converges to $0$ at $0$, {up to restricting $U$ again, there is a constant $C>0$ such that the Euclidean norm of the gradient of $h$ is bounded by $C$.}
	
	{We shall now apply again the hypothesis that $\sigma$ is complete {to show that in fact $V=B(0,\epsilon)$. For this purpose, we assume that $V$ is a proper (open) subset of $B(0,\epsilon)$ and we will derive a contradiction. Under the assumption $V\neq B(0,\epsilon)$ we would find a Euclidean segment $c:[0,1]\to\R^n$ such that $c(s)\in V$ for $s\in[0,1)$ and $c(1)\in B(0,\epsilon)\setminus V$. The path $s\mapsto (c(s),h\circ c(s))$ is contained in $\sigma(M)$;  
			%$$s\mapsto ((1-s)\mathsf x+s\mathsf y,h((1-s)\mathsf x+s\mathsf y))$$
			using $h\geq\frac{1}{2}$ and the bound on the gradient, we obtain that its hyperbolic length is less than $2\sqrt{1+C^2}$ times the Euclidean length of $c$, hence is finite. This contradicts completeness of $\sigma$. 
			%Observe that in $U$ the metric of the upper half-space model is bi-Lipschitz equivalent to the standard Euclidean metric of $\R^{n+1}$. Since Lipschitz functions continuously extend to the boundary of their domain of definition, we have that $h$ is defined on $V=B(0,\epsilon)$, for otherwise one would get a contradiction with the completeness of $\sigma$. 
			In summary},  $\sigma(M)\cap U$ is the graph of a function globally defined on $B(0,\epsilon)$, and clearly contains the point $y_0$.}	{See Figure \ref{fig:graphs} on the right.}
	
	{We are now ready to complete the proof of the fact that $\sigma$ is proper. Indeed, let $p_n\in M$ be a sequence such that $\sigma(p_n)\to y_0$. We showed above that $y_0$ is in the image of $\sigma$ (say $y_0=\sigma(p_0)$) and that $p_n$ is definitively in $\sigma^{-1}(U)$, whose image is a graph over $B(0,\epsilon)$. Hence $p_n$ is at bounded distance from $p_0$ for the first fundamental form of $\sigma$, and therefore admits a subsequence $p_{n_k}$ converging to $p_0$. In conclusion, $\sigma$ is a proper embedding.}
\end{proof}

\begin{figure}[htbp]
	\centering
	\includegraphics[height=5.5cm]{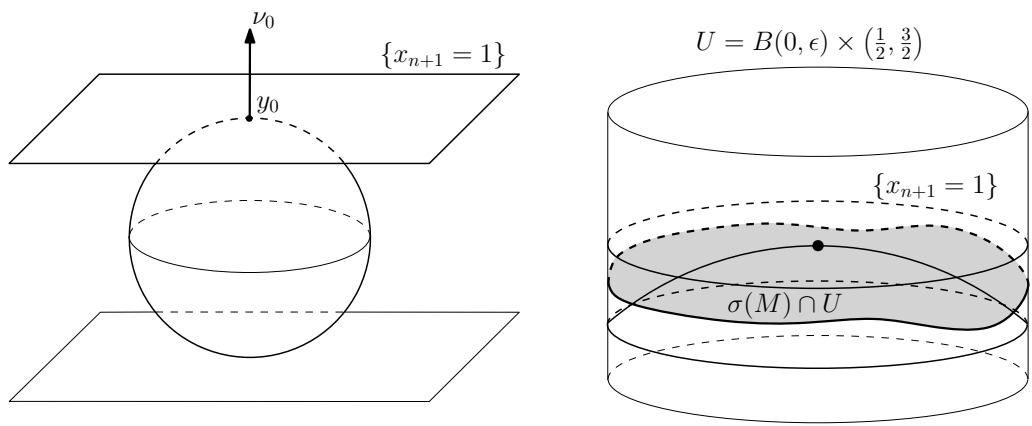} 
	%\captionsetup{labelformat=empty}
	
	\caption{The setting of the proof that $\sigma$ is proper in Proposition \ref{prop injectivity}: in the upper half-plane model, the image of $\sigma$ is contained below the horosphere $\{x_{n+1}=1\}$ and in the outer side of the horosphere $x_1^2+\ldots+x_{n}^2+(x_{n+1}-\frac{1}{2})^2= \frac{1}{4}$. On the right, the neighbourhood $U$ of $y_0$, where the image of $\sigma$ is proved to be the graph of a function $h:B(0,\epsilon)\to\R$.}\label{fig:graphs}
\end{figure}

%{An argument in the spirit of the above proof of injectivity of $\sigma$ shows that the Gauss map $G_\sigma:M\to \G{n+1}$ is injective as well. Indeed,  suppose $G_\sigma(p)=G_\sigma(q)$ for $p,q\in M$, which means} that $\sigma$ is orthogonal to a given geodesic $\ell$ of $\Hyp^{n+1}$ both at $\sigma(p)$ and at $\sigma(q)$. By Lemma \ref{lemma concave side horospheres}, $\sigma(M\setminus q)$ is necessarily contained in the concave side of each of the two horospheres tangent to $\sigma$ at $\sigma(p)$. Hence the only point at which the image of $\sigma$ may intersect $\ell$ is $\sigma(p)$. Therefore $\sigma(q)$ is necessarily equal to $\sigma(p)$, and by injectivity of $\sigma$, $p=q$.
%\appunto{tagliare?}

{By an application of Lemma \ref{lemma concave side horospheres} one can easily show that the Gauss map $G_\sigma:M\to \G{n+1}$ is injective as well {if $\sigma$ is complete and has small principal curvatures}. However, we will prove here (Proposition \ref{prop:gauss maps diffeo onto image}) a stronger property of the Gauss map.}
%Although not necessary at this point, it will be useful in Section \ref{} to
%\orange{\sout{Let us now analyse more precisely the properties of $G_\sigma$}}  
Recall that the space of oriented geodesics of $\Hyp^{n+1}$ has the natural identification 
$$\G{n+1}\cong \partial\Hyp^{n+1}\times\partial\Hyp^{n+1}\setminus \Delta~,$$
for $\Delta$ the diagonal, given by mapping an oriented geodesic $\ell$ to its endpoints at infinity according to the orientation: {as a consequence, the map $G_\sigma$ can be seen as a pair of maps with values in the boundary of $\Hyp^n$}. More precisely, if we denote by $\gamma:\R\to\Hyp^{n+1}$ a parameterized geodesic, then the above identification reads:
\begin{equation}\label{eq:identification space geodesics}
	\gamma\mapsto\left(\lim_{t\to+\infty}\gamma(t),\lim_{t\to-\infty}\gamma(t)\right)~.
\end{equation}

Given an immersion of an oriented manifold $M^n$
into $\Hyp^{n+1}$, composing the Gauss map $G_\sigma:M\to\G{n+1}$ with the above map \eqref{eq:identification space geodesics} with values in $\partial\Hyp^{n+1}\times\partial\Hyp^{n+1}$ and projecting on each factor, we obtain the so-called \emph{hyperbolic Gauss maps} $G^\pm_\sigma:M\to\partial\Hyp^{n+1}$. They are explicitely expressed by
$$G^\pm_\sigma(p)=\lim_{t\to\pm\infty}\exp_{\sigma(p)}(t\nu(p))\in \partial\Hyp^{n+1}~.$$
The following proposition states their injectivity property under the small principal curvatures assumption, which will be applied in Proposition \ref{prop:action free prop disc},

\begin{prop}\label{prop:gauss maps diffeo onto image}
	Let $M^n$ be an oriented manifold and $\sigma:M\to\Hyp^{n+1}$ be a complete  immersion  with small principal curvatures. Then both hyperbolic Gauss maps $G^\pm_\sigma:M\to\partial\Hyp^{n+1}$ are diffeomorphisms onto their images. {In particular, the Gauss map $G_\sigma$ is an embedding.}
\end{prop}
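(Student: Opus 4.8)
The plan is to prove injectivity and local diffeomorphism separately for each hyperbolic Gauss map $G^\pm_\sigma$, and then conclude the embedding statement for $G_\sigma$. By symmetry (replacing $\nu$ with $-\nu$, i.e. reversing orientation, swaps $G^+_\sigma$ and $G^-_\sigma$), it suffices to treat $G^+_\sigma$. The geometric heart of the argument is again Lemma \ref{lemma concave side horospheres}: an immersion with small principal curvatures lies strictly in the concave side of each tangent horosphere.

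First I would prove injectivity of $G^+_\sigma$. Suppose $G^+_\sigma(p)=G^+_\sigma(q)=\xi\in\partial\Hyp^{n+1}$ for $p\neq q$. The point $\xi$ is the positive endpoint of both normal geodesic rays emanating from $\sigma(p)$ and $\sigma(q)$. I would then consider the foliation of $\Hyp^{n+1}$ by horospheres centered at $\xi$ (the level sets of the Busemann function $b_\xi$ based at $\xi$). The key observation is that at each point $\sigma(p)$, the normal direction $\nu(p)$ points along the geodesic toward $\xi$, which means precisely that the horosphere centered at $\xi$ through $\sigma(p)$ is tangent to the immersion at $\sigma(p)$. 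By Lemma \ref{lemma concave side horospheres}, the whole image $\sigma(M\setminus\{p\})$ lies strictly in the concave side of this horosphere, i.e. strictly on the side \emph{away} from $\xi$; equivalently $b_\xi(\sigma(r))>b_\xi(\sigma(p))$ for all $r\neq p$ (with the appropriate sign convention). Applying the same reasoning at $q$ gives $b_\xi(\sigma(p))>b_\xi(\sigma(q))$ and $b_\xi(\sigma(q))>b_\xi(\sigma(p))$, a contradiction. Hence $G^+_\sigma$ is injective.

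Next I would show $G^+_\sigma$ is a local diffeomorphism, i.e. that $d_pG^+_\sigma$ is an isomorphism at every $p$ (it is between manifolds of equal dimension $n$). The cleanest route is to compute the differential of the hyperbolic Gauss map and relate its degeneracies to the principal curvatures. Using the explicit normal evolution $\exp_{\sigma(p)}(t\nu(p))$ and Equation \eqref{eq:normal evolution hyperboloid2}, the differential $d\sigma_t = d\sigma\circ(\cosh(t)\,\mathrm{id}-\sinh(t)B)$ degenerates in the limit $t\to+\infty$ exactly in the directions governed by the operator $(\mathrm{id}-B)$; since small principal curvatures force the eigenvalues $\lambda_i$ of $B$ to satisfy $\lambda_i<1$, the operator $\mathrm{id}-B$ is invertible, and a standard computation of the boundary limit (for instance by passing to the upper half-space model and tracking the endpoint of the rescaled geodesic ray) shows that $d_pG^+_\sigma$ is conjugate, up to the conformal factor coming from the boundary identification, to $\mathrm{id}-B$ composed with $d\sigma$, hence is injective. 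Thus $G^+_\sigma$ is an immersion between equidimensional manifolds, i.e. a local diffeomorphism. Being an injective local diffeomorphism, it is a diffeomorphism onto its (open) image.

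Finally, the embedding statement for $G_\sigma$ follows formally: under the identification $\G{n+1}\cong\partial\Hyp^{n+1}\times\partial\Hyp^{n+1}\setminus\Delta$ we have $G_\sigma=(G^+_\sigma,G^-_\sigma)$, and since $G^+_\sigma$ is already injective, so is $G_\sigma$; combined with the fact that $G_\sigma$ is an immersion (Proposition \ref{prop:gauss immersion}) and that injectivity of $G^+_\sigma$ gives injectivity on each component, $G_\sigma$ is an injective immersion which is moreover proper (completeness of $\sigma$ transfers to completeness of $G_\sigma$ via Equation \eqref{eq:fff gauss}, and $M\cong\R^n$ by Proposition \ref{prop injectivity}), hence an embedding. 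The main obstacle I anticipate is the local-diffeomorphism step: carefully computing $d_pG^\pm_\sigma$ and isolating the factor $\mathrm{id}\mp B$ requires a clean model-dependent computation of the boundary map and its conformal behaviour. The injectivity step, by contrast, reduces transparently to the Busemann/horosphere comparison already packaged in Lemma \ref{lemma concave side horospheres}, so I expect it to be short.
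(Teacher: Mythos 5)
Your proposal follows essentially the same route as the paper: injectivity of $G^\pm_\sigma$ via the tangent-horosphere comparison of Lemma \ref{lemma concave side horospheres} (your Busemann-function phrasing is just the paper's concentric-horosphere argument in different clothing), and the local-diffeomorphism step via the computation $dG^\pm_\sigma = d\sigma\circ(\mathrm{id}\mp B)$, which the paper carries out explicitly in the hyperboloid model. One small correction to your final paragraph: the embedding claim for $G_\sigma$ needs no properness argument, and your justification of properness is backwards --- Equation \eqref{eq:fff gauss} gives $\overline\I=\I-\III\le\I$, so completeness of $\I$ does \emph{not} transfer to $\overline\I$; it suffices to observe that the projection of $G_\sigma$ to the first factor is $G^+_\sigma$, a homeomorphism onto its image, so $G_\sigma$ is one as well.
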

\begin{proof}
	Let us first show that $G_\sigma^\pm$ are local diffeomorphisms. Recalling the definition of $\sigma_t$ (Definition \ref{Def normal evolution}) and its expression in the hyperboloid model of $\Hyp^{n+1}$ (Equation \eqref{eq:normal evolution hyperboloid}), $G_\sigma^\pm$ is the limit for $t\to \pm\infty$ in $\partial\Hyp^{n+1}$ of 
	$$\sigma_t(p)=\cosh(t)\sigma(p)\pm\sinh(t)\nu(p)~.$$
	
	{Recalling the definition of the boundary of $\Hyp^{n+1}$ as the projectivization of the null-cone (Equation \eqref{eq:bdy hyperboloid model})}, we will consider the boundary at infinity of $\Hyp^{n+1}$ {as the slice of the null-cone defined by $\{x_{n+2}=1\}$}. Given $p_0\in M$, up to isometries we can assume that $\sigma(p_0)=(0,\ldots,0,1)$ and $\nu(p_0)=(1,0,\ldots,0)$, 
	so that $G_\sigma^\pm(p_0)=(\pm 1,0,\ldots,0,1)$ and the tangent spaces to the image of $\sigma$ at $p_0$ and of $\partial\Hyp^{n+1}$ at $G_\sigma^\pm(p)$ are identified to the same subspace {$\{x_1=x_{n+2}=0\}$ in $\R^{n,1}$}.
	
	To compute the differential of $G_\sigma^\pm$ at $p_0$, we must differentiate the maps
	$$p\mapsto \lim_{t\to \pm\infty}\frac{\sigma_t(p)}{|\langle \sigma_t(p),\sigma(p_0)\rangle|}=\frac{\sigma(p)\pm\nu(p)}{|\langle \sigma(p)\pm\nu(p),\sigma(p_0)\rangle|}$$
	at $p=p_0$. Under these identifications, a direct computation for $V\in T_{p_0}M$ gives:
	$$dG_\sigma^\pm(V)=d\sigma\circ (\mathrm{id}\mp B)(V)~.$$
	Hence both differentials of $G_\sigma^\pm$ are invertible at $p_0$ if the eigenvalues of $B$ are always different from $1$ and $-1$, as in our hypothesis. This shows that $G_\sigma^+$ and $G_\sigma^-$ are local diffeomorphisms. 
	
	To see that $G_\sigma^\pm$ is injective, suppose that $G_\sigma^\pm(p)=G_\sigma^\pm(q)$. This means that $\sigma$ is orthogonal at $p$ and $q$ to two geodesics having a common point at infinity. Hence $\sigma$ is tangent at $p$ and $q$ to two horospheres $H_p$ and $H_q$ having the same point at infinity. By Lemma \ref{lemma concave side horospheres} the image of $\sigma$ must lie in the concave side of both $H_p$ and $H_q$, hence the two horospheres must coincide. But by Lemma \ref{lemma concave side horospheres} again, $\sigma(M\setminus \{p\})$ lies strictly in the concave side of $H_p$, hence necessarily $p=q$. See Figure \ref{fig:tangenthorospheres3}.
	
	{By the invariance of the domain, $G_\sigma^\pm$ are diffeomorphisms onto their images. Under the identification between $\G{n+1}$ and $\partial\Hyp^{n+1}\times\partial\Hyp^{n+1}\setminus \Delta$ the Gauss map $G_\sigma$ corresponds to $(G_\sigma^+,G_\sigma^-)$, and it follows that $G_\sigma$ is an embedding.}
\end{proof}

\begin{figure}[htbp]
	\centering
	\includegraphics[height=5.5cm]{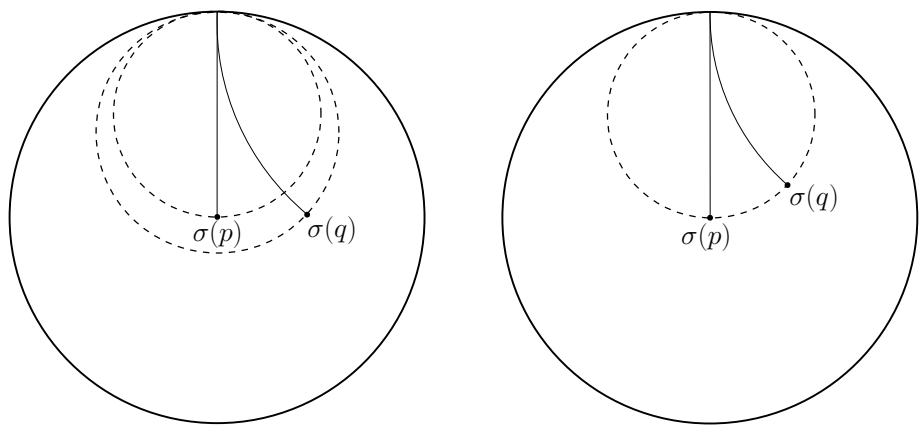} 
	%\captionsetup{labelformat=empty}
	
	\caption{The proof of the injectivity of $G_\sigma^+$. Suppose two orthogonal geodesics share the final point, hence the image of $\sigma$ is tangent at $\sigma(p)$ and $\sigma(q)$ have the same point at infinity. As a consequence of Lemma \ref{lemma concave side horospheres}, this is only possible if the two tangent horospheres coincide, and therefore if $p=q$. Replacing horospheres by metric spheres, the same argument proves that the orthogonal geodesics at different points are disjoint (See Proposition \ref{prop:action free prop disc0}).}\label{fig:tangenthorospheres3}
\end{figure}

\section{Nearly-Fuchsian manifolds and the space $\GGG_\rho$}\label{sec:nearly fuchsian}
Taking advantage of the results so far established in this chapter, we now introduce nearly-Fuchsian representations and manifolds. These will appear again in Chapter \ref{sec:hamiltonian}.

\begin{Def}\label{Def:nearly fuch rep}
	Let $M^n$ be a closed orientable manifold. A representation $\rho:\pi_1(M)\to\Isom_0(\Hyp^{n+1})$ is called \emph{nearly-Fuchsian}  if there exists a $\rho$-equivariant immersion $\widetilde\sigma:\widetilde M\to\Hyp^{n+1}$ with small principal curvatures. 
	%It is called \emph{almost-Fuchsian} if there exists such a $\widetilde\sigma$ which is \orange{\sout{moreover} also} minimal, that is, the trace of its shape operator $B$ vanishes identically.
\end{Def}

We recall that an immersion $\widetilde\sigma\colon \widetilde M \to \Isom_0(\Hyp^{n+1})$ is \emph{$\rho$-equivariant} if
\begin{equation}\label{eq:equivariant immersion}
	\widetilde\sigma\circ\alpha= \rho(\alpha)\circ\widetilde \sigma~.\end{equation}
for all $\alpha\in \pi_1(M)$. Let us show that the action of nearly-Fuchsian representations is ``good'' on $\Hyp^{n+1}$ {(Proposition \ref{prop:action free prop disc0})} and also on a  region in $\partial\Hyp^{n+1}$ which is the disjoint union of two topological discs {(Proposition \ref{prop:action free prop disc})}.

\begin{prop}\label{prop:action free prop disc0}
	Let $M^n$ be a closed orientable manifold and $\rho:\pi_1(M)\to\Isom_0(\Hyp^{n+1})$ be a nearly-Fuchsian representation. Then $\rho$ gives a free and properly discontinous action of $\pi_1(M)$ on $\Hyp^{n+1}$. Moreover $\rho$ is convex cocompact, namely there exists a $\rho$-invariant geodesically convex subset $\mathcal C\subset\Hyp^{n+1}$ such that the quotient $\faktor{\mathcal C}{\rho(\pi_1(M))}$ is compact.
\end{prop}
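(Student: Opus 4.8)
The plan is to use the normal evolution of the equivariant hypersurface to build a diffeomorphism $\widetilde M\times\R\cong\Hyp^{n+1}$ intertwining the $\rho$-action with the deck action on the first factor, from which freeness and proper discontinuity follow formally, and then to realize the convex core as the convex hull of the limit set. First I would upgrade $\widetilde\sigma$ to a complete immersion: being $\rho$-equivariant, its first fundamental form $\widetilde{\I}=\widetilde{\sigma}^*\inners$ is invariant under the deck action and descends to a Riemannian metric on the closed manifold $M$, hence is complete by compactness, and so is $\widetilde{\I}$. Thus Proposition \ref{prop injectivity} applies, giving that $\widetilde{\sigma}$ is a proper embedding and $\widetilde M\cong\R^n$; write $\Sigma_0=\widetilde{\sigma}(\widetilde M)$, a properly embedded $\rho$-invariant hypersurface. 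Compactness of $M$ also yields a uniform bound $\epsilon=\tanh(r_0)<1$ on the absolute values of the principal curvatures of $\widetilde{\sigma}$, which will feed the barrier argument below. Finally, each member $\widetilde{\sigma}_t$ of the normal evolution (Definition \ref{Def normal evolution}) is again $\rho$-equivariant, since $\rho(\alpha)$ is an orientation-preserving isometry and so commutes with the exponential map and preserves the compatible unit normal, and has small principal curvatures by Corollary \ref{cor:equidistant is immersed}.

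Next I would analyze the map $F\colon\widetilde M\times\R\to\Hyp^{n+1}$, $F(p,t)=\widetilde{\sigma}_t(p)$. The argument indicated in Remark \ref{rmk: tangent metric spheres} and Figure \ref{fig:tangenthorospheres3} shows that the geodesics normal to $\Sigma_0$ at distinct points are disjoint: were two of them to meet at a point $z$, the metric sphere centered at $z$ through the farther foot point would be tangent to $\Sigma_0$ there and would contain the other foot point, contradicting that a hypersurface with small principal curvatures avoids every tangent metric ball. Together with the injectivity of each geodesic $t\mapsto\widetilde{\sigma}_t(p)$, this makes $F$ injective, while $dF$ has full rank because $\partial_t F$ is the unit normal $\nu_t$ of the immersed $\Sigma_t$. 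Crucially, $F$ is also surjective: for any $x\in\Hyp^{n+1}$ the distance $d(x,\Sigma_0)$ is attained (as $\Sigma_0$ is closed), and the minimizing geodesic meets $\Sigma_0$ orthogonally, so $x$ lies on a normal geodesic and hence in the image of $F$. Therefore $F$ is a diffeomorphism, and the identity $F(\alpha\cdot p,t)=\rho(\alpha)\,F(p,t)$ conjugates the $\rho$-action on $\Hyp^{n+1}$ to the product of the deck action on $\widetilde M$ with the trivial action on $\R$. Since the deck action is free and properly discontinuous, the first assertion of the Proposition follows; in particular $\rho$ is faithful and discrete.

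For convex cocompactness I would take $\mathcal C$ to be the closed geodesic convex hull of the limit set $\Lambda_\rho=\partial\Hyp^{n+1}\setminus(\Omega_+\sqcup\Omega_-)$, where $\Omega_\pm=G^\pm_{\widetilde{\sigma}}(\widetilde M)$ are the embedded open discs produced by the hyperbolic Gauss maps (Proposition \ref{prop:gauss maps diffeo onto image}). As $\Lambda_\rho$ is compact and $\rho$-invariant, $\mathcal C$ is automatically a geodesically convex $\rho$-invariant set, so the only remaining point is compactness of $\mathcal C/\rho$. I would reduce this to confining $\mathcal C$ inside a bounded tubular neighbourhood $N_R(\Sigma_0)=\{x:d(x,\Sigma_0)\le R\}$: granting such an $R$, one checks via $F$ that $N_R(\Sigma_0)=F(\widetilde M\times[-R,R])$ has quotient homeomorphic to $M\times[-R,R]$, hence compact, so that $\mathcal C/\rho$ is a closed subset of a compact space. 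The confinement is exactly where the curvature bound $\epsilon=\tanh(r_0)$ enters: by Remark \ref{rmk:convex side caps} the surface $\Sigma_0$ lies on the concave side of every tangent $r_0$-cap, and since each $r_0$-cap shares its ideal boundary sphere with a totally geodesic hyperplane, these caps serve as two-sided barriers that keep every geodesic with both endpoints in $\Lambda_\rho$ within a uniform distance of $\Sigma_0$.

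I expect this last confinement step to be the main obstacle. Unlike the soft foliation argument, it requires a genuinely global estimate relating the ideal boundary data $\Omega_\pm$ and $\Lambda_\rho$ to the extrinsic geometry of $\Sigma_0$, and controlling the convex hull of the limit set by the barrier caps is the only place where the bound $\epsilon<1$ is used quantitatively rather than qualitatively. Everything else reduces to the diffeomorphism $F$ and to the results already established for complete immersions with small principal curvatures.
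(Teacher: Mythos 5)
Your first half---completeness of $\widetilde\sigma$ from cocompactness, properness via Proposition \ref{prop injectivity}, disjointness of distinct normal geodesics via tangent metric spheres, and surjectivity of $F$ via a nearest-point argument---is correct and is essentially the paper's own argument for free and proper discontinuity, merely packaged as a diffeomorphism $F:\widetilde M\times\R\to\Hyp^{n+1}$ rather than as a foliation of $\Hyp^{n+1}$ by the orthogonal geodesics.

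The convex-cocompactness half has a genuine gap, and it is exactly the step you flag. Two problems. First, the confinement of the convex hull of $\Lambda_\rho$ in a bounded neighbourhood of $\Sigma_0=\widetilde\sigma(\widetilde M)$ is asserted, not proved: to make the $r_0$-caps act as barriers you must know that every point of $\Lambda_\rho:=\partial\Hyp^{n+1}\setminus(\Omega_+\sqcup\Omega_-)$ lies in the ideal closure of the concave side of every tangent $r_0$-cap, which amounts to identifying this set with the set of accumulation points of $\Sigma_0$ at infinity. In the paper that identification (the limit-set remark and Proposition \ref{prop:action free prop disc}) is established \emph{after}, and \emph{using}, the convex cocompactness proved in the present proposition (through the quasi-isometric-embedding argument), so invoking $\Lambda_\rho$ with its expected properties here is circular as written; you would have to redefine $\Lambda_\rho$ as $\overline{\Sigma_0}\cap\partial\Hyp^{n+1}$ and carry out the cap-barrier estimate from scratch. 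Second, all of this machinery is unnecessary: the paper's own proof simply observes that, by the evolution formula $\lambda_{i;t}=\tanh(\mu_i-t)$ of Lemma \ref{lemma:evolution fsigma} and the uniform bound on the $\mu_i$ coming from compactness of $M$, one can choose $t_-$ and $t_+$ of large absolute value and opposite signs so that one of the equidistant embeddings $\widetilde\sigma_{t_\pm}$ is convex and the other concave; the region $F(\widetilde M\times[t_-,t_+])$ they bound is then geodesically convex, $\rho$-invariant, and has quotient diffeomorphic to the compact manifold $M\times[t_-,t_+]$. Replacing your choice of $\mathcal C$ by this region closes the argument without any appeal to the limit set.
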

\begin{proof}
	{Let $\widetilde\sigma$ be an equivariant immersion as in Definition \ref{Def:nearly fuch rep}. }We claim that the family of geodesics orthogonal to $\widetilde\sigma(\widetilde M)$ gives a foliation of $\Hyp^{n+1}$.
	Observing that the action of $\pi_1(M)$ on $\widetilde M$ is free and properly discontinous, this immediately implies that the action of $\pi_1(M)$ on $\Hyp^{n+1}$ induced by $\rho$ is free and properly discontinous.
	
	By repeating the same argument that shows, in the proof of  Proposition \ref{prop:gauss maps diffeo onto image}, the injectivity of $G_{\widetilde\sigma}^\pm$, replacing horospheres with {metric spheres of $\Hyp^{n+1}$} {and using Remark \ref{rmk: tangent metric spheres}}, one can prove that  two geodesics orthogonal to $\widetilde\sigma(\widetilde M)$ at different points are disjoint. 	
	
	%\sout{(Indeed, since metric spheres are more curved than horospheres, the conclusion of Lemma \ref{lemma concave side horospheres} holds a fortiori for any tangent metric sphere, a fact we have already used in the proof of Proposition \ref{prop injectivity}.)}
	
	To show that the orthogonal geodesics give a foliation of $\Hyp^{n+1}$, it remains to show that  every point $x\in\Hyp^{n+1}$ is contained in a geodesic of this family (which is necessarily unique). Of course we can assume $x\notin \widetilde\sigma(\widetilde M)$. {By cocompactness, $\widetilde\sigma$ is complete, hence it is a proper embedding by Proposition \ref{prop injectivity}. Then} 
	%\appunto{\as{qui stiamo usando che $\widetilde\sigma$ Ã¨ propria, ma non l'abbiamo giustificato (viene detto nel remark sotto, ma usando il fatto che Ã¨ un quasi-isometric embedding). ma forse c'Ã¨ un argomento diretto usando cocompattezza?}}
	%\sout{the boundary of $\widetilde\sigma(\widetilde M)$ in $\Hyp^{n+1}\cup\partial\Hyp^{n+1}$ is entirely contained in $\partial\Hyp^{n+1}$.} 
	the map that associates to each element of $\widetilde\sigma(\widetilde M)$ its distance from $x$ attains its minimum: this implies that there exists $r>0$ such that the metric sphere of radius $r$ centered at $x$ is tangent to $\widetilde\sigma(\widetilde M)$ at some point $p$. Hence $x$ is on the geodesic through $p$. See Figure \ref{fig:limit2}.
	% \appunto{\orange{Non conviene anticipare la Figura \ref{fig:limit2}?}\as{eh lo so, ma poi non so dove mettere la figura \ref{fig:limit}, secondo me cosÃ¬ va bene.}}
	
	Let us now prove that $\rho$ is also convex-cocompact. To show this, we claim that there exists $t_+,t_-\in\R$ such that $\widetilde\sigma_{t_+}$ is a convex embedding, and $\widetilde\sigma_{t_-}$ a concave one.  
	Indeed in the proof of Lemma \ref{lemma:evolution fsigma} we showed that the principal curvatures of the normal evolution $\widetilde\sigma_t$ are equal to $\tanh(\mu_i-t)$, where $\mu_i$ is the hyperbolic arctangent of the corresponding principal curvature of $\widetilde\sigma$. Hence taking $t\ll0$ (resp. $t\gg 0$) one can make sure that the principal curvatures of $\widetilde\sigma_t$ are all negative (resp. positive), hence $\widetilde\sigma_t$ is convex (resp. concave). The region bounded by the images of $\widetilde\sigma_{t_+}$ and $\widetilde\sigma_{t_-}$ is then {geodesically} convex and diffeomorphic to $\widetilde M\times [t_-,t_+]$. Under this diffeomorphism, the action of $\pi_1(M)$ corresponds to the action by deck transformations on $\widetilde M$ and the trivial action on the second factor. Hence its  quotient is compact, being diffeomorphic to $M\times [t_-,t_+]$.
\end{proof}

{This implies that in dimension three, nearly-Fuchsian manifolds are quasi-Fuchsian. }

\begin{remark}
	There is another important consequence of Proposition \ref{prop:action free prop disc0}.
	Given $\widetilde\sigma$ an equivariant immersion as in Definition \ref{Def:nearly fuch rep}, it follows from the cocompactness of the action of $\rho$ on the geodesically convex region $\mathcal C$ that $\widetilde \sigma$ is a quasi-isometric embedding {in the sense of metric spaces}. By cocompactness and Remark \ref{rmk: negative curv}, $\widetilde M$ is a complete simply connected Riemannian manifold of negative sectional curvature, hence its visual boundary $\partial\widetilde M$ in the sense of Gromov is homeomorphic to $S^{n-1}$.
	By \cite[Proposition 6.3,][]{zbMATH01496599}, $\widetilde\sigma$  extends to a continuous injective map $\partial\widetilde\sigma$ from the visual boundary $\partial\widetilde M$ of $\widetilde M$ to $\partial\Hyp^{n+1}$. By compactness of $\partial\widetilde M$,
	the extension of $\widetilde\sigma$  is a homeomorphism onto its image.
	
	Since any two $\rho$-equivariant embeddings $\widetilde\sigma_1,\widetilde\sigma_2:\widetilde M\to\Hyp^{n+1}$ are at bounded distance from  each other by cocompactness, the extension $\partial\widetilde\sigma$ does not depend on $\widetilde\sigma$, but only on the representation $\rho$. In conclusion, the image of $\partial\widetilde\sigma$ is a topological $(n-1)$-sphere $\Lambda_\rho$ in $\partial\Hyp^{n+1}$, called the \emph{limit set} of the representation $\rho$. See Figure \ref{fig:limit}.
\end{remark}

\begin{figure}[htbp]
	\centering
	\includegraphics[height=4.8cm]{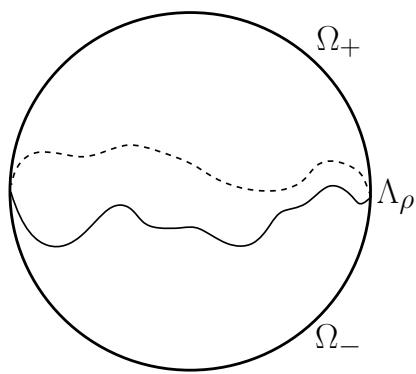} 
	%\captionsetup{labelformat=empty}
	
	\caption{A picture of the limit set $\Lambda_\rho$, which is a topological $(n-1)$-sphere  and disconnects $\partial\Hyp^{n+1}$ in two connected components $\Omega_+$ and $\Omega_-$, which are homeomorphic to $n$-discs.}\label{fig:limit}
\end{figure}

\begin{prop} \label{prop:action free prop disc}
	Let $M^n$ be a closed orientable manifold, $\rho:\pi_1(M)\to\Isom_0(\Hyp^{n+1})$ be a nearly-Fuchsian representation and $\Lambda_\rho$ be its limit set. Then {the action of $\rho$ extends to} a free and properly discontinous action on $\partial\Hyp^{n+1}\setminus\Lambda_\rho$, which is the disjoint union of two topological $n$-discs.
\end{prop}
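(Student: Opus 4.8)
The plan is to leverage the work already done in the preceding propositions, in particular the convex cocompactness established in Proposition \ref{prop:action free prop disc0} and the description of the limit set $\Lambda_\rho$ as a topological $(n-1)$-sphere that is the image of the boundary extension $\partial\widetilde\sigma$. The statement has two parts: first, that $\partial\Hyp^{n+1}\setminus\Lambda_\rho$ consists of two topological $n$-discs, and second, that $\rho$ acts freely and properly discontinuously on this complement.

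For the topological decomposition, I would argue as follows. Since $\Lambda_\rho$ is a topological $(n-1)$-sphere embedded in $\partial\Hyp^{n+1}\cong S^n$, by the generalized Jordan–Brouwer separation theorem its complement has exactly two connected components $\Omega_+$ and $\Omega_-$, each with $\Lambda_\rho$ as boundary. To see that these are discs rather than merely open sets, I would use the geometry of the nearly-Fuchsian picture: the convex region $\mathcal C$ from Proposition \ref{prop:action free prop disc0} is bounded by two embedded hypersurfaces $\widetilde\sigma_{t_\pm}$ (convex and concave), whose hyperbolic Gauss maps $G_{\widetilde\sigma_{t_\pm}}^\pm:\widetilde M\to\partial\Hyp^{n+1}$ are diffeomorphisms onto their images by Proposition \ref{prop:gauss maps diffeo onto image}. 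Concretely, the hyperbolic Gauss map $G_{\widetilde\sigma}^+$ (resp. $G_{\widetilde\sigma}^-$) gives an equivariant diffeomorphism from $\widetilde M\cong\R^n$ onto one of the two components, identifying $\Omega_\pm$ with $\R^n$ and hence proving each is a topological $n$-disc. This also shows $\Omega_+$ and $\Omega_-$ are each $\rho$-invariant, since the Gauss maps are $\rho$-equivariant.

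For the dynamical part, freeness and proper discontinuity of the action on $\Omega_+\sqcup\Omega_-$, I would transport the known good dynamics from $\Hyp^{n+1}$ (Proposition \ref{prop:action free prop disc0}) and from $\widetilde M$ to the boundary components via these equivariant diffeomorphisms. Since $G_{\widetilde\sigma}^\pm:\widetilde M\to\Omega_\pm$ is a $\rho$-equivariant diffeomorphism and the action of $\pi_1(M)$ on $\widetilde M$ by deck transformations is free and properly discontinuous (as $\widetilde M\to M$ is a covering with $M$ a manifold), the conjugated action of $\rho(\pi_1(M))$ on $\Omega_\pm$ inherits these properties directly. Taking the disjoint union over the two components completes the argument.

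The main obstacle I anticipate is making the identification of $\Omega_\pm$ with the image of the hyperbolic Gauss map fully rigorous — that is, verifying that $G_{\widetilde\sigma}^+$ is not only a diffeomorphism onto its image but that this image is \emph{exactly} one full connected component of $\partial\Hyp^{n+1}\setminus\Lambda_\rho$, rather than a proper open subset. This requires a surjectivity/closedness argument: one shows the image is open (local diffeomorphism) and that its closure meets $\Lambda_\rho$ precisely along the boundary, using completeness and the fact that $\partial\widetilde\sigma$ extends $G_{\widetilde\sigma}^+$ continuously to the visual boundary $\partial\widetilde M\cong S^{n-1}$ mapping onto $\Lambda_\rho$. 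Invariance of domain together with the compactness of $\partial\widetilde M$ should close this gap, but the careful matching of the Gromov boundary extension with the two topological discs is the delicate point that deserves the most attention.
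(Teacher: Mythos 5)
Your overall route is the paper's route: use Jordan--Brouwer on the topological sphere $\Lambda_\rho$, use Proposition \ref{prop:gauss maps diffeo onto image} to see that $G_{\widetilde\sigma}^\pm$ are equivariant diffeomorphisms of $\widetilde M\cong\R^n$ onto their images, and transport the free, properly discontinuous action of $\pi_1(M)$ on $\widetilde M$ to those images. The part you correctly flag as delicate, however, is exactly where the paper invests its geometric work, and your proposed way of closing it does not actually close it. You want to argue that each image is open and that its closure meets $\Lambda_\rho$ ``precisely along the boundary,'' so that it is clopen in $\partial\Hyp^{n+1}\setminus\Lambda_\rho$ and hence a full component. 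But the claim that the frontier of $G_{\widetilde\sigma}^+(\widetilde M)$ is contained in $\Lambda_\rho$ is equivalent to saying that $G_{\widetilde\sigma}^+$ extends continuously to the visual boundary $\partial\widetilde M$ by the map $\partial\widetilde\sigma$ --- and this is asserted, not proved. The continuous extension established in the paper is of $\widetilde\sigma$ itself, not of the hyperbolic Gauss maps; $G_{\widetilde\sigma}^+(p)$ is the endpoint of an entire geodesic ray issued from $\widetilde\sigma(p)$, so its convergence as $p\to\xi\in\partial\widetilde M$ requires a separate argument. The paper avoids this by proving two concrete inclusions instead: every $x\notin\Lambda_\rho$ is the endpoint of a geodesic orthogonal to $\widetilde\sigma(\widetilde M)$ (found by taking a horosphere at $x$ tangent to the properly embedded hypersurface, as in Proposition \ref{prop:action free prop disc0}), and no $x\in\Lambda_\rho$ is in the image of $G_{\widetilde\sigma}^\pm$ (using tangent $r$-caps and Remark \ref{rmk:convex side caps}). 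Neither of these geometric inputs appears in your proposal.

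There is a second, related gap: even granting that each of $G_{\widetilde\sigma}^+(\widetilde M)$ and $G_{\widetilde\sigma}^-(\widetilde M)$ is a full connected component of $\partial\Hyp^{n+1}\setminus\Lambda_\rho$, you have not excluded that they are the \emph{same} component, leaving the other component untouched by the group action. The paper resolves this automatically via the surjectivity statement $G_{\widetilde\sigma}^+(\widetilde M)\cup G_{\widetilde\sigma}^-(\widetilde M)=\partial\Hyp^{n+1}\setminus\Lambda_\rho$: since the union of two connected sets covers both components, and each set is connected, they must be the two distinct components. Without the horosphere surjectivity argument you have no way to make this identification, so you should either supply that argument or give an independent reason why $G_{\widetilde\sigma}^+(\widetilde M)\neq G_{\widetilde\sigma}^-(\widetilde M)$.
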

\begin{proof}
	{Since the action of $\pi_1(M)$ on $\widetilde M$ is free and properly discontinuous, and $G_{\widetilde\sigma}^\pm$ are diffeomorphisms onto their image by Proposition \ref{prop:gauss maps diffeo onto image}, it follows that the action of $\rho(\pi_1(M))$} is free and properly discontinuous on $G_{\widetilde\sigma}^+(\widetilde M)$ and $G_{\widetilde\sigma}^-(\widetilde M)$, which are topological discs in $\Hyp^{n+1}$ since $\widetilde M$ is diffeomorphic to $\R^n$. We claim that 
	{$$G_{\widetilde\sigma}^+(\widetilde M)\cup G_{\widetilde\sigma}^-(\widetilde M)=\partial\Hyp^{n+1}\setminus\Lambda_\rho~.$$} 
	Observe that, by the Jordan-Brouwer separation Theorem, the complement of $\Lambda_\rho$ has two connected components, hence the claim will also imply that $G_{\widetilde\sigma}^+(\widetilde M)$ and $G_{\widetilde\sigma}^-(\widetilde M)$ are disjoint because they are both connected.

	%\as{For an inclusion}, the same argument as the previous paragraph can be repeated, now using horospheres, to show that every $x$ in the complement of $\Lambda_\rho$ is the endpoint of some geodesic orthogonal to ${\widetilde\sigma}(\widetilde M)$. 
	%\as{{Hence $x$ is in the image of either $G_{\widetilde\sigma}^+$ or $G_{\widetilde\sigma}^-$.} This shows that $\partial\Hyp^{n+1}\setminus\Lambda_\rho\subseteq G_{\widetilde\sigma}^+(\widetilde M)\cup G_{\widetilde\sigma}^-(\widetilde M)$.}  

	In order to show that $\partial\Hyp^{n+1}\setminus\Lambda_\rho\subseteq G_{\widetilde\sigma}^+(\widetilde M)\cup G_{\widetilde\sigma}^-(\widetilde 
	M)$, one can repeat the same argument as Proposition \ref{prop:action free prop disc0}, now using horospheres, to see that every $x$ in the complement of $\Lambda_\rho$ is the endpoint of some geodesic orthogonal to ${\widetilde\sigma}(\widetilde M)$. See Figure \ref{fig:limit2}.
	
	%\sout{Conversely, let us show that  if $x\in \Lambda_\rho$, then $x$  cannot be in the image of $G_{\widetilde\sigma}^\pm$. Here we can assume that by cocompactness, the principal curvatures of ${\widetilde\sigma}$ are less than $\epsilon<1$ in absolute value. By Remark \ref{rmk:convex side caps}, the image of ${\widetilde\sigma}$ lies on the concave side of any tangent $d$-cap for some $d$.} 

	{It only remains to show the other inclusion. By continuity, it suffices to show that every $x\in\Lambda_\rho$ is not on the image of $G_{\widetilde\sigma}^\pm$. Observe that by cocompactness the principal curvatures of $\widetilde\sigma$ are bounded by some constant $\epsilon<1$ in absolute value. Now,} if $x\in \partial \Hyp^{n+1}$ is the endpoint of an orthogonal line $\ell$, then, for all $r$, one would be able to construct a $r$-cap tangent to $\ell \cap \widetilde \sigma(\widetilde M)$ such that $x$ lies in the convex side of the $r$-cap: since, by Remark \ref{rmk:convex side caps}, for some $r$ the image of $\widetilde \sigma$ lies in the concave side of the $r$-cap, $x$ cannot lie in $\partial \widetilde \sigma(\widetilde M)= \Lambda_\rho$.
	%\sout{It follows that $x$ cannot be the endpoint of any geodesic $\gamma$ orthogonal to ${\widetilde\sigma}$, because the boundary in $\partial\Hyp^{n+1}$ of the concave side of a $d$-cap orthogonal to $\gamma$ misses a neighbourhood of the corresponding endpoint of $\gamma$.} 
	See Figure \ref{fig:limit2} again.
\end{proof}

\begin{figure}[htbp]
	\centering
	\includegraphics[height=4.8cm]{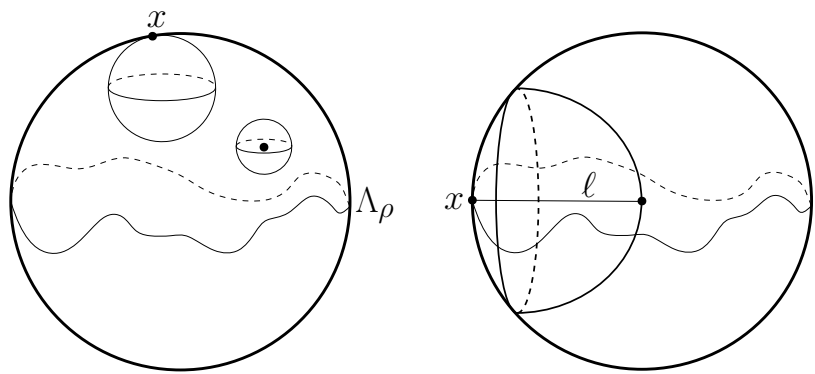} 
	%\captionsetup{labelformat=empty}
	
	\caption{The arguments in the proof of Proposition \ref{prop:action free prop disc}. On the left, since $\widetilde\sigma$ is proper and extends to $\Lambda_\rho$ in $\partial\Hyp^{n+1}$, from every point $x\notin\Lambda_\rho$ one can find a horosphere with point at infinity $x$ tangent to the image of $\widetilde\sigma$. The same argument works for an interior point $x$, using metric balls, which is the argument of Proposition \ref{prop:action free prop disc0}. On the right, a $r$-cap orthogonal to a geodesic $\ell$ with endpoint $x$. Since $\widetilde\sigma$ lies on the concave side of tangent $r$-caps for large $r$, $x$ cannot be in the image of $G_{\widetilde\sigma}^\pm$. The same argument is used in Lemma \ref{lemma:convex gauss map diffeo}, under the convexity assumption, in which case it suffices to use tangent hyperplanes instead of $r$-caps.}\label{fig:limit2}
\end{figure}

As a consequence of Proposition \ref{prop:action free prop disc0}, if $\rho:\pi_1(M)\to \Isom_0(\Hyp^{n+1})$ is a nearly-Fuchsian representation, then the quotient $\faktor{\Hyp^{n+1}}{\rho(\pi_1(M))}$ is a complete hyperbolic manifold {diffeomorphic to $M\times\R$}. This motivates the following definition.

\begin{Def}
	A hyperbolic manifold of dimension $n+1$ is \emph{nearly-Fuchsian}  if it is isometric to the quotient $\faktor{\Hyp^{n+1}}{\rho(\pi_1(M))}$, for $M$ a closed orientable $n$-manifold and $\rho:\pi_1(M)\to\Isom_0(\Hyp^{n+1})$ a nearly-Fuchsian representation.  
\end{Def}

\begin{remark}\label{rmk:embedding in the quotient}

	If $\widetilde\sigma:\widetilde M\to\Hyp^{n+1}$ is a $\rho$-equivariant embedding with small principal curvatures, then $\widetilde \sigma$ descends to the quotient defining a smooth injective map $\sigma\colon M \to  \faktor{\Hyp^{n+1}}{\rho(\pi_1(M))}$. Moreover, since $\widetilde\sigma$ is a $\rho$-equivariant homeomorphism with its image, $\sigma$ is a homeomorphism with its image as well {hence its image is an embedded hypersurface}.
\end{remark}

We conclude this section with a final definition which appears in the statement of Theorem \ref{thm:second char ham}. As a preliminary remark, recall from Propositions \ref{prop:gauss maps diffeo onto image} and \ref{prop:action free prop disc}  that if $\widetilde\sigma$ is a $\rho$-equivariant embedding with small principal curvatures, then each of the Gauss maps $G_{\widetilde\sigma}^\pm$ of ${\widetilde\sigma}$ is a diffeomorphism between $\widetilde M$ and a connected component of $\partial\Hyp^{n+1}\setminus \Lambda_\rho$.  Let us denote these connected components by $\Omega_\pm$ {as in Figure \ref{fig:limit}}, so that: 
$$\partial\Hyp^{n+1}\setminus \Lambda_\rho=\Omega_+\sqcup \Omega_-\qquad G_{\widetilde\sigma}^+(\widetilde M)=\Omega_+\qquad G_{\widetilde\sigma}^-(\widetilde M)=\Omega_-~.$$
{with the representation $\rho$ inducing an action of $\pi_1(M)$ on both $\Omega_+$ and $\Omega_-$.  Recalling the identification 
	$$\G{n+1}\cong \partial\Hyp^{n+1}\times\partial\Hyp^{n+1}\setminus \Delta~,$$
	given by 
	$$\gamma\mapsto\left(\lim_{t\to+\infty}\gamma(t),\lim_{t\to-\infty}\gamma(t)\right)~,$$
	{the following definition is well-posed}.
	
	\begin{Def}\label{Def quotient Grho}
		Given a closed oriented $n$-manifold $M$ and a nearly-Fuchsian representation $\rho:\pi_1(M)\to\Isom_0(\Hyp^{n+1})$, we define
		$\GGG_\rho$ as the quotient:
		$$\faktor{\{\gamma\in \G{n+1}\,|\,\lim_{t\to+\infty}\gamma(t)\in\Omega_+\text{ or }\lim_{t\to-\infty}\gamma(t)\in\Omega_-\}}{\rho(\pi_1(M))}~.$$
		%of $\Omega_+\times\Omega_-$ by the action of $\pi_1(M)$ induced by $\rho$. 
	\end{Def}

	Observe that, since the  action of $\rho(\pi_1(M))$ on $\partial\Hyp^{n+1}$ is free and properly discontinuous on both $\Omega_+$ and $\Omega_-$, it is also free and properly discontinuous on the region of $\G{n+1}$ consisting of geodesics having either final point in $\Omega_+$ or initial point in $\Omega_-$. Moreover, such region is simply connected, because it is the union of $\Omega_+\times \partial\Hyp^{n+1}\setminus \Delta$ and $\partial\Hyp^{n+1}\times\Omega_-\setminus \Delta$, both of which are simply connected (they retract on $\Omega_+\times\{\star\}$ and $\{\star\}\times\Omega_-$ respectively) and whose intersection $\Omega_+\times\Omega_-$ is again simply connected. 
	We conclude that $\GGG_\rho$ is a $2n$-manifold {whose fundamental group is isomorphic to $\pi_1(M)$}, and is endowed with a natural para-K\"ahler structure induced from that of  $\G{n+1}$ (which is preserved by the action of $\Isom_0(\Hyp^{n+1})$). 
	
	It is important to stress once more that $\Omega_+$ and $\Omega_-$ only depend on $\rho$ and not on ${\widetilde\sigma}$. We made here a choice in the labelling of $\Omega_+$ and $\Omega_-$ which only depends on the orientation of $M$. The other choice of labelling would result in a ``twin'' region, which differs from $\GGG_\rho$ by switching the roles of initial and final endpoints.
	
	A consequence of this construction, which is implicit in the statement of Theorem \ref{thm:second char ham}, is the following:
	
	\begin{cor} \label{cor:embedding in Grho}
		Let $M^n$ be a closed orientable manifold, $\rho:\pi_1(M)\to\Isom_0(\Hyp^{n+1})$ be a nearly-Fuchsian representation and ${\widetilde\sigma}:\widetilde M\to\Hyp^{n+1}$ be a $\rho$-equivariant embedding  of small principal curvatures. For a suitable choice of an orientation on $M$,  the Gauss map of $\widetilde \sigma$ takes values in $\Omega_+\times\Omega_-$, and induces an embedding of $M$ in $\GGG_\rho$ 
	\end{cor}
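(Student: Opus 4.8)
The plan is to assemble the statement from the injectivity properties of the Gauss map together with the description of the limit set, checking that everything descends correctly to the quotient $\GGG_\rho$. First I would record two preliminary facts about $\widetilde\sigma$. On the one hand, since $\widetilde\sigma$ is $\rho$-equivariant and each $\rho(\alpha)$ is an orientation-preserving isometry of $\Hyp^{n+1}$, it maps the oriented normal geodesic of $\widetilde\sigma$ at $\widetilde\sigma(p)$ to the oriented normal geodesic at $\widetilde\sigma(\alpha(p))=\rho(\alpha)(\widetilde\sigma(p))$; hence the Gauss map is itself $\rho$-equivariant, i.e. $G_{\widetilde\sigma}\circ\alpha=\rho(\alpha)\circ G_{\widetilde\sigma}$ for all $\alpha\in\pi_1(M)$. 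On the other hand, the first fundamental form $\I$ of $\widetilde\sigma$ is $\pi_1(M)$-invariant, so it descends to a Riemannian metric on the closed manifold $M$, which is therefore complete; thus $\widetilde\sigma$ is a complete immersion with small principal curvatures, and Proposition \ref{prop:gauss maps diffeo onto image} applies.

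From Proposition \ref{prop:gauss maps diffeo onto image} I obtain that $G_{\widetilde\sigma}$ is an embedding and that the two hyperbolic Gauss maps $G_{\widetilde\sigma}^\pm$ are diffeomorphisms onto their images, which (by the proof of Proposition \ref{prop:action free prop disc}) are precisely the two connected components of $\partial\Hyp^{n+1}\setminus\Lambda_\rho$. The labelling of these components as $\Omega_+$ and $\Omega_-$ was fixed only up to the orientation of $M$; reversing the orientation of $M$ swaps $G_{\widetilde\sigma}^+$ with $G_{\widetilde\sigma}^-$ and hence interchanges the two components. Choosing the orientation so that $G_{\widetilde\sigma}^+(\widetilde M)=\Omega_+$ and $G_{\widetilde\sigma}^-(\widetilde M)=\Omega_-$, and recalling that under the identification $\G{n+1}\cong\partial\Hyp^{n+1}\times\partial\Hyp^{n+1}\setminus\Delta$ the Gauss map corresponds to $(G_{\widetilde\sigma}^+,G_{\widetilde\sigma}^-)$, I conclude that $G_{\widetilde\sigma}$ takes values in $\Omega_+\times\Omega_-$. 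In particular its image lies inside the $\rho$-invariant region of Definition \ref{Def quotient Grho}, since every geodesic in $\Omega_+\times\Omega_-$ has final endpoint in $\Omega_+$.

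It remains to pass to the quotient. Since $G_{\widetilde\sigma}$ is $\rho$-equivariant with image in the region where $\rho(\pi_1(M))$ acts freely and properly discontinuously (Propositions \ref{prop:action free prop disc0} and \ref{prop:action free prop disc}), it descends to a smooth map $\hat G\colon M\to\GGG_\rho$. This map is injective: if $\hat G([p])=\hat G([q])$ then $G_{\widetilde\sigma}(p)=\rho(\alpha)G_{\widetilde\sigma}(q)=G_{\widetilde\sigma}(\alpha q)$ for some $\alpha\in\pi_1(M)$, so $p=\alpha q$ by injectivity of $G_{\widetilde\sigma}$, whence $[p]=[q]$; it is an immersion because $G_{\widetilde\sigma}$ is; and being a continuous injection from the compact manifold $M$ it is a homeomorphism onto its image, exactly as in Remark \ref{rmk:embedding in the quotient}. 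Hence $\hat G$ is an embedding. I do not expect any deep obstacle here, as the content is essentially careful bookkeeping. The one point requiring genuine attention is matching the orientation convention on $M$ with the labelling of $\Omega_\pm$ --- making sure the chosen orientation simultaneously produces the correct normal field (and hence Gauss map) and sends the final endpoint into $\Omega_+$ rather than $\Omega_-$ --- which is the step I would state most carefully.
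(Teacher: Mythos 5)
Your proof is correct and follows essentially the same route as the paper: the key inputs are Proposition \ref{prop:gauss maps diffeo onto image} (via completeness from cocompactness) and Proposition \ref{prop:action free prop disc} for identifying the images of $G_{\widetilde\sigma}^\pm$ with $\Omega_\pm$ after fixing the orientation, and the descent-to-quotient argument of Remark \ref{rmk:embedding in the quotient} for the embedding statement. The paper's own proof is just a one-line reduction to these same ingredients, so your write-up is simply a more explicit version of the intended argument.
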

	\begin{proof}
		The only part of the statement which is left to prove is that the induced immersion of $M$ in $\GGG_\rho$ is an embedding, but the proof follows with the same argument as in Remark \ref{rmk:embedding in the quotient}. 
		%\sout{But injectivity follows exactly as in the argument of Remark \ref{rmk:embedding in the quotient}, since the fundamental groups of $M$ and of $\GGG_\rho$ are isomorphic and $G_{\widetilde\sigma}$ is equivariant with respect to such group isomorphism. {Since $M$ is compact, $G_{\widetilde\sigma}$ is a homeomorphism onto its image and thus an embedding.}} 
	\end{proof}

\chapter{Local, global and equivariant integrability of immersions into \\ {$\G{n+1}$}}
\label{sec:local int} 

In this chapter we introduce a connection on the principal $\R$-bundle $\mathrm{p}$ and relate its curvature with the symplectic geometry of the space of geodesics. 

As a consequence, we characterize, in terms of the Lagrangian condition, the immersions in the space of geodesics which can be locally seen as Gauss maps of immersions into $\Hyp^{n+1}$. After that, we focus on the problem of global integrability for immersions into $\G{n+1}$, looking at the obstructions and then focusing on Riemannian immersions. The final sections of this chapter are devoted to characterizing equivariantly integrable Riemannian immersions into $\G{n+1}$ in terms of their extrinsic geometry, namely in terms of their Maslov class.

\section{Connection on the bundle $T^1\Hyp^n\to \G{n}$}
Recall that a \emph{connection} on a principal $G$-bundle $P$ is a $\mathfrak g$-valued 1-form $\omega$ on $P$ such that:
\begin{itemize}
	\item $\mathrm{Ad}_g(R_g^*\omega)=\omega$;
	\item for every $\xi\in G$, $\omega(X_\xi)=\xi$ where $X_\xi$ is the vector field associated to $\xi$ by differentiating the action of $G$.
\end{itemize}
In the special case $G=\R$ which we consider in this paper, {$\omega$ is a real-valued 1-form and }the first property simply means that $\omega$ is invariant under the action of $\R$.

Let us now introduce the connection that we will use in the concrete.
\begin{Def}\label{Def connection form}
	We define the connection form on $\mathrm{p}:T^1\Hyp^{n}\to \G{n}$ as 
	$$\omega(X)=\gs{n}(X,\chi_{(x,v)})$$
	for $X\in T_{(x,v)}T^1\Hyp^{n}$, where $\chi$ is the infinitesimal generator of the geodesic flow.
\end{Def}

The 1-form $\omega$ indeed satisfies the two properties of a connection: the invariance  under the $\R$-action follows immediately from the invariance of $\gs{n}$ (Lemma \ref{lemma:geodflow isometric}) and of $\chi$ (Equation \eqref{eq:diff geoflow3}); the second property follows from Equation \eqref{eq:generator geoflow is unit}, namely $\omega(\chi_{(x,v)})=\gs{n}(\chi_{(x,v)},\chi_{(x,v)})=1$.

The connection $\omega$ is defined in such a way that the associated \emph{Ehresmann connection}, which we recall being a\begin{flushleft}
	\begin{flushleft}
		
	\end{flushleft}
\end{flushleft} distribution of $T^1\Hyp^n$ in direct sum with the tangent space of the fibers of $\mathrm{p}$, is simply the distribution orthogonal to $\chi$. Indeed the Ehresmann connection associated to $\omega$ is the kernel of $\omega$. The subspaces in the Ehresmann connections defined by the kernel of $\omega$ are usually called \emph{horizontal}; we will avoid this term here since it might be confused with the horizontal distribution $\HH$ with respect to the other bundle structure of $T^1\Hyp^n$, namely the unit tangent bundle $\pi:T^1\Hyp^n\to\Hyp^n$. 

Now, a connection on a principal $G$-bundle is \emph{flat} if the Ehresmann distribution is integrable, namely, every point admits a local section tangent to the kernel of $\omega$. We will simply refer to such a section as a \emph{flat} section. The bundle is a \emph{trivial flat} principal $G$-bundle if it has a global flat section.

Having introduced the necessary language, the following statement is a direct reformulation of Proposition \ref{prop:gauss immersion}.

\begin{Prop}\label{prop pullback flat}
	Given an oriented manifold $M^n$ and an immersion $\sigma:M\to\Hyp^{n+1}$, the $G_\sigma$-pull-back of $\mathrm{p}:T^1\Hyp^{n+1}\to\G{n+1}$ is a trivial flat bundle.
\end{Prop}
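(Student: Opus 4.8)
The plan is to produce the required global flat section directly, by showing that the lift $\zeta_\sigma$ of Definition \ref{Def:lift and gauss} is one. First I would unwind the bundle-theoretic vocabulary: the total space of the pull-back bundle $G_\sigma^*\mathrm p$ is $\{(x,u)\in M\times T^1\Hyp^{n+1}\mid \mathrm p(u)=G_\sigma(x)\}$, so that a section of $G_\sigma^*\mathrm p$ is the same datum as a lift $\tilde s\colon M\to T^1\Hyp^{n+1}$ of $G_\sigma$, i.e. a map satisfying $\mathrm p\circ\tilde s=G_\sigma$. Under the pull-back connection $G_\sigma^*\omega$, such a section is flat precisely when $d\tilde s(TM)\subset\ker\omega$, equivalently when $\tilde s^*\omega=0$.

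Next I would observe that $\zeta_\sigma$ is precisely such a lift. By the very definition of the Gauss map one has $G_\sigma=\mathrm p\circ\zeta_\sigma$ (Definition \ref{Def:lift and gauss}), so $\zeta_\sigma$ defines a global section of $G_\sigma^*\mathrm p$. Since a principal $\R$-bundle admitting a global section is trivial, this already accounts for the triviality part of the statement, and it only remains to verify that this section is flat.

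For flatness, the key input is Proposition \ref{prop:gauss immersion}: the lift $\zeta_\sigma$ is orthogonal to the fibers of $\mathrm p$, meaning $d\zeta_\sigma(W)\in\chi^\perp$ for every $W\in T_pM$. Because the connection form is defined by $\omega(X)=\gs{n+1}(X,\chi)$ (Definition \ref{Def connection form}), one has $\ker\omega=\chi^\perp$, whence $\zeta_\sigma^*\omega=0$ and $\zeta_\sigma$ is tangent to the Ehresmann distribution. Thus $G_\sigma^*\mathrm p$ admits a global flat section and is a trivial flat bundle. There is no genuine obstacle here: the entire content is the identification of ``orthogonal to the fibers'' with ``tangent to $\ker\omega$'', which is immediate from the definition of $\omega$ as the contraction of $\gs{n+1}$ with $\chi$ — this is exactly why the proposition is a reformulation of Proposition \ref{prop:gauss immersion}.
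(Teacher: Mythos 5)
Your proposal is correct and is essentially identical to the paper's proof: the paper also observes that $\zeta_\sigma$ is orthogonal to $\chi$ by Proposition \ref{prop:gauss immersion} and therefore induces a global flat section of the pull-back bundle via $G_\sigma=\mathrm p\circ\zeta_\sigma$. You merely spell out more explicitly the identification of sections of $G_\sigma^*\mathrm p$ with lifts of $G_\sigma$ and of $\ker\omega$ with $\chi^\perp$, which the paper leaves implicit.
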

\begin{proof}
	The lift $\zeta_\sigma:M\to T^1\Hyp^{n+1}$ is orthogonal to $\chi$ by Proposition \ref{prop:gauss immersion} and therefore induces a global flat section of the pull-back bundle {via $G_\sigma= \mathrm p \circ \zeta_\sigma$}.
\end{proof}

\section{Curvature of the connection}
The purpose of this section is to compute the curvature of the connection $\omega$, which is simply the $\R$-valued 2-form $d\omega$. (The general formula for the curvature of a connection on a principal $G$-bundle is $d\omega+ \frac1 2 \omega\wedge\omega$, but the last term vanishes in our case $G=\R$.)

\begin{Remark}\label{rmk:curvature and bracket}
	It is known that the curvature of $\omega$ is the obstruction to the existence of local flat sections. In particular in the next proposition we will use extensively that, given $X,Y\in \chi_{(x,v)}^\perp\subset T_{(x,v)}T^1\Hyp^{n+1}$, if there exists an embedding {$ \zeta:M\to T^1\Hyp^{n+1}$ such that $\zeta(p)=(x,v)$, that $X,Y\in d\zeta(T_p M)$ and that $d\zeta(T_pM ) \subset \chi^\perp$,} then $d\omega(X,Y)=0$.

	This can be easily seen by a direct argument: if we now denote by $X$ and $Y$ some extensions tangential to the image of $\zeta$, one has 
	$$
	d\omega(X,Y)=\partial_X(\omega(Y))-\partial_Y(\omega(X))-\omega([X,Y])=0~,
	$$
	since $\omega(X)=\omega(Y)=0$ by the hypothesis that $X$ and $Y$ are orthogonal to $\chi$, whence $\omega(X)=\gs{n}(X,\chi)=0$, and moreover $\omega([X,Y])=0$ since $[X,Y]$ remains tangential to the image of $\sigma$. 
	
	The argument can in fact be reversed to see that $d\omega$ is exactly the obstruction to the existence of a flat section, by the Frobenius theorem. 
\end{Remark}

The following proposition represents an essential step to relate the curvature of $\omega$ and the symplectic geometry of the space of geodesics.

\begin{Prop}\label{prop:identity curvature form}
	The following identity holds for the connection form $\omega$ on the principal $\R$-bundle $\mathrm{p}:T^1\Hyp^{n+1}\to\G{n+1}$ and the symplectic form $\Omega$ of $\G{n+1}$:
	$$d\omega=\mathrm{p}^*\Omega~.$$
	
\end{Prop}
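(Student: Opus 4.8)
The plan is to verify the identity pointwise at an arbitrary $(x,v)\in T^1\Hyp^{n+1}$, by evaluating both $2$-forms on the basis of $T_{(x,v)}T^1\Hyp^{n+1}$ given by $\{w_1^\HH,\dots,w_n^\HH,\chi_{(x,v)},w_1^\V,\dots,w_n^\V\}$, where $w_1,\dots,w_n$ is an orthonormal basis of $v^\perp\subset T_x\Hyp^{n+1}$ (in the spirit of the decomposition \eqref{eq:direct sum}). Since a $2$-form is determined by its values on such a basis, it suffices to match the two forms on all pairs of basis vectors.

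First I would dispose of the fiber direction $\chi$. On one side, $\mathrm p^*\Omega(\chi_{(x,v)},\cdot)=\Omega(d\mathrm p(\chi_{(x,v)}),\cdot)=0$ because $\chi$ generates the kernel of $d\mathrm p$. On the other side, by Cartan's formula $\iota_{\chi}d\omega=\mathcal L_\chi\omega-d(\iota_\chi\omega)$: here $\iota_\chi\omega=\omega(\chi)=\gs{n+1}(\chi,\chi)=1$ is constant, so $d(\iota_\chi\omega)=0$, while $\mathcal L_\chi\omega=0$ since $\omega$ is invariant under the geodesic flow (Lemma \ref{lemma:geodflow isometric}, together with the invariance of $\chi$ recorded in \eqref{eq:diff geoflow3}). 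Hence $\iota_\chi d\omega=0$ as well, and both forms vanish on every pair involving $\chi$. This reduces the problem to the $2n$-dimensional subspace $\chi^\perp=\HP_{(x,v)}\oplus\VP_{(x,v)}$.

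Next I would compute $\mathrm p^*\Omega$ on $\chi^\perp$. Using that $\GG$ and $\JJ$ are the push-forwards of $\gs{n+1}|_{\chi^\perp}$ and of $\mathrm J$ by the isomorphism $d\mathrm p|_{\chi^\perp}$, one gets $\mathrm p^*\Omega(X,Y)=\GG(d\mathrm p X,\JJ d\mathrm p Y)=\gs{n+1}(X,\mathrm J Y)$ for $X,Y\in\chi^\perp$. Evaluating on the lifts, formula \eqref{eq:defiJ2} together with Definition \ref{Def:parasasaki} shows that the only nonzero entries are the mixed ones $\mathrm p^*\Omega(w_i^\HH,w_j^\V)=\gs{n+1}(w_i^\HH,w_j^\HH)=\langle w_i,w_j\rangle$, while the purely horizontal and purely vertical pairings vanish.

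Finally, for $d\omega$ on $\chi^\perp$ I would use an ambient primitive of $\omega$ in the hyperboloid model $T^1\Hyp^{n+1}\subset\R^{n+1,1}\times\R^{n+1,1}$. One checks on the basis above, using $\chi_{(x,v)}=(v,x)$ (Lemma \ref{lemma:generator geoflow hor lift}) and $w^\HH=(w,0)$, $w^\V=(0,w)$, that $\omega$ is the restriction to $T^1\Hyp^{n+1}$ of the globally defined $1$-form $\alpha_{(x,v)}(\dot x,\dot v)=\langle v,\dot x\rangle$. Since restriction commutes with $d$, the curvature $d\omega$ is the restriction of $d\alpha$, which in ambient coordinates reads $d\alpha(X,Y)=\langle \dot v_X,\dot x_Y\rangle-\langle \dot v_Y,\dot x_X\rangle$. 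Evaluating on the lifts once more leaves only mixed terms $d\omega(w_i^\HH,w_j^\V)=\pm\langle w_i,w_j\rangle$, the purely horizontal/vertical ones vanishing, and comparison with the previous step yields the claim. Conceptually, the vanishing of the horizontal-horizontal, vertical-vertical and off-diagonal mixed terms also follows from Remark \ref{rmk:curvature and bracket} applied to the totally geodesic, spherical and cylindrical examples of Subsection \ref{subsec:examples}, which realize these planes as tangent to lifts orthogonal to $\chi$, hence as flat sections; the genuinely nonzero diagonal entry, which carries the curvature, is however \emph{not} detected by such flat sections (the horosphere example of \ref{subsec:examples} only produces a symmetry relation between the mixed entries), and it is precisely there that the explicit computation above is indispensable. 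The main care in the argument goes into fixing once and for all the sign and normalization conventions for $\JJ$, $\Omega$ and $\chi$, so that the two matrices of mixed entries agree on the nose.
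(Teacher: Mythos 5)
Your proposal is correct in outline and takes a genuinely different route from the paper's proof in both halves of the argument. For the fibre direction, the paper does not invoke Cartan's formula: it extends a vector $X\in\chi^\perp$ along a two-parameter family $f(s,t)=\varphi_t(c(s))$ and expands $d\omega(X,\chi)$ term by term; your $\iota_\chi d\omega=\mathcal L_\chi\omega-d(\iota_\chi\omega)=0$ is a tidier packaging of the same two facts ($\omega(\chi)\equiv 1$ and flow-invariance of $\omega$). For the restriction to $\chi^\perp$, the paper argues entry by entry: the horizontal--horizontal, vertical--vertical and off-diagonal mixed entries are killed by exhibiting the examples of Subsection \ref{subsec:examples} (totally geodesic hyperplanes, tangent spheres, normal bundles of geodesics) as flat sections --- exactly as you observe --- and the diagonal entry $d\omega(w^\HH,w^\V)$ is computed by reducing to $n=1$, extending $w^\HH,w^\V$ via the Lorentzian cross product, and evaluating $-\omega([X,Y])$. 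Your ambient primitive $\alpha_{(x,v)}(\dot x,\dot v)=\langle v,\dot x\rangle$ --- the tautological one-form under the metric identification of $T\Hyp^{n+1}$ with $T^*\Hyp^{n+1}$ --- is a legitimate and more uniform alternative: one checks on $\{w^\HH,w^\V,\chi\}$ that it restricts to $\omega$, and then every entry of $d\omega|_{\chi^\perp}$ drops out of the single formula $d\alpha(X,Y)=\langle\dot v_X,\dot x_Y\rangle-\langle\dot v_Y,\dot x_X\rangle$, with no need for the geometric examples or the bracket computation.

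The one place where you must not hedge is the sign of the diagonal mixed entry, because that sign is the entire content of the proposition: it is what makes the connection non-flat. Writing $d\omega(w_i^\HH,w_j^\V)=\pm\langle w_i,w_j\rangle$ and deferring to ``fixing conventions'' is not acceptable, since your primitive gives an unambiguous answer: $d\alpha(w^\HH,w^\V)=-\langle w,w\rangle$, whereas $\mathrm p^*\Omega(w^\HH,w^\V)=\gs{n+1}(w^\HH,{\mathrm J}w^\V)=+\langle w,w\rangle$, so your computation, carried out honestly with the conventions as set up in the paper, yields $d\omega=-\mathrm p^*\Omega$. This is not a defect of your method. The paper's own treatment of this entry obtains $[X,Y]=-\chi$ from the formula $[X,Y]=Jac_X Y-Jac_Y X$, which is the reverse of the standard $[X,Y]=\partial_X Y-\partial_Y X$; with the standard convention one finds $[X,Y]=+\chi$ and hence $d\omega(w^\HH,w^\V)=-\omega(\chi)=-1$, in agreement with your primitive. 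You must therefore either locate a compensating sign in the definitions of $\omega$, $\Omega$, $\JJ$ and $\chi$ (I do not see one) or record that the identity holds with these conventions only up to an overall sign. The sign is harmless for every downstream application, which uses only the vanishing of $d\omega$ and of $G^*\Omega$, but a proof of the identity as stated has to resolve it explicitly rather than by fiat.
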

\begin{proof}
	We shall divide the proof in several steps. 
	
	First, let us show that $d\omega$ is the pull-back of some 2-form on $\G{n+1}$. Since $d\omega$ is obviously invariant under the geodesic flow, we only need to show that $d\omega(X,\chi_{(x,v)})=0$ for all $X\in T_{(x,v)}T^1\Hyp^{n+1}$. Clearly, it suffices to check this for $X\in \chi^\perp$. To apply the exterior derivative formula for $d\omega$, we consider $\chi$ as a globally defined vector field and we shall extend $X$ {around} $(x,v)$. For this purpose, take a curve $c:(-\epsilon,\epsilon)\to T^1\Hyp^{n+1}$ such that $c'(0)=X$ and $c'(s)$ is tangent to $\chi^\perp$ for every $s$. Then define the map $f(s,t)=\varphi_t(c(s))$ and observe that $\chi=\partial_t f$. We thus set $X=\partial_s f$, which is the desired extension {along a two-dimensional submanifold}. Then we have:
	%\appunto{\as{allora va messo $\partial_X$ anche qui e piÃ¹ sotto}}
	$$d\omega(X,\chi)=\partial_X(\omega(\chi))-\partial_\chi(\omega(X))-\omega[X,\chi]=0~,$$
	where we have used that $\omega(\chi)\equiv 1$, that $\omega(X)=0$ along the curves $t\mapsto \varphi_t(x,v)$ (since the curve $c$ is taken to be in the distribution $\chi^\perp$ and the geodesic flow preserves both $\chi$ and its orthogonal complement), and finally that $[X,\chi]=0$ since {$\chi=\partial_t f$ and $X=\partial_s f$ are coordinate vector fields} for a submanifold in neighborhood of $(x,v)$. 
	
	Having proved this, it is now sufficient to show that $d\omega$ and $\mathrm{p}^*\Omega$ agree when restricted to $\chi^\perp$. Recall that $\Omega$ is defined as $\GG(\cdot,\JJ\cdot)$, where $\GG$ and $\JJ$ are the push-forward to $T_{\mathrm{p}(x,v)}\G{n+1}$, by means of the differential of $\mathrm{p}$, of the metric $\gs{n+1}$ and of the para-complex structure $\mathrm J$ on $\chi^\perp$. Thus we must equivalently  show that 
	\begin{equation}\label{eq:sufficient}
		d\omega(X,Y)=\gs{n+1}(X,\mathrm J Y)\qquad\text{for all }X,Y\in \chi_{(x,v)}^\perp~.
	\end{equation}
	
	To see this, take an orthonormal basis $\{w_1,\ldots,w_n\}$ for $v^\perp\subset T_x\Hyp^{n+1}$, and observe that $\{w_1^\HH,\ldots,w_n^\HH,w_1^\V,\ldots,w_n^\V\}$ is a $\gs{n+1}$-orthonormal basis of $\chi^\perp$. It is sufficient to check \eqref{eq:sufficient} for $X,Y$ {distinct elements of} this basis. We distinguish several cases. 
	
	\begin{itemize}
		\item First, consider the case $X=w_i^\HH$ and $Y=w_j^\HH$, for $i\neq j$. Then $\gs{n+1}(X,{\mathrm J}Y)=\gs{n+1}(w_i^\HH,w_j^\V)=0$ by Definition \ref{Def:parasasaki}. On the other hand, by Example \ref{ex:totally geodesic}, if $\sigma$ is the inclusion of the totally geodesic hyperplane orthogonal to $v$ at $x$, then its lift $\zeta_\sigma$ is a submanifold in $T^1\Hyp^{n+1}$ orthogonal to $\chi$ at every point and tangent to $X$ and $Y$. Then $d\omega(X,Y)=0$ by Remark \ref{rmk:curvature and bracket}.
		
		\item Second, consider $X=w_i^\V$ and $Y=w_j^\V$, for $i\neq j$. Then again $\gs{n+1}(X,{\mathrm J}Y)=\gs{n+1}(w_i^\V,w_j^\HH)=0$. Here we can apply Example \ref{ex:spheres} instead, and see that there is a $n$-dimensional sphere in $\VP_{(x,v)}$ orthogonal to the fibers of $\mathrm{p}$ and tangent to $X$ and $Y$, whence $d\omega(X,Y)=0$ by Remark \ref{rmk:curvature and bracket}.
		
		\item Third, consider $X=w_i^\HH$ and $Y=w_j^\V$, for $i\neq j$. Then $\gs{n+1}(X,{\mathrm J}Y)=\gs{n+1}(w_i^\HH,w_j^\HH)=0$ since $w_i$ and $w_j$ are orthogonal. Let us now apply Example \ref{ex:mixed}, for instance by taking $Q$ the geodesic going through $x$ with speed $w_i$. The normal bundle $\mathrm N^1 Q$ is a submanifold orthogonal to the fibers of $\mathrm{p}$ and tangent to $w_i^\HH$ and $w_j^\V$. So $d\omega(X,Y)=0$ by the usual argument.
		
		\item Finally, we have to deal with the case $X=w_i^\HH$ and $Y=w_i^\V$. Here $\gs{n+1}(X,{\mathrm J}Y)=\gs{n+1}(w_i^\HH,w_i^\HH)=1$. For this computation, we may assume $n=1$, up to restricting to the totally geodesic 2-plane spanned by $v$ and $w$, which is a copy of $\Hyp^2$. Hence we will simply denote $w_i=w$, and moreover we can assume (up to changing the sign) that $w=x\boxtimes v$, where $\boxtimes$ denotes the Lorentzian cross product in $\R^{2,1}$. In other words, $(x,v,w)$ forms an oriented orthonormal triple in $\R^{2,1}$. 
		
		Now, let us extend $X$ and $Y$ to globally defined vector fields on $T^1\Hyp^2$, by means of the assignment $(x,v)\mapsto (x\boxtimes v)^\HH$ and $(x,v)\mapsto (x\boxtimes v)^\V$. By this definition, $X$ and $Y$ are orthogonal to $\chi$; we claim that $[X,Y]=-\chi$. This will conclude the proof, since
		\begin{equation*}
			d\omega(X,Y)=\partial_X(\omega(Y))-\partial_Y(\omega(X))-\omega[X,Y]=-\omega[X,Y]=\gs{2}(\chi,\chi)=1~.
		\end{equation*}
		For the claim about the Lie bracket, let us use the hyperboloid model. Then $X=(x\boxtimes v,0)$ and $Y=(0,x\boxtimes v)$. We can consider $X$ and $Y$ as globally defined (by the same expressions) in the ambient space $\R^{2,1}\times\R^{2,1}$, so as to compute the Lie bracket in $\R^{2,1}\times\R^{2,1}$, which will remain tangential to $T^1\Hyp^2$ since $T^1\Hyp^2$ is a submanifold. Using the formula $[X,Y]=Jac_XY-Jac_YX$, where $Jac_X$ denotes the Jacobian of the vector field thought as a map from $\R^3$ to $\R^3$, we obtain
		$$[X,Y]=(x\boxtimes(x\boxtimes v),0)-(0,(x\boxtimes v)\boxtimes v)=-(v,x)=-\chi_{(x,v)}$$
		by the standard properties of the cross-product.
	\end{itemize}
	In summary, we have shown that $d\omega$ and $\mathrm{p}^*\Omega$ coincide on the basis $\{\chi,w_1^\HH,\ldots,w_n^\HH,w_1^\V,\ldots,w_n^\V\}$ of $T_{(x,v)}T^1\Hyp^{n+1}$, and therefore the desired identity holds.
\end{proof}

{We get as an immediate consequence the closedness of the fundamental form $\Omega$, a fact whose proof has been deferred from Section \ref{sec:parakahler metric GG}.
	\begin{Cor}\label{cor:omega closed}
		The fundamental form $\Omega=\GG(\cdot,\JJ\cdot)$ is closed.
	\end{Cor}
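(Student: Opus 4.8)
The plan is to deduce the closedness of $\Omega$ directly from the identity $d\omega=\mathrm{p}^*\Omega$ established in Proposition \ref{prop:identity curvature form}, using the fact that $\mathrm{p}\colon T^1\Hyp^{n+1}\to \G{n+1}$ is a submersion. The essential observation is that the pull-back of forms by a submersion is injective on the level of differential forms, so a vanishing-after-pull-back argument will suffice.

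First I would apply the exterior derivative to both sides of the identity $d\omega=\mathrm p^*\Omega$. On the left-hand side we get $d(d\omega)=0$ automatically, since $d^2=0$ for any differential form. On the right-hand side, using that the exterior derivative commutes with pull-back, we obtain $d(\mathrm p^*\Omega)=\mathrm p^*(d\Omega)$. Combining these two computations yields
\[
\mathrm p^*(d\Omega)=0~.
\]
At this point the only remaining task is to conclude that $d\Omega=0$ from the vanishing of its pull-back.

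The key step — and the only point requiring a small argument — is that $\mathrm p^*$ is injective on differential forms. This holds because $\mathrm p$ is a surjective submersion: at every point $(x,v)\in T^1\Hyp^{n+1}$ the differential $d\mathrm p$ is surjective onto $T_{\mathrm p(x,v)}\G{n+1}$ (indeed, as established in Section \ref{sec:parakahler metric GG}, it restricts to an isomorphism $\chi^\perp_{(x,v)}\to T_{\mathrm p(x,v)}\G{n+1}$). Hence if $\eta$ is any $k$-form on $\G{n+1}$ with $\mathrm p^*\eta=0$, then for any point $\ell\in\G{n+1}$ and any tangent vectors $Y_1,\dots,Y_k\in T_\ell\G{n+1}$, we may choose a preimage $(x,v)\in\mathrm p^{-1}(\ell)$ and lift each $Y_i$ to a vector $X_i\in\chi^\perp_{(x,v)}$ with $d\mathrm p(X_i)=Y_i$; then $\eta_\ell(Y_1,\dots,Y_k)=(\mathrm p^*\eta)_{(x,v)}(X_1,\dots,X_k)=0$, so $\eta=0$. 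Applying this with $\eta=d\Omega$ gives $d\Omega=0$.

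I do not expect any serious obstacle here, as the entire content of the corollary is already contained in Proposition \ref{prop:identity curvature form}; the proof is essentially a one-line formal manipulation followed by the elementary remark on injectivity of pull-back by a submersion. The only care needed is to state the injectivity argument cleanly, which is why I would include the brief lifting argument above rather than merely asserting it.
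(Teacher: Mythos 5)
Your proof is correct and is essentially identical to the paper's: both apply $d$ to the identity $d\omega=\mathrm p^*\Omega$ from Proposition \ref{prop:identity curvature form} and conclude $d\Omega=0$ from the injectivity of $\mathrm p^*$. Your explicit lifting argument via the isomorphism $\chi^\perp\to T_\ell\G{n+1}$ is slightly more careful than the paper's one-line appeal to surjectivity of $\mathrm p$, but the substance is the same.
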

	\begin{proof}
		Using Proposition \ref{prop:identity curvature form} we have
		$$\mathrm{p}^*(d\Omega)=d(\mathrm{p}^*\Omega)=d(d\omega)=0~.$$
		Since $\mathrm{p}$ is surjective, it follows that $d\Omega=0$.
\end{proof}}

\section{{Lagrangian immersions and local integrability}} We have now all the ingredients to relate the Gauss maps of immersed hypersurfaces in $\Hyp^{n+1}$ with the Lagrangian condition for the symplectic geometry of $\G{n+1}$. 

\begin{Cor}\label{cor:lagrangian1}
	Given an oriented manifold $M^n$ and an immersion $\sigma:M\to\Hyp^{n+1}$, $\nobreak{G_\sigma:M\to \G{n+1}}$ is a Lagrangian immersion. 
\end{Cor}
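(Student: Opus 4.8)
The plan is to verify directly the two defining conditions of a Lagrangian immersion: that $G_\sigma$ is an immersion of a manifold of dimension equal to half that of the symplectic manifold, and that $G_\sigma^*\Omega$ vanishes identically. The dimension count is immediate, since $\dim_\R M = n$ while $\dim_\R \G{n+1} = 2n$; thus the isotropy condition $G_\sigma^*\Omega\equiv 0$ is exactly what upgrades isotropy to the Lagrangian property. That $G_\sigma$ is an immersion has already been established in Proposition \ref{prop:gauss immersion}, so the only genuine content is the vanishing of $G_\sigma^*\Omega$.

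First I would factor the Gauss map through its lift, writing $G_\sigma = \mathrm{p}\circ\zeta_\sigma$, and pull back $\Omega$ accordingly, obtaining $G_\sigma^*\Omega = \zeta_\sigma^*(\mathrm{p}^*\Omega)$. Then I would invoke Proposition \ref{prop:identity curvature form}, which identifies $\mathrm{p}^*\Omega$ with the curvature $d\omega$ of the connection form $\omega$. This reduces the computation to $G_\sigma^*\Omega = \zeta_\sigma^*(d\omega) = d(\zeta_\sigma^*\omega)$, so that everything hinges on understanding the single $1$-form $\zeta_\sigma^*\omega$ on $M$.

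The key observation is that this $1$-form vanishes identically. Indeed, by Definition \ref{Def connection form} one has $\omega(X) = \gs{n+1}(X,\chi_{(x,v)})$, while Proposition \ref{prop:gauss immersion} guarantees that the lift $\zeta_\sigma$ is everywhere orthogonal to the fibers of $\mathrm{p}$, i.e. that $d\zeta_\sigma(W)$ is $\gs{n+1}$-orthogonal to $\chi$ for every $W\in T_pM$. Hence $(\zeta_\sigma^*\omega)(W) = \gs{n+1}(d\zeta_\sigma(W),\chi) = 0$, so $\zeta_\sigma^*\omega \equiv 0$ and consequently $G_\sigma^*\Omega = d(0) = 0$, which gives the claim.

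There is essentially no hard obstacle here: the statement is a direct corollary of the curvature identity of Proposition \ref{prop:identity curvature form} combined with the orthogonality of the lift to $\chi$. If anything, the only point deserving emphasis is the conceptual interpretation already implicit in Proposition \ref{prop pullback flat}: the vanishing of $\zeta_\sigma^*\omega$ expresses that $\zeta_\sigma$ is a flat section of the pulled-back connection, and through the identity $\mathrm{p}^*\Omega = d\omega$ this flatness is precisely what forces $G_\sigma^*\Omega$ to vanish.
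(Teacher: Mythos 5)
Your proof is correct and follows essentially the same route as the paper: both arguments factor $G_\sigma=\mathrm p\circ\zeta_\sigma$, use the orthogonality of the lift to $\chi$ (Proposition \ref{prop:gauss immersion}) to kill the pulled-back connection form, and then invoke the curvature identity $\mathrm p^*\Omega=d\omega$ of Proposition \ref{prop:identity curvature form} to conclude $G_\sigma^*\Omega=0$. Your version is, if anything, slightly more explicit in noting that $\zeta_\sigma^*\omega$ vanishes as a $1$-form before taking $d$, but the content is identical.
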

\begin{proof}
	By Proposition \ref{prop pullback flat}, the pull-back by $G_\sigma$ of the principal $\R$-bundle
	$\mathrm{p}$ is flat {because} there exists {$\widehat G_\sigma=\zeta_\sigma:M\to T^1\Hyp^{n+1}$ orthogonal to $\chi$ such that $G_\sigma=\mathrm p\circ\widehat G_\sigma$. Hence $(\widehat G_\sigma)^*d\omega$ vanishes identically and $\widehat G_\sigma$ induces a flat section of $G_\sigma^*\mathrm p$.}  But by Proposition \ref{prop:identity curvature form}, $(\widehat G_\sigma)^*d\omega=(\widehat G_\sigma)^*(\mathrm{p}^*\Omega)=(G_\sigma)^*\Omega$, therefore $G_\sigma$ is Lagrangian.
\end{proof}

Observe that in Corollary \ref{cor:lagrangian1} we only use the flatness property of Proposition \ref{prop pullback flat}, and not the triviality of the  pull-back principal bundle. When $M$ is simply connected, we can partially reverse Corollary \ref{cor:lagrangian1}, showing that the Lagrangian condition is essentially the only local obstruction.

\begin{Cor}\label{cor:lagrangian2}
	Given an {orientable} simply connected manifold $M^n$ and a Lagrangian immersion $G:M\to \G{n+1}$, there exists an immersion $\zeta:M\to T^1\Hyp^{n+1}$ orthogonal to the fibers of $\mathrm{p}$ such that $G=\mathrm{p}\circ \zeta$. Moreover, if $\pi\circ \zeta:M\to \Hyp^{n+1}$ is an immersion, then $G$ coincides with its Gauss map.
\end{Cor}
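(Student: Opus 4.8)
Looking at this statement, the goal is to reverse the implication from Corollary \ref{cor:lagrangian1} in the simply connected case: given a Lagrangian immersion $G$, I want to produce a lift $\zeta$ to $T^1\Hyp^{n+1}$ orthogonal to the fibers of $\mathrm p$, and then relate $G$ to the Gauss map of $\pi\circ\zeta$ whenever the latter is an immersion.

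The plan is to exploit the correspondence between lifts orthogonal to $\chi$ and flat sections of the pull-back principal $\R$-bundle $G^*\mathrm p$. First I would form the pull-back bundle $G^*\mathrm p$ over $M$, which carries the pull-back connection $G^*\omega$. By Proposition \ref{prop:identity curvature form} the curvature of $\omega$ is $\mathrm p^*\Omega$, so the curvature of the pulled-back connection is $G^*(\mathrm p^*\Omega)=(\mathrm p\circ(\cdot))^*\Omega$; since $G$ is Lagrangian, $G^*\Omega=0$, and hence the pull-back connection is flat. This is the key computation and it is immediate from the Lagrangian hypothesis combined with Proposition \ref{prop:identity curvature form}.

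Next, because $M$ is simply connected, a flat principal $\R$-bundle over $M$ is trivial and admits a global flat section: the holonomy homomorphism $\pi_1(M)\to\R$ is automatically trivial, so the horizontal (in the Ehresmann sense, i.e. $\omega$-kernel) distribution integrates to a global leaf projecting diffeomorphically to $M$. Such a global flat section of $G^*\mathrm p$ is exactly the data of a smooth map $\zeta:M\to T^1\Hyp^{n+1}$ with $\mathrm p\circ\zeta=G$ and with $\zeta$ tangent to $\ker\omega=\chi^\perp$ at every point, i.e. orthogonal to the fibers of $\mathrm p$. Since $d\mathrm p$ restricts to an isomorphism $\chi^\perp_{(x,v)}\to T_{G(p)}\G{n+1}$ and $G$ is an immersion, $\zeta$ is an immersion as well. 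This establishes the first assertion.

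For the final sentence, I would invoke Proposition \ref{prop:gauss immersion converse}: if $\sigma:=\pi\circ\zeta$ is an immersion, then an immersion $\zeta$ into $T^1\Hyp^{n+1}$ orthogonal to the fibers of $\mathrm p$ whose projection $\pi\circ\zeta$ is an immersion must be the lift $\zeta_\sigma$ of $\sigma$ with respect to a suitable orientation of $M$. Consequently $G=\mathrm p\circ\zeta=\mathrm p\circ\zeta_\sigma=G_\sigma$ by Definition \ref{Def:lift and gauss}, so $G$ is the Gauss map of $\sigma$. The only point requiring mild care is checking that $M$ is orientable so that the orientation conventions underlying Definition \ref{Def:lift and gauss} and Proposition \ref{prop:gauss immersion converse} apply; this is part of the hypotheses. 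The main conceptual obstacle is really just the passage from flatness plus simple connectedness to the existence of a \emph{global} flat section (rather than merely local ones), which is standard but is where the simply connected hypothesis is used essentially; the rest is a direct translation through the already-established dictionary between orthogonal lifts and flat sections.
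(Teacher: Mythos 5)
Your proposal is correct and follows essentially the same route as the paper: flatness of the pull-back bundle $G^*\mathrm p$ via Proposition \ref{prop:identity curvature form} and the Lagrangian hypothesis, triviality from simple connectedness to obtain a global flat section $\zeta$, and Proposition \ref{prop:gauss immersion converse} for the Gauss map identification. The extra remarks (that $\zeta$ is an immersion because $d\mathrm p$ is an isomorphism on $\chi^\perp$, and the orientability check) are harmless elaborations of points the paper leaves implicit.
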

\begin{proof}
	Since $G$ is Lagrangian, by Proposition \ref{prop pullback flat} the $G$-pull-back bundle of $\mathrm{p}$ is flat, and it is therefore a trivial flat bundle since $M$ is simply connected. Hence it admits a global flat section, which provides the map $\zeta:M\to T^1\Hyp^{n+1}$ orthogonal to $\chi$. Assuming moreover that $\sigma:=\pi\circ \zeta$ is an immersion, by Proposition \ref{prop:gauss immersion converse}, $G=\mathrm{p}\circ \zeta$ is the Gauss map of $\sigma$.
\end{proof}

Clearly the map $\zeta$ is not uniquely determined, and the different choices differ by the action of $\varphi_t$. {Lemma \ref{lemma:desingularize for small t}, and the following corollary, show that (by post-composing with $\varphi_t$ if necessary) one can always find $\zeta$ such that $\pi\circ\zeta$ is \emph{locally} an immersion.

	\begin{Lemma} \label{lemma:desingularize for small t}
		Let $M$ be a $n$-manifold and $\zeta:M\to T^1\Hyp^{n+1}$ be an immersion orthogonal to $\chi$.
		Suppose that the differential of $\pi\circ\zeta$ is singular at $p\in M$. Then there exists $\epsilon>0$ such that the differential of $\pi\circ\varphi_t\circ\zeta$ at $p$ is non-singular for all $t\in(-\epsilon,\epsilon)\setminus\{0\}$.
	\end{Lemma}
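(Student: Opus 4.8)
The plan is to work in the hyperboloid model and reduce the statement to a linear-algebra fact about the pencil $s\mapsto d_p\sigma+s\,d_p\nu$, where I write $\zeta=(\sigma,\nu)$ and $\sigma=\pi\circ\zeta$ (not assumed to be an immersion). First I would recall from the explicit geodesic flow \eqref{eq:geodflow} that the first component of $\varphi_t\circ\zeta$ is $\cosh(t)\sigma+\sinh(t)\nu$, so that $\pi\circ\varphi_t\circ\zeta=\cosh(t)\sigma+\sinh(t)\nu$ and, for $W\in T_pM$,
$$d_p(\pi\circ\varphi_t\circ\zeta)(W)=\cosh(t)\,d_p\sigma(W)+\sinh(t)\,d_p\nu(W)=\cosh(t)\bigl(d_p\sigma(W)+\tanh(t)\,d_p\nu(W)\bigr)~.$$
Since $\cosh(t)\neq 0$, setting $s=\tanh(t)$ the differential at $p$ is non-singular if and only if $L_s:=d_p\sigma+s\,d_p\nu$ is injective. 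As $t\mapsto\tanh(t)$ maps $(-\epsilon,\epsilon)$ diffeomorphically onto a neighbourhood of $0$, it suffices to find $\delta>0$ such that $L_s$ is injective for $s\in(-\delta,\delta)\setminus\{0\}$.

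Next I would identify the common target of $P:=d_p\sigma$ and $Q:=d_p\nu$. Differentiating $\langle\sigma,\sigma\rangle=-1$, $\langle\nu,\nu\rangle=1$, $\langle\sigma,\nu\rangle=0$, together with the orthogonality of $\zeta$ to $\chi_{(\sigma(p),\nu(p))}=(\nu(p),\sigma(p))$ (Lemma \ref{lemma:generator geoflow hor lift}, using $\gstar{n+1}$ as in Remark \ref{rmk other metric2}, which gives $\langle P(W),\nu\rangle-\langle Q(W),\sigma\rangle=0$), yields $\langle P(W),\sigma\rangle=\langle P(W),\nu\rangle=\langle Q(W),\sigma\rangle=\langle Q(W),\nu\rangle=0$ for all $W$. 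Hence $P$ and $Q$ take values in $W_0:=\mathrm{Span}(\sigma(p),\nu(p))^\perp$, a spacelike $n$-plane on which the Minkowski form is positive definite, so $P,Q\colon T_pM\to W_0$ are maps between $n$-dimensional inner product spaces. Two facts are then decisive. Since $\zeta$ is an immersion, $\ker P\cap\ker Q=\{0\}$. Moreover, orthogonality gives $\zeta^*\omega\equiv 0$, hence $\zeta^*d\omega=0$; combined with $d\omega=\mathrm p^*\Omega$ (Proposition \ref{prop:identity curvature form}) and $\Omega=\GG(\cdot,\JJ\cdot)$, the horizontal/vertical expansion of $\gs{n+1}(d\zeta(W_1),\mathrm J\,d\zeta(W_2))$ reduces this to the symmetry $\langle P(W_1),Q(W_2)\rangle=\langle P(W_2),Q(W_1)\rangle$.

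Finally I would prove the linear statement: if $P,Q\colon V\to W_0$ between $n$-dimensional inner product spaces satisfy $\ker P\cap\ker Q=\{0\}$ and $\langle Pv_1,Qv_2\rangle=\langle Pv_2,Qv_1\rangle$, then $\det(P+sQ)\not\equiv 0$, so $s=0$ is an isolated zero. Set $K=\ker P$. The symmetry forces $Q(K)\subseteq\mathrm{Im}(P)^\perp$, while transversality makes $Q|_K$ injective; since $\dim K=\dim\mathrm{Im}(P)^\perp$, the map $Q|_K\colon K\to\mathrm{Im}(P)^\perp$ is an isomorphism. Writing $L_s=P+sQ$ in block form with respect to $V=K\oplus K'$ and $W_0=\mathrm{Im}(P)^\perp\oplus\mathrm{Im}(P)$, the block $K\to\mathrm{Im}(P)^\perp$ is $s\,Q|_K$, the block $K'\to\mathrm{Im}(P)$ is $P|_{K'}+sE$ with $P|_{K'}$ an isomorphism, and the other two blocks are multiples of $s$. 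Factoring $s$ out of the $\dim K$ rows indexed by $\mathrm{Im}(P)^\perp$ yields $\det L_s=s^{\dim K}g(s)$ with $g(0)=\det(Q|_K)\det(P|_{K'})\neq 0$; hence $\det L_s\neq 0$ for all small $s\neq 0$, which is exactly the required injectivity.

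The main obstacle, and the reason the statement holds at all, is the symmetry of $\langle P\,\cdot,Q\,\cdot\rangle$: without it one can build pencils singular for every $s$ despite $\ker P\cap\ker Q=\{0\}$, e.g. $P=\bigl(\begin{smallmatrix}1&0\\0&0\end{smallmatrix}\bigr)$, $Q=\bigl(\begin{smallmatrix}0&1\\0&0\end{smallmatrix}\bigr)$. Thus the conceptual heart is that orthogonality to $\chi$ forces the Lagrangian condition, which supplies this symmetry; the remainder is the elementary block-determinant computation. I would take care to present the expansion of $\gs{n+1}(d\zeta(W_1),\mathrm J\,d\zeta(W_2))$ cleanly, as that is where the geometric input becomes the algebraic symmetry.
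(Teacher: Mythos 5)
Your proof is correct and follows essentially the same route as the paper's: both reduce the claim to the pencil $d_p\sigma+\tanh(t)\,d_p\nu$, use the immersion hypothesis to get injectivity of $d_p\nu$ on $\ker d_p\sigma$, use orthogonality to $\chi$ (via $\zeta^*d\omega=0$ and $d\omega=\mathrm p^*\Omega$) to get $d_p\nu(\ker d_p\sigma)\perp\mathrm{Im}(d_p\sigma)$, and conclude that the relevant determinant vanishes only to finite order at $t=0$. The paper phrases the last step as openness of linear independence for the family $\{u_1,\ldots,u_k,w_{k+1}+\tanh(t)u_{k+1},\ldots,w_n+\tanh(t)u_n\}$ after normalizing the first $k$ vectors by $\sinh(t)$ and the rest by $\cosh(t)$, which is exactly your factorization $\det L_s=s^{\dim K}g(s)$ with $g(0)\neq 0$.
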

	\begin{proof}
		Let us denote $\zeta_t:=\varphi_t\circ\zeta$, $\sigma:=\pi\circ\zeta$, and $\sigma_t:=\pi\circ\zeta_t$. Assume also $\zeta(p)=(x,v)$. Let $\{V_1,\ldots, V_k\}$   be a basis of the kernel of $d_p\sigma$ and let us complete it to a basis $\{V_1,\ldots,V_n\}$ of $T_p M$. Hence if we denote $w_j:=d_p\sigma(V_j)$ for $j>k$, $\{w_{k+1},\ldots,w_n\}$ is a basis of the image of $d_p\sigma$. Exactly as in the proof of Proposition \ref{prop:gauss immersion converse}, we have $w_{k+1},\ldots,w_n\in v^\perp\subset T_x\Hyp^{n+1}$. Hence we have:
		$$d\zeta(V_1)=u_1^\V,\ldots,d\zeta(V_k)=u_k^\V,d\zeta(V_{k+1})=w_{k+1}^\HH+u_{k+1}^\V,\ldots,d\zeta(V_n)=w_n^\HH+u_n^\V$$
		for some $u_1,\ldots,u_n\in v^\perp$. 
		
		On the one hand, since $\zeta$ is an immersion, $u_1,\ldots,u_k$ are linearly independent. On the other hand, $\zeta$ is orthogonal to $\chi$, hence by Remark \ref{rmk:curvature and bracket} we have $\zeta^*d\omega=0$. Using Equation \eqref{eq:sufficient}, it follows that:
		$$\gs{n+1}(d\zeta(V_i),{\mathrm J}\circ d\zeta(V_j))=0$$
		%\begin{align*}g_{T^1\Hyp^{n+1}}(d\zeta(V_i),{\mathrm J}\circ d\zeta(V_j))&= \GG(d\mathrm p \circ d\zeta(V_i),\JJ\circ d\mathrm p\circ d\zeta(V_j))= \\
		%&=\Omega(d\mathrm p \circ d\zeta(V_i),d\mathrm p\circ d\zeta(V_j))=0 \end{align*}
		for all {$i,j=1,\dots n$}. Applying this to any choice of {$i\leq k$ and $j>k$}, we find $\langle u_i,w_j\rangle=0$. Hence $\{u_1,\ldots,u_k,w_{k+1},\ldots,w_n\}$ is a basis of $v^\perp$. 
		
		We are now ready to prove the statement. By Equations \eqref{eq:diff geoflow1} and \eqref{eq:diff geoflow2}, we have
		$$d\sigma_t(V_i)=d\pi\circ d\varphi_t(u_i^\V)=\sinh(t)u_i$$
		for $1\leq i\leq k$, while
		$$d\sigma_t(V_j)=d\pi\circ d\varphi_t(w_j^\HH+u_j^\V)=\cosh(t)w_j+\sinh(t)u_j$$
		for $k+1\leq j\leq n$. The proof will be over if we show that $\{d\sigma(V_1),\ldots,d\sigma(V_n)\}$ are linearly independent for $t\in(-\epsilon,\epsilon)$, $t\neq 0$. In light of the above expressions, dividing by $\sinh(t)$ (which is not zero if $t\neq 0$) or $\cosh(t)$, this is equivalent to showing that 
		$$\{u_1,\ldots,u_k,w_{k+1}+\tanh(t)u_{k+1},\ldots,w_{n}+\tanh(t)u_{n}\}$$
		are linearly independent for small $t$. This is true because we have proved above that $\{u_1,\ldots,u_k,w_{k+1},\ldots,w_n\}$ is a basis, and linear independence is an open condition.
	\end{proof}
	
	\begin{Theorem}
		\label{cor: local integrability}
		{Let $G\colon M^n \to \G{n+1}$ be an immersion. Then $G$ is Lagrangian if and only if for all $p\in M$ there exists a neighbourhood $U$ {of $p$} and an immersion $\sigma\colon U\to \Hyp^{n+1}$ such that $G_\sigma = G|_{U}$.}
	\end{Theorem}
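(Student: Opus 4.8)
The plan is to prove the two implications separately, treating the ``only if'' direction (local integrability $\Rightarrow$ Lagrangian) as an immediate consequence of Corollary \ref{cor:lagrangian1}, and concentrating the real work on the converse. For the first direction: if for each $p$ there is a neighbourhood $U$ and an immersion $\sigma\colon U\to\Hyp^{n+1}$ with $G_\sigma=G|_U$, then Corollary \ref{cor:lagrangian1} gives that each $G|_U=G_\sigma$ is Lagrangian, i.e.\ $(G|_U)^*\Omega\equiv 0$. Since being Lagrangian is a pointwise, hence local, condition on the pull-back $G^*\Omega$, and the sets $U$ cover $M$, I conclude $G^*\Omega\equiv 0$, so $G$ is Lagrangian. (Here the choice of orientation on $U$ needed to define $G_\sigma$ is irrelevant, as the vanishing of $G^*\Omega$ does not depend on it.)

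For the converse, I would fix $p\in M$ and choose a simply connected, orientable neighbourhood $U_0$ of $p$, such as a coordinate ball. The restriction $G|_{U_0}$ is then a Lagrangian immersion of a simply connected orientable manifold, so Corollary \ref{cor:lagrangian2} produces an immersion $\zeta\colon U_0\to T^1\Hyp^{n+1}$ orthogonal to the fibres of $\mathrm p$ with $G|_{U_0}=\mathrm p\circ\zeta$. If $\pi\circ\zeta$ already has injective differential at $p$, then by continuity it is an immersion on a smaller neighbourhood $U\subseteq U_0$, and the second assertion of Corollary \ref{cor:lagrangian2} identifies $G|_U$ with the Gauss map of $\sigma:=\pi\circ\zeta|_U$, completing the argument in this case.

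The main obstacle is precisely that $\pi\circ\zeta$ need \emph{not} be an immersion at $p$; Example \ref{ex:spheres} shows that this degeneration genuinely occurs, since there $\pi\circ\zeta$ is a constant map while $\zeta$ is an immersion orthogonal to the fibres. To overcome it I would desingularise by flowing along geodesics: by Lemma \ref{lemma:desingularize for small t} there exists $t_0\neq 0$, arbitrarily small, for which $\pi\circ\varphi_{t_0}\circ\zeta$ has injective differential at $p$. Setting $\zeta':=\varphi_{t_0}\circ\zeta$, I note that $\zeta'$ is still an immersion, as $\varphi_{t_0}$ is a diffeomorphism, and still orthogonal to $\chi$, since the geodesic flow preserves $\chi^\perp$ by Lemma \ref{lemma:geodflow isometric} together with Equation \eqref{eq:diff geoflow3}, while $\mathrm p\circ\zeta'=\mathrm p\circ\varphi_{t_0}\circ\zeta=\mathrm p\circ\zeta=G|_{U_0}$ because $\mathrm p\circ\varphi_{t_0}=\mathrm p$. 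Thus $\zeta'$ again satisfies the hypotheses of Corollary \ref{cor:lagrangian2}, and since $\pi\circ\zeta'$ has injective differential at $p$, hence on a neighbourhood $U$ of $p$, it yields an immersion $\sigma:=\pi\circ\zeta'|_U\colon U\to\Hyp^{n+1}$ whose Gauss map is $G|_U$ (with respect to the orientation on $U$ supplied by Proposition \ref{prop:gauss immersion converse}, consistent with the convention fixed in the introduction). This furnishes the desired local integrability and concludes the proof.
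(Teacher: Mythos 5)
Your proposal is correct and follows essentially the same route as the paper: the "only if" direction via Corollary \ref{cor:lagrangian1}, and the converse by applying Corollary \ref{cor:lagrangian2} on a simply connected neighbourhood and then invoking Lemma \ref{lemma:desingularize for small t} to desingularise $\pi\circ\zeta$ with the geodesic flow. The extra details you supply (that $\varphi_{t_0}\circ\zeta$ remains orthogonal to $\chi$ and still projects to $G$) are left implicit in the paper but are exactly the right justifications.
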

	\begin{proof}
		{The ``if'' part follows from Corollary \ref{cor:lagrangian1}. Conversely,} let $U$ be a simply connected neighbourhood of $p$. By Corollary \ref{cor:lagrangian2}, there exists an immersion $\zeta\colon U \to T^1\Hyp^{n+1}$ orthogonal to the fibers of $\mathrm{p}$ such that $G=\mathrm{p}\circ \zeta$. If the differential of $\pi\circ\zeta$ is non-singular at $p$, then, up to restricting $U$, we can assume $\sigma:=\pi\circ\zeta$ is an immersion of $U$ into $\Hyp^{n+1}$. By the second part of Corollary \ref{cor:lagrangian2}, $G|_U$ is the Gauss map of $\sigma$. If the differential of $\pi\circ\zeta$ is instead singular at $p$, by Lemma \ref{lemma:desingularize for small t} it suffices to replace $\zeta$ by $\zeta_t$ for small $t$ and we obtain the same conclusion.
	\end{proof}

\subsection{Lagrangian condition in $\GGG$ in terms of complex metrics}	
	In Proposition \ref{prop: pull-back Re hRm} we showed that the pseudo-Riemannian metric of $\GG$ on $\G 3$ is the real part of the hRm $\inners_\GGG$. Moreover, by Theorem \ref{teoremone stessa dimensione}, we know that a complex metric $g$ with constant curvature $-1$ on a surface $S$ corresponds to an equivariant immersion $\widetilde S\to \GGG$, unique up to post-composition with $\Isom(\Hyp^3)$.

	We show that there is a characterization of Lagrangian (i.e. locally integrable) immersions $G\colon S\to \G 3$ in terms of the induced complex metric.
	
	Before that, let us show that the symplectic form on $\G 3$ can be expressed in terms of the complex structure of $\GGG$.
	
	\begin{Prop}
		\label{prop Omega e Re forma d area in G}
		Let $(U,z)$ be an affine chart on $\CP^1$. Then, on the local chart $(U\times U\setminus \Delta, z\times z)$ for $\G 3$, the symplectic form $\Omega$ satisfies
		\begin{equation}
			\label{eq Omega e Re forma d area in G}
		\Omega= - \frac 1 2 Re \bigg( \frac{dz_1 \wedge dz_2}{(z_1-z_2)^2} \bigg).
		\end{equation}
	\end{Prop}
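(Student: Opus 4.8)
The plan is to compute both sides of the claimed identity in the local affine chart and compare them directly, using the two main tools already at hand: the explicit formula for the holomorphic Riemannian metric $\inners_\GGG$ from Theorem \ref{teo: metrica su G} (Equation \eqref{eq: metrica su G}), namely $\inners_\GGG = -\frac{4}{(z_1-z_2)^2}\,dz_1\cdot dz_2$, and the relation $\Omega(X,Y) = \GG(X,\JJ Y)$ together with the newly established Proposition \ref{prop: pull-back Re hRm}, which gives $\GG = Re(\inners_\GGG)$. The essential point is that the symplectic form $\Omega$ is entirely determined by the para-K\"ahler triple $(\GG, \JJ, \Omega)$, so I only need to understand how $\JJ$ acts in the complex chart in order to pass from $\GG = Re(\inners_\GGG)$ to the formula for $\Omega$.

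First I would identify the para-complex structure $\JJ$ in terms of the complex structure of $\GGG$. Recall from Lemma \ref{lemma:paracomplex structure} that the $\pm1$-eigendistributions of $\JJ$ are the integrable distributions tangent to the two factors of $\GGG \cong \CP^1\times\CP^1\setminus\Delta$ (the images of the two families of horospheres from Example \ref{ex:horospheres}). In the chart $(U\times U\setminus\Delta, z\times z)$ these eigendistributions are precisely $Span_\R\{\partial_{z_1},\partial_{\bar z_1}\}$ and $Span_\R\{\partial_{z_2},\partial_{\bar z_2}\}$ — that is, $\JJ$ is $+1$ on vectors tangent to the first $\CP^1$-factor and $-1$ on vectors tangent to the second. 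Concretely, if $\JJJ$ denotes the ambient complex structure of $\GGG$ (acting as multiplication by $i$ in each $z_k$), then $\JJ$ and $\JJJ$ are simultaneously diagonalizable on the complexified tangent space, and the product $\JJ\circ\JJJ$ switches sign between the two factors. The key computation is then to express $\Omega(X,Y)=\GG(X,\JJ Y) = Re\big(\inners_\GGG(X,\JJ Y)\big)$ and to check that applying $\JJ$ to the second factor converts the symmetric bilinear form $\inners_\GGG$ into the antisymmetric form $dz_1\wedge dz_2$ up to the stated scalar.

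The main obstacle will be getting the constant and the antisymmetrization exactly right. Since $\inners_\GGG = -\frac{4}{(z_1-z_2)^2}\,dz_1\cdot dz_2$ with $dz_1\cdot dz_2 = \frac12(dz_1\otimes dz_2 + dz_2\otimes dz_1)$, I expect the action of $\JJ$ (which, being $+1$ on the first factor and $-1$ on the second, flips the relative sign of the two terms in the symmetric product) to turn $dz_1\cdot dz_2$ into $\frac{i}{2}\,dz_1\wedge dz_2$ after accounting for the factor $\JJJ$ contributes. Tracking the factors $-4$, $\tfrac12$, the $i$ from the complex structure, and the final $Re$ should produce exactly the coefficient $-\tfrac12$ in Equation \eqref{eq Omega e Re forma d area in G}. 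To verify this cleanly rather than by opaque index-chasing, I would evaluate both sides on the explicit real tangent basis built in the proof of Proposition \ref{prop: pull-back Re hRm}, i.e. the $\GG$-orthonormal frame $d\mathrm p(w_i,0), d\mathrm p(0,w_i)$ coming from the horizontal and vertical lifts, on which both $\GG$ and $\JJ$ have already been computed explicitly. Checking the identity $\Omega(X,Y) = \GG(X,\JJ Y)$ against $-\tfrac12 Re\big(\tfrac{dz_1\wedge dz_2}{(z_1-z_2)^2}\big)$ on this frame reduces the statement to a finite, chart-independent linear-algebra verification, which is the safest way to pin down the constant and confirm the antisymmetry.
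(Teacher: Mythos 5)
Your overall strategy is sound and is genuinely different from the one in the text. The paper first argues (exactly as in the proof of Theorem \ref{teo: metrica su G}) that the right-hand side is chart-independent and $\PSL$-invariant, so that it suffices to verify the identity at the single geodesic $\ell=(1,-1)$; it then compares the two sides on the explicit orthonormal frame $d\mathrm p(w_i^{\mathcal H}), d\mathrm p(w_i^{\mathcal V})$, whose images were computed in Equation \eqref{eq mappa differenziale di d} and whose $\Omega$-pairings were already obtained in the proof of Proposition \ref{prop:identity curvature form}. Your primary route --- writing $\Omega=\GG(\cdot,\JJ\cdot)=Re\big(\inners_\GGG(\cdot,\JJ\cdot)\big)$ and diagonalizing $\JJ$ as $\mp 1$ on the tangent spaces to the two $\CP^1$-factors --- is more conceptual and bypasses the chart-independence step entirely, since every object involved is globally defined; your identification of the eigendistributions is correct, because by Example \ref{ex:horospheres} the integral leaves of $T^\pm\G 3$ are the Gauss-map images of horospheres, i.e.\ the slices $\{pt\}\times\CP^1$ and $\CP^1\times\{pt\}$. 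Your fallback verification on the explicit frame is precisely the paper's method.

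The concrete problem is in your tracking of the constant. There is no factor of $i$ ``contributed by $\JJJ$'': the ambient complex structure plays no role in $\Omega=\GG(\cdot,\JJ\cdot)$, and $\JJ$ is a \emph{real} involution, equal to $-1$ on the first factor and $+1$ on the second. Carrying out your computation honestly, with $X=(X_1,X_2)$, $Y=(Y_1,Y_2)$ and $\JJ Y=(-Y_1,Y_2)$, one gets
\[
\inners_\GGG(X,\JJ Y)=-\frac{4}{(z_1-z_2)^2}\cdot\frac12\big(X_1Y_2-Y_1X_2\big)=-\frac{2}{(z_1-z_2)^2}\,(dz_1\wedge dz_2)(X,Y)~,
\]
so the coefficient produced is $-2$, not $-\tfrac12$. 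Your own safety check exposes the same thing: on one hand $\Omega\big(d\mathrm p(w_1^{\mathcal H}),d\mathrm p(w_1^{\mathcal V})\big)=\gs{3}(w_1^{\mathcal H},w_1^{\mathcal H})=1$, while on the other hand, with $(\alpha\wedge\beta)(X,Y)=\alpha(X)\beta(Y)-\alpha(Y)\beta(X)$, one computes $-\tfrac12\,Re\big(\tfrac{(dz_1\wedge dz_2)((i,i),(-i,i))}{4}\big)=-\tfrac12\,Re(-\tfrac12)=\tfrac14$. So do not force the bookkeeping to land on $-\tfrac12$: the computation you outline, done correctly, yields the coefficient $-2$, and the $\tfrac14$-versus-$1$ mismatch persists if one writes out the paper's own frame check, so the discrepancy lies in the stated constant rather than in your method. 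Apart from this, your argument goes through.
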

\begin{proof}
	Let us first note that, using exactly the same techniques as in the proof of Theorem \ref{teo: metrica su G}, one can prove that the RHS of Equation \ref{eq Omega e Re forma d area in G} does not depend on the local affine chart and that it is $\PSL$-invariant. Hence it is sufficient to check that Equation \ref{eq Omega e Re forma d area in G} holds in a point of $\GGG$
	
	Let us recall that in Equation \eqref{eq mappa differenziale di d} we computed the map $d\mathrm p\colon \chi_{(x,v)}^\perp \to T_\ell \GGG$, where, in the hyperplane model $\C\times \R^+$ for $\Hyp^3$, we denote $x=(0,1)$, $v=(1,0)$, $\ell=(1, -1)\in \C\times \C\setminus\Delta$.
	
	With notations as in the proof of Proposition \ref{prop:identity curvature form}, define $w_1^{\mathcal H}, w_2^{\mathcal H}, w_1^{\mathcal V}, w_2^{\mathcal V}\in v^\perp\oplus v^\perp= \chi_{(x,v)}^\bot$ as
	\begin{align*}
		w_1^{\mathcal H}&=((i, 0),(0,0))\ , \\
		w_2^{\mathcal H}&=((0,1), (0,0))\ , \\
		w_1^{\mathcal V}&=((0,0), (i,0))\ , \\
		w_2^{\mathcal V}&=((0,0),(0,1))\ .
	\end{align*}
The images via $d\mathrm p$ of these vectors define a orthonormal basis for $T_\ell \GGG$: explicitly via Equation \eqref{eq mappa differenziale di d} we have 
\begin{align*}
	d\mathrm p{(w_1^{\mathcal H})}&= (i,i)\ ,\\
	d\mathrm p{(w_2^{\mathcal H})}&=(-1,1)\ ,\\
	d\mathrm p{(w_1^{\mathcal V})}&=(-i,i)\ ,\\
	d\mathrm p{(w_2^{\mathcal V})}&=(1,1)\ .
\end{align*}

We computed in the proof of Proposition \ref{prop:identity curvature form} that $\Omega(	d\mathrm p(w_1^{\mathcal H}), d\mathrm p( 	w_2^{\mathcal H}))=\Omega(	d\mathrm p(w_1^{\mathcal V}), d\mathrm p(	w_2^{\mathcal V}))=0$, and that $\Omega(	d\mathrm p (w_j^{\mathcal H}), d \mathrm p(	w_k^{\mathcal V}) )=\delta_{j,k}$. Using the explicit descriptions above, one gets directly that the RHS of Equation \ref{eq Omega e Re forma d area in G} coincides on this basis of $T_\ell \GGG$.
\end{proof}

	\begin{Cor}
		\label{cor integrabile sse forma area reale}
		An immersion $G\colon S\to \inners \GGG$ is Lagrangian if and only if the (local) area form $dA_g$ of $g=G^*\inners_\GGG$ is real, i.e. it restricts to a real-valued $2$-form on $TS$.
	\end{Cor}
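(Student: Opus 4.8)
The plan is to reduce everything to the explicit local formulas already at our disposal, namely Equation \eqref{eq: metrica su G} for $\inners_\GGG$ and the expression for $\Omega$ just obtained in Proposition \ref{prop Omega e Re forma d area in G}, and then to compute directly the area form of the pull-back metric. Throughout I assume, as is implicit in the statement (so that $dA_g$ is defined), that $g=G^*\inners_\GGG$ is a complex metric; recall that $G$ is Lagrangian precisely when $G^*\Omega\equiv 0$, and that $dA_g$ is characterized up to sign by $\|dA_g\|_g^2=1$ as in Theorem \ref{Teorema orientazione}.

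First I would work in a local affine chart $(U\times U\setminus\Delta, z\times z)$ for $\GGG$, the chart-independence being guaranteed by Theorem \ref{teo: metrica su G} and Proposition \ref{prop Omega e Re forma d area in G}. Writing $G=(f_1,f_2)$ with $f_1,f_2\colon S\to\C$ — possible on a neighbourhood of each point, since the two endpoints of a geodesic are distinct and hence lie in a common affine chart — and pulling back, one obtains immediately
\[
g=-\frac{4}{(f_1-f_2)^2}\,df_1\cdot df_2 \qquad\text{and}\qquad G^*\Omega=-\frac12\,Re\!\left(\frac{df_1\wedge df_2}{(f_1-f_2)^2}\right)~.
\]

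The main computational step is to determine $dA_g$ in these coordinates. Writing $g=F\,\alpha\cdot\beta$ with $\alpha=df_1$, $\beta=df_2$ and $F=-4/(f_1-f_2)^2$, the isotropic directions of $g$ are $\ker\alpha$ and $\ker\beta$. Choosing a basis $\{e_1,e_2\}$ of $\C T_pS$ spanning these two lines, normalized so that $\alpha=e^1$ and $\beta=e^2$ in the dual basis, the matrix of $g$ is antidiagonal with entries $F/2$. Computing the induced metric on $\bigwedge^2\C T^*_pS$ then gives $\|df_1\wedge df_2\|_g^2=-4/F^2$, so that $\|dA_g\|_g^2=1$ forces
\[
dA_g=\pm\frac{iF}{2}\,df_1\wedge df_2=\mp\frac{2i}{(f_1-f_2)^2}\,df_1\wedge df_2~.
\]

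Comparing the two displays, I would conclude that $\frac{df_1\wedge df_2}{(f_1-f_2)^2}=\pm\frac{i}{2}\,dA_g$, whence
\[
G^*\Omega=-\frac12\,Re\!\left(\pm\frac{i}{2}\,dA_g\right)=\pm\frac14\,Im(dA_g)~,
\]
both sides being genuine real $2$-forms on $S$, with the sign ambiguity of $dA_g$ harmlessly absorbed into the $\pm$. Since $dA_g$ restricts to a real-valued form on $TS$ exactly when $Im(dA_g)\equiv 0$, this identity yields $G^*\Omega\equiv 0$ if and only if $dA_g$ is real, which is the assertion. The only delicate point is the area-form computation, where one must keep track of the fact that $df_1,df_2$ are the \emph{isotropic} covectors of $g$; everything else is a formal manipulation of the given local expressions.
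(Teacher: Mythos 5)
Your proposal is correct and follows essentially the same route as the paper's proof: pass to an affine chart, write $G=(f_1,f_2)$, pull back the formula for $\Omega$ from Proposition \ref{prop Omega e Re forma d area in G}, express $dA_g$ as a multiple of $df_1\wedge df_2$, and conclude that $G^*\Omega$ is a nonzero real multiple of $Im(dA_g)$. The only (immaterial) differences are that you compute $dA_g$ via the induced norm on $\bigwedge^2\C T^*_pS$ rather than by building an explicit $g$-orthonormal frame out of the isotropic vectors as the paper does (note the harmless labelling slip: with $e_1\in\ker\alpha$ one has $\alpha=e^2$, not $e^1$), and that your normalizing constant $\mp 2i/(f_1-f_2)^2$ differs from the paper's $8i/(f_1-f_2)^2$ by a real factor, which does not affect the equivalence.
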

	\begin{proof}
		First of all, observe that clearly the statement does not depend on the sign of the area form.
		
		Let $G=(f_1,f_2)$, hence $g=\frac{-4}{(f_1-f_2)^2} df_1 \cdot df_2$, with $f_1,f_2\colon S\to \CP^1$.
		
		Let us show that
		
		\begin{equation}
			\label{eq forma d area di g}
		dA_g= 8 i \frac{1}{(f_1- f_2)^2} df_1 \wedge df_2 \ .
		\end{equation}
	
		In order to prove it, take a basis $(V_1,V_2)$ for $\C T_p S$ such that each $V_k$ is a non-zero isotropic vector for $g$ with $V_k\in Ker(d_p f_k\colon \C T_pS \to \C)$. Then, the vectors 
		\[W_1=\frac{V_1 +i V_2}{\sqrt{2i g(V_1, V_2)}}\ ,\quad  W_2= \frac{V_1 -i V_2}{i\sqrt{2i g(V_1, V_2)}}\] define a $g$-orthonormal basis, and $d A_g(W_1,W_2)=1$ leads to 
		\[
		dA_g(V_1,V_2)= 2i g(V_1,V_2):
		\] 
		as a result, one gets that Equation $\eqref{eq forma d area di g}$ holds since it holds in the basis $(V_1,V_2)$.
		
		Finally, by Proposition \ref{prop Omega e Re forma d area in G} and Equation \eqref{eq forma d area di g}, one gets
		\begin{align*}
			G^* \Omega&= -\frac 1 2 Re\ \Big( G^*\Big( \frac{dz_1 \wedge dz_2}{(z_1-z_2)^2}  \Big) \Big)=\\
			&=-\frac 1 2 Re \Big(\frac{df_1 \wedge df_2}{(f_1-f_2)^2} \Big)=\\
			&=-\frac 1 2 Im \Big(i \frac{df_1 \wedge df_2}{(f_1-f_2)^2} \Big)=\\
			&= \frac 1 {16} Im \Big(dA_g \Big) \ ,
		\end{align*}
	and the thesis follows.
	\end{proof}

	\section{Global integrability}

	Let us now approach the problem of global integrability. We provide an example to show that in general {$\pi\circ\varphi_t\circ\zeta$ might fail to be \emph{globally} an immersion for \emph{all} $t\in\R$}, as we already mentioned after Proposition \ref{prop:gauss immersion converse}. 
	
	\begin{Example}
		\label{ex: Lagrangian not globally integrable}
		
		%Let us construct a curve in $\G{2}$ with the property of being locally integrable but not globally integrable. 
		
		Let us construct a curve {$G:(-T,T)\to\G 2$} with the property of being locally integrable\footnote{As a matter of fact, any curve in $\G 2$ is locally integrable by Theorem \ref{cor: local integrability}, since the domain is simply connected and it is trivially Lagrangian. However, in this example, we will see by construction that $G$ is locally integrable.} but not globally integrable. 
		
		Fix $r>0$ {and a maximal geodesic $\ell$ in $\Hyp^2$}.
		{Let us consider a smooth curve $\sigma_+\colon (-\varepsilon,T)\to \Hyp^2$, for some small enough $\varepsilon$ and big enough $T$, so that: \begin{itemize}
				\item {$\sigma_+$} is an immersion and is parameterized by arclength;
				\item $(\sigma_+)|_{(-\varepsilon, \varepsilon)}$ lies on the $r$-cap equidistant from $\ell$, oriented in such a way that the induced unit normal vector field $(\nu_+)|_{(-\varepsilon, \varepsilon)}$ is directed towards $\ell$;
				%		so that it locally coincides with $\eta_1$ up to reparametrizing the latter by precomposition with a translation;
				\item $(\sigma_+)|_{(T_0, T)}$ lies on the metric circle $\{x\in\Hyp^2\,|\,d_{\Hyp^2}(x,x_0)=r\}$ for some $x_0\in\Hyp^2$ and some $\varepsilon<T_0<T$, oriented in such a way that the induced unit normal vector field $(\nu_+)|_{(T_0, T)}$ is directed towards $x_0$.
				%so that it locally coincides with $\eta_2$ up to precomposing the latter with a translation.
			\end{itemize}
		} 
		
		\begin{figure}[htbp]
			\centering
			\includegraphics[height=5.5cm]{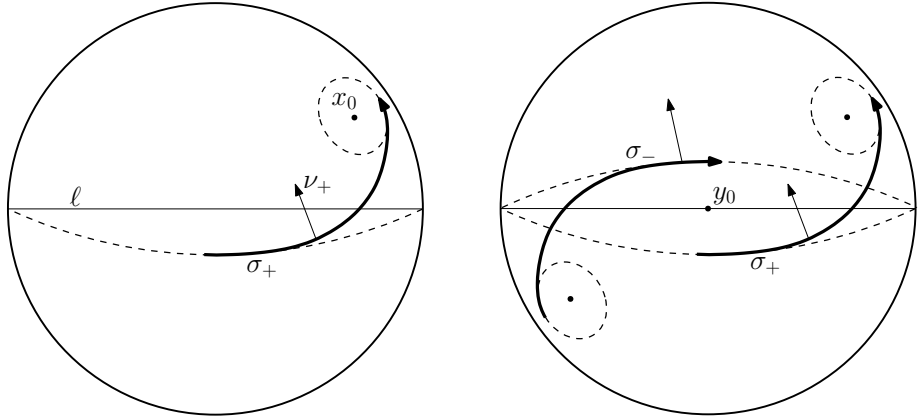} 
			%\captionsetup{labelformat=empty}
			
			\caption{On the left, the construction of the curve $\sigma_+$, which is an arclenght parameterization of a portion of $r$-cap equidistant from $\ell$ for $s\in(-\varepsilon,\varepsilon)$, and of a circle of radius $r$ for $s\in (T_0,T)$. On the right, the curve $\sigma_-$ whose image is obtained by an order-two elliptic isometry from the image of $\sigma_+$.}\label{fig:counterexample1}
		\end{figure}

		See Figure \ref{fig:counterexample1}. 	By a simple computation, for instance using Equation \eqref{eq della discordia}, the curvature of $\sigma_+$ equals $\tanh(r)$ on $(-\varepsilon, \varepsilon)$ and $\frac 1 {\tanh(r)}$ on $(T_0, T)$. Hence by the intermediate value theorem, {the image of the curvature function $k\colon (-\varepsilon, T)\to \R$ contains the interval $[\tanh(r),\frac 1 {\tanh(r)}]$.}   An important consequence of this observation is that $(\sigma_+)_t$ fails to be an immersion when $t\geq r$. More precisely, if we denote $\zeta_{\sigma_+}$ the lift to $T^1\Hyp^2$ as usual, using Equation \eqref{eq:geodflow} analogously as for Equation \eqref{eq:normal evolution hyperboloid2}, we obtain that:
		$$d(\pi\circ\varphi_t\circ\zeta_{\sigma_+})(V)=(\cosh(t)-\sinh(t)k)d\sigma(V)~.$$
		This shows that the differential of $\pi\circ\varphi_t\circ\zeta_{\sigma_+}$ at a point $s\in (-\varepsilon,T)$ vanishes if and only if 
		\begin{equation}\label{eq:curvature and differential}
			\tanh(t)=\frac 1 {k(s)}~.
		\end{equation}
		Since the image of the function $k$ contains the interval $[\tanh(r),\frac 1 {\tanh(r)}]$, if $t\geq r$ then there exists $s$ such that \eqref{eq:curvature and differential} is satisfied and therefore $\pi\circ\varphi_t\circ\zeta_{\sigma_+}$ is not an immersion at $s$.

		Now, let $y_0\in \ell$ be the point at distance $r$ from $\sigma_+(0)$ and let ${R}_0\colon \Hyp^2\to \Hyp^2$ be the symmetry at $y_0$, i.e. ${R}_0$ is the isometry of $\Hyp^2$ such that ${R}_0(y_0)=y_0$ and $d_{y_0} {R}_0=-\mathrm{id}$. Define $\sigma_-\colon (-T, \varepsilon)\to \Hyp^2$ as 
		\[
		\sigma_-(s):=  {R}_0 (\sigma_+ (-s)).
		\] 
		As a result, the normal vector field $\nu_-$ of $\sigma_-$  is such that $d R_0 ({\nu_+} (s))= {-}{\nu_-} (-s)$ and the curvature of $\sigma_-$ takes any value in the interval $[-\frac 1 {\tanh(r)}, -{\tanh(r)}]$. Hence $(\sigma_-)_t$ fails to be an immersion for $t\leq -r$.
		
		Finally, consider the two lifts $\zeta_{\sigma_+}$ and $\zeta_{\sigma_-}$ in $T^1\Hyp^2$. By construction one has that for all $s\in (-\varepsilon,\varepsilon)$
		\[
		\varphi_r \circ \zeta_{\sigma_+} (s)= \varphi_{-r} \circ \zeta_{\sigma_-} (s).
		\]
		As a result, we can define our counterexample $\zeta\colon (-T, T)\to T^1\Hyp^2$ as
		\[
		\zeta(s)
		= \begin{cases}
			\varphi_r \circ \zeta_{\sigma_+} (s)\quad &\text{ if $s>-\varepsilon$}\\
			\varphi_{-r} \circ \zeta_{\sigma_-} (s) \quad &\text{ if $s <\varepsilon$}
		\end{cases}.\]
		By construction, we have that $\mathrm p \circ \zeta_{\sigma_+}= \mathrm p \circ \zeta|_{(-\varepsilon, T)}$ and $\mathrm p \circ \zeta_{\sigma_-}= \mathrm p \circ \zeta|_{(-T, \varepsilon)}$, therefore $\mathrm p \circ \zeta$ is an immersion into $\G{2}$ and clearly it is locally integrable. 
		However, by the above discussion, $\pi\circ\varphi_t\circ\zeta$ fails to be an immersion for every $t\in\R$: for $t\geq 0$ because $\pi\circ\varphi_t\circ\zeta_{\sigma_+}$ has vanishing differential at some  $s\geq -\varepsilon$, and for $t\leq 0$ because the differential of $\pi\circ\varphi_t\circ\zeta_{\sigma_-}$ vanishes  at some  $s\leq \varepsilon$. See Figure \ref{fig:counterexample2}.
		\begin{figure}[htbp]
			\centering
			\includegraphics[height=5.5cm]{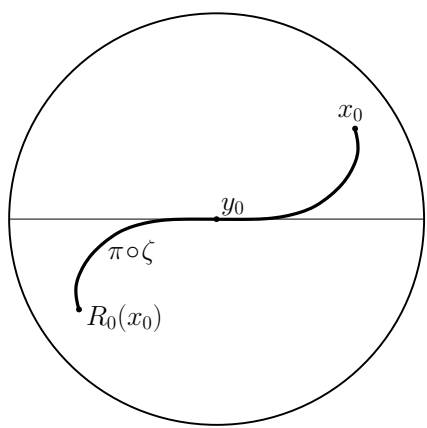} 
			\caption{The curve $\zeta:(-T,T)\to T^1\Hyp^2$ projects to a map $\pi\circ\zeta$ into $\Hyp^2$ which is not an immersion, because it is constantly equal to $x_0$ for $s\in(T_0,T)$ and to $R_0(x_0)$ for $s\in(-T,-T_0)$. Moreover the curvature of the regular part takes all the values in $(-\infty,+\infty)$. For this reason, each immersion $\varphi_t\circ\zeta$ into $T^1\Hyp^2$ does not project to an immersion in $\Hyp^2$.}\label{fig:counterexample2}
		\end{figure}	
	\end{Example}

	%\subsection{Integrability for Riemannian embeddings}
	
	%We are now ready to characterize the Gauss maps of hypersurfaces in $\Hyp^{n+1}$ of small principal curvatures.
	
	Corollary \ref{cor:lagrangian2} and {Theorem \ref{cor: local integrability}} can be improved under the additional assumption that the immersion $G$ is Riemannian. More precisely, we provide an improved characterization of {immersions into $\G{n+1}$ that are} Gauss maps of immersions with small principal curvatures in terms of the Lagrangian and Riemannian condition, again for {when} $M$ is simply connected. 
	
	%\as{vecchio da altra sezione: Putting together Corollary \ref{cor:lagrangian2} with the discussion of Section \ref{sec:extr geom small princ curv}, we will . This will be the content of Theorem \ref{prop: riemannian global integrability}. }
	
	\begin{Theorem}
		\label{prop: riemannian global integrability}
		Given a simply connected manifold $M^n$ and an {immersion} $G:M^n\to \G{n+1}$, $G$ is Riemannian and Lagrangian if and only if there exists an {immersion} $\sigma\colon M\to \Hyp^{n+1}$ with small principal curvatures such that $G_\sigma=G$, i.e. if and only if $G$ is globally integrable. 
		
		{If in addition $\sigma$ is complete, then it is a proper embedding, $G_\sigma$ is an embedding and $M$ is diffeomorphic to $\R^n$.}
	\end{Theorem}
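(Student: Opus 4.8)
The statement contains an equivalence together with an addendum under a completeness assumption. For the equivalence, one implication is essentially free and all the content lies in the other.

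The plan is to dispose first of the \emph{only if} direction: if $\sigma\colon M\to\Hyp^{n+1}$ is an immersion with small principal curvatures and $G_\sigma=G$, then $G$ is Lagrangian by Corollary \ref{cor:lagrangian1} and Riemannian by Proposition \ref{prop: small curv sse riemannian}, with nothing further to prove.

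For the \emph{if} direction, assume $G$ is Riemannian and Lagrangian. Since $M$ is simply connected, Corollary \ref{cor:lagrangian2} provides an immersion $\zeta\colon M\to T^1\Hyp^{n+1}$ orthogonal to the fibers of $\mathrm p$ with $G=\mathrm p\circ\zeta$, and the second part of that corollary reduces everything to showing that $\sigma:=\pi\circ\zeta$ is a global immersion. The point I would stress is that, in contrast with Lemma \ref{lemma:desingularize for small t} and the phenomenon of Example \ref{ex: Lagrangian not globally integrable}, the Riemannian hypothesis makes this automatic, with no need to slide $\zeta$ along the geodesic flow. Concretely, I would fix $p\in M$, write $\zeta(p)=(x,v)$, and use the splitting \eqref{eq:direct sum} together with orthogonality to $\chi$ to express
\[
d_p\zeta(V)=A(V)^\HH+u(V)^\V,\qquad A(V),u(V)\in v^\perp~,
\]
so that $d_p\sigma(V)=A(V)$. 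Since $d\zeta$ is valued in $\chi^\perp$, one has $\overline\I=\zeta^*\gs{n+1}$, and Definition \ref{Def:parasasaki} gives $\overline\I(V,V)=\|A(V)\|^2-\|u(V)\|^2$. The key step is then immediate linear algebra: positivity of $\overline\I$ forces $A(V)\neq0$ for $V\neq0$, hence $A\colon T_pM\to v^\perp$ is an injective map between $n$-dimensional spaces and $d_p\sigma$ is injective. As $p$ is arbitrary, $\sigma$ is an immersion, the second part of Corollary \ref{cor:lagrangian2} yields $G=G_\sigma$, and Proposition \ref{prop: small curv sse riemannian} then gives that $\sigma$ has small principal curvatures, since $G=G_\sigma$ is Riemannian by assumption.

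Finally, suppose in addition that $\sigma$ is complete. Then $\sigma$ is a complete immersion with small principal curvatures, so Proposition \ref{prop injectivity} gives directly that $\sigma$ is a proper embedding and $M$ is diffeomorphic to $\R^n$, while Proposition \ref{prop:gauss maps diffeo onto image} gives that $G=G_\sigma$ is an embedding. The only genuinely nontrivial point of the whole argument is the observation in the previous paragraph --- that positivity of $\overline\I$ is exactly what rules out the failure of global integrability exhibited in Example \ref{ex: Lagrangian not globally integrable} --- and once this is in place every remaining assertion is a direct citation of earlier results.
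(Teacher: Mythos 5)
Your proposal is correct and follows essentially the same route as the paper: reduce to showing $\pi\circ\zeta$ is an immersion via Corollary \ref{cor:lagrangian2}, and use the signature of the para-Sasaki metric on $\chi^\perp$ (horizontal part positive, vertical part negative) to see that the Riemannian hypothesis on $\overline\I$ forbids a nonzero vector with vanishing horizontal component. The paper phrases this by noting that $d\zeta(X)=w^\V$ forces $\GG(X,X)=-\langle w,w\rangle\le 0$, which is the same linear-algebra observation as your $\overline\I(V,V)=\|A(V)\|^2-\|u(V)\|^2$; the concluding citations for the small-principal-curvatures claim and the completeness addendum also match.
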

	\begin{proof}
		We know from Corollary \ref{cor:lagrangian1} and Proposition \ref{prop: small curv sse riemannian} that the Riemannian and Lagrangian conditions {on $G$} are necessary.
		To see that they are also sufficient, by Corollary \ref{cor:lagrangian2} there exists $\zeta\colon M \to \G{n+1}$ orthogonal to the fibers of $\mathrm{p}$ such that $\mathrm{p}\circ \zeta=G$. We claim that $\pi\circ\zeta$ is an immersion, which implies that $G=G_\sigma$ for $\sigma=\pi\circ\zeta$ by the second part of Corollary \ref{cor:lagrangian2}. Indeed, if $X\in T_p M$ is such that $d\zeta (X) \in \mathcal V_{\zeta(x)}=\ker (d\pi_{\zeta(x)}) $, then $d\zeta (X)=w^\V$ for some $w\in T_{\sigma(p)}\Hyp^{n+1}$. Hence by Definition \ref{Def:parasasaki} and the construction of the metric $\GG$,  $\GG (X,X)= -\langle w,w\rangle \le 0$: since $\GG$ is Riemannian, necessarily $w=0$ and therefore $X=0$. 
		
		{By Proposition \ref{prop: small curv sse riemannian} $\sigma$ has small principal curvatures.
			The ``in addition'' part follows by Proposition \ref{prop injectivity} and Proposition \ref{prop:gauss maps diffeo onto image}.}
	\end{proof}
	
	As another consequence of Proposition \ref{prop injectivity} and Proposition \ref{prop:gauss maps diffeo onto image}, we obtain the following result.
	
	\begin{Theorem}\label{Cor G complete}
		Let $M^n$ be a manifold. If $G\colon M\to \G {n+1}$ is a complete Riemannian and Lagrangian immersion, then $M$ is diffeomorphic to $\R^n$ and there exists a proper embedding $\sigma:M\to\Hyp^{n+1}$ with small principal curvatures such that $G=G_\sigma$.
	\end{Theorem}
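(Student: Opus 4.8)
The plan is to reduce to the simply connected case already settled in Theorem \ref{prop: riemannian global integrability} and then to rule out any nontrivial deck transformation. First I would pass to the universal cover $u\colon\widetilde M\to M$ and set $\widetilde G=G\circ u$, which is again a Riemannian and Lagrangian immersion. Its first fundamental form $\widetilde G^*\GG=u^*(G^*\GG)$ is the pull-back of a complete metric by a Riemannian covering, hence complete. Since $\widetilde M$ is simply connected, Theorem \ref{prop: riemannian global integrability} produces an immersion $\widetilde\sigma\colon\widetilde M\to\Hyp^{n+1}$ with small principal curvatures such that $G_{\widetilde\sigma}=\widetilde G$.

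Next I would check that $\widetilde\sigma$ is complete, so as to invoke the ``in addition'' clause of Theorem \ref{prop: riemannian global integrability} (equivalently Proposition \ref{prop injectivity}). By Equation \eqref{eq:fff gauss} the two first fundamental forms are related by $\widetilde G^*\GG=\I-\III=\I((\mathrm{id}-B^2)\cdot,\cdot)$, using that the shape operator $B$ is $\I$-self-adjoint; the small principal curvatures condition means the eigenvalues of $B$ lie in $(-1,1)$, so $\mathrm{id}-B^2$ is positive with eigenvalues in $(0,1]$ and thus $\I\ge \widetilde G^*\GG$ as quadratic forms. Consequently every path has $\I$-length at least its $\widetilde G^*\GG$-length, so any divergent path has infinite $\I$-length and $\I$ is complete. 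Proposition \ref{prop injectivity} then gives that $\widetilde\sigma$ is a proper embedding and that $\widetilde M$ is diffeomorphic to $\R^n$.

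The main obstacle is to upgrade this to a statement about $M$ itself, i.e. to show that the deck group $\Gamma=\pi_1(M)$ is trivial, since Theorem \ref{prop: riemannian global integrability} only applies upstairs. For each $\gamma\in\Gamma$ the map $\widetilde\sigma\circ\gamma$ is an immersion whose Gauss map is $G_{\widetilde\sigma\circ\gamma}=\widetilde G\circ\gamma=\widetilde G$, so its canonical lift $\zeta_{\widetilde\sigma\circ\gamma}$ is, like $\zeta_{\widetilde\sigma}$, a flat section of the trivial flat $\R$-bundle $\widetilde G^*\mathrm p$ (Proposition \ref{prop:gauss immersion}). Two flat sections over the connected base $\widetilde M$ differ by a global $\varphi_{t(\gamma)}$, hence by Proposition \ref{prop: flusso geod e flusso normale} one gets $\widetilde\sigma\circ\gamma=(\widetilde\sigma)_{t(\gamma)}$, the normal evolution at time $t(\gamma)$ (which is an immersion by Corollary \ref{cor:equidistant is immersed}). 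Now I would argue by cases. If $t(\gamma)\ne 0$, comparing images gives $\widetilde\sigma(\widetilde M)=\widetilde\sigma(\gamma\widetilde M)=(\widetilde\sigma)_{t(\gamma)}(\widetilde M)$, which is impossible because the equidistant hypersurface $(\widetilde\sigma)_{t(\gamma)}(\widetilde M)$ is disjoint from $\widetilde\sigma(\widetilde M)$ for $t(\gamma)\ne 0$; this disjointness follows from the fact that the geodesics normal to $\widetilde\sigma$ at distinct points are pairwise disjoint (established in the proofs of Propositions \ref{prop:gauss maps diffeo onto image} and \ref{prop:action free prop disc0}), so that each normal geodesic meets $\widetilde\sigma(\widetilde M)$ and $(\widetilde\sigma)_{t(\gamma)}(\widetilde M)$ in points at distance $|t(\gamma)|>0$. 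If instead $t(\gamma)=0$, then $\widetilde\sigma\circ\gamma=\widetilde\sigma$, and since $\widetilde\sigma$ is injective while a nontrivial deck transformation is fixed-point free, this forces $\gamma=e$. Hence $\Gamma$ is trivial, $M=\widetilde M\cong\R^n$, and $\sigma:=\widetilde\sigma$ is the desired proper embedding with $G=G_\sigma$.
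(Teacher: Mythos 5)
Your proof is correct, and up to the completeness step it coincides with the paper's: lift to the universal cover, apply Theorem \ref{prop: riemannian global integrability} to get $\widetilde\sigma$ with small principal curvatures, and deduce completeness of $\I$ from $\overline\I=\I-\III\le\I$. Where you genuinely diverge is in proving that the deck group is trivial. The paper does this in one line: since $\widetilde\sigma$ is complete with small principal curvatures, Proposition \ref{prop:gauss maps diffeo onto image} makes the Gauss map $\widetilde G=G\circ u$ injective, so the covering $u$ itself is injective and $M=\widetilde M$. You instead run the flat-bundle holonomy mechanism: each $\gamma\in\pi_1(M)$ carries the flat section $\zeta_{\widetilde\sigma}$ of $\widetilde G^*\mathrm p$ to another flat section, whence $\widetilde\sigma\circ\gamma=(\widetilde\sigma)_{t(\gamma)}$, and you exclude $t(\gamma)\neq 0$ via disjointness of equidistant hypersurfaces and $t(\gamma)=0$ via injectivity of $\widetilde\sigma$. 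This is in effect the specialization to the present situation of the holonomy representation $\hol_G$ of Section \ref{sec:hol flat princ bdles}; it costs slightly more (you need both Proposition \ref{prop injectivity} and the pairwise disjointness of normal geodesics, where the paper only invokes Proposition \ref{prop:gauss maps diffeo onto image}), but it makes explicit that the obstruction being killed is exactly the holonomy of the pull-back bundle. One detail to tighten: since $M$ is not assumed orientable, a deck transformation $\gamma$ may reverse the orientation of $\widetilde M$, in which case the canonical lift $\zeta_{\widetilde\sigma\circ\gamma}$ taken with respect to the fixed orientation uses the normal $-\nu\circ\gamma$ and the identity $G_{\widetilde\sigma\circ\gamma}=\widetilde G\circ\gamma$ as written fails; the argument is repaired at no cost by working directly with $\zeta_{\widetilde\sigma}\circ\gamma$, which is manifestly a flat section of $\widetilde G^*\mathrm p$ projecting to $\widetilde\sigma\circ\gamma$ under $\pi$.
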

	\begin{proof}
		Let us lift $G$ to the universal cover $\widetilde M$, obtaining a Riemannian and Lagrangian immersion $\widetilde G:\widetilde M\to\G{n+1}$ which is still complete. By Theorem \ref{prop: riemannian global integrability}, $\widetilde G$ is the Gauss map of an immersion $\sigma$ with small principal curvatures. We claim that $\sigma$ is complete. Indeed by Equation \eqref{eq:fff gauss} the first fundamental form of $\widetilde G$, which is complete by hypothesis, equals $\I-\III$, hence it is complete since $\III$ is positive semi-definite.
		
		It follows from Proposition \ref{prop:gauss maps diffeo onto image} that $\widetilde G$ is injective. Hence $\widetilde M=M$ and $\widetilde G=G$, and therefore $G$ is the Gauss map of $\sigma$, which is complete. 
		By the ``in addition'' part of Theorem \ref{prop: riemannian global integrability}, $\sigma$ is properly embedded and $M$ is diffeomorphic to $\R^n$.
	\end{proof}
	
	In summary, the Lagrangian condition is essentially the only \emph{local} obstruction to realizing an immersion $G:M\to \G{n+1}$ as the Gauss map  of an immersion into $\Hyp^{n+1}$, up to the subtlety that this might be an immersion only when lifted to $T^1\Hyp^{n+1}$. {This subtlety however never occurs in the small principal curvatures case.} 
	{In the remainder of the paper, we will discuss the problem of characterizing immersions into $\G{n+1}$ which are Gauss maps of \emph{equivariant} immersions into $\Hyp^{n+1}$ with small principal curvatures.}

	%In this section, we discuss around the problem of understanding whether an immersion into $\G{n+1}$ can be seen \emph{globally} as the Gauss map of an immersion of a hypersurface into $\Hyp^{n+1}$. We will focus in particular on Riemannian immersions into $\G{n+1}$.

	\section{Equivariant integrability of Riemannian immersions}
	
	In this section, we provide the first characterization of \emph{equivariant} immersions in $\G{n+1}$ which arise as the Gauss maps of \emph{equivariant} immersions in $\Hyp^{n+1}$, in the Riemannian case. This is the content of Theorem \ref{teorema hol H baby}. We first try to motivate the problem,  introduce  the obstruction, namely the Maslov class, and study some of its properties. See for instance \cite{zbMATH01523513} for a discussion on the Maslov class in more general settings.

	%Let G be a group acting on a manifold $\mathcal X$ and let $\rho\colon \pi_1(M)\to$ G be a representation. We say that an immersion $\iota\colon \widetilde M \to \mathcal X$ is \emph{$\rho$-equivariant} if, for all $\alpha\in \pi_1(M)$,  $\iota\circ\alpha= \rho(\alpha)\circ \iota$.
	
	%Recall that, by construction, the group $\Isom (\Hyp^{n+1})$ acts by isometries on $(T^1 \Hyp^{n+1}, \gs{n+1})$ and on $(\G{n+1}, \GG)$.\as{questa frase si puÃ² tagliare o spostare prima}
	%\appunto{\as{direi che non serviva introdurre il simbolo G, che fa confusione con altre cose, tanto per noi Ã¨ sempre il gruppo di isometrie. Inoltre c'era un piccolo conflitto sul simbolo $M$}}
	\subsection{Motivating examples}

	{Given an $n$-manifold $M$, a representation $\rho:\pi_1(M)\to\Isom(\Hyp^{n+1})$, and a $\rho$-equivariant immersion $\widetilde\sigma:\widetilde M\to \Hyp^{n+1}$, it is immediate to see that the Gauss map $G_{\widetilde\sigma}:\widetilde M\to \G{n+1}$ is $\rho$-equivariant} {(recall also Remark \ref{rmk: Isom preserva Omega e J})}.
	Moreover, if ${\widetilde\sigma}$ has small principal curvatures, it follows from the discussion of the previous sections that $G_{\widetilde\sigma}$ is a Lagrangian and Riemannian {immersion}. 
	
	%\as{(That $G_{\widetilde\sigma}$ is an embedding follows from the fact that $\widetilde\sigma$ is complete by cocompactness, see the last statement of Theorem \ref{prop: riemannian global integrability}.)} 
	
	However, a $\rho$-equivariant Lagrangian immersion (even with the additional assumptions of being Riemannian and being an embedding) does not always arise as the Gauss map associated to a \emph{$\rho$-equivariant} immersion in $\Hyp^{n+1}$, as the following simple example shows for $n=1$.
	
	\begin{Example}\label{ex: global integrable non equivariant}
		Let us {construct} a coordinate system for a portion of $\G{2}$. Let $\gamma:\R\to\Hyp^2$ be a geodesic parameterized by arclength, and let us define a map $\eta:\R\times (0,\pi)\to\G{2}$ by defining $\eta(t,\theta)$ as the oriented geodesic that intersects $\gamma$ at $\gamma(t)$ with an angle $\theta$ ({measured counterclockwise} with respect to the standard orientation of $\Hyp^2$). 
		We can lift $\eta$ to a map $\widehat\eta:\R\times (0,\pi)\to T^1\Hyp^2$, which will however \emph{not} be orthogonal to the fibers of the projection $T^1\Hyp^2\to\G{2}$. The lift is simply defined as 
		$$\widehat \eta(t,\theta)=(\gamma(t),\cos(\theta)\gamma'(t)+\sin(\theta) w)~,$$ where $w$ is the vector {forming an angle $\frac \pi 2$ with $\gamma'(t)$}. We omitted the dependence of $w$ on $t$ since, in the hyperboloid model, $w$ is actually a constant vector in $\R^{2,1}$.  
		
		Let us compute the pull-back of the metric $\GG$ on $\G{2}$ by the map $\eta$. We have already observed in Example \ref{ex:spheres} that the restriction of $\GG$ on the image of $\theta\mapsto\eta(t_0,\theta)$ is minus the standard metric of $\Sph^1$. Indeed in this simple case, $d\widehat\eta_{(t,\theta)}(\partial_\theta)=(0,w)$ is in the vertical subspace 
		$\VP$
		%$\VP_{\widetilde\eta_{(t,\theta)}}$ 
		and by Definition \ref{Def:parasasaki} its squared norm is $-1$. On the other hand, since the vector field $\cos(\theta)\gamma'(t)+\sin(\theta) w$ is parallel along $\gamma$, when we differentiate in $t$ we obtain, by applying the definition of horizontal lift:
		\begin{equation}\label{eq:lift example n=1}
			d\widehat\eta_{(t,\theta)}(\partial_t)=\cos(\theta)(\gamma'(t))^\HH+\sin(\theta)w^\HH~.\end{equation}
		Moreover, Equation \eqref{eq:lift example n=1} gives the decomposition of $d\widehat\eta_{(t,\theta)}(\partial_t)$ in $T_{\widehat\eta_{(t,\theta)}}T^1\Hyp^2={\mathrm{Span}(\chi)\oplus\chi^\perp}$ as in Equation \eqref{eq:direct sum}, since the first term is a multiple of the generator of the geodesic flow, and the second term is in $\HP$.
		%$\HP_{\widetilde\eta_{(t,\theta)}}$. 
		This shows, by definition of the metric $\GG$, that $d\eta_{(t,\theta)}(\partial_t)$ has squared norm $\sin^2(\theta)$ and that $d\eta_{(t,\theta)}(\partial_t)$ and $d\eta_{(t,\theta)}(\partial_\theta)$ are orthogonal. In conclusion, we have showed:
		$$\eta^*\GG=-d\theta^2+\sin^2(\theta)dt^2~.$$
		
		We are now ready to produce our example of $\rho$-equivariant embedding $G:\widetilde M\to\G{2}$ which is not $\rho$-integrable. Consider $M=\Sph^1$, $\widetilde M=\R$, and the representation $\rho:\Z\to\Isom(\Hyp^2)$ which is a hyperbolic translation along $\gamma$. The induced action on $\G{2}$ preserves the image of $\eta$ and its generator acts on the $(t,\theta)$-coordinates simply by $(t,\theta)\mapsto (t+c,\theta)$. Hence the map
		$$G:\R\to\G{2}\qquad G(s)=\eta(s,\theta_0)$$
		for some constant $\theta_0\in (0,\pi)$ is a $\rho$-equivariant Riemannian embedding by the above expression of $\eta^*\GG$. Of course the Lagrangian condition is {trivially satisfied since $n=1$}. By Theorem \ref{prop: riemannian global integrability} $G$ coincides with the Gauss map $G_\sigma$ associated to some embedding $\sigma:\R\to\Hyp^2$ with small curvature. It is easy to see that any such embedding $\sigma$ is not $\rho$-equivariant unless $\theta_0=\frac \pi 2$, see Figure \ref{fig:example_dim2}.
	\end{Example}
	
	\begin{figure}[htbp]
		\centering
		\begin{minipage}[c]{.5\textwidth}
			\centering
			\includegraphics[width=.6\textwidth]{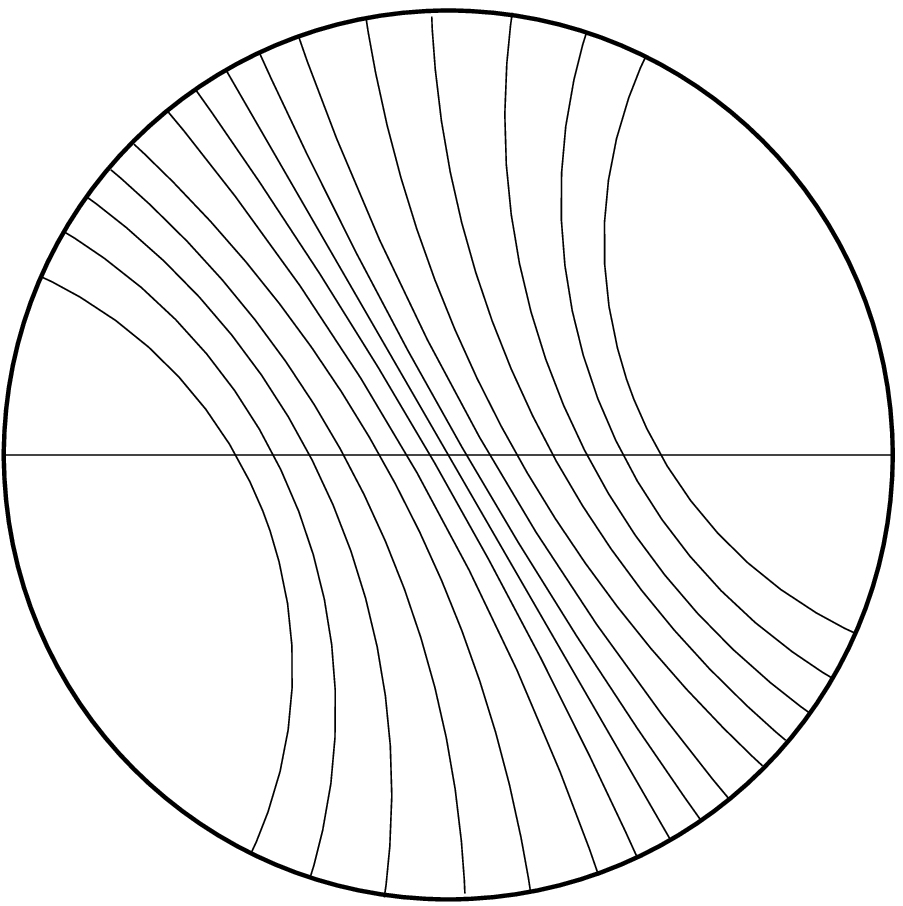} 
			%\captionsetup{labelformat=empty}
		\end{minipage}%
		%\hspace{5mm}
		\begin{minipage}[c]{.5\textwidth}
			\centering
			\includegraphics[width=.7\textwidth]{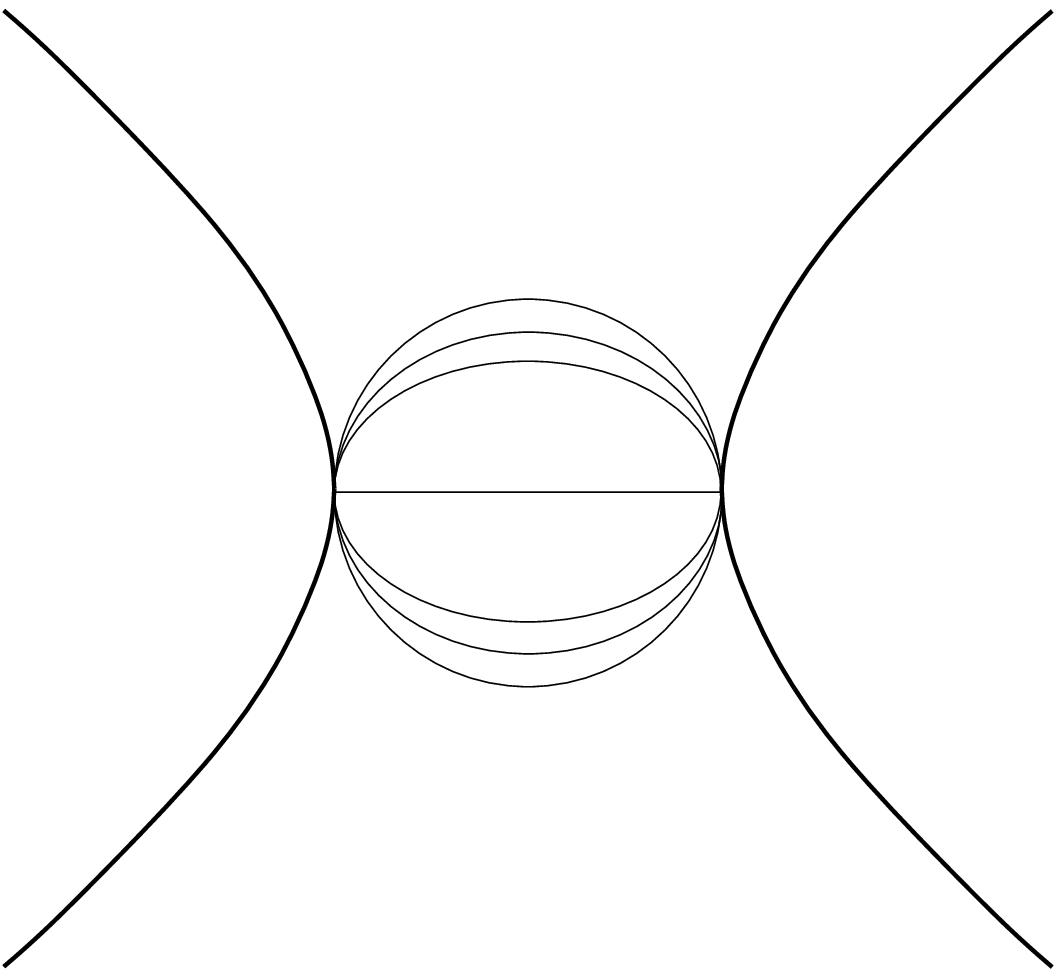} 
			%\captionsetup{labelformat=empty}
		\end{minipage}%
		\caption{On the left, the family of geodesics forming a fixed angle $\theta$ with the horizontal geodesic (in the Poincar\'e disc model of $\Hyp^2$). On the right, the metric $-d\theta^2+\sin^2(\theta)dt^2$ represents a portion of Anti-de Sitter space of dimension 2. Here are pictured some lines defined by $\theta=c$, in the projective model of Anti-de Sitter space.}\label{fig:example_dim2}
	\end{figure}

	{We also briefly provide an example of a locally integrable, but not globally integrable immersion in $\G{3}$ for $M$ not simply connected (lifting to the universal cover $\widetilde M$ this corresponds to $\rho$ being the trivial representation). This example in particular motivates Corollary \ref{cor hol H baby}, which is a direct consequence of Theorem \ref{teorema hol H baby}.}
	
	\begin{Example}\label{ex: global integrable non equivariant2}
		First, let us consider a totally geodesic plane $\mathcal P$ in $\Hyp^3$ and an annulus $\mathcal A$ contained in $\mathcal P$. Let $c:\widetilde {\mathcal A}\to \mathcal A$ be the universal covering. Then $c$ is an immersion in $\Hyp^3$ with small principal curvatures (in fact, totally geodesic), and is clearly not injective. See Figure \ref{fig:annulus} on the left. Of course, in light of Proposition \ref{prop injectivity}, this is possible because the immersion $c$ is not complete.
		
		Now, let us deform $\mathcal A$ in the following way. We cut $\mathcal A$ along a geodesic segment $s\subset\mathcal P$ to obtain a rectangle $\mathcal R$ having two (opposite) geodesic sides, say $r_1$ and $r_2$. Then we deform such rectangle to get an immersion $c':\mathcal R\to\Hyp^3$, so that one geodesic side remains unchanged (say $c'(r_1)=s$), while the other side $r_2$ is mapped to an $r$-cap equidistant from $\mathcal P$, for small $r$, in such a way that it projects to $s$ under the normal evolution. We can also arrange $c$ so that a neighbourhood of $r_1$ is mapped to $\mathcal P$, while a neighbourhood of $r_2$ is mapped to the $r$-cap equidistant from $\mathcal P$. See Figure \ref{fig:annulus} on the right.
		
		By virtue of this construction, the Gauss map of $c'$ coincides on the edges $r_1$ and $r_2$ of $\mathcal R$, and therefore induces an immersion $G':\mathcal A\to\G{3}$. Clearly $G'$ is locally integrable, but not globally integrable.  In other words, the lift to the universal cover of $G'$ is a $\rho$-equivariant immersion of $\widetilde A$ to $\G 3$ which is not $\rho$-integrable, for $\rho$ the trivial representation.

		\begin{figure}[htbp]
			\centering
			\includegraphics[height=4.5cm]{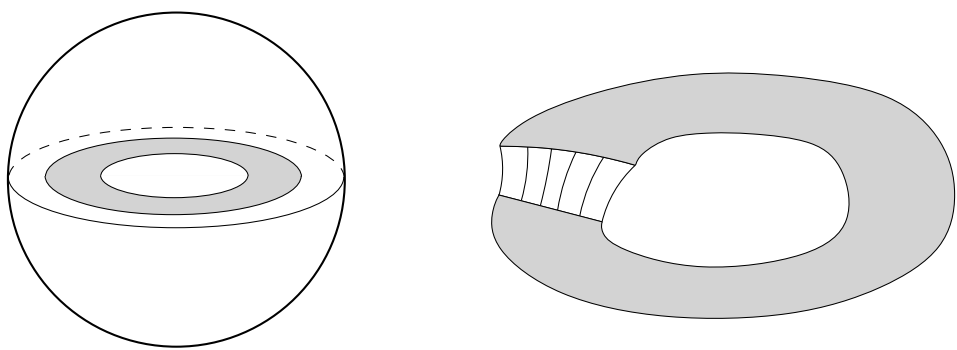} 
			%\captionsetup{labelformat=empty}
			
			\caption{On the left, a totally geodesic annulus $\mathcal A$ in a plane $\mathcal P$. On the right, an embedded rectangle with the property that a neighbourhood of one side lies in $\mathcal P$, while a neighbourhood of the opposite side lies on an $r$-cap equidistant from  $\mathcal P$. Such rectangle induces an embedding of $\mathcal A$ in $\G{3}$ which is locally, but not globally, integrable.}\label{fig:annulus}
		\end{figure}
		
	\end{Example}
	
	Motivated by the previous examples, we introduce the relevant definition for our problem.

	\begin{Def}\label{Def rhointegrable}
		%We say that an immersion $G\colon M \to \G{n+1}$ is \emph{integrable} if there exists an immersion $\sigma\colon M\to \Hyp^{n+1}$ whose Gauss map is $G$.
		Given {an $n$-manifold $M$ and a} representation $\rho\colon \pi_1(M) \to \Isom(\Hyp^{n+1})$, a $\rho$-equivariant immersion $G\colon \widetilde M \to \G{n+1}$ is \emph{$\rho$-integrable} if there exists a $\rho$-equivariant immersion ${\widetilde\sigma}\colon \widetilde M \to \Hyp^{n+1}$ whose Gauss map is $G$.
	\end{Def}

	\subsection{The Maslov class and the characterization}
	
	Let us now introduce the obstruction which will permit us to classify $\rho$-integrable Lagrangian immersions under the Riemannian assumption, namely the Maslov class. For this purpose, let $G:\widetilde M\to\G{n+1}$ be a Riemannian immersion. The
	\emph{second fundamental form} of $G$ is a symmetric bilinear form on $M$ with values in the normal bundle of $G$, defined as
	$$\overline\II(V,W)=({\mathbbm D}_{dG(V)}(dG(W)))^\perp$$
	for vector fields $V,W$, where {$\mathbbm D$} denotes the ambient Levi-Civita connection {of $\GG$} and $\perp$ the projection to the normal subspace of $G$. {One can prove that $\overline\II(V,W)$ is a tensor, i.e. that it}  depends on the value of $V$ and $W$ {pointwise.} 
	The
	\emph{mean curvature} is then 
	$$\overline{\mathrm H}=\frac{1}{n}\mathrm{tr}_{\overline\I}\overline\II~,$$
	that is, it is the trace of $\overline\II$ with respect to the first fundamental form $\overline\I$ of $G$, and is therefore a section of the normal bundle of $G$.

	Consider now the 1-form on $\widetilde M$ given by $G^*(\Omega (\overline {\mathrm H}, \cdot ) )$. It will follow from Proposition \ref{Prop: formula H in G} {(see Corollary \ref{cor:maslov closed})} that this is a closed 1-form. Since $\Isom(\Hyp^{n+1})$ acts by automorphisms of the para-K\"ahler manifold $(\G{n+1}, \GG, \mathbb J, \Omega)$, if $G$ is $\rho$-equivariant, then the form $G^*(\Omega (\overline {\mathrm H}, \cdot ) )$ is $\pi_1(M)$-invariant: as a result, it defines a well-posed closed 1-form on $M$. Its cohomology class is the so-called Maslov class:
	
	\begin{Def}\label{Def Maslov class}
		Given an $n$-manifold $M$, a representation $\rho\colon \pi_1(M) \to \Isom(\Hyp^{n+1})$ and a $\rho$-equivariant Lagrangian and Riemannian {immersion} $G\colon \widetilde M \to \G{n+1}$, the
		\emph{Maslov class} of $G$ is the cohomology class 
		%\[\mathrm{m}_G= [G^*(\Omega (\overline {\mathrm H}, \cdot ) )]\in H_{dR}^1(M)~.\] 
		\[
		\mu_G:=[G^*(\Omega (\overline {\mathrm H}, \cdot ) )]\in H_{dR}^1(M)~.
		\]
	\end{Def}
	
	The main result of this section is the following, and it will be deduced as a {consequence of} Theorem \ref{Teorema hol H}.
	
	\begin{Theorem}
		\label{teorema hol H baby}
		Given an orientable $n$-manifold  $M$ and a representation $\rho\colon \pi_1(M) \to \Isom(\Hyp^{n+1})$, a $\rho$-equivariant  Riemannian and Lagrangian {immersion} $G\colon \widetilde M \to \G{n+1}$ is
		$\rho$-integrable if and only if $\mu_G=0$ in $H_{dR}^1(M)$.
	\end{Theorem}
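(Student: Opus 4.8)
The plan is to reinterpret $\rho$-integrability as the triviality of a flat principal $\R$-bundle over $M$, and to identify the holonomy of that bundle with the de Rham periods of the Maslov class. Throughout I use that $M$ is orientable, so that the unit normal $\nu$, the Gauss map $G$, and the function $\kappa_\sigma$ below are globally well-defined once an orientation is fixed.

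First I would observe that, since $G$ is Lagrangian, the pull-back bundle $G^*\mathrm{p}$ over $\widetilde M$ is flat: its curvature is $G^*\Omega=0$ by Proposition \ref{prop:identity curvature form} (this is the content of Corollary \ref{cor:lagrangian1}). Because $G$ is $\rho$-equivariant, the deck action of $\pi_1(M)$ on $\widetilde M$ together with the action of $\rho$ on $T^1\Hyp^{n+1}$ induces an action on the total space of $G^*\mathrm{p}$ preserving the flat connection, so the quotient $(G^*\mathrm{p})/\pi_1(M)$ is a flat principal $\R$-bundle over $M$. A global flat section of this quotient is the same datum as a $\rho$-equivariant flat section $\zeta$ of $G^*\mathrm{p}$; since $G$ is Riemannian, the argument in the proof of Theorem \ref{prop: riemannian global integrability} shows that $\pi\circ\zeta$ is automatically an immersion with small principal curvatures and Gauss map $G$. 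Hence $G$ is $\rho$-integrable if and only if the quotient bundle is trivial, i.e. its holonomy homomorphism $s\colon\pi_1(M)\to\R$ vanishes.

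Next I would give a concrete description of $s$. By Theorem \ref{prop: riemannian global integrability} there is an immersion $\sigma\colon\widetilde M\to\Hyp^{n+1}$ with small principal curvatures and $G_\sigma=G$, giving a flat section $\zeta_\sigma$. For $\alpha\in\pi_1(M)$, the immersion $\rho(\alpha)^{-1}\circ\sigma\circ\alpha$ again has Gauss map $G$ (using $\rho$-equivariance of $G$), so its lift is another flat section of the connected flat bundle $G^*\mathrm{p}$ and therefore equals $\varphi_{s(\alpha)}\circ\zeta_\sigma$ for a constant $s(\alpha)\in\R$; equivalently $\rho(\alpha)^{-1}\circ\sigma\circ\alpha=\sigma_{s(\alpha)}$ is a normal evolution of $\sigma$. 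A direct check shows $\alpha\mapsto s(\alpha)$ is a homomorphism, is independent of the chosen $\sigma$, and coincides with the holonomy of the quotient bundle. Using that $\kappa_\sigma$ is an isometry invariant and that $\kappa_{\sigma_t}=\kappa_\sigma-t$ (Lemma \ref{lemma:evolution fsigma}), one obtains $\kappa_\sigma(\alpha\cdot p)=\kappa_\sigma(p)-s(\alpha)$; in particular $d\kappa_\sigma$ is $\pi_1(M)$-invariant and descends to a closed $1$-form $\eta$ on $M$ whose periods satisfy $\int_\gamma\eta=-s([\gamma])$.

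The crux, and where I expect the real work to lie, is the identity
\begin{equation*}
	G^*\big(\Omega(\overline{\mathrm H},\cdot)\big)=d\kappa_\sigma~,
\end{equation*}
relating the mean curvature vector $\overline{\mathrm H}$ of the Gauss map inside $(\G{n+1},\GG,\JJ,\Omega)$ to the function $\kappa_\sigma=\frac1n\sum_i\arctanh\lambda_i$ built from the principal curvatures of $\sigma$. Establishing it requires computing the second fundamental form and the mean curvature of $G$ in terms of the extrinsic data $(\I,B)$ of $\sigma$, using the explicit para-Sasaki metric, the para-complex structure $\JJ$, and the lift formula $d\zeta_\sigma(W)=d\sigma(W)^\HH-d\sigma(B(W))^\V$; the smoothness of $\kappa_\sigma$ (Remark \ref{rmk fsigma smooth}) is what makes the right-hand side meaningful. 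Once this formula is in hand, $G^*(\Omega(\overline{\mathrm H},\cdot))$ is closed, descends to exactly $\eta$, and represents the Maslov class $\mu_G$. Therefore $\mu_G=0$ in $H^1_{dR}(M)$ if and only if $\eta$ is exact, if and only if all its periods vanish, if and only if $s\equiv 0$, which by the first paragraph is equivalent to $\rho$-integrability, closing the argument.
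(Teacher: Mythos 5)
Your proposal is correct and follows essentially the same route as the paper: reformulating $\rho$-integrability as triviality of the quotient flat $\R$-bundle $\mathrm p_G$ (Lemma \ref{lemma:integrable iff trivial bundle}), identifying its holonomy with the periods of $d\kappa_\sigma$ via $\kappa_{\sigma_t}=\kappa_\sigma-t$, and invoking the key identity $G^*(\Omega(\overline{\mathrm H},\cdot))=d\kappa_\sigma$ of Proposition \ref{Prop: formula H in G} together with de Rham duality (this is exactly Theorem \ref{Teorema hol H}). The only difference is an immaterial sign convention in the holonomy, and you correctly flag the mean-curvature computation as the genuine technical content rather than reproving it.
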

	
	We immediately obtain the following characterization of global integrability {for $\pi_1(M)\ne \{1\}$}.
	
	\begin{Cor}
		\label{cor hol H baby}
		Given an orientable $n$-manifold  $M$ and an {immersion} $G\colon M \to \G{n+1}$, $G$ is the Gauss map of an immersion $\sigma:M\to\Hyp^{n+1}$ of small principal curvatures if and only
		if $G$ is Riemannian and Lagrangian and $\mu_G=0$ in $H_{dR}^1(M)$.
	\end{Cor}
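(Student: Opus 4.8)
The plan is to deduce the corollary directly from Theorem \ref{teorema hol H baby} by specializing to the trivial representation $\rho_0\colon\pi_1(M)\to\Isom(\Hyp^{n+1})$. The first thing I would set up is a dictionary between the integrability notions for maps defined on $M$ and the equivariant notions for maps defined on $\widetilde M$. Let $u\colon\widetilde M\to M$ be the universal covering. A $\rho_0$-equivariant immersion $\widetilde M\to\G{n+1}$ is exactly one invariant under the deck group, hence one that descends to a map $M\to\G{n+1}$; conversely any immersion $G\colon M\to\G{n+1}$ lifts to the $\rho_0$-equivariant immersion $\widetilde G:=G\circ u$. Likewise, a $\rho_0$-equivariant immersion $\widetilde M\to\Hyp^{n+1}$ is precisely the lift of an immersion $M\to\Hyp^{n+1}$. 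Thus ``$\widetilde G$ is $\rho_0$-integrable'' in the sense of Definition \ref{Def rhointegrable} is equivalent to ``there exists an immersion $\sigma\colon M\to\Hyp^{n+1}$ with $G_\sigma=G$''. I would also record that being Riemannian and being Lagrangian are local conditions, hence transfer freely between $G$ and $\widetilde G$, and that the $1$-form $\widetilde G^*(\Omega(\overline{\mathrm H},\cdot))$ on $\widetilde M$ is the pull-back by $u$ of the $1$-form $G^*(\Omega(\overline{\mathrm H},\cdot))$ on $M$, so that $\mu_{\widetilde G}=\mu_G$ in $H_{dR}^1(M)$.

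With this dictionary in place, both implications become short. For the forward direction, suppose $G=G_\sigma$ for an immersion $\sigma\colon M\to\Hyp^{n+1}$ with small principal curvatures. Then $G$ is Riemannian by Proposition \ref{prop: small curv sse riemannian} and Lagrangian by Corollary \ref{cor:lagrangian1}. Lifting, $\widetilde G$ is a $\rho_0$-equivariant Riemannian Lagrangian immersion, and it is $\rho_0$-integrable, its integrating map being $\sigma\circ u$ (which is $\rho_0$-equivariant since $u\circ\alpha=u$, and has Gauss map $G_{\sigma}\circ u=\widetilde G$). Theorem \ref{teorema hol H baby} then forces $\mu_{\widetilde G}=0$, whence $\mu_G=0$. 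For the converse, assume $G$ is Riemannian, Lagrangian, and $\mu_G=0$. Then $\widetilde G$ is Riemannian, Lagrangian, $\rho_0$-equivariant, with $\mu_{\widetilde G}=\mu_G=0$, so Theorem \ref{teorema hol H baby} produces a $\rho_0$-equivariant immersion $\widetilde\sigma\colon\widetilde M\to\Hyp^{n+1}$ whose Gauss map is $\widetilde G$. This $\widetilde\sigma$ descends to an immersion $\sigma\colon M\to\Hyp^{n+1}$ with $G_\sigma=G$, and since $G$ is Riemannian, $\sigma$ has small principal curvatures again by Proposition \ref{prop: small curv sse riemannian}.

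I do not anticipate a serious obstacle, since the entire mathematical content sits in Theorem \ref{teorema hol H baby}. The only point that requires genuine care is making the reduction to the trivial representation airtight: namely, verifying that the Maslov class of the map $G$ on $M$ literally coincides with that of its lift $\widetilde G$ (so that both the hypothesis $\mu_G=0$ and the conclusion transfer verbatim), and checking that the descent of the integrating immersion $\widetilde\sigma$ to $M$ is legitimate, i.e. that $\rho_0$-invariance of $\widetilde\sigma$ is exactly the statement that it factors through $u$. Both facts are immediate from the definitions, but they are precisely the places where one must keep track of the distinction between statements phrased on $M$ and statements phrased equivariantly on $\widetilde M$.
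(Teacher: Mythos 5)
Your proposal is correct and follows essentially the same route as the paper: both reduce the corollary to Theorem \ref{teorema hol H baby} applied to the trivial representation, using the correspondence between immersions defined on $M$ and deck-invariant immersions on $\widetilde M$. Your write-up is slightly more explicit about verifying that the Maslov class and the Riemannian/Lagrangian conditions transfer between $G$ and its lift $\widetilde G$, but this adds no new mathematical content beyond what the paper's shorter argument already relies on.
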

	\begin{proof}
		{Denote $\rho$ the trivial representation. Given $G:M\to\G{n+1}$, precomposing with the universal covering map we obtain an immersion $\widetilde G:\widetilde M\to\G{n+1}$ which is $\rho$-equivariant. Observe that $\widetilde G$ is the Gauss map of some immersion $\widetilde\sigma:\widetilde M\to \Hyp^{n+1}$ by Theorem \ref{prop: riemannian global integrability}. Then $G$ is the Gauss map of some immersion in $\Hyp^{n+1}$ if and only if $\widetilde\sigma$ descends to the quotient $M$, i.e. it is $\rho$-integrable. Hence this is equivalent to the vanishing of the Maslov class by Theorem \ref{teorema hol H baby}.}
	\end{proof}
	
	\subsection{Mean curvature of Gauss maps}
	
	Recall that, given an embedding $\sigma:M\to\Hyp^{n+1}$ with small principal curvatures, we introduced in \eqref{eq:aux function} the function $\kappa_\sigma:M\to\R$ which is the mean of the hyperbolic arctangents of the principal curvatures of $\sigma$. This function is strictly related to the mean curvature of the Gauss map of $\sigma$, as in the following proposition.
	
	\begin{Prop}
		\label{Prop: formula H in G}
		Let $M^n$ be an oriented manifold, $\sigma:M\to\Hyp^{n+1}$ an embedding with small principal curvatures, and $G_\sigma:M\to\G{n+1}$ its Gauss map. Then 
		$$G_\sigma^*(\Omega (\overline {\mathrm H}, \cdot ) )=d (\kappa_\sigma)= d\left({\frac{1}{n}}\sum_{i=1}^n\arctanh\lambda_i\right)~,$$
		where $\lambda_1,\ldots,\lambda_n$ denote the principal curvatures of $\sigma$.
	\end{Prop}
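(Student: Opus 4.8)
The plan is to reduce the statement to a computation of the mean curvature vector $\overline{\mathrm H}$ of the Lagrangian immersion $G_\sigma$, and then to match the resulting expression with $d\kappa_\sigma$ by invoking the Codazzi equation for $\sigma$. Since $G_\sigma$ is Lagrangian (Corollary \ref{cor:lagrangian1}) and Riemannian (Proposition \ref{prop: small curv sse riemannian}), for every tangent vector $X$ the vector $\JJ X$ is $\GG$-orthogonal to the image of $dG_\sigma$; as $\JJ$ is anti-isometric (Equation \eqref{eq:JGantiisometry}) and $\overline{\I}=\GG|_{TG_\sigma}$ is nondegenerate, the normal bundle of $G_\sigma$ is exactly $\JJ(dG_\sigma(TM))$. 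Hence we may write $\overline{\mathrm H}=\JJ\,dG_\sigma(Z)$ for a unique vector field $Z$ on $M$, and for every $X$ \[\Omega(\overline{\mathrm H},dG_\sigma X)=\GG\big(\JJ\,dG_\sigma Z,\JJ\,dG_\sigma X\big)=-\GG(dG_\sigma Z,dG_\sigma X)=-\overline{\I}(Z,X)~.\] Thus the Proposition is equivalent to the identity $Z=-\mathrm{grad}_{\overline{\I}}\kappa_\sigma$, and everything reduces to computing $\overline{\mathrm H}$.

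To compute $\overline{\mathrm H}$ I would replace $\gs{n+1}$ by the metric $\gstar{n+1}$ of Remark \ref{rmk other metric2}: it induces the same $\GG$ and the same $\JJ$ on $\G{n+1}$ (both depend only on the restriction to $\chi^\perp$), but it is the restriction to $T^1\Hyp^{n+1}\subset\R^{n+1,1}\times\R^{n+1,1}$ of a flat ambient metric, so its Levi-Civita connection is simply the tangential projection of the flat ambient differentiation. Moreover $\chi$ is a Killing field of constant norm for $\gstar{n+1}$, so the fibers of $\mathrm p$ (its orbits) are geodesics, and $\mathrm p$ is a pseudo-Riemannian submersion with totally geodesic fibers. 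Fix a local orthonormal frame $(X_i)$ of principal directions ($BX_i=\lambda_i X_i$), set $u_i=d\sigma(X_i)$, so that $d\zeta_\sigma(X_j)=u_j^\HH-\lambda_j u_j^\V$ by Equation \eqref{eq:differential lift} and $\JJ\,d\zeta_\sigma(X_k)=:N_k=-\lambda_k u_k^\HH+u_k^\V$. Differentiating $d\zeta_\sigma(X_j)$ in the flat ambient and projecting, one finds that the $\chi$-component of the derivative vanishes---reflecting that $\zeta_\sigma$ is a flat section (Proposition \ref{prop pullback flat})---so the second fundamental form $\widehat{\II}$ of $\Sigma=\zeta_\sigma(M)$ in $T^1\Hyp^{n+1}$ takes values in the span of the $N_k$, and consequently $\overline{\II}(dG_\sigma X_i,dG_\sigma X_j)=d\mathrm p\big(\widehat{\II}(X_i,X_j)\big)$.

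Carrying out the projections, using the hyperboloid Gauss formula $\mathrm d_{X_i}u_j=\sum_k c^i_{jk}u_k+\lambda_j\delta_{ij}\nu+\delta_{ij}\sigma$ with $c^i_{jk}=\I(\nabla_{X_i}X_j,X_k)$, gives \[\widehat{\II}(X_i,X_j)=\sum_k\frac{(\lambda_j-\lambda_k)c^i_{jk}+(\partial_{X_i}\lambda_j)\delta_{jk}}{-(1-\lambda_k^2)}\,N_k~.\] Taking the $\overline{\I}$-trace (recall $\overline{\I}(X_i,X_i)=1-\lambda_i^2$ from Equation \eqref{eq:fff gauss}), pairing $\overline{\mathrm H}$ with $\Omega(\cdot,dG_\sigma X_m)=\GG(\cdot,N_m)$ and using $\GG(N_k,N_m)=-(1-\lambda_k^2)\delta_{km}$, the desired equality $\Omega(\overline{\mathrm H},dG_\sigma X_m)=\partial_{X_m}\kappa_\sigma$ becomes \[\sum_{i\neq m}\frac{(\lambda_i-\lambda_m)\,c^i_{im}}{1-\lambda_i^2}=\sum_{j\neq m}\frac{\partial_{X_m}\lambda_j}{1-\lambda_j^2}~.\] This is exactly what Codazzi yields: projecting $(\nabla_{X_m}B)X_j=(\nabla_{X_j}B)X_m$ onto $X_j$ gives $\partial_{X_m}\lambda_j=c^j_{jm}(\lambda_j-\lambda_m)$ for $j\neq m$, which matches the two sides after relabelling.

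The core of the work, hence the main obstacle, is this second fundamental form computation: organizing the two projections (onto $T(T^1\Hyp^{n+1})$ and then onto the normal bundle of $\Sigma$) and bookkeeping the Codazzi terms correctly. The one technical subtlety is that the $\lambda_i$ and the principal frame are only smooth where the eigenvalue multiplicities are locally constant; I would run the computation on the open dense set where this holds, choosing smooth orthonormal frames adapted to the eigendistributions, so that all terms with $\lambda_i=\lambda_j$ drop from the sums above (consistently with the fact that Codazzi forces eigenvalues of multiplicity $\geq 2$ to be constant along their eigendistribution). Since $\kappa_\sigma$ is globally smooth (Remark \ref{rmk fsigma smooth}) and $\Omega(\overline{\mathrm H},\cdot)$ is a globally smooth $1$-form, agreement on a dense set gives agreement everywhere. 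As a consistency check, both sides are invariant under the normal evolution $\sigma_t$, since $G_{\sigma_t}=G_\sigma$ while $\kappa_{\sigma_t}=\kappa_\sigma-t$ by Lemma \ref{lemma:evolution fsigma}.
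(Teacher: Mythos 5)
Your proposal is correct and follows essentially the same route as the paper's proof: reduce to computing the mean curvature of $G_\sigma$, lift to $T^1\Hyp^{n+1}$ equipped with the metric $\gstar{n+1}$ of Remark \ref{rmk other metric2} so that covariant derivatives become flat ambient derivatives followed by projection, compute in an $\I$-orthonormal frame of principal directions, and close the argument with the Codazzi identity $\partial_{X_m}\lambda_j=(\lambda_j-\lambda_m)\,\I(\nabla_{X_j}X_j,X_m)$. The only differences are organizational (you package the computation as the second fundamental form of $\zeta_\sigma(M)$ pushed down by $d\mathrm p$, rather than computing $\GG(\mathbbm D_{\epsilon_i}\epsilon_i,\JJ\epsilon_k)$ directly via Lemma \ref{Lemma: curvatura media in G 1}) together with your welcome extra care about the smoothness of the principal frame, which the paper glosses over.
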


	The essential step in the proof of Proposition \ref{Prop: formula H in G} is the following computation for the mean curvature vector of the Gauss map $G_\sigma$:
	\begin{equation} \label{eq:formula H in G}
		\overline{\mathrm H}=-\JJ(dG_\sigma(\overline \nabla \kappa_\sigma))~,
	\end{equation}
	where $\overline \nabla$ denotes the gradient with respect to the first fundamental form $\overline \I$ of $G_\sigma$. 
	Indeed, once Equation \eqref{eq:formula H in G} is established, Proposition \ref{Prop: formula H in G} follows immediately since
	%$$G_\sigma^*(i_{\overline{\mathrm H}}\Omega)(V)=\Omega(\overline{\mathrm H},dG_\sigma(V))=-\GG(\JJ(\overline{\mathrm H}),dG_\sigma(V))=\GG(dG_\sigma(\mathrm{grad}_{\overline \I}\Lambda_\sigma),dG_\sigma(V))=d\Lambda_\sigma(V)~.$$
	$$\Omega(\overline{\mathrm H},dG_\sigma(V))=-\GG(\JJ(\overline{\mathrm H}),dG_\sigma(V))=\GG(dG_\sigma(\overline \nabla \kappa_\sigma),dG_\sigma(V))=d\kappa_\sigma(V)~.$$
	
	{The computations leading to Equation \eqref{eq:formula H in G} will be done in $T^1 \Hyp^{n+1}$ equipped with the metric $\gstar{n+1}$ defined in Remark \ref{rmk other metric2}, which is the restriction of the flat pseudo-Riemannian metric \eqref{eq:metric minkxmink} of $\R^{n+1,1}\times \R^{n+1,1}$ to $T^1\Hyp^{n+1}$, seen as a submanifold as in \eqref{eq:modelT}. This approach is actually very useful: the Levi-Civita connection of $\gstar{n+1}$ on $T^1\Hyp^{n+1}$, that we denote by
		$\widehat D$}, {will be} just the normal projection of the flat connection $\mathrm d$ of $\R^{n+1,1}\times \R^{n+1,1}$ to $T^1\Hyp^{n+1}$.
	%for which $dp\colon (\chi^\bot, \hat g_{T^1 \Hyp^{n+1}}) \to (T\G{n+1}, \GG)$ is still a linear isometry at each point.
	
	{Indeed}, the following lemma will be useful to compute the Levi-Civita connection {$\mathbbm D$} of $\G{n+1}$. Given a vector $X\in T_\ell\G{n+1}$ and $(x,v)\in \mathrm{p}^{-1}(\ell)$, we define the \emph{horizontal lift} of $X$ at   $(x,v)$ as the unique vector $\widetilde X\in T_{(x,v)}T^1\Hyp^{n+1}$ such that 
	\begin{equation}\label{eq:hor lift}
		\widetilde X\in \chi_{(x,v)}^\perp \qquad\text{and}\qquad d\mathrm p(\widetilde X)=X~.\end{equation}
	For a vector field $X$ on an open set $U$ of $\G{n+1}$, we will also refer to the  vector field $\widetilde X$ on $\mathrm{p}^{-1}(U)$, defined by the conditions  \eqref{eq:hor lift}, as the \emph{horizontal lift} of the vector field $X$.

	%Denote with $\hat D$ the Levi-Civita connection for $\hat g_{T^1 \Hyp^{n+1}}$.

	\begin{Lemma}
		\label{Lemma: curvatura media in G 1}
		Given two  vector fields $X,Y$ on $\GG$, 
		\[
		%(\mathbbm D_X Y)_{|\mathrm{p}(v)}= d \mathrm{p} \Big( \hat D_{\widetilde X} \widetilde Y \Big).
		\mathbbm D_XY=d\mathrm p(\widehat D_{\widetilde X}\widetilde Y)
		\]
	\end{Lemma}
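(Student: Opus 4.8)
The plan is to recognize the identity as the standard O'Neill-type relation for the pseudo-Riemannian submersion $\mathrm p\colon T^1\Hyp^{n+1}\to\G{n+1}$, whose fibers are the orbits of the geodesic flow. I would set $\nabla_X Y:=d\mathrm p(\widehat D_{\widetilde X}\widetilde Y)$ for horizontal lifts $\widetilde X,\widetilde Y$ (as in \eqref{eq:hor lift}) and prove that $\nabla=\mathbbm D$ by checking that $\nabla$ is a well-defined torsion-free metric connection, invoking the uniqueness of the Levi-Civita connection. The four ingredients I would use throughout are: that the geodesic flow $\varphi_t$ acts isometrically for $\gstar{n+1}$ (Lemma \ref{lemma:geodflow isometric} together with Remarks \ref{rmk other metric1}--\ref{rmk other metric2}) and preserves $\chi$ (Equation \eqref{eq:diff geoflow3}); that $\chi$ is $\gstar{n+1}$-orthogonal to $\chi^\perp$, with $\chi^\perp$ non-degenerate; that $\gstar{n+1}$ and $\gs{n+1}$ agree on $\chi^\perp$, so that by Definition \ref{Def:parasasaki} and Remark \ref{rmk other metric2} one has the basic identity $\GG(X,Y)\circ\mathrm p=\gstar{n+1}(\widetilde X,\widetilde Y)$; and that $\widetilde X$ is $\mathrm p$-related to $X$, whence $\widetilde X(h\circ\mathrm p)=(Xh)\circ\mathrm p$ for every $h\in C^\infty(\G{n+1})$ and $\widetilde{fX}=(f\circ\mathrm p)\widetilde X$.

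First I would establish well-definedness, i.e. that the right-hand side does not depend on the point chosen in $\mathrm p^{-1}(\ell)$. Since $\varphi_t$ preserves $\chi^\perp$ and satisfies $d\mathrm p\circ d\varphi_t=d\mathrm p$, each horizontal lift $\widetilde X$ is $\varphi_t$-invariant. As $\varphi_t$ is a $\gstar{n+1}$-isometry, it preserves $\widehat D$, so
\[
d\varphi_t\big(\widehat D_{\widetilde X}\widetilde Y\big|_{(x,v)}\big)=\widehat D_{\widetilde X}\widetilde Y\big|_{\varphi_t(x,v)}~,
\]
and applying $d\mathrm p$ (which is constant along fibers) shows $\nabla_X Y$ is well-posed on $\G{n+1}$. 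Tensoriality in $X$ follows from $\widetilde{fX}=(f\circ\mathrm p)\widetilde X$, and the Leibniz rule in $Y$ from $\widehat D_{\widetilde X}((f\circ\mathrm p)\widetilde Y)=(\widetilde X(f\circ\mathrm p))\widetilde Y+(f\circ\mathrm p)\widehat D_{\widetilde X}\widetilde Y$ together with $\widetilde X(f\circ\mathrm p)=(Xf)\circ\mathrm p$; hence $\nabla$ is a genuine connection.

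Next I would verify the two defining properties. For torsion-freeness, since $\widehat D$ is torsion-free and $\widetilde X\sim_{\mathrm p}X$, $\widetilde Y\sim_{\mathrm p}Y$ give $[\widetilde X,\widetilde Y]\sim_{\mathrm p}[X,Y]$, I get
\[
\nabla_X Y-\nabla_Y X=d\mathrm p\big(\widehat D_{\widetilde X}\widetilde Y-\widehat D_{\widetilde Y}\widetilde X\big)=d\mathrm p([\widetilde X,\widetilde Y])=[X,Y]~.
\]
For metric compatibility I would start from $\GG(Y,Z)\circ\mathrm p=\gstar{n+1}(\widetilde Y,\widetilde Z)$, differentiate along $\widetilde X$, and use compatibility of $\widehat D$:
\[
\big(X\,\GG(Y,Z)\big)\circ\mathrm p=\gstar{n+1}(\widehat D_{\widetilde X}\widetilde Y,\widetilde Z)+\gstar{n+1}(\widetilde Y,\widehat D_{\widetilde X}\widetilde Z)~.
\]
Because $\widetilde Z\in\chi^\perp$ and $\chi\perp_{\gstar{n+1}}\chi^\perp$, only the $\chi^\perp$-component of $\widehat D_{\widetilde X}\widetilde Y$ contributes; by \eqref{eq:direct sum} this component equals $\widetilde{\nabla_X Y}$, since $d\mathrm p$ annihilates $\chi$ and is injective on $\chi^\perp$. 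Thus each term rewrites as $\GG(\nabla_X Y,Z)\circ\mathrm p$ and $\GG(Y,\nabla_X Z)\circ\mathrm p$, and surjectivity of $\mathrm p$ yields compatibility. Uniqueness of the Levi-Civita connection then gives $\nabla=\mathbbm D$.

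The main obstacle I anticipate is bookkeeping the two distinct splittings of $TT^1\Hyp^{n+1}$: the decomposition $\chi^\perp\oplus\mathrm{Span}(\chi)$ relevant for $\mathrm p$ must not be confused with the horizontal/vertical splitting $\HH\oplus\VP$ relevant for $\pi\colon T^1\Hyp^{n+1}\to\Hyp^{n+1}$. The key delicate point is precisely that $\widehat D_{\widetilde X}\widetilde Y$ may have a nonzero component along $\chi$ (the analogue of O'Neill's $A$-tensor), which is invisible to $d\mathrm p$ and harmless for compatibility only because $\widetilde Z$ is taken orthogonal to $\chi$; making this step rigorous requires the orthogonality $\chi\perp_{\gstar{n+1}}\chi^\perp$ and the non-degeneracy of $\chi^\perp$, both of which are guaranteed by the explicit form of $\gstar{n+1}$ in Remark \ref{rmk other metric2}.
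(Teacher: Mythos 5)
Your proof is correct and follows essentially the same strategy as the paper's: define the candidate connection $\nabla_XY=d\mathrm p(\widehat D_{\widetilde X}\widetilde Y)$, check well-posedness via the isometric action of the geodesic flow, verify linearity, the Leibniz rule, torsion-freeness through $\mathrm p$-relatedness of brackets, and metric compatibility using that the horizontal lift of $\nabla_XY$ is the $\chi^\perp$-projection of $\widehat D_{\widetilde X}\widetilde Y$, then conclude by uniqueness of the Levi-Civita connection. Your explicit identification of the $\chi$-component as the analogue of O'Neill's $A$-tensor is a nice clarification of the same delicate point the paper handles implicitly.
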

	\begin{proof}
		{By the well-known characterization of the Levi-Civita connection, }it is sufficient to prove that the expression
		{$\mathbbm A_XY:=d\mathrm p(\widehat D_{\widetilde X}\widetilde Y)$}
		is a {well-posed linear connection which is torsion-free and compatible with the metric of $\GG$. We remark that this is not obvious because, although the metric of $\GG$ is the restriction of the metric $\gstar{n+1}$ to $\chi^\perp$, there is no {flat} section of the bundle projection $p:T^1\Hyp^{n+1}\to\G{n+1}$, hence $\G{n+1}$ cannot be seen as an isometrically embedded submanifold of $T^1 \Hyp^{n+1}$.}
		
		First, {observe that} the expression $(\mathbbm A_XY)_{|\ell}=(d_{(x,v)}\mathrm p)(\widehat D_{\widetilde X}\widetilde Y)$ does not depend on the choice of the point $(x,v)\in \mathrm{p}^{-1}(\ell)$. Indeed, given two points $(x_1,v_1)$ and $(x_2,v_2)$ in $\mathrm{p}^{-1}(\ell)$, there exists $t$ such that $(x_2,v_2)=\varphi_t(x_1,v_1)$. By a small adaptation of Lemma \ref{lemma:geodflow isometric}, the geodesic flow $\varphi_t$ acts by isometries of the metric $\gstar{n+1}$ (see also Remarks \ref{rmk other metric1} and \ref{rmk other metric2}), hence it also preserves the horizontal lifts $\widetilde X$ and $\widetilde Y$ and the Levi-Civita connection $\widehat D$. Hence $d\mathrm p ( \widehat D_{\widetilde X} \widetilde Y)= \mathbbm A_X Y$ is a well-defined vector field on $\G{n+1}$ whose horizontal lift is the projection of $\widehat D_{\widetilde X} \widetilde Y $ to $\chi^\bot$.
		%, i.e. \as{mi sembra che la formula qua sotto sia sbagliata perchÃ© $\chi$ non Ã¨ unitario per $\widehat g$, ma si puÃ² tagliare, basta esprimere a parole}
		%   \begin{equation}
		%   \label{eq: canonical lifting connection}
		%   \widetilde{A_X Y}= \hat D_{\widetilde X} \widetilde Y - \widehat g_{T^1 \Hyp^{n+1} } (\hat D_{\widetilde X} \widetilde Y, \chi) \chi.
		%  \end{equation}
		
		{We} check that $\mathbbm A$ is a linear connection{. It} is immediate to check the additivity in $X$ and $Y$. Moreover we have the {$C^\infty$-}linearity in $X$ since:
		$$\mathbbm A_{fX} Y= d \mathrm{p} \left( \widehat D_{(f\circ \mathrm{p})\widetilde X} \widetilde Y \right) = d\mathrm p \left( (f\circ \mathrm{p}) \hat D_{\widetilde X} \widetilde Y \right)= f \cdot (\mathbbm A_X Y)~,$$
		and the Leibnitz rule in $Y$, for:
		$$\mathbbm A_X (fY)= d\mathrm p \left( \partial_{\widetilde X} (f\circ \mathrm{p})\ \widetilde Y+ (f\circ \mathrm{p}) \widehat D_{\widetilde X} \widetilde Y\right)= \partial_X f\ Y + f\cdot (\mathbbm A_X Y)~.$$
		The connection $\mathbbm A$ is torsion-free: 
		$$	
		\mathbbm A_X Y - \mathbbm A_Y X =d \mathrm p (\widehat D_ {\widetilde X} \widetilde Y) - d\mathrm  p( \widehat D_{\widetilde Y} \widetilde X)= d\mathrm p ([\widetilde X, \widetilde Y])= [X,Y]~.
		$$
		Finally, we show that $\mathbbm A$ is compatible with the metric $\GG$:
		\begin{align*}
			\GG(\mathbbm A_X Y, Z) + \GG(Y, \mathbbm A_X Z)  &= \gstar{n+1} (\widetilde{\mathbbm A_X Y}, \widetilde Z) + \gstar{n+1 } (\widetilde Y, \widetilde{\mathbbm A_X Z})=\\
			&= \gstar{n+1} ( \widehat D_{\widetilde X} \widetilde Y, \widetilde Z) + \gstar{n+1} (\widetilde Y, \widehat D_{\widetilde X} \widetilde Z) =\\
			&= \partial_{\widetilde X} (\gstar{n+1} (\widetilde Y, \widetilde Z)  )= \partial_X (\GG(Y,Z))~,    \end{align*}
		%\appunto{\orange{ho corretto la notazione $X(f)$ con $\partial_X (f)$ in tutta la dim}} 
		where in the first line we used the definition of $\GG$, and in the second line the fact that the horizontal lift of $\mathbbm A_X Y$ is the orthogonal projection (with kernel spanned by $\chi$) of $\widehat D_{\widetilde X}\widetilde Y$.
	\end{proof}
	
	We are now ready to provide the proof of Proposition \ref{Prop: formula H in G}.
	
	\begin{proof}[Proof of Proposition $\ref{Prop: formula H in G}$]
		As already observed after Equation \eqref{eq:formula H in G}, it suffices to prove that 
		$\overline{\mathrm H}=-\JJ(dG_\sigma(\overline\nabla \kappa_\sigma))$. So we shall compute the mean curvature vector of $G_\sigma$ in $\G{n+1}$. For this purpose, let $\{e_1,\ldots,e_n\}$ be a local frame on $M$ which is orthonormal with respect to the first fundamental form $\overline \I=G_\sigma^*\GG$. To simplify the notation, let us denote $\epsilon_i:=dG_\sigma(e_i)$.
		Then $\{\JJ\epsilon_1,\ldots,\JJ\epsilon_n\}$ is an orthonormal basis for the normal bundle of $G_\sigma$, on which the metric $\GG$ is negative definite since $G_\sigma$ is Riemannian. The mean curvature vector can be computed as:
		$$\overline{\mathrm H}=\frac{1}{n}\sum_{i=1}^n \overline \II(\epsilon_i,\epsilon_i)=-\frac{1}{n}\sum_{i=1}^n\sum_{k=1}^n \GG(\overline \II(\epsilon_i,\epsilon_i),\JJ\epsilon_k)\JJ\epsilon_k=-\frac{1}{n}\sum_{i=1}^n\sum_{k=1}^n \GG(\mathbbm D_{\epsilon_i}\epsilon_i,\JJ\epsilon_k)\JJ\epsilon_k~,$$
		where in the last equality we used that $\overline \II(V,W)$ is the normal projection of $\mathbbm D_V W$. 
		
		Let us now apply this expression to a particular {$\overline \I$-}orthonormal frame $\{e_1,\ldots,e_n\}$ obtained in the following way. Pick a local {$\I$-}orthogonal frame on $M$ of eigenvectors for the shape operator $B$ of $\sigma$, and normalize each of them so as to have unit norm for $\overline\I$. Hence each $e_i$ is an eigenvector of $B$, whose corresponding eigenvalue $\lambda_i$ are the principal curvatures of $\sigma$. We claim that, with this choice, $\GG(\mathbbm D_{\epsilon_i}\epsilon_i,\JJ\epsilon_k)=d(\arctanh\lambda_i)(e_k)$. This will conclude the proof, for
		$$\overline{\mathrm H}=\frac{1}{n}\JJ\left(\sum_{i=1}^n\sum_{k=1}^n d(\arctanh\lambda_i)(e_k)\epsilon_k\right)=\JJ\left(\sum_{k=1}^n \partial_{e_k}\kappa_\sigma\ dG_\sigma(e_k)\right)=\JJ(dG_\sigma(\overline \nabla \kappa_\sigma)))~.$$
		
		To show the claim, we will first use Lemma \ref{Lemma: curvatura media in G 1} to get 
		$$\GG(\mathbbm D_{\epsilon_i}\epsilon_i,\JJ\epsilon_k)=\gstar{n+1}(\widehat D_{\widetilde\epsilon_i}\widetilde\epsilon_i,J\widetilde \epsilon_k)~,$$
		where $\widehat D$ is the Levi-Civita connection of $\gstar{n+1}$ {and $\widetilde\epsilon_i$ is the horizontal lift of $\epsilon_i$}. 
		%Moreover, we have the explicit expression $\widetilde\epsilon_i=d\widetilde\sigma(e_i)=(\widetilde\sigma\circ\gamma_i)'(0)$ for $\gamma_i$ an integral curve of the vector field $e_i$. In the hyperboloid model \eqref{eq:modelT} of $T^1\Hyp^{n+1}$, 
		As in Equation \eqref{eq: differential of sigma tilde}, we can write $$d\zeta_\sigma(e_i)=(d\sigma(e_i),-\lambda_id\sigma(e_i))$$
		%$$\widetilde\sigma\circ\gamma_i(t)=(\sigma\circ\gamma_i(t),\nu\circ\gamma_i(t))\qquad\text{and}\qquad d\widetilde\sigma(e_i)=(d\sigma(e_i),-\lambda_id\sigma(e_i))~.$$
		and the Levi-Civita connection $\widehat D$ is the normal projection with respect to the metric \eqref{eq:metric minkxmink} of the flat connection $\mathrm d$ of $\R^{n+1,1}\times\R^{n+1,1}$. Hence we can compute: 
		\begin{align*}
			\GG(\mathbbm D_{\epsilon_i}\epsilon_i,\JJ\epsilon_k)&=-\lambda_k\langle \mathrm d_{d\sigma(e_i)}d\sigma(e_i),d\sigma(e_k)\rangle+\lambda_i\langle \mathrm d_{d\sigma(e_i)}d\sigma(e_i),d\sigma(e_k)\rangle+\partial_{e_i} \lambda_i\langle d\sigma(e_i),d\sigma(e_k)\rangle\\
			&=(\lambda_i-\lambda_k)\ \I(\nabla_{e_i}e_i,e_k)+(\partial_{e_i} \lambda_i)\ \I(e_i,e_k)~.
		\end{align*}
		We recall that $g$ denotes the first fundamental form of $\sigma$, and $\nabla$ its Levi-Civita connection, and in the last equality we used that the Levi-Civita connection of $\Hyp^{n+1}$ is the  projection to the hyperboloid in Minkowski space $\R^{n+1,1}$ of the ambient flat connection.
		
		Now, when $i=k$ we obtained the desired result:
		$$\GG(\mathbbm D_{\epsilon_i}\epsilon_i,\JJ\epsilon_i)=\frac{\partial_{e_i} \lambda_i}{1-\lambda_i^2}=d(\arctanh\lambda_i)(e_i)$$
		since $e_i$ is a unit vector for the metric $\overline\I$, hence using the expression $\overline\I=\I-\III$ from Equation \eqref{eq:fff gauss} its {squared} norm for the metric $\I$ is $(1-\lambda_i^2)^{-1}$. When $i\neq k$, the latter term disappears since $\{e_1,\ldots,e_n\}$ is an orthogonal frame for $g$, and we are thus left with showing that 
		$$(\lambda_i-\lambda_k)\ \I(\nabla_{e_i}e_i,e_k)=d(\arctanh\lambda_i)(e_k)~.$$
		
		For this purpose, using the compatibility of $\nabla$ with the metric, namely $\partial_{ e_i} (\I( e_i,  e_k))=\I(\nabla_{ e_i} e_i,  e_k )+\I( e_i,  \nabla_{ e_i} e_k)$, that $\I(e_i,e_k)=0$, and that $\nabla$ is torsion-free, we get:
		$$(\lambda_i-\lambda_k)\ \I(\nabla_{ e_i}  e_i,  e_k) = - (\lambda_i-\lambda_k)\ \I( e_i, \nabla_{ e_i}  e_k)
		= \lambda_k\ \I( e_i,  \nabla_{ e_i} e_k) - \lambda_i\ \I( e_i,  \nabla_{ e_k} e_i) -\lambda_i\ \I( e_i, [ e_i, e_k ])~.$$
		
		Now, 
		recall that the Codazzi equation for $\sigma$ is $d^\nabla B=0$. 
		%, where $\nabla$ is the Levi-Civita connection of $g$ and $d^\nabla B$ can be defined by
		%\[
		%d^\nabla B (X,Y)= \nabla_X (B(Y)) - \nabla_Y (B(X)) - B[X,Y].
		%\]
		Applying it to the vector fields $e_i$ and $e_k$, we obtain
		$$d^\nabla B( e_i,  e_k)= \nabla_{ e_i} (\lambda_k  e_k) - \nabla_{ e_k} (\lambda_i  e_i)- B([ e_i,  e_k])=0,$$
		from which we derive
		\begin{equation}
			\label{eq: lemma2 curvatura media}
			%\begin{split}
			% d^\nabla B( e_i,  e_k)&= \nabla_{ e_i} (\lambda_k  e_k) - \nabla_{ e_k} (\lambda_i  e_i)- B([ e_i,  e_k])\\
			\lambda_k \nabla_{e_i}e_k - \lambda_i \nabla_{e_k}e_i= (\partial_{ e_k} \lambda_i) e_i -(\partial_{ e_i} \lambda_k) e_k  + B([ e_i,  e_k])~.
			% \end{split}
		\end{equation}
		Using Equation \eqref{eq: lemma2 curvatura media} in the previous expression, we finally obtain:
		%$$(\lambda_i-\lambda_k) g(\nabla_{ e_i}  e_i,  e_k) =$$
		
		\begin{align*}
			(\lambda_i-\lambda_k)\ \I(\nabla_{ e_i}  e_i,  e_k) =%\ & (\lambda_i-\lambda_k) (\partial_{ e_k} g( e_k,  e_i))- (\lambda_i-\lambda_k) g( e_k, \nabla_{ e_k}  e_i)\\
			% =\ & 0 -\lambda_i g( e_k,  \nabla_{ e_k} e_i) + \lambda_kg( e_k,  \nabla_{ e_i} e_k) + \lambda_k g( e_k, [ e_k, e_i ])\\
			%\stackrel{\eqref{eq: lemma2 curvatura media}}{=}
			& (\partial_{ e_k}\lambda_i)\ \I( e_i,  e_i) - (\partial_{ e_i}\lambda_k)\ \I( e_i,  e_k)+ \I( e_i, B[ e_i,  e_k] )- \I(B( e_i), [ e_i,  e_k])\\
			= &  \frac{\partial_{ e_k} \lambda_i}{1- \lambda_i^2}=d(\arctanh \lambda_i)(e_k)
		\end{align*}
		where the cancellations from the first to the second line are due to the fact that $B$ is $\I$-self adjoint and that $\I(e_i,e_k)=0$. This concludes the proof.
	\end{proof}
	
	\begin{Cor}\label{cor:maslov closed}
		Given an $n$-manifold $M$, a representation $\rho\colon \pi_1(M) \to \Isom(\Hyp^{n+1})$ and a $\rho$-equivariant Lagrangian and Riemannian {immersion} $G\colon \widetilde M \to \G{n+1}$, the Maslov class $\mu_G$ is a well-defined cohomology class in $H^1_{dR}(M,\R)$.
	\end{Cor}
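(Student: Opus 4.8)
The plan is to verify two properties of the $1$-form $\alpha := G^*(\Omega(\overline{\mathrm H}, \cdot))$ on $\widetilde M$: that it is closed, and that it descends to a well-defined $1$-form on $M$. Together these produce a well-defined class $[\alpha] = \mu_G \in H^1_{dR}(M,\R)$. Let me first record that $\alpha$ is genuinely real-valued: $\Omega$ is the (real) symplectic form on $\G{n+1}$ and $\overline{\mathrm H}$ is the mean curvature vector of $G$, computed from the real pseudo-Riemannian metric $\GG$, hence a real section of the normal bundle of $G$.

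For closedness, I would argue locally, since being closed is a local condition. Fix $p \in \widetilde M$ and choose a simply connected open neighbourhood $U$ of $p$. As $G$ is Riemannian and Lagrangian, Theorem \ref{prop: riemannian global integrability} provides an immersion $\sigma_U \colon U \to \Hyp^{n+1}$ with small principal curvatures whose Gauss map is $G|_U$; up to shrinking $U$ we may assume $\sigma_U$ is an embedding. The second fundamental form and mean curvature of $G$ are intrinsic to $G$ as a Riemannian immersion into $(\G{n+1}, \GG)$, so on $U$ they agree with those of the Gauss map $G_{\sigma_U}$. Proposition \ref{Prop: formula H in G} then gives
\[
\alpha|_U = G_{\sigma_U}^*\big(\Omega(\overline{\mathrm H}, \cdot)\big) = d(\kappa_{\sigma_U})~,
\]
so $\alpha$ is exact, in particular closed, on $U$. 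Since $p$ was arbitrary, $d\alpha = 0$ on all of $\widetilde M$.

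For the descent to $M$, I would exploit the naturality of the para-K\"ahler structure under isometries. By Remark \ref{rmk: Isom preserva Omega e J}, every element of $\Isom(\Hyp^{n+1})$ acts on $\G{n+1}$ preserving both $\GG$ and $\JJ$, hence also $\Omega = \GG(\cdot, \JJ\cdot)$; consequently such an isometry carries the mean curvature vector of $G$ to that of its post-composition with the isometry. Combined with the equivariance relation $G \circ \gamma = \rho(\gamma) \circ G$ for each $\gamma \in \pi_1(M)$, a direct computation gives $\gamma^* \alpha = \alpha$. Thus $\alpha$ is $\pi_1(M)$-invariant and descends to a closed $1$-form on $M$, whose de Rham class is $\mu_G$.

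The substantive content of the argument has in fact been deferred to Proposition \ref{Prop: formula H in G}, so once that formula is available this corollary is essentially a packaging statement. The only point requiring mild care is that Proposition \ref{Prop: formula H in G} is stated for embeddings, which is precisely why the local integrability step must be performed on neighbourhoods small enough that $\sigma_U$ is an embedding; this is harmless since the property being checked, closedness, is local.
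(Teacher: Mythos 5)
Your proof is correct and follows essentially the same route as the paper: both reduce closedness to Proposition \ref{Prop: formula H in G} via the integrability result of Theorem \ref{prop: riemannian global integrability}, and both obtain descent to $M$ from the $\rho$-equivariance of $G$ together with the invariance of $\Omega$ under $\Isom(\Hyp^{n+1})$. The only (harmless) difference is that the paper applies the integrability theorem once globally on the simply connected $\widetilde M$ to get exactness of the form there, whereas you localize to small neighbourhoods --- which has the minor virtue of matching the embedding hypothesis in the statement of Proposition \ref{Prop: formula H in G} exactly.
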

	\begin{proof}
		{By Theorem \ref{prop: riemannian global integrability}, $G$ is the Gauss map of a (in general non equivariant) immersion $\sigma:\widetilde M\to\Hyp^{n+1}$. By Proposition \ref{Prop: formula H in G}, the 1-form $G^*(\Omega (\overline {\mathrm H}, \cdot ) )$ on $\widetilde M$ is exact, and $\rho$-equivariant, hence it induces a closed 1-form on $M$ whose cohomology class is $\mu_G$ as in Definition \ref{Def Maslov class}.}
	\end{proof}
	
	\subsection{Holonomy of the pull-back bundle and proof of Theorem \ref{teorema hol H baby} }\label{sec:hol flat princ bdles}
	
	An immediate consequence of Proposition \ref{Prop: formula H in G} is that the vanishing of the Maslov class is a necessary condition for a $\rho$-equivariant Lagrangian and Riemannian embedding $G:\widetilde M\to \G{n+1}$ to be $\rho$-integrable (Definition \ref{Def rhointegrable}). Indeed, if $\widetilde\sigma:\widetilde M\to\Hyp^{n+1}$ is a $\rho$-equivariant embedding with $G_{\widetilde\sigma}=G$ (hence necessarily with small principal curvatures), then the function $f_{\widetilde\sigma}$ descends to a well-defined function on $M$, hence by Proposition \ref{Prop: formula H in G} {$G^*(\Omega (\overline {\mathrm H}, \cdot ) )$} is an exact 1-form, i.e. the Maslov class $\mu_{G_{\widetilde\sigma}}$ vanishes. We will now see that this condition is also sufficient, which will be a consequence of a more general result, {Theorem \ref{Teorema hol H}}.
	
	Let $G:\widetilde M\to\G{n+1}$ be a $\rho$-equivariant Lagrangian and Riemannian embedding. We have already used that the $G$-pull-back bundle $\widetilde {\mathrm{p}}_{G}\colon \widetilde P \to \widetilde M$ of $\mathrm p:T^1\Hyp^{n+1}\to\G{n+1}$ is a flat trivial $\R$-principal bundle over $\widetilde M$, {namely, it is isomorphic, as a flat principal bundle, to the trivial bundle $\widetilde M\times \R\to \widetilde M$ with flat sections $\widetilde M\times \{\ast\}$}. Moreover, {$G$ being $\rho$-equivariant,} the fundamental group $\pi_1(M)$ acts freely and properly discontinously on $\widetilde P$, thus inducing a flat $\R$-principal bundle structure $\mathrm p_G\colon P \to M$, where $P$ is the quotient of $\widetilde P$ by the action of $\pi_1(M)$. However the bundle $\mathrm p_G\colon P\to M$ is not trivial in general. The obstruction to triviality is represented by the \emph{holonomy} of the bundle, which can be defined, in our setting, as follows.

	\begin{Def}\label{Def holonomy}
		Let $P\to M$ be a flat principal $\R$-bundle that is isomorphic to the quotient of the trivial bundle $\widetilde M\times\R\to\widetilde M$ by an equivariant (left) action of $\pi_1(M)$.  The \emph{holonomy representation} is the representation $\hol:\pi_1(M)\to\R$ such that  the action of $\pi_1(M)$ is expressed by:
		$$\alpha\cdot(m,s)=(\alpha\cdot m,\hol(\alpha)+ s)~.$$
	\end{Def}
	
	\begin{Remark}\label{rmk: alternative definition of hol}
		Fix $p\in M$ and $\alpha$ a closed $C^1$ loop based at $p$. Then pick a horizontal lift $\widehat\alpha$ to the total space of $\mathrm p_G$, namely with $\frac{d\widehat \alpha}{dt}$ orthogonal to the fibers, so that $\mathrm p\circ  \widehat \alpha=\alpha$. (The lift is uniquely determined by its initial point in $\mathrm{p}_G^{-1}(p)$.)  It follows from Definition \ref{Def holonomy} that
		\[
		\hol_G(\alpha) \cdot\widehat\alpha(1)=  \widehat \alpha(0) .
		\]
		In the identification $\pi_1(M)=\pi_1(M, [p])$, this allows to give an alternative definition of $\hol_G$ through homotopy classes of closed paths in $M$.
	\end{Remark}
	
	\begin{Remark}
		We remark that in general, for flat principal $G$-bundles, the holonomy representation is only defined up to conjugacy, but in out case $G=\R$ is abelian and therefore $\hol$ is uniquely determined by the isomorphism class of the flat principal bundle.
		
		Also observe that, since $\R$ is {abelian}, $\hol_G$ induces a map
		from $H_1(M, \Z)$ to $\R$,
		where $H_1(M,\Z)$ is the first homology group of $M$ and we are using that there is a canonical isomorphism between $H_1(M, \Z)$ and the abelianization of the fundamental group of $M$ in a point. Equivalently, $\hol_G$ is identified to an element of $H^1(M, \R)$.
	\end{Remark}
	
	We can interpret the holonomy of the principal bundle $\mathrm p_G$ in terms of the geometry of $\Hyp^{n+1}$. 
	{Global flat sections of the trivial bundle  $\widetilde{\mathrm{p}}_{G}\colon \widetilde P \to \widetilde M$ correspond to {Riemannian} embeddings $\zeta:\widetilde M\to T^1\Hyp^{n+1}$ as in Corollaries \ref{cor:lagrangian1} and \ref{cor:lagrangian2}.}
	%\orange{\sout{
	%Indeed, the principal bundle $\widetilde {\mathrm{p}}_{G}\colon \widetilde P \to \widetilde M$ is trivial since $\widetilde M$ is simply connected, hence it admits a global {flat} section, which corresponds to a Riemannian embedding $\zeta:\widetilde M\to T^1\Hyp^{n+1}$ as in Corollaries \ref{cor:lagrangian1} and \ref{cor:lagrangian2}.}} 
	By Theorem \ref{prop: riemannian global integrability}, such a $\zeta$ is the lift to $T^1\Hyp^{n+1}$ of an embedding $\sigma:\widetilde M\to\Hyp^{n+1}$ with small principal curvatures. 
	
	Now, let $\alpha\in\pi_1(M)$. {By equivariance of $G$, namely $G\circ\alpha=\rho(\alpha)\circ G$, it follows that $\mathrm p \circ\zeta\circ \alpha= \mathrm p \circ \rho(\alpha)\circ \zeta$, hence $\rho(\alpha)\circ\zeta\circ\alpha^{-1}:\widetilde M\to T^1\Hyp^{n+1}$ provides another flat section of the pull-back bundle $\widetilde {\mathrm{p}}_{G}$. Therefore} there exists $t_\alpha\in\R$ such that
	\begin{equation}\label{eq:equivariance lifts}
		\varphi_{t_\alpha}\circ\zeta=\rho(\alpha)\circ\zeta\circ\alpha^{-1}~.
	\end{equation}
	Then the value $t_\alpha$ is precisely the holonomy of the quotient bundle $\mathrm p_G$, namely the group representation
	\[
	\hol_G\colon \pi_1(M)\to \R\qquad \hol_G(\alpha)=t_\alpha
	\]
	A direct consequence of this discussion is the following:
	
	\begin{Lemma}\label{lemma:integrable iff trivial bundle}
		Given an $n$-manifold $M$ and a representation $\rho\colon \pi_1(M) \to \Isom(\Hyp^{n+1})$, a $\rho$-equivariant Lagrangian and Riemannian embedding $G\colon \widetilde M \to \G{n+1}$ is $\rho$-integrable if and only if the $\R$-principal flat bundle $\mathrm p_G$ is trivial.
	\end{Lemma}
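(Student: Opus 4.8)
The plan is to exploit the standard fact that a flat principal $\R$-bundle is trivial if and only if it admits a global flat section, equivalently if and only if its holonomy representation $\hol_G$ of Definition \ref{Def holonomy} vanishes identically. The whole argument then reduces to matching up global flat sections of $\mathrm p_G$ with $\rho$-equivariant lifts $\zeta\colon \widetilde M\to T^1\Hyp^{n+1}$, and finally with $\rho$-equivariant immersions into $\Hyp^{n+1}$ by means of Theorem \ref{prop: riemannian global integrability}. Most of the ingredients have already been assembled in the discussion preceding the statement; the proof consists essentially in recording the two implications.

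For the direct implication, I would suppose that $G$ is $\rho$-integrable (Definition \ref{Def rhointegrable}), so that there is a $\rho$-equivariant immersion $\widetilde\sigma\colon\widetilde M\to\Hyp^{n+1}$ with $G_{\widetilde\sigma}=G$. I would take its lift $\zeta_{\widetilde\sigma}$, which by Proposition \ref{prop:gauss immersion} is orthogonal to the fibers of $\mathrm p$ and hence, as in Proposition \ref{prop pullback flat}, provides a global flat section of the pull-back bundle $\widetilde{\mathrm p}_G\colon\widetilde P\to\widetilde M$. Since $\widetilde\sigma$ is $\rho$-equivariant, so is $\zeta_{\widetilde\sigma}$, namely $\zeta_{\widetilde\sigma}\circ\alpha=\rho(\alpha)\circ\zeta_{\widetilde\sigma}$ for every $\alpha\in\pi_1(M)$; therefore this flat section is invariant under the action of $\pi_1(M)$ and descends to a global flat section of the quotient bundle $\mathrm p_G\colon P\to M$, which is thus trivial.

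For the converse, I would start from a global flat section of $\mathrm p_G$ and lift it to a global flat section $\zeta\colon\widetilde M\to T^1\Hyp^{n+1}$ of $\widetilde{\mathrm p}_G$ which is equivariant for the $\pi_1(M)$-action defining the quotient. By Corollary \ref{cor:lagrangian2} together with Theorem \ref{prop: riemannian global integrability}, and here the Riemannian hypothesis is crucial since it rules out the pathology of Example \ref{ex: Lagrangian not globally integrable} and guarantees that $\pi\circ\zeta$ is a genuine immersion, the section $\zeta$ is the lift $\zeta_\sigma$ of an embedding $\sigma\colon\widetilde M\to\Hyp^{n+1}$ with small principal curvatures satisfying $G_\sigma=G$. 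It then remains to convert the equivariance of $\zeta$ into equivariance of $\sigma$: the invariance $\rho(\alpha)\circ\zeta\circ\alpha^{-1}=\zeta$ combined with Equation \eqref{eq:equivariance lifts} forces $\varphi_{\hol_G(\alpha)}\circ\zeta=\zeta$, and since the geodesic flow acts freely on $T^1\Hyp^{n+1}$ for parameters $t\neq 0$ and $\zeta$ is an embedding, this yields $\hol_G(\alpha)=0$ for all $\alpha$; equivalently $\zeta$, and hence $\sigma=\pi\circ\zeta$, is $\rho$-equivariant, so $G$ is $\rho$-integrable.

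I expect the only genuinely delicate point to be this last translation between the equivariance of the lift $\zeta$ and the vanishing of the holonomy, together with the verification that the flat section one produces really projects to an immersion in $\Hyp^{n+1}$ rather than merely to $T^1\Hyp^{n+1}$. Both are however already controlled by the results quoted above: the former by the explicit description of $\hol_G$ in Equation \eqref{eq:equivariance lifts} and the freeness of the geodesic flow, and the latter by the Riemannian assumption through Theorem \ref{prop: riemannian global integrability}. Everything else is a formal manipulation of flat sections of the principal $\R$-bundle.
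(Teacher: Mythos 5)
Your proposal is correct and follows essentially the same route as the paper: both arguments identify global flat sections of $\mathrm p_G$ with $\rho$-equivariant lifts $\zeta$, use the Riemannian hypothesis (via Theorem \ref{prop: riemannian global integrability}) to ensure that such a $\zeta$ projects to a genuine immersion in $\Hyp^{n+1}$, and read off equivariance of $\sigma$ from the vanishing of the holonomy in Equation \eqref{eq:equivariance lifts}. The only difference is cosmetic: in your converse direction the detour through $\varphi_{\hol_G(\alpha)}\circ\zeta=\zeta$ and the freeness of the geodesic flow is redundant, since the equivariance of $\zeta$ already gives $\sigma\circ\alpha=\pi\circ\rho(\alpha)\circ\zeta=\rho(\alpha)\circ\sigma$ directly from the equivariance of $\pi$.
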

	\begin{proof}
		The bundle $\mathrm p_G$ is trivial if and only if its holonomy $\hol_G$ vanishes identically, that is, if and only if $t_\alpha=0$ for every $\alpha\in\pi_1(M)$. By the above construction, this is equivalent to the condition that $\zeta\circ\alpha=\rho(\alpha)\circ\zeta$ for all $\alpha$, which {is equivalent to} $\sigma\circ\alpha=\rho(\alpha)\circ\sigma$, {namely} that $\sigma$ is $\rho$-equivariant.
	\end{proof}

	%\appunto{\as{ok il remark, ma io userei semplicemente $\alpha$ per un elemento del $\pi_1$ dappertutto, senza sottilizzare troppo sulle varie incarnazioni del gruppo fondamentale, se vuoi provo} \orange{sÃ¬ sono d'accordo, non distinguerei mai piu' $\gamma$ per indicare un elemento del gruppo fondamentale puntato, perÃ² in questa circostanza mi sembrava un po' piÃ¹ chiaro fare la distinzione. Se vuoi prova a modificarlo }}

	We are ready to prove the following.

	\begin{Theorem}
		\label{Teorema hol H}
		Given an $n$-manifold $M$, a representation $\rho\colon \pi_1(M) \to \Isom(\Hyp^{n+1})$ and a $\rho$-equivariant Lagrangian and Riemannian embedding $G\colon \widetilde M \to \G{n+1}$, the holonomy of $\mathrm p_G$ is given by
		\[
		\hol_G(\alpha)=\int_{\alpha} \mu_G.
		\]
		for all $\alpha \in \pi_1(M)$.
	\end{Theorem}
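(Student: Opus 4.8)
The plan is to exhibit the function $\kappa_\sigma$ as a primitive of the Maslov form on the universal cover, and then to identify the failure of this primitive to be $\pi_1(M)$-invariant with the holonomy value $t_\alpha$ appearing in \eqref{eq:equivariance lifts}. First, since $\widetilde M$ is simply connected and $G$ is Riemannian and Lagrangian, Theorem \ref{prop: riemannian global integrability} produces a (in general non-equivariant) immersion $\sigma\colon\widetilde M\to\Hyp^{n+1}$ with small principal curvatures such that $G=G_\sigma$, whose lift $\zeta_\sigma$ is exactly the flat section of the pull-back bundle used in \eqref{eq:equivariance lifts}. By Proposition \ref{Prop: formula H in G}, the $1$-form representing $\mu_G$ pulls back on $\widetilde M$ to
\[
G^*\bigl(\Omega(\overline{\mathrm H},\cdot)\bigr)=d\kappa_\sigma~.
\]
Hence, to integrate $\mu_G$ along a loop $\alpha\in\pi_1(M)$, it suffices to lift $\alpha$ to a path in $\widetilde M$ from a basepoint $\tilde p$ to $\alpha\cdot\tilde p$ and compute $\int_\alpha\mu_G=\kappa_\sigma(\alpha\cdot\tilde p)-\kappa_\sigma(\tilde p)$.

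The second and main step is to evaluate this difference. I would apply the geodesic flow to the relation \eqref{eq:equivariance lifts}, namely $\varphi_{t_\alpha}\circ\zeta_\sigma=\rho(\alpha)\circ\zeta_\sigma\circ\alpha^{-1}$. By Proposition \ref{prop: flusso geod e flusso normale} the left-hand side $\varphi_{t_\alpha}\circ\zeta_\sigma$ is the lift $\zeta_{\sigma_{t_\alpha}}$ of the normal evolution $\sigma_{t_\alpha}$, while the right-hand side is the lift of $\rho(\alpha)\circ\sigma\circ\alpha^{-1}$. Since two immersions sharing the same lift have equal shape operators, hence equal $\kappa$-functions, this gives $\kappa_{\sigma_{t_\alpha}}=\kappa_{\rho(\alpha)\circ\sigma\circ\alpha^{-1}}$. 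Now $\rho(\alpha)$ is an ambient isometry and $\alpha^{-1}$ a reparametrization, so the principal curvatures of $\rho(\alpha)\circ\sigma\circ\alpha^{-1}$ at $q$ coincide with those of $\sigma$ at $\alpha^{-1}\cdot q$; thus the right-hand side equals $\kappa_\sigma\circ\alpha^{-1}$. On the left, Lemma \ref{lemma:evolution fsigma} yields $\kappa_{\sigma_{t_\alpha}}=\kappa_\sigma-t_\alpha$. Combining,
\[
\kappa_\sigma(q)-t_\alpha=\kappa_\sigma(\alpha^{-1}\cdot q)\qquad\text{for every }q\in\widetilde M~.
\]

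Finally, substituting $q=\alpha\cdot\tilde p$ gives $t_\alpha=\kappa_\sigma(\alpha\cdot\tilde p)-\kappa_\sigma(\tilde p)$, which combined with the formula for $\int_\alpha\mu_G$ above and the identity $\hol_G(\alpha)=t_\alpha$ yields the assertion. I expect the only delicate points to be bookkeeping ones: verifying that the orientation conventions make $\rho(\alpha)\circ\zeta_\sigma\circ\alpha^{-1}$ genuinely the lift of $\rho(\alpha)\circ\sigma\circ\alpha^{-1}$ (so that the unit normal fields, and hence the signs of the principal curvatures entering $\kappa$, match up, using that $\rho(\alpha)$ and the deck transformations preserve the orientation defining the Gauss map), and that the sign conventions for the holonomy of the flat $\R$-bundle $\mathrm p_G$ and for the chosen lift of $\alpha$ to $\widetilde M$ are consistent, so that the resulting equality carries a $+$ sign rather than its opposite.
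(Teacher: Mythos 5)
Your proposal is correct and follows essentially the same route as the paper's proof: both produce $\sigma$ via Theorem \ref{prop: riemannian global integrability}, identify $d\kappa_\sigma$ with the Maslov form via Proposition \ref{Prop: formula H in G}, and extract $t_\alpha=\kappa_\sigma(\alpha\cdot\tilde p)-\kappa_\sigma(\tilde p)$ from the equivariance relation \eqref{eq:equivariance lifts} using Proposition \ref{prop: flusso geod e flusso normale} and Lemma \ref{lemma:evolution fsigma}. The only cosmetic difference is that you carry the deck transformation as $\alpha^{-1}$ on the right where the paper precomposes with $\alpha$; the sign bookkeeping agrees in the end.
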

	
	Observe that Theorem \ref{teorema hol H baby} follows immediately from Theorem \ref{Teorema hol H} since, by the standard de Rham Theorem, there exists an isomorphism 
	\begin{equation*}
		\begin{split}
			H_{dR}^1(M, \R) &\xrightarrow{\sim} H^1(M, \R) \\
			\eta &\mapsto \bigg( \xi \mapsto \int_\xi \eta \bigg),
		\end{split}
	\end{equation*}
	hence $\hol_G\equiv 0$ if and only if $\mu_G=0$. 
	
	\begin{proof}[Proof of Theorem $\ref{Teorema hol H}$]
		Let $\zeta:\widetilde M\to T^1\Hyp^{n+1}$ be a map {such} that $\mathrm{p}\circ \zeta=G$, so as to induce a global section of the pull-back bundle $\widetilde {\mathrm p}_G$. Then by Equation \eqref{eq:equivariance lifts} the holonomy $t_\alpha=\hol_G(\alpha)$ satisfies $\varphi_{t_\alpha}\circ\zeta\circ\alpha=\rho(\alpha)\circ\zeta$. By Proposition \ref{prop: flusso geod e flusso normale}, this gives the following equivariance relation for $\sigma=\pi\circ\zeta$:
		$$(\sigma\circ\alpha)_{t_\alpha}=\rho(\alpha)\circ\sigma~.$$
		{Let now $\kappa_\sigma$ denote the mean of the hyperbolic arctangents of the principal curvatures, as in Equation \eqref{eq:aux function}}. Lemma \ref{lemma:evolution fsigma} and the fact that $\rho(\alpha)$ acts isometrically imply:
		$$f_{\sigma\circ\alpha}=\kappa_\sigma+t_\alpha~.$$
		Now, by Proposition \ref{Prop: formula H in G} and the definition of the Maslov class, we have:
		$$\int_\alpha \mu_G=\int_\alpha d\kappa_\sigma=\kappa_\sigma(\alpha(p))-\kappa_\sigma(p)=t_\alpha$$
		for any point $p\in M$. This concludes the proof.
		%\appunto{\as{vari segni da ricontrollare}}
	\end{proof}

	\section{Minimal Lagrangian immersions}
	
	We prove here two direct corollaries of Theorem \ref{teorema hol H baby}. Let us first recall the definition of minimal Lagrangian (Riemannian) immersions.
	
	\begin{Def}
		A {Riemannian} immersion of an $n$-manifold into $\G{n+1}$ is \emph{minimal Lagrangian} if:
		\begin{itemize}
			\item  its mean curvature vector vanishes identically;
			\item it is Lagrangian with respect to the symplectic form $\Omega$.
		\end{itemize}
	\end{Def}
	
	Our first corollary is essentially a consequence of Theorem \ref{teorema hol H baby}.
	
	\begin{Cor} \label{cor:minimal is rho integrable}
		Let $M^n$ be a closed {orientable} manifold  and  $\rho:\pi_1(M)\to\Isom(\Hyp^{n+1})$ a representation. If $G:\widetilde M\to\G{n+1}$ is a $\rho$-equivariant Riemannian minimal Lagrangian immersion, then $G$ is the Gauss map of a $\rho$-equivariant embedding $\widetilde\sigma:\widetilde M\to \Hyp^{n+1}$ with small principal curvatures such that
		$$\kappa_{\widetilde\sigma}={\frac{1}{n}}\sum_{i=1}^n\arctanh\lambda_i=0~.$$
		In particular, $\rho$ is a nearly-Fuchsian representation and $G$ is an embedding.
	\end{Cor}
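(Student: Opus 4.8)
The plan is to deduce this as a rather direct consequence of Theorem \ref{teorema hol H baby} together with the normal-evolution formulas established earlier. First I would observe that, since $G$ is minimal, its mean curvature vector $\overline{\mathrm H}$ vanishes identically, and therefore the $1$-form $G^*(\Omega(\overline{\mathrm H},\cdot))$ representing the Maslov class is the zero form. Hence $\mu_G=0$ in $H^1_{dR}(M,\R)$, and Theorem \ref{teorema hol H baby} immediately yields a $\rho$-equivariant immersion $\widetilde\sigma\colon\widetilde M\to\Hyp^{n+1}$ whose Gauss map is $G$. Since $G$ is Riemannian, Proposition \ref{prop: small curv sse riemannian} guarantees that $\widetilde\sigma$ has small principal curvatures.

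The next step is to upgrade $\widetilde\sigma$ so that the normalization $\kappa_{\widetilde\sigma}=0$ holds. By Proposition \ref{Prop: formula H in G} one has $d\kappa_{\widetilde\sigma}=G^*(\Omega(\overline{\mathrm H},\cdot))=0$, so that $\kappa_{\widetilde\sigma}$ is a constant $c$ on the connected manifold $\widetilde M$. I would then replace $\widetilde\sigma$ by its normal evolution $\widetilde\sigma_c$ at time $c$: by Corollary \ref{cor:equidistant is immersed} this remains an immersion of small principal curvatures, by Corollary \ref{prop:gauss map invariant normal evo} it has the same Gauss map $G$, and by Lemma \ref{lemma:evolution fsigma} it satisfies $\kappa_{\widetilde\sigma_c}=\kappa_{\widetilde\sigma}-c=0$. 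The only point requiring a short verification is that $\widetilde\sigma_c$ is still $\rho$-equivariant; this follows because the normal evolution is defined through the exponential map and the oriented unit normal field, both of which commute with the isometric action of $\rho(\pi_1(M))$, so that $\widetilde\sigma_c\circ\alpha=\rho(\alpha)\circ\widetilde\sigma_c$ for all $\alpha\in\pi_1(M)$. Relabelling $\widetilde\sigma_c$ as $\widetilde\sigma$ gives the desired immersion with $\kappa_{\widetilde\sigma}=0$.

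Finally I would record the two consequences. The existence of a $\rho$-equivariant immersion with small principal curvatures is exactly the content of Definition \ref{Def:nearly fuch rep}, so $\rho$ is nearly-Fuchsian. For the embedding property, since $M$ is closed the first fundamental form of $\widetilde\sigma$ is $\pi_1(M)$-invariant and descends to the compact manifold $M$, hence is complete; thus $\widetilde\sigma$ is a complete immersion with small principal curvatures, and Proposition \ref{prop:gauss maps diffeo onto image} ensures that $G=G_{\widetilde\sigma}$ is an embedding. I do not expect a serious obstacle here, as the whole argument is an assembly of previously established results; the only mildly delicate points are the equivariance of the normal evolution and the completeness deduced from compactness of $M$, both of which are routine.
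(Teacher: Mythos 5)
Your argument is correct and follows essentially the same route as the paper: vanishing mean curvature gives $\mu_G=0$, Theorem \ref{teorema hol H baby} produces the equivariant integrand, Proposition \ref{Prop: formula H in G} plus the normal evolution normalizes $\kappa_{\widetilde\sigma}$ to zero, and completeness (from cocompactness) yields the embedding conclusions. The only detail to add is that the corollary also asserts $\widetilde\sigma$ itself is an embedding, which follows from the completeness you already established together with Proposition \ref{prop injectivity}.
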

	We remark that if $n=2$, then the condition $\kappa_{\widetilde\sigma}=0$ is equivalent to $\lambda_1+\lambda_2=0$ since $\arctanh$ is an odd {and injective} function. That is, in this case $\widetilde\sigma$ is a \emph{minimal} embedding in $\Hyp^3$. 
	\begin{proof}
		Suppose $G$ is a $\rho$-equivariant minimal Lagrangian immersion. Since its mean curvature vector vanishes identically, we have $\mu_G=0$ and therefore $G$ is $\rho$-integrable by Theorem \ref{teorema hol H baby}. That is, there exists a $\rho$-equivariant immersion $\widetilde\sigma:\widetilde M\to\Hyp^{n+1}$ such that $G=G_{\widetilde\sigma}$. By Proposition \ref{prop: small curv sse riemannian}, $\widetilde\sigma$ has small principal curvatures, hence $\rho$ is nearly-Fuchsian. By Proposition \ref{Prop: formula H in G}, we have that $\kappa_{\widetilde\sigma}$ is constant. By Lemma \ref{lemma:evolution fsigma}, up to taking the normal evolution, we can find $\widetilde\sigma$ such that $\kappa_{\widetilde\sigma}$ vanishes identically.
		
		Finally, $\widetilde \sigma$ is complete by cocompactness, and therefore both $\widetilde\sigma$ and $G$ are embeddings by Proposition \ref{prop injectivity} and Proposition \ref{prop:gauss maps diffeo onto image}.  
	\end{proof}
	
	The following is a uniqueness result for $\rho$-equivariant minimal Lagrangian immersions.
	
	\begin{Cor}\label{cor:uniqueness min lag}
		Given a closed {orientable} manifold $M^n$ and a representation $\rho:\pi_1(M)\to\Isom(\Hyp^{n+1})$, there exists at most one $\rho$-equivariant Riemannian minimal Lagrangian immersion $G:\widetilde M\to\G{n+1}$ up to reparametrization. If such a $G$ exists, then $\rho$ is nearly-Fuchsian and $G$ {induces a {minimal Lagrangian} embedding of $M$ in $\GGG_\rho$}.
	\end{Cor}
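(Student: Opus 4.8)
The plan is to reduce the uniqueness statement to a uniqueness result for hypersurfaces in the nearly-Fuchsian manifold $N:=\faktor{\Hyp^{n+1}}{\rho(\pi_1(M))}$, and then to settle the latter by a maximum principle argument. First I would invoke Corollary \ref{cor:minimal is rho integrable}: any $\rho$-equivariant Riemannian minimal Lagrangian immersion $G_j$ ($j=1,2$) is the Gauss map of a $\rho$-equivariant embedding $\widetilde\sigma_j:\widetilde M\to\Hyp^{n+1}$ with small principal curvatures and $\kappa_{\widetilde\sigma_j}\equiv 0$. In particular $\rho$ is nearly-Fuchsian, so by Proposition \ref{prop:action free prop disc0} the action is convex cocompact and $\widetilde\sigma_j$ descends to a \emph{closed} embedded hypersurface $\Sigma_j\subset N$. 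Since the Gauss map is invariant under the normal evolution (Corollary \ref{prop:gauss map invariant normal evo}) and $\kappa_{\sigma_t}=\kappa_\sigma-t$ pins down the evolution parameter (Lemma \ref{lemma:evolution fsigma}), proving $G_1=G_2$ up to reparametrization is \emph{equivalent} to proving $\Sigma_1=\Sigma_2$ as subsets of $N$: both are normalized by the condition $\kappa\equiv 0$, and equality of images forces the two immersions to differ by a diffeomorphism of $M$.

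The core is therefore to show that two closed embedded hypersurfaces of $N$ with small principal curvatures and $\kappa\equiv 0$ must coincide. The key analytic fact is that $\kappa=\frac1n\sum_{i=1}^n\arctanh\lambda_i$ is, on the range $|\lambda_i|<1$, a strictly \emph{increasing} function of each principal curvature, since $\partial\kappa/\partial\lambda_i=\frac{1}{n(1-\lambda_i^2)}>0$; hence the equation $\kappa=c$ is a quasilinear \emph{elliptic} equation for a hypersurface, and one has a comparison principle. To exploit it, I would use that the family of geodesics orthogonal to $\Sigma_1$ foliates $N$ (again Proposition \ref{prop:action free prop disc0}), so that every point of $\Sigma_2$ lies on a unique such geodesic; this realizes $\Sigma_2$ as a normal graph over $\Sigma_1$, i.e. there is a smooth function $u:M\to\R$ with $\Sigma_2=\{(\sigma_1)_{u(p)}(p)\,|\,p\in M\}$, where $(\sigma_1)_t$ denotes the normal evolution.

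Then I would examine $u$ at its maximum point $p_0$ and minimum point $p_1$, which exist by compactness. At $p_0$ the hypersurface $\Sigma_2$ is tangent to the normal translate $(\Sigma_1)_{u(p_0)}$ and lies locally on one side of it; the maximum principle (comparison of shape operators for graphs with a common tangent plane) gives an inequality between the shape operators, which by monotonicity of $\kappa$ yields $\kappa_{\Sigma_2}(p_0)\le \kappa_{(\Sigma_1)_{u(p_0)}}(p_0)$. Using $\kappa_{\Sigma_2}\equiv 0$ and $\kappa_{(\Sigma_1)_{u(p_0)}}=\kappa_{\Sigma_1}-u(p_0)=-u(p_0)$ (Lemma \ref{lemma:evolution fsigma}), this forces $u(p_0)\le 0$, hence $\max u\le 0$. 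The symmetric computation at $p_1$ gives $\min u\ge 0$, so $u\equiv 0$ and $\Sigma_1=\Sigma_2$. This proves uniqueness up to reparametrization; the final assertions, that $\rho$ is nearly-Fuchsian and that $G$ induces a \emph{minimal Lagrangian embedding} of $M$ into $\GGG_\rho$, then follow from Corollary \ref{cor:minimal is rho integrable} together with Corollary \ref{cor:embedding in Grho}.

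The step I expect to be the main obstacle is the maximum-principle comparison: one must carefully fix the sign conventions relating the shape operator to the Hessian of the graph function, verify that the induced graph equation is genuinely elliptic in the region of small principal curvatures, and confirm that tangency at the extremal points of $u$ produces the correct (one-sided) comparison of shape operators. A secondary technical point is checking that the normal foliation of $\Sigma_1$ covers all of $N$ so that the global graph function $u$ is well defined, which relies on completeness of $\widetilde\sigma_1$ (guaranteed by cocompactness) and the foliation property established in Proposition \ref{prop:action free prop disc0}.
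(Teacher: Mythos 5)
Your proposal is correct and follows essentially the same route as the paper: reduce via Corollary \ref{cor:minimal is rho integrable} to two closed embedded hypersurfaces in the nearly-Fuchsian quotient normalized by $\kappa\equiv 0$, then compare them at the extremal points of the signed normal distance using tangency of equidistant hypersurfaces, monotonicity of the shape operators (Weyl), monotonicity of $\arctanh$, and the relation $\kappa_{\sigma_t}=\kappa_\sigma-t$. The only cosmetic difference is that you phrase the comparison through a global normal graph function $u$, while the paper works directly with the restriction of the signed distance function to the second hypersurface; these are the same argument.
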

	\begin{proof}
		Suppose that $G$ and $G'$ are $\rho$-equivariant minimal Lagrangian immersions. By Corollary \ref{cor:minimal is rho integrable}, there exist $\rho$-equivariant embeddings $\widetilde\sigma,\widetilde\sigma':\widetilde M\to\Hyp^{n+1}$ {with small principal curvatures} such that $G=G_{\widetilde\sigma}$ and $G'=G_{\widetilde\sigma'}$, with $\kappa{\widetilde\sigma}=\kappa{\widetilde\sigma'}=0$.  {Moreover, $G$ and $G'$ induce embeddings in  $\GGG_\rho$ by Corollary \ref{cor:embedding in Grho}.}
		
		By Remark \ref{rmk:embedding in the quotient}, both $\sigma$ and $\sigma'$ induce embeddings of {$M$ in the nearly-Fuchsian manifold $\faktor{\Hyp^{n+1}}{\rho(\pi_1(M))}$; let us denote with $\Sigma$ and $\Sigma'$ the corresponding images}. We claim that $\Sigma=\Sigma'$, which implies the uniqueness in the statement. 
		
		To see this, consider the signed distance from $\Sigma$, which is a proper function 
		$$r:\faktor{\Hyp^{n+1}}{\rho(\pi_1(M))}\to\R~.$$
		Since $\Sigma'$ is closed, $r|_{\Sigma'}$ admits a maximum value $r_{\max}$ achieved at some point $x_{\max}\in\Sigma'$.   This means that at the point $x_{\max}$, $\Sigma'$ is tangent to a hypersurface $\Sigma_{r_{\max}}$ at signed distance $r_{\max}$ from $\Sigma$, and $\Sigma'$ is contained in the side of $\Sigma_{r_{\max}}$ where $r$ is decreasing. This implies that, if $B'$ denotes the shape operator of $\Sigma'$ and $B_{r_{\max}}$ that of $\Sigma_{r_{\max}}$, both computed with respect to the unit normal vector pointing to the side of increasing $r$, then $B_{r_{\max}}-B'$ is positive semi-definite at $x_{\max}$.
		
		Let us now denote by $\lambda_1,\ldots,\lambda_n$ the eigenvalues of $B_{r_{\max}}$ and $\lambda'_1,\ldots,\lambda_n'$ those of $B'$. Let us moreover assume that $\lambda_1\leq \ldots\leq\lambda_n$ and similarly for the $\lambda_i'$. By Weyl's monotonicity theorem, $\lambda_i\geq \lambda_i'$ at $x_{\max}$ for $i=1,\ldots,n$. Since $\arctanh$ is a monotone increasing function, this implies that
		$$\sum_{i=1}^n\arctanh\lambda_i(x_{\max})\geq \sum_{i=1}^n\arctanh\lambda_i'(x_{\max})~.$$
		Since $\kappa{\widetilde\sigma'}=0$, the right-hand side vanishes. On the other hand, since $\kappa{\widetilde\sigma}=0$, from Lemma \ref{lemma:evolution fsigma} the left-hand side is identically equal to $-r_{\max}$. Hence $r_{\max}\leq 0$. Repeating the same argument replacing the maximum point of $r$ on $\Sigma'$ by the minimum point, one shows $r_{\min}\geq 0$. Hence $r|_{\Sigma'}$ vanishes identically, which proves that $\Sigma=\Sigma'$ and thus concludes the proof.
		%recall from Section \ref{sec:small princ curv} (see in particular Corollary \ref{cor:equidistant is immersed}) that the equidistant surfaces from $\Sigma$ provide a foliation of the manifold  $\Hyp^{n+1}/\rho(\pi_1(M))$. 
	\end{proof}

%\section{Almost-Fuchsian and nearly-Fuchsian manifolds}

\chapter{Hamiltonian symplectomorphisms of $\G{\rho}$ and deformations of immersions}\label{sec:hamiltonian}

In this chapter we will provide the second characterization of $\rho$-integrability, in the case of a nearly-Fuchsian representation $\rho:\pi_1(M)\to\Isom(\Hyp^{n+1})$. We first introduce the terminology and state the result (Theorem \ref{thm:second char ham}); then we introduce the so-called \emph{Lagrangian Flux map} which will play a central role in the proof of Theorem \ref{thm:second char ham}.

\section{Hamiltonian group and nearly-Fuchsian manifolds}
We will restrict hereafter to the case of nearly-Fuchsian representations $\rho:\pi_1(M)\to\Isom(\Hyp^{n+1})$. 
Let $G:\widetilde M\to\G{n+1}$ be a $\rho$-integrable immersion as in Definition \ref{Def rhointegrable}. Since $\rho$ is nearly-Fuchsian, we showed in Corollary \ref{cor:embedding in Grho} that $G$ induces an embedded submanifold in the para-K\"ahler manifold $\GGG_\rho$, defined in Definition \ref{Def quotient Grho}. This motivates the following definition in the spirit of Definition \ref{Def rhointegrable}.

\begin{Def}\label{Def rhointegrable submfd}
	Given a closed {orientable} $n$-manifold $M$ and a nearly-Fuchsian representation $\rho:\pi_1(M)\to\Isom(\Hyp^{n+1})$,
	an embedding 
	%\appunto{\as{non mi piace che usiamo $\sigma$ per un embedding in $\GGG_\rho$. E se cercassimo di fondere questa def e la \ref{Def rhointegrable}?}}
	$M\to \GGG_\rho$ is $\rho$-\emph{integrable} if it is induced in the quotient from {a $\rho$-integrable} embedding ${G}\colon \widetilde M\to \G{n+1}$.
	Similarly, an embedded submanifold $\mathcal L\subset \GGG_\rho$ is $\rho$-\emph{integrable} if it is the image of a $\rho$-integrable embedding.
\end{Def}

Theorem \ref{thm:second char ham} below gives a description of the set of $\rho$-integrable submanifolds $\mathcal L\subset \GGG_\rho$ which are induced by immersions $G$ with small principal curvatures. Clearly, {as we have previously shown}, a necessary condition on $\mathcal L$ is that of being Lagrangian and Riemannian. To state the theorem, we need to recall the notion of Hamiltonian symplectomorphism.

\begin{Def}\label{Def Hamc}
	Given a symplectic manifold $(\mathcal X,\Omega)$, a compactly supported symplectomorphism $\Phi$ is \emph{Hamiltonian} if there exists a compactly supported smooth function $F_\bullet:\mathcal X\times[0,1]\to\R$
	such that $\Phi=\Phi_1$, where $\Phi_{s_0}$ is the flow at time $s_0$ of the (time-dependent) vector field $X_s$ defined by:
	\begin{equation}\label{eq:hamiltonian function}
		dF_s=\Omega(X_s,\cdot)~.
	\end{equation}
	The isotopy  $\Phi_\bullet:\mathcal X\times [0,1]\to \mathcal X$ is called \emph{Hamiltonian isotopy}.
\end{Def} 

\begin{Remark}\label{rmk:hamiltonian and cartan}
	If $\Phi_\bullet$ is a Hamiltonian isotopy as in Definition \ref{Def Hamc}, then $\Phi_s$ is a symplectomorphism for every $s\in[0,1]$. Indeed
	$$\mathcal L_{X_s}\Omega=\iota_{X_s}d\Omega+d(\iota_{X_s}\Omega)=0$$
	as a consequence of Cartan's formula and Equation \eqref{eq:hamiltonian function}, and $\Phi_s$ is clearly Hamiltonian. 
\end{Remark}

Compactly supported Hamiltonian symplectomorphisms form a group which we will denote by $\Ham_c(\mathcal X,\Omega)$.

{The aim of this chapter is to prove the following result.}

\begin{Theorem}\label{thm:second char ham}
	Let $M$ be a closed {orientable} $n$-manifold, $\rho:\pi_1(M)\to\Isom(\Hyp^{n+1})$ be a nearly-Fuchsian representation and $\mathcal L\subset\GGG_\rho$ a Riemannian $\rho$-integrable submanifold. Then a Riemannian submanifold $\mathcal L'$ is $\rho$-integrable if and only if there exists $\Phi\in \Ham_c(\GGG_\rho,\Omega)$ such that $\Phi(\mathcal L)=\mathcal L'$.
\end{Theorem}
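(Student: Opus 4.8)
The plan is to translate $\rho$-integrability into the vanishing of a holonomy invariant, and then to identify the variation of this invariant along a path of Lagrangian submanifolds with the \emph{Lagrangian flux}, which vanishes precisely for Hamiltonian isotopies. First I would set up the holonomy description directly in the quotient $\GGG_\rho$ of Definition \ref{Def quotient Grho}. Given a Riemannian Lagrangian submanifold $\mathcal L\subset\GGG_\rho$, I lift it to a $\rho$-equivariant Riemannian and Lagrangian embedding $G\colon\widetilde M\to\G{n+1}$ (using that $\pi_1(\GGG_\rho)\cong\pi_1(M)$ and Corollary \ref{cor:embedding in Grho}) and form the flat $\R$-principal bundle $\mathrm p_G\colon P\to M$ obtained by pulling back $\mathrm p\colon T^1\Hyp^{n+1}\to\G{n+1}$ and quotienting by $\pi_1(M)$. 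By Lemma \ref{lemma:integrable iff trivial bundle}, $\mathcal L$ is $\rho$-integrable if and only if the holonomy $\hol_G\in H^1(M,\R)$ vanishes, and by Theorem \ref{Teorema hol H} this holonomy is de Rham dual to the Maslov class $\mu_G$. This reduces the entire statement to a comparison of holonomy classes under deformation.

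The key intermediate result I would prove is that, along a Lagrangian isotopy $\{\mathcal L_s\}_{s\in[0,1]}$ in $\GGG_\rho$ with $\mathcal L_0=\mathcal L$ and $\mathcal L_1=\mathcal L'$, the difference of holonomies $\hol_{G'}-\hol_G$ equals the \emph{Lagrangian flux} $\Flux(\{\mathcal L_s\})$ of the isotopy, in the sense of the flux homomorphism for Lagrangians developed in \cite{solomon} (and used in \cite{zbMATH07050784}). Granting this, the easy direction is immediate and requires no Riemannian hypothesis on $\mathcal L'$: if $\Phi\in\Ham_c(\GGG_\rho,\Omega)$ satisfies $\Phi(\mathcal L)=\mathcal L'$, take a Hamiltonian isotopy $\Phi_s$ from the identity to $\Phi$ (Definition \ref{Def Hamc}) and set $\mathcal L_s=\Phi_s(\mathcal L)$. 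Since each $\Phi_s$ is a symplectomorphism, $\mathcal L'=\mathcal L_1$ is Lagrangian; moreover a direct computation using that the generating functions $F_s$ of Equation \eqref{eq:hamiltonian function} are globally defined shows that the Lagrangian flux of a Hamiltonian isotopy vanishes. Hence $\hol_{G'}=\hol_G=0$, so $\mathcal L'$ is $\rho$-integrable.

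For the converse (harder) direction I would assume both $\mathcal L$ and $\mathcal L'$ Riemannian and $\rho$-integrable, realized as Gauss maps of $\rho$-equivariant embeddings $\widetilde\sigma,\widetilde\sigma'\colon\widetilde M\to\Hyp^{n+1}$ with small principal curvatures, descending to embedded hypersurfaces $\Sigma,\Sigma'$ in the nearly-Fuchsian manifold $N=\Hyp^{n+1}/\rho(\pi_1(M))$. The plan is to build a smooth interpolation $\Sigma_s$ from $\Sigma$ to $\Sigma'$ through embedded hypersurfaces, passing through \emph{convex} ones for technical control: pushing $\Sigma$ and $\Sigma'$ far out along the normal evolution makes them convex (as in the proof of Proposition \ref{prop:action free prop disc0}), and one interpolates inside the resulting convex region, arguing by a maximum principle in the spirit of Corollary \ref{cor:uniqueness min lag} to keep the interpolants embedded with small principal curvatures, so that their Gauss maps are genuine embeddings into $\GGG_\rho$ by Proposition \ref{prop:gauss maps diffeo onto image}. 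This yields a Lagrangian isotopy $\mathcal L_s$ connecting $\mathcal L$ to $\mathcal L'$ whose flux equals $\hol_{G'}-\hol_G=0$. Finally, a Lagrangian isotopy of zero flux is induced by an ambient Hamiltonian isotopy; the remaining, and most delicate, step is an extension argument producing a \emph{compactly supported} time-dependent Hamiltonian $F_s$ on the noncompact manifold $\GGG_\rho$ whose time-one flow $\Phi=\Phi_1\in\Ham_c(\GGG_\rho,\Omega)$ satisfies $\Phi(\mathcal L)=\mathcal L'$.

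The main obstacle will be precisely this hard direction: ensuring that the convex interpolation stays embedded with small principal curvatures throughout (so the moving Gauss maps remain embedded Lagrangians), and that the normal Hamiltonian vector field defined along the family $\mathcal L_s$ can be extended to a globally defined, compactly supported Hamiltonian function on $\GGG_\rho$. The flux-equals-holonomy identity and the vanishing-flux-implies-Hamiltonian principle are comparatively formal once the geometric interpolation is in place, so the analytic heart of the argument lies in the construction of Lemma-type convex interpolation and in the compactly supported extension.
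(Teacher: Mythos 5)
Your proposal follows essentially the same route as the paper: $\rho$-integrability is recast as triviality of the flat $\R$-bundle $\mathrm p_G$ (Lemma \ref{lemma:integrable iff trivial bundle}), the holonomy difference along a Lagrangian isotopy is identified with the Lagrangian flux (Proposition \ref{prop: flux = diff holonomy}), the "if" direction follows from the vanishing of the flux of a Hamiltonian isotopy, and the "only if" direction is carried by a convex interpolation in the nearly-Fuchsian manifold followed by an extension to a compactly supported Hamiltonian. The one point where your outline would run into trouble is the claim that the interpolants can be kept with \emph{small principal curvatures} so as to invoke Proposition \ref{prop:gauss maps diffeo onto image}: the natural interpolation (level sets of $(1-s)r_0+sr_1$ for the signed distance functions, as in Lemma \ref{lemma:convex interpolation}) only guarantees \emph{convexity}, not the bound $|\lambda_i|<1$, and the paper instead uses Lemma \ref{lemma:convex gauss map diffeo} — Gauss maps of convex equivariant hypersurfaces are embeddings onto $\Omega_+$ — to conclude that the moving family stays embedded and Lagrangian in $\GGG_\rho$.
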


Of course, although not stated in Theorem \ref{thm:second char ham}, both $\mathcal L$ and $\mathcal L'$ are necessarily Lagrangian as a consequence of Corollary \ref{cor:lagrangian1}.

\section{The Lagrangian Flux}

We shall now define the Flux map for Lagrangian submanifolds, which was introduced in \cite{solomon}, and relate it to the holonomy of $\R$-principal bundles.

\begin{Def}\label{Def:flux}
	Let $(\mathcal X,\Omega)$ be a symplectic manifold and let $\Upsilon_\bullet:M\times[0,1]\to\mathcal X$ be a smooth map such that each $\Upsilon_t$ is a Lagrangian embedding of $M$. Then we define:
	$$\Flux(\Upsilon_\bullet)=\int_0^1 \Upsilon_s^*(\Omega(X_s,\cdot))ds\in H^1_{dR}(M,\R)~,$$
	where
	$$X_{s_0}(\Upsilon_{s_0}(p))=\left.\frac{d}{ds}\right|_{s=s_0}\Upsilon_s(p)\in T_{\Upsilon_{s_0}(p)}\mathcal X~.$$
\end{Def}

Observe that by Cartan's formula the integrand $\Upsilon_s^*(\Omega(X_s,\cdot))$ is a closed 1-form for every $s$, hence $\Flux(\Upsilon_\bullet)$ is well-defined as a cohomology class in $H^1_{dR}(M,\R)$. 

Now, let $\mathcal L$ be a Lagrangian embedded submanifold in $\GGG_\rho$, which is induced by a $\rho$-equivariant immersion $G:\widetilde M\to\G{n+1}$. Recall that in Subsection \ref{sec:hol flat princ bdles} we defined the principal $\R$-bundle $\mathrm p_G$ as the quotient of $\widetilde{\mathrm p}_G=G^*\mathrm p$ by the action of $\pi_1(M)$. Moreover in Theorem \ref{Teorema hol H} we computed the holonomy 
$$\hol_G:\pi_1(M)\to\R$$
of $\mathrm p_G$. 
The key relation between Lagrangian flux and $\hol_{G}$ is stated in the following proposition.

\begin{Prop} 
	\label{prop: flux = diff holonomy}
	Let $M$ be a closed {orientable} $n$-manifold and $\rho:\pi_1(M)\to\Isom(\Hyp^{n+1})$ be a nearly-Fuchsian representation. If $\Upsilon_s$ is as in Definition \ref{Def:flux} and $\Upsilon_0(M)=\mathcal L$, $\Upsilon_1(M)=\mathcal L'$, then
	\[
	\hol_{\Upsilon_1} (\alpha) - \hol_{\Upsilon_0} (\alpha)=\int_\alpha\Flux (\Upsilon_\bullet)~.
	\]
	In particular, $\Flux (\Upsilon_\bullet)  (\alpha)$ depends uniquely on the endpoints of $\Upsilon_\bullet$.
\end{Prop}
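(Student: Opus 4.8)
The plan is to connect the Lagrangian Flux to the holonomy of the pull-back bundle $\mathrm p_G$ via the key identity $d\omega = \mathrm p^* \Omega$ from Proposition \ref{prop:identity curvature form}, which says that the symplectic form $\Omega$ on $\G{n+1}$ is the curvature of the connection $\omega$. The holonomy $\hol_{\Upsilon_t}$ measures, along each loop $\alpha$, the failure of a flat section of $\mathrm p_{\Upsilon_t}$ to close up. First I would lift the isotopy $\Upsilon_\bullet$ to the unit tangent bundle: for each $t$, since $\Upsilon_t$ is Lagrangian it admits (on the universal cover) a $\chi$-orthogonal lift $\zeta_t \colon \widetilde M \to T^1\Hyp^{n+1}$ with $\mathrm p \circ \zeta_t = \widetilde\Upsilon_t$, and these can be chosen to vary smoothly in $t$. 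The holonomy $\hol_{\Upsilon_t}(\alpha)$ is then the value $t_\alpha(t)$ for which $\varphi_{t_\alpha(t)} \circ \zeta_t = \rho(\alpha) \circ \zeta_t \circ \alpha^{-1}$, as in Equation \eqref{eq:equivariance lifts}; equivalently, by Remark \ref{rmk: alternative definition of hol}, it is computed by integrating the connection form $\omega$ along a lift of $\alpha$.

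The central step is to differentiate $\hol_{\Upsilon_t}(\alpha)$ in $t$ and show the derivative equals $\int_\alpha \Upsilon_t^*(\Omega(X_t,\cdot))$, so that integrating from $t=0$ to $t=1$ yields the claim. Concretely, fix a loop $\alpha$ and a based lift $\widehat\alpha_t$ in the total space of $\mathrm p_{\Upsilon_t}$ tangent to $\ker\omega$ (a $\chi$-orthogonal lift), so that $\hol_{\Upsilon_t}(\alpha) = -\int_{\widehat\alpha_t}\omega$ up to the sign convention of Remark \ref{rmk: alternative definition of hol}. I would view the family $\{\widehat\alpha_t\}$ as a map from an annulus (or cylinder) $S^1 \times [0,1]$ into $T^1\Hyp^{n+1}$, and apply Stokes' theorem to $d\omega = \mathrm p^*\Omega$ over this cylinder. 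The boundary terms give $\hol_{\Upsilon_1}(\alpha) - \hol_{\Upsilon_0}(\alpha)$, while the interior integral of $\mathrm p^*\Omega = \Omega(dX_s,\cdot)\wedge ds$ reproduces exactly $\int_\alpha \Flux(\Upsilon_\bullet)$ by the definition of the Flux (Definition \ref{Def:flux}) and the definition of the variation field $X_s$. The orthogonality of the lifts $\zeta_t$ to $\chi$ ensures that the pullback of $\omega$ to the ``spatial'' directions vanishes, so that only the $\Omega$-term survives.

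The main obstacle will be the careful bookkeeping of the lifts and the equivariance. Since $\GGG_\rho$ is a quotient and the flux is defined downstairs on $M$ while the holonomy is naturally computed on the universal cover $\widetilde M$, I must verify that the $\chi$-orthogonal lifts $\zeta_t$ can be chosen to depend smoothly on $t$ and to be compatible with the $\pi_1(M)$-action in a controlled way (the defect being exactly the holonomy). One technical point is that the Hamiltonian/Lagrangian isotopy $\Upsilon_t$ may move the submanifold outside the region where it is the Gauss map of a genuine immersion in $\Hyp^{n+1}$; however, since we only need the lifts to $T^1\Hyp^{n+1}$ (which exist for any Lagrangian immersion by Corollary \ref{cor:lagrangian2}) and not their projections to $\Hyp^{n+1}$, this causes no difficulty. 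The final assertion that $\Flux(\Upsilon_\bullet)(\alpha)$ depends only on the endpoints then follows immediately, since the right-hand side equals the difference $\hol_{\Upsilon_1}(\alpha) - \hol_{\Upsilon_0}(\alpha)$, which manifestly depends only on $\mathcal L$ and $\mathcal L'$.
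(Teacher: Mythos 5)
Your strategy is the same as the paper's: restrict the isotopy to a loop $\alpha$, lift the resulting cylinder to the total space of the $\R$-bundle, and apply Stokes' theorem to $d\omega=\mathrm p^*\Omega$ so that the boundary terms produce the holonomies and the interior integral produces the flux. One detail in your setup, however, would fail as literally stated: you take the lifts $\widehat\alpha_t$ of $\alpha$ to be tangent to $\ker\omega$ (i.e.\ horizontal) and simultaneously write $\hol_{\Upsilon_t}(\alpha)=-\int_{\widehat\alpha_t}\omega$ and treat the family as a map from $S^1\times[0,1]$. These three things are incompatible: a horizontal lift of $\alpha$ closes up if and only if the holonomy along $\alpha$ vanishes (that failure to close \emph{is} the holonomy, per Remark \ref{rmk: alternative definition of hol}), and the integral of $\omega$ along a horizontal curve is identically zero. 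So with horizontal lifts your ``cylinder'' is really a square with two extra vertical boundary edges at the basepoint, and the boundary contribution comes from those vertical edges, not from the lifts themselves. The clean fix — which is what the paper does — is to drop horizontality: since the fiber $\R$ is contractible, choose an arbitrary smooth \emph{closed} section $\overline\Theta$ of the pull-back bundle over the whole cylinder $[0,1]\times S^1$, and invoke Proposition \ref{prop: holonomy as integral of loops}, which states that for any closed lift $\overline\alpha$ of $\alpha$ one has $\hol(\alpha)=\int_{\overline\alpha}\omega$. With that substitution your Stokes computation goes through verbatim and coincides with the paper's proof, including the final observation that the flux therefore depends only on the endpoints of the isotopy.
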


To prove Proposition \ref{prop: flux = diff holonomy}, we will make use of the following expression for the holonomy representation.

\begin{Prop}
	\label{prop: holonomy as integral of loops}
	Let $G\colon \widetilde M\to\G{n+1}$ be a $\rho$-equivariant Lagrangian embedding and $\mathrm p_G$ be the associated $\R$-principal bundle over $M$. If $\alpha:[0,1]\to M$ is a smooth loop and $\overline\alpha$ a smooth loop in the total space of $\mathrm p_G$ such that $\alpha= \mathrm p_G (\overline \alpha)$, then
	\[
	\hol_G(\alpha)= \int_{\overline \alpha} \omega
	\]
	where $\omega$ is the principal connection of $\mathrm p_G$.
\end{Prop}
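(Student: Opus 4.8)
The plan is to reduce the statement to a computation on the universal cover, where the connection form becomes the exterior differential of the fiber coordinate, and then to evaluate the integral by the fundamental theorem of calculus. First I would use the presentation of $\mathrm p_G$ furnished by Definition \ref{Def holonomy}: let $q\colon \widetilde M \times \R \to P$ be the quotient map exhibiting the total space $P$ as $(\widetilde M \times \R)/\pi_1(M)$, where $\gamma\in\pi_1(M)$ acts by $\gamma\cdot(m,s)=(\gamma\cdot m,\hol_G(\gamma)+s)$ and $\R$ acts by translation on the second factor. The whole proof rests on the identity
\[
q^*\omega = ds~,
\]
which I would establish first.

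To prove $q^*\omega=ds$ I would invoke the two defining properties of a principal connection. In this trivialization the fundamental vector field of the $\R$-action is $\partial_s$, so $\omega(\partial_s)=1$; indeed $\omega$ is the descent of the connection form of Definition \ref{Def connection form}, for which the generator $\chi$ of the geodesic flow satisfies $\omega(\chi)=1$. On the other hand $\ker\omega$ is the horizontal distribution. Since $\widetilde M$ is simply connected and $G$ is Lagrangian, Corollary \ref{cor:lagrangian2} produces a lift $\zeta\colon\widetilde M\to T^1\Hyp^{n+1}$ orthogonal to the fibers of $\mathrm p$, that is, a global flat section of $\widetilde{\mathrm p}_G=G^*\mathrm p$; the trivialization underlying Definition \ref{Def holonomy} is precisely the one associated with such a flat section, so the horizontal distribution corresponds to $T\widetilde M\oplus 0$. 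Combining $\omega(\partial_s)=1$ with the vanishing of $\omega$ on $T\widetilde M\oplus 0$ yields $q^*\omega=ds$.

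Finally I would carry out the computation. Since $q$ is a covering map and $[0,1]$ is simply connected, the loop $\overline\alpha$ lifts to a path $\widetilde{\overline\alpha}(t)=(\beta(t),s(t))$ in $\widetilde M\times\R$, where $\beta$ is a lift of $\alpha$ to $\widetilde M$. Because $\overline\alpha$ is a loop projecting to the loop $\alpha$, its two endpoints lie in the same $q$-fiber and are therefore related by the deck transformation $\gamma_\alpha=[\alpha]\in\pi_1(M)$; freeness of the $\pi_1(M)$-action on $\widetilde M$ forces $\widetilde{\overline\alpha}(1)=\gamma_\alpha\cdot\widetilde{\overline\alpha}(0)$, whose second coordinate gives $s(1)-s(0)=\hol_G(\gamma_\alpha)=\hol_G(\alpha)$. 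Then
\[
\int_{\overline\alpha}\omega=\int_{\widetilde{\overline\alpha}}q^*\omega=\int_0^1 s'(t)\,dt=s(1)-s(0)=\hol_G(\alpha)~,
\]
as desired.

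The only genuinely delicate point is the identification $q^*\omega=ds$: this is where flatness of $\widetilde{\mathrm p}_G$ (equivalently, the Lagrangian hypothesis on $G$) enters, guaranteeing that the trivialization can be chosen adapted to the horizontal distribution so that no extra $\beta^*\theta$ term survives. As a consistency check, specializing to a horizontal lift $\widehat\alpha$, for which $s$ is constant, recovers the relation $\hol_G(\alpha)\cdot\widehat\alpha(1)=\widehat\alpha(0)$ of Remark \ref{rmk: alternative definition of hol}, and also shows that the value $\int_{\overline\alpha}\omega$ is independent of the chosen representative $\overline\alpha$ over $\alpha$, as it must be.
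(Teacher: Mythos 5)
Your proof is correct, but it takes a genuinely different route from the paper's. The paper argues directly on the quotient bundle: it takes the horizontal lift $\widehat\alpha$ of $\alpha$ starting at $\overline\alpha(0)$, builds a map $f$ of the square $[0,1]\times[0,1]$ into the total space of $\mathrm p_G$ interpolating between $\overline\alpha$ and $\widehat\alpha$ (one side constant at the basepoint, the opposite side a segment in the fibre over $\alpha(0)$), and applies Stokes' theorem together with $f^*d\omega=0$ (flatness, i.e.\ the Lagrangian hypothesis) to reduce $\int_{\overline\alpha}\omega$ to the integral of $\omega$ along the fibre segment from $\overline\alpha(1)$ to $\widehat\alpha(1)$, which equals $-\hol_G(\alpha)$ by Remark \ref{rmk: alternative definition of hol}. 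You instead develop everything on the universal cover: you identify the flat trivialization $(m,s)\mapsto\varphi_s(\zeta(m))$ of $\widetilde{\mathrm p}_G$, observe that $q^*\omega=ds$ there, and conclude by lifting $\overline\alpha$ and applying the fundamental theorem of calculus together with the form of the deck action. Your route avoids Stokes entirely and makes transparent that $\omega$ is literally $ds$ in the flat gauge; what it buys in clarity it pays for by leaning on two identifications that you assert rather than verify: that the abstract trivialization in Definition \ref{Def holonomy} is the one induced by the flat section $\zeta$, and that under this identification the $\pi_1(M)$-action reads $(m,s)\mapsto(\alpha\cdot m,\, s+t_\alpha)$. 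Both are true --- the latter because $\rho(\alpha)$ commutes with the geodesic flow (Lemma \ref{lemma:geodflow isometric}), so $\rho(\alpha)\varphi_s(\zeta(m))=\varphi_s(\rho(\alpha)\zeta(m))=\varphi_{s+t_\alpha}(\zeta(\alpha\cdot m))$ --- and both are essentially contained in the paragraph where the paper defines $\hol_G(\alpha)=t_\alpha$ via Equation \eqref{eq:equivariance lifts}, but since this is exactly where the content of the statement lives, you should spell them out.
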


\begin{proof}
	Say $\alpha(0)=\alpha(1)= x_0$. {Recalling Remark \ref{rmk: alternative definition of hol}},
	let $\widehat \alpha$ be the horizontal lift of $\alpha$ starting at $\overline \alpha(0)$. We apply Stokes' theorem. Define {a smooth map $f$ from $[0,1]\times [0,1]$ to the total space of $\mathrm p_G$} so that 
	\begin{itemize}
		\item $f(x,0)= \overline \alpha(x)$,
		\item $f(x,1)= \widehat \alpha(x)$, 
		\item {$f(0, y)\equiv \overline \alpha(0)=\overline\alpha(1)$},
		\item {$y\mapsto f(1, y)$  parametrizes the interval from $\overline \alpha(1)$ to $\widehat \alpha(1)$ in $\mathrm p_G^{-1} (x_0)\approx \R$.}
	\end{itemize}
	{By Stokes' Theorem and the flatness of $\mathrm p_G$}, one gets that  
	\begin{align*}
		0= \int_{[0,1]\times [0,1]} f^* d\omega&=
		\int_{\overline \alpha} \omega + \int_{f(1, \cdot)} \omega - \int_{\widehat \alpha} \omega - \int_{f(0, \cdot)} \omega\\
		&=\int_{\overline \alpha} \omega + \int_{f(1, \cdot)} \omega.
	\end{align*}
	
	By Remark \ref{rmk: alternative definition of hol},  $\widehat \alpha(1)= (-\hol_G(\alpha)) \cdot \overline\alpha(1)$. 
	Since $\omega= \gs{n+1}(\chi, \cdot)$ and $f(1, \cdot)$ is contained in {$\mathrm p_G^{-1} (x_0)$}, one gets that
	\[
	\int_{f(1, \cdot)} \omega=- \hol_G(\alpha)
	\]
	and the proof follows.
\end{proof}

\begin{proof}[Proof of Proposition $\ref{prop: flux = diff holonomy}$]
	Define 
	$\Theta:[0,1]\times S^1\to M$ by
	$\Theta(s,t)=\Upsilon_s(\alpha(t))$. Since the bundle $\mathrm p_G$ has contractible fibre, there always exists a smooth global section. In particular, there exists $\overline\Theta$ such that $\Theta=\mathrm p_G\circ \overline\Theta$. 
	% \as{\sout{The loops $\overline\Theta(0, \cdot)$ and $\overline\Theta(1, \cdot)$ projects via $\mathrm{p}_G$ to the curves $\Upsilon_0 \circ \alpha$ and $\Upsilon_1 \circ \alpha$ respectively. }}
	By Proposition \ref{prop: holonomy as integral of loops}, recalling that $d\omega=\mathrm{p}^* \Omega$, and applying Stokes' Theorem, we obtain:
	$$\hol_{\Upsilon_1} (\alpha) - \hol_{\Upsilon_0}(\alpha)=\int_{\overline\Theta(1, \cdot)} \omega - \int_{\overline\Theta(0, \cdot)} \omega=\int_{[0,1]\times S^1}\overline\Theta^*d\omega=\int_{[0,1]\times S^1}\Theta^*\Omega$$
	and the last term equals  $\int_\alpha\Flux(\Upsilon_\bullet)$.
\end{proof}

We conclude this section by proving one (easy) implication of Theorem \ref{thm:second char ham}. As mentioned in the introduction, this implication does not need the hypothesis that $\mathcal L$ and $\mathcal L'$ are Riemannian. 

\begin{proof}[Proof of the ``if'' part of Theorem $\ref{thm:second char ham}$]
	Suppose there exists a Hamiltonian symplectomorphism $\Phi=\Phi_1$, endpoint of a Hamiltonian isotopy $\Phi_\bullet$, such that $\Phi(\mathcal L)=\mathcal L'$. Then define the map $\Upsilon_\bullet:M\times[0,1]\to\GGG_\rho$ in such a way that $\Upsilon_0:M\to\mathcal G$ is an embedding with image $\mathcal L$ and
	$$\Upsilon_s=\Phi_s\circ \Upsilon_0~.$$
	By Remark \ref{rmk:hamiltonian and cartan}, $\Phi_s$ is a (Hamiltonian) symplectomorphism for every $s\in[0,1]$, hence $\Upsilon_s$ is a Lagrangian embedding for all $s$. We claim that $\Flux(\Upsilon_\bullet)$ vanishes in $H^1_{dR}(M,\R)$. Indeed, for every $s$ we have
	$$\Upsilon_s^*(\Omega(X_s,\cdot))=\Upsilon_s^*dF_s=df_s$$
	by Equation \eqref{eq:hamiltonian function}, where $X_s$ is the vector field generating the Hamiltonian isotopy (and hence $\Upsilon_\bullet$) and $f_s=F_s\circ\Upsilon_s$. Therefore 
	$\int_0^1\Upsilon_s^*(\Omega(X_s,\cdot))ds$ is exact, namely $\Flux(\Upsilon_\bullet)=0$.
	
	Using Proposition \ref{prop: flux = diff holonomy}, we have $\hol_{\Upsilon_0}  = \hol_{\Upsilon_1}$. By Lemma \ref{lemma:integrable iff trivial bundle}, this shows that $\mathcal L$ is $\rho$-integrable if and only if $\mathcal L'$ is $\rho$-integrable, and this concludes the proof of the first implication in Theorem \ref{thm:second char ham}.
\end{proof}

\section{Conclusion of Theorem \ref{thm:second char ham}} 

We are left with the other implication in  Theorem \ref{thm:second char ham}.  Given two Riemannian $\rho$-integrable submanifolds $\mathcal L,\mathcal L'\subset \GGG_\rho$, we shall produce $\Phi\in\Ham_c(\GGG_\rho,\Omega)$ mapping $\mathcal L$ to $\mathcal L'$. We remark here that the results and methods of \cite{solomon} use stronger topological hypothesis, hence do not apply under our assumptions.

Roughly speaking, the idea is to reduce the problem to finding a deformation in the nearly-Fuchsian manifold $\faktor{\Hyp^{n+1}}{\rho(\pi_1(M))}$ which interpolates between two hypersurfaces of small principal curvatures corresponding to $\mathcal L$ to $\mathcal L'$. For technical reasons, it will be easier to deal with {convex} hypersurfaces  that we defined in Definition \ref{Def:convex immersion}}

\begin{Lemma} \label{lemma:convex gauss map diffeo}
Let $M^n$ be a closed oriented manifold, $\rho:\pi_1(M)\to\Isom(\Hyp^{n+1})$ be a nearly-Fuchsian representation and $\widetilde\sigma:\widetilde M\to\Hyp^{n+1}$ be a $\rho$-equivariant embedding. If $\widetilde\sigma$ is convex, then the Gauss map $G_{\widetilde\sigma}^+$ is an {equivariant} diffeomorphism between $\widetilde M$ and the connected component $\Omega_+$ of $\partial\Hyp^{n+1}\setminus\Lambda_\rho$.
\end{Lemma}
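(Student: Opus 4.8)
The plan is to follow the blueprint of Propositions \ref{prop:gauss maps diffeo onto image} and \ref{prop:action free prop disc}, but to replace the comparison with $r$-caps (which required small principal curvatures) by comparison with totally geodesic hyperplanes, which is exactly what convexity makes available (cf. Remark \ref{rmk:convex side caps} and the caption of Figure \ref{fig:limit2}). First I would record the two easy facts. Since $M$ is closed, $\widetilde\sigma$ is cocompact, hence complete, and by Remark \ref{rmk:embedding in the quotient} it descends to an embedding of the compact manifold $M$ into $\faktor{\Hyp^{n+1}}{\rho(\pi_1(M))}$; therefore $\widetilde\sigma$ is a proper embedding and its image $\Sigma$ bounds a geodesically convex region $C$ (the convex side, towards which $-\nu$ points). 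For the local diffeomorphism I reuse the differential computation from Proposition \ref{prop:gauss maps diffeo onto image}, namely $d_pG_{\widetilde\sigma}^+=d_p\widetilde\sigma\circ(\mathrm{id}-B)$: since $\widetilde\sigma$ is convex, $B$ is negative semi-definite, so $\mathrm{id}-B$ is positive definite and $G_{\widetilde\sigma}^+$ is a local diffeomorphism. Equivariance is immediate from the naturality of the forward-endpoint construction under isometries.

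Next I would establish injectivity using the convex geometry of $C$ in place of Lemma \ref{lemma concave side horospheres}. For a point $z$ on the forward normal ray from $\widetilde\sigma(p)$ (which points into the concave side $\Hyp^{n+1}\setminus C$), the orthogonal geodesic realizes the distance to $C$, so the nearest-point projection $\Pi\colon\Hyp^{n+1}\to C$ satisfies $\Pi(z)=\widetilde\sigma(p)$. If $G_{\widetilde\sigma}^+(p)=G_{\widetilde\sigma}^+(q)=\xi$ with $p\neq q$, the two forward rays are asymptotic to $\xi$, hence the distance between their points at equal arclength parameter tends to $0$; since $\Pi$ is $1$-Lipschitz (projection onto a convex set in a $\mathrm{CAT}(-1)$ space), this forces $\widetilde\sigma(p)=\widetilde\sigma(q)$, a contradiction. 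To see that the image avoids the limit set, I would use support hyperplanes: the totally geodesic hyperplane $H_p$ tangent to $\Sigma$ at $\widetilde\sigma(p)$ supports the convex body $C$, so $\Sigma$ (hence $\Lambda_\rho=\partial\Sigma$) lies in the closed convex side of $H_p$, whereas the forward ray at $p$ enters the open concave side of $H_p$ and thus ends at a point of the open complementary ideal hemisphere, disjoint from $\Lambda_\rho$. Hence $G_{\widetilde\sigma}^+(\widetilde M)\subseteq\partial\Hyp^{n+1}\setminus\Lambda_\rho$, and being connected it lies in a single component $\Omega_\pm$.

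It remains to prove surjectivity onto a full component and to identify it with $\Omega_+$. For surjectivity I would argue by covering space theory rather than by a direct sliding argument: since $\rho$ is nearly-Fuchsian, a reference immersion $\widetilde\sigma_0$ with small principal curvatures gives an equivariant diffeomorphism $G_{\widetilde\sigma_0}^+\colon\widetilde M\to\Omega_+$ (Proposition \ref{prop:action free prop disc}), so $\faktor{\Omega_+}{\rho(\pi_1(M))}\cong M$ is compact, and likewise for $\Omega_-$. The equivariant, injective local diffeomorphism $G_{\widetilde\sigma}^+$ descends to a local diffeomorphism $\overline{G}{}^+\colon M\to\faktor{\Omega_\pm}{\rho(\pi_1(M))}$ between closed manifolds of the same dimension; a local diffeomorphism from a compact manifold is a covering map, and it is injective because $G_{\widetilde\sigma}^+$ is injective and equivariant, hence it is a diffeomorphism. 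Lifting, $G_{\widetilde\sigma}^+\colon\widetilde M\to\Omega_\pm$ is a diffeomorphism onto the whole component. Finally, the labelling of $\Omega_+$ versus $\Omega_-$ depends only on $\rho$ and the orientation of $M$ (as recorded after Proposition \ref{prop:action free prop disc}); since $\widetilde\sigma$ carries the orientation-compatible normal, its forward endpoints lie in the same component as those of $\widetilde\sigma_0$, namely $\Omega_+$.

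The hard part will be the last two points. The surjectivity onto the \emph{entire} component $\Omega_+$ (rather than an arbitrary open subset of it) is the genuine content: without the small principal curvatures hypothesis one cannot invoke Proposition \ref{prop:gauss maps diffeo onto image} directly, and I expect the covering-space argument above --- resting on the compactness of $\faktor{\Omega_+}{\rho(\pi_1(M))}$ --- to be the cleanest route, the alternative being to show that the forward endpoints of $\widetilde\sigma(p)$ accumulate on $\Lambda_\rho$ as $p\to\infty$ in $\widetilde M$. Pinning down that the component is precisely $\Omega_+$ and not $\Omega_-$ is pure orientation bookkeeping, but it must be handled with care, ideally by checking that $G^+$ and $G^-$ of $\widetilde\sigma$ land in opposite components and that the orientation-compatible normal selects the same one as for $\widetilde\sigma_0$; a deformation through normal evolutions does not suffice in general, since the forward evolution of a hypersurface with a principal curvature $\le-1$ never reaches the small-principal-curvature regime.
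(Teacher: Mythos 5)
Your overall architecture is sound and agrees with the paper on the two ``boundary'' steps: the differential computation $dG^+_{\widetilde\sigma}=d\widetilde\sigma\circ(\mathrm{id}-B)$ for the local diffeomorphism, and support hyperplanes (the paper's ``$r$-caps with $r=0$'') to exclude $\Lambda_\rho$ from the image. Where you genuinely diverge is in the two middle steps. For injectivity the paper repeats the tangent-horosphere argument of Proposition \ref{prop:gauss maps diffeo onto image} (two orthogonal geodesics with the same forward endpoint give two tangent horospheres that must coincide), whereas you use the $1$-Lipschitz nearest-point projection onto the convex side; this is a clean alternative, but note that your intermediate claim is off: for two rays asymptotic to the same ideal point, the distance at equal arclength tends to the Busemann offset, not to $0$ in general. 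The repair is immediate --- the infimum of $d(\gamma_p(t),\gamma_q(s))$ over both parameters is $0$, and since $\Pi(\gamma_p(t))=\widetilde\sigma(p)$ and $\Pi(\gamma_q(s))=\widetilde\sigma(q)$ for all $t,s>0$, the Lipschitz bound still forces $\widetilde\sigma(p)=\widetilde\sigma(q)$. For surjectivity the paper slides horospheres centered at a given $x\in\Omega_+$ until first tangency (as in Proposition \ref{prop:action free prop disc}), while you quotient and invoke ``injective proper local diffeomorphism onto a connected closed manifold is a diffeomorphism''; your route is valid and arguably cleaner, and its reliance on the compactness of $\faktor{\Omega_\pm}{\rho(\pi_1(M))}$ is legitimate since the nearly-Fuchsian hypothesis supplies a reference small-principal-curvature immersion.

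The one place where you are not yet done, and which you correctly flag, is the identification of the component as $\Omega_+$ rather than $\Omega_-$. You are right that normal evolution cannot reduce to the small-principal-curvature case when some $\lambda_i\le -1$. The paper's horosphere-sliding step settles this implicitly: a sufficiently small horoball centered at $x\in\Omega_+$ lies in the component of $\Hyp^{n+1}\setminus\widetilde\sigma(\widetilde M)$ whose ideal boundary contains $\Omega_+$, and since $\widetilde\sigma(\widetilde M)$ and the reference hypersurface are equivariantly isotopic sections of the product structure $M\times\R$ of the nearly-Fuchsian manifold with compatible normals, that component is the $+\nu$ side; hence the orthogonal geodesic produced at the first tangency point exits on the $+\nu$ side and $x$ is a \emph{forward} endpoint. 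Some version of this ``the $+\nu$ side of $\widetilde\sigma$ is the side whose ideal boundary is $\Omega_+$'' statement is what your last step needs to be complete.
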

\begin{proof}
By the same argument as in Section \ref{sec:nearly fuchsian} (see the discussion between Proposition \ref{prop:action free prop disc0} and Proposition \ref{prop:action free prop disc}), $\widetilde\sigma$ extends to a continuous injective map of the visual boundary of $\widetilde M $ with image $\Lambda_\rho$. We can now repeat wordly the argument of Proposition \ref{prop:gauss maps diffeo onto image} to show that, if $B$ is negative semi-definite, then $G_{\widetilde\sigma}^+$ is a diffeomorphism onto its image. To show that {$G_{\widetilde\sigma}^+(\widetilde M)=\Omega_+$}, we repeat instead the proof of Proposition \ref{prop:action free prop disc}. More precisely, one first shows (using tangent horospheres) that every $x\in\Omega_+$ is in the image of $G_{\widetilde\sigma}^+$. Then, by continuity, it suffices to show that every $x\in\Lambda_\rho$ is not on the image of $G_{\widetilde\sigma}^+$. To see this, the last paragraph of the proof of Proposition \ref{prop:action free prop disc} applies unchanged, and when considering tangent $r$-caps we can even take $r=0$, that is, replace $r$-caps by totally geodesic hyperplanes. See Figure \ref{fig:limit2}.
\end{proof}

\begin{Lemma}\label{lemma:convex interpolation}
Let $M^n$ be a closed oriented manifold and $\rho:\pi_1(M)\to\Isom(\Hyp^{n+1})$ be a nearly-Fuchsian representation. Given two closed hypersurfaces $\Sigma_0$ and $\Sigma_1$ of small principal curvatures in the nearly-Fuchsian manifold $\faktor{\Hyp^{n+1}}{\rho(\pi_1(M))}$, there exists an isotopy $$\upsilon_\bullet\colon M\times[0,1]\to\faktor{\Hyp^{n+1}}{\rho(\pi_1(M))}$$ such that:
\begin{itemize}
	\item $\upsilon_s$ is a convex embedding for all $s\in[0,1]$;
	\item $\upsilon_0(M)$ is a hypersurface equidistant from $\Sigma_0$;
	\item $\upsilon_1(M)$ is a hypersurface equidistant from $\Sigma_1$.
\end{itemize}
\end{Lemma}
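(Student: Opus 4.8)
The plan is to first reduce, by a normal flow, to the case of two strictly convex hypersurfaces, and then to interpolate between them by linearly interpolating the support functions of the convex bodies they bound in the hyperboloid model of $\Hyp^{n+1}$.

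First I would use the normal evolution (Definition \ref{Def normal evolution}) to replace $\Sigma_0$ and $\Sigma_1$ by strictly convex hypersurfaces. Since $\Sigma_0$ has small principal curvatures, Lemma \ref{lemma:evolution fsigma} shows that the principal curvatures of its normal evolution $(\Sigma_0)_t$ are $\tanh(\mu_i-t)$, where $\mu_i=\arctanh(\lambda_i)$; as $M$ is closed the $\mu_i$ are uniformly bounded, so there is $t_0\gg 0$ for which all these curvatures are strictly negative. By Corollary \ref{cor:equidistant is immersed} and Proposition \ref{prop injectivity} (together with Remark \ref{rmk:embedding in the quotient}), the hypersurface $\upsilon_0(M):=(\Sigma_0)_{t_0}$ is then a strictly convex embedding equidistant from $\Sigma_0$ in the sense of Definition \ref{Def:convex immersion}; I define $\upsilon_1(M):=(\Sigma_1)_{t_1}$ analogously. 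It thus suffices to join the lifts $\widetilde\Sigma_0^c,\widetilde\Sigma_1^c\subset\Hyp^{n+1}$ — two $\rho$-invariant, properly embedded, strictly convex hypersurfaces, both with asymptotic boundary $\Lambda_\rho$ by Proposition \ref{prop:action free prop disc0} — through $\rho$-invariant strictly convex hypersurfaces contained in a $\rho$-cocompact region.

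The interpolation I propose is as follows. Each $\widetilde\Sigma_i^c$ bounds a geodesically convex side $\widetilde C_i\subset\Hyp^{n+1}$, which in the hyperboloid model is the intersection of $\Hyp^{n+1}$ with a $\rho$-invariant convex cone of $\R^{n+1,1}$. To such a convex body one associates a support function $h_i$ on (a domain of) the de Sitter space $\dS^{n+1}=\{x\,|\,\langle x,x\rangle=1\}$, and the key analytic facts I would invoke are that $\widetilde\Sigma_i^c=\partial\widetilde C_i\cap\Hyp^{n+1}$ is a smooth strictly convex hypersurface if and only if $h_i$ is smooth and $Lh_i\succ 0$ for a suitable \emph{linear} second-order operator $L$ (the hyperbolic analogue of the classical support-function criterion for convex bodies), and that $h_i$ is $\rho$-invariant because $\rho$ acts linearly preserving the bilinear form. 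I then set $h_s:=(1-s)h_0+s\,h_1$ and let $\widetilde\Sigma_s=\partial\widetilde C_s\cap\Hyp^{n+1}$, where $\widetilde C_s$ is the convex body with support function $h_s$. Since $L$ is linear, positive-definiteness is preserved under convex combinations, so each $\widetilde\Sigma_s$ is smooth and strictly convex; $\rho$-invariance of $h_s$ is immediate; and from $\min(h_0,h_1)\le h_s\le\max(h_0,h_1)$ one gets $\widetilde C_0\cap\widetilde C_1\subseteq\widetilde C_s\subseteq\mathrm{conv}(\widetilde C_0\cup\widetilde C_1)$, so that $\widetilde\Sigma_s$ stays in the $\rho$-cocompact region bounded by $\widetilde\Sigma_0^c$ and $\widetilde\Sigma_1^c$ and hence descends to a closed convex embedding $\upsilon_s:M\to\faktor{\Hyp^{n+1}}{\rho(\pi_1(M))}$. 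The family $\upsilon_\bullet$ is the desired isotopy, with $\upsilon_0,\upsilon_1$ equidistant from $\Sigma_0,\Sigma_1$ by construction. Throughout, Lemma \ref{lemma:convex gauss map diffeo} ensures that the hyperbolic Gauss maps remain diffeomorphisms onto $\Omega_+$, which is exactly what makes convexity the natural property to preserve.

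The hard part will be the support-function step: one must set up the Lorentzian support function on de Sitter space so that (a) the correspondence with strictly convex hypersurfaces and the positivity criterion $Lh\succ 0$ hold, with $L$ genuinely linear, and (b) the asymptotic boundary $\Lambda_\rho$ and the smoothness up to the relevant domain are preserved under $h\mapsto h_s$. I expect (a) to be a careful but essentially standard adaptation of classical convex geometry to the hyperbolic setting, whereas (b) — guaranteeing that $\widetilde\Sigma_s$ is a genuine closed hypersurface asymptotic to $\Lambda_\rho$ for every $s$ — is the delicate point; the squeezing $\widetilde C_0\cap\widetilde C_1\subseteq\widetilde C_s\subseteq\mathrm{conv}(\widetilde C_0\cup\widetilde C_1)$ is precisely the tool I would use to bypass it, reducing all the verifications to the cocompact region lying between the two convex leaves.
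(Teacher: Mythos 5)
Your reduction to strictly convex equidistant hypersurfaces is exactly the paper's first step, but from there you take a genuinely different route, and the route you take defers precisely the step where all the content lies. The paper interpolates via the signed distance functions $r_0,r_1$ from $\Sigma_0,\Sigma_1$: a short Hessian computation ($\nabla^2 r_i(X,Y)=-\|Dr_i\|\,\II(X,Y)$ on directions tangent to the level sets, and $\nabla^2r_i(\nu_i,\cdot)=0$) shows each $r_i$ is a convex function on the region where its level sets are convex, so $f_s=(1-s)r_0+sr_1$ is convex with nowhere-vanishing gradient (the gradients $\nu_0,\nu_1$ cannot be opposite because the corresponding orthogonal geodesics both run from $\Omega_-$ to $\Omega_+$), and the zero level sets $\{f_s=0\}$ are the desired convex hypersurfaces foliating the region between the two leaves. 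The ``linearity'' that you want from the support-function formalism is obtained there for free, using only objects already set up in the paper.

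The gap in your version is step (a), which you label as hard and then assume. For the argument to close you need: that an unbounded $\rho$-invariant geodesically convex body asymptotic to $\Lambda_\rho$ is faithfully encoded by a support function $h$ on a proper open subdomain of $\dS^{n+1}$ (the domain of finiteness, which you must check depends only on $\Lambda_\rho$ and the choice of side, so that $h_0$ and $h_1$ live on the same domain); that smoothness and \emph{strict} convexity of $\partial C\cap\Hyp^{n+1}$ is equivalent to smoothness of $h$ together with positivity of a genuinely linear second-order operator $Lh$; and that positivity of $Lh_s$ on that domain forces the envelope to be a single properly embedded hypersurface rather than an immersed or degenerate one. None of this is in the paper, and the classical Euclidean statement you are adapting is proved for \emph{bounded} bodies, where the support function is finite and the positivity criterion is checked on the whole sphere; the unbounded, equivariant, fixed-asymptotic-boundary setting is exactly where boundary effects of the domain of $h$ can spoil the conclusion. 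Your squeezing $\widetilde C_0\cap\widetilde C_1\subseteq\widetilde C_s\subseteq\mathrm{conv}(\widetilde C_0\cup\widetilde C_1)$ is a good idea for controlling properness and cocompactness, but it does not by itself deliver smoothness or strict convexity of $\partial\widetilde C_s$. As written, the proof is therefore incomplete; either carry out the de Sitter duality in the unbounded equivariant setting in full, or switch to the distance-function interpolation, which reaches the same conclusion with tools already at hand.
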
 
\begin{proof}
First of all, let us observe that we can find hypersurfaces equidistant from $\Sigma_0$ and $\Sigma_1$ which are convex. Indeed, by Corollary \ref{cor:equidistant is immersed} and Remark \ref{rmk:embedding in the quotient}, $t$-equidistant hypersurfaces are embedded for all $t\in\R$. Moreover, by compactness, the principal curvatures of $\Sigma_0$ and $\Sigma_1$ are in $(-\epsilon,\epsilon)$ for some $0<\epsilon<1$, and applying Equation \eqref{eq della discordia} we may find  $t_0$ such that the principal curvatures of the $t$-equidistant hypersurfaces are negative for $t\geq t_0$ -- namely, the equidistant hypersurfaces are convex.

Abusing notation, up to taking equidistant hypersurfaces as explained above, we will now assume that $\Sigma_0$ and $\Sigma_1$ are convex, and our goal is to produce $\upsilon_\bullet$ such that $\upsilon_s$ is a convex embedding for all $s\in[0,1]$, $\upsilon_0(M)=\Sigma_0$ and $\upsilon_1(M)=\Sigma_1$. Up to replacing $\Sigma_0$ and $\Sigma_1$ again {with equidistant hypersurfaces}, we {can} also assume that $\Sigma_0\cap\Sigma_1=\emptyset$, that $\Sigma_1$ is in the concave side of $\Sigma_0$, and that the equidistant surfaces from $\Sigma_1$ which intersect $\Sigma_0$ are all convex. We call $\mathcal A$ the region of $\faktor{\Hyp^{n+1}}{\rho(\pi_1(M))}$ bounded by $\Sigma_0$ and containing $\Sigma_1$.

Let us now consider the (signed) distance functions $r_0$ and $r_1$ from $\Sigma_0$ and $\Sigma_1$ respectively, chosen in such a way that both $r_0$ and $r_1$ are positive functions on the concave side of $\Sigma_0$ and $\Sigma_1$ respectively. Again by Corollary \ref{cor:equidistant is immersed} and Remark \ref{rmk:embedding in the quotient}, these functions are smooth and have nonsingular differential everywhere. Let us denote by $\nu_i$ the gradient of $r_i$. {The vector field $\nu_i$} has unit norm and is tangent to the orthogonal foliations of $\Sigma_i$ which have been described in the proof of Proposition \ref{prop:action free prop disc}. (Proposition \ref{prop:action free prop disc} describes the foliation in the universal cover, but it clearly descends to the quotient $\faktor{\Hyp^{n+1}}{\rho(\pi_1(M))}$.)

We claim that both $r_i$'s are convex functions in the region $\mathcal A$,  i.e. that their Hessians are positive semi-definite, as a consequence of the fact that the level sets of $r_i$ in $\mathcal A$ are all convex. Recall that the Riemannian Hessian of a smooth function $f:\mathcal A\to\R$ is the symmetric 2-tensor defined as 
\begin{equation}
	\label{eq: def hessiano}
	\nabla^2f(X,Y)=\partial_X (\partial_Y f)-\partial_{D_XY}f~,
\end{equation}
where $X,Y$ are local vector fields and $D$ is the ambient Levi-Civita connection as usual. Clearly $\nabla^2r_i(\nu_i,\nu_i)=0$ since $r_i$ is linear along the integral curves of $\nu_i$ and such integral curves, which are the leaves of the orthogonal foliation described above, are geodesics. Moroever, if $X$ is a vector field tangent to the level sets of $r_i$, then $\nabla^2r_i(X,\nu_i)=0$: indeed the first term {in the RHS of Equation \eqref{eq: def hessiano}} vanishes  because $r_i$ is linear along the integral curves, and the second term as well, because $D_{X}\nu_i=-B_i(X)$ is tangential to the level sets of $r_i$ and thus $\partial_{D_{X}\nu_i}r_i=0$.

To conclude that $\nabla^2r_i$ is positive semi-definite, it remains to show that $\nabla^2r_i(X,X)\geq 0$ for all $X$ tangent to the level sets. 
It is more instructive to perform this computation in the general setting of a smooth function  $f:\mathcal A\to\R$. Since the unit normal vector field to the level set of $f$ is $\nu=\frac{Df}{\|Df\|}$,{with $Df$ being the gradient of $f$, for all $X,Y$ vector fields tangent to the fibers, we} get that:
\begin{equation}\label{eq:hessian and II}
	\nabla^2f(X,Y)=-\partial_{D_XY}f=-\langle D_XY,\nu\rangle\partial_\nu f=-\|Df\|\II(X,Y)~,
\end{equation}
where in the last step we used that $\partial_\nu f=\langle Df,\nu\rangle=\|Df\|$, and $\II$ denotes the second fundamental form of the level sets of $f$. When $f=r_i$, in the region $\mathcal A$ the level sets of $r_i$ are convex, hence {$\II$ is negative semi-definite and} $\nabla^2r_i(X,X)\geq 0$.

We remark that Equation \eqref{eq:hessian and II} also shows that, if $f$ is a convex function, then its level sets are convex hypersurfaces as long as $Df\neq 0$. We shall now apply this remark to the zero set of the function $f_s=(1-s)r_0+sr_1$ for $s\in[0,1]$. The differential of $f_s$ never vanishes, for $\|Dr_0\|=\|Dr_1\|=1$, hence $Df_s=0$ is only possible for $s=\frac 1 2$ if $Dr_0=-Dr_1$: nevertheless, this cannot happen  since the geodesics with initial vector $Dr_0=\nu_0$ and $Dr_1=\nu_1$ both have final endpoint in $\Omega_+$ and initial endpoint in $\Omega_-$ by (the proof of) Proposition \ref{prop:action free prop disc}. Hence $\{f_s=0\}$ is an embedded hypersurface for all $s$. Observe moreover that
$$\{f_s=0\}=\bigg\{\frac{r_0}{r_0-r_1}=s\bigg\}~.$$
Since $\Sigma_0\cap \Sigma_1=\emptyset$, $r_0-r_1$ never vanishes, and this shows that the hypersurfaces $\{f_s=0\}$ provide a foliation of the region between $\Sigma_0$ and $\Sigma_1$, which is contained in $\mathcal A$. Since both $r_0$ and $r_1$ are convex functions in $\mathcal A$, 
$$\nabla^2f_s(X,X)=(1-s)\nabla^2r_0(X,X)+s\nabla^2r_1(X,X)\geq 0$$
for every $X$, hence $f_s$ is convex. As remarked just after Equation \eqref{eq:hessian and II}, since $\|Df_s\|\neq 0$, $\{f_s=0\}$ is a convex hypersurface.

It is not hard now to produce $\upsilon_\bullet:M\times[0,1]\to\faktor{\Hyp^{n+1}}{\rho(\pi_1(M))}$ such that $\upsilon_s(M)=\{f_s=0\}$. For instance one can flow along the vector field $\frac{DF}{\|DF\|^2}$ where $F=\frac{r_0}{r_0-r_1}$. Alternatively one can apply Lemma \ref{lemma:convex gauss map diffeo} to infer that the Gauss maps $G^+_{\widetilde\sigma}$ in the universal cover  induce diffeomorphisms of each hypersurface $\{f_s=0\}$ with $\faktor{\Omega_+}{\rho(\pi_1(M))}\cong M$, and define $\upsilon_s$ as the inverse map.
\end{proof}

\begin{proof}[Proof of the ``only if'' part of Theorem $\ref{thm:second char ham}$]
Suppose $\mathcal L$ and $\mathcal L'$ are $\rho$-integrable Riemannian submanifolds in $\GGG_\rho$. Then there exists hypersurfaces $\Sigma$ and $\Sigma'$ in $\faktor{\Hyp^{n+1}}{\rho(\pi_1(M))}$ whose Gauss map image induce $\mathcal L$ and $\mathcal L'$ respectively. We now apply Lemma \ref{lemma:convex interpolation} and find $\upsilon_\bullet$ such that $\upsilon_s$ is convex for every $s$, and the images of $\upsilon_0$ and $\upsilon_1$ are equidistant hypersurfaces from $\Sigma$ and $\Sigma'$ respectively. Define 
$\Upsilon_\bullet:M\times[0,1]\to\GGG_\rho$ so that $\Upsilon_s$ is the map into $\GGG_\rho$ induced  by the Gauss map of the lifts on the universal cover $\widetilde\upsilon_s:\widetilde M\to\Hyp^{n+1}$. As a consequence of Lemma \ref{lemma:convex gauss map diffeo} the Gauss map of each $\widetilde\upsilon_s$ is an embedding with image in $\Omega_+\times\partial\Hyp^{n+1}\setminus\Delta$ in $\G{n+1}$. 
Repeating the same argument of Remark \ref{rmk:embedding in the quotient}, $\Upsilon_s:M\to\GGG_\rho$ is an embedding for every $s$. As a particular case, by Lemma \ref{prop:gauss map invariant normal evo}, $\Upsilon_0(M)=\mathcal L$ and $\Upsilon_1(M)=\mathcal L'$.

By construction for every $s\in[0,1]$ the image of $\Upsilon_s$ is a $\rho$-integrable embedded submanifold in $\GGG_\rho$. Let us denote by $\mathcal L_s$ such submanifold. It follows (Lemma \ref{lemma:integrable iff trivial bundle}) that {$\hol_{\Upsilon_s}$} is trivial for all $s$.
By Proposition \ref{prop: flux = diff holonomy}, together with the definition of $\Flux$, we have that
$$\int_0^{s}\Upsilon_r^*(\Omega(X_r,\cdot))dr=0\in H^1_{dR}(M,\R)$$
for all $s$, where $X_s$ is the vector field generating $\Upsilon_s$. Hence necessarily the cohomology class of $\Upsilon_s^*(\Omega(X_s,\cdot))$  in $H^1_{dR}(M,\R)$ is trivial for all $s$. We can therefore find a smooth function $f_\bullet:M\times[0,1]\to\R$ such that $df_s=\Upsilon_s^*(\Omega(X_s,\cdot))$ for all $s$. Pushing forward $f_s$ by means of $\Upsilon_s$, we have defined smooth functions on $\mathcal L_s$ whose differential equals $\Omega(X_s,\cdot)$. Let us extend them to $F_\bullet:\GGG_\rho\to\R$ so that $F_\bullet$ is compactly supported and $F_s\circ \Upsilon_s=f_s$. 

Let $\widehat X_s$ be the symplectic gradient of $F_s$, namely
$$dF_s=\Omega(\widehat X_s,\cdot)~,$$
and let $\Phi_s$ be the flow generated by $\widehat X_s$. From
$d(F_s\circ \Upsilon_s)=df_s$
we see that 
$$\Omega(\widehat X_s,d\Upsilon_s(V))=\Omega(X_s,d\Upsilon_s(V))$$
for all $V\in T_p M$. This implies that $\Omega(\widehat X_s-X_s,\cdot)$ vanishes identically along the Lagrangian submanifold $\mathcal L_s$. By non-degeneracy of the symplectic form $\Omega$, $\widehat X_s-X_s$ is tangential to  $\mathcal L_s$. Therefore 
$\Phi_s\circ\Upsilon_0$ and $\Upsilon_s$ differ by pre-composition with a diffeomorphism $\phi_s$ of $M$ (which is indeed obtained as the flow on $M$ of the vector field $\Upsilon_s^*(\widehat X_s-X_s)$). This shows that $\Phi_s(\mathcal L)=\mathcal L_s$. In particular, $\Phi=\Phi_1$ is the desired compactly supported Hamiltonian symplectomorphism of $\GGG_\rho$ such that $\Phi(\mathcal L)=\mathcal L'$. 
\end{proof}

\section{Evolution by geometric flows}\label{app:geometric flows}

The aim of this last section is to provide {a relationship between certain geometric flows for hypersurfaces in $\Hyp^{n+1}$ and their induced flows in $T^1 \Hyp^{n+1}$ and in $\G{n+1}$.}

Let $M=M^n$ be an oriented manifold. Let $\sigma \colon M \times (-\varepsilon,\varepsilon) \to \Hyp^{n+1}$ be a smooth map such that $\sigma_t= \sigma(\cdot, t)$ is an immersion with small principal curvatures for all $t$, and let $\nu=\nu(x,t)$ be the normal vector field.

%Denote the induced maps as $\zeta_\sigma \colon M \times (-\varepsilon, \varepsilon)\to T^1 \Hyp^{n+1}$ and $G_\sigma \colon M \times (-\varepsilon,\varepsilon) \to \G{n+1}$, defined by $\zeta_t=\zeta_\sigma(\cdot, t)$ being the lift of $\sigma_t$ and by $G_t=G_\sigma(\cdot, t)$ being the Gauss map of $\sigma_t$.  

%\sout{Denote with $\I_t$ the first fundamental form of $\sigma_t$ and with $\overline \I_t$ the pull-back metric of $\zeta_t$ and $G_t$ equivalently.
%We will use $\nabla^t$ and $\overline {\nabla^t}$ to denote the Levi Civita connections (and the gradient operators) of $\I_t$ and $\overline \I_t$ respectively.
%Also denote with $D$ the Levi-Civita connection on $\Hyp^{n+1}$ and with $B_t$ the shape operator for the immersion $\sigma_t$. Finally, we use the notation}
%\[
%\frac{d}{dt} \sigma_t= \dot \sigma_t := d\sigma (\frac{\partial}{\partial t}),
%\]
%\sout{and similarly for $\zeta_t$ and $G_t$.}

\begin{Prop} 
\label{Prop: flows on G and T1Hn}
Let $f\colon M\times(-\varepsilon, \varepsilon)\to \R$ be a smooth map such that 
\[
\frac{d}{dt} \sigma_t  = f_t \nu_t,
\]
{and let $\zeta_t:=\zeta_{\sigma_t}:M\to T^1\Hyp^{n+1}$ be the lift to $T^1\Hyp^{n+1}$, $G_t:=G_{\sigma_t}:M\to\G{n+1}$ be the Gauss map.} Then,
\begin{align}
	\label{eq flusso in T1}
	\frac {d}{dt} \zeta_t &= - d\zeta_t ( B_t ( \overline \nabla^t f_t) ) - {\mathrm J}  ( d\zeta_t (\overline \nabla^t f_t)) + f_t \chi \\
	\label{eq flusso in G}
	\frac {d}{dt} G_{t} &= - d G_{t} ( B_t ( \overline \nabla^t f_t) ) - \mathbb J   (d G_{t} (\overline \nabla^t f_t) )
\end{align}
where $\overline \nabla^t f_t$ is the gradient of $f_t$ with respect to the first fundamental form $\overline\I_t=G_t^*\GG$ {and $B_t$ is the shape operator of $\sigma_t$.}
\end{Prop}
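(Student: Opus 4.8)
The plan is to work throughout in the hyperboloid model $\Hyp^{n+1}\subset\R^{n+1,1}$, where $\zeta_t=(\sigma_t,\nu_t)$ and the whole statement reduces to computing the $\R^{n+1,1}\times\R^{n+1,1}$-valued time derivative $\tfrac{d}{dt}\zeta_t=(\partial_t\sigma_t,\partial_t\nu_t)$ and decomposing it along the splitting $T T^1\Hyp^{n+1}=\mathrm{Span}_\R(\chi)\oplus\HP\oplus\VP$ of Equation \eqref{eq:direct sum}. First I would recall the ingredients: that $\chi_{(x,v)}=(v,x)$ (Lemma \ref{lemma:generator geoflow hor lift}), that the horizontal and vertical lifts read $w^\HH=(w,0)$ and $w^\V=(0,w)$, that ${\mathrm J}(w^\HH)=w^\V$ and ${\mathrm J}(w^\V)=w^\HH$ (Equation \eqref{eq:defiJ2}), and that the differential of the lift is $d\zeta_t(X)=(d\sigma_t(X),-d\sigma_t(B_tX))$ by Equation \eqref{eq:differential lift}. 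The small principal curvatures hypothesis guarantees that $\mathrm{id}-B_t^2$ is invertible and that $\overline\I_t$ is Riemannian, so that $\overline\nabla^tf_t$ is well-defined.

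Next I would compute $\partial_t\nu_t$. The first component $\partial_t\sigma_t=f_t\nu_t$ is the hypothesis. For the second, I would differentiate in $t$ the three defining identities $\langle\nu_t,\nu_t\rangle=1$, $\langle\sigma_t,\nu_t\rangle=0$ and $\langle\nu_t,d\sigma_t(X)\rangle=0$. The first gives $\partial_t\nu_t\perp\nu_t$; the second, using $\partial_t\sigma_t=f_t\nu_t$, gives $\langle\sigma_t,\partial_t\nu_t\rangle=-f_t$. For the third I would invoke Clairaut's theorem on the smooth map $\sigma$ of the product $M\times(-\varepsilon,\varepsilon)$, namely $\partial_t(d\sigma_t(X))=\partial_X(f_t\nu_t)=(\partial_Xf_t)\nu_t+f_t\,\partial_X\nu_t$, together with the fact that $\partial_X\nu_t$ is automatically tangent to $\Hyp^{n+1}$ and hence equals $D_X\nu_t=-d\sigma_t(B_tX)$ by the definition of the shape operator, Equation \eqref{eq:shape}. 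This yields $\langle\partial_t\nu_t,d\sigma_t(X)\rangle=-df_t(X)$. Expanding $\partial_t\nu_t$ in the basis $\{\sigma_t,\nu_t,d\sigma_t(e_i)\}$ and reading off its three components, I would obtain $\partial_t\nu_t=f_t\sigma_t-d\sigma_t(\nabla^{\I_t}f_t)$, where $\nabla^{\I_t}$ denotes the gradient for the first fundamental form $\I_t$ of $\sigma_t$.

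The key algebraic point is then to convert the $\I_t$-gradient into the $\overline\I_t$-gradient. By Equation \eqref{eq:fff gauss} one has $\overline\I_t=\I_t-\III_t=\I_t((\mathrm{id}-B_t^2)\cdot,\cdot)$, using that $B_t$ is $\I_t$-self-adjoint, whence $(\mathrm{id}-B_t^2)\overline\nabla^tf_t=\nabla^{\I_t}f_t$. With this identity I would verify \eqref{eq flusso in T1} by a direct expansion: setting $U=\overline\nabla^tf_t$ and using $d\zeta_t(U)=(d\sigma_t(U),-d\sigma_t(B_tU))$, a short computation with the lift formulas gives $-d\zeta_t(B_tU)-{\mathrm J}(d\zeta_t(U))=(0,\,d\sigma_t((B_t^2-\mathrm{id})U))$, while $f_t\chi=(f_t\nu_t,f_t\sigma_t)$. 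Summing these two contributions reproduces exactly $(f_t\nu_t,\,f_t\sigma_t-d\sigma_t(\nabla^{\I_t}f_t))=\tfrac{d}{dt}\zeta_t$ thanks to the gradient identity, which proves \eqref{eq flusso in T1}.

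Finally, \eqref{eq flusso in G} follows by pushing \eqref{eq flusso in T1} forward through $d\mathrm p$. Since $G_t=\mathrm p\circ\zeta_t$ one has $\tfrac{d}{dt}G_t=d\mathrm p(\tfrac{d}{dt}\zeta_t)$ and $d\mathrm p\circ d\zeta_t=dG_t$; moreover $d\mathrm p(\chi)=0$ because $\chi$ generates the fibers, and $d\mathrm p$ intertwines ${\mathrm J}$ on $\chi^\perp$ with $\mathbb J$ on $\G{n+1}$ by the very construction of the para-complex structure (Lemma \ref{lemma:paracomplex structure}); note that $d\zeta_t(U)$ and $d\zeta_t(B_tU)$ lie in $\chi^\perp$ by Proposition \ref{prop:gauss immersion}, so ${\mathrm J}$ may be applied. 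Thus the $f_t\chi$ term drops out and the two remaining terms become $-dG_t(B_tU)-\mathbb J(dG_t(U))$, as desired. I expect no serious obstacle: the whole argument is bookkeeping, the only mildly delicate points being the justification that $\partial_X\nu_t$ is tangent to $\Hyp^{n+1}$ (so that it coincides with the covariant derivative) and the passage between the two gradients via the operator $\mathrm{id}-B_t^2$.
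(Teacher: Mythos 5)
Your proposal is correct and follows essentially the same route as the paper: both compute $\partial_t\nu_t$ by differentiating the defining relations $\langle\nu_t,\nu_t\rangle=1$, $\langle\sigma_t,\nu_t\rangle=0$, $\langle\nu_t,d\sigma_t(X)\rangle=0$ (using that $X$ and $\partial_t$ commute), and then convert between the $\I_t$- and $\overline\I_t$-gradients via $\overline\I_t=\I_t((\mathrm{id}-B_t^2)\cdot,\cdot)$ before pushing forward by $d\mathrm p$. The only cosmetic difference is that the paper verifies \eqref{eq flusso in T1} by pairing both sides against the $\gs{n+1}$-orthonormal frame $\big(d\zeta_t(e_{i;t}),\chi,{\mathrm J}d\zeta_t(e_{i;t})\big)$ adapted to $B_t$, whereas you expand both sides directly in the ambient coordinates of $\R^{n+1,1}\times\R^{n+1,1}$ — which also has the merit of making the normal component $f_t\sigma_t$ of $\partial_t\nu_t$ explicit.
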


{As a preliminary step to prove Proposition \ref{Prop: flows on G and T1Hn}, we  compute} the variation in time of the normal vector field. Recalling that $D$ denotes the Levi-Civita connection on $\Hyp^{n+1}$, we show:

\begin{equation}\label{eq appendix 1}
D_{\frac{d\sigma_t}{dt}  } {\nu_t}= - d\sigma( \nabla^t f_t)~,
\end{equation}
where now $\nabla^t$ denotes the gradient with respect to the first fundamental form $\I_t$ of $\sigma_t$.
On the one hand, by metric compatibility,
\[
\inner{D_{\frac{d\sigma_t}{dt} }\nu_t, \nu_t }= \frac 1 2 \partial_{\frac{d\sigma_t}{dt} } \inner{\nu_t,\nu_t}= 0
\]
hence $D_{\frac{d\sigma_t}{dt} }\nu_t$ is tangent to the hypersurface.

On the other hand, let $X$ be any vector field over $M$. Since $X$ and $\frac{\partial}{\partial t}$ commute on $M\times(-\varepsilon,\varepsilon)$,
\begin{align*}
\inner{D_{\frac{d\sigma_t}{dt} } \nu_t, d\sigma_t(X)}&= \partial_{\frac{d\sigma_t}{dt} } \inner{\nu_t, d\sigma_t (X)} - \inner{\nu_t, D_{ \frac{d\sigma_t}{dt}  } (d\sigma_t (X) )}\\
&= 0 - \inner{\nu_t, D_{ \frac{d\sigma_t}{dt}  } (d\sigma_t (X) )}\\
&=-\inner{\nu_t, D_{d\sigma_t(X)} (f_t \nu_t) }\\
&=- X( f_t) -f_t \inner{\nu_t, D_{d\sigma_t(X)} \nu_t}\\
&=-X(f_t) =- \I_t(\nabla^t f_t, X)~.
\end{align*}
This shows Equation \eqref{eq appendix 1}. As a result, in the hyperboloid model \eqref{eq:modelT} we have:
\begin{equation}\label{eq appendix 2}
\frac{d}{dt} \zeta_t = \bigg(\frac d {dt} \sigma_t, D_{\frac{d \sigma_t}{dt}} \nu\bigg)= (f_t \nu_t, - d\sigma_t (\nabla^t f_t))
\end{equation}

\begin{proof}[Proof of Proposition $\ref{Prop: flows on G and T1Hn}$]
Let $e_{1;t}, \dots, e_{n;t}$ be a local $\overline{\I}_t$-orthonormal frame diagonalizing $B_t$, so $B_t(e_{k;t})= \lambda_{k;t} e_{k;t}$.
By definition of $\gs{n+1}$ and of ${\mathrm J}$, 
\begin{equation}
	\label{eq: base T1 Hn}
	\big(d\zeta_{t}( e_{1;t}), \dots, d\zeta_{t}(e_{n;t}), \chi, {\mathrm J} d\zeta_{t}( e_{1;t}), \dots, {\mathrm J} d \zeta_{t}( e_{n;t})\big)
\end{equation}
defines at each point of the image an orthonormal basis for the tangent space of $T^1 \Hyp^{n+1}$, with the former $n+1$ vectors having norm $1$ and the latter $n$ vectors having norm $-1$.

We prove Equation \eqref{eq flusso in T1}, then Equation \eqref{eq flusso in G} follows after observing that
\[
\frac {d}{dt} G_{t} = (dG_{t})\bigg(\frac \partial {\partial t}\bigg)= (d\mathrm p \circ d\zeta_t)\bigg(\frac {\partial} {\partial t} \bigg)= d\mathrm p \bigg(\frac{d}{dt} \zeta_t\bigg).
\]
We show that LHS and RHS of \eqref{eq flusso in T1} have the same coordinates with respect to the basis \eqref{eq: base T1 Hn}. By  Equations \eqref{eq: differential of sigma tilde} and \eqref{eq appendix 2},
\begin{align*}
	\gs{n+1} \bigg( \frac d {dt} \zeta_t, {\mathrm J} d\zeta_t ( e_{k;t}) \bigg) &= f_t \inner{\nu_t, - d\sigma_t (B_t (e_{k;t})) } - \inner{- d\sigma_t (\nabla^t f_t), d\sigma_t (e_{k;t})} \\
	&= \inner{  d\sigma_t (\nabla^t f_t), d\sigma_t (e_{k;t})}= \partial_{e_{k;t}} f_t \\
	%&= \I_t ( \nabla^t f_t, e_{k;t})=\\
	& 
	= \overline \I_t (\overline {\nabla}^t f_t,  e_{k;t})
	= \gs{n+1} (- {\mathrm J} d\zeta_t (\overline {\nabla}^t f_t), {\mathrm J} d\zeta_t (e_{k;t}) )~.
\end{align*}
Similarly, recalling that $B_t$ is self-adjoint with respect to both $\I_t$ and $\overline \I_t$, one has 
\begin{align*}
	\gs{n+1} \bigg( \frac d {dt}\zeta_t, d\zeta_t (e_{k;t}) \bigg)&= \inner{f_t \nu_t, d\sigma_t (e_{k;t})} - \inner{- d\sigma_t(\nabla^t f_t), - d\sigma_t (B_t ( e_{k;t} )) }  \\
	&= - \inner{d\sigma_t (\nabla^t f_t),  d\sigma_t (B_t(e_{k;t}))}\\
	&= - \I_t (\nabla^t f_t,  B_t(e_{k;t}))
	= - \overline{\I}_t (\overline{\nabla}^t f_t,  B_t(e_{k;t}))\\
	&= -\overline{\I}_t (B_t( \overline {\nabla}^t f_t), e_{k;t})
	= \gs{n+1} (- d\zeta_t (B_t (\overline {\nabla}^t f_t) ), d\zeta_t (e_{k;t}) )~.
\end{align*}
Finally, 
\begin{align*}
	\gs{n+1} \bigg(\frac d {dt} \zeta_t, \chi\bigg)= f_t\inner{\nu_t,\nu_t}= f_t  = \gs{n+1}(f_t\chi, \chi)
\end{align*}
and the proof follows.
\end{proof}

An interesting corollary of Proposition \ref{Prop: flows on G and T1Hn} involves mean curvature {flow}. Directly by Proposition \ref{Prop: formula H in G}, one has the following.

\begin{Cor}
	\label{cor: mean curvature flow}
The  flow in $\Hyp^{n+1}$ defined by
\[
\frac d {dt} \sigma_t = \frac 1 n \sum_{k=1}^n \arctanh(\lambda_{k;t}),
\]
%\sout{with the $\lambda_{t,k}$'s being the principal curvatures of $\sigma_t$ and with $|\lambda_{0,k}|<1$} 
on hypersurfaces of small principal curvatures, induces in $\G{n+1}$ the mean curvature flow up to a tangent factor, namely
\[
\frac d {dt} G_{t} =\mathrm{\overline H_t} +   B_t (\mathbb J (\mathrm{\overline H_t})) .
\]
%(\orange{Tecnicamente nell'ultima equazione $B_t$ non e' piu' un tensore su $M$ ma lungo $G_t(M)$, pero' forse eviterei di appesantire la notazione con $d G_t\circ B_t\circ (d G_t)^{-1}$})
\end{Cor}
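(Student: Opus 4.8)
The plan is to read off the result directly from Proposition \ref{Prop: flows on G and T1Hn} once we identify the driving function of the flow. Set
\[
f_t := \kappa_{\sigma_t} = \frac{1}{n}\sum_{k=1}^n \arctanh(\lambda_{k;t})~,
\]
so that the prescribed flow is exactly $\frac{d}{dt}\sigma_t = f_t\,\nu_t$. Since we work with hypersurfaces of small principal curvatures, $f_t$ is a well-defined smooth function by Remark \ref{rmk fsigma smooth}, hence Proposition \ref{Prop: flows on G and T1Hn} applies and Equation \eqref{eq flusso in G} gives
\[
\frac{d}{dt}G_t = -dG_t\big(B_t(\overline\nabla^t f_t)\big) - \mathbb{J}\big(dG_t(\overline\nabla^t f_t)\big)~.
\]
The whole content of the corollary is then to rewrite the two terms on the right-hand side in terms of the mean curvature vector $\overline{\mathrm H}_t$ of $G_t$.

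First I would handle the second term. By Proposition \ref{Prop: formula H in G}, more precisely its central identity \eqref{eq:formula H in G} applied to $\sigma_t$, the mean curvature vector of the Gauss map is $\overline{\mathrm H}_t = -\mathbb{J}\big(dG_t(\overline\nabla^t \kappa_{\sigma_t})\big)$. With our choice $f_t = \kappa_{\sigma_t}$ this is precisely the second term, so that term equals $\overline{\mathrm H}_t$ verbatim. For the first term I would use that $\mathbb{J}$ is a para-complex structure, i.e.\ $\mathbb{J}^2 = \mathbbm 1$: applying $\mathbb{J}$ to the previous equality yields
\[
\mathbb{J}(\overline{\mathrm H}_t) = -\mathbb{J}^2\big(dG_t(\overline\nabla^t f_t)\big) = -dG_t(\overline\nabla^t f_t)~,
\]
a vector tangent to the image of $G_t$ (consistently with the fact that, for a Lagrangian immersion in a para-K\"ahler manifold, $\mathbb{J}$ interchanges the tangent and normal subspaces).

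The one point requiring care, and where I would be explicit, is the meaning of $B_t(\mathbb{J}(\overline{\mathrm H}_t))$: here the shape operator $B_t$, which a priori acts on $TM$, is transported to the tangent space of the submanifold $G_t(M)$ via the linear isomorphism $dG_t$. Writing $\mathbb{J}(\overline{\mathrm H}_t) = dG_t(-\overline\nabla^t f_t)$ and applying this transported operator gives
\[
B_t\big(\mathbb{J}(\overline{\mathrm H}_t)\big) = dG_t\big(B_t(-\overline\nabla^t f_t)\big) = -dG_t\big(B_t(\overline\nabla^t f_t)\big)~,
\]
which is exactly the first term in the expression for $\frac{d}{dt}G_t$. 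Combining the two identifications yields
\[
\frac{d}{dt}G_t = B_t\big(\mathbb{J}(\overline{\mathrm H}_t)\big) + \overline{\mathrm H}_t = \overline{\mathrm H}_t + B_t\big(\mathbb{J}(\overline{\mathrm H}_t)\big)~,
\]
which is the claim. I do not expect any genuine obstacle beyond fixing this identification convention cleanly; the entire argument is a two-line substitution into \eqref{eq flusso in G} using \eqref{eq:formula H in G} and $\mathbb{J}^2=\mathbbm 1$, with the geometric interpretation (the flow inducing, up to a tangential term, the Lagrangian mean curvature flow in $\G{n+1}$) following at once since tangential terms do not affect the evolution of the image submanifold.
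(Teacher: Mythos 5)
Your proof is correct and is exactly the argument the paper intends: the corollary is stated there as following ``directly'' from Proposition \ref{Prop: formula H in G} combined with Equation \eqref{eq flusso in G}, which is precisely your substitution $f_t=\kappa_{\sigma_t}$ followed by the identity $\overline{\mathrm H}_t=-\JJ(dG_t(\overline\nabla^t\kappa_{\sigma_t}))$ and $\JJ^2=\mathbbm 1$. Your explicit remark about transporting $B_t$ to the image of $dG_t$ only makes the convention behind the paper's notation $B_t(\JJ(\overline{\mathrm H}_t))$ clearer.
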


\printbibliography[heading=bibintoc]

\end{document}